\DeclareMathAlphabet{\mathcalligra}{T1}{calligra}{m}{n}
\DeclareMathAlphabet{\mathpzc}{OT1}{pzc}{m}{it}
\newcites{intro}{References}        %      shell# bibtex intro
\newcites{symptop}{References}  %%    shell# bibtex symptop
\newcites{refFH}{References}        %%    shell# bibtex refFH     % Floer
\newcites{refCG}{References}    %%  shell# bibtex refCG           % contact geom.
\newcites{refRF}{References}     %%  shell# bibtex refRF            % Rab.wtz-Floer
\newtheorem{theorem}{Theorem}[section]
\newtheorem{corollary}[theorem]{Corollary}
\newtheorem{lemma}[theorem]{Lemma}
\newtheorem{proposition}[theorem]{Proposition}
\newtheorem{conjecture}[theorem]{Conjecture}
\theoremstyle{definition}
\newtheorem{definition}[theorem]{Definition}
\newtheorem{assumption}[theorem]{Assumption}
\newtheorem{remark}[theorem]{Remark}
\newtheorem{exercise}[theorem]{Exercise}
\newtheorem{example}[theorem]{Example}
\newtheorem{NOTATION}[theorem]{Notation}
\theoremstyle{remark}
\newenvironment{dedication}
  {\clearpage           % we want a new page
   \thispagestyle{empty}% no header and footer
   \vspace*{\stretch{1}}% some space at the top 
   \itshape             % the text is in italics
   \raggedleft          % flush to the right margin
  }
  {\par % end the paragraph
   \vspace{\stretch{3}} % space at bottom is three times that at the top
   \clearpage           % finish off the page
  }
\newcommand{\comp}[1]{#1^{\rm C}}  % complement of a set 
\newcommand{\C}{{\mathbb{C}}}
\newcommand{\D}{{\mathbb{D}}}
\newcommand{\N}{{\mathbb{N}}}
\newcommand{\Q}{{\mathbb{Q}}}
\newcommand{\R}{{\mathbb{R}}}
\renewcommand{\SS}{{\mathbb{S}}}
\newcommand{\T}{{\mathbb{T}}}
\newcommand{\Z}{{\mathbb{Z}}}
\newcommand{\Aa}{{\mathcal{A}}}   % connections
\newcommand{\Bb}{{\mathcal{B}}}
\newcommand{\Cc}{{\mathcal{C}}}   % configuration space
\newcommand{\Ee}{{\mathcal{E}}}
\newcommand{\Ff}{{\mathcal{F}}}
\newcommand{\Hh}{{\mathcal{H}}}
\newcommand{\Jj}{{\mathcal{J}}}
\newcommand{\Ll}{{\mathcal{L}}}   % Lagrangian planes
\newcommand{\Mm}{{\mathcal{M}}}   % moduli space
\newcommand{\Oo}{{\mathcal{O}}}
\newcommand{\Pp}{{\mathcal{P}}}
\newcommand{\Rr}{{\mathcal{R}}}
\newcommand{\Ss}{{\mathcal{S}}}
\newcommand{\Tt}{{\mathcal{T}}}
\newcommand{\Uu}{{\mathcal{U}}}
\newcommand{\Vv}{{\mathcal{V}}}
\newcommand{\Xx}{{\mathcal{X}}}
\newcommand{\EEE}{\mbf{E}}       % electric field
\newcommand{\AAA}{\mbf{A}}       % magnetic vector potential
\newcommand{\AAAdot}{\mbf{\dot A}} % dot magnetic vector potential
\newcommand{\BBB}{\mbf{B}}       % magnetic field B=rot A=\nabla x A
\newcommand{\rot}{\mathrm{rot}}  % rot
\newcommand{\rrr}{\mbf{r}}       %
\newcommand{\Crm}{{\mathrm{C}}}
\newcommand{\Ann}{{\rm Ann }}            % annihilator
\newcommand{\pann}[1]{#1^\perp}        % annihilator-perp
\newcommand{\Bdual}[1]{#1^\prime}     % Banach dual
\newcommand{\Hdual}[1]{#1^*}              % Hilbert dual
\newcommand{\coker}{{\rm coker\, }}  % cokernel
\newcommand{\im}{{\rm im\, }}             % image
\newcommand{\dom}{{\rm dom\, }}           % domain
\newcommand{\DIV}{{\rm div\, }}           % divergence
\newcommand{\sign}{{\rm sign\, }}         % sign
\newcommand{\id}{{\rm id}}                % identity
\newcommand{\Id}{{\rm Id}}
\newcommand{\rank}{{\rm rank}}            % rank
\newcommand{\codim}{{\rm codim\, }}       % codimension
\newcommand{\diag}{{\rm diag}}            % diagonal matrix
\newcommand{\dist}{{\rm dist}}            % distance
\newcommand{\INT}{{\rm int\, }}           % interior
\newcommand{\supp}{{\rm supp\, }}         % support
\newcommand{\grad}{\mathop{\mathrm{grad}}}    % gradient
\newcommand{\Fred}{\Ff}              % Fredholm operators %\mathpzc{Fred}
\newcommand{\INDEX}{\mathop{\mathrm{index}}}     % (Fredholm) index
\newcommand{\IND}{{\rm ind}}                  % (Morse) index
\newcommand{\INDsign}{{\rm ind}^\sigma}       % signature (Morse) index
\newcommand{\CZ}{{\mu_{\rm CZ}}}              % Conley-Zehnder index
\newcommand{\CZcan}{{\mu^{\rm CZ}}}           % clockw. Conley-Zehnder index
\newcommand{\RStrans}{{\mu^{\rm RS}_\xi}}     % transverse RS index
\newcommand{\CZtrans}{{\mu^{\rm CZ}_\xi}}     % transv. CZ index
\newcommand{\mucan}{{\mu_{\rm can}}}          % Maslov index clockwise
\newcommand{\RS}{{\mu_{\rm RS}}}              % Robbin-Salamon index
\newcommand{\LVF}{{Y}}               % Liouville VF
\newcommand{\Lflow}{\theta}          % Liouville flow
\newcommand{\LVFcan}{{Y_{\rm can}}}  % Liouville VF canonical
\newcommand{\LVFfrad}{{Y_{\rm rad}}} % vertical canonical VF on T^*M
\newcommand{\LM}{{\tau}}             % Lagrange multiplier (Rabinowitz Floer)
\newcommand{\LMpath}{{\eta}}         % Lagrange multiplier path (RF)
\newcommand{\Ffreg}{\Ff_{\rm reg}}   % regular (transv. non-deg) fcts.
\newcommand{\Uureg}{\Uu_{\rm reg}}   % Uureg
\newcommand{\pertau}{{\tau}}         % period tau
\newcommand{\lpz}{{z}}                   % loop z:S^1\to M  (1-periodic)
\newcommand{\natinc}{\iota}               % natural loop space inclusion LQ->LTQ
\newcommand{\Ppall}{\Pp_{\rm all}}   % periodic orbits ALL periods
\newcommand{\LIP}{{\rm LIP}}         % LIP leaf-wise intersection
\newcommand{\RVF}{{R}}               % Reeb VF
\newcommand{\Rflow}{\vartheta}       % Reeb flow 
\newcommand{\ror}{{r}}               % Reeb orbit r:\R->M, r(t+\tau)=r(t)
\newcommand{\rorDUMMY}{{r}}       % Reeb orbit r DUMMY
\newcommand{\GRO}{\Pp_{\mspace{-3mu}\pm}} % generalized Reeb orbits
\newcommand{\RFH}{{\rm RFH}}         % Rabinowitz-Floer homology
\newcommand{\mean}{{\rm mean}}       % mean value
\newcommand{\ppgamma}{\gamma_{\rm prime}} % prime part of periodic path gamma
\newcommand{\PhiPSShom}{\Phi^{\mathrm{PSS}}}
\newcommand{\phiPSShom}{\phi^{\mathrm{PSS}}}
\newcommand{\PsiPSShom}{\Psi^{\mathrm{PSS}}}
\newcommand{\psiPSShom}{\psi^{\mathrm{PSS}}}
\newcommand{\SF}{{\mu^{\rm spec}}} % spectral flow
\newcommand{\wind}{{\rm wind}}     % winding number
\newcommand{\NULL}{{\rm null}}       % Nullity
\newcommand{\SsPp}{{\mathcal{SP}}}        % Symplectomorphisms
\newcommand{\Per}{{\rm Per}}          % periodic orbits
\newcommand{\Fix}{{\rm Fix}}          % Fixed points
\newcommand{\Crit}{{\rm Crit}}        % Critical points
\newcommand{\Hom}{{\rm Hom}}          % Homomorphisms
\newcommand{\End}{{\rm End}}          % Endomorphisms
\newcommand{\SB}{{\rm SB}}            % sum of the Betti numbers
\newcommand{\Ho}{{\rm H}}              % Homology
\newcommand{\Io}{{\rm I}}              % evaluation homomorphism over pi_2
\newcommand{\SP}{{\rm SP}}             % Simplicial chain complex
\newcommand{\HM}{{\rm HM}}             % Morse homology
\newcommand{\CM}{{\rm CM}}             % Morse chain cplx
\newcommand{\HF}{{\rm HF}}             % Floer homology
\newcommand{\CF}{{\rm CF}}             % Floer chain complex
\newcommand{\lambdacan}{\lambda_{\rm can}} % T^*M: Liouville form
\newcommand{\omegacan}{\omega_{\rm can}}   % T^*M: can.sympl.form
\newcommand{\PD}{{\rm PD}}             % Poincare' duality
\newcommand{\Pd}{{\rm Pd}}             % Poincare' duality chain level
\newcommand{\RP}{\R{\rm P}}            % real projective space
\newcommand{\CP}{\C{\rm P}}            % complex projective space
\newcommand{\Hess}{\mathrm{Hess}}          % Hessian
\newcommand{\cat}{\mathrm{cat}}            % category
\newcommand{\cupp}{\mathrm{cup}}           % cuplength: cohomology
\newcommand{\Pcont}{\mathcal{P}_0} 
\newcommand{\Eig}{\mathrm{Eig}}            % eigenspace
\newcommand{\Arnold}{{Arnol$'$d}}           % V.I. Arnol'd
\newcommand{\interior}[1]{\accentset{\circ}{#1}} % interior
\newcommand{\pt}{\textsf{pt}}                    % pt 1-point-space
\renewcommand{\d}{{\rm d}}
\newcommand{\norm}{{\rm norm}}
\newcommand{\loc}{{\rm loc}}
\newcommand{\eps}{{\varepsilon}}
\newcommand{\GL}{{\rm GL}}
\renewcommand{\O}{{\rm O}}
\newcommand{\SO}{{\rm SO}}
\newcommand{\U}{{\rm U}}
\newcommand{\Sp}{{\rm Sp}}
\renewcommand{\o}{{\mathfrak o}}
\newcommand{\cc}{{\mathfrak c}}
\newcommand{\Hhreg}{{\mathcal{H}}_{\rm reg}}
\newcommand{\Vvreg}{{\mathcal{V}}_{\rm reg}}
\newcommand{\Zreg}{Z_{\rm reg}}
\newcommand{\delbar}{\bar\p}
\newcommand{\inner}[2]{\langle #1, #2\rangle}   
\newcommand{\INNER}[2]{\left\langle #1, #2\right\rangle}
\newcommand{\mbf}[1]{\text{\boldmath $#1$}}  % mbf=mathboldface
\newcommand{\ba}{{\mbf{a}}}
\def\NABLA#1{{\mathop{\nabla\kern-.5ex\lower1ex\hbox{$#1$}}}}
\def\Nabla#1{\nabla\kern-.5ex{}_{#1}}
\def\Tabla#1{\Tilde\nabla\kern-.5ex{}_{#1}}
\def\abs#1{\mathopen|#1\mathclose|}   
\def\Abs#1{\left|#1\right|}            
\def\norm#1{\mathopen\|#1\mathclose\|}
\def\Norm#1{\left\|#1\right\|}
\renewcommand{\Tilde}{\widetilde}
\newcommand{\p}{{\partial}}
\newcommand{\INTO}{\hookrightarrow}              % embedding
\newcommand{\IMTO}{\looparrowright}              % immersion
\renewcommand{\1}{{{\mathchoice {\rm 1\mskip-4mu l} {\rm 1\mskip-4mu l}
{\rm 1\mskip-4.5mu l} {\rm 1\mskip-5mu l}}}}
\newcommand{\Index}[1]{#1\index{#1}}
\newlength\eqshift
\renewcommand\theequation{\thesection.\arabic{equation}}
\let\savetheequation\theequation
\renewcommand*\env@matrix[1][\arraystretch]{%
  \edef\arraystretch{#1}%
  \hskip -\arraycolsep
  \let\@ifnextchar\new@ifnextchar
  \array{*\c@MaxMatrixCols c}}
\let\save@mathaccent\mathaccent
\newcommand*\if@single[3]{%
  \setbox0\hbox{${\mathaccent"0362{#1}}^H$}%
  \setbox2\hbox{${\mathaccent"0362{\kern0pt#1}}^H$}%
  \ifdim\ht0=\ht2 #3\else #2\fi
  }
\newcommand*\rel@kern[1]{\kern#1\dimexpr\macc@kerna}
\newcommand*\widebar[1]{\@ifnextchar^{{\wide@bar{#1}{0}}}{\wide@bar{#1}{1}}}
\newcommand*\wide@bar[2]{\if@single{#1}{\wide@bar@{#1}{#2}{1}}{\wide@bar@{#1}{#2}{2}}}
\newcommand*\wide@bar@[3]{%
  \begingroup
  \def\mathaccent##1##2{%
%Enable nesting of accents:
    \let\mathaccent\save@mathaccent
%If there's more than a single symbol, use the first character instead (see below):
    \if#32 \let\macc@nucleus\first@char \fi
%Determine the italic correction:
    \setbox\z@\hbox{$\macc@style{\macc@nucleus}_{}$}%
    \setbox\tw@\hbox{$\macc@style{\macc@nucleus}{}_{}$}%
    \dimen@\wd\tw@
    \advance\dimen@-\wd\z@
%Now \dimen@ is the italic correction of the symbol.
    \divide\dimen@ 3
    \@tempdima\wd\tw@
    \advance\@tempdima-\scriptspace
%Now \@tempdima is the width of the symbol.
    \divide\@tempdima 10
    \advance\dimen@-\@tempdima
%Now \dimen@ = (italic correction / 3) - (Breite / 10)
    \ifdim\dimen@>\z@ \dimen@0pt\fi
%The bar will be shortened in the case \dimen@<0 !
    \rel@kern{0.6}\kern-\dimen@
    \if#31
      \overline{\rel@kern{-0.6}\kern\dimen@\macc@nucleus\rel@kern{0.4}\kern\dimen@}%
      \advance\dimen@0.4\dimexpr\macc@kerna
%Place the combined final kern (-\dimen@) if it is >0 or if a superscript follows:
      \let\final@kern#2%
      \ifdim\dimen@<\z@ \let\final@kern1\fi
      \if\final@kern1 \kern-\dimen@\fi
    \else
      \overline{\rel@kern{-0.6}\kern\dimen@#1}%
    \fi
  }%
  \macc@depth\@ne
  \let\math@bgroup\@empty \let\math@egroup\macc@set@skewchar
  \mathsurround\z@ \frozen@everymath{\mathgroup\macc@group\relax}%
  \macc@set@skewchar\relax
  \let\mathaccentV\macc@nested@a
%The following initialises \macc@kerna and calls \mathaccent:
  \if#31
    \macc@nested@a\relax111{#1}%
  \else
%If the argument consists of more than one symbol, and if the first token is
%a letter, use that letter for the computations:
    \def\gobble@till@marker##1\endmarker{}%
    \futurelet\first@char\gobble@till@marker#1\endmarker
    \ifcat\noexpand\first@char A\else
      \def\first@char{}%
    \fi
    \macc@nested@a\relax111{\first@char}%
  \fi
  \endgroup
}
\newcommand{\widebarsub}[2]{{{\widebar{#1}}_{{\mspace{-1.7mu} #2}}}}
\newcommand{\Jbar}{{\widebar{J}}}     % complex conjugate J_0
\long\def\symbolfootnote[#1]#2{\begingroup%
\def\thefootnote{\fnsymbol{footnote}}\footnote[#1]{#2}\endgroup}
\begin{document}
\sloppy
\author{Joa Weber \\ UNICAMP }
\title{Topological Methods in the \\ Quest for Periodic Orbits
       \\
       \vspace{.5cm}
       {\Large \mbox{ } Lecture Notes}\large\,\footnote{
       February 18, 2018. Revised version of
       the~{\href{https://impa.br/publicacoes/coloquios/}
                        {$31^{\text{\underline{o}}}$ CBM-08}}
       {\href{https://www.math.stonybrook.edu/\%7Ejoa/PUBLICATIONS/00-publications.html}{Lecture
         Notes}}.
       Notable modifications and extensions occured,
       in increasing order, in Sections~\ref{sec:loops},~\ref{sec:Baire},
       and~\ref{sec:Thom-Smale-transversality}. There are the new
       Appendices~\ref{sec:function-spaces} and~\ref{sec:functional-analysis}.
       }\Large
       \\
       {\Large MM613 2016-2}
       }
\date{}

%%%%%%%%%%%%%%%%%%%%%%%%%%%%%%%%%%
%%%%%%%%%% TITLE PAGE %%%%%%%%%%%%%%%
%%%%%%%%%%%%%%%%%%%%%%%%%%%%%%%%%%
\begin{titlepage}
\maketitle %(to set the title page and copyright page; see note)
\thispagestyle{empty}
\newpage
\mbox{ }
\end{titlepage}

%%%%%%%%%%%%%%%%%%%%%%%%%%%%%%%%%%
%%%%%%%%%% FRONTMATTER %%%%%%%%%%%%%
%%%%%%%%%%%%%%%%%%%%%%%%%%%%%%%%%%
\frontmatter %• title page and copyright page information

%%%%%%%%%%%%%%%%%%%%%%% dedic.tex %%%%%%%%%%%%%%%%%%%%%%%%%%%%%%%%%
%
% sample dedication
%
% Use this file as a template for your own input.
%
%%%%%%%%%%%%%%%%%%%%%%%% Springer %%%%%%%%%%%%%%%%%%%%%%%%%%

\begin{dedication}
To the cycles of life  %\hspace{3cm}\mbox{ }
\\\mbox{ }
\\
%their \hspace{2,4cm}\mbox{ } \\ unimaginable variety \\ marvelous surprises
unimaginable in variety \\ stunning surprises

%Use the template \emph{dedic.tex} together with the Springer document class SVMono for monograph-type books or SVMult for contributed volumes to style a quotation or a dedication\index{dedication} at the very beginning of your book in the Springer layout
\end{dedication}

\newpage
\thispagestyle{empty}

% dedication by hand:
%     \clearpage\thispagestyle{empty}
%      \par\vspace*{.35\textheight}{\centering Dedicated to ...\par}\clearpage
%
%%%%%%%%%%%%%%%%%%%%%%preface.tex%%%%%%%%%%%%%%%
% sample preface
%
% Use this file as a template for your own input.
%
%%%%%%%%%%%%%%%%%%%%%%%% Springer %%%%%%%%%%%%%%

%\preface   %%% Springer SVMono ONLY!!!
\chapter{Preface}
%% Please write your preface here
%Use the template \emph{preface.tex} together with the Springer document class SVMono (monograph-type books) or SVMult (edited books) to style your preface in the Springer layout.

%A preface\index{preface} is a book's preliminary statement, usually written by the \textit{author or editor} of a work, which states its origin, scope, purpose, plan, and intended audience, and which sometimes includes afterthoughts and acknowledgments of assistance. 

% When written by a person other than the author, it is called a foreword. The preface or foreword is distinct from the introduction, which deals with the subject of the work.

% Customarily \textit{acknowledgments} are included as last part of the preface.

The present text originates from lecture notes written
during the graduate course
``MM613 M\'{e}todos Topol\'{o}gicos da Mec\^{a}nica Hamiltoniana''
held from august to november 2016 at UNICAMP.
The manuscript has then been extended in order to serve
as accompanying text for an advanced mini-course
during the \textit{$31^{\rm st}$ Col\'{o}quio Brasileiro de Matem\'{a}tica},
IMPA, Rio de Janeiro, in august 2017.

%%%%%%%%%%%%%%%%%%%%%%%%%%%%%%%
%%%%%%%%%%%%%%%%%%%%%%%%%%%%%%%
\subsection*{Scope}
We aim to present some steps in the history
of the problem of detecting closed orbits
in Hamiltonian dynamics. This not only relates
to symplectic geometry, but also to an odd
cousin, called contact geometry and leading to Reeb dynamics.
Ultimately we'd like to introduce the reader to Rabinowitz-Floer
homology, an active area of contemporary research.

When we started to write these lecture notes
we aimed in the introduction
``\textit{The following text is meant to provide an introductory overview,
throwing in some details occasionally,
preferably such which are usually omitted.}''
Obviously we failed: In the end our text contains quite a lot of details and,
as it turned out, basically all of them can be found \emph{somewhere}
in the literature..

%%%%%%%%%%%%%%%%%%%%%%%%%%%%%%%
%%%%%%%%%%%%%%%%%%%%%%%%%%%%%%%
\subsection*{Content}
There are two parts, Hamiltonian dynamics on a symplectic manifold
and Reeb dynamics on a contact manifold,
each one coming with, maybe largely motivated by, a famous
conjecture: The {\Arnold} conjecture on existence of
$1$-periodic Hamiltonian trajectories and the Weinstein conjecture
concerning existence of closed characteristics (embedded circles whose
tangent spaces are lines of the characteristic line bundle) on certain
closed energy hypersurfaces
of a symplectic manifold, namely, those of contact type.
While in general closed characteristics integrate
the Hamiltonian vector field, it is a consequence
of the contact condition that they simultaneously integrate
a Reeb vector field. Closed characteristics are images
of periodic Hamiltonian trajectories -- whatever period but on a given energy level.

Part one recalls basics of symplectic geometry, in particular,
we review the Conley-Zehnder index from various
angles. Then we present the construction of Floer homology,
rather detailed, as analogous steps are used in the construction of
Rabinowitz-Floer homology. Floer homology was deviced to prove the
{\Arnold} conjecture.
Part two recalls basics of contact geometry
and reviews the construction of Rabinowitz-Floer homology.
The Weinstein conjecture is reconfirmed for certain classes
of hypersurfaces in exact symplectic manifolds.

It goes without saying that the references simply reflect the
knowledge, not to say ignorance, of the author.
They are not meant to be exhaustive. Certainly many more 
people contributed to the many research fields, and all their facets,
touched upon in these %lecture
notes.

%%%%%%%%%%%%%%%%%%%%%%%%%%%%%%%
%%%%%%%%%%%%%%%%%%%%%%%%%%%%%%%
\subsection*{Audience}
The intended audience are graduate students.
Necessary background includes basic knowledge of manifolds,
differential geometry, and functional analysis.

%%%%%%%%%%%%%%%%%%%%%%%%%%%%%%%
%%%%%%%%%%%%%%%%%%%%%%%%%%%%%%%
\subsection*{Acknowledgements}
It is a pleasure to thank brazilian tax payers for the
excellent research and teaching opportunities at UNICAMP
and for generous financial support:
``O presente trabalho foi realizado com apoio do CNPq, Conselho
Nacional de Desenvolvimento Cient\'{\i}fico e Tecnol\'{o}gico - Brasil,
e da FAPESP, Funda\c{c}\~{a}o de Amparo \`{a} Pesquisa do Estado
de S\~{a}o Paulo - Brasil.''

I'd like to thank Leonardo Soriani Alves
for interest in and many pleasant conversations throughout
the lecture course
``MM613 M\'{e}todos Topol\'{o}gicos da Mec\^{a}nica Hamiltoniana''
held in the second semester of 2016 at UNICAMP.

\vspace{\baselineskip}
\begin{flushright}\noindent
Campinas, \hfill \mbox{ }  {\it Joa Weber} \\
February 2018  \hfill \mbox{ }  %{\it Weber} \\
\end{flushright}

\tableofcontents

%%%%%%%%%%%%%%%%%%%%%%%%%%%%%%%%%%
%%%%%%% main matter %%%%%%%%%%%%%%%%%%
%%%%%%%%%%%%%%%%%%%%%%%%%%%%%%%%%%
\mainmatter %\include files (e.g., main chapters, appendices)

% introduction
\cleardoublepage
\phantomsection
%%%%%%%%%%%%%%%%%%%%%%%%%%%%%%%%%%
%%%%%%%%%%%%%%%%%%%%%%%%%%%%%%%%%%
%%%%%%%%%%%%%%%%%%%%%%%%%%%%%%%%%%
%% CHAPTER %%%%%%%%%%%%%%%%%%%%%%%%%
%%%%%%%%%%%%%%%%%%%%%%%%%%%%%%%%%%
%%%%%%%%%%%%%%%%%%%%%%%%%%%%%%%%%%
%%%%%%%%%%%%%%%%%%%%%%%%%%%%%%%%%%
\chapter{Introduction}
\chaptermark{Introduction}
The quest for periodic orbits of dynamical systems - for instance
periodic geodesics or periodic trajectories of particles in a magnetic field -
dates back to the foundational work by
Hamilton~\citeintro{Hamilton:1835a} and
Jacobi~\citeintro{Jacobi:2009a} around 1840 and by
Poincar\'{e}~\citeintro{Poincare:1895a} around 1900, followed by work,
among many others, by Lusternik-Schnirelmann~\citeintro{lusternik:1930a}
in the 1920s, Kolmogorov-{\Arnold}-Moser
 \citeintro{Kolmogorov:1954a,Arnold:1963a,Moser:1962a}
around the 1960s, and Rabinowitz~\citeintro{Rabinowitz:1979a,rabinowitz:1986a}
and Conley-Zehnder\citeintro{Conley:1983a} around the early 1980s. Floer's
approach~\citeintro{floer:1989a} to infinite dimensional Morse theory in
the second half of the 1980s, combining the Conley-Zehnder
approach with Gromov's $J$-holomorphic curves introduced
in his 1985 landmark paper~\citeintro{gromov:1985a},
marked a breakthrough in the efforts to prove the
\emph{{Arnol$\, '$d} conjecture}: The number of 1-periodic orbits of a
Hamiltonian vector field on a closed symplectic manifold $M$ is bounded
below by the Lusternik-Schnirelmann category of $M$ or, in the
non-degenerate case, by the sum of the Betti numbers of $M$. At about the same time
Hofer entered the stage and together with Wysocki, Zehnder,
Eliashberg, among others, contactized the symplectic world, eventually
leading to the (occasionally so-called) theory of everything~\citeintro{Eliashberg:2000a}:
Symplectic Field Theory -- SFT.

%%%%%%%%%%%%%%%%%%%%%%%%%%%%%%%%%%
%%%%%%%%%%%%%%%%%%%%%%%%%%%%%%%%%%
%%% SECTION %%%%%%%%%%%%%%%%%%%%%%%%
%%%%%%%%%%%%%%%%%%%%%%%%%%%%%%%%%%
%%%%%%%%%%%%%%%%%%%%%%%%%%%%%%%%%%
%\section{Toy model: Morse homology}

%%%%%%%%%%%%%%%%%%%%%%%%%%%%%%%%%%
%%% SUBSECTION %%%%%%%%%%%%%%%%%%%%%
%%%%%%%%%%%%%%%%%%%%%%%%%%%%%%%%%%
\subsection*{Departing from Poincar\'{e}'s last geometric theorem}

We briefly sketch how Poincar\'{e}'s last geometric
theorem\index{theorem!Poincar\'{e}'s last geometric --}
inspired the {\Arnold} conjecture. For many more facets and further
related results along these developments see the excellent
presentations~\cite[Ch.~6]{hofer:2011a}
and~\cite[App.~9]{Arnold:1978a}. The following result was announced by
Poincar\'e~\citeintro{Poincare:1912a} shortly before his death in 1912
and proved by Birkhoff~\citeintro{Birkhoff:1913a} shortly thereafter. Let
$\D^n\subset\R^n$ be the closed unit disk and $\SS^{n-1}=\p\D^n$ the unit sphere.

\begin{theorem}[\Index{Poincar\'{e}}-\Index{Birkhoff}]
\label{thm:P-B}
Every area and orientation preserving homeomorphism $h$
of an annulus $A:=\SS^1\times[a,b]$ rotating the two
boundaries in opposite directions\footnote{
  This so-called \textbf{\Index{twist condition}}
  excludes rotations (they have no fixed points in general).
  }
possesses at least 2 fixed points in the interior. 
\end{theorem}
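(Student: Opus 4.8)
\emph{Reduction to a lift.} The plan is to lift everything to the universal cover $\Tilde A=\R\times[a,b]$, whose deck group is generated by $T\colon(x,y)\mapsto(x+1,y)$. Writing $\Tilde h=(\Tilde h_1,\Tilde h_2)$ for a lift of $h$, the twist condition singles out a canonical one: the lift with $\Tilde h_1(x,a)<x$ and $\Tilde h_1(x,b)>x$ for all $x$ (the two boundary circles being rotated in opposite senses; in particular $h$ maps each boundary circle to itself, so $\Tilde h_2(x,a)=a$ and $\Tilde h_2(x,b)=b$). Any fixed point of $\Tilde h$ in the open strip $\R\times(a,b)$ descends to an interior fixed point of $h$, and it suffices to exhibit two such fixed points in distinct $T$-orbits. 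Note that $\Tilde h$ has no fixed point on the two boundary lines (the inequalities are strict), and that $h$ preserves the finite area measure, so by the Poincar\'e recurrence theorem area-almost every point of $\INT A$ is recurrent for $h$.

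\emph{One fixed point.} Assume, for contradiction, that $\Tilde h$ is fixed-point free on the closed strip. Extend $\Tilde h$ to a homeomorphism $H$ of $\R^2$ by adjoining the half-planes $\{y\le a\}$ and $\{y\ge b\}$, on which $H$ is declared to be the translations $(x,y)\mapsto(x-1,y)$ and $(x,y)\mapsto(x+1,y)$ respectively, interpolating in a collar of $y=a$ so that $H_1(x,y)<x$ there and in a collar of $y=b$ so that $H_1(x,y)>x$ there; then $H$ is an orientation-preserving, fixed-point-free homeomorphism of the plane -- and this is precisely where the sign in the twist condition is used. By the Brouwer plane translation theorem such an $H$ has empty non-wandering set. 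On the other hand, choose a recurrent point $\bar p\in\INT A$ and a lift $\Tilde p$; the recurrence $h^{n_k}(\bar p)\to\bar p$ lifts to $\Tilde h^{\,n_k}(\Tilde p)-(m_k,0)\to\Tilde p$ for integers $m_k$, and I claim the twist condition together with area preservation forces $m_k=0$ along a subsequence -- an interior recurrent orbit cannot drift coherently around the annulus when the two boundaries are sheared in opposite directions. Granting this, $\Tilde p$ is non-wandering for $\Tilde h$, hence for $H$, contradicting Brouwer; so $\Tilde h$ has a fixed point $\Tilde q_1$ in the open strip, and $q_1$ is an interior fixed point of $h$.

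\emph{A second fixed point.} Suppose $q_1$ were the only interior fixed point of $h$. Since $h$ is isotopic to $\id_A$, its Lefschetz number equals $\chi(A)=0$, hence the fixed point index of $q_1$ is $0$. I would then refine the construction of the previous step so as to produce a fixed point of nonzero index (a ``Birkhoff minimum'' of index $+1$, or, running the argument once from each boundary circle, one point of index $+1$ and one of index $-1$). This contradicts uniqueness via Poincar\'e--Hopf: the displacement field $\bar V(p)=\Tilde h(\Tilde p)-\Tilde p$ is $T$-invariant and so descends to a vector field on $A$ vanishing exactly at the interior fixed points of the canonical lift, and on $\SS^1\times\{a\}$ and $\SS^1\times\{b\}$ it takes values in the negative, respectively positive, first-coordinate half-axis of $\R^2\setminus\{0\}$, hence has winding number $0$ on each boundary circle; therefore the indices of its interior zeros sum to $0-0=0$, and a lone index-$0$ zero is impossible. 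Thus $h$ has at least two interior fixed points.

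\emph{Main obstacle.} The heart of the matter is the essential use of area preservation in the first step, namely the drift estimate $m_k=0$ along a subsequence: this is the genuinely hard part of Birkhoff's original argument, and it cannot be dispensed with, since general (non-measure-preserving) twist homeomorphisms of the annulus need not have any fixed points -- whether one runs it by hand via Birkhoff's monotonicity argument on the images of a boundary circle or packages it through modern Brouwer-theoretic free-disk-chain methods. The companion subtlety is pinning down the index of the fixed point produced, which is what powers the passage to the second fixed point in the last step.
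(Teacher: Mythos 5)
The paper does not actually prove Theorem~\ref{thm:P-B}; it merely states it, attributes it to Poincar\'e--Birkhoff, and refers to \cite[\S 8.2]{mcduff:1998a} for a proof of (only) one fixed point. So there is no ``paper proof'' to compare with, and the question is simply whether your argument is complete.

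It is not, and you have yourself put your finger on exactly where it breaks: the drift estimate $m_k=0$ in the first step, and the production of a nonzero-index fixed point in the second, are both asserted rather than proved, and each is \emph{the} difficulty of that step. For the drift estimate, the claim as phrased is too local to be true: a single Poincar\'e-recurrent point $\bar p$ can perfectly well carry coherent drift ($m_k\to\infty$, say) even under the twist condition -- nothing in the boundary behavior of a recurrent orbit living in the interior forces its rotation number to vanish. What the genuine arguments do (Birkhoff's curve-shortening of the images of a boundary circle, or Franks' free-disk-chain / chain-recurrence argument on the open annulus) is a global statement: if $\tilde h$ is fixed-point free, \emph{every} positively and negatively recurrent point has rotation number of a single sign, and area preservation plus the opposite twists at the two ends then make both signs forced, which is the contradiction. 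Extending $\tilde h$ to a plane homeomorphism $H$ and invoking the Brouwer plane translation theorem is a sensible device, but Brouwer gives wandering only for the lifted map $\tilde h$ (equivalently $H$), not for $T^m\circ\tilde h$; without the missing global rotation-number lemma you cannot transfer a nonwandering point of $h$ to one of $\tilde h$. For the second fixed point, the winding-number computation on the boundary circles correctly gives total index $0$, but from there you need to know that the set of interior fixed points, if nonempty, cannot consist of a single point of index $0$ -- and that, as you say, requires actually exhibiting a zero of index $\pm 1$, which is again the heart of Birkhoff's argument (his ``maximum'' and ``minimum'' fixed points). Also note a smaller technical point: $h$ is a homeomorphism, not a diffeomorphism, so ``fixed points'' need not be isolated; the Poincar\'e--Hopf bookkeeping must be replaced throughout by the Lefschetz fixed-point index (degree of $\id-h$ on small neighborhoods), and one has to phrase the conclusion as ``the fixed point set is not a single point, nor empty'', not as a count of zeros. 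In short: the scaffolding is the right shape (this is essentially the Franks route), but both load-bearing lemmas remain to be supplied.
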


\begin{exercise}\label{exc:jhghj78}
Show that $h$ in the Poincar\'{e}-Birkhoff Theorem~\ref{thm:P-B}
is homotopic to the identity.
[{Hint: Identify each of the two boundary components of the annulus $A$
to a point to obtain a space homeomorphic to $\SS^2$ equipped with an induced
homeomorphism $\tilde h$. Apply the Hopf degree theorem.\footnote{
  \textbf{\Index{Hopf degree theorem}:}
  Two maps of a closed connected oriented
  $n$-dimensional manifold $Q$ into $\SS^n$
  are homotopic if and only if they have the same degree.
  See e.g.~{\cite[Ch.3 \S 6]{guillemin:1974a}}
  or~{\cite[Ch.5 Thm.~1.10]{hirsch:1976a}}.
  }}]
\end{exercise}

\Index{Lefschetz fixed point theory},
introduced in 1926~\citeintro{Lefschetz:1926a},
cf.~{\cite[Ch.5 \S 2 Excs.]{hirsch:1976a}} or~{\cite[Ch.3 \S
4]{guillemin:1974a}}, guarantees existence of a fixed
point for a continuous map $h:X\to X$ on a compact topological space
$X$ whenever a certain integer $L_h$, called the Lefschetz number,
is non-zero. Key properties concerning applications
are, firstly, that $L_h$ is a homotopy invariant
and, secondly, if $X$ is a closed manifold then
$L_\id$ is the Euler characteristic $\chi(X)$.

For the Poincar\'{e}-Birkhoff Theorem~\ref{thm:P-B}
Lefschetz theory fails, as $\chi(A)=0$.
A direct proof of the existence of one fixed point of $h$
is given in the beautyful presentation~\cite[\S 8.2]{mcduff:1998a}
where, furthermore, existence of infinitely many
periodic points\footnote{
  A \textbf{\Index{periodic point}} $x$ of $h$ is a fixed point of one of the
  iterates of $h$, that is $h^k(x):=(h\circ\dots\circ h)(x)=x$ for some $k\in\Z$.
  }
of $h$ is proved whenever the boundary twist is 'sufficiently strong'.

\begin{exercise}
Show that any continuous map $f:\SS^2\to\SS^2$ \Index{homotopic
to the identity}, in symbols $f\sim \id$, has at least one fixed point.
This result is sharp even for homeomorphisms: Find a homeomorphism
of $\SS^2$ with exactly one fixed point.
[Hint: Consider the Riemann sphere $\R^2\cup\{\infty\}$
and translations on $\R^2$.]
\end{exercise}

\begin{remark}\label{rem:2FIX}
By~\citeintro{Nikishin:1974a,Simon:1974a}
one gets back to at least two guaranteed fixed points, if
one requires a homeomorphism $f\sim\id$ on $\SS^2$
to preserve, in addition, a regular measure.
So any diffeomorphism $f$ of $\SS^2$ leaving
an area form $\omega$ invariant, that is $f^*\omega=\omega$,
admits at least two\footnote{
  Note that $\deg f=1=\deg \id$ and apply the
  Hopf degree theorem.
  }
fixed points; cf. Section~\ref{sec:LIPS}.
\end{remark}

In dimension \emph{two}, but \emph{not} in higher dimension,
the diffeomorphisms of a surface that preserve an area form
are the \textbf{symplectomorphisms} of the form.

%%%%%%%%%%%%%%%%%%%%%%%%%%%%%%%%%%
%%% SUBSECTION %%%%%%%%%%%%%%%%%%%%%
%%%%%%%%%%%%%%%%%%%%%%%%%%%%%%%%%%
\subsection*{Arriving at the {\Arnold} conjecture}

In~\cite[App.~9]{Arnold:1978a} {\Arnold}
suggested to glue together two copies of the annulus
in the Poincar\'{e}-Birkhoff Theorem~\ref{thm:P-B}
along their boundaries each of which equipped
with the same area and orientation preserving map $h$
which, in addition, is now assumed to be a diffeomorphism
and not too far $C^1$-away from the identity.
This results in the $2$-\Index{torus} $\T^2:=\SS^1\times\SS^1$
equipped with an area and orientation preserving diffeomorphism, say
$\tilde h$, which is $C^1$-close to $\id$ and by the twist condition
satisfies a condition illustratively called
``preservation of center of mass''.
Note that Lefschetz theory does not predict any fixed point for
$\tilde h$ since $\chi(\T^2)=0$.
However, due to the additional $C^1$-close-to-$\id$ condition,
the fixed points of $\tilde h$ correspond precisely to the critical
points of a function $F$ on $\T^2$ called the generating function of
$\tilde h$. The number of critical points of $F$ is bounded below
by the \textbf{Lusternik-Schnirelmann category}
$$
     \abs{\Crit F}\ge \cat(\T^2)=3>\cupp_\R(\T^2)=2,
$$
more modestly, by the \textbf{cuplength} plus one,
or via Morse theory by the \textbf{sum of the Betti numbers}
$\SB(\T^2)=4$ in the \textbf{non-degenerate case}, that is
in case all fixed points of $h$, equivalently all critical points of
$F$, are non-degenerate.
See e.g.~\cite{weber:2015-MORSELEC-In_Preparation} for
basics on Lusternik-Schnirelmann and Morse theory.
So the number of fixed points of $\tilde h$ is at least three.
But this number is even by symmetry of the construction
(the fixed points come in pairs). Consequently $\tilde h$ has at least four
fixed points. So $h$ has at least two and this reconfirms the
Poincar\'{e}-Birkhoff Theorem~\ref{thm:P-B} for diffeomorphisms
and under the \emph{additional
assumption} of $h$ being $C^1$-close to $\id$.
\\
Hence one might conjecture,
as {\Arnold} did in~\citeintro{Arnold:1976a},
that the torus result should remain true without the
$C^1$-close-to-$\id$ condition and, furthermore, not only for
``doublings'' $\tilde h$ of $h$. It is important to observe that
$h\sim\id$ leads to the fact that $\tilde h$ is a Hamiltonian
diffeomorphism\footnote{
  Some authors use the terminology
  \emph{$\tilde h$ is \Index{homologous to the identity}}.
  }
for the area form, that is it is the time-1-map of the flow generated
by the Hamiltonian vector field $X_H$ for some function
$H:\SS^1\times\T^2\to\T^2$. Fixed points of $\tilde h$ are then
in bijection with 1-periodic orbits of $X_H$.

\vspace{.2cm}
\noindent
\textbf{\Index{Arnol$'$d conjecture}.} Suppose $(M,\omega)$ is
a\index{conjecture!\Arnold}
\emph{closed} symplectic manifold and $H:\R\times M\to\R$ is a smooth time-$1$
periodic function $H_t(x):=H(t,x)=H_{t+1}(x)$, denoted
$H:\SS^1\times M\to\R$.
Consider the time-dependent Hamiltonian equation
$$
     \dot z=X_{H_t}(z),\quad z:\R\to M,
$$
and the set $\Pcont(H)=\Pcont(H;M,\omega)$ of all contractible\footnote{
  Multiplying $X_H$ by a small constant implies that all
  1-periodic solutions are very short, hence contractible;
  see Proposition~\ref{prop:HZ-C2small}.
  Firstly, this inspires the conjecture that it is the contractible solutions
  which are related to the topology of $M$.
  Secondly, this has the consequence that any Floer complex on a component of
  the free loop space that consists of non-contractible loops
  is chain homotopy equivalent to the trivial (no generators) complex.
  }
and 1-periodic\footnote{
  Which non-zero period one picks does not
  matter by Remark~\ref{rem:period-1}, so choose $1$.
  }
solutions. The {\Arnold} conjecture states that the
number $\abs{\Pp_0(H)}$ of contractible $1$-periodic solutions
is bounded below by the least number of critical points that a
function on $M$ must have, that is by the infimum $\Crit(M)$ over
all functions $f:M\to\R$ of the number $\Crit f$ of critical points.
The commonly addressed weaker forms of the {\Arnold}
conjecture are suggested by Lusternik-Schnirelmann and Morse theory,
respectively. They state that
\begin{equation}\label{eq:deg-AC}
     \Abs{\Pcont(H)}\ge \cupp_\R(M) +1
\end{equation}
in general and that
\begin{equation}\label{eq:non-deg-AC}
     \Abs{\Pcont(H)}\ge \SB(M)
\end{equation}
in case all contractible 1-periodic solutions are non-degenerate.

\vspace{.2cm}
\noindent
As we tried to stress, the {\Arnold} conjecture for $\T^2$ is the
\emph{differentiable} generalization of the Poincar\'{e}-Birkhoff
Theorem~\ref{thm:P-B}. Are there \emph{topological} 
generalizations, that is topological analogues of
the {\Arnold} conjecture, as well? There are -- in dimension \emph{two}
-- and these are extremely far reaching indeed; see discussion
towards the end of $\S 6.1$ in~\cite{hofer:2011a}.
For instance, they led to the affirmative
solution~\citeintro{Franks:1992a,Bangert:1993a}
of the longstanding open question if all Riemannian 2-spheres
carry infinitely many geometrically distinct periodic
geodesics.\footnote{
  Or, equivalently and shorter, if they carry infinitely many closed geodesics.
  In our terminology \textbf{\Index{periodic}} indicates maps
  (parametrizations) defined on $\R$ (or on $\R/\tau\Z$ to emphasize a
  period $\tau\not=0$), hence analysis, whereas
  \textbf{\Index{closed}} refers to closed (compact and
  without boundary) 1-dimensional submanifolds
  or, more general, immersed circles, hence geometry.
  But one and the same immersed circle
  can be parametrized, even insisting on constant speed,
  by choosing any of its points as initial value at time zero or,
  giving up on injectivity, by running at $k$-fold speed. All of these
  different maps are \textbf{\Index{geometrically equal}},
  meaning same image.
  }

%%% SUBSUBSECTION %%%%%%%%%%%%%%%%%%%
%%%%%%%%%%%%%%%%%%%%%%%%%%%%%%%%%%
\subsubsection*{Floer homology -- period one}
Cornerstones in the confirmation of the {\Arnold} conjecture
were the solution by Conley and Zehnder~\citeintro{Conley:1983a}
for Hamiltonians $H_t$ on the standard torus $(\T^n,\omega_0)$ and the
solution by Floer~\citeintro{floer:1988a,floer:1989a} for
$\omega$-aspherical (and other) closed symplectic manifolds
$(M,\omega)$; see e.g.~\citerefFH{salamon:1999a}
or~\cite{hofer:2011a} for detailed accounts of further contributions.
Floer's seminal contribution was to develop a meaningful Morse
(homology) theory for the \textbf{\Index{symplectic action} functional}
\begin{equation}\label{eq:symp-action-fctl}
     \Aa_H:\Ll_0 M\to\R,\quad
     z\mapsto \int_{\D} \bar{z}^*\omega -\int_0^1 H_t(z(t))\, dt,
\end{equation}
on the component $\Ll_0 M$ of the \textbf{\Index{free loop space}}
$\Ll M=C^\infty(\SS^1,M)$ that consists of contractible
smooth $1$-periodic loops $z:\SS^1=\p\D\to M$,
where $\bar{z}:\D\to M$ is any smooth extension of $z$.
Floer mastered the obstructions presented by
\begin{itemize}
\item %[\bullet]
  \emph{infinite} Morse index of the critical points
  $z\in\Crit\Aa_H=\Pcont(H)$\footnote{
    For time-dependent vector fields take the analytic view point
    of periodic solution \emph{maps}. (Looking
    at images in $M$ is useless without
    recording for each point simultaneously time.)
    }
  (which by definition are the generators of the Floer chain groups),
  cf. Ex.~\ref{ex:inf-M-index};
\item %[\bullet]
  the fact that the formal \emph{downward} gradient equation
  for the $L^2$-gradient
  $$
     \grad\Aa_H(z)=-J_t(z)\bigl(\dot z-X_{H_t}(z)\bigr)
  $$
  \emph{does not} generate a flow on loop space, not even a
  semi-flow; see Remark~\ref{rem:no-flow}.
  Here $J_t$ is a family of $\omega$-compatible almost complex structures.
\end{itemize}
By definition and for generic $H$ the Floer chain group
$\CF_*(M,\omega,H)$
is the free abelian group generated by $\Pcont(H)$
and graded by the canonical Conley-Zehnder index
assuming that the first Chern class $c_1(M)$ vanishes.
Roughly speaking, the Floer boundary operator counts downward flow lines
and the Floer isomorphism
$$
     \HF_{n-\ell}(M,\omega;H,J)\cong\Ho_\ell(M)
$$
to singular homology of $M$ proves the {\Arnold}
conjecture~(\ref{eq:non-deg-AC}) for \emph{closed}
symplectic manifolds that are $\omega$-aspherical;
see Definition~\ref{def:I-omega}.
Floer homology of the closed manifold $M$ of dimension $2n$
is restricted to degrees in $[-n,n]$.

%%% SUBSUBSECTION %%%%%%%%%%%%%%%%%%%
%%%%%%%%%%%%%%%%%%%%%%%%%%%%%%%%%%
%\subsubsection*{Floer homology of cotangent bundles}
\vspace{.2cm}
\noindent
\textbf{Floer homology of cotangent bundles.}
Floer homology of non-compact symplectic manifolds can be highly
different, if it can be defined. For instance, given a closed
orientable Riemannian manifold $(Q,g)$ consider the cotangent bundle
$T^*Q$ equipped with the canonical symplectic structure
$\omegacan=d\lambdacan="dp\wedge dq\,"$. %$
It is convenient to identify $T^*Q\cong TQ$ via $g$ and
abbreviate $g_q(v,v)$ by $\abs{v}^2$.
Now consider a \textbf{\Index{mechanical Hamiltonian}}
\begin{equation}\label{eq:H_V}
     H_{V_t}(q,v)=\frac12\abs{v}^2+V_t(q),\qquad
     \text{$q\in Q$,\, $v\in T_q Q$,\, $t\in\SS^1$,}
\end{equation}
of the form \emph{kinetic} plus \emph{potential energy} where the
\textbf{\Index{potential}} $V(t,q)=:V_t(q)$ is a smooth function
on $\SS^1\times Q$. The action functional~(\ref{eq:symp-action-fctl})
takes on the form
\begin{equation}\label{eq:symp-action-fctl-T^*M}
     \Aa_V:\Ll TQ\to\R,\quad
     z=(q,v)\mapsto\int_0^1 g\left(v(t),\dot q(t)\right) -H_{V_t}(z(t))\, dt,
\end{equation}
being defined on arbitrary loops, not just contractible ones;
cf.~(\ref{eq:action-can-cot}).
Its critical points are of the form $z_x=(x,\dot x)$ where $x$
is a \textbf{\Index{perturbed 1-periodic geodesic}}, that is an element of
the set\index{geodesic!perturbed 1-periodic --}
\begin{equation}\label{eq:P(V)}
     \Pp(V):=\{x\in\Ll Q\mid-\Nabla{t}\dot x-\nabla V_t(x)=0 \}.
\end{equation}
By the Morse index theorem the Morse index of
a periodic geodesic is finite; still true after perturbation
by a zero order term. In~\citerefFH{weber:2002a}
it is shown that for generic $V$ the canonical Conley-Zehnder index
is well defined and equal to
\begin{equation}\label{eq:CZ-ind-Mind-intro}
     \CZcan(z_x)=\IND_{\Ss_V}(x)\in\N_0
\end{equation}
the \textbf{\Index{Morse index}}; cf.~(\ref{conv:CZ=Morse-conv}):
The number of negative eigenvalues, counted with multiplicities,
of the Hessian at a critical point $x$ of the classical action functional
given by $\Ss_V(\gamma)=\int_0^1\frac12\abs{\dot\gamma}^2-V_t(\gamma)\, dt$
for $\gamma\in\Ll Q$.
The upshot is that the \textbf{Floer homology
of the cotangent bundle}, graded by $\CZcan$, is naturally isomorphic
to singular integral homology of the free loop space:
That is\index{Floer homology!of $T^*Q$}
\begin{equation*}%\label{eq:''Viterbo''}
     \HF_*(\Aa_V):=\HF_*(T^*Q,\omegacan;H_V,\Jbar_g)\simeq\Ho_*(\Ll Q),
\end{equation*}
at least if the orientable manifold $Q$ carries a spin structure or,
equivalently, if the first and second Stiefel-Whitney classes of $Q$
are both trivial; cf.~Section~\ref{sec:FH-T^*M}.
If $Q$ is not simply connected, there is a separate isomorphism
for each component $\Ll_\alpha Q$
of the free loop space.
If $Q$ is not orientable, choose $\Z_2$ coefficients.

%%%%%%%%%%%%%%%%%%%%%%%%%%%%%%%%%%
%%% SUBSECTION %%%%%%%%%%%%%%%%%%%%%
%%%%%%%%%%%%%%%%%%%%%%%%%%%%%%%%%%
\subsection*{Weinstein conjecture}
Given a symplectic manifold $(M,\omega)$,
consider an \emph{autonomous}\footnote{
  In case of a time-independent vector field $X$
  the skinnier geometric view point makes sense and one is looking for
  \textbf{\Index{closed characteristics}}, namely, closed
  1-dimensional submanifolds $P$ that integrate $X$, that is along which
  $X$ is a non-vanishing section of the tangent bundle $TP$.
  }
Hamiltonian $F:M\to\R$, also called an energy function. For such $F$
the Hamiltonian flow $\phi^F$ generated by the Hamiltonian vector
field $X_F$ on $M$ is energy preserving: Energy level
sets $F^{-1}(c)$ are invariant under $\phi^F$.
It is a natural question if there exists a Hamiltonian
flow trajectory that closes up in finite time $T$
on a given, say closed, \emph{regular} level set $\Sigma:=F^{-1}(c)$.
Observe that by regularity there are no zeroes of $X_F$ or,
equivalently, no constant flow trajectories.
Restricting the non-degenerate $2$-form $\omega$ to
the odd-dimensional submanifold $\Sigma$ yields the so-called
\textbf{\Index{characteristic line bundle}}
$$
     \Ll_\Sigma:=\ker\omega|_\Sigma\to\Sigma
$$
which even comes with a non-vanishing section, namely $X_F$.
Therefore flow lines of $X_F$ are integral curves
of the distribution $\Ll_\Sigma$ and those that close
up are called the \textbf{\Index{closed characteristics}} $P$
of the energy surface, in symbols $TP=\Ll_\Sigma|_P$.

On $\R^{2n}$ equipped with the canonical symplectic form
$\omegacan="dp\wedge dq\,"$ %$
existence of a closed characteristic was confirmed
on convex and star-shaped $\Sigma$ by, respectively,
Weinstein~\citerefCG{Weinstein:1978a}
and Rabinowitz~\citerefCG{Rabinowitz:1978a}.
Weinstein then isolated key \emph{geometric} features of these
hypersurfaces, and of the slightly more general class treated by
Rabinowitz in~\citeintro{Rabinowitz:1979a},
thereby coining the notion of \emph{contact type} hypersurfaces
in~\citeintro{Weinstein:1979a}
and formulating the influential\footnote{
  For more background and context we recommend the fine
  survey in~\citerefCG{Hutchings:2010a}.
  }

\vspace{.2cm}
\noindent
\textbf{\Index{Weinstein conjecture}.}
\textit{A closed hypersurface of contact type with trivial first real
cohomology carries a closed characteristic.}

%%% SUBSUBSECTION %%%%%%%%%%%%%%%%%%%
%%%%%%%%%%%%%%%%%%%%%%%%%%%%%%%%%%
\subsubsection*{Rabinowitz-Floer homology -- free period fixed energy}
For about three decades the potential of the variational setup
used by Rabinowitz in his breakthrough result~\citerefCG{Rabinowitz:1978a},
cf.~\citeintro{Rabinowitz:1979a}, went widely unnoticed. Given an
autonomous Hamiltonian system $(V,\omega,F:M\to\R)$, his idea was to
incorporate a Lagrange multiplier $\LM$ into the standard action
functional~(\ref{eq:symp-action-fctl}) whose presence causes that the
critical points are periodic Hamiltonian trajectories of \emph{whatever period}
and constrained to a \emph{fixed energy} level surface, namely $\Sigma:=F^{-1}(0)$.
Only around 2007 the Rabinowitz action functional
\begin{equation*}%\label{eq:action-Rab-intro}
     \Aa^F:\Ll V\times\R,\quad
     (\lpz,\LM)\mapsto
     \int_{\SS^1} \lpz^*\lambda-\LM\int_0^1 F(\lpz(t))\, dt,
\end{equation*}
on certain exact symplectic manifolds $(V,\omega=d\lambda)$, namely
convex ones, was brought to new, if not spectacular, honours by Cieliebak and
Frauenfelder in their landmark construction~\citeintro{Cieliebak:2009a}
of a Floer type homology theory:
Rabinowitz-Floer homology $\RFH(\Sigma,V):=\HF(\Aa^F)$
associated to certain closed hypersurfaces
$\Sigma=F^{-1}(0)$, for instance such of restricted contact type that bound a
closed submanifold-with-boundary $M\subset V$,
written as a regular level.

The power of their theory is shown by the fact that
Rabinowitz-Floer homology of the archetype example of the unit
bundle $\Sigma=S^*Q$ in the cotangent bundle
$(V,\lambda)=(T^*Q,\lambdacan)$ over a closed Riemannian manifold $Q$,
not only represents the homology of the loop space of $Q$,
but \emph{simultaneously} its cohomology.

%%%%%%%%%%%%%%%%%%%%%%%%%%%%%%%%%%
%%% SUBSECTION %%%%%%%%%%%%%%%%%%%%%
%%%%%%%%%%%%%%%%%%%%%%%%%%%%%%%%%%
\subsection*{Symplectic and contact topology}
For an overview of the development of
symplectic and contact topology, starting with
Lagrange's 1808 formulation of classical mechanics and culminating
in the moduli space techniques initiated by
Gromov~\citeintro{gromov:1985a} and
Floer~\citeintro{Floer:1986a,floer:1989a}
in the mid 1980's we recommend the
article~\citeintro{2016arXiv161102676N}.
The article also explains the origin of the
adjective \emph{symplectic} as the greek version
of the originally advocated latin adjective \emph{complex}.
The latter was abandoned as it was already used in the prominent
notion of complex number; see also\index{symplectic!origin of name}
the wiktionary entry~\href{https://en.wiktionary.org/wiki/symplectic}{'symplectic'}.

%\newpage
%%%%%%%%%%%%%%%%%%%%%%%%%%%%%%%%%%
%%% SUBSECTION %%%%%%%%%%%%%%%%%%%%%
%%%%%%%%%%%%%%%%%%%%%%%%%%%%%%%%%%
\section*{Notation and conventions}%\label{sec:notation-signs}
\addcontentsline{toc}{section}{Notation and sign conventions}
\vspace{.2cm}
\begin{tabular}{lll}
%\begin{tabular}{llr}
\toprule
%\multicolumn{2}{c}{Item} \\
%\cmidrule(r){1-2}
  Symbol
  & Terminology
  & Remark
\\
%%%%%%%%%%%%%%%%%%%%%%%%%%%%
\midrule
  $\N$, $\N_0$
  & positive integers, including
      $0$\index{$\N_0=\{0,1,2,\dots\}$, $\N=\{1,2,3,\dots\}$}
  & $\{1,2,3,\dots\}$, $\{0,1,2,3,\dots\}$
\\
  $\R^*$, $\Z^*$
  & non-zero reals,\index{$\R^*:=\R\setminus\{0\}$ non-zero reals}
      integers\index{$\Z^*:=\Z\setminus\{0\}$ non-zero integers}
  & $\R\setminus\{0\}$, $\Z\setminus\{0\}$
\\
%%%%%%%%%%%%%%%%%%%%%%%%%%%%
\midrule
  $N$
  & \Index{manifold} (mf)
  & modeled on $\R^k$ %(hence no boundary)
\\
  $N$
  & \Index{manifold-with-boundary}
  & modeled on $\R^{k-1}\times\{x_k\ge 0\}$
\\
  $Q$
  & closed manifold
  & compact, no boundary, $\dim$ $n$
\\
  $(M,\omega)$
  & symplectic mf/mf-w-bdy
  & $\dim M=2n$
\\
  $(V,\lambda)$
  & exact symplectic mf/mf-w-bdy
  & $\dim V=2n$, $\omega=d\lambda$
\\
  $(W,\alpha)$
  & contact mf/mf-w-bdy
  & $\dim W=2n-1$
\\
%%%%%%%%%%%%%%%%%%%%%%%%%%%%
\midrule
  $F$, $\phi^F$
  & autonomous Ham. and flow
  & $\frac{d}{dt}\phi_t=X_F\circ\phi_t$, $\phi_0=\id$
\\
  
  & \hfill generates $1$-param. group:
  & $\phi_{t+s}=\phi_s\circ\phi_s$
\\
%%%
  $\cc$
  & orbit, \Index{integral curve}
  & inj.\,imm.\,submf. $\cc\subset N$ of dim.
\\
  
  & of auton. vector field $X$
  & $1$ or $0$ s.t. $X$ is tangent to $\cc$
\\
  
  & \hfill closed orbit\index{orbit!closed --}
  & \hfill $\cc\cong\SS^1$ or $\cc=\{\pt\}$
\\
  
  & \hfill constant or point orbit\index{orbit!point --}
  & \hfill $\cc=\{\pt\}$
\\
  $P$
  & \Index{closed characteristic} on $S$
  & $TP=\Ll_S|_P$,
     Rmk.\,~\ref{rem:vhjgj676}
\\
%%%%%%%%%%%%%%%%%%%%%%%%%%%%
\midrule
%%%
  $\gamma$
  & path (Def.~\ref{def:paths_and_curves-continuous})
  & smooth map $\gamma:\R\to N$
\\
  $\cc$
  & curve
  & image of a path, subset of $N$
\\
  $\alpha$
  & finite path
  & smooth map $\alpha:[a,b]\to N$
\\

  & \hfill that closes up
  & \hfill $\alpha(a)=\alpha(b)$ with all derivatives
\\
  $\gamma$, $\gamma_\tau$
  & $\tau$-periodic \Index{loop} ($\tau\not=0$)
  & $\gamma:\R/\pertau\Z\to N$, Def.~\ref{def:prime-part}
\\
  $\gamma_0$
  & point path (domain$=\{\pt\}$)
  & $\gamma_0:\R^0=\{0\}\to N$
\\
%%%
%%%
  $u$
  & \Index{trajectory} (domain $\R$)
  & $u:\R\to \text{mf}$ s.t. $\dot u=X_t(u)$
\\
  
  & \Index{flow line}
  & image $u(\R)$ of solution
\\
  $z$\index{orbit!periodic --}
  & \Index{periodic orbit} (circle domain)
  & $z:\R/\pertau\Z\to N$, $\dot z=X_t(z)$, $\tau\not=0$
\\
  
  & \hfill constant periodic orbit
  & \hfill $z:\R/\pertau\Z\to \{\pt\}\subset N$
\\
%%%%%%%%%%%%%%%%%%%%%%%%%%%%
\midrule
  $H_t$, $\psi^{H}$
  & non-auton. Hamiltonian, flow
  & $\frac{d}{dt}\psi_t=X_{H_t}\circ\psi_t$, $\psi_0=\id$
\\
  
  & \hfill is not a $1$-param. group: 
  & $\psi_t=\psi_{t,0}$; see~(\ref{eq:Ham-flow})
\\
  
  & Hamiltonian path/loop
  & Remark~\ref{rem:closed-vs-periodic}
\\
  $\Pp(H)$
  & $1$-periodic orbits of $X_H$
  & $\Pp(H)^*$, non-constant ones
\\
%%%%%%%%%%%%%%%%%%%%%%%%%%%%
\bottomrule
\end{tabular}
\index{$\R_+:=[0,\infty)$}

%\newpage
%\vspace{.5cm}
\begin{NOTATION}\label{not:notations_and_signs}
Unless
mentioned otherwise, the following conventions apply throughout.
All quantities, including homotopies and paths,\index{homotopies are smooth}
\textbf{\Index{smooth}}, that is of class $C^\infty$.
The empty set $\emptyset$ generates the trivial group $\{0\}$.
It is often convenient to set $\inf\emptyset:=\infty$.
Vector spaces are real.\index{vector spaces are real}
Neighborhoods\index{empty set!generates trivial group}
are open.\index{$\inf\emptyset:=\infty$}
To help readability we sometimes omit parentheses of arguments of
maps, usually for linear maps, but also for flows
we usually write $\phi_t p$ instead of $\phi_t(p)$.

Given a differentiable map $f:X\to Y$ between Banach spaces,
we denote by $df(x)\in\Ll(X,Y)$ the (Fr\'{e}chet)
\textbf{\Index{differential}} of $f$ at $x$; see e.g.~\cite{ambrosetti:1993a}.
A \textbf{\Index{Banach manifold}} is a Hausdorff\,\footnote{
  Points are separated by open sets:
  any two points admit open disjoint neighborhoods.
  } 
topological space $\Xx$\index{Hausdorff space}\index{topology!Hausdorff --}
which is\index{manifold!of class $C^k$}
\emph{locally modeled on a Banach space
$X$};\footnote{
  $\Xx$ is covered by the open domains of a collection $\Aa$, called
  \textbf{atlas}, of homeomorphisms $\varphi_i:\Xx\supset U_i\to
  V_i\subset X$, called \textbf{local coordinate charts}, such
  that all transition maps $\varphi_j\circ{\varphi_i}^{-1}:V_i\to V_j$
  are $C^\infty$ diffeomorphisms. In a
  \textbf{$\mbf{C^k}$ manifold} they are all of class $C^k$.
  }
see e.g.~\cite{Abraham:1967a,lang:2001a}.
In the finite dimensional case $\dim \Xx:=\dim X=n\in\N$
we speak of a \textbf{manifold} and add the
requirement\index{topology!base of --}
of being \Index{second countable}.\footnote{
  There is a countable base of the topology. A base
  is a collection $\Bb$ of open sets that generates the topology:
  Any open set of the topology is a union of members of $\Bb$.
  }
For manifolds we choose the model space $\R^n$
and we denote them\index{manifold}
by roman font letters such as $N$.
A finite dimensional manifold is metrizable, hence admits a countable
atlas; cf. Remark~\ref{rem:sep-B-countable}\,c).
To define a \textbf{\Index{manifold-with-boundary}} $N$
replace $\R^n$ by its closed upper half space. The boundary $\p N$
might be empty though.
A \textbf{closed manifold}, here usually denoted
by\index{manifold!closed $Q=Q^n$}
$Q=Q^n$,\index{closed manifold $Q=Q^n$}\index{$Q=Q^n$ closed manifold}
is a compact manifold (hence no boundary by definition of manifold).

For a map $\gamma:\R\to N$, a path, denote time shift
and uniform speed change~by
$$
     \gamma_{(T)}:=\gamma(T+\cdot),\qquad
     \gamma^\mu:=\gamma(\mu\cdot),
$$
whereas subindex $\gamma_\tau:[0,\tau]\to N$,
$\tau\in\Per(\gamma)\setminus\{0\}$,
denotes a divisor part, see~(\ref{eq:divisor-part}), and simultaneously the
induced loop
$
     \gamma_\tau:\R/\tau\Z\to N
$,
but subindex $u_s(\cdot):=u(s,\cdot)$ also denotes the operation of
\Index{freezing a variable}.\index{variable!freeze a --}

Given a map $f:X\to Y$ between sets, a
\textbf{\Index{pre-image}} is a subset of $X$
of the form $f^{-1}(B):=\{x\in X\mid f(x)\in B\}$ where
$B$ is a subset of $Y$. Often we simply write $f^{-1} B$.
The pre-image of a point is denoted by $f^{-1}(y):=f^{-1}(\{y\})$.

\vspace{.2cm}\noindent
\textit{Linear space.}
On $\R^{2n}$ there are two natural structures, the
\textbf{\Index{euclidean metric}}
$\langle v,w\rangle_0:=\sum_{j=1}^{2n}v_jw_j$ and the
\textbf{\Index{standard almost complex structure}}
$$
     J_0:=\begin{pmatrix} 0&-\1 \\ \1&0 \end{pmatrix}; \qquad
     \Jbar_0:=-J_0=\begin{pmatrix} 0&\1 \\ -\1&0 \end{pmatrix}.
$$
The matrizes $\pm J_0$ represent multiplication by $\pm i$ under
the natural isomorphism
$$
     \R^{2n}\stackrel{\simeq}{\longrightarrow}\C^n ,\quad
     z=(x,y)=(x_1,\dots,x_n,y_1,\dots,y_n)\mapsto x+iy .
$$
We shall use this isomorphism freely whenever convenient,
even writing $\R^{2n}=\C^n$ as real vector spaces and $J_0=i$.
Furthermore, given the coordinates $z=(x,y)\in\R^{2n}$
it is natural to combine them in the form
$$
     \omega_0:=\sum_j dx_j\wedge dy_j
$$
called\index{$\omega_0=dx\wedge dy$ standard sympl. form}
the\index{symplectic form!standard}
\textbf{standard symplectic form}. While the 2-form $\omega_0$ is
exact for several choices\index{differential form!primitive of --}
of\index{primitive of differential form}
primitives,\footnote{
  A differential form $\lambda$ is called a \textbf{primitive} of $\omega$
  if its exterior derivative $d\lambda$ is $\omega$.
  }
such as for instance $\sum_j x_jdy_j$, the natural \textbf{\Index{radial vector field}}
$\LVF_0(z)=z=\sum_j\left(x_j\p_{x_j}+y_j\p_{y_j}\right)$
is compatible with the $\omega_0$-primitive
\begin{equation}\label{eq:R2n-omega-standard}
     \lambda_0:=\frac12\sum_{j=1}^n\left(x_jdy_j-y_jdx_j\right),\qquad
     d\lambda_0=\omega_0=:dx\wedge dy,
\end{equation}
in the sense that
$
     i_{\LVF_0}\omega_0:=\omega_0(\LVF_0,\cdot)=\lambda_0
$.\footnote{
  We define the \textbf{\Index{wedge product}}\index{product!wedge}
  by $dx_j\wedge dy_j:=\frac12\left(dx_j\otimes dy_j-dy_j\otimes dx_j\right)$
  as in~\cite{guillemin:1974a}.
  }
Hence
$
     L_{\LVF_0}\omega_0=d i_{\LVF_0}\omega_0=\omega_0
$.
These identities play a crucial role in the history of the Weinstein
Conjecture~\ref{conj:Weinstein} and the development of the 
notion of contact type hypersurfaces.

On the other hand, on cotangent bundles, say $T^*Q\ni(q,p)$,
there is a canonical globally defined $1$-form, the
\textbf{\Index{Liouville form}} $\lambdacan$,
see~(\ref{eq:tautological-1-form}),
the\index{$\omegacan=d\lambdacan=dp\wedge dq$ canonical symplectic form}
\textbf{canonical symplectic form}\index{symplectic form!canonical}
$\omegacan:=d\lambdacan$,
and\index{canonical!symplectic form}
the\index{canonical!fiberwise radial vector field}
canonical\index{canonical!$1$-form}
\textbf{\Index{fiberwise radial vector field}}
$\LVFfrad$, see~(\ref{eq:Liouv-VF-vert-cot-bdl}).
In natural cotangent bundle coordinates
$(q_1,\dots,q_n,p^1,\dots,p^n)$ these structures are of the form
$\LVFfrad=\sum_j p^j\p_{p^j}$ and
\begin{equation*}%\label{eq:R2n}
     \lambdacan:=\sum_{j=1}^n p^j\, dq_j=:p\, dq
     ,\quad
     \omegacan:={\color{cyan} +}\:
     d\lambdacan=\sum_{j=1}^n dp^j\wedge dq_j
     =: dp\wedge dq .
\end{equation*}
Note that $\omegacan(\LVFcan,\cdot)=\lambdacan$ where $\LVFcan:=2\LVFfrad$
is the \textbf{canonical Liouville vector field}.
Of course, these definitions make sense on $\R^{2n}\simeq T^*\R^n$.
For better readability we often use the notation
$(q_1,\dots,q_n,p_1,\dots,p_n)$.

The two natural symplectic structures $\omega_0$ and $\omegacan$
on $\R^{2n}$ are compatible with $J_0=i$ and 
$\Jbar_0=-i$, respectively, in the
sense that the two\index{$\Jbar_0:=-J_0=-i$}
compositions\index{$(\R^{2n},\omegacan,\Jbar_0,\langle\cdot ,\cdot\rangle_0)$}
\begin{equation}\label{eq:R2n-comp-triples}
     \omega_0(\cdot,J_0\cdot)=\inner{\cdot}{\cdot}_0 ,\qquad
     \omegacan(\cdot, {\color{cyan} \Jbar_0}\cdot)=\inner{\cdot}{\cdot}_0 ,
\end{equation}
both reproduce the euclidean metric. 

\begin{remark}[Canonical normalization of Conley-Zehnder index]
In Hamiltonian dynamics of classical physical Hamiltonians
on cotangent bundles, e.g. on $\R^{2n}\simeq T^* \R^n$,
the second choice in~(\ref{eq:R2n-comp-triples})
is natural since the dynamics is governed by
\textbf{\Index{Hamilton's equations}}~\citeintro[Eq.\,(A.)]{Hamilton:1835a}
given by
\begin{equation}\label{eq:HamEqs}
     \begin{pmatrix}\dot q\\\dot p\end{pmatrix}
     =\begin{pmatrix}\p_{p}H\\-\p_{q}H\end{pmatrix}
     = {\color{cyan} \Jbar_0} \Nabla{} H
\end{equation}
and exhibiting most prominently $\Jbar_0$. For the most basic physical
system, the harmonic oscillator on $\R^2$, which is also most basic
mathematically in the sense that in place of $\Nabla{} H$ one has
the identity $\1$, the system is linear and given by
$$
     \dot \zeta=\Jbar_0\zeta ,\qquad
     \zeta(t)=e^{\Jbar_0 t}=e^{-i t} ,\quad t\in[0,1].
$$
Hence it is natural
to favorize $\Jbar_0$ and the finite path $t\mapsto e^{\Jbar_0 t}$ concerning
sign conventions and normalize the index function by associating the value 
$1$\index{symplectic path!normalizing --}\index{symplectic path!distinct --}
to the \emph{distinct symplectic path}
$e^{-i t}$ in $\Sp(2)$,\index{$e^{-i t}$ distinct symplectic path}
as we do in~(\ref{eq:CZ-normalization-canonical}).\footnote{
  As a rotation $e^{-i t}$ is mathematically
  negatively oriented (counter-clockwise is positive).
  }
However, probably since $e^{i t}$ is also rather distinct
in the sense that it represents the mathematically
positive sense of rotation (counter-clockwise), just like $i=J_0$,
basically all of the original papers on the Conley-Zehnder
index use the normalization 
$$
     \CZ(\{ e^{i t} \}_{t\in[0,1]})=1.
$$
This is\index{standard normalization!of Conley-Zehnder index}
the\index{normalization!standard}\index{Conley-Zehnder index!standard normalization}
\textbf{standard normalization}
or the \textbf{counter-clockwise normalization}
and $\CZ$ the (standard) Conley-Zehnder index.
For compatibility with the literature
our review in Section~\ref{sec:linear-theory}
of the various variants of Maslov-type indices
and the various constructions of each of them
uses the standard normalization.

A simple method to deal with the need,
when dealing with $\omegacan$,
for a Conley-Zehnder index normalized
clockwise is to introduce a new name and notation.
We call the Conley-Zehnder index based
on\index{canonical normalization!of Conley-Zehnder index}
the\index{normalization!canonical}\index{Conley-Zehnder index!canonical normalization}
\textbf{canonical normalization}
\begin{equation}\label{eq:CZ-normalization-canonical}
     \boxed{\CZcan({\color{magenta} \{ e^{-i t} \}_{t\in[0,1]}})=1.}
\end{equation}
the \textbf{canonical Conley-Zehnder index},
denoted by $\CZcan$ for distinction.
It is just the negative $\CZcan=-\CZ$ of the standard
Conley-Zehnder index.
\end{remark}

By \Index{$\overline{\R^{2n}}\times\R^{2n}$} we denote the
vector space $\R^{2n}\times\R^{2n}$ equipped
with the almost complex structure $-J_0\oplus J_0$
and the symplectic form $-\omega_0\oplus\omega_0$.

\vspace{.2cm}\noindent
\textit{Manifolds.}
Suppose $M$ is a manifold. Let $\SS^1\subset\C\cong\R^2$ be the unit
circle which we usually identify with $\R/\Z$ through the map
$\R/\Z\to\SS^1$, $t\mapsto e^{i2\pi t}$.
We slightly abuse notation to express periodicity in time
$t\in\R/\Z$. We usually write
$$
     H:\SS^1\times M\to\R\quad\text{or}\quad
     H:\R/\Z\times M\to\R
$$
to denote a function $H:\R\times M\to\R$ with
$H_{t+1}=H_t$ $\forall t$ where $H_t:=H(t,\cdot)$.
Given a symplectic form $\omega$ on $M$, the identities of 1-forms
\begin{equation}\label{conv:Ham-VF}
     \boxed{dH_t(\cdot)={\color{blue}-}\omega(X_{H_t},\cdot)}
     ,\qquad X_t:=X_{H_t}=X_{H_t}^\omega,
\end{equation}
one for each $t\in\SS^1$, determine the family of
\textbf{\Index{Hamiltonian vector field}s} $X_{t}$.\footnote{
  In our previous papers~\citerefFH{weber:2002a,salamon:2006a}
  we used twice opposite signs, firstly for $\omegacan$
  and secondly in~(\ref{eq:R2n-comp-triples}). Hence the Hamiltonian
  vector field there and here is the same.
  }
The set $\Pp_0(H)$ of contractible $1$-periodic
Hamiltonian trajectories is precisely the set of critical points of
the\index{symplectic action functional!perturbed --}
\textbf{perturbed symplectic action functional}
$$
     \Aa_H=\Aa_H^\omega:C^\infty_{\rm contr}(\SS^1,M)\to\R,\quad
     z\mapsto {\color{cyan} +} {\int_{\D} \bar{z}^*\omega}  \,\,
     {\color{blue} -} \int_0^1 H_t(z(t))\, dt.
$$
Here $\bar{z}:\D\to M$ denotes an extension\footnote{
  To avoid that $\Aa_H(z)$ depends on the extension $\bar z$, suppose that
  $\omega$ vanishes over $\pi_2(M)$.
  }
of the contractible loop $z:\SS^1\to M$ and the two signs arise as follows.
The sign {\color{cyan} $"+"$} is due to the requirement that %$
on cotangent bundles (convention
$\omegacan:={\color{cyan} +}\: d\lambdacan=dp\wedge dq$)
the first integral should reduce to $\int_{\SS^1}\lambdacan$.
Since the critical points of $\Aa_H$ should be
orbits of $X_H$, as opposed to $-X_H$, the sign choice {\color{blue} $"-"$}
in~(\ref{conv:Ham-VF}) dictates the second sign {\color{blue} $"-"$} in $\Aa_H$.

Suppose $J_t$ is a
family of \textbf{\Index{almost complex structure}s} on $TM$,
that is each $J_t$ is a section of the endomorphism bundle
$\End(TM)\to M$ with ${J_t}^2=-\1$. Assume, in addition, that each
$J_t$ is \textbf{$\mbf{\omega}$-compatible}\,\footnote{
  In the euclidean case the convention
  $g_{J_0}:=\omega_0(\cdot,{\color{cyan}J_0}\cdot)$
  leads to the euclidean metric, so the opposite convention
  $g_{J_0}^\prime:=\omega_0({\color{red}J_0}\cdot,\cdot)$
  is negative definite and therefore not an inner product.
  }
in the sense that
$$
     \boxed{g_{J_t}:=\langle\cdot ,\cdot\rangle_t:=\omega(\cdot, {\color{cyan}J_t}\cdot)}
$$
defines a Riemannian metric on $M$, one for each $t$. 
Such a triple $(\omega,J_t,g_{J_t})$ is called a
\textbf{\Index{compatible triple}} and for \emph{such} there is the
identity\index{$(\omega,J,g_J)$ compatible triple}
\index{almost complex structure!$\omega$-compatible}
\begin{equation}\label{eq:signs-HamEq}
     X_{H_t}^\omega={\color{blue} +\: \color{cyan} J_t}\circ \nabla^{g_{J_t}} H_t,
\end{equation}
one for each $t\in\SS^1$.
Two compatible triples in $\R^{2n}$ are shown in~(\ref{eq:R2n-comp-triples}).

\begin{exercise}
Suppose $J$ is an $\omega$-compatible almost
complex structure. Show that both,
the associated Riemannian metric $g_J$
and $\omega$ itself, are $J$-invariant,
that is $g_J(J\cdot,J\cdot)=g_J(\cdot,\cdot)$
and $\omega(J\cdot,J\cdot)=\omega(\cdot,\cdot)$.
\end{exercise}

\vspace{.2cm}\noindent
\textit{Cotangent bundles.}
Consider a cotangent bundle
$(T^*Q,\omegacan:={\color{cyan} +}\: d\lambdacan)$
over a closed Riemannian manifold $(Q,g)$ of dimension $n$.
By exactness of $\omegacan$ there is no need to restrict to contractible loops.
Just define\footnote{
  Here all signs are dictated: By physics (integrand should be
  $p\,dq-H\,dt$) as well as by mathematics (the cousin $\Ss_V$ of
  $\Aa$, given by~(\ref{eq:classical-action}), is bounded
  below which suggests that the downward gradient flow
  encodes homology and the upward flow cohomology).
  }
\begin{equation}\label{eq:action-can-cot}
     \Aa_H^{\lambdacan}:\Ll T^*Q\to\R,\quad
     z
     =(q,p)\mapsto{\color{cyan}+}
     \int_0^1\INNER{p(t)}{\dot q(t)} -H_{t}(q(t),p(t))\, dt.
\end{equation}
It is convenient to use $g$ to identify $T^*Q$ with $TQ$ via the inverse of the
isomorphism $\xi\mapsto g(\xi,\cdot)$, again denoted by $g$.
For Hamiltonians $H_{V,g}:\SS^1\times TQ\to\R$ of the form
\emph{kinetic plus potential energy}
for some potential $V_{t+1}=V_t:Q\to\R$, see (\ref{eq:H_V}),
the critical points of $\Aa_V:=\Aa_{H_{V,g}}^{\lambdacan}$ on $\Ll TQ$
are precisely of the form $z_x:=(x,\dot x)$ with
$x\in\Pp(V)=\{-\Nabla{t}\dot x-\nabla V(x)=0\}$.
Hence $x$ is a (perturbed) 1-periodic
geodesic in the Riemannian manifold $(Q,g)$ and as such admits a Morse
index $\IND_{\Ss_V}(x)\in\N_0$ and a nullity $\NULL_{\Ss_V}(x)\in\N_0$.

Suppose the nullity of $x\in\Pp(V)$ is zero and the vector
bundle $x^*TQ\to\SS^1$ is
trivial; pick an orthonormal trivialization.
Then the linearized Hamiltonian flow
along $x$ provides a finite path $\Psi_x:[0,1]\to\Sp(2n)$
of symplectic matrices that starts at $\1_{2n}$
and whose endpoint does not admit $1$ in its
spectrum (by the nullity assumption). Thus $\Psi_x$ has a well defined
canonical Conley-Zehnder index $\CZcan(\Psi_x)$.
Recall from~(\ref{eq:CZ-normalization-canonical})
that $\CZcan$ is based on the canonical $\Jbar_0$ (clockwise)
normalization and equal to $-\CZ$.
In other words, compared to our previous
papers~\citerefFH{weber:2002a,salamon:2006a}
we use {\color{magenta} the opposite} \texttt{(signature)} axiom:
\begin{enumerate}\label{label:signature-can}
\index{\rm\texttt{(signature}$\texttt{)}_{\texttt{can}}$}
%\index{\rm \texttt{(signature)}}
\index{Conley-Zehnder index!canonical}
\item[$\rm\texttt{(signature)}_{\texttt{can}}$]
  If $S=S^T\in\R^{2n\times 2n}$ is a symmetric matrix
  of norm $\norm{S}<2\pi$, then
  \begin{equation}\label{eq:signature_can}
     \CZcan\left(\{[0,1]\ni t\mapsto e^{t{\color{magenta} \Jbar_0} S}\right)
     =\frac12\sign(S):=\frac{n^+(S)-n^-(S)}{2}.
  \end{equation}
\end{enumerate}
Since $\CZ(\Psi_x)$ does not depend on the
choice of trivialization, one defines
$\CZcan(z_x):=\CZcan(\Psi_x)$.
The relation to the Morse index is
\begin{equation}\label{conv:CZ=Morse-conv}
     \CZcan(z_x)
     ={\color{magenta} +\:}\IND_{\Ss_V}(x),
\end{equation}
as shown in~\citerefFH{weber:2002a}.\footnote{
  The identity $\CZ=-\IND_{\Ss_V}$ in~\citerefFH{weber:2002a}
  uses the anti-clockwise normalization of $\CZ$.
  }
If $x^*TM\to\SS^1$ is not orientable a correction term enters.

\begin{remark}[Homology or cohomology?]\label{rem:grad-HF-T*Q}
As the energy functional $\Ss_V:\Ll Q\to\R$ given
by~(\ref{eq:classical-action}) is bounded below, the \emph{downward} gradient
direction is the right choice to construct Morse homology, whereas
counting upwards naturally suits cohomology.
The functional is Morse for generic $V$ and the critical points are given by
$\Crit\,\Ss_V=\Pp(V)$. For each of them there is a finite Morse index
$\IND_{\Ss_V}(x)$ and the index function $\IND_{\Ss_V}$ decreases
along isolated connecting \emph{downward} gradient flow lines under the
Morse-Smale condition. Thus $\IND_{\Ss_V}$ is a natural grading of
Morse homology $\HM_*(\Ll Q,\Ss_V)$.

Because the critical points of $\Ss_V$ coincide with those
of $\Aa_V$ under the correspondence $x\mapsto z_x=(x,\dot x)$
and there is the identity~(\ref{conv:CZ=Morse-conv})
of indices and the identity $\Ss_V(x)=\Aa_V(z_x)$ of functionals,
it is natural to use the canonical Conley-Zehnder index
$\CZcan$ and the downward gradient of $\Aa_V$ to construct Floer
homology of cotangent bundles.
\end{remark}
\end{NOTATION}

%%%%%%%%%%%%%%%%%%%%%%%%%
%%%%%%%%% REFERENCES %%%%%%
%%%%%%%%%%%%%%%%%%%%%%%%
%\bibliographysty le{plain}
         %   erzeugt:     [1] Joa Weber
%\bibliographystyle{abbrv}
         %  erzeugt:      [1] J. Weber and 
\bibliographystyleintro{alpha}
         %  article:    [Web05]  J. Weber
         %  book:      [Web05]  Joa Weber
         % more authors: [HZ87]
%%%%%%%%%%%%%%%%%%%%%%%%%
% Using hyperref, one should say:
\cleardoublepage
\phantomsection
\addcontentsline{toc}{section}{References}
%\bibliographyintro{$HOME/Dropbox/0-Libraries+app-data/Bibdesk-BibFiles/library_math}{}

\begin{bibliographyintro}{}
% imports file intro.bbl
\end{bibliographyintro}

\cleardoublepage
\phantomsection
      % INTRODUCTION

% PART I -- HAMILTONIAN
\cleardoublepage
\phantomsection
\part{Hamiltonian dynamics}\label{sec:Ham-Dyn}
%%%%%%%%%%%%%%%%%%%%%%%%%%%%%%%%%%
%%%%%%%%%%%%%%%%%%%%%%%%%%%%%%%%%%
%%%%%%%%%%%%%%%%%%%%%%%%%%%%%%%%%%
%% chapter %%  SYMPLECTIC GEOMETRY %%%%%%
%%%%%%%%%%%%%%%%%%%%%%%%%%%%%%%%%%
%%%%%%%%%%%%%%%%%%%%%%%%%%%%%%%%%%
%%%%%%%%%%%%%%%%%%%%%%%%%%%%%%%%%%
\chapter{Symplectic geometry}
\chaptermark{Symplectic geometry}
\label{sec:symp-top}

Consider a manifold $M$ of finite dimension. A
\textbf{\Index{Riemannian metric}} $g$
on $M$ is a family of symmetric non-degenerate
bilinear forms $g_x$ on $T_xM$ varying smoothly in $x$.
To define a \textbf{\Index{symplectic form}} $\omega$ on
$M$ replace symmetric by
skew-symmetric\footnote{
  Such non-degenerate skew-symmetric $\omega_x$ is called
  a \textbf{\Index{symplectic bilinear form}} on $T_xM$.
  }
-- consequently the dimension is necessarily even, say $2n$ -- and
impose, in addition, the \emph{global} condition that the
non-degenerate differential 2-form $\omega$ is closed
($d\omega=0$). 
Symplectic manifolds are orientable since the $2n$-form
$\omega^{\wedge n}$ nowhere vanishes by non-degeneracy of $\omega$,
in other words $\omega^{\wedge n}$ is a volume form.
Thus, if the manifold $M$ is \emph{closed}, then the differential form
$\omega$ cannot be exact by Stoke's theorem.
Thus the global condition $d\omega=0$ implies
that $[\omega]\not=0$. Hence the second real cohomology of a closed
symplectic manifold is necessarily non-trivial.
For existence of symplectic structures
see e.g.~\citesymptop{Gompf:2001a,Salamon:2013a}.

In contrast to Riemannian geometry there are
\emph{no local invariants} in symplectic geometry:
By Darboux's Theorem a symplectic manifold
looks locally like the prototype symplectic vector space
$(\R^{2n},\omega_0)$.
In contrast to Riemannian geometry\footnote{
  The space of Riemannian metrics is convex, hence contractible, thus
  topologically trivial.
  }
the global theory is rich, already for the space $\Sp(2n)$ of \emph{linear}
symplectic transformations of $(\R^{2n},\omega_0)$.
In Chapter~\ref{sec:symp-top} we follow
mainly~\cite{mcduff:1998a}.

\begin{exercise}
Show that only one of the unit spheres
$\SS^k\subset\R^{k+1}$, $k\in\N$, carries a symplectic form.
Which of the tori $\T^k:=(\SS^1)^{\times k}$
carry a symplectic form? How about the real projective plane $\RP^2$?
And, in contrast, the $\CP^k$'s?
\end{exercise}

\begin{exercise}[The unit $2$-sphere $\SS^2\subset\R^3$]\label{exc:2-sphere}
Show that\index{symplectic form!on $\SS^2$}
$$
     \omega_p(x,y):=\langle p,x\times y\rangle,\qquad
     p\in\SS^2,\quad x,y\in(\R p)^\perp,
$$
defines a symplectic form on $\SS^2$ and that the unit tangent
bundle $T^1 \SS^2$ is diffeomorphic to 
$\SO(3)$.\index{$\SO(3)\cong T^1\SS^2$}
[Hint: The three columns of any matrix $\ba\in\SO(3)$ are of the
form $p,v,p\times v$ where $p\perp v$ are unit vectors.]
\end{exercise}

\begin{exercise}
Show that $\omega$ on $\SS^2$ defined above is
in cylindrical coordinates given by
$
     \omega_{\rm cyl}=d\theta\wedge dz
$, for
$
     (\theta,z)\in[0,2\pi)\times(-1,1)
$,
and in spherical coordinates by
$
     \omega_{\rm sph}=(\sin\varphi)\, d\theta\wedge d\varphi
$, for
$
     (\theta,\varphi)\in[0,2\pi)\times(0,\pi)
$.
\end{exercise}

%%%%%%%%%%%%%%%%%%%%%%%%%%%%%%%%%%
%%%%%%%%%%%%%%%%%%%%%%%%%%%%%%%%%%
%%% SECTION %%%%%%%%%%%%%%%%%%%%%%%%
%%%%%%%%%%%%%%%%%%%%%%%%%%%%%%%%%%
%%%%%%%%%%%%%%%%%%%%%%%%%%%%%%%%%%
\section{Linear theory}\label{sec:linear-theory}
The \textbf{\Index{symplectic linear group}} $\Sp(2n)$
consists of all real $2n\times 2n$ matrices $\Psi$
that preserve the standard symplectic structure
$\omega_0="dx\wedge dy\,"$, %$
that is
\begin{equation}\label{eq:pres-symp-lin}
     \omega_0=\Psi^*\omega_0:=\omega_0(\Psi\cdot,\Psi\cdot).
\end{equation}
Observe that this identity implies that $\det\Psi=1$.

\begin{exercise}\label{exc:pres-symp-lin}
Show that~(\ref{eq:pres-symp-lin}) is equivalent to
$$
     \Psi^TJ_0\Psi=J_0
$$
where $\Psi^T$ is the transposed matrix.
[Hint: Compatibility with euclidean metric.]
\end{exercise}

Consider the  group $\GL(2n,\R)$ of invertible real $2n\times 2n$
matrices. The orthogonal group $\O(2n)\subset\GL(2n)$ is the
subgroup of those matrices that preserve the euclidean metric.
The linear map $\R^{2n}\to\C^n$, $z=(x,y)\mapsto x+iy$,
is an isomorphism of vector spaces that identifies $J_0$ and the imaginary unit $i$.
Under this identification $X+iY\in\GL(n,\C)$ corresponds~to
$$
     \begin{pmatrix} X&-Y\\Y&X\end{pmatrix}\in\GL(2n,\R).
$$
Similarly the unitary group $\U(n)$ is a subgroup of $\GL(2n,\R)$,
in fact of $\Sp(2n)$.

\begin{exercise}\label{exc:Un}
Show that the identities of real $n\times n$ matrices
$$
     X^TY=Y^TX,\qquad X^TX+Y^TY=\1,
$$
are precisely the condition that $X+iY\in\U(n)$.
\end{exercise}

\begin{exercise}\label{exc:O-Sp-Gl=U}
Prove that the intersection of any two of $\O(2n)$, $\Sp(2n)$, and
$\GL(n,\C)$ is precisely $\U(n)$ as indicated by Figure~\ref{fig:fig-O-Sp-Gl=U}.
\end{exercise}
\begin{figure}[h]
  \centering
  \includegraphics[width=0.5\textwidth]
                             {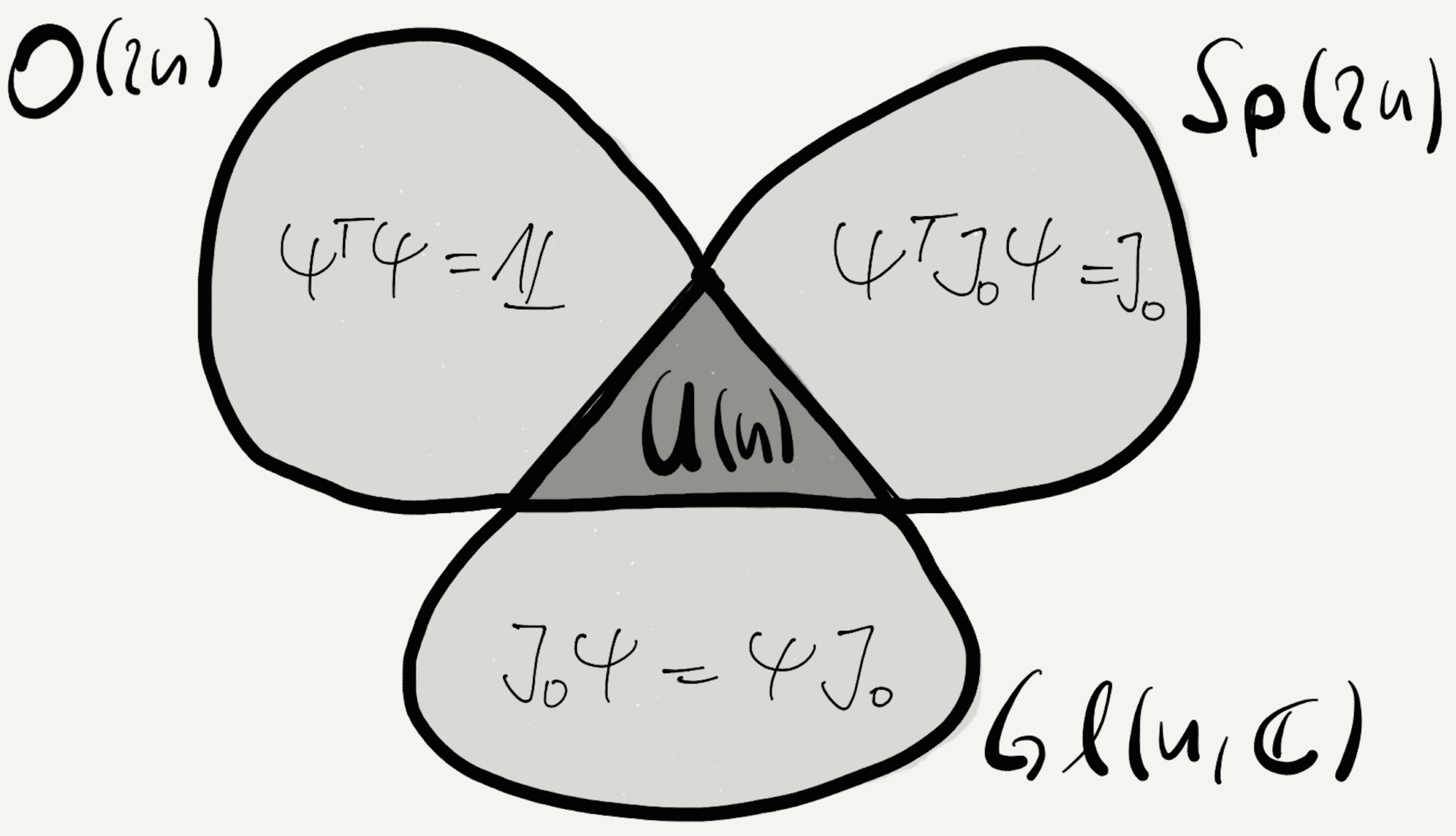}
  \caption{Relation among four classical matrix Lie groups}
  \label{fig:fig-O-Sp-Gl=U}
\end{figure}

The \textbf{eigenvalues of a symplectic matrix} occur 
either\index{eigenvalues! of symplectic matrix} as
pairs $\lambda,\lambda^{-1}\in\R\setminus\{0\}$ or
$\lambda,\bar\lambda\in\SS^1$ or as complex quadruples
$$
     \lambda,\lambda^{-1},\bar\lambda,{\bar\lambda}^{-1}.
$$
In particular, both $1$ and $-1$ occur with even multiplicity.

%%%%%%%%%%%%%%%%%%%%%%%%%%%%%%%%%%
%%% SUBSECTION %%%%%%%%%%%%%%%%%%%%%
%%%%%%%%%%%%%%%%%%%%%%%%%%%%%%%%%%
\subsection{Topology of $\Sp(2)$}
Major topological properties of $\Sp(2n)$,
such as the fundamental group being $\Z$
or the existence of certain cycles, can
nicely be visualized using the
Gel{$'$}fand-Lidski{\u\i}~\citesymptop{Gelfand:1958a}
homeomorphism between $\Sp(2)$ and the open
solid 2-torus in $\R^3$. It is a diffeomorphism
away from the center circle $\U(1)$; for details
see also~\citesymptop[App.~D]{weber:1999a}.
Consider the sets $\Cc_\pm$ of all symplectic matrices
which have $\pm1$ in their spectrum.
$$
     \textit{The set $\Cc:=\Cc_+$ is called
     the \textbf{\Index{Maslov cycle}} of $\Sp(2n)$.}
$$
It consists of disjoint subsets
\index{cycle!Maslov --}
which are submanifolds, called strata. For $n=1$
there are only two strata one of which
contains only one element, namely the identity matrix ${\rm E}=\1$;
see Figure~\ref{fig:fig-Maslov-cycle} which shows
$\Cc_\pm$ in the Gel{$'$}fand-Lidski{\u\i}
parametrization of $\Sp(2)$, namely, as the open solid $2$-torus in $\R^3$.

\begin{figure}[h]
  \centering
  \includegraphics%[width=0.75\textwidth]
                             [height=4cm]
                             {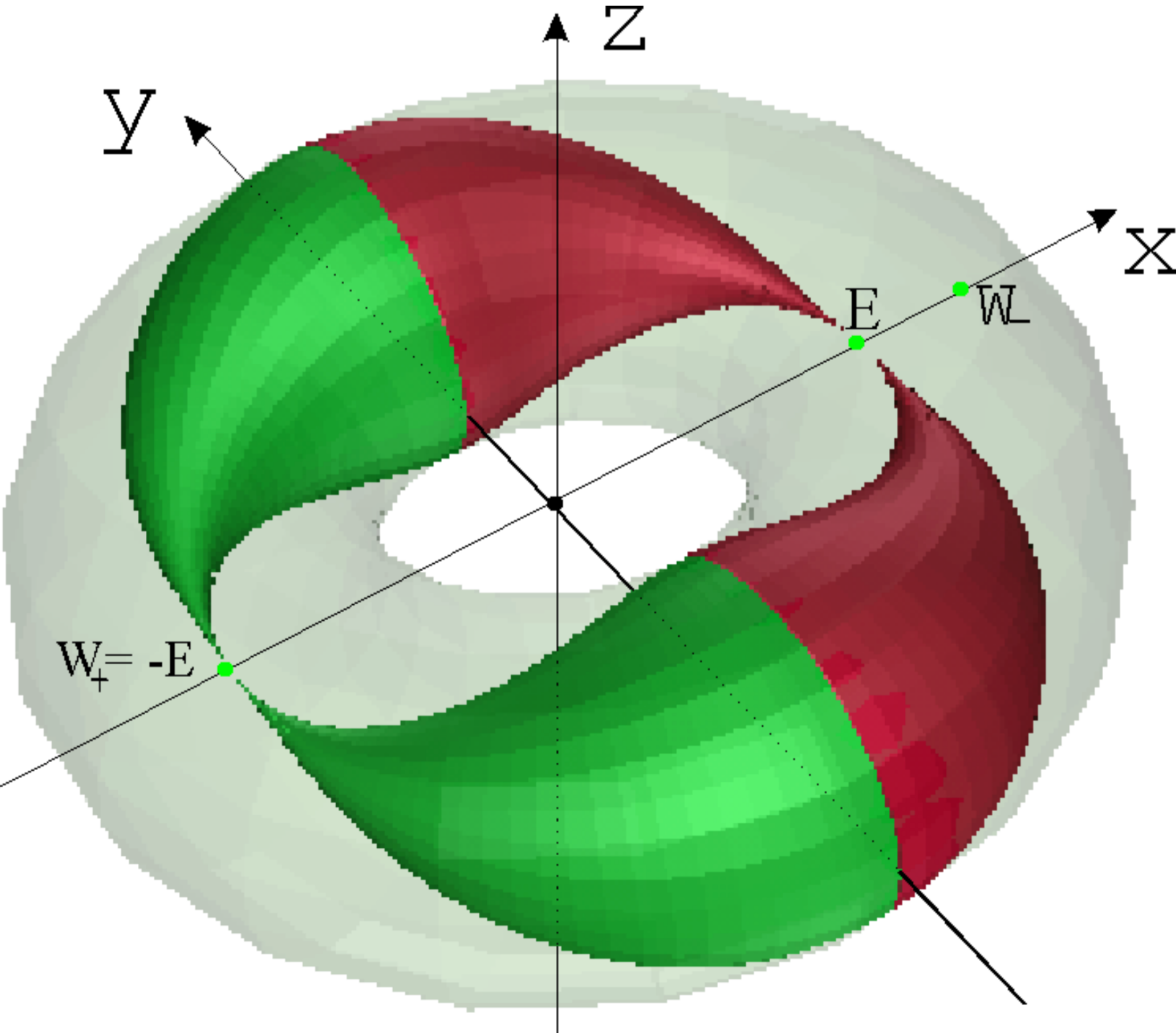}
  \caption{Subsets ${\color{green} \Cc_-}$ and ${\color{red} \Cc_+}$
                 of $\Sp(2)=\SS^1\times \INT\D$. Notation ${\rm E}=\1$}
  \label{fig:fig-Maslov-cycle}
\end{figure}

For the spectrum of the elements $\Psi$ of $\Sp(2)$
there are three possibilities:
\begin{enumerate}
\item[\boldmath\bf (pos. hyp.)]
  real positive pairs $\lambda,\lambda^{-1}>0$;
  \hfill
   those $\Psi$ with $\lambda=1$ are ${\color{red} \Cc_+}$;
\item[\boldmath\bf (neg. hyp.)]
  real negative pairs $\lambda,\lambda^{-1}<0$;
  \hfill
   those $\Psi$ with $\lambda=-1$ are ${\color{green} \Cc_-}$;
\item[\bf (elliptic)]
  complex pairs
  $\lambda,\bar\lambda\in\SS^1\setminus\{{\color{green} -1},{\color{red} +1}\}$;
  \hfill
  those enclosed by ${\color{green} \Cc_-}\cup {\color{red} \Cc_+}$.
\end{enumerate}

\begin{exercise}[Eigenvalues of first and second kind]
\label{exc:1st-2nd-kind}
\index{eigenvalue!of first/second kind}
Note that the set enclosed by ${\color{green} \Cc_-}\cup {\color{red} \Cc_+}$
has \emph{two} connected components. What distinguishes them?\footnote{
  Those $\Psi$ whose eigenvalue of the first kind
  lies in the upper half plane, e.g. $J_0$, lie in the same
  component, those where this location
  is the lower half plane lie in the other component.
  }
Suppose $\lambda,\bar\lambda\in\SS^1\setminus\{-1,+1\}$
are eigenvalues of $\Psi\in\Sp(2n)$.
Thus $\lambda\not=\bar\lambda$ and the eigenvectors
$\xi_\lambda,\xi_{\bar\lambda}=\overline{\xi_\lambda}\in\C^n\setminus\R^n$
are linearly independent. Show that
$\omega_0\left(\overline{\xi_\lambda},\xi_\lambda\right)\in i\R\setminus\{0\}$.
If the imaginary part of this quantity is positive, then
$\lambda$ is called \textbf{of the first kind}, otherwise
\textbf{of the second kind}. Show that one of $\lambda,\bar\lambda$
is of the first kind and the other one of the second kind.
\end{exercise}

For $\Sp(2n)$, $2n\ge 4$, where the
eigenvalues can be quadruples
the notion of eigenvalues of the first and second kind
becomes important concerning \emph{stability properties
of Hamiltonian trajectories}: Two pairs of eigenvalues on $\SS^1$
can meet and leave $\SS^1$, if and only if, eigenvalues of different
kind meet.

%%%%%%%%%%%%%%%%%%%%%%%%%%%%%%%%%%
%%% SUBSECTION %%%%%%%%%%%%%%%%%%%%%
%%%%%%%%%%%%%%%%%%%%%%%%%%%%%%%%%%
\subsection{Maslov index $\mu$}\label{sec:MI-mu}
Loops are continuous throughout Section~\ref{sec:MI-mu}.
The map
\begin{equation}\label{eq:str-def-retr-Sp-U}
     h:[0,1]\times \Sp(2n)\to\Sp(2n),\quad
     (t,\Psi)\mapsto (\Psi\Psi^T)^{-t/2} \Psi,
\end{equation}
is a strong deformation retraction of $\Sp(2n)$ onto
$\Sp(2n)\cap\GL(n,\C)\simeq\U(n)$; cf. Figure~\ref{fig:fig-O-Sp-Gl=U}.
So the quotient space $\Sp(2n)/\U(n)$ is contractible.
It is well known that the determinant map $\det:\U(n)\to\SS^1$
induces an isomorphism of fundamental groups. 
Consequently the fundamental group of $\Sp(2n)$
is given by the integers. 
Define a map
$
     \rho:\Sp(2n)
     \to\Sp(2n)\cap\GL(n,\C)
     \to\SS^1
$
by
\begin{equation}\label{eq:map-rho}
     \rho:\Psi\mapsto h(\Psi,1)=
     \begin{pmatrix} X&-Y\\Y&X\end{pmatrix}
     \mapsto \det\bigl(\underbrace{X+iY}_{\in\U(n)}\bigr).
\end{equation}

%%%%%%%%%%%%%%%%%%%%%%%%%%%%%%%%%%
%%% SUBSUBSECTION %%%%%%%%%%%%%%%%%%%
\subsubsection*{Maslov index -- degree}
An explicit isomorphism
$[\mu]:\pi_1(\Sp(2n))\to\Z$ is realized by the
\textbf{\Index{Maslov index}} $\mu$
which assigns to every loop
$\Phi:\SS^1\to\Sp(2n)$ of symplectic matrices
the integer
$$
     \mu(\Phi):=\deg\left(\SS^1\stackrel{\Phi}{\longrightarrow}\Sp(2n)
     \stackrel{\rho}{\longrightarrow}\SS^1\right).
$$

\begin{exercise}\label{exc:Maslov}
Show that the Maslov index satisfies the following axioms:
\begin{enumerate}
\item[\texttt{(homotopy)}]
  Two loops in $\Sp(2n)$ are homotopic iff they have the
  same Maslov index.
\item[\texttt{(product)}]
  For any two loops $\Phi_1,\Phi_2:\SS^1\to\Sp(2n)$ we have
  $$
     \mu(\Phi_1\Phi_2)=\mu(\Phi_1)+\mu(\Phi_2).
  $$
  Consequently $\mu(\1)=0$, hence $\mu(\Phi^{-1})=-\mu(\Phi)$
  where $\Phi^{-1}(t):=\Phi(t)^{-1}$.
\item[\texttt{(direct sum)}]
  If $n=n^\prime+n^{\prime\prime}$, then
  $\mu(\Phi^\prime\oplus\Phi^{\prime\prime})=\mu(\Phi^\prime)
  +\mu(\Phi^{\prime\prime})$.
\item[\texttt{(normalization)}]
  The loop $\Phi:\R/\Z\to\U(1)$, $t\mapsto
  e^{i 2\pi t}$,
  has Maslov index $1$.
\end{enumerate}
Show that these axioms determine $\mu$ uniquely.
[Hints: The problem reduces to loops in $\U(n)$ by~(\ref{eq:str-def-retr-Sp-U}).
On diagonal matrizes and products of matrizes
$\det$ behaves nicely.
A complex matrix $\Phi(t)\in\U(n)$ is triangularizable
via conjugation by a unitary matrix, continuously in $t$.
Diagonal elements are loops $\SS^1\to \U(1)$.]
\end{exercise}

%%%%%%%%%%%%%%%%%%%%%%%%%%%%%%%%%%
%%% SUBSUBSECTION %%%%%%%%%%%%%%%%%%%
\subsubsection*{Maslov index -- intersection number with Maslov cycle}
Looking at the Maslov cycle $\Cc$ in Figure~\ref{fig:fig-MasCyc-Sp_1},
alternatively at the Robbin-Salamon cycle
$\overline{\Sp}_1$,\footnote{
  Write $\Psi\in\Sp(2n)$ in the form of a block matrix
  $\Psi=\begin{pmatrix}A&B\\C&D\end{pmatrix}$ with
  four $n\times n$ matrices and consider the function
  $\chi:\Sp(2n)\to\R$, $\Psi\mapsto\det B$. By definition
  the \textbf{\Index{Robbin-Salamon cycle}} $\overline{\Sp}_1$ is the pre-image
  $\chi^{-1}(0)$,
  \index{cycle!Robbin-Salamon --}
  i.e. $\overline{\Sp}_1$ consists of all $\Psi$ with $\det B$=0.
  Actually $\Sp(2n)$ is partitioned by the submanifolds
  $\Sp_k(2n)$ of codimension $k(k+1)/2$ which consist
  of those $\Psi$ with $\rank\, B=n-k$
  and $\overline{\Sp}_1$ is the complement of
  the codimension zero stratum $\Sp_0(2n)$.
  }
suggests that the Maslov index $\mu(\Phi)$
should be half the intersection number with either cycle
of generic, that is transverse, loops $\Phi:\SS^1\to\Sp(2n)$.\footnote{
  This is indeed the case: The \textbf{\Index{Robbin-Salamon index}} $\RS$
  \index{$\RS$ Robbin-Salamon index}
  of a generic loop is the intersection number with $\Sp_1(2n)$;
  see definition in~\citesymptop[\S 4]{Robbin:1993a}.
  The equality $\mu=\frac12 \RS$
  follows either by the formula
  in~\citesymptop[Rmk.~5.3]{Robbin:1993a}
  or by the fact that $\frac12\RS$
  satisfies, by~\citesymptop[Thm.~4.1]{Robbin:1993a},
  the first three axioms for $\mu$ in Exercise~\ref{exc:Maslov}.
  To confirm \texttt{(normalization)}
  calculate $\RS(t\mapsto e^{i2\pi t})=2$.
  Hint: $B(t)=-\sin (2\pi t)$, intersection form
  above~\citesymptop[Thm.\,4.1]{Robbin:1993a}.]
  }

%%%%%%%%%%%%%%%%%%%%%%%%%%%%%%%%%%
%%% SUBSECTION %%%%%%%%%%%%%%%%%%%%%
%%%%%%%%%%%%%%%%%%%%%%%%%%%%%%%%%%
\subsection{Conley-Zehnder index $\CZ$ of symplectic path}
\label{sec:CZ}
Paths and loops are continuous throughout Section~\ref{sec:CZ}.
Consider the map
\begin{equation}\label{eq:map-SP*}
     \Sp(2n)\to\R,\quad
     \Psi\mapsto\det\left(\Psi-\1\right)
\end{equation}
and note that the pre-image of $0$
is precisely the Maslov cycle $\Cc$.
Let $\Sp^*$, $\Sp^*_+$, and $\Sp^*_-$ be the subsets of $\Sp(2n)$ on
which this map is, respectively, different from zero, positive, and negative.
Thus we obtain the partitions
$$
     \Sp(2n)
     =\Sp^*_+\,\mathop{\dot{\cup}}\,\Cc\,\mathop{\dot{\cup}}\,\Sp^*_-,
     ,\qquad
     \Sp^*=\Sp^*_+\,\mathop{\dot{\cup}}\,\Sp^*_-.
$$
Geometrically the \textbf{\Index{Conley-Zehnder index}}
of admissible paths $\Psi$, namely
$$
     \CZ:\SsPp^*(2n):=\{\text{$\Psi:[0,1]\stackrel{C^0}{\to}\Sp(2n)$ $\mid$
     $\Psi(0)=\1$, $\Psi(1)\in \Sp^*$}\}\to\Z
$$
can be defined as intersection number with the Maslov
cycle $\Cc$ of generic paths in $\Sp(2n)$ starting at the
identity and ending away from the Maslov cycle.
Here generic not only means transverse to the codimension one stratum,
but also in the sense that the paths depart from $\1$
immediately into $\Sp^*_-$. The need for the latter condition
can be read off from Figure~\ref{fig:fig-CZ-index} easily.
\begin{figure}[h]
\hfill
\begin{minipage}[b]{.44\linewidth}
  \centering
  \includegraphics[width=0.85\textwidth]
                             %[height=4cm]
                             {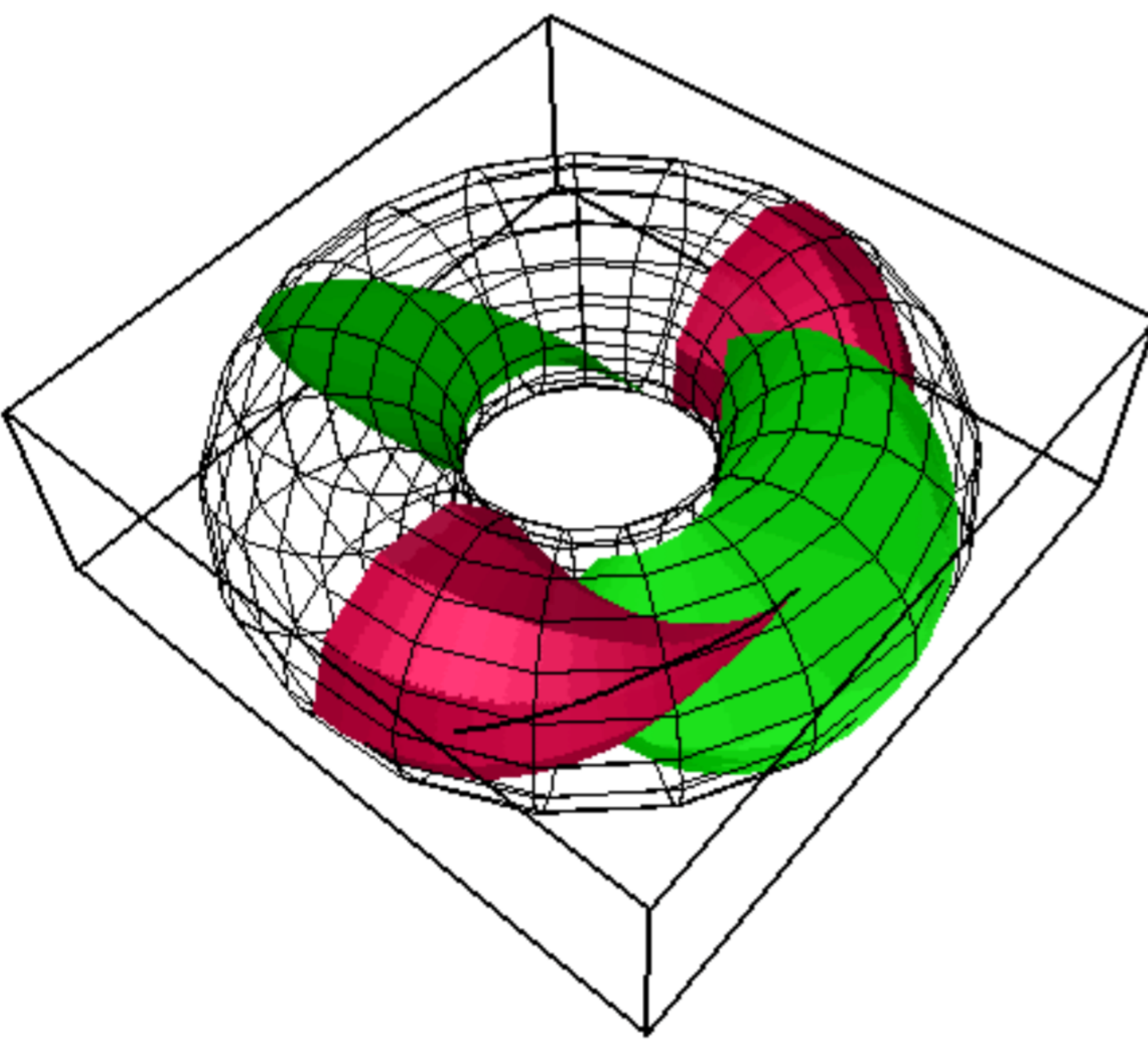}
  \caption{Cycles {\color{red} $\Cc$}, {\color{green} $\Sp_1(2)$}}
  \label{fig:fig-MasCyc-Sp_1}
\end{minipage}
\hfill
\begin{minipage}[b]{.54\linewidth}
  \centering
  \includegraphics[width=0.95\textwidth]
                             %[height=4cm]
                             {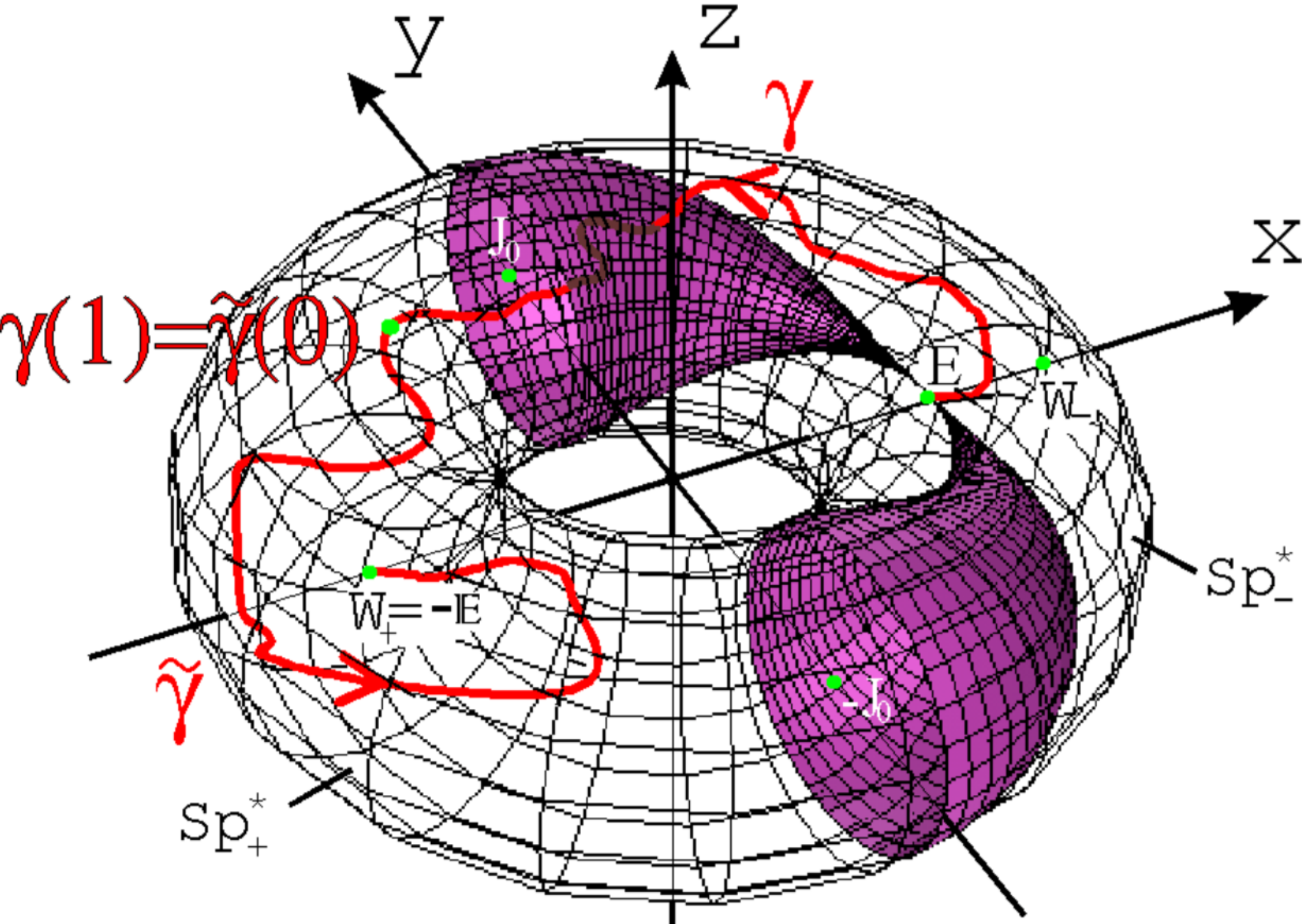}
  \caption{CZ-index $1$ of path $\Psi=\gamma$}
  \label{fig:fig-CZ-index}
\end{minipage}
\hfill
\end{figure}

\begin{exercise}\label{exc:CZ-index}
Following the original definition by Conley and
Zehnder~\citesymptop[\S 1]{conley:1984a}, pick
a path $\Psi\in\SsPp^*(2n)$. Then its endpoint
lies in one of the two \emph{connected} open
sets $\Sp^*_+$ or $\Sp^*_-$. Show that these sets contain, respectively,
the matrices
$$
     W_+:=-\1,\qquad
     W_-:=\diag\left(2,-1,\dots,-1,\tfrac12,-1,\dots,-1\right).
$$
Extend $\Psi$ from its endpoint to the corresponding matrix
inside the component $\Sp^*_\pm$ of the endpoint.
Consider the extended path $\tilde\Psi:[0,2]\to\Sp(2n)$. Define
$$
     \CZ(\Psi):=\deg\left(\rho^2\circ\tilde\Psi\right)
$$
and show that $\rho\circ\Psi(0)=1\in\SS^1$ and that
$$
     \rho (W_+)=\det(-\1)=(-1)^n,\quad
     \rho (W_-)=\det\left(1,-1,\dots,-1\right)=(-1)^{n-1}.
$$
Clearly taking the square of $\rho$ yields $+1$ in either case.
Hence the path $\rho^2\circ\tilde\Psi:[0,2]\to\SS^1$ closes up at time
$1$ and therefore taking the degree makes sense.
\end{exercise}

The Conley-Zehnder index $\CZ:\SsPp^*(2n)\to\Z$
satisfies certain axioms, similar to those
of the Maslov index $\mu$ in Exercise~\ref{exc:Maslov}.

\begin{theorem}[Conley-Zehnder index]\label{thm:CZ}
For\index{$\CZ$ Conley-Zehnder index}
$\Psi\in\SsPp^*(2n)$ the following holds.
\begin{enumerate}
\item[\rm\texttt{(homotopy*)}]
  The Conley-Zehnder index is constant on the components of $\SsPp^*(2n)$.
  \index{\rm \texttt{(homotopy)}}
\item[\rm\texttt{(loop*)}]
  $
     \CZ(\Phi\Psi)=2\mu(\Phi)+\CZ(\Psi)
  $
  for any loop $\Phi:\SS^1\to\Sp(2n)$.
  \index{\rm \texttt{(loop)}}
\item[\rm\texttt{(signature*)}]
  If $S=S^T\in\R^{2n\times 2n}$ is a symmetric matrix
  of norm $\norm{S}<2\pi$, then
  \index{\rm \texttt{(signature)}}
  $$
     \CZ\left(t\mapsto e^{tJ_0 S}\right)
     =\frac12\sign(S):=\frac{n^+(S)-n^-(S)}{2}
  $$
  where $\sign(S)$ is the \textbf{\Index{signature}} of $S$
  and $n^\pm(S)$ is the number of positive/negative eigenvalues of $S$.
\item[\rm\texttt{(direct sum)}]
  If $n=n^\prime+n^{\prime\prime}$, then
  $\mu(\Psi^\prime\oplus\Psi^{\prime\prime})=\mu(\Psi^\prime)
  +\mu(\Psi^{\prime\prime})$. \index{\rm \texttt{(direct sum)}}
\item[\rm\texttt{(naturality)}]
  $\CZ(\Theta\Psi \Theta^{-1})=\CZ(\Psi)$ for any path
  $\Theta:[0,1]\to\Sp(2n)$.\index{\rm \texttt{(naturality)}}
\item[\rm\texttt{(determinant)}]
  $
     (-1)^{n-\CZ(\Psi)}=\sign\det\left(\1-\Psi(1)\right)
  $.\index{\rm \texttt{(determinant)}}
\item[\rm\texttt{(inverse)}]
  $
     \CZ(\Psi^{-1})=\CZ(\Psi^{T})=-\CZ(\Psi)
  $.\index{\rm \texttt{(inverse)}}
\end{enumerate}
\end{theorem}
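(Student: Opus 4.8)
The plan is to read off all seven identities from the concrete description of $\CZ$ supplied by Exercise~\ref{exc:CZ-index}: for $\Psi\in\SsPp^*(2n)$ let $\tilde\Psi:[0,2]\to\Sp(2n)$ be an extension of $\Psi$ to $W_+$ or $W_-$ \emph{within} the connected component $\Sp^*_\pm$ containing $\Psi(1)$; then $\CZ(\Psi)=\deg(\rho^2\circ\tilde\Psi)$, which makes sense because $\rho^2\circ\tilde\Psi$ is a loop in $\SS^1$, as $\rho^2(\1)=\rho^2(W_\pm)=1$. Write $r:=h(1,\cdot):\Sp(2n)\to\U(n)$ for the retraction of~(\ref{eq:str-def-retr-Sp-U}), so $\rho=\det\circ r$ by~(\ref{eq:map-rho}); note $\rho\circ r=\rho$ and $\rho|_{\U(n)}=\det$. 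I use freely that $\Sp^*_+$ and $\Sp^*_-$ are connected (Exercise~\ref{exc:CZ-index}), which makes $\CZ(\Psi)$ independent of all choices in building $\tilde\Psi$. Now \texttt{(homotopy*)} is immediate: along a path in a component of $\SsPp^*(2n)$ the endpoint stays in one $\Sp^*_\pm$, the extension arc can be dragged along, and $\deg$ is a homotopy invariant, so $\CZ$ is locally constant. For \texttt{(naturality)}, $\Theta\Psi\Theta^{-1}$ is admissible since conjugation preserves $\{\Psi(0)=\1\}$ and $\Sp^*$; I would then build a homotopy in $\SsPp^*(2n)$: deform $s\mapsto\bigl(t\mapsto\Theta((1-s)t)\Psi(t)\Theta((1-s)t)^{-1}\bigr)$ to the constant conjugation $\Theta(0)\Psi\Theta(0)^{-1}$, then choose a path $\beta$ in $\Sp(2n)$ from $\Theta(0)$ to $\1$ and use $s\mapsto\beta(s)\Psi\beta(s)^{-1}$; each stage lies in $\SsPp^*(2n)$, so \texttt{(homotopy*)} applies.

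The heart of the matter is \texttt{(loop*)}. Since $\Phi$ is a loop at $\1$, $(\Phi\Psi)(1)=\Psi(1)$, so $\Phi\Psi$ and $\Psi$ share an extension arc, and $\CZ(\Phi\Psi)-\CZ(\Psi)$ equals the degree of the $\SS^1$-loop $t\mapsto\rho^2(\Phi(t)\Psi(t))\,\overline{\rho^2(\Psi(t))}=\bigl(\rho(\Phi(t)\Psi(t))\,\overline{\rho(\Psi(t))}\bigr)^2$, i.e.\ twice the winding of $t\mapsto\rho(\Phi(t)\Psi(t))\,\overline{\rho(\Psi(t))}$. The key point is that $F(A,B):=\rho(AB)\,\overline{\rho(A)}\,\overline{\rho(B)}$ defines a based-nullhomotopic map $\Sp(2n)\times\Sp(2n)\to\SS^1$: one has $F\equiv1$ on $\U(n)\times\U(n)$ because $\rho|_{\U(n)}$ is the multiplicative determinant, and $(s;A,B)\mapsto F(h(s,A),h(s,B))$ is a homotopy fixing $(\1,\1)$ from $F$ to $F\circ(r\times r)\equiv1$. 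Hence $t\mapsto F(\Phi(t),\Psi(t))$ has winding $0$, so $\rho(\Phi(t)\Psi(t))\,\overline{\rho(\Psi(t))}=F(\Phi(t),\Psi(t))\,\rho(\Phi(t))$ has winding $\deg(\rho\circ\Phi)=\mu(\Phi)$, giving $\CZ(\Phi\Psi)=2\mu(\Phi)+\CZ(\Psi)$. This interaction of $\rho$ with matrix products — multiplicative only up to a nullhomotopic correction — is the step I expect to require the most care.

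For \texttt{(direct sum)} (which should read $\CZ(\Psi'\oplus\Psi'')=\CZ(\Psi')+\CZ(\Psi'')$) one checks $r(\Psi'\oplus\Psi'')=r(\Psi')\oplus r(\Psi'')$ and, under $\R^{2n}\cong\C^n$, $\rho(\Psi'\oplus\Psi'')=\rho(\Psi')\rho(\Psi'')$; extending $\Psi'\oplus\Psi''$ through $W^{(n')}_\pm\oplus W^{(n'')}_\pm$ makes $\rho^2\circ\widetilde{\Psi'\oplus\Psi''}$ the pointwise product of $\rho^2\circ\tilde\Psi'$ and $\rho^2\circ\tilde\Psi''$, so the degrees add. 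For \texttt{(signature*)}, set $\Psi(t)=e^{tJ_0S}$ with $S=S^T$, $\|S\|<2\pi$; then $1\in\spec(e^{J_0S})$ would force an eigenvalue of $J_0S$ in $2\pi i\Z$, impossible since $\|J_0S\|<2\pi$, unless $\det S=0$, so $\Psi\in\SsPp^*(2n)$ whenever $S$ is nondegenerate. Inside $\{S=S^T:\|S\|<2\pi,\ \det S\neq0\}$ two matrices lie in the same component exactly when they have the same signature (the nondegenerate symmetric matrices of a fixed signature form a connected set, and rescaling keeps one inside the norm ball), so by \texttt{(homotopy*)} the value $\CZ(e^{tJ_0S})$ depends only on $\sign S$. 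Choosing a representative that is block-diagonal in the $n$ standard symplectic planes, with $2\times2$ blocks among $\diag(c,c)$, $\diag(c,-c)$, $\diag(-c,-c)$ for fixed $0<c<2\pi$, \texttt{(direct sum)} reduces everything to these three cases, where a direct computation — $r(e^{tJ_0S})\equiv\1$ in the hyperbolic case $\diag(c,-c)$, while $e^{tJ_0S}$ is a rotation in the other two — yields $\CZ=1,0,-1$, matching $\tfrac12\sign S=1,0,-1$.

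Finally, \texttt{(determinant)} comes straight from the definition: a continuous lift $\theta$ of $\rho\circ\tilde\Psi$ with $\theta(0)=0$ satisfies $\theta(2)=\pi\,\CZ(\Psi)$ and $e^{i\theta(2)}=\rho(W_\pm)$, which is $(-1)^n$ when $\Psi(1)\in\Sp^*_+$ and $(-1)^{n-1}$ when $\Psi(1)\in\Sp^*_-$, i.e.\ according to the sign of $\det(\1-\Psi(1))$; this rearranges to $(-1)^{n-\CZ(\Psi)}=\sign\det(\1-\Psi(1))$. For \texttt{(inverse)}: from $\Psi^T=(-J_0)\Psi^{-1}(-J_0)^{-1}$ and \texttt{(naturality)} one gets $\CZ(\Psi^T)=\CZ(\Psi^{-1})$; writing $\Psi=PU$ with $P=(\Psi\Psi^T)^{1/2}$ symplectic, symmetric and positive definite, and $U=r(\Psi)$, a short computation gives $r(\Psi^{-1})=U^{-1}=r(\Psi)^{-1}$, hence $\rho\circ\Psi^{-1}=\overline{\rho\circ\Psi}$; extending $\Psi^{-1}$ by the pointwise inverse of the extension arc of $\tilde\Psi$ (legitimate since $W_+^{-1}$, $W_-^{-1}$ lie in $\Sp^*_+$, $\Sp^*_-$ respectively) yields $\rho^2\circ\widetilde{\Psi^{-1}}=\overline{\rho^2\circ\tilde\Psi}$, so $\CZ(\Psi^{-1})=-\CZ(\Psi)$.
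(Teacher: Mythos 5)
The paper does not prove Theorem~\ref{thm:CZ}: right after the statement it simply remarks that the starred axioms normalize and uniquely determine $\CZ$, and refers the reader to Salamon's Park City lectures. Your proposal supplies an actual proof from the degree formula of Exercise~\ref{exc:CZ-index}, so there is no in-paper argument to compare it against; what I can say is that the route you take (pair each axiom against the explicit formula $\CZ(\Psi)=\deg(\rho^2\circ\tilde\Psi)$) is exactly what the cited reference does, and your execution of the individual axioms is correct. The strongest part is the $\texttt{(loop*)}$ computation: observing that $F(A,B)=\rho(AB)\overline{\rho(A)}\,\overline{\rho(B)}$ is basepoint-preservingly nullhomotopic via the retraction $h$ to $\U(n)\times\U(n)$, where $\rho$ is multiplicative, and then reading off the winding $\mu(\Phi)$ of $t\mapsto\rho(\Phi\Psi)\overline{\rho(\Psi)}$, is both correct and clean. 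The $\texttt{(signature*)}$ reduction to $2\times2$ blocks with $J_0 S$ a rotation or a symmetric matrix, together with the fact that the nondegenerate symmetric matrices of a fixed signature inside the $2\pi$-ball form a connected set, is the right argument; and the $\texttt{(determinant)}$ and $\texttt{(inverse)}$ derivations from a continuous lift and the polar decomposition both check out, including the verification that $W_\pm^{-1}\in\Sp^*_\pm$ and that $\det(A^{-1}-\1)=\det(A-\1)$ for $A\in\Sp(2n)$.

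There is one recurring gap you should be aware of, although it is inherited from the exercise rather than introduced by you. You write that connectedness of $\Sp^*_\pm$ ``makes $\CZ(\Psi)$ independent of all choices in building $\tilde\Psi$,'' and you use the same freedom implicitly in $\texttt{(homotopy*)}$ (dragging the extension arc along a homotopy) and in $\texttt{(direct sum)}$ (where the product extension terminates at $W'_\pm\oplus W''_\pm$, which is not literally $W_\pm(2n)$ in the mixed case). Connectedness only guarantees that \emph{some} extension arc exists; it does not guarantee that two different extension arcs from $\Psi(1)$ to $W_\pm$ inside $\Sp^*_\pm$ produce the same degree. For that one needs that every loop in $\Sp^*_\pm$ has $\rho^2$-winding number zero, which follows from the classical fact (Conley--Zehnder; also Salamon--Zehnder) that $\Sp^*_\pm$ are simply connected, or at least that $\rho^2_*\colon\pi_1(\Sp^*_\pm)\to\pi_1(\SS^1)$ vanishes. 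If you want your argument to be self-contained, state and use this fact explicitly; once it is in place, all the steps where you relocate or replace the extension arc become legitimate with no further change.

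Two cosmetic remarks. First, you are right that the stated $\texttt{(direct sum)}$ axiom contains a typo in the paper ($\mu$ should read $\CZ$); it is good that you flagged this. Second, in $\texttt{(naturality)}$ your two-stage homotopy works, but the first stage alone does not close the argument since $\Theta(0)$ need not be the identity; the second stage $s\mapsto\beta(s)\Psi\beta(s)^{-1}$ with $\beta$ a path from $\Theta(0)$ to $\1$ is exactly what is needed, and you correctly check both stages stay inside $\SsPp^*(2n)$.
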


The {\rm\texttt{(signature)}} axiom normalizes $\CZ$. The $*$-axioms
determine $\CZ$ uniquely; see e.g.~\citerefFH[\S 2.4]{salamon:1999a}.

\begin{remark}[Canonical Maslov and Conley-Zehnder indices]
\label{rem:canonical-Maslov-CZ}
Clockwise\index{canonical Maslov index}\index{Maslov index!canonical --}
rotation\index{$\mucan$ canonical Maslov index}
appears\index{$\CZcan$ canonical Conley-Zehnder index}
naturally in Hamiltonian dynamics,
cf.~(\ref{eq:HamEqs}), since $\Jbar_0:=-J_0$ is compatible with
$\omegacan=dp\wedge dq$, not $J_0$.
Thus it is natural and convenient to introduce versions of
the Maslov and Conley-Zehnder indices normalized clockwise
and denoted by $\mucan$ and $\CZcan$, respectively,
namely\index{\rm\texttt{(normalization}$\texttt{)}_{\texttt{can}}$}
\begin{enumerate}
\item[$\rm\texttt{(normalization)}_{\texttt{can}}$]
  \mbox{ }
  \newline \vspace{-.85cm}
  \begin{equation}\label{eq:CZ-normalization-canonical-88}
     \mucan({\color{magenta} \{ e^{-i 2\pi t} \}_{t\in[0,1]}}):=1
     ,\qquad
     \CZcan({\color{magenta} \{ e^{-i t} \}_{t\in[0,1]}}):=1.
  \end{equation}
\end{enumerate}
These indices are just the negatives of the standard anti-clockwise normalized
indices, that is $\mucan=-\mu$ and $\CZcan=-\CZ$.
They satisfy corresponding versions of the previously stated
axioms, e.g. \texttt{(loop)} becomes
$\rm\texttt{(loop)}_{\texttt{can}}$
$\CZcan(\Phi\Psi)=2\mucan(\Phi)+\CZcan(\Psi)$ and
$\rm\texttt{(signature)}_{\texttt{can}}$ is displayed
in~(\ref{eq:signature_can}).

In the present text we use for Floer, and also Rabinowitz-Floer,
homology the canonical (clockwise) version $\CZcan$ of the
Conley-Zehnder index, because on cotangent bundles (see
Section~\ref{sec:FH-T^*M}) these theories relate canonically to the
classical action functional $\Ss_V$ which requires no choices at all
to establish Morse homology; see Remark~\ref{rem:grad-HF-T*Q}.

The reason why we introduced here in great detail
the standard (counter-clockwise) version $\CZ$ is better comparability
with the literature. It spares the reader continuously translating
between the normalizations. So while we explain $\CZ$, the reader can
conveniently consult the literature for details of proofs, and once
everything is established for $\CZ$ we simply note that
$\CZcan(\Psi)=-\CZ(\Psi)$.
\end{remark}

Symmetric matrizes are rather intimately tied to symplectic geometry:

\begin{exercise}
Show that for symmetric $S=S^T\in\R^{2n\times 2n}$
the matrizes $e^{t  J_0 S}$ and $e^{t \Jbar_0 S}$ are elements of
$\Sp(2n)$ whenever $t\in\R$.
\end{exercise}

\begin{exercise}[Symmetric matrizes]\label{exc:symm-symp}
More generally, given a path
$[0,1]\ni t\mapsto S(t)=S(t)^T\in\R^{2n \times 2n}$ of
symmetric matrizes, show that the path of matrizes
$\Psi:[0,1]\to\R^{2n \times 2n}$ determined by the initial value problem
\begin{equation}\label{eq:symm-symp}
     \frac{d}{dt}\Psi(t)
     =J_0 S(t)\Psi(t),\qquad \Psi(0)=\1,
\end{equation}
takes values in $\Sp(2n)$. %the \emph{symplectic} matrizes.
Note that $\Psi\in\SsPp^*(2n)$ iff
$\det\left(\1-\Psi(1)\right)\not= 0$.
Vice versa, given a symplectic $C^1$ path $\Psi$, then
the family of matrizes defined by
\begin{equation}\label{eq:symp-symm}
     S(t):=-J_0\dot\Psi(t) \Psi(t)^{-1}
\end{equation}
is symmetric. 
\end{exercise}

\begin{exercise}\label{exc:Mas-co-or}
a)~The map~(\ref{eq:map-SP*}) provides a natural co-orientation\footnote{
  A \textbf{\Index{co-orientation}} is an orientation of the normal
  bundle (to the top-diml. stratum).
  }
of the Maslov cycle $\Cc$.
Does this co-orientation serve to define the Maslov index $\mu$
as intersection number with $\Cc$?
(For simplicity suppose $n=1$.)
[Hint: Given this co-orientation, calculate $\mu$ for any generic loop
winding around `the hole'~once.]
\\
b)~Is the situation better for the Robbin-Salamon cycle
$\overline{Sp}_1$? Suppose $n=1$ and co-orient $\overline{Sp}_1=\Sp_1$
by the increasing direction of the function $\chi$ in an earlier footnote.
Show that the intersection number with $\Sp_1$ of the loop
$\R/\Z\ni t\mapsto e^{i2\pi t}\in\U(1)\subset\Sp(2)$
is $-1$ at $t=0$ and $+1$ at $t=1/2$.
\\
c)~For $n=1$ consider the \textbf{\Index{parity}} $\nu(B,D)$ of $\Psi\in\Sp_1$
defined in~\citesymptop[Rmk.~4.5]{Robbin:1993a}. Check that for
$t\mapsto e^{i2\pi t}$ the parity is $-1$ at $t=0$ and $+1$ at
$t=1/2$. Check that the intersection number of loops
with $\Sp_1$ co-oriented by \emph{$\chi$-co-orientation times parity}
recovers the Maslov index $\mu$.
\end{exercise}

By now several alternative descriptions of the Conley-Zehnder index
have been found, for instance, the interpretation as intersection number
with the Maslov cycle of a symplectic path, even with arbitrary endpoints,
has been defined by Robbin and Salamon~\citesymptop{Robbin:1993a};
see Section~\ref{sec:RS-index}.
In case $n=1$ there is a description of $\CZ$ in terms of winding
numbers which we discuss right below.

For further details concerning Maslov, Conley-Zehnder, and other indices
see e.g.~\citesymptop{Arnold:1967a,conley:1984a,Robbin:1993a,
Gutt:2014a-arXiv-link} and\citerefFH{salamon:1999a}.

%%%%%%%%%%%%%%%%%%%%%%%%%%%%%%%%%%
%%% SUBSUBSECTION %%%%%%%%%%%%%%%%%%%
%%%%%%%%%%%%%%%%%%%%%%%%%%%%%%%%%%
\subsubsection*{\boldmath Winding number descriptions of $\CZ$ in the 
case $n=1$}
For the following geometric and analytic construction
we recommend the presentations in \citesymptop[\S 8]{Hofer:2003a}
and \citesymptop[\S 2]{Hryniewicz:2015a}.
It is convenient to naturally identify $\R^2$ with $\C$ and $J_0$ with
$i$.

\vspace{.2cm}
\noindent
\textit{Geometric description (winding intervals
\citesymptop[\S 3]{Hofer:1999a}).}
A path $\Psi:[0,1]\to\Sp(2)$ with $\Psi(0)=\1$
uniquely determines via the identity
$$
     \Psi(t) z_s=r(t,s) e^{i\theta(t,s)},\qquad
     z_s:=e^{i2\pi s}\in\SS^1,
$$
two continuous functions $r$ and $\theta$. Note that $r>0$
and $\theta(0,s)=2\pi s$.
Define the winding number of the point $z_s:=e^{i2\pi s}\in\SS^1$
under the symplectic path $\Psi$, i.e. the change in argument
of $[0,1]\ni t\mapsto \Psi(t)z_s\in\C\setminus\{0\}$,
see Figure~\ref{fig:fig-CZ-winding},
\begin{figure}%[h]
  \centering
  \includegraphics%[width=0.9\textwidth]
                             [height=4cm]
                             {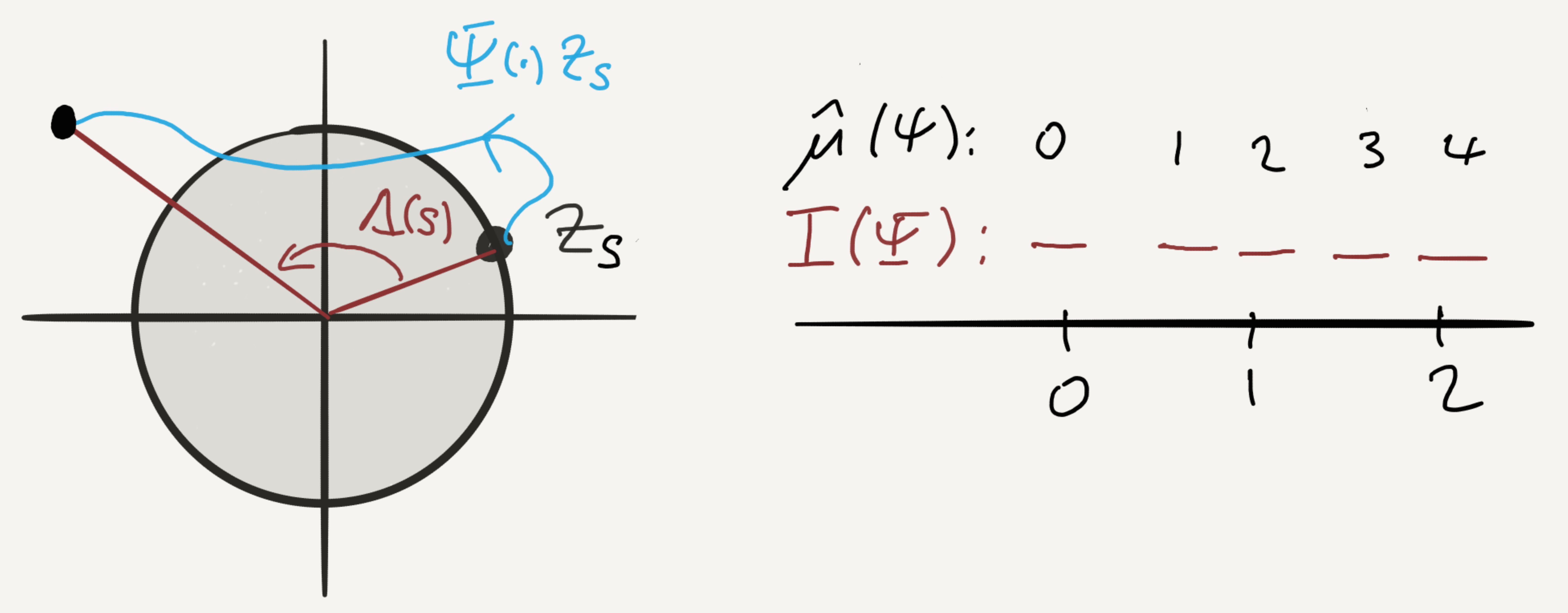}
  \caption{Winding number $\Delta(s)$ of point $z_s=e^{i2\pi s}\in\SS^1$
                under path $\Psi$}
  \label{fig:fig-CZ-winding}
\end{figure}
by 
$$
     \Delta(s):=\frac{\theta(1,s)-2\pi s}{2\pi}\in\R.
$$
The \textbf{winding interval of the symplectic path}
$\Psi$ \index{winding interval of path} is the union
$$
     I(\Psi):=\{\Delta(s)\mid s\in[0,1]\}
$$
of the winding numbers under $\Psi$ of the elements of $\SS^1$.
The interval $I(\Psi)$ is compact, its boundary is disjoint from the integers
iff $\Psi(1)\notin\Cc$, that is iff $\Psi\in\SsPp^*(2)$, and most
importantly its length $\abs{I(\Psi)}<1/2$ is less then $1/2$.
Thus, for $\Psi\in\SsPp^*(2)$, the winding interval
either lies between two consecutive integers or contains
precisely one of them in its interior. Thus one can define
$$
     \mu^\prime(\Psi):=
     \begin{cases}
        2k&\text{, if $k\in I(\Psi)$,}
        \\
        2k+1&\text{, if $I(\Psi)\subset(k,k+1)$,}
     \end{cases}
$$
for some integer $k\in\Z$. One verifies the $*$-axioms in
Theorem~\ref{thm:CZ} to get that
$
     \mu^\prime=\CZ
$
is the Conley-Zehnder index itself.

Observe that the winding number $\Delta(s)$ is an integer $k$
iff $\Psi(1)z_s=\lambda z_s$ is a positive multiple of $z_s$.
But the latter means that $\lambda$ is a positive eigenvalue of
$\Psi(1)$. Thus $k\in I(\Psi)$ which shows
that positive hyperbolic paths are of \emph{even} Conley-Zehnder index.
Similar considerations show that negative hyperbolic and elliptic paths
both have \emph{odd} Conley-Zehnder indices.

\vspace{.2cm}
\noindent
\textit{Analytic description (eigenvalue winding numbers,
\citesymptop[\S 3]{Hofer:1995b}).}
The integer $\mu^\prime(\Psi)$ can be characterized in terms of the spectral
properties of the unbounded self-adjoint differential operator on
$L^2$ with dense domain $W^{1,2}$, namely
$$
     L_S:=-J_0\frac{d}{dt}-S(t):L^2(\SS^1,\R^{2})\supset W^{1,2}\to L^2
$$
where the family $S$ of symmetric matrices corresponds to $\Psi$
via~(\ref{eq:symp-symm}).
Here we assume that the symplectic path $\Psi$ is defined on $\R$
and satisfies $\Psi(t+1)=\Psi(t)\Psi(1)$.
This extra condition corresponds to periodicity $S(t+1)=S(t)$.

The spectrum $\sigma(L_S)$ of the operator $L_S$ consists,
by compactness of the resolvent, of countably many
isolated real eigenvalues of finite multiplicity
accumulating precisely at $\pm\infty$.
Suppose that $v:\SS^1\to\R^2$ is eigenfunction
associated to an eigenvalue $\lambda$.
Note that $v:\SS^1\to\R^2$ cannot have any zero.
Thus we can write $v(t)=\rho(t) e^{i\vartheta(t)}$
and define its winding number by
$\wind(v):=\frac{\vartheta(1)-\vartheta(0)}{2\pi}$.
This integer only depends on the eigenvalue $\lambda$,
but not on the choice of eigenvector. So it is denoted by
$\wind(\lambda)$ and called the \textbf{\boldmath winding number of the
eigenvalue $\lambda$}. \index{winding number of eigenvalue}
For each integer $k$ there are precisely two eigenvalues (counted
with mulitplicities) whose winding number is $k$.
If there is only one such eigenvalue, its multiplicity is 2.
Moreover, if $\lambda_1\le\lambda_2$, then
$\wind(\lambda_1)\le\wind(\lambda_2)$.
\\
Let $\lambda_-(S)<0$ be the largest negative eigenvalue
and $\lambda_+(S)\in\N_0$ the next larger one.
Define the maximal winding number among the negative eigenvalues
of the operator $L_S$ and its parity by
$$
     \alpha(S):=\wind(\lambda_-)\in\Z,\qquad
     p(S):=
     \begin{cases}
        0&\text{, if $\wind(\lambda_-)=\wind(\lambda_+)$,}
       \\
        1&\text{, if $\wind(\lambda_-)<\wind(\lambda_+)$.}
     \end{cases}
$$

\begin{theorem}%[{\citesymptop{Hofer:1995b}}]
If $\Psi\in\SsPp^*(2)$, then $2\alpha(S)+p(S)=\mu^\prime(\Psi)$.
\end{theorem}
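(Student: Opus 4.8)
The plan is to relate both sides to the motion of the winding interval of a one–parameter family of symplectic paths built from $S$. For $\lambda\in\R$ let $\Psi_\lambda:\R\to\Sp(2)$ be the fundamental solution of $\dot v=J_0\bigl(S(t)+\lambda\1\bigr)v$ with $\Psi_\lambda(0)=\1$; since $S(t)+\lambda\1$ is symmetric, each $\Psi_\lambda$ is a symplectic path, and $\Psi_0=\Psi$. Rewriting the eigenvalue equation $L_Sv=\lambda v$ as $\dot v=J_0\bigl(S(t)+\lambda\1\bigr)v$ identifies $\spec L_S$ with the set of $\lambda$ for which $1\in\spec\Psi_\lambda(1)$, the eigenfunctions being $v(t)=\Psi_\lambda(t)v_0$ with $v_0\in\ker\bigl(\1-\Psi_\lambda(1)\bigr)$; and if $v_0=r_0z_{s_0}$ with $z_{s_0}\in\SS^1$, then $v(1)=v(0)$ forces the winding number $\Delta_\lambda(s_0)$ of $z_{s_0}$ under $\Psi_\lambda$ to be an integer equal to $\wind(v)$.

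First I would prove that the winding interval $I(\Psi_\lambda)$ moves strictly monotonically to the right as $\lambda$ increases. In polar coordinates $\Psi_\lambda(t)z_s=r(t)e^{i\theta(t)}$ (using $J_0=i$) the argument satisfies $\dot\theta=\bigl\langle S(t)e^{i\theta},e^{i\theta}\bigr\rangle+\lambda$, the extra summand $\lambda$ being the contribution of $\lambda J_0=\lambda i$, an infinitesimal positive rotation. Differentiating this scalar ODE in $\lambda$ and using variation of constants gives $\partial_\lambda\theta(1)>0$, so each $\lambda\mapsto\Delta_\lambda(s)$ is strictly increasing; moreover $\dot\theta\ge\lambda-\norm{S}_\infty$ shows $\Delta_\lambda(s)\to\pm\infty$ as $\lambda\to\pm\infty$. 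Hence $\max I(\Psi_\lambda)$ and $\min I(\Psi_\lambda)$ are continuous and strictly increasing and sweep out all of $\R$; combined with the general bound $\abs{I(\Psi_\lambda)}<1/2$ from the geometric description of $\CZ$, this shows $I(\Psi_\lambda)$ meets a given integer $k$ exactly on a closed interval $[\lambda_k^-,\lambda_k^+]$, with $k=\max I(\Psi_{\lambda_k^-})$, $k=\min I(\Psi_{\lambda_k^+})$, and $k\in\INT I(\Psi_\lambda)$ for $\lambda\in(\lambda_k^-,\lambda_k^+)$.

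Next I would match this picture with the spectrum. At $\lambda=\lambda_k^\pm$ the value $k$ is attained by $\Delta_\lambda$ at an \emph{extremal} direction $z_{s^*}$, so $\Psi_\lambda(1)z_{s^*}=c\,z_{s^*}$ with $c>0$; if $c\neq1$ then $\Psi_\lambda(1)$ is positive hyperbolic, in which case $k$ lies in the \emph{interior} of $I(\Psi_\lambda)$, contradicting maximality (resp. minimality) at $z_{s^*}$; hence $c=1$, so $\lambda_k^\pm\in\spec L_S$ with an eigenfunction of winding $k$. Establishing this dichotomy cleanly is the step I expect to be the main obstacle. Once it is in place, the quoted facts that the eigenvalues of $L_S$ have non-decreasing winding numbers and that each integer occurs as a winding number with total multiplicity two identify $\lambda_k^-\le\lambda_k^+$ as exactly the winding-$k$ eigenvalues and show they are consecutive in $\spec L_S$ (no eigenvalue can lie strictly between them, as $\Psi_\lambda(1)\notin\Cc$ there).

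Finally the formula follows by locating $\lambda=0$. Since $\Psi=\Psi_0\in\SsPp^*(2)$ we have $\Psi(1)\notin\Cc$, hence $0\notin\spec L_S$ and no integer is an endpoint of $I(\Psi_0)$. If some integer $k$ lies in $\INT I(\Psi_0)$, then $\mu^\prime(\Psi)=2k$ and $\lambda_k^-<0<\lambda_k^+$ (as $0$ is one of the $\lambda$ with $k\in\INT I(\Psi_\lambda)$); being consecutive, $\lambda_-=\lambda_k^-$ and $\lambda_+=\lambda_k^+$, so $\wind(\lambda_-)=\wind(\lambda_+)=k$, whence $\alpha(S)=k$, $p(S)=0$, and $2\alpha(S)+p(S)=2k=\mu^\prime(\Psi)$. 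Otherwise $I(\Psi_0)\subset(k,k+1)$ for some integer $k$, so $\mu^\prime(\Psi)=2k+1$; now $0>\lambda_k^+$ (the integer $k$ has already left $I(\Psi_\lambda)$ from below) while $0<\lambda_{k+1}^-$ ($k+1$ has not yet entered from above), so every eigenvalue of winding $\le k$ is negative and every eigenvalue of winding $\ge k+1$ is positive; thus $\wind(\lambda_-)=k$ and $\wind(\lambda_+)=k+1$, giving $\alpha(S)=k$, $p(S)=1$, and $2\alpha(S)+p(S)=2k+1=\mu^\prime(\Psi)$. These two cases are exhaustive, which completes the proof.
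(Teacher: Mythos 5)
Your strategy --- deform $S$ to $S+\lambda\1$, watch the winding interval $I(\Psi_\lambda)$ of the path $\Psi_\lambda$ solving $\dot\Psi_\lambda=J_0(S+\lambda\1)\Psi_\lambda$ slide strictly monotonically to the right, and read off $\spec L_S$ as the set of $\lambda$ at which an integer reaches the boundary of $I(\Psi_\lambda)$ --- is essentially the Hofer--Wysocki--Zehnder argument that the theorem's citation points to; the notes state the result without proof, so there is nothing in the text to compare against line by line. Your monotonicity step is correct (the scalar angular ODE $\dot\theta=\langle S(t)e^{i\theta},e^{i\theta}\rangle_0+\lambda$ plus variation of constants gives $\partial_\lambda\Delta_\lambda(s)>0$, and $\dot\theta\ge\lambda-\norm{S}_\infty$ gives the divergence as $\lambda\to\pm\infty$), and the endgame locating $\lambda=0$ relative to $[\lambda_k^-,\lambda_k^+]$ and $[\lambda_{k+1}^-,\lambda_{k+1}^+]$ closes correctly once the two quoted spectral facts (monotone winding, total multiplicity two per integer) are granted.

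The one genuine gap is the one you flag yourself: if $\Delta_\lambda$ attains the integer $k$ at an extremal direction $z_{s^*}$, so that $\Psi_\lambda(1)z_{s^*}=c\,z_{s^*}$ with $c>0$, and $c\neq 1$, you must show $k\in\INT I(\Psi_\lambda)$. This is true and elementary, so the gap is fillable: for $c\neq1$ the matrix $\Psi_\lambda(1)$ has two distinct positive eigenvalues $c,c^{-1}$, hence the induced orientation-preserving circle map $z\mapsto\Psi_\lambda(1)z/\abs{\Psi_\lambda(1)z}$ fixes exactly the four eigendirections $\pm e_1,\pm e_2$ and is fixed-point free on each of the four open arcs between them; on each arc the iterates converge to the expanding endpoint $\pm e_1$, so every interior point of the arc is moved strictly toward that endpoint. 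Consequently $\Delta_\lambda-k$ has a constant nonzero sign on each arc, and since the expanding endpoint is the clockwise endpoint on two of the arcs and the counterclockwise endpoint on the other two, both signs occur; thus $k$ is an interior value of $I(\Psi_\lambda)$, and $c=1$ follows at the extremal parameters. One further small correction: your parenthetical reason for consecutiveness of $\lambda_k^-$ and $\lambda_k^+$ in the spectrum (``$\Psi_\lambda(1)\notin\Cc$ there'') is not established --- a crossing in the open interval with eigenfunction of winding $k$ is not excluded by the interval picture alone --- but the conclusion does follow from monotonicity of $\wind$ together with the multiplicity-two fact, which you invoke anyway, so nothing is lost.
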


%%%%%%%%%%%%%%%%%%%%%%%%%%%%%%%%%%
%%% SUBSECTION %%%%%%%%%%%%%%%%%%%%%
%%%%%%%%%%%%%%%%%%%%%%%%%%%%%%%%%%
\subsection{Lagrangian subspaces}
A \textbf{\Index{symplectic vector space}} $(V,\omega)$
is a real vector space with a non-degenerate
skew-symmetric bilinear form. So $\dim V=2n$
is necessarily even. 

\begin{exercise}
Show that, firstly, each symplectic vector space
admits a \textbf{\Index{symplectic basis}},
that is vectors $u_1,\dots,u_n,v_1\dots,v_n$ such that
$$
     \omega(u_j,u_k)=\omega(v_j,v_k)=0,\quad
     \omega(u_j,v_k)=\delta_{jk},
$$
and, secondly, there is a \textbf{linear \Index{symplectomorphism}} -- a
vector space isomorphism preserving the symplectic forms -- to
$(\R^{2n},\omega_0)$.
\end{exercise}

The \textbf{\Index{symplectic complement}}
of a vector subspace $W\subset V$ is defined by
$$
     W^\omega:=\ker\, \omega
     :=\{v\in V\mid\text{$\omega(v,w)=0$ $\forall w\in W$}\}.
$$
In contrast to the orthogonal complement, the symplectic
complement is not necessarily disjoint to $V$, but $V$ and $V^\omega$
are still of complementary dimension (as non-degeneracy
is imposed in both worlds) and $(W^\omega)^\omega=W$.
Thus the maximal dimension of $W\cap W^\omega$ is $n=\frac12\dim V$.
Such $W$, that is those with $W=W^\omega$,
are called \textbf{\Index{Lagrangian subspace}s}.
Equivalently these are characterized as the $n$ dimensional subspaces
restricted to which $\omega$ vanishes identically.

A\index{subspace!isotropic --}\index{isotropic subspace}
subspace\index{subspace!coisotropic --}\index{coisotropic subspace}
$W\subset V$ is called \textbf{isotropic} if $W\subset
W^\omega$, in other words, if $\omega$ vanishes on $W$,
and \textbf{coisotropic} if $W^\omega\subset W$.

\begin{exercise}[Graphs of symmetric matrizes are Lagrangian]
\label{exc:symm-Lag}
Show that
$$
     \Gamma_S:=\{(x,Sx)\mid x\in\R^n\}
$$
is a Lagrangian subspace of $(\R^{2n},\omega_0)$
if and only if $S=S^T\in\R^{n\times n}$ is symmetric.
\end{exercise}

\begin{exercise}[Natural structures on $W\oplus W^*$]
\label{exc:W-plus-W*}
Let $W$ be a real vector space and $W^*$ its
dual\index{$W^*:=\Ll(W,\R)$!dual space of $W$}
space.\footnote{
  Here $W^*:=\Ll(W,\R)$ is the \textbf{\Index{dual space}} of the real
  vector space $W$.
  }
Show that on $W\oplus W^*$ a symplectic form $\Omega_0$ is naturally
given by $\left((v,\eta),(\tilde v,\tilde \eta)\right)
\mapsto\tilde\eta(v)=\eta(\tilde v)$.
Show that both summands of $W\oplus W^*$ are Lagrangian.
\index{$\Omega_0$ on $W\oplus W^*$}
\index{$(\Omega_0,\Jbar_g,G_g)$ compatible triple}
Now pick, in addition, an \textbf{\Index{inner product}} on $W$,
that is a non-degenerate symmetric bilinear form $g$ on $W$. This provides a
natural isomorphism $W\to W^*$, $v\mapsto g(v,\cdot)$, again denoted
by $g$, which naturally leads to the inner product
$g^*=g(g^{-1}\cdot,g^{-1}\cdot)$ on $W^*$. Moreover, on $W\oplus W^*$
one obtains an inner product $G_g=g\oplus g^*$ and an
almost complex structure $\Jbar_g$; cf.~(\ref{eq:J_g-G_g}).
Show their compatibility in the sense that $\Omega_0(\cdot,\Jbar_g\cdot)=G_g$.
\end{exercise}

\begin{exercise}
Show that the graph $\Gamma_\Psi$ of a linear
symplectomorphism $\Psi:V\to V$ is a Lagrangian
subspace of the cartesian product $V\times V$
equipped with the symplectic form $(-\omega)\oplus\omega$
that sends $((v,w),(v^\prime,w^\prime))$ to
$-\omega(v,v^\prime)+\omega(w,w^\prime)$. 
Note that the \textbf{\Index{diagonal subspace}}
$\Delta:=\{(v,v)\mid v\in V\}$ is Lagrangian.
\end{exercise}

%%%%%%%%%%%%%%%%%%%%%%%%%%%%%%%%%%
%%% SUBSECTION %%%%%%%%%%%%%%%%%%%%%
%%%%%%%%%%%%%%%%%%%%%%%%%%%%%%%%%%
\subsection{Robbin-Salamon index -- degenerate endpoints}
\label{sec:RS-index}
A symplectic $C^1$ path $\Psi:[0,1]\to\Sp(2n)$ gives rise to the family
of symmetric matrizes $S(t)$ given by~(\ref{eq:symp-symm}) for which, in turn,
it is a solution to the ODE~(\ref{eq:symm-symp}).
A number $t\in[0,1]$ is called a \textbf{\Index{crossing}}
if $\det\left(\1-\Psi(t)\right)=0$ or, equivalently, if $1$ is
eigenvalue of $\Psi(t)$. In other words,  if $\Psi(t)$ hits the Maslov
cycle $\Cc$: The eigenspace
${\rm Eig}_1\Psi(t)=\ker\left(\1-\Psi(t)\right)\not= 0$ must be non-trivial.

At a crossing $t$ the quadradic form given by
$$
     \Gamma(\Psi,t):{\rm Eig}_1\Psi(t)\to\R,\quad
     \xi_0\mapsto\omega_0(\xi_0,\dot\Psi(t)\xi_0)=\inner{\xi_0}{S(t)\xi_0},
$$
is called the \textbf{\Index{crossing form}}.
A crossing is called \textbf{regular} \index{crossing!regular --}
if the crossing form is non-degenerate.
Regular crossings are isolated.
If all crossings are regular the \textbf{\Index{Robbin-Salamon index}}
$\RS(\Psi)$ was introduced in~\citesymptop{Robbin:1993a},
although here we repeat the presentation given
in~\citerefFH[\S\,2.4]{salamon:1999a},
as the sum over all crossings $t$ of the signatures
of the crossing forms where crossings at the boundary points $t=0,1$
are counted with the factor $\frac12$ only.
For the particular paths $\Psi\in\SsPp^*(2n)$ the Robbin-Salamon index
$$
     \RS(\Psi):=\frac12\sign S(0)+\sum_t\sign\Gamma(\Psi,t)=\CZ(\Psi)
$$
reproduces the Conley-Zehnder index.

\begin{exercise}\label{exc:RS-endpoint-factor}
  a)~Check the identity in the definition~of~$\Gamma$.
  b)~The factor $\frac12$ at the endpoints is introduced in order to make
  $\RS(\Psi)$ invariant under homotopies with fixed endpoints.
  To see what happens homotop the path $\gamma$ in
  Figure~\ref{fig:fig-CZ-index} to $\Psi(t)= e^{i\pi t}$
  and calculate the crossing forms at $t=0$ in both cases.
\end{exercise}

In~\citesymptop{Robbin:1993a} an index for a rather more general class
of paths is constructed: A \emph{relative} index
$\RS(\Lambda,\Lambda^\prime)$ for \emph{pairs}
of paths of \emph{Lagrangian subspaces} of a symplectic vector space $(V,\omega)$;
cf. Exercise~\ref{exc:symm-Lag}.
Here crossings are non-trivial intersections
$\Lambda(t)\cap\Lambda^\prime(t)\not=\{0\}$.
The Conley-Zehnder index on $\SsPp^*(2n)$ is recovered
by choosing the symplectic vector space
$(\R^{2n}\times\R^{2n},-\omega_0\oplus\omega_0)$
and the Lagrangian path given by the graphs $\Gamma_{\Psi(t)}$ of
$\Psi$ relative to the constant path given by the diagonal $\Delta$.
Indeed $\CZ(\Psi)=\RS(\Gamma_{\Psi},\Delta)$
by~\citesymptop[Rmk.\,5.4]{Robbin:1993a}.
Note that $\Gamma_{\Psi(t)}\cap\Delta\simeq {\rm Eig_1\,\Psi(t)}$.

%%%%%%%%%%%%%%%%%%%%%%%%%%%%%%%%%%
%%%%%%%%%%%%%%%%%%%%%%%%%%%%%%%%%%
%%% SECTION %%%%%%%%%%%%%%%%%%%%%%%%
%%%%%%%%%%%%%%%%%%%%%%%%%%%%%%%%%%
%%%%%%%%%%%%%%%%%%%%%%%%%%%%%%%%%%
\section{Symplectic vector bundles}\label{sec:SVB}
Suppose $E\to N$ is a vector bundle of real rank $2n$ over a
manifold-with-boundary of dimension $k$; where
$\p N=\emptyset$ is not excluded.
A \textbf{symplectic vector bundle}\index{vector bundle!symplectic --}
is a pair $(E,\omega)$ where $\omega$ is a family of
symplectic bilinear forms $\omega_x$, one on each fiber $E_x$.
Similarly a \textbf{complex vector bundle}\index{vector bundle!complex --}
is a pair $(E,J)$ where $J$ is a family of
\emph{complex structures $J_x$ on the fibers $E_x$},
that is $J_x^2=-\Id_{E_x}$.
Existence of a deformation retraction, such as $h$
in~(\ref{eq:str-def-retr-Sp-U}),
of $\Sp(2n)$ onto $\U(n)$ has the consequence that any
symplectic vector bundle $(E,\omega)$ is fiberwise homotopic,
thus isomorphic as a vector bundle,\footnote{
  A \textbf{vector bundle isomorphism}\index{vector bundle!isomorphism}
  is a diffeomorphism between the total spaces whose fiber restrictions
  are vector space isomorphisms.
  }
to a complex
vector bundle $(E,J_\omega)$ called the \textbf{underlying complex vector
bundle}.\index{vector bundle!underlying complex --}\footnote{
  The complex structure $J_\omega$, but not
  its isomorphism class, depends on $h$.
  } 
A \textbf{Hermitian vector bundle}\index{vector bundle!Hermitian --}
$(E,\omega,J,g_J)$ is a symplectic and complex
vector bundle $(E,\omega,J)$ 
such that $J$ is $\omega$-compatible,
that is $g_J:=\omega(\cdot , J\cdot)$
is a Riemannian bundle metric on $E$.

\begin{proposition}\label{prop:cx=symp-VB}
Two symplectic vector bundles $(E_1,\omega_1)$ and
$(E_2,\omega_2)$ are isomorphic if and only if their underlying
complex bundles are isomorphic.
\end{proposition}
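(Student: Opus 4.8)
The plan is to reduce the statement to the already-established fact that $\Sp(2n)$ deformation retracts onto $\U(n)$, fiberwise, via the map $h$ in~(\ref{eq:str-def-retr-Sp-U}). One direction is immediate: if the underlying complex bundles $(E_1,J_{\omega_1})$ and $(E_2,J_{\omega_2})$ are isomorphic as complex vector bundles, then in particular they are isomorphic as real vector bundles, and since each $E_i$ is isomorphic as a real vector bundle to its underlying complex bundle, the $E_i$ themselves are isomorphic as real vector bundles; but a symplectic vector bundle structure on a given real bundle is, up to isomorphism, exactly the data of the underlying complex structure, so we recover an isomorphism of symplectic vector bundles. To make this precise I would phrase it through the classifying space picture below rather than waving hands.

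The cleanest route is via classifying spaces. Isomorphism classes of rank-$2n$ symplectic vector bundles over $N$ (paracompact, which holds since $N$ is a manifold-with-boundary) are in bijection with homotopy classes of maps $N\to B\Sp(2n)$, because $\Sp(2n)$ is the structure group: the clutching/cocycle data for a symplectic vector bundle is a $\Sp(2n)$-valued \v{C}ech cocycle, and two such give isomorphic bundles iff the cocycles are cohomologous iff the classifying maps are homotopic. Similarly, isomorphism classes of rank-$n$ complex vector bundles correspond to $[N,B\U(n)]$. The deformation retraction $h$ of $\Sp(2n)$ onto $\U(n)$ is in particular a homotopy equivalence of topological groups (more precisely it shows the inclusion $\U(n)\hookrightarrow\Sp(2n)$ is a homotopy equivalence), hence the induced map $B\U(n)\to B\Sp(2n)$ is a homotopy equivalence, so it induces a bijection $[N,B\U(n)]\xrightarrow{\cong}[N,B\Sp(2n)]$. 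Chasing the definitions, this bijection sends the class of a complex bundle to the class of the same bundle viewed symplectically (any $\omega$-compatible choice gives back the complex structure up to isomorphism), and conversely sends $[(E,\omega)]$ to $[(E,J_\omega)]$. Therefore $(E_1,\omega_1)\cong(E_2,\omega_2)$ iff their classifying maps into $B\Sp(2n)$ are homotopic iff the corresponding maps into $B\U(n)$ are homotopic iff $(E_1,J_{\omega_1})\cong(E_2,J_{\omega_2})$.

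An alternative, more hands-on argument avoiding classifying spaces: given an isomorphism $\Phi\colon(E_1,J_{\omega_1})\to(E_2,J_{\omega_2})$ of complex bundles, transport $\omega_2$ back to get a symplectic form $\Phi^*\omega_2$ on $E_1$ which is compatible with $J_{\omega_1}$; then both $\omega_1$ and $\Phi^*\omega_2$ are $J_{\omega_1}$-compatible, and the space of symplectic forms compatible with a fixed complex structure on a fixed bundle is convex (the straight-line homotopy $\omega_s=(1-s)\omega_1+s\Phi^*\omega_2$ stays non-degenerate because each $\omega_s(\cdot,J_{\omega_1}\cdot)$ is a sum of inner products, hence positive definite), so by Gray/Moser-type stability for vector bundles — or more elementarily by an explicit fiberwise linear algebra deformation — one constructs a bundle automorphism of $E_1$ carrying $\omega_1$ to $\Phi^*\omega_2$, and composing with $\Phi$ gives a symplectic isomorphism $(E_1,\omega_1)\to(E_2,\omega_2)$.

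The main obstacle is not any deep input but bookkeeping: one must be careful that the assignment $(E,\omega)\mapsto(E,J_\omega)$ is genuinely well-defined up to isomorphism (the footnote in the text already flags that $J_\omega$ itself depends on the choice of retraction $h$), and that the induced correspondence on isomorphism classes is exactly the inverse of the one coming from $B\U(n)\to B\Sp(2n)$. I would organize the write-up so that this well-definedness is checked once, cleanly, using convexity of the space of compatible $\omega$'s (resp. compatible $J$'s), and then the homotopy-equivalence argument closes both implications at once.
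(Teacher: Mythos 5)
Your proposal is correct, and your primary route (via $B\Sp(2n)\simeq B\U(n)$) is in substance the first of the two proofs the paper points to in McDuff--Salamon, namely the one built on the deformation retraction~(\ref{eq:str-def-retr-Sp-U}); the classifying-space packaging is a clean way to make the "well-definedness up to isomorphism" issue disappear. Your hands-on alternative, using convexity of the space of symplectic forms compatible with a fixed $J$, is the mirror image of the second cited proof, which instead uses contractibility of the space $\Jj(V,\omega)$ of complex structures compatible with a fixed $\omega$; the two are dual and buy the same thing. The fiberwise deformation you gesture at does work without any Moser machinery: for two $J$-compatible forms $\omega_1,\omega_2$ on the same fiber with induced metrics $g_i=\omega_i(\cdot,J\cdot)$, write $g_2=g_1(A\cdot,\cdot)$ with $A$ positive and $g_1$-self-adjoint; then $A$ commutes with $J$ because both metrics are $J$-invariant, hence so does $A^{-1/2}$, and $A^{-1/2}$ carries $\omega_2$ to $\omega_1$. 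This depends smoothly on the base point, so it globalizes. One small caveat worth flagging in a write-up: the hands-on argument as stated only treats the implication from a complex isomorphism to a symplectic one; the converse requires the symmetric argument (start from a symplectic isomorphism $\Psi$, compare $J_1$ with $\Psi^*J_2$ inside the contractible space of $\omega_1$-compatible complex structures), whereas the classifying-space argument handles both directions at once.
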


Two proofs are given in~\cite[\S 2.6]{mcduff:1998a},
one based on the deformation retraction~(\ref{eq:str-def-retr-Sp-U}),
the other on constructing a homotopy equivalence
between $\Jj(V,\omega)$, the space of $\omega$-compatible
complex structures on a symplectic vector space, and the convex,
thus contractible, non-empty space of all inner products~on~$V$.

A \textbf{\Index{trivialization}} of a bundle $E$
is an isomorphism to the trivial bundle which preserves
the structure under consideration.
A \textbf{unitary trivialization}\index{trivialization!unitary --}
of a Hermitian vector bundle $E$ is a smooth map
\begin{equation}\label{eq:unitary-triv}
     \Phi:N\times\R^{2n}\to E,\quad
     (x,\xi)\mapsto\Phi(x,\xi)=:\Phi(x)\xi,
\end{equation}
which maps fibers linearly isomorphic to fibers,
that is $\Phi^{-1}$ is a vector bundle isomorphism to the trivial bundle,
and simultaneously identifies the compatible triple
$\omega,J,g_J$ on $E$
with the standard compatible triple
$\omega_0,J_0,\inner{\cdot}{\cdot}_0$ on $\R^{2n}$.

\begin{proposition}\label{prop:unitary-triv}
A Hermitian vector bundle $E\to\Sigma$ over a compact
Riemann surface $\Sigma$ with \emph{non-empty} boundary
$\p\Sigma$ admits a unitary trivialization.
\end{proposition}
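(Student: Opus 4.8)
The plan is to reduce the statement to two facts: that the underlying complex vector bundle $(E,J)$ is smoothly trivial, and that any complex-linear trivialization can be upgraded to a unitary one. First I would record that a unitary trivialization as in~(\ref{eq:unitary-triv}) is the same thing as a complex-linear bundle isomorphism $\Phi\colon\Sigma\times\C^n\to E$ — one intertwining $i=J_0$ with $J$ — which is in addition an isometry between the Hermitian metrics $h:=g_J-i\,\omega$ on $E$ and $h_0:=\inner{\cdot}{\cdot}_0-i\,\omega_0$ on $\R^{2n}\cong\C^n$: taking real and imaginary parts of the isometry condition returns exactly $\Phi^*g_J=\inner{\cdot}{\cdot}_0$ and $\Phi^*\omega=\omega_0$, while $\Phi^{-1}J\Phi=J_0$ is complex-linearity. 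Note that the complex structure of the Riemann surface is irrelevant; only the underlying smooth compact surface-with-boundary enters.

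\emph{Step (a): triviality of $(E,J)$.} Since $\p\Sigma\neq\emptyset$, every connected component of $\Sigma$ admits a handle decomposition with only $0$- and $1$-handles, hence deformation retracts onto a finite embedded graph, which is homotopy equivalent to a finite wedge $\bigvee_{j=1}^{k}\SS^1$ of circles (with $k=0$, i.e.\ a point, in the disk case). By homotopy invariance of pullback bundles it suffices to trivialize a complex vector bundle over $\bigvee\SS^1$: trivialize it over a contractible neighbourhood of the wedge point and then over each circle, observing that the clutching map $\SS^1\to\GL(n,\C)$ of the circle factor is null-homotopic because $\GL(n,\C)$ is connected — it retracts onto $\U(n)$, cf.~(\ref{eq:str-def-retr-Sp-U}). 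These trivializations match near the wedge point, so $(E,J)$ is complex-linearly trivial; fix such an isomorphism $\Psi\colon\Sigma\times\C^n\to E$. (Proposition~\ref{prop:cx=symp-VB} is the conceptual counterpart of this step: symplectic and complex vector bundles carry the same information, so triviality of one yields triviality of the other.)

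\emph{Step (b): making $\Psi$ unitary.} Pull the Hermitian metric $h$ back through $\Psi$ to a smooth family $x\mapsto h_x$ of Hermitian inner products on $\C^n$, and run the Gram--Schmidt process on the standard basis $(e_1,\dots,e_n)$ with respect to $h_x$. Since Gram--Schmidt is a rational — in particular smooth — function of the Gram matrix, this produces an $h_x$-orthonormal basis $(f_1(x),\dots,f_n(x))$ depending smoothly on $x\in\Sigma$. Letting $A(x)\in\GL(n,\C)$ be the matrix with $A(x)e_j=f_j(x)$, the composite $\Phi:=\Psi\circ A\colon\Sigma\times\C^n\to E$ is complex-linear and satisfies $\Phi(x)^*h=h_0$ for every $x$; by the first paragraph $\Phi$ is the desired unitary trivialization.

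I expect Step (a) to be the only genuine obstacle: everything turns on the hypothesis $\p\Sigma\neq\emptyset$, which forces $\Sigma$ to have the homotopy type of a $1$-complex and thereby kills all complex vector bundles over it (via connectedness of $\U(n)$), whereas Step (b) is routine gauge-fixing. A tidier packaging of Step (a), which I might prefer to present in the end, is to work directly with the bundle $\mathrm{Fr}_{\U}(E)\to\Sigma$ of $h$-unitary frames: it is a principal $\U(n)$-bundle, hence classified by $[\Sigma,{\rm B}\U(n)]\cong[\bigvee\SS^1,{\rm B}\U(n)]$, which is trivial since ${\rm B}\U(n)$ is simply connected ($\U(n)$ being connected); a trivial principal bundle has a section, and a section of $\mathrm{Fr}_{\U}(E)$ is exactly a unitary trivialization of $E$.
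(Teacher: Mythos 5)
Your proof is correct, but it takes a genuinely different route from the one sketched in the paper. You reduce everything to topology: since $\p\Sigma\neq\emptyset$ the surface deformation retracts onto a wedge of circles, complex vector bundles over a $1$-complex are trivial because $\GL(n,\C)$ (equivalently $\U(n)$) is connected, and Gram--Schmidt then upgrades a complex trivialization to a unitary one; your repackaging via sections of the unitary frame bundle and simple connectedness of ${\rm B}\U(n)$ is the cleanest form of this. The paper instead gives a hands-on geometric construction: first it trivializes unitarily over an arbitrary path $\gamma:[0,1]\to\Sigma$ \emph{with prescribed unitary frames at both endpoints} (parallel transport over small subintervals, Gram--Schmidt over $\C$, then coupling of the pieces using connectedness of $\U(n)$); a parametrized version of this handles the disk $\D$ by trivializing along radial rays, and the general case follows by induction, attaching pairs of pants. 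The essential input --- connectedness of $\U(n)$ --- is the same in both arguments, so the two proofs are morally equivalent. What the paper's longer route buys is the auxiliary path-with-fixed-endpoints statement, which is exactly the gluing tool reused later (e.g.\ for the overlap maps in the first Chern number construction and for trivializations along orbits); what your route buys is brevity and a clean conceptual reason ($\pi_0\U(n)=0$, i.e.\ $\pi_1{\rm B}\U(n)=0$) why nonemptiness of the boundary kills every obstruction.
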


The idea is to prove in a first step that for any
path $\gamma:[0,1]\to\Sigma$ the pull-back
bundle $\gamma^*T\Sigma\to[0,1]$ can be unitarily
trivialized even if one fixes in advance unitary
isomorphisms $\Phi_0:\R^{2n}\to E_{\gamma(0)}$ and
$\Phi_1:\R^{2n}\to E_{\gamma(1)}$ over the two endpoints
of $\gamma$. To see this construct unitary frames
over small subintervalls of $[0,1]$ starting with a unitary
basis of $E_{\gamma(t)}$ at some $t$, extend to a small
intervall via parallel transport, say with respect to some Riemannian
connection on $E$, and then exposed to the Gram-Schmidt process
over $\C$. The coupling of the resulting unitary trivializations over
the subintervals is based on the fact that the Lie group
$\U(n)$ is connected. In the second step one uses a parametrized
version of step one to deal with the case that $\Sigma$ is
diffeomorphic to the unit disk $\D\subset\R^2$.\,\footnote{
  Trivialize along rays starting at the origin: Extend a chosen
  frame sitting at the origin simultaneously along all rays, say by
  parallel transport, along an interval $[0,\eps]$. Now apply
  Gram-Schmidt to the family of frames
  and repeat the process on $[\eps/2,3\eps/2]$, and so on.
  }
Step three is to prove the general case by an induction
that starts at step two and whose induction step
is again by a parametrized version of step one, this
time for the disk with two open disks removed from its interior
(called a \textbf{\Index{pair of pants}}).

%%%%%%%%%%%%%%%%%%%%%%%%%%%%%%%%%%
%%% SECTION %%%%%%%%%%%%%%%%%%%%%%%%
%%%%%%%%%%%%%%%%%%%%%%%%%%%%%%%%%%
\subsection{Compatible almost complex structures}
\label{sec:comp-alm-cx-str}
Given a symplectic manifold $(M,\omega)$,
consider an endomorphism $J$ of $TM$
with $J^2=-\1$. Such $J$ is called an
\textbf{\Index{almost complex structure}} on $M$.\footnote{
  Such $J$ is called a \textbf{\Index{complex structure}}
  or an \textbf{\Index{integrable complex structure}} on $M$,
  \index{complex structure!integrable}
  if it arises from an atlas of $M$ consisting of complex
  differentiable coordinate charts to $(\C^n,i)$.
  }
If, in addition, the expression
$$
     g_J(\cdot,\cdot):=\omega(\cdot,J\cdot)
$$
defines a Riemannian metric on $M$,
then $J$ is called an \textbf{$\mbf{\omega}$-compatible almost
complex structure} on $M$. 
\index{almost complex structure!$\omega$-compatible}
The space $\Jj(M,\omega)$ of all such $J$
is non-empty and contractible by~\cite[Prop.~2.63]{mcduff:1998a}.

\begin{exercise}\label{exc:comp-alm-cx-strs}
Pick $J\in\Jj(M,\omega)$ and let $\nabla$
be the Levi-Civita connection associated to $g_J$.
\index{$\nabla=\nabla^g$ Levi-Civita connection}
Suppose $\xi$ is a smooth vector field on $M$,
show (i) and (ii):\footnote{
  Part~(iii) is less trivial;
  see~\cite[Le.\,C.7.1]{mcduff:2004a}.
  }
\begin{enumerate}
\item[(i)]
  $J$ preserves $g_J$ and $(\Nabla{\xi} J)J+J(\Nabla{\xi} J)=0$;
\item[(ii)]
  $J$ and $(\Nabla{\xi} J)$ are anti-symmetric with respect to $g_J$;
\item[(iii)]
  $J(\Nabla{J\xi} J)=\Nabla{\xi} J$.
\end{enumerate}
\end{exercise}

%%%%%%%%%%%%%%%%%%%%%%%%%%%%%%%%%%
%%% SECTION %%%%%%%%%%%%%%%%%%%%%%%%
%%%%%%%%%%%%%%%%%%%%%%%%%%%%%%%%%%
\subsection{First Chern class}
Up to isomorphism, symplectic\footnote{
  equivalently, complex vector bundles, by Proposition~\ref{prop:cx=symp-VB}  
  }
vector bundles $E$ over manifolds $N$ are classified
by a family $c_k(E)\in\Ho^{2k}(N)$ of integral cohomology classes of $N$
called \textbf{Chern classes}. If $N=\Sigma$ is a closed orientable
Riemannian surface, the first Chern class is uniquely
determined by the \textbf{first Chern number} which is the integer
obtained by evaluating the first Chern class on the fundamental
cycle $\Sigma$. Thus, slightly abusing notation, in case
$E\to\Sigma$ we shall denote the first Chern number
by $c_1(E)\in\Z$.
We cite again from~\cite{mcduff:1998a}.

\begin{theorem}\label{thm:c_1}
There exists a unique functor $c_1$, called the
\textbf{\Index{first Chern number}}, which assigns an integer
$c_1(E)\in\Z$ to every symplectic vector bundle $E$ over a
closed oriented Riemann surface $\Sigma$
and satisfies the following axioms.
\begin{enumerate}
\item[{\rm \texttt{(naturality)}}]
  Two symplectic vector bundles $E$ and $E^\prime$ over $\Sigma$
  are isomorphic iff they have the same rank and the same Chern number.
\item[{\rm \texttt{(functoriality)}}]
  For any smooth map $\varphi:\Sigma^\prime\to\Sigma$ of
  oriented Riemann surfaces and any symplectic vector bundle
  $E\so\Sigma$ it holds
  $
     c_1(\varphi^*E)=\deg(\varphi)\cdot c_1(E)
  $.
\item[{\rm \texttt{(additivity)}}]
  For any two symplectic vector bundles $E_1\to\Sigma$ and
  $E_2\to\Sigma$
  $$
     c_1(E_1\oplus E_2)=c_1(E_1\otimes E_2)=c_1(E_1)+c_1(E_2).
  $$
\item[{\rm \texttt{(normalization)}}]
  The Chern number of $\Sigma$ is
  $
     c_1(\Sigma):=c_1(T\Sigma)=2-2g
  $
  where $g$ is the genus.
\end{enumerate}
\end{theorem}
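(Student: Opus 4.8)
The plan is to construct $c_1$ by the classical \emph{clutching construction} over a disk, read the axioms off this concrete description, and then deduce uniqueness by splitting every symplectic vector bundle over $\Sigma$ as a line bundle plus a trivial summand. By the strong deformation retraction $h$ of $\Sp(2n)$ onto $\U(n)$ in \eqref{eq:str-def-retr-Sp-U} together with Proposition~\ref{prop:cx=symp-VB}, it is equivalent to work with the underlying Hermitian bundles, the relevant trivializations being unitary. Fix a small embedded closed disk $\D\subset\Sigma$ and put $\Sigma_0:=\Sigma\setminus\INT\D$, a compact surface with boundary circle $C=\partial\D$. By Proposition~\ref{prop:unitary-triv}, applied once to $\Sigma_0$ and once to $\D$, there are unitary trivializations $\Phi_0$ of $E|_{\Sigma_0}$ and $\Phi_1$ of $E|_{\D}$; over the overlap $C$ the loop $g:=\Phi_1^{-1}\circ\Phi_0\colon C\to\U(n)$ is the clutching function, and I would \emph{define} $c_1(E)$ to be its Maslov index $\mu(g)\in\Z$ (equivalently, the degree of $\det\circ g$, using the boundary orientation of $C$ induced by $\Sigma$).

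Next I would check well-definedness and, with it, the axiom \texttt{(naturality)}. Two unitary trivializations of $E|_{\Sigma_0}$ differ by a smooth map $\Sigma_0\to\U(n)$ whose restriction to $C$ is null-homotopic in $\U(n)$, since the class of $C$ in $\pi_1(\Sigma_0)$ is a product of commutators while $\pi_1(\U(n))\cong\Z$ is abelian; two trivializations of $E|_{\D}$ differ by a map $\D\to\U(n)$, null-homotopic on $C=\partial\D$; and independence of the chosen disk follows because any two embedded disks in the connected surface $\Sigma$ are ambient isotopic and $\mu$ is a homotopy invariant (Exercise~\ref{exc:Maslov}, axiom \texttt{(homotopy)}). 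Conversely, homotopic clutching data produce isomorphic bundles, so $E\mapsto c_1(E)$ is a bijection from isomorphism classes of rank-$2n$ symplectic vector bundles over $\Sigma$ onto $\pi_1(\Sp(2n))\cong\Z$; this is exactly \texttt{(naturality)}. In particular every such bundle is isomorphic to $L_k\oplus\underline{\R}^{2n-2}$, where $L_k$ is the rank-$2$ bundle with clutching loop $\theta\mapsto e^{ik\theta}$ and $k=c_1(E)$, because every loop in $\Sp(2n)\simeq\U(n)$ is homotopic to $\theta\mapsto\diag(e^{ik\theta},1,\dots,1)$.

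For the other axioms: \texttt{(functoriality)} is cleanest once one observes that $c_1(E)$ is the evaluation on $[\Sigma]$ of the primary obstruction to trivializing $E$, an integral class in $H^2(\Sigma;\Z)$ natural in $\Sigma$, so $c_1(\varphi^*E)=\langle\varphi^*c_1(E),[\Sigma']\rangle=\langle c_1(E),\varphi_*[\Sigma']\rangle=\deg(\varphi)\,c_1(E)$; alternatively one homotopes $\varphi$ so that $\varphi^{-1}(\D)$ is a finite disjoint union of disks carrying signed multiplicity $\deg\varphi$ and adds the winding numbers of the pulled-back clutching loops. \texttt{(additivity)}: choosing a common disk, the clutching loop of $E_1\oplus E_2$ is the block sum $g_1\oplus g_2$, so Maslov indices add by the \texttt{(direct sum)} and \texttt{(product)} properties of $\mu$, and for line bundles $\det(g_1\otimes g_2)=\det g_1\cdot\det g_2$ yields the same for the tensor product. \texttt{(normalization)}: choose a vector field $X$ on $\Sigma$ with isolated zeros, all inside $\D$, of total index $\chi(\Sigma)=2-2g$ (Poincar\'e--Hopf); then $X/|X|$ trivializes $T\Sigma$ over $\Sigma_0$, so the clutching loop on $C$ is $X|_C$ up to homotopy, whose winding number is $\sum_p\mathrm{ind}_p(X)=2-2g$.

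Finally, uniqueness: let $c_1$ be the functor just built and $c_1'$ any candidate satisfying the axioms. Using $E\cong L_k\oplus\underline{\R}^{2n-2}$ and \texttt{(additivity)}, it suffices to show $c_1=c_1'$ on trivial bundles and on line bundles. A trivial bundle is its own pullback under a non-surjective self-map of $\Sigma$, which has degree $0$, so \texttt{(functoriality)} forces $c_1'=0=c_1$ there. For line bundles, by $c_1$-\texttt{(naturality)} every line bundle over $\Sigma$ is $\varphi_d^*\mathcal{O}(1)$ for some $\varphi_d\colon\Sigma\to\SS^2$ of degree $d\in\Z$; since $\mathcal{O}(1)^{\otimes 2}\cong T\SS^2$ has Chern number $2$ by \texttt{(normalization)}, \texttt{(additivity)} gives $c_1(\mathcal{O}(1))=1=c_1'(\mathcal{O}(1))$, and then \texttt{(functoriality)} gives $c_1(\varphi_d^*\mathcal{O}(1))=d=c_1'(\varphi_d^*\mathcal{O}(1))$, so $c_1=c_1'$. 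The step I expect to be the main obstacle is \texttt{(naturality)} itself — proving that the Chern number is a \emph{complete} invariant — since this requires the two-sided correspondence between isomorphism classes of bundles clutched over a disk and homotopy classes of clutching loops (the nontrivial direction being that all trivializations over $\Sigma_0$ agree on $C$ up to homotopy, via the commutator/abelianization argument), together with the homotopy classification of loops in $\U(n)$ by $\det$; everything else is bookkeeping on top of Propositions~\ref{prop:cx=symp-VB} and~\ref{prop:unitary-triv}.
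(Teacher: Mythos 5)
Your proof is correct and uses the same overall strategy the paper develops around Exercise~\ref{exc:c_1-Sigma}: define $c_1$ via a unitary/symplectic overlap (clutching) function over a decomposition of $\Sigma$ using Propositions~\ref{prop:cx=symp-VB} and~\ref{prop:unitary-triv}, read $c_1$ off as the Maslov index (degree of $\rho$) of that loop, verify the axioms, and deduce uniqueness by reducing to rank-$2$ bundles. The notable technical variation is in well-definedness: the paper's route goes through Lemma~\ref{le:extend-degree} (a map $\p\Sigma\to\Sp(2n)$ extends over $\Sigma$ iff $\deg(\rho\circ\Psi)=0$), which treats arbitrary splittings and multi-component cutting curves uniformly; you specialize to a single disk $\D$ and instead argue that, since $[\p\D]$ is a product of commutators in $\pi_1(\Sigma_0)$ and $\pi_1(\U(n))\cong\Z$ is abelian, any change of trivialization over $\Sigma_0$ restricts to a null-homotopic loop on $\p\D$. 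That abelianization trick is a clean, self-contained substitute for the extension lemma in the one-component case. Its downside surfaces in your ``direct'' argument for \texttt{(functoriality)}: after homotoping $\varphi$ so that $\varphi^{-1}(\D)$ is a disjoint union of disks, you are implicitly comparing a single-disk clutching description with a multi-disk one, and that comparison is exactly the independence-of-splitting statement your disk-only setup never establishes; fortunately your cohomological argument via the primary obstruction class covers functoriality without this gap (at the cost of importing the naturality of that class rather than deriving it from the construction). For uniqueness, your chain --- trivial bundles via a degree-$0$ self-map, $c_1'(\mathcal O(1))=1$ via $T\SS^2\cong\mathcal O(1)^{\otimes 2}$ together with \texttt{(normalization)} and \texttt{(additivity)}, then general line bundles via a degree-$d$ map to $\SS^2$ and \texttt{(functoriality)} --- is an explicit and correct implementation of the reduction the paper intends through Exercise~\ref{exc:Splitting-Lemma}.
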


The proof is constructive, based on the Maslov index $\mu$ for
symplectic loops: Pick a splitting $\Sigma=\Sigma_1\cup_C\Sigma_2$
such that $\p\Sigma_1=C=-\p\Sigma_2$ as \emph{oriented} manifolds.
So the union $C=S^1 \mathop{\dot{\cup}}\dots\mathop{\dot{\cup}} S^1$
of, say $\ell$, embedded 1-spheres is oriented as the boundary of
$\Sigma_1$, say by the outward-normal-first convention.
Given a symplectic vector bundle $E$ over $\Sigma$,
pick unitary\footnote{
  a \textbf{symplectic trivialization} (just required to
  identify $\omega_0$ with $\omega$) is already
  fine\index{trivialization!symplectic}
  }
trivializations
\begin{equation}\label{eq:unitary-trivs}
     \Sigma_i\times\R^{2n}\to E_i,\quad
     (x,\xi)\mapsto\Phi_i(x)\xi,\quad i=1,2
\end{equation}
and consider the \textbf{\Index{overlap map}} $\Psi:C\to\Sp(2n)$
defined by $x\mapsto\Phi_1(x)^{-1}\Phi_2(x)$.

\begin{exercise}[First Chern number]\label{exc:c_1-Sigma}
Prove uniqueness in Theorem~\ref{thm:c_1}.
Show that the first Chern number of $E\to\Sigma$
is the degree of the composition
\begin{equation*}
\begin{split}
     c_1(E)
     =\deg\Bigl(C\stackrel{\Psi}{\longrightarrow}\Sp(2n)
%     \stackrel{\rho}{\longrightarrow}\SS^1
     \xrightarrow[{(\ref{eq:map-rho})}]{\rho}
     \SS^1\Bigr)
     =\sum_{j=1}^\ell \mu(\gamma_j)
\end{split}
\end{equation*}
by verifying for $\deg(\rho\circ\Psi)$ the four axioms for the first
Chern number.
\\
(The second identity for the Maslov index $\mu$ is obvious:
Just pick an orientation preserving parametrization
$\gamma_j:\SS^1\to S^1$ for each connected component of $C$.)
\newline
[Hint: Show, or even just assume, first that $\deg(\rho\circ\Psi)$
is independent of the choice of, 
firstly, trivialization and, secondly, splitting. 
Use these two facts, whose proofs rely heavily on
Lemma~\ref{le:extend-degree} below, to verify the four axioms.]
\end{exercise}

\begin{lemma}\label{le:extend-degree}
Let $\Sigma$ be a compact oriented Riemann surface with
\emph{non-empty} boundary.
A smooth map $\Psi:\p\Sigma\to\Sp(2n)$ extends to
$\Sigma$ iff $\deg\left(\rho\circ\Psi\right)=0$.
\end{lemma}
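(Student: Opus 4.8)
The plan is to reduce the extension problem over $\Sigma$ to the same problem with target $\SS^1$, exploiting that $\Sp(2n)$ deformation retracts onto $\U(n)$ and that $\U(n)\cong\SU(n)\times\SS^1$ with $\SU(n)$ simply connected; the integer $\deg(\rho\circ\Psi)$ will turn out to be exactly the obstruction for the $\SS^1$-problem.

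\textit{Necessity.} Suppose $\Psi$ extends to a smooth $\bar\Psi\colon\Sigma\to\Sp(2n)$. Then $\rho\circ\bar\Psi\colon\Sigma\to\SS^1$ extends $\rho\circ\Psi$. The fundamental cycle of the closed $1$-manifold $\p\Sigma$ equals $\p[\Sigma,\p\Sigma]$, hence vanishes in $\Ho_1(\Sigma;\Z)$; so, writing $\theta$ for a generator of $\Ho^1(\SS^1;\Z)$ and $i\colon\p\Sigma\hookrightarrow\Sigma$ for the inclusion, naturality gives $\deg(\rho\circ\Psi)=\langle(\rho\circ\bar\Psi)^*\theta,\,i_*[\p\Sigma]\rangle=0$. (Equivalently: $\frac1{2\pi i}\int_{\p\Sigma} d\log(\rho\circ\bar\Psi)=\frac1{2\pi i}\int_{\Sigma} d\,d\log(\rho\circ\bar\Psi)=0$ by Stokes, the form $d\log$ of a nowhere-vanishing function being closed.)

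\textit{Sufficiency.} Assume $\deg(\rho\circ\Psi)=0$. By the strong deformation retraction $h$ of~(\ref{eq:str-def-retr-Sp-U}), the map $\Psi$ is homotopic to $\Psi_1:=h(1,\cdot)\circ\Psi\colon\p\Sigma\to\U(n)$; since $\p\Sigma\hookrightarrow\Sigma$ is a cofibration (it has a collar neighbourhood), $\Psi$ extends over $\Sigma$ iff $\Psi_1$ does, and — postcomposing any $\Sp(2n)$-valued extension with $h(1,\cdot)$, which restricts to the identity on $\U(n)$ — iff $\Psi_1$ extends to a map into $\U(n)$. Moreover $\deg(\rho\circ\Psi_1)=\deg(\rho\circ\Psi)=0$, and $\rho|_{\U(n)}=\det$ by~(\ref{eq:map-rho}). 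Now the diffeomorphism $\U(n)\xrightarrow{\ \sim\ }\SU(n)\times\SS^1$, $U\mapsto\bigl(U\cdot\diag(\overline{\det U},1,\dots,1),\,\det U\bigr)$ (with inverse $(A,z)\mapsto A\cdot\diag(z,1,\dots,1)$), identifies $\Psi_1$ with a pair $(\sigma,\delta)$ where $\delta=\det\circ\Psi_1$; hence $\Psi_1$ extends over $\Sigma$ iff both $\sigma\colon\p\Sigma\to\SU(n)$ and $\delta\colon\p\Sigma\to\SS^1$ extend. Triangulating $\Sigma$ with $\p\Sigma$ a subcomplex makes $(\Sigma,\p\Sigma)$ a $2$-dimensional relative CW pair, and $\sigma$ always extends — over the $1$-skeleton by path-connectedness of $\SU(n)$, and over each $2$-cell because its attaching circle maps into the \emph{simply connected} group $\SU(n)$. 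Finally $\delta$ extends: by the standard fact that a smooth map $\p\Sigma\to\SS^1$ extends over a compact oriented surface iff the sum of its degrees over the boundary circles (with induced orientations) vanishes, which here reads $\deg\delta=\deg(\det\circ\Psi_1)=\deg(\rho\circ\Psi)=0$.

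\textit{The $\SS^1$-fact, and the expected obstacle.} That fact can itself be argued as follows: as $\SS^1=K(\Z,1)$, a map $\p\Sigma\to\SS^1$ extends over $\Sigma$ iff its class lies in $\im\bigl(i^*\colon\Ho^1(\Sigma;\Z)\to\Ho^1(\p\Sigma;\Z)\bigr)=\ker\bigl(\Ho^1(\p\Sigma;\Z)\xrightarrow{\partial}\Ho^2(\Sigma,\p\Sigma;\Z)\bigr)$, from the long exact sequence of $(\Sigma,\p\Sigma)$; since $\Ho^2(\Sigma;\Z)=0$ (as $\p\Sigma\neq\emptyset$) and $\Ho^2(\Sigma,\p\Sigma;\Z)\cong\Ho_0(\Sigma;\Z)\cong\Z$ by Lefschetz duality, $\partial$ is the inclusion-induced map $\Ho_0(\p\Sigma;\Z)\to\Ho_0(\Sigma;\Z)$, i.e. the sum-of-degrees map, and necessity is again Stokes. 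The main difficulty I foresee is not conceptual but bookkeeping: pinning down the orientations so that the invariant produced by the $\SS^1$-fact is precisely $\deg(\rho\circ\Psi)$ with the boundary orientations used in Exercise~\ref{exc:c_1-Sigma}, and confirming that extending $\Psi_1$ into $\U(n)$ is no harder than extending it into $\Sp(2n)$ (which is the postcomposition-with-$h(1,\cdot)$ remark above).
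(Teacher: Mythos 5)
Your proof is correct, and since the paper states the lemma without an argument (deferring to McDuff--Salamon, cited at the start of the chapter), there is no in-text proof to compare against; the route you take — retract to $\U(n)$ via $h$, split $\U(n)\cong\SU(n)\times\SS^1$, kill the $\SU(n)$ factor using simple connectedness plus $\dim\Sigma=2$, and identify the $\SS^1$-obstruction with $\deg(\rho\circ\Psi)$ via $K(\Z,1)$/Lefschetz duality — is precisely the standard one from that source. Two small remarks: the orientation worry you flag at the end resolves itself, because $\deg(\rho\circ\Psi)$ is evaluated against $[\p\Sigma]$ and one has $[\p\Sigma]=\p[\Sigma,\p\Sigma]$ with the very same induced boundary orientation, so the Stokes/exactness step already carries the correct signs; and your $K(\Z,1)$/Lefschetz-duality identification of the obstruction with the total degree uses the tacit hypothesis that $\Sigma$ is \emph{connected} (standard for a ``Riemann surface''), since for a disconnected surface the degree would have to vanish component-by-component, not merely in total.
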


\begin{exercise}[Obstruction to triviality]\label{exc:c_1-Obstruction}
Use the axioms to show that the first Chern number $c_1(E)$ vanishes
iff the symplectic vector bundle is \textbf{trivial}, that is isomorphic
to the trivial Hermitian bundle $\Sigma\times(\R^{2n},\omega_0,J_0,
\langle\cdot,\cdot\rangle_0)$.\index{vector bundle!trivial --}
\end{exercise}

\begin{exercise}[First Chern class]\label{exc:c_1-general-base}
Suppose $E$ is a symplectic vector bundle over any manifold $N$.
Observe that the first Chern number assigns an integer $c_1(f^*E)$ to every
smooth map $f:\Sigma\to N$ defined on a given closed oriented
Riemannian surface. Use the axioms to show that this integer
depends only on the homology class of $f$ and so the
first Chern number generalizes to an integral cohomology class
$c_1(E)\in\Ho^2(N)$ called the \textbf{\Index{first Chern class}} of $E$.
\end{exercise}

The \textbf{first Chern class of a symplectic manifold},
denoted by $c_1(M,\omega)$ or just by $c_1(M)$, is the first
Chern class of the tangent bundle $E=TM$.

\begin{exercise}[Splitting Lemma\footnote{
  There is a far more general theory behind called \emph{splitting principle};
  see e.g.~\cite[\S 21]{bott:1982a}.
  }\label{exc:Splitting-Lemma}]
Every symplectic vector bundle $E$ over a closed oriented Riemannian
surface $\Sigma$ decomposes as a direct sum of rank-2
symplectic vector bundles.
\newline
 [Hint: View $E$ as complex vector bundle with $\C$-dual $E^*$,
so $c_1(E)=-c_1(E^*)=-c_1(\Lambda^n E^*)=c_1((\Lambda^n E^*)^*)$;
cf.~\cite[p.414]{Griffiths:1978a}. Remember \texttt{(naturality)}.]
\end{exercise}

\begin{exercise}[Lagrangian subbundle]\label{exc:Lag-sub-bdle}
Suppose $E\to\Sigma$ is a symplectic vector bundle over a
closed oriented Riemannian surface. If $E$ admits a Lagrangian
subbundle $L$, then the first Chern number $c_1(E)=0$ vanishes.
(Consequently the vector bundle $E$ is unitarily trivial by
Exercise~\ref{exc:c_1-Obstruction}.)
\newline
[Hint: The unitary trivializations~(\ref{eq:unitary-trivs})
identify Lagrangian subspaces. 
Modify them so that each Lagrangian in $L$
gets identified with the horizontal Lagrangian $\R^n\times 0$.
Then the overlap map $\Psi$ will be of the form~(\ref{eq:map-rho})
with $Y=0$, so the determinant is real
and the degree therefore zero.]
\end{exercise}

%%%%%%%%%%%%%%%%%%%%%%%%%%%%%%%%%%
%%%%%%%%%%%%%%%%%%%%%%%%%%%%%%%%%%
%%% SECTION %%%%%%%%%%%%%%%%%%%%%%%%
%%%%%%%%%%%%%%%%%%%%%%%%%%%%%%%%%%
%%%%%%%%%%%%%%%%%%%%%%%%%%%%%%%%%%
\section{Hamiltonian trajectories}\label{sec:Ham-flows}

In the following we shall distinguish the analytic point
of view (maps) from the topological, often geometrical, point of view
(subsets, often submanifolds). We use the following terminology to
indicate
\begin{equation*}
\begin{split}
   &\textsf{maps: paths, loops, solutions, trajectories, \dots}
     \\
   &\textsf{subsets: curves, flow lines, closed characteristics, \dots.}
\end{split}
\end{equation*}
Sometimes it is convenient to use one and the same term in both
worlds and employ two adjectives to indicate the analysis or the
geometry point of view: 
Since periodicity is a property of maps, whereas closedness (compact
and no boundary) is a property of submanifolds, our convention is that
\begin{equation*}
\begin{split}
   &\textsf{``periodic'' qualifies maps: periodic orbits, periodic geodesics,}
     \\
   &\textsf{``closed'' qualifies subsets: closed orbits, closed geodesics.}
\end{split}
\end{equation*}
Closed geodesics are immersed circles and so are non-point closed orbits.

%%%%%%%%%%%%%%%%%%%%%%%%%%%%%%%%%%
%%% SUBSECTION %%%%%%%%%%%%%%%%%%%%%
%%%%%%%%%%%%%%%%%%%%%%%%%%%%%%%%%%
\subsection{Paths, periods, and loops}\label{sec:loops}
Suppose $N$ is a manifold.

%%% SUBSUBSECTION %%%%%%%%%%%%%%%%%%%
%%%%%%%%%%%%%%%%%%%%%%%%%%%%%%%%%%

\begin{definition}[Paths and curves, closed, simple, constant]
\label{def:paths_and_curves-continuous}
A \textbf{\Index{path}} is a smooth \emph{map} of the form
$\gamma:\R\to N$, whereas a \textbf{\Index{finite path}}\index{path!finite --}
is a smooth map $\alpha:[a,b]\to N$ that is defined on a compact
interval where $a\le b$.
The images $\cc=\gamma(\R)$ and
$\cc=\alpha([a,b])$ are connected subsets of $N$,
called \textbf{\Index{curve}s} in $N$.
%Note. A finite path is not a path.
\newline
In case $a=b$ we call $\alpha_p:\{a\}\to N$, $a\mapsto p$,
alternatively $\gamma_p:\R^0\to N$, $0\mapsto p$, a
\textbf{\Index{point path}} and its image curve\index{path!point --}
$\cc_p=\{p\}$ a \textbf{\Index{point}}.\index{curve!point --}
Note: Point paths are automatically embeddings and points
embedded submanifolds.

A\index{path!simple --}\index{curve!simple --}\index{simple path}
path $\beta$, finite or not, is called \textbf{simple} if it is
injective along the interior of the domain or, equivalently,
if it does not admit self-intersections $\beta(t)=\beta(s)$
at times $t\not= s$ in the interior of the domain.
The image of a simple path is called a \textbf{\Index{simple curve}}.
At the other extreme are paths, finite or
not,\index{path!constant --}\index{constant path}
whose images consist of a single point only.
These are called \textbf{constant paths}.
Note: A constant path is simple iff it is a point path.

We say that a\index{path!finite!closes up with order $\ell$}
finite\index{finite path!closes up with order $\ell$}
path $\alpha:[a,b]\to N$ \textbf{closes up with order $\mbf{\ell}$}
if initial and end point $\alpha(a)=\alpha(b)$ coincide,
together with all derivatives up to order $\ell$. 
In case all derivatives close up ($\ell=\infty$) we speak of a
\emph{finite path that closes up smoothly}, if in addition $a\not= b$ we speak
of a \textbf{\Index{loop} of period $\mbf{b-a}$}.\index{curve!closed --}
(For $a=b$ we already assigned the name point path.
A loop can be constant though.)

There are corresponding notions in other categories,
e.g. the elements of $C^k(\R,N)$ are called $C^k$ paths,
there are e.g. $W^{1,2}$ loops and so on.
\end{definition}

\begin{definition}[Paths, periodic and non-periodic]\label{def:loops_and_periods}
Consider a path $\gamma:\R\to N$.
If there is a real $\tau\not= 0$ such that\index{path!$\tau$-periodic --}
$\gamma(\tau+\cdot)=\gamma(\cdot)$,
then $\gamma$ is called a \textbf{\Index{periodic path}} and $\tau$
\textbf{a period} of\index{path!period of --}
$\gamma$.\index{period!of path}\index{path!periodic --}
One also says that the path $\gamma$
is\index{$\tau$-periodic path}\index{periodic!$\tau$- --}
\textbf{$\mbf{\tau}$-periodic}.
If there is no such $\tau\not=0$, then $\gamma$
is called \textbf{\Index{non-periodic}}.\index{path!non-periodic --}
By definition $\tau=0$ is considered a period
of any path, called the\index{path!trivial period of --}
\textbf{\Index{trivial period}}.\index{period!trivial --}
Let $\Per(\gamma)$ be the \textbf{\Index{set of all periods}} of
$\gamma$, including the trivial
period\index{$\Per(\gamma)$ period set of $\gamma:\R\to N$}
$0$.\index{periods!set of --}
\end{definition}

Observe that $\Per(\gamma)=\{0\}$ iff $\gamma$ is a non-periodic path
and $\Per(\gamma)=\R$ iff $\gamma$ is a constant path.
Do not confuse \emph{finite path that closes up} with \emph{periodic path} --
the domains $[a,b]$ and $\R$ are different.
However, a finite path $\alpha:[0,b]\to N$ closing up with all derivatives
comes with an\index{$\alpha^\#$ associated periodic path}
\textbf{associated $\mbf{b}$-periodic path}\index{path!associated periodic --}
$$
     \alpha^\#:\R\to N,\quad
     t\mapsto\alpha(t\bmod b),\qquad
     t\bmod 0:=0.
$$
Note that if $\alpha$ is a point path, then $\alpha^\#\equiv\alpha(0):\R\to N$ is
a constant path.
Vice versa, a non-constant $\tau$-periodic path $\gamma:\R\to N$ is
the infinite concatenation of the closed finite paths
$\alpha_k=\gamma|:[k\tau,(k+1)\tau]\to N$, $k\in\Z$.

\begin{exercise}
Given a path $\gamma:\R\to N$, show that $\Per(\gamma)$ is a closed
subgroup of $(\R,+)$.
With the convention $\inf\emptyset=\infty$
define\index{loop!minimal period of --}
the \textbf{minimal} or\index{period!minimal --}
\textbf{\Index{prime period}}\index{minimal period}
$$
     \tau_\gamma:=\inf\{\tau\in\Per(\gamma)\mid\tau>0\}
     \in[0,\infty].
$$
Show that the \textbf{period group}
$\Per(\gamma)$\index{period group of a path}
of\index{path!period group of --}
a\index{$\Per(\gamma)$ period group of a path}
path comes in three
flavors, namely\index{period!prime --}\index{loop!prime period of --}
\begin{equation}\label{eq:orbits-types}
     \Per(\gamma)
     =
     \begin{cases}
        \R&\text{, if $\tau_\gamma=0$, constant path (periodic),}
        \\
        \tau_\gamma\Z&
        \text{, if $\tau_\gamma>0$, non-constant periodic path,}
        \\
        \{0\}&\text{, if $\tau_\gamma=\infty$, non-periodic path (non-constant),}
     \end{cases}
\end{equation}
[Hint: Consult~\cite[Prop.\,1.3.1]{Palais:2009a} if you get stuck.]
If the period groups of two paths that have the same image curve are
equal, will they in general become equal after suitable time shift?
\end{exercise}

\begin{definition}[Prime and divisor parts, loops]
\label{def:prime-part}
A \textbf{\Index{divisor part}} 
of\index{path!periodic --!divisor part of -- --}
a\index{periodic path!divisor part of --}
periodic path $\gamma:\R\to N$ is a 
finite path of the form
\begin{equation}\label{eq:divisor-part}
     \gamma_\tau:[0,\abs{\tau}]\to N,\quad 
     t\mapsto
     \begin{cases}
        \gamma(t)&\text{, $\tau\ge0$,}\\
%        \gamma(0)&\text{, $\tau=0$,}\\
        \hat\gamma(t):=\gamma(-t)&\text{, $\tau<0$,}
     \end{cases}
\end{equation}
one for each period $\tau\in\Per(\gamma)$.
For $\tau\not=0$ the map on the quotient\footnote{
  By $u_\tau$ we also denote ``freezing the variable $\tau$'', but
  application context should be different. 
  }
\begin{equation}\label{eq:conv-or-path}
     \gamma_\tau:\R/\tau\Z\to N,\quad
     [t]\mapsto
     \begin{cases}
        \gamma(t)&\text{, $\tau>0$,}\\
%        \gamma(0)&\text{, $\tau=0$,}\\
        \hat\gamma(t)=\gamma(-t)&\text{, $\tau<0$,}
     \end{cases}
\end{equation}
is\index{$\gamma_\tau$ sign of period $\tau$ determines direction}
the \textbf{loop associated to the non-zero period}
$\tau\in\Per(\gamma)=\tau_\gamma\Z$
\textbf{of the non-constant path}
$\gamma$. To the trivial period $\tau=0$ we associate the point path
\[
     \gamma_0:\R^0=\{0\}\INTO N,\quad 0\mapsto\gamma(0),
\]
which we do not call a loop.
A \textbf{\Index{loop}} is a map of the
form~(\ref{eq:conv-or-path}). 
%
%Loops are non-constant and often denoted
%by $\gamma:\R/\tau\Z\to N$, the domain indicating the period.
%
In case the minimal period $\tau_\gamma$ is positive and finite
we denote the associated divisor part and loop by
\begin{equation}\label{eq:prime-part}
     \ppgamma:[0,\tau_\gamma]\to N,\quad
     \ppgamma:\R/\tau_\gamma\Z\to N,\qquad
     t\mapsto\gamma(t),
\end{equation}
called the \textbf{prime part}\index{prime part!of periodic path}
and\index{path!periodic --!prime part of -- --}
the \textbf{prime loop} of a (non-constant) periodic path.
It is useful to call the loop $\ppgamma$ also the \textbf{prime loop}
of any of the loops $\gamma_\tau$ associated to a positive
period of $\gamma$.
A \textbf{\Index{simple loop}}\index{loop!simple --}
is an injective prime loop $\ppgamma$, equivalently,
the finite path $\ppgamma:[0,\tau_\gamma]\to N$ must be simple.
Observe that
$
     \gamma_{k\tau_\gamma}:\R/k\tau_\gamma\Z\to N
$
is a $k$-fold cover of $\ppgamma$.
\end{definition}

\begin{exercise}
Find a path whose prime part is not simple. Show that a simple loop
which is an immersion is an \textbf{\Index{embedding}}
(an injective immersion that is a homeomorphism onto its image)
of the unit circle.
\end{exercise}

Do not confuse the \emph{prime period $\tau_\gamma$} of a path with
the\index{time $T>0$ of first return}
\textbf{time $\mbf{T>0}$ of first return},
often called \textbf{time of first continuous return} or
\textbf{of order zero} and denoted by\index{first return!time of --}
$\mbf{T_0}$, namely $\gamma(T)=\gamma(0)$ but
$\gamma(t)\not=\gamma(0)$ at earlier times $t\in(0,T)$.\footnote{
  Similarly define the time $T_\ell>0$ of
  \textbf{first return of order $\mbf{\ell}$}
  by\index{first return of order $\ell$}
  the condition $\gamma(T_\ell)=\gamma(0)$,
  together with all derivatives up to order $\ell$,
  and this is not the case at any earlier time $t\in(0,T_\ell)$.    
  }
To see the difference consider a\index{first return!time of --}
figure eight with $\gamma(0)$ being the crossing point. 
For trajectories of smooth \emph{autonomous} vector fields both
notions coincide, prime parts are automatically simple, and prime
loops are\index{loop!embedded --}\index{embedded!loop}
circle embeddings; cf. Exercise~\ref{exc:loop-type-geod}!
For a periodic \emph{immersion} $\gamma:\R\to N$, thus non-constant,
an associated loop $\gamma_\tau$ is simple iff it is an embedding iff
it is the prime loop.

\begin{remark}[Negative periods]
Given a non-constant periodic path $\gamma$,
note that for negative periods $\tau$
divisor parts~(\ref{eq:divisor-part})
and\index{period!negative}
associated\index{path!backward --}\index{loop!running backwards}
loops~(\ref{eq:conv-or-path}) \emph{run backwards}, more precisely, they
follow\index{$\hat\tau:=-\tau$}
the\index{$\hat\gamma:=\gamma(-\cdot)$ time reversed path}
\textbf{\Index{time reversed path}}\index{path!time reversed --}
$$
     \hat\gamma(t):=\gamma(-t),\quad t\in\R,\qquad
     \hat\tau:=-\tau.
$$
Certainly $\gamma$ is $\tau$-periodic iff $\hat\gamma$
is $\hat\tau$-periodic and $\Per(\hat\gamma)=\Per(\gamma)$.
\end{remark}

\begin{definition}[Concatenation of finite paths and loops]\mbox{ }

(i)~Consider\index{paths!finite --!consecutive -- --}
two \textbf{consecutive} finite paths, that is\index{consecutive finite paths}
$\alpha:[a,b]\to N$ and $\beta:[b,c]\to N$ such that $\alpha$ ends at
the point $\alpha(b)=\beta(b)$ at which $\beta$ begins, together
with all derivatives.\footnote{
  If the domains are $[a,b]$ and $[c,d]$ replace $\beta$ by
  $\tilde\beta:[b,b+d-c]\ni t\mapsto \beta(t-b+c)$.
  }
The\index{$\beta\#\alpha$ concatenation of finite paths}
\textbf{concatenation} of two consecutive finite paths
is the finite path\index{concatenation!of finite paths}
defined by following first $\alpha$ and then $\beta$, notation
$$
     \beta\#\alpha:[a,c]\to N.
$$
Finite paths $\alpha:[0,b]\to N$ closing up with all derivatives are
self-concatenable: For $k\in\Z$ consider the
\textbf{$\mbf{k}$-fold concatenation}
$\alpha\#\dots\#\alpha:[0,\abs{kb}]\to N$; it is traversed backwards
in case $k<0$, see~(\ref{eq:divisor-part}). In case $b> 0$ denote by
\begin{equation}\label{eq:k-conc-path}
     \alpha^{\#k}:\R/kb\Z\to N,\qquad
     k\in\Z^*
\end{equation}
the \textbf{associated $\mbf{kb}$-periodic loop}; mind
convention~(\ref{eq:conv-or-path}) if $k<0$. To $k=0$ associate the
point path $\alpha^{{\#}0}(0):=\alpha(0)$ with domain $\R^0=\{0\}$.

(ii)~Suppose $\gamma_\tau$ is a loop with period $\tau$. Use
in~(\ref{eq:k-conc-path}) the finite path $\alpha$ given by the
divisor part $\gamma_\tau:[0,\abs{\tau}]\to N$
from~(\ref{eq:divisor-part}) to get the loop
$$
     \gamma_{k\tau}=\gamma^{\#k}_\tau:
     \R/k\tau\Z %=:\SS^1_{k\tau}
     \to N,
     \qquad \tau\in\Per(\gamma)\setminus\{0\},\quad
     k\in\Z^*,
$$
of period $k\tau$. It is
a \textbf{$\mbf{k}$-fold cover} of the $\tau$-periodic
loop\index{loop!${k}$-fold cover of --}
$\gamma_\tau$.\index{$\gamma^{\#k}_\tau$ ${k}$-fold cover of loop}
In particular, the loop
$
     \gamma^{\#k}_{\tau_\gamma}=\ppgamma^{\#k}:
     \R/k\tau_\gamma\Z \to N
$
is a $k$-fold cover of the prime loop of $\gamma_\tau$.
\end{definition}

\begin{definition}[Time shift and uniform change of speed]
Certainly\index{$z_{(T)}:=z(T+\cdot)$, $u_{(\sigma)}$ time-shift}
if a\index{$z^T:=z(T\cdot)$ speed change} 
path $\gamma:\R\to N$ is $\tau$-periodic, then so is any
\textbf{\Index{time shift}ed} path
$$
     \gamma_{(T)}:=\gamma(T+\cdot), \qquad T\in\R,\qquad
     \Per(\gamma_{(T)})=\Per(\gamma)=\tau_\gamma\Z.
$$
The operation \textbf{uniform change of speed}
applied to a loop\index{loop!uniform change of speed}
$\gamma$, namely\index{change of speed}
\begin{equation}\label{eq:k-speed}
     \gamma^\mu:=\gamma(\mu\cdot),\quad \mu\in\R,\qquad
     \text{hence $\gamma^0\equiv\gamma(0)$,}
\end{equation}
produces the new prime period
$\tau_{\gamma^\mu}=\tfrac{1}{\mu}\cdot\tau_\gamma$,
if $\mu\not=0$, and $\tau_{\gamma^0}=0$.
\end{definition}

\begin{remark}[In/compatibilities]
The operation of $k$-fold self-concatenation $\gamma^{\#k}_\tau$ of
a $\tau$-periodic loop $\gamma_\tau$ is compatible with ODEs and also
preserves periods in the sense that $\tau$ is still a period after the operation.
Period preservation also holds true for uniform \emph{integer} speed
changes $\gamma^k_\tau$, but for $k\not=1$ these do in general not map
ODE solutions to solutions. Examples are integral trajectories of a
vector field ($1^{\rm st}$ order ODE).
However, there are important cases where solutions are mapped to
solutions, e.g. periodic geodesics ($2^{\rm nd}$ order ODE).
\end{remark}

%%% SUBSUBSECTION %%%%%%%%%%%%%%%%%%%
%%%%%%%%%%%%%%%%%%%%%%%%%%%%%%%%%%
\begin{exercise}[Loops and periods -- immersed case]
An \textbf{\Index{immersion}} is a smooth map whose differential
is injective at every point.
Starting from Definition~\ref{def:paths_and_curves-continuous}
redo all definitions and constructions replacing
\emph{path} by \emph{immersed path}
and investigate if and how things change.
\end{exercise}

%%% SUBSUBSECTION %%%%%%%%%%%%%%%%%%%
%%%%%%%%%%%%%%%%%%%%%%%%%%%%%%%%%%
\begin{exercise}[Loops and periods -- embedded case]
Consider only immersed paths $\gamma:\R\IMTO N$.
Then a loop is an \emph{embedding} iff it is simple, in which case
it is prime\index{loop!embedded --}\index{embedded!loop}
(but prime is not sufficient).
Investigate if and how the previous constructions
change on the space of embedded loops.
A crucial observation is that, given a non-constant periodic
trajectory of an \emph{autonomous} smooth vector field on $N$,
an associated loop is embedded iff it is prime.
\end{exercise}

%%%%%%%%%%%%%%%%%%%%%%%%%%%%%%%%%%
%%% SUBSECTION %%%%%%%%%%%%%%%%%%%%%
%%%%%%%%%%%%%%%%%%%%%%%%%%%%%%%%%%
\subsection{Hamiltonian flows}\label{sec:Ham-flows-NEW}
Throughout $(M,\omega)$ is a symplectic manifold
and, as usual, everything is smooth.

%%% SUBSUBSECTION %%%%%%%%%%%%%%%%%%%
%%%%%%%%%%%%%%%%%%%%%%%%%%%%%%%%%%
\subsubsection{Autonomous Hamiltonians $\mbf{F}$}
Given a function $F:M\to\R$, by non-degeneracy of $\omega$ the
identity of $1$-forms
\begin{equation}\label{eq:X_H}
     dF=-i_{X_F}\omega:=-\omega(X_F,\cdot)
\end{equation}
determines a vector field $X_F=X_F^\omega$ on $M$, called the
\textbf{\Index{Hamiltonian vector field}} associated to $H$
or the \textbf{\Index{symplectic gradient}} of
$H$.\index{$X_H$ Hamiltonian vector field}\index{$X_H=J\nabla H$ for $\omega$-compatbile $J$}
The function $F$ is called the
\textbf{\Index{Hamiltonian}} of the dynamical system $(M,X_F)$
and it is called \textbf{autonomous}
since\index{autonomous Hamiltonian}\index{Hamiltonian!autonomous}
it does not depend on time.
For $\omega$-compatible almost complex structures $J$ the
Hamiltonian vector field is given by
\begin{equation}\label{eq:X_H-omega-comp-J}
     X_F=J\nabla F
\end{equation}
where the \textbf{\Index{gradient $\nabla F$}} is taken with respect
to the induced Riemannian metric, that is $\nabla F$ is determined
by $dF=g_J(\nabla F,\cdot)$.
We denote the flow generated by the Hamiltonian vector field
of an \emph{autonomous} Hamiltonian by $\phi=\{\phi_t\}$,
alternatively by $\phi^F=\{\phi_t^F\}$, as opposed to the
greek letter $\psi=\{\psi_t\}$ used in case of non-autonomous
Hamiltonians which are usually denoted
by\index{$\psi^H$ flow of non-auton. Ham. $H$}
$H=H_t$.\index{$\phi^F$ flow of auton. Ham. $F$}

An \textbf{\Index{energy level}} is a pre-image set
$F^{-1}(c)\subset M$ where $F$ is an autonomous Hamiltonian.
It is called an \textbf{\Index{energy surface}}
if\index{$S=F^{-1}(c)$ energy surface}
$c$ is a regular value of $F$, notation 
$
     S=F^{-1}(c)
$.
Hence energy surfaces $S$ contain no singularities (zeroes) of
$X_F$,\index{flow!stationary point of --}
equivalently, no \textbf{\Index{stationary point}s}\footnote{
  This means that $\phi_t p=p$ for \emph{all} times or, in other words,
  for the members of the \emph{whole} family $\{\phi_t\}$. For distinction
  we use the term \textbf{\Index{fixed point}} in the context of an
  individual map: If $\phi_Tp=p$, then $p$ is called a
  fixed point (of the map $\phi_T$).
  }
of the flow, and by the regular value theorem $S$ is a smooth
codimension one submanifold of~$M$. Most importantly, the Hamiltonian
flow preserves its energy levels:
\begin{equation}\label{eq:energy-preservation}
     \frac{d}{dt} F(\phi_t^F p)=0,\qquad
     F:M\to\R,
\end{equation}
for every initial condition $p\in M$.

\begin{exercise}
Show that~(\ref{eq:X_H}) determines $X_F$ uniquely.
Prove that $X_F$ is tangent to energy surfaces and,
slightly more general, prove~(\ref{eq:energy-preservation}).
\end{exercise}

\begin{remark}[Periodic vs closed and autonomous vs time-dependent]
\label{rem:closed-vs-periodic}\mbox{ }

a)~Suppose the flow\index{flow!complete}\index{complete flow}
of $X_F$ is complete, that is $\phi=\{\phi_t\}_{t\in\R}$.
The solution $z(t):=\phi_t z_0$, $t\in\R$, of $\dot z=X_F(z)$
with\index{Hamiltonian!path}
$z(0)=z_0$ is called a \textbf{Hamiltonian path} or\index{path!Hamiltonian}
a \textbf{\Index{flow trajectory}} -- note the domain
$\R$.\index{Hamiltonian!trajectory}
A solution\index{trajectory}
is either a line immersion $z:\R\IMTO M$ (embedded $z(\R)\cong\R$
or with self-tangencies $z(\R)\cong\SS^1$, but not self-transverse)
or it is constant $z(\R)=\{\pt\}$.
The image $\cc$ of a flow trajectory $z:\R\to M$
is called a \textbf{\Index{flow line}} or an
\textbf{\Index{integral curve}} of the Hamiltonian vector field.
If the solution path forms a loop $z:\R/\pertau\Z\to M$
(possibly constant but $\tau\not=0$ by definition of loop)
we call it a\index{Hamiltonian!loop}\index{loop!Hamiltonian --}
\textbf{Hamiltonian loop} or a \textbf{\Index{periodic orbit}}
of period $\tau$ (note the circle domain)
and its image $\cc=z(\R)$ a\index{orbit!periodic --}
\textbf{\Index{closed orbit}}.\index{orbit!closed --}

b)~\emph{Autonomous} (time-independent) vector fields $X$ on a
manifold $N$: Their\index{autonomous}
closed orbits are closed submanifolds of dimension,
either one (an embedded circle), or zero (a point).
For a non-constant periodic trajectory $z$ of $X$ the prime period
$\tau_z$ coincides with the time $T_0$ of first \emph{continuous} return.

c)~\emph{Non-autonomous} vector fields $X_t$ on $N$:
Here any geometric property of the image of a non-constant
trajectory $\gamma:\R\to M$ is lost, in general.\footnote{
  Unless one looks at the corresponding trajectory $t\mapsto(t,\gamma(t))$ in
  $\R\times N$.
  }
This is due to the possibility that $X_t$ could be
zero for a time interval of positive length during which the trajectory
will rest at a point, say $p$. Switching on again a suitable vector
field one can leave $p$ in any desired direction. 
Compared to the autonomous case $X$,
not only the immersion property is lost, but there
can be arbitrary self-intersections.
So $\gamma(\R)$ is in general nothing but a subset of $N$.
The same argument shows that the notion
of time of first return, even with infinite order,
is meaningless, it would not imply periodicity
of a non-constant trajectory $\gamma:\R\to M$.
For these reasons, in case of a non-autonomous vector field,
we do not call the image of a periodic trajectory
a closed orbit, it will be called just an image.
However, periodic solutions $\gamma:\R/\tau\Z\to M$ may
exist and these will still be called \textbf{\Index{periodic orbit}s}.

d)~Concerning solutions of autonomous $2^{\rm nd}$ order ODE's
see Exercise~\ref{exc:loop-type-geod}.
\end{remark}

\begin{remark}\label{rem:period-1}
Given $F:M\to\R$, note that $z$ is a ${\color{cyan}\tau}$-periodic
trajectory of $X_F$ iff $z^\tau=z(\tau\cdot)$ is a $1$-periodic
trajectory of ${\color{cyan}\tau} X_F$.
More generally, that $z$ is a ${\color{cyan}\tau}$-periodic
trajectory of a ${\color{cyan}\tau}$-periodic vector field
$X_t$ is equivalent to $z^\tau=z(\tau\cdot)$ being a
$1$-periodic trajectory of the $1$-periodic vector field
$X_{t\color{cyan}\tau}$.
\end{remark}

\begin{remark}[Multiple cover problem -- variable period]
\label{rem:mult-cover-problem}
This doesn't refer to change of speed, but to path concatenation:
Given a $\tau$-periodic trajectory $z:\R\to M$ of $X_t$, consider that
same map instead of on $[0,\tau]$ on the larger domain
$[0,k\tau]$, $k\in\N$, to get a periodic \emph{trajectory} $k$ times
covering $z$ -- \emph{same speed} but $k$-fold time.
\end{remark}

\begin{proposition}[$C^1$ and $C^2$ small Hamiltonians,
{\cite[\S 6.1]{hofer:2011a}}]\label{prop:HZ-C2small}
Suppose $M$ is a closed symplectic manifold.
Sufficiently $C^1$ small Hamiltonians $H:\SS^1\times M\to\R$
do not admit \emph{non-contractible} \underline{$1$}-periodic orbits.
Sufficiently $C^2$ small \underline{autonomous} Hamiltonians $F:M\to\R$
do not admit \underline{$1$}-periodic orbits at all --
except the constant ones sitting at the critical points.
\end{proposition}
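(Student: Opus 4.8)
The plan is to reduce everything to two elementary facts: a $1$-periodic trajectory of a $C^1$-small Hamiltonian vector field is a \emph{short} loop, and a non-constant short periodic orbit sitting inside a Darboux chart is forbidden by a Poincar\'e--Wirtinger inequality as soon as the Hessian of $F$ is small.

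First I would fix an auxiliary Riemannian metric $g$ on the closed manifold $M$. Since $\omega$ is non-degenerate and $M$ is compact, the bundle isomorphism $v\mapsto\omega(v,\cdot)$ from $TM$ to $T^*M$ and its inverse are uniformly bounded, so there is a constant $C_0=C_0(M,\omega,g)$ with $\abs{X_{H_t}(p)}_g\le C_0\,\abs{d_xH_t(p)}_g$ and $\abs{X_F(p)}_g\le C_0\,\abs{dF(p)}_g$ for all $p$ and $t$ (use~\eqref{conv:Ham-VF},~\eqref{eq:X_H}); in particular these fields are $C^0$-small once $H$, resp.\ $F$, is $C^1$-small. \emph{First statement.} If $z\colon\R/\Z\to M$ solves $\dot z=X_{H_t}(z)$, then $\mathrm{length}(z)=\int_0^1\abs{\dot z(t)}_g\,dt\le C_0\norm{H}_{C^1}$, since only the fibrewise differential $d_xH_t$ enters. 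Pick $\rho\in(0,\mathrm{inj}(M,g))$, positive by compactness. If $\norm{H}_{C^1}$ is small enough that $C_0\norm{H}_{C^1}<\rho$, then every point of the loop $z$ lies within distance $\rho/2$ of $z(0)$ (go the short way around), so $z$ is contained in the geodesic ball $B_\rho(z(0))$, which is the diffeomorphic $\exp_{z(0)}$-image of a Euclidean ball and hence contractible; thus $z$ is null-homotopic in $M$. So $C^1$-small $H$ admits no non-contractible $1$-periodic orbit.

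\emph{Second statement.} The constant $1$-periodic orbits $z\equiv p$ of $X_F$ are precisely those with $X_F(p)=0$, i.e.\ with $dF(p)=0$ by~\eqref{eq:X_H}, i.e.\ the critical points of $F$; these are always present, and the claim is that for $C^2$-small $F$ there are no others. Fix a finite cover of $M$ by Darboux charts; by the Lebesgue-number lemma there is $r_0>0$ such that every $g$-ball of radius $r_0$ lies in one of them. Let $z\colon\R/T\Z\to M$ be a non-constant periodic orbit with $T\le1$. As above $\mathrm{length}(z)\le T\,C_0\norm{F}_{C^1}\le C_0\norm{F}_{C^1}$, so for $F$ sufficiently $C^1$-small this is $<r_0$ and $z$ lies inside a single Darboux chart, which I identify with an open subset of $(\R^{2n},\omega_0)$. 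There $X_F=J_0\nabla F$ with the Euclidean gradient (compare~\eqref{eq:X_H-omega-comp-J},~\eqref{eq:signs-HamEq}), so the closed curve $z$ satisfies $\dot z=J_0\nabla F(z)$. Set $w:=\dot z=X_F\circ z$. Since $z$ closes up, $\int_0^Tw\,dt=z(T)-z(0)=0$, so $w$ has mean zero and the Poincar\'e--Wirtinger inequality on the circle of circumference $T$ applies; moreover $\dot w=\tfrac{d}{dt}\bigl(J_0\nabla F(z)\bigr)=J_0\,(\nabla^2F)(z(t))\,w(t)$, whence $\abs{\dot w(t)}\le\kappa\,\abs{w(t)}$ pointwise, where $\kappa$ is the supremum of $\norm{(\nabla^2F)(p)}$ over $p$ in the chart. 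Combining,
\[
   \|w\|_{L^2}^{2}\ \le\ \Bigl(\tfrac{T}{2\pi}\Bigr)^{2}\|\dot w\|_{L^2}^{2}\ \le\ \Bigl(\tfrac{T\kappa}{2\pi}\Bigr)^{2}\|w\|_{L^2}^{2}.
\]
As $z$ is non-constant, $w\not\equiv0$ and $\|w\|_{L^2}>0$, so $T\ge 2\pi/\kappa$. Now $\kappa$ is bounded by a constant times $\norm{F}_{C^2}$, the constant depending only on $M$, $\omega$ and the fixed finite atlases, so for $F$ sufficiently $C^2$-small we get $\kappa<2\pi$, hence $T\ge2\pi/\kappa>1$, contradicting $T\le1$. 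Thus no non-constant periodic orbit of $X_F$ has period $\le1$; in particular there is no non-constant $1$-periodic orbit. (The threshold $\kappa<2\pi$ is exactly the one in the \texttt{(signature)} axiom of Theorem~\ref{thm:CZ}, where it guarantees that $t\mapsto e^{tJ_0S}$ has no eigenvalue $1$ at time $1$.)

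The smallness reductions — the length estimate and the Lebesgue-number argument placing the whole orbit in one chart — are routine bookkeeping. The step that will require care is the identity $\dot w=J_0(\nabla^2F)(z)\,w$ with the honest pointwise bound $\abs{\dot w}\le\kappa\abs{w}$: differentiating $X_F\circ z$ intrinsically on $M$ would involve a connection and produce curvature and first-order terms, so the essential move is to corral the orbit into a Darboux chart first, where $X_F=J_0\nabla F$ holds on the nose; one must additionally check that the chart-coordinate $C^2$-norm of $F$ is dominated by $\norm{F}_{C^2}$ uniformly over the finitely many charts, which is where compactness of $M$ re-enters.
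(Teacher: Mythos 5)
Your proof is correct and follows essentially the same route as the paper's sketch: a length estimate from $C^1$-smallness showing the orbit fits in a geodesic ball (hence contractible), plus a Darboux-chart reduction and Wirtinger inequality for the autonomous $C^2$-small case. The paper outsources the latter to page 185 of Hofer--Zehnder; what you have written out -- set $w=\dot z$, note $\int_0^T w=0$ so Wirtinger gives $\|w\|_{L^2}\le\frac{T}{2\pi}\|\dot w\|_{L^2}$, then $\dot w=J_0(\nabla^2F)(z)w$ so $\|\dot w\|_{L^2}\le\kappa\|w\|_{L^2}$, forcing $w\equiv0$ when $\kappa<2\pi$ -- is precisely that argument.
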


\begin{proof}[Idea of proof]
Pick an $\omega$-compatible almost complex structure
to conclude that the length of a periodic orbit $z$ of period one is
small if the Hamiltonian is $C^1$ small, autonomous or not.
Indeed
$$
     \mathrm{length}(z)
     =\int_0^1\Abs{\dot z(t)}dt
     =\int_0^1\Abs{\nabla H_t(z(t))}dt.
$$
But a short loop $z$ in a compact manifold is contractible
and its image is covered by a Darboux chart.
For autonomous $F:M\to\R$ the argument
on page 185 in~\cite{hofer:2011a}
shows that $\dot z=0$ whenever the Hessian
of $F$ is sufficiently small.
\end{proof}

%%% SUBSUBSECTION %%%%%%%%%%%%%%%%%%%
%%%%%%%%%%%%%%%%%%%%%%%%%%%%%%%%%%
\subsubsection{Non-autonomous Hamiltonians $\mbf{H}$}
A time dependent Hamiltonian $H:\R\times M\to\R$,
notation $H_t(x):=H(t,x)$, generates a time dependent Hamiltonian
vector field $X_t:=X_{H_t}$ by considering~(\ref{eq:X_H}) for each time $t$.
One obtains a family $\psi_t=\psi_t^H$ of symplectomorphisms\footnote{
  A \textbf{\Index{symplectomorphism}}
  is a diffeomorphism preserving the symplectic form:
  $\psi^*\omega=\omega$.
  }
on $M$, called the \textbf{\Index{Hamiltonian flow}} generated by $H$, via
\begin{equation}\label{eq:Ham-flow}
     \frac{d}{dt}\psi_{t,0}=X_t\circ\psi_{t,0},\qquad
     \psi_{0,0}=\id,\qquad \psi_t:=\psi_{t,0}.
\end{equation}
The family\footnote{
  If $X_t$ depends on time, it is wise to keep track of the initial
  time $t_0$. As indicated in~(\ref{eq:Ham-flow}) we 
  shall always use $t_0=0$. The notation
  $\psi_{t,0}$ helps to remember that $\psi_{t+s,0}$
  is in general not a composition of $\psi_{s,0}$ and
  $\psi_{t,0}$. To obtain the composition law
  $\psi_{t,s}\psi_{s,r}=\psi_{t,r}$ one would have to allow for
  variable initial times, not just $t_0=0$.
  For simplicity $\psi_{t,0}=:\psi_t$.
  }
$\psi=\{\psi_t\}$ is called a\index{flow!complete}
\textbf{\Index{complete flow}} if it exists for all $t\in\R$.
Important examples are autonomous Hamiltonians $F$
and periodic in time Hamiltonians $H_{t+1}\equiv H_t$,
both on closed manifolds.
A \textbf{\Index{Hamiltonian trajectory}}, is a path of the form
$z(t)=\psi_t p$ with $p\in M$. In case $z$ is a loop we call it a
\textbf{Hamiltonian loop}.\index{Hamiltonian!loop}
In either case $z$ satisfies the 
\textbf{Hamiltonian equation}\index{Hamiltonian!equation}
$$
     \dot z(t)=X_t(z(t)),\qquad z(0)=p.
$$
Hamiltonian flows, autonomous or not, %have the fundamental property of
preserve the symplectic form.
By definition\footnote{
  Alternatively, defining $\Ll_X$ axiomatically,
  our definition becomes Thm.~2.2.24 in~{\cite{abraham:1978a}}.
  }
of the \textbf{\Index{Lie derivative}} and
\textbf{\Index{Cartan's formula}} one gets
\begin{equation}\label{eq:Lie-derivative}
     \frac{d}{dt}\psi_t^*\omega
     =:\psi_t^*\left(\Ll_{X_t}\omega\right)
     =\psi_t^*\left(i_{X_t}d\omega+di_{X_t}\omega\right).
\end{equation}
This shows that the family of diffeomorphisms $\psi_t$
generated by the family of vector fields $X_t$
preserves $\omega$, that is $\psi_t^*\omega=\omega$,
if and only if the $1$-form $i_{X_t}\omega$ is closed.\footnote{
  Such vector fields are called \textbf{symplectic},
  generalizing the Hamiltonian ones.\index{symplectic vector field}
  }
This holds, for instance, if $X_t$ is Hamiltonian
($di_{X_t}\omega=ddH=0$).

%%% SUBSUBSECTION %%%%%%%%%%%%%%%%%%%
%%%%%%%%%%%%%%%%%%%%%%%%%%%%%%%%%%
\subsubsection{Periodic orbits and their loop types}

\begin{remark}[Loops, periodic orbits, closed characteristics]\label{rem:loops}
\mbox{ }

  \vspace{.1cm}
  \textit{Topology (Subsets).}
  A loop $\gamma:\R/\tau\Z\to N$ is a
  \emph{simple}, thus \emph{prime}, loop
  if it admits no self-intersections, in symbols
  $\gamma^{-1}(\gamma(t))=\{t\}$ $\forall t$.
  Given two non-constant loops $\gamma$ and $\tilde \gamma$,
  if $\tilde \gamma=\gamma(k\cdot)$ for some integer $k$,
  one says that $\gamma$ is \textbf{$\mbf{k}$-fold covered} by
  $\tilde\gamma$, or a \textbf{\Index{multiply covered loop}}
  in case $\abs{k}>1$, in 
  symbols\index{$\gamma^k=\tilde\gamma$: loop
     $\gamma$ $k$-fold covered by $\tilde\gamma$}
  $\gamma^k=\tilde \gamma$.
  Two loops $\gamma$ and $\tilde \gamma$ are called
  \textbf{\Index{geometrically distinct}} if their image sets are not
  equal. Otherwise, they are
  \textbf{\Index{geometrically equivalent}},
  in symbols
  \[
     \gamma\sim\tilde \gamma.
  \]
  Geometrically equivalent loops, although having the same image set,
  certainly can be very different as maps. For instance, subloops of a
  figure-eight can be traversed a different number of times or in a
  different order.\footnote{
    While we require loops to be smooth, they do not need to be
    immersions. To go smoothly around a corner, just slow down to
    speed zero and accelerate again afterwards.
    }

  \vspace{.1cm}
  \textit{Analysis (Maps).} 
  Whereas any loop in a manifold $N$ is a periodic orbit of some
  periodic vector field $X_t$,
  only the rather restricted class of \emph{embedded} loops
  arises as (prime) periodic orbits of autonomous vector fields;
  see Exercise~\ref{exc:loops=X_t-orbits}.
  By~(\ref{eq:orbits-types}) the prime loop of a non-constant periodic
  trajectory of an autonomous vector field $X$ is a circle embedding.
  Moreover, still in the autonomous case, two periodic orbits
  $z,\tilde z$ are geometrically distinct iff their images are disjoint
  and, in the non-constant case, geometrically equivalent iff one
  $k$-fold covers the other one.

  \vspace{.1cm}
  \textit{Geometry (Submanifolds).} 
  Suppose $X$ is an autonomous vector field on a manifold $N$.
  Let $\Ppall(X)$ be the set of loop trajectories $z:\R/\tau\Z\to N$,
  in other words, periodic orbits, \emph{all} periods $\tau\not= 0$,
  constant solutions not excluded.
  Let $\Ppall^*(X)$ be the subset of the non-constant ones.
  The\index{$\Ppall(X)$ periodic orbits, all periods $\tau\not=0$}
  sets\index{$\Ppall^*(X)$ non-constant periodic orbits, all periods $\tau\not=0$}
  of
  equivalence 
  classes
  \begin{equation}\label{eq:closed-char-X}
     \Cc(X):=\Ppall(X)/\sim
     ,\qquad
     \Cc^*(X):=\Ppall^*(X)/\sim
  \end{equation}
  correspond to the set of closed orbits of $X$, respectively
  the non-point ones.
  The latter are disjoint embedded circles tangent to $X$,
  disjoint to each other. Representatives
  $y$ and $z$ of the same element of $\Cc^*(X)$ are multiple covers of a
  common simple periodic orbit $x$. In other words,
  the elements of the set $\Cc^*(X)$ are in bijection with those
  embedded circles $\SS^1\cong P\INTO N$ whose tangent bundle
  $TP=\R\cdot X|_P$ is spanned by the vector field $X$
  along\index{$\Cc(X)$ integral circles of $X$}\index{integral circles!of a vector field}
  $P$.\index{$\Cc^*(X)$ integral circles and zeroes of $X$}
  Technically one says that such $P$ are \textbf{integral
  submanifolds} of the (in general, singular) distribution $\R X\to N$ of 
  lines (and possibly points) along $N$.\index{integral submanifolds of distribution}
  We call these $P$ the \textbf{integral circles} of the vector
  field $X$. The set $\Cc^*(X)$ corresponds to the integral circles of
  $X$, whereas $\Cc(X)$ includes, in addition, the 0-dimensional
  integral submanifolds, namely, the zeroes, also called \textbf{singularities},
  of the vector field $X$.\index{singularity!of vector field}

  In Chapters~\ref{sec:contact-geometry} and~\ref{sec:RF}
  we will deal with the following special case:
  The manifold $N$ is a closed regular level set $S:=F^{-1}(c)$
  of an autonomous Hamiltonian $F$ on a symplectic manifold
  $(M,\omega)$ and $X=X_F$ is the Hamiltonian vector field.
  Note that in this case there are no zeroes of $X_F$, hence
  no constant solutions, on $F^{-1}(c)$ by regularity of the
  value $c$. In fact, the vector field $X_F$ spans what is called the
  \textbf{\Index{characteristic line bundle}}
  $\Ll_S:=(\ker\omega|_S)\to S$ and this is true for $X_K$ whenever
  $S$ is a regular level set of a Hamiltonian $K$; see~(\ref{eq:char-line-bdle}).
  Thus
  \begin{equation}\label{eq:Cc-clos-char-3}
      \Cc(S)=\Cc(S,\omega):=\Cc(X_F|_S)=\Cc^*(X_F|_S)
  \end{equation}
  denotes the set of integral circles $P$ of the characteristic
  distribution $\Ll_s$ of real lines along $S$.
  In\index{$\Cc(S)$ closed characteristics of energy surface $S\subset(M,\omega)$}
  this context the elements $P$ of $\Cc(S)$ are called the
  \textbf{\Index{closed characteristics} of the regular level set $\mbf{S}$}.
\end{remark}

\begin{exercise}[Loops are generated by vector fields]
\label{exc:loops=X_t-orbits}\mbox{ }\newline
a)  A loop $z$ in $N$ is the trajectory of some periodic vector field
  $X_{t+\tau}=X_t$.\index{periodic!vector field}\index{vector field!periodic}
\newline
b)  An embedded loop $z$ in $N$ is a trajectory
  of some autonomous vector field~$X$.
\newline
[Hints: First case $N=\R^k$, graph of $z$ in $[0,1]\times \R^k$, cutoff functions.]
\end{exercise}

\begin{exercise}[Prime periodic geodesics are self-transverse, not self-tangent]
\label{exc:loop-type-geod}
Let $N$ be a Riemannian manifold
with Levi-Civita connection $\nabla$.
Suppose the path $\gamma:\R\to N$ satisfies the $2^{\rm nd}$ order ODE
$\Nabla{t}\dot\gamma=0$. The equation implies that the speed
$\abs{\dot\gamma(t)}$ of a solution is constant in time $t$. Hence all
non-constant solutions are immersions, called
\textbf{\Index{geodesic}s}.
If a geodesic admits a positive period $\tau$ it is called
a \textbf{\Index{periodic geodesic}}
often denoted by $\gamma:\R/\tau\Z\to N$
or $\gamma_\tau$ to indicate the period in question.
Its image is an immersed circle, called
a\index{geodesic!periodic --}\index{geodesic!closed --}
\textbf{\Index{closed geodesic}}.
There are precisely two options. Such $\gamma$ is
\begin{itemize}
\item[-]
  either \Index{self-transverse}, then
   $\gamma=\ppgamma:\R/\tau_\gamma\Z\to N$
   is\index{geodesics are self-transverse}
   the prime loop of~$\gamma$,
   called a \textbf{prime periodic geodesic},
   or\index{geodesic!prime periodic --}
\item[-]
  a $k$-fold cover, where $k\in\N_{\ge2}$, of an
  underlying prime periodic geodesic.
\end{itemize}
\textbf{Self-transverse} means that whenever two arcs of $\gamma$
meet in $N$ they intersect transversely. In this case there is just a finite
number of intersection points by compactness of the domain.

  a)~Show that the two options are characterized by the two possibilities
  whether the set $\Tt$ of times $t_0$ such that $\gamma(t_0)$ has
  more than one pre-image\footnote{
    In symbols $\abs{\gamma^{-1}(\gamma(t_0))}\ge 2$. Such
    $\gamma(t_0)$ is called a \textbf{multiple} or a
    \textbf{double} ($=2$) \textbf{point}.\index{double point}
    }
  under $\gamma$
  is a finite set or an infinite set (thus equal to the circle domain itself).

  b)~Why are there no non-self-tangent
  self-intersections of trajectories of autonomous vector
  fields, but for geodesics they can appear?
\end{exercise}

%%%%%%%%%%%%%%%%%%%%%%%%%%%%%%%%%%
%%% SUBSECTION %%%%%%%%%%%%%%%%%%%%%
%%%%%%%%%%%%%%%%%%%%%%%%%%%%%%%%%%
\subsection{Conley-Zehnder index of periodic orbits}
Given a symplectic manifold $(M,\omega)$, consider a $1$-periodic family of
Hamiltonians $H_{t+1}=H_t:M\to\R$ with Hamiltonian flow
$\psi_t=\psi_{t,0}$.\index{$\Pp(H)$ $1$-periodic Hamiltonian orbits}
Let\index{periodic orbits!set of $1$- --}
$\Pp(H)$ be the \textbf{\Index{set of $1$-periodic orbits}}.

\begin{exercise}\label{exc:Pp-Fix-FH}
Check that $\Pp(H)\to \Fix\, \psi_1$, $z\mapsto z(0)$, provides
a bijection between the set of $1$-periodic orbits and the set of fixed
points of the time-1-map corresponding to initial time zero.
[Hint: Recall that $\psi_1$ abbreviates~$\psi_{1,0}$.]
\end{exercise}

A $1$-periodic orbit $z$ is called 
\textbf{non-degenerate}\index{non-degenerate!periodic orbit} 
if $1$ is not an eigenvalue of the linearized time-$1$-map, that is
\begin{equation}\label{eq:def-non-deg}
     \det\left(d\psi_1(p)-\1\right)\not=0,\qquad
     p:=z(0).
\end{equation}

\begin{exercise}\label{exc:nondeg->isolated}
Show that condition~(\ref{eq:def-non-deg}) implies that
$p$ is an isolated fixed point of $\psi_1$. Vice versa, would
isolatedness imply~(\ref{eq:def-non-deg})?\newline
[Hint: Condition~(\ref{eq:def-non-deg}) means that
the graph of $\psi_1$ in the product manifold $M\times M$
is transverse at $p$ to the \textbf{\Index{diagonal}}
$\Delta:=\{(p,p)\mid p\in M\}$.]
\end{exercise}

\begin{exercise}[Finite set]\label{exc:finite-set}
If the manifold $M$ is closed
and all $1$-periodic orbits are non-degenerate,
then the set $\Fix\,\psi_1$, hence $\Pp(H)$, is a finite set.
\end{exercise}

In addition to non-degeneracy, suppose the loop trajectory
$z:\SS^1=\p\D\to M$ is contractible.
Fix~an extension of $z$, namely a smooth map $v:\D\to M$
that coincides with $z$ on $\p\D$.
Moreover, pick an auxiliary $\omega$-compatible almost complex
structure $J\in\Jj(M,\omega)$, so the Hermitian
vector bundle $(E,\omega,J,g_J)$ with $E=v^*TM\to\D$
admits a unitary trivialization $\Phi_v$ by
Proposition~\ref{prop:unitary-triv},
that is $\Phi_v$ identifies the compatible
triples $(\omega_0,J_0,\inner{\cdot}{\cdot}_0)$ and $(\omega,J,g_J)$
where $J_0$ rotates counter-clockwise and corresponds to $i$.
Restriction to the boundary $\SS^1$ provides a
unitary trivialization, say $\Phi_z$, of the pull-back bundle
$z^*TM\to\SS^1$.
These choices provide a symplectic path
$\Psi_{z,v}\in\SsPp^*(2n)$ defined~by
\begin{equation}\label{eq:CZ-well-def-Ham}
\begin{tikzcd} %[row sep=small] %[column sep=tiny]
\R^{2n}
\arrow[rr, dashed, "\Psi_{z,v}(t)"]
  &&\R^{2n}
        \arrow[d, "\Phi_v(z(t))"]
\\
T_{z(0)}M
\arrow[u, "\Phi_v(z(0))^{-1}"]
\arrow[rr, "d\psi_t(z(0))"]
  &&T_{z(t)}M
\end{tikzcd} 
\end{equation}
The \textbf{standard} and the
\textbf{canonical Conley-Zehnder indices of the non-degenerate
$1$-periodic orbit} $z$ are defined and related by
\begin{equation}\label{eq:def-CZ-orbit}
\begin{split}
     \underline{\CZcan(z):=\CZcan(\Psi_{z,v})}
     =-\CZ(\Psi_{z,v})
     =:-\CZ(z).
\end{split}
\end{equation}
In general these indices depend on the
spanning disk $v$, unless $c_1(M)|_{\pi_2(M)}=0$.
Sometimes it is useful to denote $\CZcan(z)$
by $\CZcan(z;H)$ or even by $\CZcan(z;H,\omega)$.

\begin{exercise}\label{exc:CZ-well-def-Ham}
Show that $\Psi_{z,v}(t)\in\Sp(2n)$ and that $\Psi_{z,v}(1)\in\Sp^*$.
Show that $\CZcan(\Psi_{z,v})$ does not depend on the particular
unitary trivialization $\Phi_z$.
Show that $\CZcan(z)$ is independent of the choice of $v$
if $c_1(M)$ vanishes on $\pi_2(M)$.
\end{exercise}

\begin{exercise}[Critical points are periodic orbits]
Suppose $z_0$ is a non-degenerate critical point
of a time independent function $H:M\to\R$.
Then~the constant $1$-periodic orbit $t\mapsto z_0$
is non-degenerate and the canonical Conley-Zehnder index and the Morse
index of $z_0$, see Section~\ref{sec:Morse-homology}, are related by
\begin{equation}\label{eq:mu_H=ind_-H}
\begin{split}
     \CZcan\left(z_0\right)
   :&=\CZcan\left(\Psi_{z_0}:t\mapsto e^{{\color{magenta} -}t J_0 S t}\right)
%   &=\CZcan\left([0,1]\ni t\mapsto e^{t \Jbar_0 S}\right)\\
     =\frac12\sign(S)\\
   &=n-\IND_H(z_0)
     =\IND_{-H}(z_0)-n
\end{split}
\end{equation}
whenever $\norm{S}<2\pi$ and where $\dim M=2n$.
To obtain the first displayed formula, pick an
$\omega$-compatible almost complex structure $J$,
the induced metric $g_J$,
and an orthonormal basis of eigenvectors of the
Hessian of $H$ at $z_0$ and denote the corresponding
Hessian matrix by $S$.\footnote{
  One has $X^\omega_H=J\nabla H$ for
  $(M,\omega,J,g_J)$, but $X^{\omega_0}_K={\color{magenta} -}J_0\nabla K$ for
  $(\R^{2n},\omega_0,J_0,\inner{\cdot}{\cdot}_0$.
  }
Apply the axiom $\rm\texttt{(signature)}_{\texttt{can}}$.
\end{exercise}

%%%%%%%%%%%%%%%%%%%%%%%%%%%%%%%%%%
%%%%%%%%%%%%%%%%%%%%%%%%%%%%%%%%%%
%%% SECTION %%%%%%%%%%%%%%%%%%%%%%%%
%%%%%%%%%%%%%%%%%%%%%%%%%%%%%%%%%%
%%%%%%%%%%%%%%%%%%%%%%%%%%%%%%%%%%
\section{Cotangent bundles}\label{sec:cotangent-bundles}
Cotangent bundles are the phase spaces in
the Hamiltonian formulation of classical mechanics.
Given the tremendous success of the theory in
physics, not to mention daily life, one wouldn't risk much
predicting that these bundles should have a distinct position in the
mathematical world as well. Indeed
$$
     \textbf{Cotangent bundles $\mbf{\pi:T^*N\to N}$ over a manifold}
$$
\begin{enumerate}
\item[(S1)]
  admit\index{symplectic form!on cotangent bundle}
  a \textbf{\Index{canonical symplectic form}}
  and a canonical $1$-form given by\footnote{
     Local coordinates $\varphi:U\to\R^n$ on $N$ induce the
     diffeomorphism $T^*\varphi:T^*U\to \varphi(U)\times\R^n$
     taking $z=(q,p)$ to $(x,y):=(\varphi(q),(d\varphi(q)^{-1})^{*}p)\in\R^{2n}$
     and identifying $\omegacan$ with $\omegacan(\R^{2n})$.
     }
  \begin{equation*}
  \begin{split}
     \omegacan=d\lambdacan="dp\wedge dq\,",\qquad
     \lambdacan(z):T_zT^*N&\to\R,
     \\
     \zeta&\mapsto z\circ d\pi(z)\zeta
  \end{split}
  \end{equation*}
  The \textbf{\Index{Liouville form}} $\lambdacan$ is characterized by
  the property that
  \begin{equation}\label{eq:tautological-1-form}
     \sigma^*\lambdacan=\sigma
  \end{equation}
  for every $1$-form $\sigma\in\Omega^1(N)$, so one
  calls $\lambdacan$ the \textbf{\Index{tautological 1-form}};
\item[(S2)]
  admit a canonical Lagrangian subbundle of the tangent bundle,
  namely\footnote{
     The previous coordinates identify each $V_z$ symplectically
     with the Lagrangian $0\times\R^n$.
  }
  $$
     V:=\ker d\pi\subset T(T^*N);
  $$
\item[(S3)]
  admit \emph{along the zero section} $\Oo_N=\o(N)$ a natural
  Lagrangian splitting\footnote{
     $T_{\Oo_N}T^*N=H\oplus V$ is a direct sum:
     Linearize the composition $\pi\circ\o=\id:N\to T^*N\to N$
     of injection and surjection to get $H\cap V=\{0\}$. So
     $H+V=T_{\Oo_N}T^*N$ since ranks add up.
     }
  \begin{equation*}
  \begin{split}
     T_{\Oo_N}T^*N
     =\overbrace{\im d\o}^{=:H}\oplus\overbrace{\ker d\pi}^{=:V}
   &\stackrel{\cong}{\longrightarrow}
     TN\oplus T^*N.
     \\
     (h,v)
   &\;\mapsto\;
     (w_h,\theta_v)
  \end{split}
  \end{equation*}
  The isomorphisms\footnote{
     It suffices to show either surjectivity or
     injectivity (equal dimension of domain/codomain). As $d\o$ is
     injective it is an isomorphism $d\o:TN\to\im d\o$ onto its image
     with inverse $w$.
     Assuming $\theta_v\equiv 0$ with $v\in V$ means $v\in H^\omega$.
     So $v\in V\cap H=\{0\}$, as $H=H^\omega$ is Lagrangian: The restriction
     $\o^*\omegacan=d\o^*\lambdacan=d\o$
     is zero as $\o\in\Omega^1(N)$ is the zero section.
     }
  $w:\im d\o\to TN$ and $\theta:\ker d\pi\to T^*M$
  are determined in terms of the inclusion
  $\o:N\hookrightarrow T^*N$, $q\mapsto(q,0)$, by
  $$
     d\o (w_h)=h,\qquad
     \theta_v(\cdot):=\omegacan(v,d\o \cdot),
  $$
  and $\o^*\omegacan=(w,\theta)^*\Omega_{\rm can}$ where
  $$
     \Omega_{\rm can}(w_h\oplus\theta_v,w_{h^\prime}\oplus\theta_{v^\prime})
     :=\theta_v(w_{h^\prime})-\theta_{v^\prime}(w_h);
  $$
\item[(S4)]
  have trivial first Chern class in the sense that
  $c_1(T_QT^*N)=0$ for every {\color{cyan} oriented} closed\footnote{
    The argument involves Poincar\'{e} duality.
    Closedness: \textit{Push the obstruction to infinity}.
    }
  submanifold $Q$. If $Q=N$, then $c_1(TQ\oplus T^*Q)=0$.\footnote{
    If $Q=N=\Sigma$ use Exercise~\ref{exc:Lag-sub-bdle}.
    In general, let $\tilde E^*$ be the $\C$-dual of the vector
    bundle $\C^n\hookrightarrow \tilde E=T_Q T^*N\to Q$. Then
    $
       c_1(\tilde E)=-c_1(\tilde E^*)=-c_1(\Lambda^n \tilde E^*)
       =0\in\Ho^2(Q)
     $
     by~\cite[p.414]{Griffiths:1978a} and as the restriction
     of $\omegacan^{\wedge n}$ to $Q$ is a non-vanishing section;
     cf.~\citesymptop[App.~B.1.7]{weber:1999a}.
     }
\end{enumerate}

\begin{figure}%[h]
  \centering
  \includegraphics%[width=0.9\textwidth]
                             [height=4cm]
                             {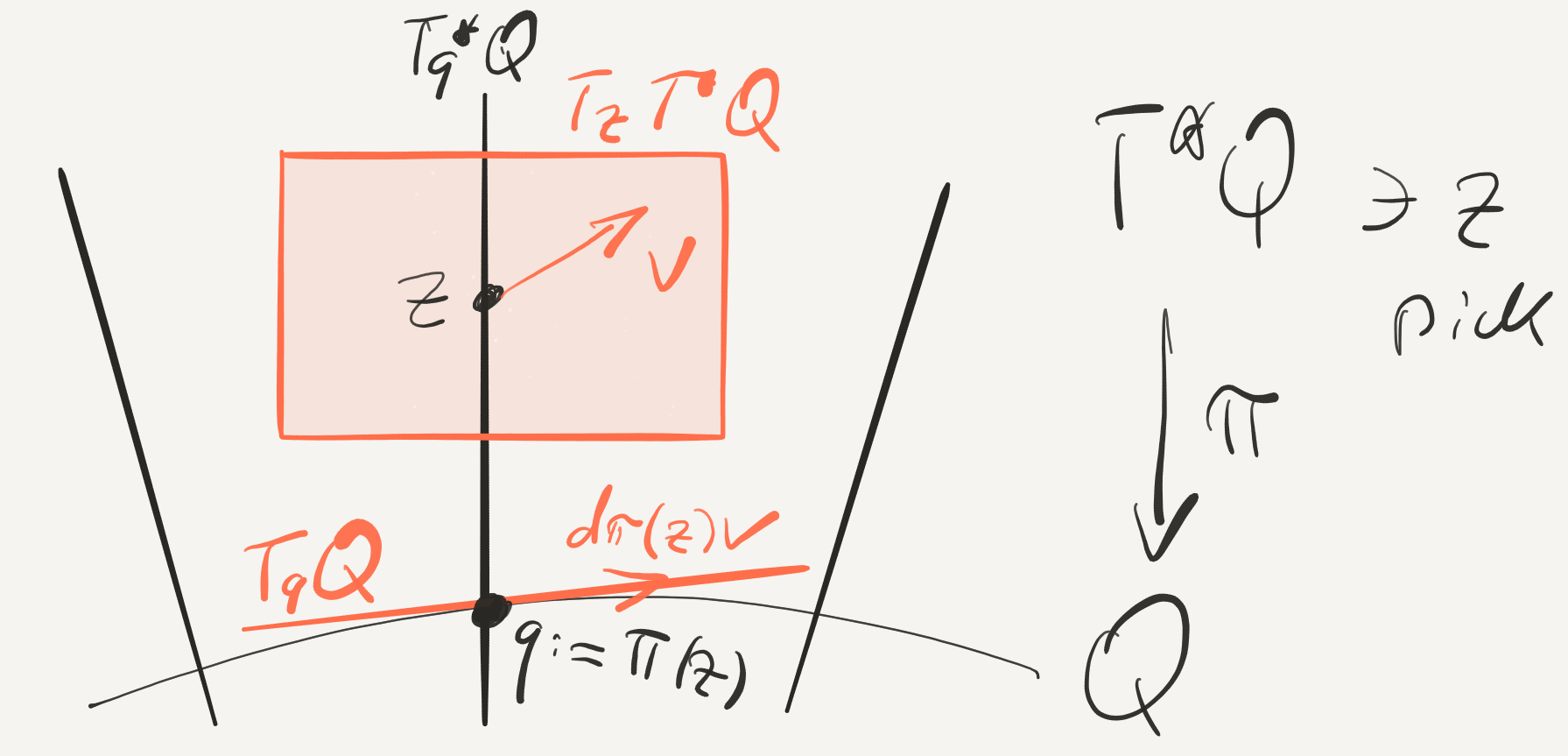}
  \caption{Liouville 1-form $\lambdacan\in\Omega^1(T^*Q)$ on cotangent
    bundle $\pi:T^*Q\to Q$}
  \label{fig:fig-Liouville-form}
\end{figure}

To get in all the other structures introduced in earlier sections
is rather simple:
$$
     \textbf{Pick a Riemannian metric $\mbf{g}$ on $\mbf{N}$ to obtain}
$$
\begin{itemize}
\item[(H1)]
  a \emph{global} Lagrangian splitting $T(T^*N)\cong\pi^*\left(TN\oplus T^*N\right)$
  defined pointwise by the isomorphism
  which takes the derivative of a curve $t\mapsto z(t)=(x(t),y(t))$ in $T^*N$
  to the pair of derivatives, namely
  \begin{equation*}\label{eq:g-Lag-split}
  \begin{split}
     T_{z(t)}T^*N \stackrel{\cong}{\longrightarrow} T_{x(t)}N\oplus T_{x(t)}^*N
     ,\quad
     \dot z(t)\mapsto \left(\dot x(t),\Nabla{t} y(t)\right).
  \end{split}
  \end{equation*}
  The isomorphism takes $\omegacan$ to $\Omega_{\rm can}$
  extending the one in~(S3);
\item[(H2)]
  a canonical Hermitian structure $(\omegacan,\Jbar_g,G_g)$ on
  $E=T(T^*N)$ defined pointwise for $z=(x,y)$
  on $T_xN\oplus T^*_x N$ by
  \begin{equation}\label{eq:J_g-G_g}
     \Jbar_g:=\begin{pmatrix} 0&g^{-1} \\ -g&0 \end{pmatrix},\qquad
     G_g:=\begin{pmatrix} g&0 \\ 0&g^* \end{pmatrix}.
  \end{equation}
  Concerning notation see Exercise~\ref{exc:W-plus-W*}.
  Throughout we denote by $g$ not only 
  the Riemannian metric on~$N$, but also the induced
  isomorphism
  $$
     g:TN\to T^*N,\quad v\mapsto g(v,\cdot).
  $$
\end{itemize}

\begin{exercise}
Prove properties~(S1--S4) and~(H1--H2).\footnote{
  For details of the proof of~(H1--H2)
  see e.g.~\citesymptop[App.~B.1.2--B.1.4]{weber:1999a}.
  }
\end{exercise}

\begin{exercise}\label{exc:lift-diffeo}
Any diffeomorphism $\psi:N\to N$ of a manifold
lifts to a symplectomorphism of the cotangent bundle
\begin{equation*}
\begin{tikzcd} %[row sep=small] %[column sep=tiny]
\left( T^*N,\omegacan\right)
\arrow[rr, dashed, "\Psi:=T^*\psi", "\cong"']
\arrow[d, "\pi"']
  &&\left( T^*N,\omegacan\right)
        \arrow[d, "\pi"]
\\
N
\arrow[rr, "\psi", "\cong"']
  &&N
\end{tikzcd} 
\end{equation*}
defined by $\Psi(q,p)=\left(\psi(q),(d\psi(q)^{-1})^* p\right)$.
Prove that $\Psi^*\lambdacan=\lambdacan$.
What can one say if $(N,g)$ is a Riemannian manifold
and $\psi$ is an \textbf{\Index{isometry}} ($\psi^*g=g$)?
\end{exercise}

\begin{exercise}\label{exc:X-gen-diffeo}
Suppose the vector field $Y:N\to TN$ generates a
$1$-parameter group of diffeomorphisms $\psi_t:N\to N$.
Consider the corresponding group of symplectomorphisms
$\Psi_t$ of $(T^*N,\omegacan)$ and denote by
$X:T^*N\to TT^*N$ the generating vector field, that is 
$$
     X(q,p)
     =\left.\tfrac{d}{dt}\right|_{t=0}\Psi_t (q,p)
%     =\left.\tfrac{d}{dt}\right|_{t=0}\left(\psi_t(q),(d\psi_t(q)^{-1})^*\right).
     =\left( Y(q),\left.\tfrac{d}{dt}\right|_{t=0} (d\psi_t(q)^{-1})^* p\right).
$$
Show that $X$ is the Hamiltonian vector
field of the function $H(q,p):=p\circ Y(q)$.
\newline
[Hint: Observe that
$0=\Ll_X\lambdacan=\omegacan (X,\cdot)+d\left(\lambdacan (X)\right)$
by the previous exercise and Cartan's formula and
that $\lambdacan (X(z)):=z\circ\left(d\pi(z) X(z)\right)$.]
\end{exercise}

%%%%%%%%%%%%%%%%%%%%%%%%%%%%%%%%%%
%%% SUBSECTION %%%%%%%%%%%%%%%%%%%%%%
%%%%%%%%%%%%%%%%%%%%%%%%%%%%%%%%%%
\subsection{Electromagnetic flows -- twisted cotangent bundles}
\label{sec:electromagnetic}
Classical 
mechanics describes the motion of a particle of unit mass $m=1$
and unit charge $e=1$ located at time $t$ at position $\gamma(t)$ 
in a configuration space, a manifold $Q$.
The \textbf{\Index{electromagnetic system}}\footnote{
  \textbf{\Index{Maxwell's equations}} for a \textbf{\Index{magnetic field}}
  $\BBB$ and an \textbf{\Index{electric field}} $\EEE$ in $\R^3$ are
  $$
     \BBB=\rot\,\AAA=\nabla\times\AAA,\qquad
     \EEE=-\nabla V-\AAAdot.
  $$
  Here $\AAA$ is the \textbf{\Index{magnetic vector potential}}
  and $V$ the \textbf{\Index{electric scalar potential}}.
  The \textbf{\Index{Lorentz force} law} of a particle
  of mass $m$ and charge $e$ at position $\rrr(t)$
  is (cf.~\citesymptop[\S 1.1.2]{arnold:2006a})
  $$
     m\mbf{\ddot r}=e\left(\EEE+\mbf{\dot r}\times\BBB\right).
  $$
  }
is described by the following three structures on $Q$:
A Riemannian metric $g$ (providing kinetic energy),
a smooth one form $\theta$ (the \textbf{magnetic potential}),
and a smooth function $V$\index{potential!magnetic}\index{potential!electric} 
(the \textbf{electric potential}). 

\begin{exercise}[Reformulating Maxwell's equations]
\label{exc:Maxwell}
Let us reformulate (some of) Maxwell's equations on $\R^3$
in terms of quantities which are at home on manifolds -- differential
forms (see e.g. \cite{bott:1982a,warner:1983a}).
Consider $\R^3$ with its natural orientation and the euclidean metric $g_0$.
One-forms $\theta\in\Omega^1=\Omega^1(\R^3)$ are
in bijection with vector fields $\AAA\in\Xx=\Xx(\R^3)$,
just identify components:
$$
     \Omega^1\ni\theta=\theta_1 dx_1+\theta_2 dx_2+\theta_3 dx_3
     \;\stackrel{g_0^{-1}}{\mapsto}\;
     \theta_1\p_{x_1}+\theta_2\p_{x_2}+\theta_3\p_{x_3}=:\AAA\in\Xx.
$$
Let us indicate the isomorphism $\Omega^1\simeq\Xx$ by writing
$dx_j\mapsto\p_{x_j}$. Consider the isomorphism
$*:\Omega\to\Omega$ which relates the degree of the
differential forms by $\Omega^j\to\Omega^{3-j}$
and is determined on the elements of the natural bases by
\begin{equation*}
\begin{aligned}
     \Omega^0&\to\Omega^3,\quad& 1&\mapsto dx_1\wedge dx_2\wedge dx_3,\\
     \Omega^1&\to\Omega^2,\quad& dx_1&\mapsto dx_2\wedge dx_3,\\
     \Omega^2&\to\Omega^1,\quad& dx_1\wedge dx_2&\mapsto dx_3,\\
     \Omega^3&\to\Omega^0,\quad& dx_1\wedge dx_2\wedge dx_3&\mapsto 1,
\end{aligned}
\end{equation*}
and cyclic permutations. Note that $**=\1$.\footnote{
  More generally, on odd dimensional space $**=\1$,
  but on even dimensional space $**=\pm \1$
  on forms of even/odd degree.
  }
With $\nabla=(\p_{x_1},\p_{x_2},\p_{x_3})$ check\footnote{
  To $\BBB=B_1\p_{x_1}+B_2\p_{x_2}+B_3\p_{x_3}$ corresponds
  $\sigma=B_1dx_2\wedge dx_3+B_2dx_3\wedge dx_2+B_3dx_1\wedge dx_2$.
  }

\vspace{.5cm}
\begin{tabular}{llllll}
\toprule
  $\Xx$
  & $\stackrel{g_0}{\longrightarrow}$
  & $\Omega^1$
  & $\stackrel{*}{\longleftrightarrow}$
  & $\Omega^2$
  & Remark
\\
\midrule
  $\AAA$
  & 
  & $\theta$
  & 
  &
  & magnetic potential
\\
  $\nabla\times\AAA$
  & 
  & $*d\theta$
  & 
  & $d\theta$
  & exact magnetic field
\\
  $\BBB$
  & 
  & $*\sigma$
  & 
  & $\sigma$
  & magnetic field, requires
\\

  & 
  & 
  & 
  &
  & $\DIV\BBB=0$ ($ \Leftrightarrow d\sigma=0$)
\\
  $\mbf{V}$
  & 
  & $\nu$
  & 
  & 
  & velocity $\mbf{V}=\mbf{\dot r}$
\\
  $\mbf{V}\times \BBB$
  & 
  & $*(\nu\wedge*\sigma)$
  & 
  & $\nu\wedge*\sigma$
  &
\\

  & 
  & $=-i_{\mbf{V}}\sigma$
  & 
  & 
  & explicit calculation
\\
  $\mbf{Y}(\mbf{V})$
  & 
  & %$-i_{\mbf{V}}\sigma$
  & 
  & 
  & Lorentz force
\\
  & 
  & 
  & 
  & 
  & $\mbf{Y_r}\mbf{\dot r}:=\mbf{\dot r}\times\BBB_{\mbf{r}}$
\\
\bottomrule
\end{tabular}
\vspace{.5cm}

This shows that the Lorentz force
$\mbf{Y_r}\,\mbf{\dot r}:=\mbf{\dot r}\times\BBB_{\mbf{r}}$
experienced by a particle of unit mass and
unit charge is determined in terms of the closed
2-form $\sigma$ encoding the magnetic field by the identity 
\begin{equation}\label{eq:Lorentz-R3}
     i_{\mbf{Y}(\mbf{V})}g_0=-i_{\mbf{V}}\sigma.
\end{equation}
Concerning differential forms in electrodynamics
see e.g.~\citesymptop{Deschamps:1981a,Bott:1985a,Warnick:2014a}.
\end{exercise}

\begin{exercise}[Twisted symplectic structures
$\omega_\sigma=\omegacan+\pi^*\sigma$]
\label{excs:twisted-symp-str}
Suppose $\sigma$ is a closed 2-form on the closed manifold $Q$.
Denote by $\pi:T^*Q\to Q$ the bundle 
projection.\index{twisted symplectic structure}\index{symplectic structure!twisted --}
Show that $\omegacan+\pi^*\sigma$ is a symplectic form on $T^* Q$.
\newline
Now fix a Riemannian metric $g$ on $Q$ and check that the
identity
$$
     g_q(Y_q v,\cdot)={\color{magenta} -}\sigma_q(v,\cdot)
$$
pointwise at $q\in Q$ and $v\in T_qQ$
determines a fiber preserving anti-symmetric vector bundle map $Y:TQ\to TQ$,
i.e. fiberwise the map $Y_q:T_qQ\to T_qQ$ is linear and anti-symmetric.
The map $Y$ is the \textbf{\Index{Lorentz force}}
associated to the magnetic field
$\sigma$; see e.g.~\citesymptop{Contreras:2004a}.
We chose the {\color{magenta} minus} sign
in order to match the classical scenario~(\ref{eq:Lorentz-R3}) in
$\R^3$; cf.~\citesymptop{Barros:2005a}.
\end{exercise}

\begin{exercise}[Twisted geodesic flow]\label{exc:twisted-geod-flow}
A curve $\gamma$ solves the Euler-Lagrange
equations~(\ref{eq:EL-twisted}) associated to the Lagrangian
$L_\theta:TQ\to\R$ given by
$$
     L_\theta(q,v)=T(q,v)+\theta(q) v-V(q),\qquad
     T(q,v):=\tfrac12 \abs{v}^2:=\tfrac12 g_q(v,v),
$$
where $T$ is called \textbf{\Index{kinetic energy}},
if and only if (cf.~\citesymptop[\S 2]{Ginzburg:1996a})
the pair $(\gamma,g\dot\gamma)$
is an integral curve of the Hamiltonian vector field
$X_H^{\omega_{d\theta}}$ associated to the Hamiltonian $H$
and the twisted symplectic structure $\omega_{d\theta}$ on $T^*Q$ given by
$$
     H(q,p)=T(q,g^{-1}p)+V(q),\qquad
     \omega_{d\theta}:=\omegacan+\pi^*d\theta.
$$
The flow of $X_H^{\omega_{d\theta}}$ is called
\textbf{\Index{twisted geodesic flow}},
its flow lines\index{geodesic!twisted --}
\textbf{\Index{twisted geodesics}}.\index{geodesic flow!twisted --}
Note: In the Hamiltonian formulation there is no need that the magnetic
field $\sigma=d\theta$ is exact, any closed 2-form on $Q$ will do
by Exercise~\ref{excs:twisted-symp-str}.
\end{exercise}

Concerning existence of periodic electromagnetic
trajectories we recommend the (older) survey~\citesymptop{Ginzburg:1996a}
and the comments~\citesymptop{Ginzburg:2001b}.
For more recent results see e.g.~\citerefRF{Merry:2011a} and references therein.

%%%%%%%%%%%%%%%%%%%%%%%%%%%%%%%%%%
%%% SUBSUBSECTION %%%%%%%%%%%%%%%%%%%
\subsubsection*{Lagrangian and Hamiltonian formalism}
The \textbf{\Index{Lagrangian formulation}} of the dynamics is as follows.
Given two points $q_0,q_1\in Q$, the motion of the particle
is a curve $\gamma:[t_0,t_1]\to M$ with $\gamma(t_i)=q_i$
extremizing the \textbf{\Index{classical action}} functional
$$
     \Ss(\gamma)=\int_{t_0}^{t_1} L(\gamma(t),\dot\gamma(t))\, dt.
$$
Here\index{Lagrangian}
the\index{electromagnetic!Lagrangian}
\textbf{electromagnetic Lagrangian}\index{Lagrangian!electromagnetic}
$L:TQ\to\R$ of the system is given by
\begin{equation*}
\begin{split}
     L(q,v)=L_\theta(q,v):
   &=\frac12\, m\, g_q(v,v)+e\left(\theta_q v- V(q)\right)\\
   &=\frac12 \Abs{v}^2+\theta v- V.
\end{split}
\end{equation*}
The extremals (critical points) $\gamma$ are the solutions of the
\textbf{\Index{Euler-Lagrange equations}} which in local
coordinates can be written as
\begin{equation}\label{eq:EL-twisted}
     \frac{d}{dt} \p_v L(\gamma(t),\dot\gamma(t)) =\p_q L(\gamma(t),\dot\gamma(t)).
\end{equation}
For the physics behind we recommend~\citesymptop{Feynman:1964a}.
For the variational theory in the more general setting
of\index{Lagrangian!Tonelli --}
\textbf{\Index{Tonelli Lagrangians}}\footnote{
  A Lagrangian $L:TQ\to\R$ is \textbf{Tonelli} if it is
  fiberwise uniformly convex and superlinear.
  }
see e.g.~\citesymptop{mazzucchelli:2012a} or~\citerefRF{Abbondandolo:2013c}.

The \textbf{Hamiltonian description} of the system replaces the
Lagrangian $L_\theta$ by its Legendre transform $H_\theta:T^*Q \to\R$
called the\index{electromagnetic!Hamiltonian}
\textbf{electromagnetic Hamiltonian}\index{Hamiltonian!electromagnetic}
of\index{Hamiltonian}
the system and given by
\begin{equation*}
\begin{split}
     H_\theta(q,p)
   &=(\p_v L(q,v)) v-L(q,v)\\
   &=\frac12\Abs{v}^2+V\\
   &=\frac12\Abs{p-\theta}^2+V
\end{split}
\end{equation*}
where we substituted $v$ according to $p:=\p_v L_\theta(q,v)=g_qv+\theta_q$.
The dynamics of the particle on $T^*Q$ is then determined
by the Hamiltonian vector field $X_{H_\theta}^{\omegacan}$.

Alternatively, the dynamics of the same particle is described
by the Hamiltonian vector field $X_H^{\omega_{d\theta}}$ associated to
the standard \emph{non-magnetic} Hamiltonian
$$
     H(q,p)=\frac12\Abs{p}^2+V(q),\qquad
     \omega_{d\theta}:=\omegacan+\pi^*d\theta
     =d(\lambdacan+\pi^*\theta),
$$
and a \emph{magnetically} \textbf{\Index{twisted symplectic structure}}.
This\index{symplectic structure!twisted} alternative
description works not only for exact magnetic fields
$\sigma=d\theta$, but for any closed 2-form $\sigma$ on $Q$;
cf.~\citesymptop[Thm.\,2.1\,(ii)]{Ginzburg:1996a}.

\begin{exercise}
Hamiltonian dynamics of $(H_\theta,\omegacan)$ and
$(H,\omega_{d\theta})$ coincides.
\end{exercise}

For further details of the relation of symplectic geometry and classical
mechanics see e.g.~\cite{Arnold:1978a}
or~\citesymptop[Ch.~3]{Arnold:2001b}
or~\citesymptop[\S 1, \S 4]{arnold:2006a}.

%%%%%%%%%%%%%%%%%%%%%%%%%
%%%%%%%%% REFERENCES %%%%%%
%%%%%%%%%%%%%%%%%%%%%%%%%
%\bibliographystyle{plain}
         %   erzeugt:     [1] Joa Weber
%\bibliographystyle{abbrv}
         %  erzeugt:      [1] J. Weber and 
\bibliographystylesymptop{alpha}
         %  article:    [Web05]  J. Weber
         %  book:      [Web05]  Joa Weber
         % more authors: [HZ87]
%%%%%%%%%%%%%%%%%%%%%%%%%
%% include Bibliography in TOC %%
% en.wikibooks.org/wiki/LaTeX/Bibliography_Management#Using_tocbibind
%%%%%%%%%%%%%%%%%%%%%%%%%
% Using hyperref, one should say:
\cleardoublepage
\phantomsection
\addcontentsline{toc}{section}{References}
%\bibliographysymptop{$HOME/Dropbox/0-Libraries+app-data/Bibdesk-BibFiles/library_math}{}

\begin{bibliographysymptop}{}
% imports file symptop.bbl
\end{bibliographysymptop}

\cleardoublepage
\phantomsection      % SYMPLECTIC TOPOLOGY
%%%%%%%%%%%%%%%%%%%%%%%%%%%%%%%%%%
%%%%%%%%%%%%%%%%%%%%%%%%%%%%%%%%%%
%%%%%%%%%%%%%%%%%%%%%%%%%%%%%%%%%%
%% CHAPTER %%%%%%%%%%%%%%%%%%%%%%%%%
%%%%%%%%%%%%%%%%%%%%%%%%%%%%%%%%%%
%%%%%%%%%%%%%%%%%%%%%%%%%%%%%%%%%%
%%%%%%%%%%%%%%%%%%%%%%%%%%%%%%%%%%
\chapter{Fixed period -- Floer homology}
\chaptermark{Fixed period -- Floer homology}
\label{sec:FH}

%%%%%%%%%%%%%%%%%%%%%%%%%%%%%%%%%%
%%% SUBSUBSECTION %%%%%%%%%%%%%%%%%%%
\subsubsection*{Towards Floer homology}

Consider a symplectic manifold $(M,\omega)$. Given an autonomous
Hamiltonian $F:M\to\R$, it is an interesting but rather
challenging problem to investigate the set ``of geometrically
distinct closed orbits'' of the Hamiltonian vector field
on a given regular energy level $S=F^{-1}(E)$.
A little thought reveals that this cannot be a set of Hamiltonian loops
$\R/\tau\Z\to M$, but rather it should be
a set of equivalence classes of such or, in geometric terms, of their
images -- embedded circles tangent to the Hamiltonian vector field
$X_F$. These circles form the set $\Cc(S,\omega)$ of
\emph{closed characteristics} of $X_F$ on $S$; cf.~(\ref{eq:Cc-clos-char-3}).

\vspace{.1cm}
\textit{Back to Hamiltonian loops (circle-parametrized closed orbits).}
Let us simplify the problem, firstly, by searching for loop trajectories
$\R/\tau\Z\to M$ of $X_F$ without taking any equivalence
classes at all, and secondly, by just focussing on the plain existence
problem. Let us break this further down into smaller pieces.

\vspace{.1cm}
\textit{Period one.} 
The problem reduces to detect Hamiltonian loops of period \emph{one}:
A $\tau$-periodic trajectory $z$ of $F$ on the level set $F^{-1}(E)$
corresponds to the $1$-periodic trajectory $z^\tau(t):=z(\tau t)$ of
$\tau F$ on the level set $(\tau F)^{-1}(\tau E)$ ($=F^{-1}(E)$);
indeed $\tau X_F=X_{\tau F}$.
So there is a little price to pay: One looses the freedom to fix
an energy value $E$.
 (The notion of  energy value/level is lost anyway as soon as one allows
 time-dependent Hamiltonians.)
So let us then investigate the set $\Pp(F)$ of $1$-periodic
loop trajectories $z:\R/\Z\to M$ of $X_F$. Two loops $z$ and $\tilde z$ are
geometrically distinct if their images are disjoint subsets of $M$,
otherwise they are equivalent $z\sim\tilde z$; cf.~Remark~\ref{rem:loops}.

There are two problems with autonomous Hamiltonians $F$
concerning \emph{non-constant} Hamiltonian loops $z$.
Suppose you found such $z$ of period~$1$.

\begin{enumerate}
\item[\rm (multiple covers)]
  How can you find out a Hamiltonian loop $z:\R/\Z\to M$
  does not \Index{multiply cover} another one $\tilde z$\,?
  In other words, is $1$ the prime period of $z:\R\to M$\,?
\item[\rm (degeneracy)]
  A non-constant Hamiltonian loop $z$ provides an $\SS^1$ family via
  time-shift\index{$z_{(T)}:=z(T+\cdot)$, $u_{(\sigma)}$ time-shift}
  $$
     \SS^1\ni\tau\mapsto z_{(\tau)}(\cdot):=z(\cdot+\tau).
  $$
\end{enumerate}
The multiple cover problem makes it difficult to decide
if a newly detected periodic orbit, say in the form of a
critical point of a functional on loop space, is geometrically
different from known ones.
The fact that the elements of $\Pp(F)$
are never isolated obstructs reformulating the problem
in terms of Morse theory, a powerful tool to analyze
sets of critical points in terms of topology.

\vspace{.1cm}
\textit{Non-autonomous $1$-periodic Hamiltonians and their
$1$-periodic orbits.} 
Both problems can be avoided by allowing time-dependent
Hamiltonians $H:\SS^1\times M\to\R$ and directing attention
to the set $\Pp(H)$ of $1$-periodic Hamiltonian loops
with respect\index{periodic orbits!set of $1$- --}
to\index{$\Pp(H)$ $1$-periodic Hamiltonian loops}
$X_{t+1}=X_t:=X_{H_t}$; see~(\ref{eq:Ham-flow}).
Is there a lower bound for the cardinality $\abs{\Pp(H)}$
uniformly in $H$? in fact, this problem reduces to study $\Pp_0(H)$,
the\index{$\Pp_0(H)$ contractible $1$-periodic ones}
\textbf{set of contractible 1-periodic Hamiltonian loops}:

\vspace{.1cm}
\textit{Contractible $1$-periodic Hamiltonian loops and closedness of $M$.}
For the special case of $C^2$ small autonomous Hamiltonians all
$1$-periodic orbits are constant by Proposition~\ref{prop:HZ-C2small},
so there are no non-contractible ones.
But a $C^2$ small Hamiltonian is obtained
by multiplying any given $H: M\to\R$
by a small constant $\eps>0$.
Really? Correct, at least, if $M$ is closed.
So from now on we assume closedness of $M$ --
conveniently guaranteeing completeness of flows --
and aim for a lower bound for the
cardinality of the set $\Pp_0(H)$, uniformly~in~$H$.

\begin{assumption}
Throughout Chapter~\ref{sec:FH} we assume, unless said
differently, that the symplectic manifold $(M,\omega)$ is closed and
symplectically aspherical, see~(\ref{eq:symp-asph-c_1}), and we study
the set $\Pp_0(H)$ for Hamiltonians~{$H:\SS^1\times M\to \R$}.
\end{assumption}

\textit{Arriving at Floer's ideas.}
It was well known before Floer that the contractible
$1$-periodic Hamiltonian loops
are precisely the critical points of a (possibly
multi-valued) functional $\Aa_H$,
but at the time it seemed that this functional was
``certainly not suitable for an existence proof.''~\citerefFH[(1.5)]{Moser:1976a}.
However, this changed with the success of Rabinowitz~\citerefCG{Rabinowitz:1978a}
in applying minimax methods to detect critical values of $\Aa_H$
-- and with Floer' s rather different idea~\citeintro{floer:1988a}
to overcome the obstruction presented by infinite
Morse index and coindex, namely, by looking at a \emph{relative}
index between critical points -- which is finite! Thereby
Floer discovered \textbf{\Index{relative Morse theory}} and
successfully reformulated the problem.\index{Morse theory!relative}
Floer's insights also included departing from looking at the
$L^2$ gradient equation as a formal ODE on the loop space,
but instead noticing that it represents a well posed PDE
  {\color{brown} Fredholm}
problem for maps from the cylinder $\R\times\SS^1$ into
the manifold $M$ itself (whenever suitably compactified by imposing
  {\color{brown} non-degenerate} 
Hamiltonian loops $z^\mp$ to sit at $\pm\infty$).
Assuming transversality, the dimension of the associated moduli space
is finite, as it is the Fredholm index which itself is given by the
spectral flow, the relative Morse index, along a flow cylinder.
Non-degeneracy amounts to $\Aa_H$ being a Morse functional,
this holds true for generic $H$, and in good cases (say $\Io_{c_1}=0$)
the relative index becomes the difference of an
absolute index associated to the non-degenerate Hamiltonian loops $z^\mp$ --
the Conley-Zehnder index of Section~\ref{sec:CZ}.
The fundamental results of Floer's construction are the
lower bounds~(\ref{eq:deg-AC}) and~(\ref{eq:non-deg-AC})
proving the {\Arnold} conjecture~\citeintro{floer:1989a} in many
cases; in general see~\citerefFH{Fukaya:1999a,Liu:1998a}.

\vspace{.1cm}
\textit{Back to autonomous Hamiltonians: Closed characteristics.}
Given an autonomous Hamiltonian $F:M\to\R$,
a natural approach to analyze the set $\Cc(X_F)$ of closed integral manifolds
of $X_F$, see~(\ref{eq:closed-char-X}), would certainly be 
-- in view of Floer's estimate $\abs{\Pp_0(H)}\ge \SB(M)$ --
to focus on $\Pp_0(F)$ in a first step:
Approximate $F$ in an appropriate topology
by non-degenerate, in general time-$1$-periodic, Hamiltonians $H_\nu\to F$.
By Floer's estimate every set $\Pp_0(H_\nu)$ is non-empty, so picking
one element $z_\nu$ for every $\nu$ provides a sequence
of Hamiltonian loops. Now one can try to extract a convergent
subsequence using the Arzel\`{a}-Ascoli Theorem~\ref{thm:AA}
and show that the limit loop, say $z$, satisfies the equation
$\dot z=X_F(z)$ of the limit Hamiltonian.
There are two problems.

I. Firstly, the Hamiltonian limit loop $z$, in fact already some or all of the
$z_\nu$, could be constant. This is excluded if the energy
hypersurface $S=F^{-1}(c)$ of $z$ is known to be regular,
that is if $c$ is a regular value of $F$.
Action filtered Floer homology may help sometimes;
see Section~\ref{sec:action-filtered-Floer homology}
and e.g.~\citerefFH{weber:2006a}.

II. The second problem are multiplicities. If you get two, or more, limit
solutions $z,\tilde z$ this way and suppose you even already know that they
are different and non-constant elements of $\Pp_0(F)$, say by having information
about the action values $\Aa_F(z)$ and $\Aa_F(\tilde z)$.
Even then, how would one decide whether
$z$ and $\tilde z$ are geometrically distinct or whether
one multiply covers the other one?
\newline
It helps looking at the particular Hamiltonian loops on a cotangent
bundle over a closed Riemannian manifold $Q$ which correspond to
geodesics $\gamma:\SS^1\to Q$ in the base manifold and remembering
Bott's analysis~\citerefFH{Bott:1956a} of how the Morse index changes
under iterations $\gamma^k(\cdot):=\gamma(k\cdot):\SS^1\to Q$.
For Hamiltonian loops the corresponding
index formulae have been pioneered by Long~\citerefFH{Long:2002a}.
For recent tremendous success of studying iterations,
in a slightly different direction though,\index{conjecture!Conley}
see Ginzburg's proof\citerefFH{Ginzburg:2010b} of the
\Index{Conley conjecture}.
See~\citerefFH{Ginzburg:2015b} for a recent survey about
existence of infinitely many simple periodic orbits.

\vspace{.1cm}
\textit{Cotangent bundles and loop spaces.}
If one gives up the compactness requirement
and looks at the class of symplectic manifolds
given by cotangent bundles $(T^*Q,\omegacan)$ over closed Riemannian
manifolds $(Q,g)$, say spin (thus orientable),
equipped with physical Hamiltonians $H$ of the form kinetic
plus potential energy and a natural almost complex structure $\Jbar_g$,
then Floer homology is totally different: It does not represent
singular homology of $T^*M$, but it is naturally isomorphic
to singular homology of the free loop space $\Ll Q$;
cf. Section~\ref{sec:FH-T^*M}. If $Q$ is not simply connected, then
there is one isomorphisms for each component of $\Ll Q$,
so here Floer homology even detects non-contractible periodic orbits.

%%%%%%%%%%%%%%%%%%%%%%%%%%%%%%%%%%
%%% SUBSUBSECTION %%%%%%%%%%%%%%%%%%%
\subsubsection*{Outline of Chapter~\ref{sec:FH}}

Consider a Hamiltonian $H:\SS^1\times M\to\R$.
Motivated by the Morse complex Floer's program,
see~\citeintro{floer:1989a},
is to use the symplectic action functional
\begin{equation*}\label{eq:action-A-in_preliminaries}
     \Aa_H:\Ll_0 M=C^\infty_{\rm contr}(\SS^1,M)\to\R,\quad
     z\mapsto\int_{\D} \bar{z}^*\omega -\int_0^1 H_t(z(t))\, dt,
\end{equation*}
cf.~(\ref{eq:action-A}), as a Morse function to construct a Morse type chain complex.

In Section~\ref{sec:toy-model}
we briefly recall the usual construction of the Morse complex
associated to a Morse function $f:Q\to\R$ on
a closed Riemannian manifold of dimension $n$
by using the critical points as generators,
the Morse index as grading, and counting downward
gradient flow lines to define a boundary operator.
We also recall the geometric realization
of the Morse cochain complex using the same
generators and grading, but counting upward flow lines.

Section~\ref{sec:symp-ac-fctl} is devoted to a detailed study
of the action functional $\Aa_H:\Ll_0 M\to\R$ starting with a list of serious
deficiency and explaining sign conventions.
Then we calculate the differential and, with the help of a family
$J_t$ of $\omega$-compatible almost complex structures,
also the $L^2$ gradient of $\Aa_H$.
This shows that the critical points are precisely given by the set
\begin{equation*}
     \Crit \Aa_H=\{z:\R/\Z\to M\mid
     \text{$\dot z=X_{H_t}(z)$, $z\sim\pt$}\}=:\Pp_0(H)
\end{equation*}
of $1$-periodic contractible Hamiltonian loops.
We calculate the Hessian operator
$A_z$ of $\Aa_H$ at a critical point $z$
to define non-degeneracy of critical points.
Next we insert an excursion to Baire's category theorem
trying to separate the surrounding and easily confusable notions of
``residual'' and ``second category'' subsets.
The purpose is to detail the informal notion of genericity
in theorems like the one asserting that $\Aa_H$ is Morse for generic
$H$ (``transversality on loops'').
\newline
Given Section~\ref{sec:symp-ac-fctl},
we define, for generic $H$, the Floer chain group $\CF_*(H)$
as the $\Z_2$ vector space generated by the finite set $\Pp_0(H)$ of
contractible $1$-periodic orbits and graded by the canonical
Conley-Zehnder index $\CZcan$ in~(\ref{eq:def-CZ-orbit}):
$$
     \CF_k(H)=\CF_k(M,\omega,H)
     :=\bigoplus_{z\in\Pp_0(H)\atop \CZcan(z)=k}
     \Z_2 z.
$$

Section~\ref{sec:DGF} introduces the substitute
for the non-existent downward gradient flow of $\Aa_H$ on $\Ll M$, namely,
solutions $u:\R\times\SS^1\to M$ to Floer's
elliptic PDE\footnote{
  Gromov's~\citeintro{gromov:1985a} $J$-holomorphic curve
  equation is $\p_su+J(u)\p_t u=0$, now add a lower order perturbation
  $\nabla H$.
  Here we have a $\Jbar$-holomorphic curve equation where
  $\Jbar:=-J$. Alternatively, time reflection $\tilde u(s,t):=u(-s,t)$
  relates the solutions $u$ of the displayed equation to solutions of
  the perturbed $J$-holomorphic curve equation
  $\p_s\tilde u+J_t(\tilde u)\p_t\tilde u+\nabla H_t(\tilde u)=0$.
  }
\begin{equation*}
\begin{split}
     0
    =\p_s u-J_t(u)\Bigl(\p_t u-X_{H_t}(u)\Bigr)
    =\p_s u-J_t(u)\p_t u-\nabla H_t(u),
\end{split}
\end{equation*}
called \textbf{\Index{Floer cylinders}} or
\textbf{\Index{Floer trajectories}}.
A Floer trajectory defines what we informally call a ``flow line'' or ``integral curve''
in the loop space, namely, the image set
$\{u(s,\cdot)\mid s\in\R\}\subset \Ll M$.
Any two trajectories producing the same flow line
differ by composition with time-shift $s\mapsto\sigma+ s$.
It is useful to switch view points using the correspondence
``cylinder in $M$'' $\leftrightarrow$ ``path in $\Ll M$'',
namely\index{$u_s:=u(s,\cdot)$ freeze variable}
$$
     \text{$u:\R\times\SS^1\to M$, $(s,t)\mapsto u(s,t)$}
     \quad\leftrightarrow\quad
     \text{$u:\R\to\Ll M$, $s\mapsto u_s(\cdot):=u(s,\cdot)$}.
$$
Back to Floer cylinders $u$.
Imposing as asymptotic boundary conditions
at $\pm\infty\times\SS^1$ Hamiltonian loops $z^\mp$
we call $u$ a \textbf{\Index{connecting trajectory}} from $z^-$ to $z^+$.
Let $\Mm(z^-,z^+)$ be the space of
all of them. At this point we insert Section~\ref{sec:Fredholm}
on relevant elements of Fredholm theory needed to analyze under which
conditions the spaces $\Mm(z^-,z^+)$ are manifolds and to calculate
their dimensions. Since $\Aa_H$ is Morse all asymptotic boundary
conditions are non-degenerate. This causes
the operators $D_u$ defined by linearizing Floer's
equation at any connecting trajectory $u$
to be Fredholm. For the manifold property of $\Mm(z^-,z^+)$
and the dimension formula $\CZcan(z^-)-\CZcan(z^+)$
one needs to make sure that $D_u$ is onto, often referred to as
``transversality on cylinders''.
But this can be achieved for generic $H$ again.
The necessary perturbation not only preserves the Morse property, but
even the set of critical points.
The machinery to deal with transversality issues,
be it non-degeneracy of the critical points of $\Aa_H$
or surjectivity of the Fredholm operators,
goes under the name Thom-Smale transversality theory
and will be discussed in detail guided by the
presentation~\citerefFH[App.\,B.4]{salamon:1999b} in finite dimensions.
In Section~\ref{sec:DGF}, also in~\ref{sec:FC},
we essentially follow~\citerefFH{salamon:1999a}.
\newline
Given Section~\ref{sec:DGF}, we define, for generic $(H,J)$,
Floer's boundary operator as the mod two count of flow
lines connecting Hamiltonian loops of index difference~$1$:
On basis elements $x\in\Pp_0(H)$ of canonical Conley-Zehnder index~$k$,~set
\begin{equation*}
\begin{split}
     \p x=\p^{\rm F}(M,\omega,H,J) x
%     \CF_*(H)&\to\CF_*(H)
%     \\
%     x&\mapsto
     :=\sum_{y\in\Pp_0(H)\atop \CZcan(y)=k-1} \#_2(m_{xy})\, y
\end{split}
\end{equation*}
where $\#_2(m_{xy})$ is the number modulo two %{\rm (mod 2)} 
of flow lines connecting $x$ and $y$.

Section~\ref{sec:FC} is the heart of Chapter~\ref{sec:FH}.
First the property $\p^2=0$ is shown.
The resulting chain complex is denoted by
$
     \CF(H):=\left(\CF_*(H),\p\right)
$.
Its homology is a graded $\Z_2$ vector space denoted by
$
     \HF_*(H)=\HF_*(M,\omega,H;J)
$
and called \textbf{Floer homology}.
Next continuation isomorphisms are constructed
which naturally identify Floer homology under change
of Hamiltonian.
Then we discuss two methods of constructing
a natural isomorphisms to singular homology of
the manifold $M$ itself, namely by choosing for $H$
a $C^2$ small Morse function or, alternatively, by studying
``spiked disks''.
Section~\ref{sec:FC} concludes with a brief account
of action filtered Floer homology.

In Section~\ref{sec:FH-T^*M} we have
a glimpse at Floer homology for cotangent bundles, as
opposed to compact symplectic manifolds.

%%%%%%%%%%%%%%%%%%%%%%%%%%%%%%%%%%
%%% SUBSUBSECTION %%%%%%%%%%%%%%%%%%%
\subsubsection*{Preliminaries}
As a line in $\Ll M$ is a cylinder in $M$,
the formal $L^2$ gradient
equation for $\Aa_H$ on $\Ll M$ corresponds to
a PDE in $M$. Thus carrying out the program
of constructing a Morse complex
relies heavily on non-linear functional analysis.
So it is useful to impose in a first step conditions
in order to ``facilitate'' the analysis
and look for generalizations subsequently.
Closedness of $M$ we already mentioned.

\begin{itemize}
\item[(C1)]
  The symplectic manifold $(M,\omega)$ is closed.
\item[(C2)]
  Evaluating\index{$\mathrm{(C2)}$ $\omega$, $c_1(M)$ vanish on $\pi_2(M)$}
  $\omega$\index{assumption!(C2)}
  and $c_1(M)$ on $\pi_2(M)$, cf.~(\ref{eq:I-omega}),
  is identically zero
  \begin{equation}\label{eq:symp-asph-c_1}
      \Io_\omega=0=\Io_{c_1}.
  \end{equation}
\end{itemize}

Under these conditions we sketch
the construction of the Floer complex in
Sections~\ref{sec:symp-ac-fctl}--\ref{sec:FC};
for details, including history, see~\citerefFH{salamon:1999a},
or~\cite{mcduff:2004a}.

In Section~\ref{sec:FH-T^*M} the closedness condition~(C1)
gets dropped and we consider cotangent bundles $T^* Q$ equipped
with the canonical symplectic form $\omegacan=d\lambdacan$
over closed base manifolds $Q$.
Here both conditions in~(C2) are satisfied
automatically, the lack of compactness~(C1) will be
compensated by restricting to a class of Hamiltonians
that grow fiberwise sufficiently fast, such as
physical Hamiltonians of the form kinetic plus potential energy.

%%%%%%%%%%%%%%%%%%%%%%%%%%%%%%%%%%%%
\begin{definition}\label{def:I-omega}
A symplectic manifold $(M,\omega)$ is called
\textbf{symplectically aspherical}, 
or\index{$\omega$-aspherical}
\textbf{$\mbf{\omega}$-aspherical} for short,
if\index{symplectically!aspherical}
the\index{$\Io_\omega:\pi_2(M)\to\R$, $ [v]\mapsto\int_{\SS^2} v^*\omega$}
homomorphism\index{$\Io_{c_1}:\pi_2(M)\to\Z$, $ [v]\mapsto c_1([v])$}
\begin{equation}\label{eq:I-omega}
     \Io_\omega:\pi_2(M)\to\R,\quad
     [v]\mapsto [\omega]([v]):=\int_{\SS^2} v^*\omega,
\end{equation}
vanishes for every class and each smooth representative $v:\SS^2\to M$.
We denote by $\Io_{c_1}:\pi_2(M)\to\Z$ the corresponding
evaluation homomorphism for the first Chern class
$c_1\in\Ho^2(M;\Z)$ of the (homotopic) complex vector
bundles $TM\to M$ associated to any family $J_t$ of $\omega$-compatible
almost complex structures.
\end{definition}

\begin{exercise}
Show that $\Io_\omega$ is well defined and a homomorphism of groups.
\end{exercise}

\begin{example}[Symplectically aspherical manifolds -- Condition~(C2)]
\label{ex:I-omega}
Examples of closed symplectically aspherical manifolds
are constructed in~\citerefFH{Gompf:1998a}.
It is shown that the conditions $\Io_\omega=0$
and $\Io_{c_1}=0$ are independent;
see also the survey~\citerefFH{Kedra:2008a}.
Since $M$ is closed $\Ho^2(M)$ is non-trivial, indeed
$[\omega]\not= 0$. Thus $\Io_\omega=0$
causes $\pi_1(M)\not=0$ via the Hurewicz homomorphism;
see~\cite[p.228]{hofer:2011a}.  
\begin{enumerate}
\item[(tori)]
  Since any torus $\T^\ell$ is \textbf{\Index{aspherical}},
  in fact $\pi_k(\T^\ell)=0$ for $k\ge 2$, any symplectic
  form $\omega$ on $\T^{2n}$ satisfies~(C2).
  Tori were treated in~\citeintro{Conley:1983a}.
\end{enumerate}
\end{example}

%%%%%%%%%%%%%%%%%%%%%%%%%%%%%%%%%%
%%%%%%%%%%%%%%%%%%%%%%%%%%%%%%%%%%
%%% SECTION %%%%%%%%%%%%%%%%%%%%%%%%
%%%%%%%%%%%%%%%%%%%%%%%%%%%%%%%%%%
%%%%%%%%%%%%%%%%%%%%%%%%%%%%%%%%%%
\section{Toy model}\label{sec:toy-model}

The Morse complex goes back to the work of Thom, Smale, and Milnor
in the 40s, 50s, and 60s, respectively, and was rediscovered in an
influential paper of Witten in 1982. It has been studied since
by many people. The standard reference is the 1993
monograph~\citerefFH{schwarz:1993a} by Schwarz.
For more on the history and references after 1993 see also our recent
lecture notes manuscript~\cite{weber:2015-MORSELEC-In_Preparation}
which covers in detail the dynamical systems
approach from~\citerefFH{weber:1993a}; cf.~\citerefFH{weber:2006b}.

%%%%%%%%%%%%%%%%%%%%%%%%%%%%%%%%%%
%%% SUBSECTION %%%%%%%%%%%%%%%%%%%%%
%%%%%%%%%%%%%%%%%%%%%%%%%%%%%%%%%%
\subsection{Morse homology}\label{sec:Morse-homology}
Given a closed manifold $Q$ of dimension $n$,
one can utilize gradient dynamical systems to recover the
\Index{integral singular homology} $\Ho_*(Q):=\Ho_*(Q;\Z)$.
Among all smooth functions on $Q$ there is an open and
dense subset consisting of \textbf{\Index{Morse function}s}
$f:Q\to\R$, that is all critical points $x$ are
\textbf{non-degenerate} in 
\index{critical point!non-degenerate} 
\index{non-degenerate!critical point!} 
the sense that all eigenvalues of the Hessian symmetric bilinear
form $\Hess_x f$ on $T_x Q$ are non-zero.
The number of negative eigenvalues, counted with
multiplicities, is called the \textbf{\Index{Morse index}} of $x$
denoted by $\IND_f(x)$. The \textbf{\Index{negative space}}
associated to the critical point $x$ is the subspace $E_x\subset T_xQ$
spanned by all eigenvectors associated to negative eigenvalues.
Non-degenerate critical points are isolated,
so by compactness of $Q$ they form a finite set $\Crit f$.

An \textbf{\Index{oriented critical point}} $o_x$,
\index{$o_x$, $\langle x\rangle$: oriented critical point}
also called an \textbf{\Index{orientation of a critical point}}
and alternatively denoted by $\langle x\rangle$, is a critical point $x$ 
together with a choice of orientation of its negative space $E_x$.
For each $k\in \Z$, let the \textbf{\Index{Morse chain group}}
$\CM_k(f)$ be the abelian group generated by the oriented critical
points $o_x$ of Morse index $k$ and subject to the relations
$o_x+\bar o_x=0$ where $\bar o_x$ is the opposite orientation of
$o_x$.\footnote{
  By convention the empty set $\emptyset$ generates the trivial group
  $\{0\}$; see Notation~\ref{not:notations_and_signs}.
  }
Let us denote by $[x]$ the equivalence class of an oriented
critical point under the relation $o_x+\bar
o_x=0$.\footnote{
  A choice $\langle\Crit_k f\rangle$ of an orientation
  for all critical points of index $k$ is a basis of $\CM_k(f)$.
  }

To define a boundary operator on $\CM_*(f)$
pick a Riemannian metric $g$ on $Q$ and
consider the corresponding \textbf{\Index{downward gradient flow}}
on $Q$, i.e. the 1-parameter group of diffeomorphisms
$\varphi=\{\varphi_t\}_{t\in\R}$ determined~by
\begin{equation}\label{eq:DGF}
      \frac{d}{dt}\varphi_t=-\nabla f\circ\varphi_t,\qquad
     \varphi_0=\id.
\end{equation}
By non-degeneracy of $x\in\Crit_k f$ the
\textbf{\Index{un/stable manifolds}} 
$W^{u/s}(x):=\{q\in\Q\mid\text{$\varphi_t q\to x$, as $t\to-/+\infty$}\}$
are embedded submanifolds of $Q$ of dimension/codimension $k$;
see e.g.~\citerefFH{Weber:2015c}.
Slightly perturbing the Morse function $f$ outside a small neighborhood
of its critical points leads to a function with the same critical points,
still Morse and still denoted by $f$, but whose flow satisfies in
addition the \textbf{\Index{Morse-Smale condition}}:\footnote{
  If the Morse-Smale condition~(\ref{eq:MS-Morse-homology}) holds
  true, we call $h:=(f,g)$ a \textbf{\Index{Morse-Smale pair}}.
  }
Namely, any intersection
\begin{equation}\label{eq:MS-Morse-homology}
     M_{xy}:=W^u(x)\pitchfork W^s(y),\qquad \dim M_{xy}=\IND_f(x)-\IND_f(y),
\end{equation}
of an unstable and a stable manifold is cut out transversely, hence a
manifold -- the \textbf{\Index{connecting manifold}} of $x$ and $y$.
The spaces of \textbf{\Index{connecting flow lines}}
$$
     m_{xy}:=M_{xy}\pitchfork f^{-1}(r),\qquad \dim m_{xy}=\IND_f(x)-\IND_f(y)-1,
$$
where $r\in(f(y),f(x))$ is any choice of a regular value of $f$,
are not only manifolds, but are what is called
\emph{compact up to broken flow lines}; cf. Figure~\ref{fig:fig-partner-pairs}.
Consequently in case of index difference one the $m_{xy}$ are finite
sets whose elements $u$ represent isolated flow lines running from
$x$ to $y$. Given such $u$ and an orientation $o_x$ of $E_x$, one can
define a \textbf{\Index{push-forward orientation}} $u_* o_x$
of $E_y$ that respects orientation reversal, that is
$u_* \bar o_x=\overline{u_* o_x}$.
Thus $u_*[x]:=[u_*o_x]$ is well defined
on the generators $[x]$ of the quotient group $\CM_k(f)$.
The \textbf{\Index{Morse boundary operator}} is then
defined on the generators by
$$
     \p_k=\p_k(f,g):\CM_k(f)\to\CM_{k-1}(f),\quad
     [x]\mapsto\sum_{y\in\Crit_{k-1} f}\;\sum_{u\in m_{xy}} u_*[x]
$$
and extended to the whole group by linearity.
That $\p^2=0$ boils down to the fact that
$\p^2[x]$ is a sum over all 1-fold broken flow lines $(u,v)$
where $u$ is a flow line from $x$ to some $y$ and $v$ is one
from \emph{that same} $y$ to some $z$ as indicated by
Figure~\ref{fig:fig-partner-pairs}.
As also indicated by the figure such broken flow lines correspond
precisely to the ends of a 1-dimensional manifold-with-boundary.
In other words, these broken flow lines appear in pairs and, moreover,
one partner provides the opposite coefficient
$v_*u_*[x]=-\tilde v_*\tilde u_*[x]$
in front of $[z]$ as the other one. So in sum each partner pair contributes zero,
but $\p^2[x]$ is precisely a sum of partner pair contributions.
For details of the facts above/below
see e.g.~\cite{weber:2015-MORSELEC-In_Preparation}.
\begin{figure}[h]
  \centering
  \includegraphics%[width=0.9\textwidth]
                             %[height=4cm]
                             {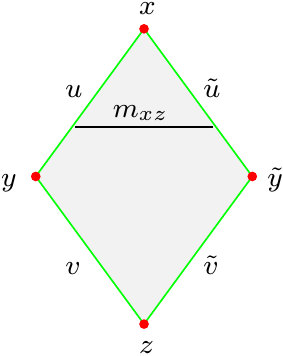}
  \caption{Partner pair property $(u,v)\sim(\tilde u,\tilde v)$
                leads to $\p^2=0$}
  \label{fig:fig-partner-pairs}
\end{figure}

The corresponding homology groups $\HM_k(Q;h)$
are actually independent of the Morse-Smale pair $h=(f,g)$ as one shows,
for instance,\footnote{
  Alternatively, use Po\'{z}niak cones;
  see~\cite{weber:2015-MORSELEC-In_Preparation}.
  }
by choosing a generic homotopy $f_t$ between 
two Morse functions $f^\alpha$ and $f^\beta$ and similarly $g_t$ among
the Riemannian metrics. Counting flow lines of the time-dependent
gradient equation --~just replace $h=(f,g)$ in~(\ref{eq:DGF}) by the
time-dependent pair $h_{\alpha\beta}=(f_t,g_t)$~-- 
provides a chain complex homomorphism $\psi^{\beta\alpha}_k(h_{\alpha\beta})$.
The induced maps on homology $\Psi^{\beta\alpha}_k:
\HM_k(f^\alpha,g^\alpha)\to\HM_k(f^\beta,g^\beta)$
are called \textbf{\Index{continuation maps}}.
They do not depend on the choice of homotopy $h_{\alpha\beta}$.
On the chain level the continuation maps have the trivial, but important,
property that the constant homotopy, denoted by $h_\alpha$,
induces the identity map, that is $\psi^{\alpha\alpha}_k(h_\alpha)=\1$.
Looking at homotopies of homotopies
not only shows that the $\Psi^{\beta\alpha}_k$
are independent of $h_{\alpha\beta}$, but also
provides the crucial relations
$$
     \Psi^{\gamma\beta}_k \Psi^{\beta\alpha}_k 
     =\Psi^{\gamma\alpha}_k,\qquad
     \Psi^{\alpha\alpha}_k=\1.
$$
A rather nice way to construct a natural isomorphism
\begin{equation}\label{eq:fund-Morse-hom}
     \Psi^h:\HM_k(Q;h)\stackrel{\cong}{\longrightarrow}\Ho_k(Q)
\end{equation}
to singular homology of $Q$ is via the
Abbondandolo-Majer
filtration~\citerefFH{abbondandolo:2006a}.\footnote{
  If $f(x)=k$ for $x\in\Crit_k f$, then
  $\{f<k+\frac12\}$ leads to such filtration; cf.~\cite[Thm.~7.4]{milnor:1965a}.
  }

\begin{remark}[$\Z_2$ coefficients]\label{rem:Z2-coeff-Morse}
The $k^{\rm th}$ Morse chain group with $\Z_2$ coefficients
is the $\Z_2$ vector space $\CM_k(f;\Z_2)$ whose canonical
basis $\Crit_k f$ are the critical points of Morse index $k$.
The Morse boundary operator is defined by
\begin{equation}\label{eq:bound-Morse-Z2}
     \p_k x:=\sum_{y\in\Crit_{k-1} f} \#_2(m_{xy}) \, y
\end{equation}
on the basis elements $x\in\Crit_k f$; here $\#_2$
denotes 'number of elements mod 2'.
\begin{equation*}
\begin{gathered}
  \textsf{The $\Z_2$ Morse boundary operator counts modulo two}
  \\
  \textsf{downward flow trajectories between critical points of index difference $1$.}
\end{gathered}
\end{equation*}
\end{remark}

\begin{exercise}[Closed orientable surfaces]
Calculate the $\Z_2$ Morse homology of your favorite
closed orientable surface.
[Hint: Embedd in $\R^3$; height function.]
\end{exercise}

\begin{exercise}[Real projective plane $\RP^2$]
Find a Morse function on $\RP^2$ with exactly
three critical points. Find the $\Z_2$
Morse complex and homology.
\newline
[Hint: Think of $\RP^2$ as unit disk $\D\subset\R^2$,
opposite boundary points identified.]
\end{exercise}

%%%%%%%%%%%%%%%%%%%%%%%%%%%%%%%%%%
%%% SUBSECTION %%%%%%%%%%%%%%%%%%%%%
%%%%%%%%%%%%%%%%%%%%%%%%%%%%%%%%%%
\subsection{Morse cohomology}\label{sec:Morse-cohom}
By
definition \textbf{\Index{cohomology}} arises from homology by
\emph{dualization}:
Any chain complex $\Crm=(\Crm_*,\p_*)$ comes naturally with a
cochain complex $\Crm^\#=(\Crm^*,\delta^*)$, the
\textbf{dual complex of $\mbf{\Crm}$}: It consists of the dual
spaces $\Crm^k:=\Crm_k^\#$ and transposed maps $\delta^k:=\p_{k+1}^\#$.
The cohomology $\Ho^*(\Crm^\#)$ of the cochain complex $\Crm^\#$
is called the \textbf{cohomology of $\mbf{\Crm}$}
and denoted by $\Ho^*(\Crm)$.

%%%%%%%%%%%%%%%%%%%%%%%%%%%%%%%%%%
%%% SUBSUBSECTION %%%%%%%%%%%%%%%%%%%
\subsubsection*{Morse cohomology}
For a Morse-Smale pair $h=(f,g)$ the
\textbf{\Index{Morse cochain groups}} are defined by
$$
     \CM^k(f):=\CM_k^\#(f):=\Hom(\CM_k(f),\Z)
$$
for any $k$ and the associated \textbf{\Index{Morse coboundary
operators}} $\delta^k=\delta^k(h)$ by
\begin{equation*}\label{eq:morse-coboundary}
\begin{tikzcd} [column sep=tiny]
\CM^{k+1}(f)
\arrow[r, equal]
  &\Bigl\{\CM_{k+1}(f)
    \arrow[r]
    \arrow[d, cyan, "\p_{k+1}"]
    &\Z\text{ linear}\Bigr\}%\,\,\mbox{ }
\\
\,\,\CM^{k}(f)\,\,
\arrow[r, equal]
\arrow[u,  "\delta^k:=", "(\p_{k+1})^\#"']
  &\Bigl\{\,\CM_{k}(f)\,
    \arrow[r, "\gamma"]
    &\Z\text{ linear}\Bigr\}.
\end{tikzcd}
\end{equation*}
The transposed map acts by $\delta^k\gamma=\gamma\circ\p_{k+1}$,
of course. The quotient space
$$
     \HM^k(Q;h):=\frac{\ker\delta^k}{\im\delta^{k-1}}
$$
is called the $\mbf{k^{\mathrm{th}}}$ \textbf{\Index{Morse cohomology}}
of $Q$ with $\Z$ coefficients.

From now on we restrict to $\Z_2$ coefficients for simplicity of
the presentation. Since $\Z_2$ is a field the Kronecker duality theorem
implies that the homomorphism induced on cohomology
$[\psi^{\beta\alpha}(h_{\alpha\beta})^\#]$ by the transpose 
is the transpose of the homology continuation isomorphism
$\Psi^{\beta\alpha}$. Consequently the transposes
$$
     (\Psi^{\beta\alpha})^\#
     =[\psi^{\beta\alpha}(h_{\alpha\beta})]^\#
     =[\psi^{\beta\alpha}(h_{\alpha\beta})^\#]:
     \HM^*(Q;h^\beta;\Z_2)\to\HM^*(Q;h^\alpha;\Z_2)
$$
are isomorphisms and satisfy the identities
\begin{equation}\label{eq:cont-maps-trans}
     (\Psi^{\beta\alpha})^\#(\Psi^{\gamma\beta})^\#=(\Psi^{\gamma\alpha})^\#
     ,\qquad
     (\Psi^{\alpha\alpha})^\#=\1.
\end{equation}

%%%%%%%%%%%%%%%%%%%%%%%%%%%%%%%%%%
%%% SUBSUBSECTION %%%%%%%%%%%%%%%%%%%
\subsubsection*{Geometric realization}
Restricting to $\Z_2$ coefficients
there are no orientations involved, so $\Bb_f:=\Crit f$ is a
\textbf{canonical basis} of $\CM_*(f;\Z_2)$.
\index{canonical basis!of $\CM_*(f;\Z_2)$}
By compactness of $Q$ the dimension
of $\CM_*(f;\Z_2)$, thus of its dual space $\CM^*(f;\Z_2)$, is finite.
Hence the dual set of $\Bb_f$, the set
$\Bb_f^\#:=\Crit^\# f:=\{\eta^x\mid x\in\Crit f\}$
that consists of the Dirac $\delta$-functionals\footnote{
  Dirac $\delta$-functionals are denoted by $\eta^x$ for distinction
  from the coboundary operator $\delta$.
  }
associated to the elements of $\Bb_f$,
is a basis of $\CM^*(f;\Z_2)$
called the \textbf{\Index{dual basis}} of $\Bb_f$.
By definition each functional is determined by its values
\begin{equation}\label{eq:Dirac-delta}
     \eta^{x}:\CM_*(f;\Z_2)\to\Z_2,\quad
     y\mapsto
     \begin{cases}
        1&\text{, $y=x$,}
        \\
        0&\text{, else,}
     \end{cases}
\end{equation}
on the basis elements $y\in\Crit f$. Since $\CM^*(f;\Z_2)$ is of
finite dimension any element $\omega$ can indeed be written
as a linear combination of the $\eta^x$'s, that is
\begin{equation}\label{eq:formal-sum}
     \omega=\sum_{x\in\Crit f}\omega_x\eta^x,\qquad
     \omega_x:=\omega(x)\in\Z_2.
\end{equation}
The dual basis, thus $\CM^*(f;\Z_2)$, inherits the Morse index grading
of $f$, i.e.
$
     \abs{\eta^x}:=\abs{x}:=\IND_f(x).
$
To geometrically identify the action of the coboundary operator
$\delta^k:=\p_{k+1}^\#$ on a cochain $\omega\in\CM^k(f;\Z_2)$
observe that
$$
     (\delta^k\omega)_x
     =(\delta^k\omega)(x)
     =\omega(\p_{k+1}x)
     =\sum_{y\in\Crit_k f}\#_2(m_{xy})\,\omega_y
$$
for every $x\in\Crit_{k+1} f$. Here we used
definition~(\ref{eq:bound-Morse-Z2}) of $\p_{k+1}$. Thus by~(\ref{eq:formal-sum})
\begin{equation*}
     \delta^k\omega
     =\sum_{x\in\Crit_{k+1} f}\left(\delta^k\omega\right)_x\eta^x
     =\sum_{x\in\Crit_{k+1} f}
     \Bigl(\sum_{y\in\Crit_k f}\#_2(m_{xy})\,\omega_y\Bigr)\eta^x
\end{equation*}
for every cochain $\omega\in\CM^k(f;\Z_2)$. In particular, we obtain that
\begin{equation}\label{eq:formal-sum-678}
     \delta^k\eta^y
     =\sum_{x\in\Crit_{k+1} f} \#_2(m_{xy})\,\eta^x
\end{equation}
for every basis element $\eta^y\in\Crit_k^\# f$.
But this means the
following.
\begin{equation*}
\begin{gathered}
  \textsf{The $\Z_2$ Morse {\color{red} co}boundary operator counts modulo two}
  \\
  \textsf{{\color{red} upward} flow trajectories between critical
               points of index difference $1$.}
\end{gathered}
\end{equation*}

%%%%%%%%%%%%%%%%%%%%%%%%%%%%%%%%%%
%%% SUBSECTION %%%%%%%%%%%%%%%%%%%%%
%%%%%%%%%%%%%%%%%%%%%%%%%%%%%%%%%%

%%%%%%%%%%%%%%%%%%%%%%%%%%%%%%%%%%
%%%%%%%%%%%%%%%%%%%%%%%%%%%%%%%%%%
%%% SECTION %%%%%%%%%%%%%%%%%%%%%%%%
%%%%%%%%%%%%%%%%%%%%%%%%%%%%%%%%%%
%%%%%%%%%%%%%%%%%%%%%%%%%%%%%%%%%%
\addtocounter{section}{+1}
\sectionmark{Symplectic action functional $\Aa_H$}
\addtocounter{section}{-1}
\section[Symplectic action functional $\Aa_H$ -- period one]
{Symplectic action $\Aa_H$ -- period one}
\sectionmark{Symplectic action functional $\Aa_H$}
\label{sec:symp-ac-fctl}

Suppose $(M,\omega)$ is a \emph{closed} symplectic manifold.
The through a Hamiltonian $H$
\textbf{perturbed \Index{symplectic action} functional}
on the space $\Ll_0 M$ of contractible smooth loops
$z:\SS^1\to M$ is defined by
\begin{equation}\label{eq:action-A}
     \Aa_H:\Ll_0 M\to\R,\quad
     z\mapsto\int_{\D} \bar{z}^*\omega -\int_0^1 H_t(z(t))\, dt,
\end{equation}
where $\bar{z}=v:\D\to M$ is a \textbf{\Index{spanning disk}},
i.e. a smooth extension of $z=v|_{\p\D}$.
Some remarks are in order. The symplectic action functional
\begin{itemize}
\item
  is \textbf{not well defined}, unless $M$ is $\omega$-aspherical
  ($dA_H$ makes sense though);\index{symplectic action!not well defined}
\item
  is \textbf{not bounded} below,\index{symplectic action!not bounded}
    neither above.\footnote{
    Figure out how term one behaves under replacing \emph{loops} $z$ by
    $z^k(t):=z(kt)$ und $v$ by $v^k$.
    }
  Unfortunately, common variational
  techniques build on at least semi-boundedness, say from below.

  One circumvents this problem by restricting
  attention to those $L^2$ (not $W^{1,2}$) gradient trajectories $\R\to\Ll_0 M$ along
  which the action
  remains bounded; see Remark~\ref{rem:non-bound-Aa}.
  It is the set $\Mm$ of these -- called the set of
  \textbf{finite energy trajectories} of the $L^2$ gradient $\grad\Aa_H$~--
  that carries the complete homology information of $M$
  whenever $\Aa_H$ is Morse. This brings in, through the
  back door, another common assumption in variational theory:
  Although in general $\Aa_H$ is not
  Palais-Smale with respect to the $W^{1,2}$ gradient
  (cf.\citerefFH[VI.1]{Hofer:1985a} and~\cite[\S\,3.3]{hofer:2011a}),
  it is sufficient that the \textbf{\Index{Palais-Smale condition}}
  holds \emph{on} $\Mm$;
\item
  has critical points of \textbf{infinite Morse index};
  cf. Example~\ref{ex:inf-M-index} ($H=0$).
  Unfortunately, therefore the symplectic action functional
  will not admit fundamental Morse theoretical tools such as
  the cell attachment theorem: The \index{theorem!Kuiper's --} 
  unit sphere in an infinite dimensional Hilbert space
  is contractible!\footnote{
    See Kakutani~\citerefFH{Kakutani:1943a}
    or apply \Index{Kuiper's theorem}~\citerefFH{Kuiper:1965a}.
    }

  One circumvents this problem by only looking at those eigenvalues
  that change sign along the path of Hessians associated to a flow
  trajectory connecting two critical points.
  Generically the sum of signed sign changes is well defined and finite,
  called \textbf{relative Morse index} or \textbf{spectral flow}.
\end{itemize}

\begin{remark}[Signs in $\Aa_H$ -- see Notation~\ref{not:notations_and_signs}
for a detailed discussion]
\label{rem:SIGNS-symp-action}\mbox{ }
\newline
\textit{Closed manifolds.} At the level of closed symplectic manifolds
the sign choices in~(\ref{eq:action-A}) are not relevant.
Changing the sign of $\omega$ is equivalent by~(\ref{conv:Ham-VF})
to changing the sign of $H$. 
But changing the sign of $H$, more precisely replacing $H=H_t$
by $\hat H=\hat H_t:=-H_{-t}$, results in
$\HF_k(H)\simeq\HF^{-k}(\hat H)$ induced by natural identification
of the two chain complexes.
Together with continuation $\HF^{-k}(\hat H)\simeq\HF^{-k}(H)$
and the natural isomorphisms to singular (co)homology
such change of sign induces nothing but the
Poincar\'{e} duality isomorphisms
$\Ho_{k+n}(M)\simeq\Ho^{2n-(k+n)}(M)$
of the closed symplectic, so orientable, manifold~$M$.
\newline
\textit{Cotangent bundles.} Motivated by classical mechanics
one would like to have as integrand $p\,dq-H\,dt$; this is the case for
convention~(\ref{eq:action-A}) with
$\omegacan=d\lambdacan$.
\end{remark}

\begin{remark}[Palais-Smale condition]\label{rem:PS-condition}
Suppose $f$ is a $C^1$ function on a Banach manifold
$\Bb$ equipped with a Riemannian metric,
see~\citerefFH{Palais:1966b},
and $\nabla f$ denotes the gradient.
A sequence $z_i\in \Bb$ along which
$f$ is bounded and $\nabla f$ converges to zero
is called a \textbf{\Index{Palais-Smale sequence}}.
One says that the \textbf{\Index{Palais-Smale condition}
holds on a subset} $U\subset\Bb$ if every
Palais-Smale sequence in $U$ admits
a subsequence converging to a critical point.
For a detailed account of the Palais-Smale
condition and its history see the
survey~\citerefFH{Mawhin:2010a}.
\end{remark}

\begin{remark}[Non-exact cases]\label{rem:non-exact}
Exactness of $\omega$ facilitates the definition of action
functionals on loops, but it can be dropped on the cost of either
\begin{itemize}
\item
  restricting to contractible loops and spanning in disks,
  cf.~(\ref{eq:action-A}), or
\item
  fixing one reference loop in each component of loop space
  and spanning in cylinders.
\end{itemize}
The thereby potentially arising multi-valuedness of the action can then
\begin{itemize}
\item
  either be ruled out by requiring $\omega$ to be symplectically aspherical
  (case of spanning disks) or symplectically atoroidal
  (case\index{$\omega$-atoroidal}
  of\index{atoroidal!symplectically --}\index{atoroidal!$\omega$- --}
  spanning\index{symplectically!atoroidal}
  cylinders),\footnote{
    \textbf{Symplectically} or \textbf{$\mbf{\omega}$-atoroidal} means that
    $\int_{\T^2} v^*\omega=0$ for every smooth map $v:\T^2\to M$;
    see~\citerefRF[\S 2.3]{Merry:2011a} for sufficient conditions in the cotangent bundle
    case $M=T^*Q$.
    }
  
\item
  or be accepted and dealt with by constructing chain complexes
  with coefficients in Novikov rings; cf.~\citerefFH{Hofer:1995a}.
\end{itemize}
\end{remark}

%%%%%%%%%%%%%%%%%%%%%%%%%%%%%%%%%%
%%% SUBSECTION %%%%%%%%%%%%%%%%%%%%%
%%%%%%%%%%%%%%%%%%%%%%%%%%%%%%%%%%
\subsection{Critical points and $L^2$ gradient}\label{sec:FH-gradient}

The critical points of the action functional $\Aa_H$ are
the $1$-periodic orbits of $X_{H}$ by
formula~(\ref{eq:dA_H}) for the differential
of $\Aa_H$ at any\footnote{
  The differential is well defined at any loop, contractible or not.
}
loop $z$.

\begin{remark}[Paying dynamics to get compactness]\label{rem:dyn-comp}
To turn the differential into a gradient
one needs to pick a Riemannian metric on the loop space.
Looking at the differential suggests the $W^{1,2}$
topology (absolutely continuous loops with square integrable
derivatives), but for this choice desirable compactness properties
fail, as mentioned above.
It was Floer's insight that taking $L^2$ gradient instead
provides sufficient compactness on relevant parts of loop space;
see Section~\ref{sec:FH-comp}.
The price to pay will be that the $L^2$ gradient
does not generate a flow on the whole loop space --
but it does on relevant parts. The relevant part
actually consists of the loops represented by the space
$\Mm$ of finite energy trajectories; cf.~(\ref{eq:Mm=Mm_xy}).
It is this space that carries the homology of $M$.
\end{remark}

Pick a $1$-periodic family $J_t$ of $\omega$-compatible
almost complex structures and let $g_{J_t}=\inner{\cdot}{\cdot}_t$ be
the associated family of Riemannian metrics on $M$.
Define the \textbf{$\mbf{L^2}$ inner product}
on the loop\index{$L^2$ inner product}
space\index{loop space!$L^2$ inner product}
at any loop $z$, contractible or not, by
$$
      \langle\cdot,\cdot\rangle=\langle\cdot,\cdot\rangle_{0,2}:
     T_z\Ll M\times T_z\Ll M\to\R,\quad
     (\xi,\eta)\mapsto\int_0^1 \langle\xi(t),\eta(t)\rangle_t\, dt ,
$$
where $\xi$ and $\eta$ are smooth vector fields along the loop $z$.

\begin{exercise}\label{exc:symp-action-fctl}
a)~Show that $\Aa_H$ is well defined, if $\Io_\omega=0$.
Recall that the identity $dH_t=-\omega(X_{H_t},\cdot)$
determines the vector field $X_{H_t}$.
Prove that
\begin{equation}\label{eq:dA_H}
\begin{split}
     d\Aa_H(z)\zeta
   &=\int_0^1\omega\left(\zeta,\dot z-X_{H_t}(z)\right) dt \\
  &=\int_0^1\INNER{\zeta}{-J_t(z)\bigl(\dot z-X_{H_t}(z)\bigr)}_t \, dt\\
   &=\int_0^1\INNER{\zeta}{-J_t(z)\dot z-\nabla H_t(z)}_t \, dt\\
   &=\bigl\langle\zeta,\underbrace{- J_t(z)\dot z-\nabla H(z)}
         _{\mathop{=:}\grad\Aa_H(z)}\bigr\rangle
\end{split}
\end{equation}
for every smooth vector field $\zeta$ along a contractible loop $z$ in
\begin{figure}%[h]
  \centering
  \includegraphics%[width=0.9\textwidth]
%                             [height=4cm]
                             {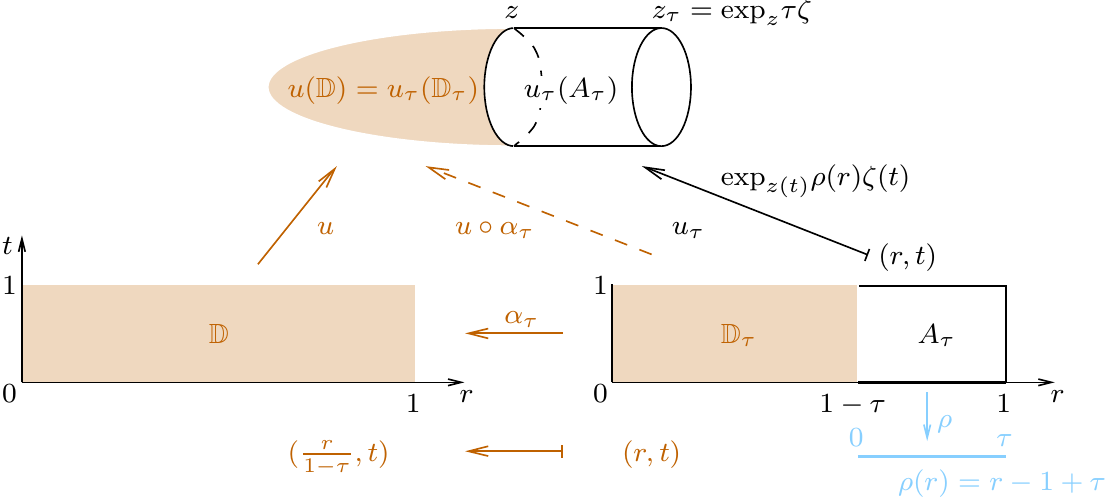}
  \caption{Derivative $d\Aa_H(z)\zeta
                 :=\left.\frac{d}{d\tau}\right|_0
                 \Aa_H(\exp_z\tau\zeta)$ and
                  spanning disks $u_\tau$}
  \label{fig:fig-dA_H}
\end{figure}
$M$; see also the hint to Exercise~\ref{prop:ENERGY-ACTION}.\footnote{
  Hint: To see identity one, pick an auxiliary Riemannian metric
  on $M$ with corresponding Levi-Civita connection $\nabla$
  and exponential map $\exp$. Given $z$ and $\zeta$, pick
  the families of loops $z_\tau$ and spanning disks $u_\tau$
  in Figure~\ref{fig:fig-dA_H}.
  (In the figure we have identified the closed unit disk
  in $\R^2$ minus the origin with the cylinder $(0,1]\times\SS^1$
  which we denote by $\D$! This abuses notation, but might facilitate
  reading.)
  In the calculation use that the integral is additive
  under the domain decomposition $\D=\D_\tau\cup A_\tau$
  to obtain a sum of three terms: One of them
 vanishes, as $\int_{\D_\tau} u_\tau^*\omega=\int_\D u^*\omega$
  is constant in $\tau$, and one of them leads easily to $X_{H_t}$.
  The third one is
  $$
     \left.\tfrac{d}{d\tau}\right|_{0}\int_{A_\tau} u_\tau^*\omega
     =\int_0^1
     \underbrace{
       \bigl.\tfrac{d}{d\tau}\bigr|_{0}{\textstyle \int_{1-\tau}^1} F(\tau,r,t)\, dr
       }_{-\left.F(\tau,r=1-\tau,t)\right|_{0}
       \tfrac{d}{d\tau}\bigr|_{0} (1-\tau)}
     dt
     =\int_0^1
     \underbrace{
       F(\tau=0,r=1,t)
     }_{\omega(\zeta,\dot z)}
     dt
  $$
  for $F=\omega\bigl(E_2(z,\rho\zeta)\zeta, E_1(z,\rho\zeta)\dot
  z+E_2(z,\rho\zeta)\rho\Nabla{t}\zeta\bigr)$
  and $E_i(z(t),\zeta(t)):T_{z(t)} M\to T_{\exp_{z(t)}\zeta(t)} M$
  for $i=1,2$ denoting the covariant partial derivatives of the exponential map
  ($E_i(z,0)=\1$ as shown
  e.g. in the section \textit{Analytic setup near hyperbolic singularities}
  in~\cite[App.]{weber:2015-MORSELEC-In_Preparation}).
}

b)~For general $\Io_\omega$ show that, although
$\Aa_H$ is not well defined, its linearization
does not depend on the choice of spanning disk.
Think about spanning disks as cylinders connecting
the periodic orbit $z$ with some fixed constant loop
$z_0(t)\equiv p\in M$. Extend the definition of $\Aa_H$
to components of the free loop space other than that
of the contractible loops. 
\end{exercise}

\begin{lemma}[Critical set and action spectrum -- closed manifold $M$]
\label{le:compactness-crit} \mbox{ }
\begin{enumerate}
\item[\rm a)]
 The set of critical points $\Crit \Aa_H =:\Pp_0(H)$, in fact, the set
$$
     \Pp(H):=\{z\in C^\infty(\SS^1,M) \mid \dot z=X_{H_t}(z)=J_t(z)\nabla H_t(z)\},
$$
is compact in the $C^1$ topology.
This builds on compactness of $M$.\footnote{
  Without compactness of $M$ one sometimes still
  gets compactness of the part $\Crit^{\le a}\Aa_H$ of the critical
  set below level $a$; cf. Section~\ref{sec:FH-T^*M} where $M=T^*Q$
  and~\citerefFH[App.\,A]{weber:2002a}.
  }
\item[\rm b)]
The set of critical values $\mbf{\mathfrak{S}(\Aa_H)}:=\Aa_H(\Crit\Aa_H)$,
called the \textbf{action spectrum},
is\index{$\mathfrak{S}(\Aa_H):=\Aa_H(\Crit\Aa_H)$!action spectrum}
a\index{action spectrum! $\mathfrak{S}(\Aa_H)$}
compact\,\footnote{
  In situations where $\Crit^{\le a}\Aa_H$ is compact $\forall a$
  one still gets closedness of $\mathfrak{S}(\Aa_H)$, easily.
  }
and measure-zero, thus nowhere dense, subset of the real
line~$\R$. The complement is open and dense; cf. Section~\ref{sec:Baire}.
\end{enumerate}
\end{lemma}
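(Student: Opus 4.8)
The two parts concern the critical set $\Pp_0(H)=\Crit\Aa_H$ and the action spectrum $\mathfrak{S}(\Aa_H)$, and I would treat part a) first since part b) is essentially a corollary of it together with Sard's theorem.

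For part a), the plan is to show that the larger set $\Pp(H)$ of \emph{all} $1$-periodic solutions of $\dot z=X_{H_t}(z)$ (not just contractible ones) is compact in $C^1$, which clearly implies the claim for $\Pp_0(H)=\Crit\Aa_H$ since the latter is the subset of contractible loops, and contractibility is a closed condition. The argument is a standard Arzel\`{a}--Ascoli bootstrapping: take a sequence $z_i\in\Pp(H)$. Since $M$ is closed, $\{z_i\}$ is uniformly bounded; since $\dot z_i(t)=X_{H_t}(z_i(t))$ and $X_{H_t}$ is a smooth family of vector fields on the compact manifold $M$, the velocities $\dot z_i$ are uniformly bounded, hence $\{z_i\}$ is equicontinuous. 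By Arzel\`{a}--Ascoli (Theorem~\ref{thm:AA}) a subsequence converges in $C^0$ to some continuous loop $z$. Then $\dot z_i(t)=X_{H_t}(z_i(t))\to X_{H_t}(z(t))$ uniformly in $t$, so $z$ is $C^1$ with $\dot z=X_{H_t}(z)$, giving $z\in\Pp(H)$ and $C^1$ convergence along the subsequence. (Elliptic/ODE regularity then upgrades to $C^\infty$, but for the stated $C^1$ compactness this is not needed.) The one point to be careful about is working in local charts to make sense of "uniformly bounded velocities" and "uniform convergence of $\dot z_i$", but this is routine given compactness of $M$.

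For part b), the plan is: $\mathfrak{S}(\Aa_H)=\Aa_H(\Crit\Aa_H)$ is the continuous image of the $C^1$-compact set $\Crit\Aa_H$ under the map $z\mapsto\Aa_H(z)$, which is continuous in the $C^1$ topology (the disk integral $\int_\D\bar z^*\omega$ depends continuously on $z$ via any continuous choice of spanning disks, and under assumption~(C2) it is moreover well defined independently of the disk; the Hamiltonian term $\int_0^1 H_t(z(t))\,dt$ is manifestly $C^0$-continuous in $z$). Hence $\mathfrak{S}(\Aa_H)$ is compact. To get measure zero I would invoke Sard's theorem: $\Aa_H$ restricted to a suitable Banach/Hilbert completion of $\Ll_0 M$ is a $C^1$ (indeed smooth) function whose critical points are exactly $\Pp_0(H)$, so $\mathfrak{S}(\Aa_H)$ is the set of critical values of a smooth real-valued function on a (separable) Banach manifold; the Sard--Smale theorem then gives that $\mathfrak{S}(\Aa_H)$ has measure zero in $\R$. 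A compact set of measure zero in $\R$ is automatically nowhere dense (its closure is itself, with empty interior), and its complement is therefore open and dense.

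The main obstacle I anticipate is not part a), which is genuinely routine, but the clean justification of the measure-zero statement in part b): one must either phrase $\Aa_H$ as a $C^1$ functional on an infinite-dimensional manifold to which a Sard-type theorem applies, or — if one wants to avoid infinite-dimensional Sard — argue more directly, e.g.\ by covering $\Pp_0(H)$ by finitely many charts and using the fact that near a critical point the action is locally controlled by a finite-dimensional reduction. The cleanest route is the Sard--Smale theorem, and I would just cite the excerpt's planned functional-analytic appendix (and Section~\ref{sec:Baire}) for the statement, noting that separability of the loop space guarantees the hypotheses are met. Everything else — compactness of the image, closedness, nowhere density of the complement, openness/density of the complement — follows formally once measure zero is in hand.
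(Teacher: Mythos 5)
Your part a) is correct, and it is precisely the Arzel\`{a}--Ascoli route that the paper itself leaves as Exercise after the lemma; the paper's own proof is slightly different and arguably slicker: it sends the initial conditions $w_i=(z_i(0),\dot z_i(0))$ into the compact subset $\{|v|\le\max|X_H|\}$ of $TM$, extracts a convergent subsequence $w_i\to(p,v)$, observes that $p$ is again a fixed point of $\psi_1$ and $d\psi_1(p)v=v$ by continuity of $\psi_1$ and $d\psi_1$, and concludes $C^1$ convergence $z_i\to\psi_\cdot p$ from smooth dependence of the flow on initial conditions. Either argument is fine.

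Part b) has a genuine gap at the step you yourself flag as the main obstacle. The Sard--Smale theorem does \emph{not} apply to $\Aa_H$ viewed as a functional on a Banach/Hilbert completion of the loop space: Sard--Smale requires the map to be Fredholm, and a real-valued $C^\ell$ function $f$ on an infinite-dimensional Banach manifold is never Fredholm, since $\ker df(x)$ has codimension at most $1$ and is therefore infinite-dimensional. This is not a technicality one can wave away --- the Fredholm hypothesis is necessary (cf.\ Remark~\ref{rem:sep-B-countable}\,a) and Smale's original paper), and there are smooth functions on separable Hilbert spaces whose set of critical values contains an interval (Kupka's counterexample). So ``measure zero by infinite-dimensional Sard'' is not available. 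The paper's actual route is the one you mention only in passing as a fallback: represent $\mathfrak{S}(\Aa_H)$ as a subset of the critical values of a smooth function on a \emph{finite-dimensional} manifold --- either by the method of finite-dimensional approximation (broken-trajectory reduction, as in Milnor or in \citerefFH[Le.\,3.3]{weber:2002a}) or by Sikorav's method --- and then apply the classical Sard theorem there. Once measure zero is secured this way, your remaining deductions (compactness of the image by continuity of $\Aa_H$ on the $C^1$-compact critical set, nowhere density of a closed measure-zero set, openness and density of the complement) are all correct and match the paper.
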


\begin{proof}
a) Pick a sequence $z_i\in\Pp(H)$ and consider the
sequence $w_i:=(p_i,v_i):=(z_i(0),\dot z_i(0))$ in the tangent bundle $TM$.
Pick a Riemannian metric on $M$.
As $\abs{\dot z_i(0)}=\abs{X_{H_{0}}\circ z_i(0)}
\le\max_{\SS^1\times M}\abs{X_H}$,
the sequence lives in a compact subset of $TM$.
Thus there is a subsequence, still denoted by $w_i$,
converging to an element $w=(p,v)\in TM$.
But the $z_i(0)$ are fixed points of the time-1-map
$\psi_1$ of the Hamiltonian flow~(\ref{eq:Ham-flow}) and so is the
limit $p$ by continuity of $\psi_1$.\footnote{
  Indeed $\psi_1p=\psi_1\lim_iz_i(0)=\lim_i\psi_1z_i(0)=lim_iz_i(0)=p$.
  }
Similarly $d\psi_1(p)v=v$ since $\psi_1$ is of class $C^1$.
Hence the $1$-periodic orbits $z_i$ converge in $C^1$
to the $1$-periodic orbit $z(t):=\psi_t p$.
\\
b) Compactness holds by~a) and continuity of $\Aa_H$.
Concerning nowhere dense the main step is to represent
$S:=\mathfrak{S}(\Aa_H)$ as a subset of the critical point set
$S^\prime$ of a smooth function on a finite dimensional manifold.
Then $S^\prime$, hence $S$, is of (Lebesgue) measure zero by
Sard's Theorem~\citerefFH{Sard:1942a} and therefore cannot
contain intervals.
But in this case $S$ is nowhere dense:
Any non-empty open set $U$ in $\R$ admits a non-empty
open subset $V$ disjoint from $S$.
To see this pick an open interval $I\subset U$.
As $S$ does not contain intervals, our
$I$ admits a point $p\notin S$. Since $S$ is closed $p$ admits an open
neighborhood $V\subset I$ disjoint from $S$.

\emph{Note}. Closed and nowhere dense does not imply measure zero:
There are closed subsets of $[0,1]$, called Cantor-like, which do not
contain intervals, but are of positive measure;
see e.g.~\cite[Ch.\,1 \S 6 Exc.\,4]{Stein:2005a}.
\newline
To carry out the reduction to Sard's Theorem
one can either apply the method of finite dimensional
approximation ($S=S^\prime$), see~\cite{milnor:1963a}
or e.g.~\citerefFH[Le.\,3.3]{weber:2002a} in a similar
situation, or adapt Sikorav's method described
in~\cite[\S\,5.2]{hofer:2011a}.
\end{proof}

\begin{exercise}
Use the Arzel\`{a}-Ascoli Theorem~\ref{thm:AA}
to prove Lemma~\ref{le:compactness-crit}\,a).
\end{exercise}

%%% SUBSUBSECTION %%%%%%%%%%%%%%%%%%%
%%%%%%%%%%%%%%%%%%%%%%%%%%%%%%%%%%
\subsubsection{The $\mbf{L^2}$ gradient $\mbf{\grad\Aa_H}$ as section of a Hilbert space bundle}
Concerning analysis one prefers to work in Banach
or even Hilbert spaces. As the gradient~(\ref{eq:dA_H}) of
$\Aa_H$ involves \emph{one} derivative of the loop,
the most natural space to consider
is the space
$$
     \Lambda M:=W^{1,2}_{\rm contr}(\SS^1,M)
$$
of contractible absolutely continuous loops $z:\SS^1\to M$
with square integrable derivative.\footnote{
  An absolutely continuous map $\SS^1\to M$
  admits a derivative almost everywhere.
%  So integration makes sense.
  }
Although not a linear space, it is a Hilbert manifold,
modeled locally at a loop $z$ on the Hilbert space
$$
     W^{1,2}(\SS^1,z^*TM)=T_z\Lambda M
$$
of absolutely continuous
vector fields along $z$ with square integrable derivative.
A standard reference for the \textbf{geometry
of \Index{manifolds of maps}} is~\citerefFH{Eluiasson:1967a}.
If $z\in\Lambda M$, then $\grad\Aa_H(z)$
is an $L^2$ integrable vector field along $z$, that is
$$
     \grad\Aa_H(z)\in L^{2}(\SS^1,z^*TM)=:\Ee_z.
$$

\begin{remark}[No flow]\label{rem:no-flow}
To put it differently, the $L^2$ gradient $\grad\Aa_H$
of the action functional is not a tangent vector field to
the Hilbert manifold $W^{1,2}(\SS^1,M)$, nor to any
$W^{k,2}$, due to the \emph{loss of a
  derivative}.\footnote{
  Given $u\in W^{k,2}$ , then $\grad\Aa_H(u)$ lies in
  $W^{k-1,2}$. So $\grad\Aa_H$ is not a vector field on $W^{k,2}$,
  i.e. a section of $TW^{k,2}$, so the formal equation
  $\frac{d}{ds}u(s)=-\grad\Aa_H(u(s))$ isn't~an~ODE.
  }
The initial value problem is not well posed and $\grad\Aa_H$
does not generate a flow.\index{symplectic action!no flow}
The reason is that, by regularity, the loops of which a
solution cylinder is composed are smooth, so the flow
cannot pass any of the many non-smooth elements $z\in\Lambda M$.
\end{remark}

The union of all the Hilbert spaces $\Ee_z=L^{2}(\SS^1,z^*TM)$
forms a Hilbert space bundle $\Ee$ over $\Lambda M$.
A section is given by the $L^2$ gradient $\grad\Aa_H$.
Figure~\ref{fig:fig-L2-Hilbert-bdle}
illustrates the gradient section
and indicates the natural splitting
$$
     T_z\Ee M\cong T_z\Lambda M\oplus \Ee_z
     =W^{1,2}(\SS^1,z^*TM)\oplus L^2 (\SS^1,z^*TM)
$$
of the tangent bundle $T\Ee$ along the zero section
of $\Ee$, denoted again by $\Lambda M$.
\begin{figure}[h]
  \centering
  \includegraphics%[width=0.9\textwidth]
                             %[height=4cm]
                             {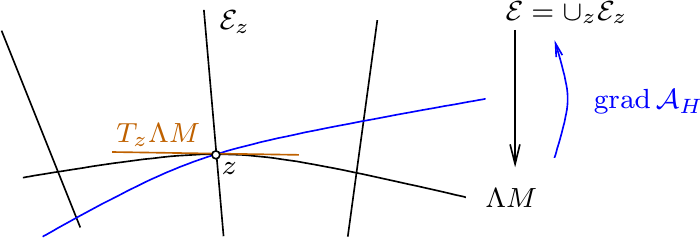}
  \caption{Hilbert bundle $\Ee\to\Lambda M$ over loop space and $L^2$
                gradient section}
  \label{fig:fig-L2-Hilbert-bdle}
\end{figure}

\begin{lemma}[Regularity]\label{le:regularity-crit}
Suppose $H\in C^k(\SS^1\times\Lambda M)$ for some integer $k\ge 2$.
Then any zero $z\in \Lambda M$ of $\grad\Aa_H$ is of class $C^k$.
\end{lemma}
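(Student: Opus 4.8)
The plan is to recognize a zero of $\grad\Aa_H$ as a weak solution of the Hamiltonian ODE and then bootstrap regularity in local charts. By~(\ref{eq:dA_H}) the condition $\grad\Aa_H(z)=0$ reads $-J_t(z)\dot z-\nabla H_t(z)=0$ almost everywhere, which, using $J_t^2=-\1$ together with~(\ref{eq:X_H-omega-comp-J}), is equivalent to
\begin{equation*}
     \dot z(t)=J_t(z(t))\,\nabla H_t(z(t))=X_{H_t}(z(t))
     \qquad\text{for a.e. }t\in\SS^1 .
\end{equation*}
Because the domain $\SS^1$ is one-dimensional, the Sobolev embedding $W^{1,2}\hookrightarrow C^{0,1/2}$ shows that $z$ is (H\"older-)continuous to begin with, hence the right-hand side above is a continuous function of $t$; by the fundamental theorem of calculus for absolutely continuous maps, $z$ is in fact $C^1$ and solves $\dot z=X_{H_t}(z)$ in the classical sense.

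First I would record the elementary loss-of-one-derivative fact: if $H\in C^k(\SS^1\times M)$ then the map $X_H\colon\SS^1\times M\to TM$, $(t,x)\mapsto X_{H_t}(x)$, is of class $C^{k-1}$, since it is built from the $C^{k-1}$ object $dH$ and the smooth form $\omega$ (equivalently, from $\nabla H_t$ and the smooth almost complex structure $J_t$). Now run the induction in a coordinate chart around a point of the (compact) image $z(\SS^1)$. Suppose it is already known that $z\in C^j$ for some $0\le j<k$. Then $t\mapsto(t,z(t))$ is $C^j$, so its composition with the $C^{k-1}$ map $X_H$ is of class $C^{\min(j,k-1)}$; but this composition equals $\dot z$ almost everywhere, whence $z\in C^{\min(j,k-1)+1}$. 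Starting from $j=0$, and noting $\min(j,k-1)+1=j+1$ as long as $j\le k-1$, the exponents produced are $1,2,\dots,k$, so after $k$ steps we conclude $z\in C^k(\SS^1,M)$, as claimed.

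I expect the only genuine subtlety to be the very first step, the passage from the a.e.\ identity to a bona fide $C^1$ solution: one must verify that $z\in\Lambda M=W^{1,2}_{\rm contr}(\SS^1,M)$, being absolutely continuous with $\dot z$ equal a.e.\ to the continuous map $t\mapsto X_{H_t}(z(t))$, is actually continuously differentiable with that derivative everywhere; once this is in place the remainder is pure bookkeeping. (Alternatively, one may argue by ODE uniqueness: since $X_H$ is $C^{k-1}$, hence locally Lipschitz in the space variable, the continuous solution $z$ coincides with the integral curve $t\mapsto\psi_t(z(0))$ of the time-dependent Hamiltonian flow~(\ref{eq:Ham-flow}), and the claimed $C^k$-regularity then follows from $C^{k-1}$-dependence of flows on initial conditions combined with one further differentiation of the equation.) A minor point worth noting is that all of this is local and chart-independent, as $C^k$-regularity of a map between manifolds is by definition a chart-wise condition.
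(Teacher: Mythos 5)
Your argument is correct and is essentially the same as the paper's: identify the zero of $\grad\Aa_H$ with a weak solution of $\dot z=X_{H_t}(z)$, use the Sobolev embedding $W^{1,2}(\SS^1)\hookrightarrow C^0(\SS^1)$ to get continuity, and bootstrap once per derivative of $H$ until the iteration saturates at $C^k$. Your write-up is slightly more explicit about the passage from the a.e.\ identity to a genuine $C^1$ solution and adds an alternative via flow regularity, but the structure and the key loss-of-one-derivative observation coincide with the paper's proof.
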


\begin{proof}
By the Sobolev embedding theorem
$W^{1,2}(\SS^1)\hookrightarrow C^0(\SS^1)$
we get $z\in C^0$. By assumption
$z$ admits a weak derivative of class $L^2$, say $y$,
and $y=X_{H_t}(z)$ almost everywhere.
But the RHS,\index{RHS (right hand side)}\index{LHS (left hand side)}
hence $y$, is of class $C^0$ since $X_{H_t}(z(t))$ corresponds to $dH_t(z(t))$
under $\omega$. Thus the weak derivative is actually
the ordinary derivative $\dot z=y\in C^0$, hence $z\in C^1$.
More generally, assume $z\in C^\ell$ for some $\ell\in\N$. Then
$\dot z=X_{H_t}(z)\in C^{\min\{k-1,\ell\}}$, hence $z\in C^{\min\{k,\ell+1\}}$.
Thus the iteration terminates at $\ell=k-1$ and shows that $z\in\ C^k$.
\end{proof}

%%% SUBSUBSECTION %%%%%%%%%%%%%%%%%%%
%%%%%%%%%%%%%%%%%%%%%%%%%%%%%%%%%%
\subsection{Arzel\`{a}-Ascoli -- convergent subsequences}

\begin{theorem}[Arzel\`{a}-Ascoli Theorem]\label{thm:AA}
Suppose $(\Xx,d)$ is a compact metric space
and $C(\Xx)$ is the Banach space of
continuous functions on $\Xx$
equipped with the sup norm. Then the following is true.
A subset $\Ff$ of $C(\Xx)$ is pre-compact if and only if
the family $\Ff$ is \textbf{\Index{equicontinuous}}\footnote{
  $\forall\eps>0$ $\exists\delta>0$ such that
  $\abs{f(x)-f(y)}<\eps$
  whenever $d(x,y)<\delta$ and $f\in\Ff$.
  }
and \textbf{\Index{pointwise bounded}}\footnote{
  $\sup_{f\in\Ff}\abs{f(x)}<\infty$ for every $x\in\Xx$.
  }.
\index{theorem!Arzel\`{a}-Ascoli --}
\index{family!equicontinuous}
\index{family!pointwise bounded}
\end{theorem}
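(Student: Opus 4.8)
The plan is to prove both implications separately, since this is a classical result and the standard argument works verbatim in the setting of a compact metric space $(\Xx,d)$ with the uniform norm on $C(\Xx)$.

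For the easy direction, suppose $\Ff\subset C(\Xx)$ is pre-compact. First I would show pointwise boundedness: the evaluation map $\ev_x\colon C(\Xx)\to\R$, $f\mapsto f(x)$, is continuous (indeed $1$-Lipschitz) for each fixed $x\in\Xx$, so it carries the pre-compact, hence bounded, set $\Ff$ to a bounded subset of $\R$. Next, equicontinuity: given $\eps>0$, cover the totally bounded set $\overline{\Ff}$ by finitely many balls of radius $\eps/3$ centred at functions $f_1,\dots,f_N\in C(\Xx)$; each $f_j$ is uniformly continuous since $\Xx$ is compact, so there is a common $\delta>0$ with $|f_j(x)-f_j(y)|<\eps/3$ whenever $d(x,y)<\delta$. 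For arbitrary $f\in\Ff$ pick $j$ with $\norm{f-f_j}_\infty<\eps/3$ and estimate $|f(x)-f(y)|\le|f(x)-f_j(x)|+|f_j(x)-f_j(y)|+|f_j(y)-f(y)|<\eps$, giving equicontinuity.

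For the substantial direction, suppose $\Ff$ is equicontinuous and pointwise bounded; I want to show every sequence $(f_k)$ in $\Ff$ has a uniformly convergent subsequence. First I would use compactness of $\Xx$ to fix a countable dense subset $D=\{x_1,x_2,\dots\}\subset\Xx$ (a compact metric space is separable). Then a diagonal argument: since $(f_k(x_1))_k$ is bounded in $\R$, extract a subsequence converging at $x_1$; from that, a further subsequence converging at $x_2$; and so on; the diagonal subsequence $g_m:=f_{k_m}$ converges at every point of $D$. The main obstacle — and the only place equicontinuity on the whole space is really used — is upgrading pointwise convergence on $D$ to uniform convergence on $\Xx$. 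Here I would argue Cauchy-uniformly: given $\eps>0$, choose $\delta>0$ from equicontinuity so that $d(x,y)<\delta$ implies $|g_m(x)-g_m(y)|<\eps/3$ for all $m$; cover $\Xx$ by finitely many balls $B(y_1,\delta),\dots,B(y_p,\delta)$ with each $y_i$ chosen in $D$ (possible by density and compactness); pick $M$ so large that $|g_m(y_i)-g_n(y_i)|<\eps/3$ for all $m,n\ge M$ and all $i=1,\dots,p$; then for any $x\in\Xx$, picking $i$ with $d(x,y_i)<\delta$, the triangle inequality gives $|g_m(x)-g_n(x)|\le|g_m(x)-g_m(y_i)|+|g_m(y_i)-g_n(y_i)|+|g_n(y_i)-g_n(x)|<\eps$. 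Thus $(g_m)$ is uniformly Cauchy, hence converges in the Banach space $C(\Xx)$, and $\Ff$ is pre-compact. I expect the diagonal extraction and the finite-cover bookkeeping to be the only slightly fiddly points; both are routine.
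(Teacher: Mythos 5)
Your proof is correct and complete: both directions are the standard arguments (the $\eps/3$ total-boundedness estimate for necessity, and the diagonal extraction over a countable dense set followed by the uniform-Cauchy upgrade for sufficiency). The paper itself gives no proof, only a reference to Rudin, and your argument is essentially the one found there, so there is nothing to reconcile.
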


For a proof see e.g.~\cite[Thm.~A.5]{Rudin:1991b}.
The theorem generalizes to functions
taking values in a metric space.

\begin{exercise}\label{exc:AA}
Suppose $(\Xx,d)$ is a metric space
and $L>0$ is a constant.
a)~Show that any family $\Ff$ of Lipschitz continuous
functions on $\Xx$ with Lipschitz constant $L$
is equicontinuous.
\newline
b)~Show that any family of differentiable
functions on a closed manifold $Q$
whose derivative is bounded by $L$
is equicontinuous.
\end{exercise}

Further examples of equicontinuous families are
provided by $\alpha$-H\"older continuous functions.
In practice one often encounters families
of weakly differentiable functions on a compact
manifold $Q$ that are uniformly bounded in some
Sobolev space $W^{k,p}(Q)$. If $\alpha=k-\frac{n}{p}>0$
where $n=\dim Q$ then these functions are $\alpha$-H\"older
continuous by the Sobolev embedding theorem
which applies by compactness of $Q$.

%%%%%%%%%%%%%%%%%%%%%%%%%%%%%%%%%%
%%% SUBSECTION %%%%%%%%%%%%%%%%%%%%%
%%%%%%%%%%%%%%%%%%%%%%%%%%%%%%%%%%
\subsection{Hessian}\label{sec:Hess-A}
A Hessian is usually the second derivative
of a function or, more generally, of a section of a vector bundle
at a point in the domain.
However, this is in general only well defined
at a critical point, respectively a zero.
To extend the concept to general points
one chooses a connection or, equivalently,
a family of horizontal subspaces.
Furthermore, it is often convenient to
express the Hessian bilinear form via an inner product
as a linear operator, the Hessian operator.

Our setting is the following. Given a symplectic manifold
$(M,\omega)$ and a Hamiltonian $H:\SS^1\times M\to\R$,
pick a family $J_t=J_{t+1}$ of $\omega$-compatible
almost complex structures and denote by
$g_{t}$ the associated family of Riemannian metrics on $M$.
At each time $t$ consider the corresponding Levi-Civita connection
$\nabla^t$ with exponential map $\exp^t$ and
\textbf{\Index{parallel transport}} $\Tt^t_p(v):T_pM\to T_{\exp^t_p}M$
along the curve $[0,1]\ni\tau\mapsto\exp^t_p\tau v$.\footnote{
  To easy notation we write $\nabla=\nabla^t$ and $\exp=exp^t$ and
  so on. But we keep indicating time dependence of quantities
  which involve perturbation at some stage, such as $H_t$ and $J_t$.
}
Given a vector field $\zeta$ along an arbitrary loop $z$,\footnote{
  We assume tacitly that $\norm{\zeta(t)}$
  is smaller than the injectivity radius of $(M,g_t)$
  at $z(t)$.
  }
set
$$
     \exp_z\zeta:\SS^1\to M,\quad t\mapsto\exp^t_{z(t)}\zeta(t),
$$
to obtain a loop in $M$ homotopic to $z$
through $\tau\mapsto z_\tau:=\exp_z\tau\zeta$.

Now consider the map between Banach spaces defined near the origin by
\begin{equation}\label{eq:F_z-dA}
     f_z: T_z\Lambda M\to\Ee_z,\quad
     \zeta\mapsto\Tt_z(\zeta)^{-1}\grad\Aa_H(\exp_z\zeta).
\end{equation}
Since a $W^{1,2}$ vector field is in particular of class $L^2$,
there is the natural inclusion $T_z\Lambda M\subset \Ee_z$
which suggests to view this linearization
as an unbounded operator with dense domain.
The derivative at the origin
$df_z(0)\zeta=\left.\frac{d}{d\tau}\right|_{\tau=0}f_z(\tau\zeta)$
defines the \textbf{covariant \Index{Hessian operator}} of $\Aa_H$, namely
\begin{equation}\label{eq:Hess-A}
\begin{split}
     A_z:=df_z(0):L^2_z\supset W^{1,2}_z
   &\to L^2_z=L^2(\SS^1,z^*TM)
     \\
     \zeta
   &\mapsto -J_t(z)\Nabla{t}\zeta
     -(\Nabla{\zeta} J)(z)\dot z-\Nabla{\zeta}\nabla H_t(z),
\end{split}
\end{equation}
at any loop $z$.

\begin{figure}[h]
  \centering
  \includegraphics%[width=0.9\textwidth]
                             %[height=4cm]
                             {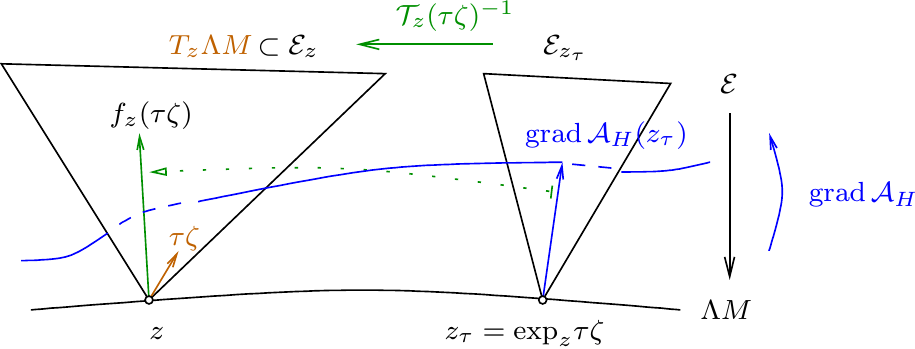}
  \caption{The Hessian operator
                $A_z\zeta=\left.\frac{d}{d\tau}\right|_{\tau=0}f_z(\tau\zeta)$
                 at any loop $z$}
  \label{fig:fig-parallel-transport-symp}
\end{figure}

\begin{exercise}\label{exc:symm-Hess}
Check that $df_z(0)\zeta$ is indeed given by~(\ref{eq:Hess-A}).
Show that $A_z$ is symmetric and even self-adjoint with
compact resolvent. Conclude that the eigenvalues of $A_z$ are
real, accumulate only at $\pm\infty$, and the eigenspaces
are of finite dimension. Show that $\dim\ker A_z\le 2n$.
\newline
[Hints: For symmetry see Exercise~\ref{exc:comp-alm-cx-strs}.
Self-adjointness is a regularity problem.
For compact resolvent see~\citerefFH[\S 2.3]{weber:2002a}.
To estimate $\dim\ker A_z$ see (\ref{eq:ker_A=Eig_1}).
]
\end{exercise}

The eigenvalues of $A_z$ are real by self-adjointness.
Let $z$ be a critical point of the symplectic action $\Aa_H$.
Observe that $A_z:=df_z(0)=D\grad\Aa_H(z)$ represents
the Hessian of $\Aa_H$ at the zero $z$ of the gradient.
The critical point $z$ is called
\textbf{\Index{non-degenerate}}\index{critical point!non-degenerate --}
if zero is not an eigenvalue of the Hessian $A_z$.
By self-adjointness of $A_z$ a critical point $z$ is non-degenerate
iff $A_z$ is surjective.
Below after having picked a trivialization we shall say more about the
spectrum. A \textbf{\Index{Morse function}} is a function all of whose
critical points are non-degenerate. A Hamiltonian $H$ is 
a \textbf{\Index{regular Hamiltonian}} if $\Aa_H$ is Morse.

\begin{exercise}\label{exc:non-deg-Crit-pp}
Given $z\in\Crit\Aa_H$, then z is non-degenerate as a critical point
iff $A_z$ is surjective iff $z$ is non-degenerate as a $1$-periodic
trajectory; see~(\ref{eq:def-non-deg}).
\end{exercise}

\begin{lemma}\label{le:non-deg=>isolated}
A non-degenerate critical point $z\in\Lambda M$ of
$\Aa_H$ is isolated.
\end{lemma}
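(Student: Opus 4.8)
The plan is to pass to the standard chart on the Hilbert manifold $\Lambda M$ centred at $z$ and to reduce the isolatedness of $z$ to an elementary coercivity estimate for the Hessian. First I would recall (see~\citerefFH{Eluiasson:1967a}) that $\zeta\mapsto\exp_z\zeta$ identifies a neighbourhood of $0$ in $T_z\Lambda M=W^{1,2}_z$ with a neighbourhood $\Uu$ of $z$ in $\Lambda M$, and that under this identification the zeros of $\grad\Aa_H$ in $\Uu$ correspond exactly to the zeros of the map $f_z$ of~(\ref{eq:F_z-dA}): since parallel transport $\Tt_z(\zeta)$ is a linear isomorphism, $f_z(\zeta)=0$ if and only if $\grad\Aa_H(\exp_z\zeta)=0$. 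Moreover $f_z(0)=\Tt_z(0)^{-1}\grad\Aa_H(z)=0$ because $z\in\Crit\Aa_H$, so it suffices to prove that $0$ is the only zero of $f_z$ in a sufficiently small $W^{1,2}$-ball around $0$.

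Next I would use non-degeneracy to upgrade the Hessian $A_z=df_z(0)$ of~(\ref{eq:Hess-A}) into a bounded linear isomorphism $W^{1,2}_z\to L^2_z$. By Exercise~\ref{exc:symm-Hess} the operator $A_z$ is self-adjoint on $L^2_z$ with compact resolvent, so its spectrum consists of discrete real eigenvalues of finite multiplicity; non-degeneracy (Exercise~\ref{exc:non-deg-Crit-pp}) means $0$ is not one of them, hence $0$ is in the resolvent set and $A_z^{-1}\colon L^2_z\to L^2_z$ is bounded with range the domain $W^{1,2}_z$. Combining this with the first order elliptic estimate $\norm{\Nabla{t}\zeta}_{L^2}\le\norm{A_z\zeta}_{L^2}+C\norm{\zeta}_{L^2}$ that is immediate from the explicit formula~(\ref{eq:Hess-A}), one obtains a constant $c>0$ with
\begin{equation*}
     \norm{A_z\zeta}_{L^2}\ge c\,\norm{\zeta}_{W^{1,2}}
     \qquad\text{for all }\zeta\in W^{1,2}_z .
\end{equation*}

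Finally I would combine this lower bound with the differentiability of $f_z$ at $0$, i.e. that $f_z(\zeta)=A_z\zeta+o(\norm{\zeta}_{W^{1,2}})$ in $L^2_z$, which is the content of~(\ref{eq:Hess-A}). Choosing $\delta>0$ so that $\norm{f_z(\zeta)-A_z\zeta}_{L^2}\le\tfrac{c}{2}\norm{\zeta}_{W^{1,2}}$ for $\norm{\zeta}_{W^{1,2}}<\delta$, one gets
\begin{equation*}
     \norm{f_z(\zeta)}_{L^2}\ge\norm{A_z\zeta}_{L^2}-\norm{f_z(\zeta)-A_z\zeta}_{L^2}
     \ge\tfrac{c}{2}\,\norm{\zeta}_{W^{1,2}}>0
\end{equation*}
whenever $0<\norm{\zeta}_{W^{1,2}}<\delta$. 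Thus $\zeta=0$ is the unique zero of $f_z$ in that ball, and correspondingly $z$ is the only critical point of $\Aa_H$ in the associated neighbourhood $\Uu$; that is, $z$ is isolated. (Alternatively, once one knows that $f_z$ is of class $C^1$ near $0$ — a routine Nemytskii-operator fact on $W^{1,2}(\SS^1)$ — the isomorphism property of $A_z$ lets one invoke the inverse function theorem and conclude directly that $f_z$ is a local diffeomorphism near $0$.) The one point that needs genuine input is the middle step: upgrading ``$0$ is not an eigenvalue of $A_z$'' to a uniform coercive bound for $A_z\colon W^{1,2}_z\to L^2_z$, which is precisely what the self-adjointness and compact-resolvent properties of Exercise~\ref{exc:symm-Hess} provide.
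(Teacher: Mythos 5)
Your argument is correct and follows essentially the same route as the paper's first proof: pass to the chart $f_z$ from~(\ref{eq:F_z-dA}) and use that non-degeneracy plus self-adjointness make $A_z=df_z(0):W^{1,2}_z\to L^2_z$ a bounded bijection, hence $0$ is the only nearby zero. The only cosmetic difference is that you unwind the inverse function theorem into an explicit coercivity estimate (and you note the direct IFT route yourself); the paper also records a second, independent proof via the identification $\ker A_z\cong\Eig_1 d\psi_1(z(0))$ and transversality of the graph of $\psi_1$ to the diagonal, which you do not need.
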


\begin{proof}[Proof v1.]
The critical points of $\Aa_H$ near $z$ are in bijection with
the zeroes near the origin of the map $f_z$ given
by~(\ref{eq:F_z-dA}) and $z$ corresponds to the origin.
But no point other than the origin gets mapped to zero,
because $f_z$ is a local diffeomorphism near the origin
by the inverse function theorem. The latter applies since
the linearization $df_z(0)=A_z$ is a bijection: It is injective
by the non-degeneracy assumption, hence surjective by
self-adjointness.
\end{proof}

\begin{proof}[Proof v2.]
Recall the bijection $z\mapsto z(0)=:p$
from Exercise~\ref{exc:Pp-Fix-FH}.
Note that $\zeta(t):=d\psi_t(p)\zeta_0$ already
lies in the 'kernel' of the differential equation~(\ref{eq:Hess-A})
for any $\zeta_0\in T_{z(0)} M$. But $\zeta(t)$ must
close up at time one in order to lie in the kernel of $A_z$.
This happens precisely if $\zeta_0$ is
eigenvector of $d\psi_1(z(0))$ associated to the
eigenvalue $1$. Thus there is an isomorphism
\begin{equation}\label{eq:ker_A=Eig_1}
     \ker A_z\cong\Eig_1 d\psi_1(z(0)),\quad
     \zeta\mapsto\zeta(0).
\end{equation}
Now recall Exercise~\ref{exc:nondeg->isolated}.
\end{proof}

%%% SUBSUBSECTION %%%%%%%%%%%%%%%%%%%
%%%%%%%%%%%%%%%%%%%%%%%%%%%%%%%%%%
\subsubsection{The Hessian with respect to a unitary trivialization}
Given a loop $z$, pick a unitary trivialization~(\ref{eq:unitary-triv})
of the symplectic vector bundle $z^*TM\to\SS^1$, namely
a smooth family $\Phi$ of vector space isomorphisms
$\Phi(t):\R^{2n}\to T_{z(t)} M$ intertwining the
Hermitian triples $\omega_0,J_0,\inner{\cdot}{\cdot}_0$
and~$\omega,J,g_J$; cf.~(\ref{eq:R2n-comp-triples}).
Conjugation transforms the Hessian $A_z$
into the unbounded linear operator on $L^2=L^2(\SS^1,\R^{2n})$
with dense domain $W^{1,2}$~given~by
\begin{equation}\label{eq:Hess-A-triv}
\begin{split}
     A(z):=\Phi^{-1} A_z\Phi:L^2\supset W^{1,2}
   &\to L^2=L^2(\SS^1,\R^{2n})
     \\
     \zeta
   &\mapsto -J_0\dot \zeta-S_t\zeta
\end{split}
\end{equation}
where $S_t$ is a $1$-periodic family of symmetric matrizes
(Exercise~\ref{exc:symm-Hess}), namely
\begin{equation}\label{eq:S-triv}
     S_t v=
     \Phi^{-1}\bigl(
        J_t(z)\left(\Nabla{t}\Phi\right) v
       +(\Nabla{\Phi v} J)(z) \,\dot z
       +\Nabla{\Phi v}\nabla H_t(z)
     \bigr),\quad v\in\R^{2n}.
\end{equation}

\begin{example}[Infinite Morse index]\label{ex:inf-M-index}
The operator $-i\frac{d}{dt}$ on $C^\infty(\SS^1,\C^n)$
has eigenvectors $\zeta_k=e^{-i2\pi kt}z$ and eigenvalues
$\lambda_k=2\pi k$, $k\in\Z$, $z\in\C\setminus\{0\}$.
\end{example}

%%% SUBSUBSECTION %%%%%%%%%%%%%%%%%%%
%%%%%%%%%%%%%%%%%%%%%%%%%%%%%%%%%%
\subsection{Baire's category theorem -- genericity}\label{sec:Baire}
Since the notions surrounding Baire's category theorem are
easily confused, we enlist them for definiteness
in more detail than needed here. However,
all you should take with you in Section~\ref{sec:Baire}
is Baire's category Theorem~\ref{thm:Baire} part~(C)
and its first application Theorem~\ref{thm:A-Morse}
($\Aa_H$ Morse for generic $H$).

A \textbf{\Index{topological space}} consists of a set
$X$ together with a collection\footnote{
  It is often useful to call a set alternatively a
  \textbf{\Index{collection}} or a \textbf{\Index{family}}.
  Indeed isn't a ``collection of sets'' so much easier to absorb than
  a ``set of sets''?
}
$\Uu\subset 2^X$ of subsets $U\subset X$
subject to certain axioms.\footnote{
  Axioms: The collection $\Uu$, firstly, must contain the empty set
  $\emptyset$ and $X$ itself and, secondly, it must be closed under
  finite intersections and, thirdly, under arbitrary unions.
}
Such collection $\Uu$
is called a \textbf{\Index{topology}} on
$X$.\index{topology!closed subsets of --}
The\index{topology!open subsets --}
elements $U\in\Uu$ are called the \textbf{\Index{open subsets}}
of the topology, their
\textbf{\Index{complement}s} $\comp{U}:=X\setminus U$ the
\textbf{\Index{closed subsets}}.
We often denote a topological space $(X,\Uu)$ by the shorter symbol $\Tt$
with the understanding that $\Tt$ denotes the collection
of open sets -- the set $X$ being given implicitly as
the largest one of them, the union of all open sets.
\\
A subset $S\subset X$ of a topological space $\Tt$
is called \textbf{\Index{dense}}
if it meets (intersects) every non-empty open set or, equivalently,
if its closure $\bar S=X$ contains every point.
A subset $A$ of a topological space
is called \textbf{\Index{nowhere dense}} \index{subset!nowhere dense}
if its closure $\bar A$ has empty interior, that is
if there is no open set $U\not=\emptyset$ in which $A$ is dense:
For each non-empty open set $U$
there is a non-empty open subset $V\subset U$
disjoint from $A$.\footnote{
  Equivalently, the closure $\bar A$ of a nowhere dense set
  has a dense complement. Equivalently, the complement $A^{\rm C}$
  of a nowhere dense set is a set with dense \emph{interior}.
  %$\INT A=( \overline{A^{\rm C}} )^{\rm C}$.
  (Not every dense set has a nowhere dense complement.)
  }
Any countable union of nowhere dense subsets is called
a\index{subset!meager} 
\textbf{\Index{meager subset}}\,\footnote{
  A meager subset, although a union of nowhere dense sets,
  can be dense: Consider $\Q\subset\R$.
  }
(alternatively, a subset of the \emph{first category in the sense of Baire}).
All other subsets, that is all non-meager subsets, are said to be
of the \textbf{\Index{second category in the sense of Baire}}. 
\index{subset!of $2^{\rm nd}$ category (Baire)}
These are somewhere dense.
\\
As first readings we recommend the Wikipedia article
\href{https://en.wikipedia.org/wiki/Meagre_set}{Meagre set},
the online handout
\href{http://www.ucl.ac.uk/~ucahad0/3103_handout_7.pdf}{The Baire category theorem and its consequences},
the Blog
\href{https://terrytao.wordpress.com/2009/02/01/245b-notes-9-the-baire-category-theorem-and-its-banach-space-consequences/}{The Baire category theorem and its Banach space consequences},
and $\S 1$ of~\cite{{Oxtoby:1980a}}.

\begin{figure}%[h]
  \centering
  \includegraphics%[width=0.9\textwidth]
                             [height=3cm]
                             {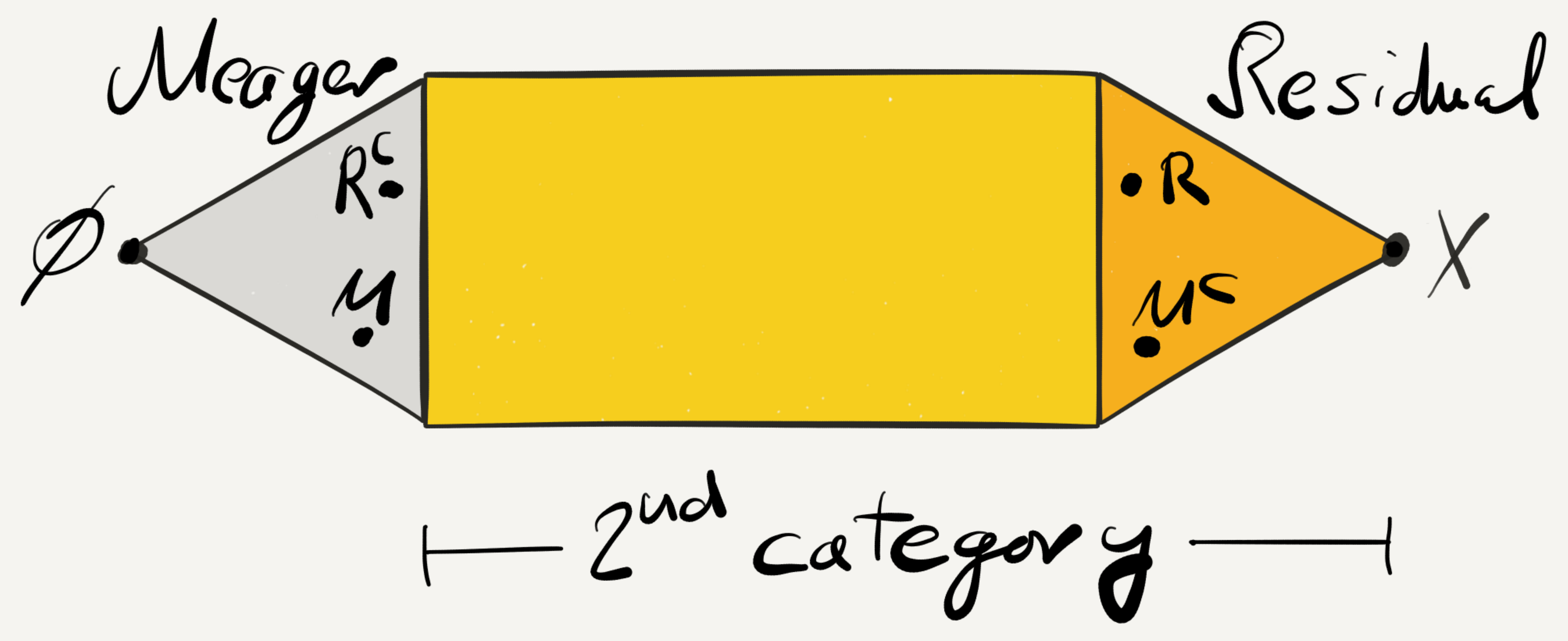}
  \caption{Partition of power set $2^X$ of
                \emph{complete} metric space $\Xx=(X,d)$}
  \label{fig:fig-meager-residual}
\end{figure}

\begin{exercise}
The subcollection $\Mm\subset\Tt$ of meager
subsets\index{$\sigma$-ideal}\index{ideal!$\sigma$- --}
is a \textbf{$\mbf{\sigma}$-ideal} of subsets: Subsets of a meager set
are meager, countable unions of meager sets are meager.
[Hint: Countable unions of countable unions are countable.]
\end{exercise}

How about the complements $R:=\comp{M}\subset X$ of meager subsets 
$M$? These are called \textbf{\Index{residual subsets}} 
or\index{subset!co-meager}\index{subset!residual}  
\textbf{co-meager subsets}.
Let's call them \textbf{\Index{residuals}}.
Let $\Rr$ be the set of residuals. Can the complement $R$ of
a meager set be meager? Or is it always non-meager, i.e.
of the second category? 

\begin{exercise}\label{exc:Baire-exc}
Suppose $U$ and $R$ are subsets of a topological space $\Tt$, show:
\begin{itemize}
\item[\rm (i)]
  $U$ open and dense $\:\Leftrightarrow\:$
  complement $U^{\rm C}$ closed and nowhere dense;
\item[\rm (ii)]
  $R$ residual $:\Leftrightarrow$
  $R^{\rm C}$ meager $\Leftrightarrow$
  $R$ $\supset$ countable intersect. of dense open sets;
\item[\rm (iii)]
  Countable intersections of residuals are residuals.
\end{itemize}
[Hint: (i)~Interior $\INT U=\bigl.\overline{U^{\rm C}}\bigr.^{\rm C}$.
(ii)~ '$\Rightarrow$'  Families
de\,Morgan~\cite[Thm.~0.3]{Kelley:1955a}.
'$\Leftarrow$' Suffices to show: Complement $R^{\rm C}$ is
contained in a meager set.]
\end{exercise}

%%% SUBSUBSECTION %%%%%%%%%%%%%%%%%%%
%%%%%%%%%%%%%%%%%%%%%%%%%%%%%%%%%%
\subsubsection{Complete metric spaces}

A metric space $\Xx=(X,d)$ comes with a natural topology $(X,\Uu_d)$,
called the \textbf{\Index{metric topology}}, where the collection of
open sets $\Uu_d$\index{topology!metric --}
consists of the family of open balls about the points of $X$
and arbitrary unions of such.
A subset $S\subset X$ of a metric space is \emph{\Index{dense}} if and
only if every point of $X$ is the limit of a sequence in $S$.

\begin{theorem}[Baire category theorem]\label{thm:Baire}
In a complete metric space $\Xx$
\begin{enumerate}
\item[\rm (A)]
  meager sets (countable unions of nowhere dense sets)
  have empty interior;
\item[\rm (B)]
  the complement of any meager set is dense,
  that is residual sets are dense;
\item[\rm (C)]
  countable intersections of dense open sets,
  therefore residuals, are dense.
\end{enumerate}
\end{theorem}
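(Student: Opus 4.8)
The three assertions are logically equivalent once one has the core nested-ball argument, so the plan is to prove (C) first by the classical Baire argument and then deduce (A) and (B) by elementary set-theoretic manipulations (cf. Exercise~\ref{exc:Baire-exc}). For (C), let $\{U_k\}_{k\in\N}$ be a countable family of dense open subsets of the complete metric space $\Xx=(X,d)$ and let $W$ be an arbitrary non-empty open set; I must produce a point of $W\cap\bigcap_k U_k$. The idea is to build a nested sequence of closed balls $\overline{B}(x_k,r_k)$ with shrinking radii whose intersection is a single point lying in $W$ and in every $U_k$.

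\textbf{Key steps.} First, since $U_1$ is dense and $W$ is open and non-empty, $W\cap U_1$ is non-empty and open; pick $x_1$ and $r_1\in(0,1)$ with $\overline{B}(x_1,r_1)\subset W\cap U_1$. Inductively, given $\overline{B}(x_k,r_k)$, use density of $U_{k+1}$ to find a point in the non-empty open set $B(x_k,r_k)\cap U_{k+1}$ and then a closed ball $\overline{B}(x_{k+1},r_{k+1})\subset B(x_k,r_k)\cap U_{k+1}$ with $0<r_{k+1}<r_k/2$. The sequence $(x_k)$ is Cauchy because for $m\ge k$ one has $x_m\in\overline{B}(x_k,r_k)$ and $r_k\to 0$; by completeness it converges to some $x\in X$. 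Since all later terms lie in the closed set $\overline{B}(x_k,r_k)$, the limit $x$ lies there too, hence $x\in U_k$ for every $k$ and $x\in\overline{B}(x_1,r_1)\subset W$. Thus $W\cap\bigcap_k U_k\neq\emptyset$, proving (C). For (A): a meager set is $\bigcup_k A_k$ with each $\overline{A_k}$ nowhere dense, so each $U_k:=X\setminus\overline{A_k}$ is open and dense; if the meager set had non-empty interior $W$, then $W$ would be an open set disjoint from $\bigcap_k U_k$, contradicting (C). Statement (B) is just the contrapositive/complement form of (A): if $M$ is meager and its complement $R$ were not dense, then $X\setminus\overline R$ would be a non-empty open set contained in $M$, contradicting (A); alternatively, $R\supset\bigcap_k U_k$ with the $U_k$ as above, and $\bigcap_k U_k$ is dense by (C).

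\textbf{Main obstacle.} There is no serious obstacle here — the only point requiring care is the bookkeeping in the inductive construction: one must ensure at each stage that the ball is \emph{closed} and strictly contained in the previous \emph{open} ball (so that the limit point, caught in all the closed balls, actually lands inside each $U_k$ and inside $W$), and that the radii are summably small so that $(x_k)$ is Cauchy. Choosing $r_{k+1}<r_k/2$ and shrinking slightly so that $\overline{B}(x_{k+1},r_{k+1})\subset B(x_k,r_k)$ handles both requirements simultaneously. The deductions of (A) and (B) from (C) are purely formal and are exactly the content of parts (i)--(ii) of Exercise~\ref{exc:Baire-exc}, so they can be stated briefly.
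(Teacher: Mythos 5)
Your proof is correct and follows the same strategy the paper takes: reduce (A) and (B) to (C) via the set-theoretic equivalences of Exercise~\ref{exc:Baire-exc}, and then prove (C). The only difference is that the paper stops there and cites the literature for the nested-closed-balls argument, whereas you carry it out in full; that argument is the standard one and your bookkeeping (closed ball inside the previous open ball, $r_{k+1}<r_k/2$) is sound.
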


\begin{exercise}
Show that the three assertions (A), (B), (C) are equivalent.
\newline
[Hint: Take complements.]
\end{exercise}

By the exercise it suffices to prove part~(C) of the
theorem.\footnote{
  For a proof see e.g.~\cite[Thm.~2.2]{Rudin:1991b}
  or~\cite[Thm.~6.34]{Kelley:1955a}
  or~\cite[Thm.~9.1]{Oxtoby:1980a}.
  See also~\cite[\S 9]{Kelley:1976a}
  and for references to the original papers
  see~\cite[Notes to Ch.~III]{reed:1980a}.
  }
Concerning a slightly different direction
see Remark~\ref{rmk:Baire-new}.
In applications one often gets away with
a weaker form of the Baire theorem
(replace 'dense' by 'non-empty'):

\begin{corollary}[Baire category theorem -- weak form]\label{cor:Baire}
For a non-empty complete metric space $\Xx=(X,d)$ the following is true.
\begin{enumerate}
\item[\rm (b)]
  One cannot write $X$ as a countable union of nowhere dense sets.
  \newline
  (A non-empty complete metric space is non-meager in itself.)
\item[\rm ($\widetilde{{\rm b}}$)]
  If $X$ is written as a countable union of closed sets,
  then at least one of them has non-empty interior.
\item[\rm (c)]
  Countable intersections of dense open sets are non-empty.
\end{enumerate}
\end{corollary}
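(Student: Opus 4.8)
The plan is to deduce all three assertions directly from the strong form, Theorem~\ref{thm:Baire}, together with the elementary observation that a \emph{closed} set with empty interior is nowhere dense: for closed $C$ one has $\bar C=C$, so $\bar C$ has empty interior precisely when $C$ does. Everything else is just unwinding definitions.

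First I would prove (b). Suppose, for contradiction, that $X=\bigcup_{n\in\N}A_n$ with every $A_n$ nowhere dense. Then $X$ is, by definition, a meager subset of itself, so by part~(A) of Theorem~\ref{thm:Baire} it has empty interior. But $X$ is open in its own metric topology, hence equals its own interior, which is non-empty by hypothesis — a contradiction. This is precisely the parenthetical reformulation: a non-empty complete metric space is non-meager in itself.

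Next, ($\widetilde{\rm b}$) follows from (b). If $X=\bigcup_{n\in\N}C_n$ with each $C_n$ closed and each having empty interior, then by the observation above each $C_n$ is nowhere dense, so $X$ is meager in itself, contradicting (b). Hence at least one $C_n$ has non-empty interior. Finally, for (c), let $U_n$, $n\in\N$, be dense open subsets of $X$. By part~(C) of Theorem~\ref{thm:Baire} the intersection $\bigcap_{n\in\N}U_n$ is dense in $X$, and since $X$ is non-empty, a dense subset must meet it; thus $\bigcap_{n\in\N}U_n\neq\emptyset$. (Equivalently, passing to complements shows (c) is just the contrapositive of (b): by Exercise~\ref{exc:Baire-exc}(i) each $U_n^{\rm C}$ is closed and nowhere dense, and $\bigcap_n U_n=\emptyset$ would write $X=\bigcup_n U_n^{\rm C}$ as a countable union of nowhere dense sets.)

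There is essentially no obstacle: all the substance sits in Theorem~\ref{thm:Baire}, and the corollary is a routine repackaging. The only point needing a moment's care is the step from ``closed with empty interior'' to ``nowhere dense,'' which is immediate because a closed set equals its closure; one should also note explicitly that a dense subset of a non-empty space is non-empty, which is what turns the density conclusion of part~(C) into the non-emptiness claimed in~(c).
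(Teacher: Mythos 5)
Your proposal is correct and follows exactly the route the paper intends: the corollary is presented there as an immediate weakening of Theorem~\ref{thm:Baire} (``replace `dense' by `non-empty'\,''), and your deductions of (b) from part~(A), of ($\widetilde{\rm b}$) from (b) via the observation that a closed set with empty interior is nowhere dense, and of (c) from part~(C) are precisely the routine unwinding the paper leaves to the reader.
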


So in a non-empty complete metric space,
any set with non-empty interior is of the second category
(non-meager) by~(A). Moreover, the complement $R$
of a meager set $M$ cannot be meager: Otherwise
$\Xx=M\cup M^{\rm C}$ contradicting~(b).
Thus $\Rr\cap\Mm=\{M^{\rm C}\mid M\in\Mm\}\cap\Mm=\emptyset$
which answers the introductory questions and
is illustrated by Figure~\ref{fig:fig-meager-residual}.

Note that the properties \emph{nowhere dense},
\emph{dense}, and \emph{somewhere dense}
do not correspond to the sets $\Mm$, $\Rr$, and
$2^X\setminus (\Mm\cup\Rr)$.
While all elements of $\Rr$ are dense subsets of $\Xx$
and all nowhere dense subsets are located in $\Mm$,
it is possible that even dense subsets are elements of $\Mm$,
e.g. $M=\Q\subset \R=X$.

\begin{remark}[Warning]
Obviously, not all subsets of the second category
are dense. For example, the non-dense subset
$A=[-1,1]\subset\R=X$ is of the second category:
It is not meager since its complement is not dense.
In view of this, the in the literature not uncommon
wording \textit{``every set of the second category is dense
by Baire's category theorem''} is rather misleading,
often based on defining the sets of the second category
as those that contain countable intersections of dense open sets,
that is on confusing second category and residual.
\end{remark}

%%% SUBSUBSECTION %%%%%%%%%%%%%%%%%%%
%%%%%%%%%%%%%%%%%%%%%%%%%%%%%%%%%%
\subsubsection{Genericity}
It is common to call a property P \textbf{\Index{generic}}
if it is 'achievable by small perturbation'. More precisely, in our
context a generic property is one that is true for the elements
of the set $\Rr$ of residual subsets of a complete metric space
$\Xx$. In this case the property is shared by the elements
of a dense set. In other words, by a small perturbation of an
arbitrary pick one can get the desired property.\footnote{
  Saying \emph{``pick a generic element $x$ of $\Xx$''}
  actually means that one chooses $x\in\Rr$.
  }

For closed manifolds~$Q$ the set of smooth functions
equipped with the metric
\begin{equation}\label{eq:Cinfty-d}
     d(f,g):=\sum_{k=0}^\infty\frac{1}{2^k}\,
     \frac{\norm{f-g}_{C^k}}{1+\norm{f-g}_{C^k}}
\end{equation}
is a complete metric space $(C^\infty(Q),d)$, often
abbreviated by $C^\infty(Q)$; see e.g. footnote to~(\ref{eq:complete-45}),
\cite[1.46]{Rudin:1991b}, or \cite[IV.2]{Conway:1985a}.
The\index{complete metric space!of smooth functions} 
$C^k$ norm is the sum of the $C^0$ norms
of all partial derivatives up to order $k$
where $\norm{f}_{C^0}:=\sup_{Q} \abs{f}$.

%%% SUBSUBSECTION %%%%%%%%%%%%%%%%%%%
%%%%%%%%%%%%%%%%%%%%%%%%%%%%%%%%%%
\subsubsection{Non-degeneracy is a generic property}
\begin{theorem}[$\Aa_H$ Morse for generic $H$]
\label{thm:A-Morse}
Let $(M,\omega)$ be a \underline{closed} symplectic manifold. Then
there is a dense \underline{open} subset $\Hhreg$ of the complete
metric space $\Hh=(C^\infty(\SS^1\times M),d)$
such that the symplectic action functional $\Aa_H:\Ll_0 M\to\R$
is Morse whenever $H\in\Hhreg$.
\end{theorem}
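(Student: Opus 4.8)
The plan is to prove the statement in two stages: first density of the set of regular Hamiltonians, then openness. For density, the standard strategy is a parametrized transversality (Sard--Smale) argument. I would fix a large integer $\ell$ and work first in a suitable Banach space completion; the cleanest route is to introduce the universal moduli space of pairs $(H,z)$ with $z\in\Pp(H)$, i.e. $1$-periodic solutions of $\dot z=X_{H_t}(z)$. By Exercise~\ref{exc:Pp-Fix-FH} this is the same as the space of pairs $(H,p)$ with $p$ a fixed point of the time-$1$ map $\psi^H_1$. The key point is to show that the ``universal'' section whose zero set cuts out this space is transverse to zero: concretely, linearizing the fixed-point equation in both $H$ and $p$ must be surjective. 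Surjectivity in the $H$-direction alone should already hold, because one can locally perturb $H$ near a single point of the orbit $z(\SS^1)$ (which, being an embedded or at worst immersed loop, has points where such a localized perturbation moves $\psi_1$ in an arbitrary prescribed direction on $T_{z(0)}M$); here one uses that the orbit spends positive time near any of its points, so a bump function supported near $(t_0,z(t_0))$ produces, to first order, an arbitrary element of $\Eig_1 d\psi_1(z(0))^{\perp}$, and combined with the $p$-direction this gives full surjectivity. Granting this, the universal moduli space is a Banach manifold, the projection to the $H$-factor is Fredholm of index $0$, and the Sard--Smale theorem gives a residual set of $H$ (in the $C^\ell$ sense) for which all $1$-periodic orbits are nondegenerate, i.e. $A_z$ is surjective for all $z\in\Pp(H)$ by Exercise~\ref{exc:non-deg-Crit-pp}; then a standard approximation argument upgrades from $C^\ell$ to $C^\infty$, using that $C^\infty(\SS^1\times M)$ is a countable intersection of the relevant $C^\ell$ conditions, so one obtains a residual, hence (by Baire, Theorem~\ref{thm:Baire}) dense, subset $\Hhreg$ of $\Hh=(C^\infty(\SS^1\times M),d)$.

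For openness, I would argue by compactness. Suppose $H\in\Hhreg$, so $\Aa_H$ is Morse. By Lemma~\ref{le:compactness-crit}(a) the set $\Pp(H)$ is $C^1$-compact, and since $M$ is closed and all orbits are nondegenerate, Exercise~\ref{exc:finite-set} shows $\Pp(H)$ is a finite set $\{z_1,\dots,z_m\}$. Nondegeneracy of $z_j$ means $\det(d\psi^H_1(z_j(0))-\1)\neq 0$. Now for $H'$ close to $H$ in $C^2$ (hence in the $d$-metric, for $H'$ in a suitable $d$-ball), the time-$1$ maps $\psi^{H'}_1$ are $C^1$-close to $\psi^H_1$; a standard implicit-function / degree argument shows that all fixed points of $\psi^{H'}_1$ lie in small neighborhoods of the $z_j(0)$, that each such neighborhood contains exactly one, and that it is again nondegenerate by continuity of the determinant condition. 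Hence $H'\in\Hhreg$, proving $\Hhreg$ is open. I would phrase the ``fixed points stay near the old ones'' step using that, away from neighborhoods of the $z_j(0)$, the function $p\mapsto |\psi^H_1(p)-p|$ (in any fixed Riemannian distance) is bounded below by a positive constant on the compact complement, so the same holds for $H'$ close to $H$.

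\textbf{Main obstacle.} The delicate point is the transversality (surjectivity) claim in the density step: one must verify that perturbing $H$ near a single point of an orbit genuinely produces, after linearization, enough variation of the time-$1$ map. The subtlety is that the orbit may be a multiply covered loop or may have self-intersections, so a perturbation supported near a point of the image $z(\SS^1)$ could be ``felt'' at several times; one handles this by choosing the support in the base $M$ small enough, together with a narrow time-window, so that only one passage of the orbit through that region is affected, which is always possible because at each instant $t_0$ the pair $(t_0,z(t_0))$ has a neighborhood in $\SS^1\times M$ disjoint from the rest of the graph $\{(t,z(t))\}$ except near times close to $t_0$ — here one uses smoothness and, if necessary, restricts first to a neighborhood of $t_0$ in the circle. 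After isolating one passage, the first-variation formula for $\psi_1$ in terms of the Hamiltonian perturbation shows the image of the $H$-derivative contains an arbitrary prescribed tangent vector; combined with the already-present variation from moving $p$ within $\Eig_1$, this yields the full surjectivity needed to apply Sard--Smale. The rest is routine bookkeeping: setting up the Banach manifold of $C^\ell$ Hamiltonians, checking the Fredholm property of the projection (index zero since the ``expected dimension'' of $\Pp(H)$ is zero in the nondegenerate case), and the standard $C^\ell\to C^\infty$ passage.
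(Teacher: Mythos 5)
Your proposal is correct and follows essentially the same route as the paper's proof (Example~\ref{ex:generic-Morse}): a Sard--Smale argument on a universal moduli space, with transversality achieved by a Hamiltonian perturbation localized in both time and space near a single point $(t_*,z(t_*))$ of the graph of the orbit (which disposes of multiple covers and self-intersections exactly as you describe), followed by the standard $C^k\to C^\infty$ approximation using openness, and openness itself via compactness of the set of $1$-periodic orbits. The only cosmetic difference is that you linearize the fixed-point equation of the time-$1$ map on $M$, whereas the paper linearizes the $L^2$-gradient section $\grad\Aa_H$ over the loop space $\Lambda M$ and verifies surjectivity through the annihilator condition; these formulations are equivalent via Exercise~\ref{exc:non-deg-Crit-pp} and the isomorphism~(\ref{eq:ker_A=Eig_1}).
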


The proof, given in Example~\ref{ex:generic-Morse} below, will serve
to illustrate abstract Thom-Smale transversality theory
(Section~\ref{sec:Thom-Smale-transversality}) in a simple setting.

\begin{definition}[Non-degenerate case, Morse-regular Hamiltonians]
\label{def:non-deg-case}
The terminology \textbf{\Index{non-degenerate case}}
refers to the situation $H\in\Hhreg$, that is all
$1$-periodic orbits $z\in\Pp_0(H)=\Crit \Aa_H$ are non-degenerate,
that is $\Aa_H$ is Morse.
Call\index{regular!M- --}\index{regular!Morse- --}\index{M-regular}
the
elements of\index{Hamiltonian!M(orse)-regular --}
$\Hhreg$\index{$\Hhreg$ M(orse)-regular Hamiltonians}
\textbf{\Index{Morse-regular}} or
\textbf{\Index{M-regular}} Hamiltonians.
\end{definition}

\begin{proposition}[Finite set]\label{prop:fin-crit-pts}
Given a closed symplectic manifold $(M,\omega)$
and an M-regular Hamiltonian $H\in\Hhreg$, then
$\Crit\Aa_H=\Pp_0(H)$ is a finite set.
\end{proposition}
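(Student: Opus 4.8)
The plan is to deduce finiteness from two facts already at our disposal: the critical set is compact, and in the Morse case every critical point is isolated. A compact topological space all of whose points are isolated is finite, and the whole argument reduces to making this observation applicable.

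First I would check that $\Crit\Aa_H=\Pp_0(H)$ is compact. By Lemma~\ref{le:compactness-crit}\,a) the set $\Pp(H)$ of \emph{all} $1$-periodic orbits of $X_{H}$ — contractible or not — is compact in the $C^1$ topology. The subset $\Pp_0(H)$ of the contractible ones is closed in $\Pp(H)$, because the free homotopy class of a continuous loop is a locally constant function on $C^0(\SS^1,M)$, hence is unchanged under $C^1$-convergence; so a $C^1$-limit of contractible orbits is again a contractible orbit. Therefore $\Pp_0(H)$ is compact in the $C^1$ topology.

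Next I would invoke M-regularity. Since $H\in\Hhreg$, by Definition~\ref{def:non-deg-case} every $z\in\Pp_0(H)=\Crit\Aa_H$ is a non-degenerate critical point of $\Aa_H$ — equivalently, by Exercise~\ref{exc:non-deg-Crit-pp}, a non-degenerate $1$-periodic trajectory. Lemma~\ref{le:non-deg=>isolated} then asserts that each such $z$ is isolated in the Hilbert manifold $\Lambda M=W^{1,2}_{\rm contr}(\SS^1,M)$, i.e.\ admits a $W^{1,2}$-neighbourhood meeting $\Pp_0(H)$ only in $z$. Since $\SS^1$ is compact, the inclusion $C^1(\SS^1,M)\hookrightarrow W^{1,2}(\SS^1,M)$ is continuous, and by Lemma~\ref{le:regularity-crit} every element of $\Pp_0(H)$ is smooth, hence genuinely lies in $\Lambda M$; consequently each $z\in\Pp_0(H)$ is also isolated for the $C^1$ topology on $\Pp_0(H)$.

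Finally, $\Pp_0(H)$ equipped with the $C^1$ topology is both compact and discrete, so its open cover by one-point subsets has no proper subcover and $\Pp_0(H)$ is finite. I expect the only genuinely delicate point — the one I would spell out with care — to be reconciling the two ambient topologies: compactness of $\Pp(H)$ is stated for the $C^1$ topology, whereas isolatedness of critical points is produced inside the Hilbert manifold $\Lambda M$. The continuous embedding $C^1\hookrightarrow W^{1,2}$ together with the elliptic regularity statement Lemma~\ref{le:regularity-crit} bridges this gap. (Routing the argument through Exercise~\ref{exc:finite-set} instead would require knowing that \emph{all} $1$-periodic orbits are non-degenerate, which M-regularity does not guarantee for the non-contractible ones, so the direct argument above is the cleaner path.)
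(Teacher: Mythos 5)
Your argument is essentially the paper's first proof (``v1''): compactness of the critical set from Lemma~\ref{le:compactness-crit} combined with discreteness from Lemma~\ref{le:non-deg=>isolated}. You spell out the topology-matching step ($C^1$ vs.\ $W^{1,2}$) that the paper leaves implicit, and your remark explaining why Exercise~\ref{exc:finite-set} (the paper's ``v2'') is not directly applicable to $\Pp_0(H)$ alone is a sound observation, but the route is the same.
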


\begin{proof}
\textit{v1.} $\Crit\Aa_H$ is compact
(Lemma~\ref{le:compactness-crit}) and
discrete (Lemma~\ref{le:non-deg=>isolated}).
\newline
\textit{v2.} Exercise~\ref{exc:finite-set}.
\end{proof}

%%% SUBSUBSECTION %%%%%%%%%%%%%%%%%%%
%%%%%%%%%%%%%%%%%%%%%%%%%%%%%%%%%%
\subsubsection{Variant: Metrizable complete topological vector spaces}

A pair $(E,\Uu)$ is called\index{vector space!topological --}
a \textbf{\Index{topological vector space}} if it consists of a
vector space $E$ together with a\index{Hausdorff topology}
Hausdorff\,\footnote{
  \textbf{Hausdorff} means
  that\index{topological space!Hausdorff}
  any two distinct points are separable
  by open sets (there are two disjoint open sets such that each
  of them contains precisely one of the two points).
  }
topology $\Uu$ for which the vector space operations are continuous.
Examples are normed vector spaces.
A sequence $(x_n)\subset E$ is called
a\index{Cauchy sequence!in topological vector space}
\textbf{Cauchy sequence} if each
open neighborhood $U\in\Uu$ of $0$
contains the difference $x_n-x_m\in\Uu$ for all $n,m$ sufficiently large.
If every Cauchy sequence converges to an element $x\in E$, then
$(E,\Uu)$\index{topological vector space!complete --}
is called\index{complete!topological vector space}
a \textbf{complete topological vector space}.
We recommend the concise presentation~\cite[\S 1.1]{Zimmer:1990a}.

A topological space $(X,\Uu)$ is called a \textbf{\Index{Baire space}}, or
simply \textbf{Baire}, if countable intersections of dense open sets,
hence residuals, are dense.
Examples are complete metric spaces, for instance Banach spaces,
hence Banach manifolds.

\begin{remark}\label{rmk:Baire-new}
Complete topological vector spaces $(E,\Uu)$ which are
\textbf{metrizable}\footnote{
  A topological space is called \textbf{metrizable}
  if\index{topological space!metrizable}
  it\index{metrizable topological space}
  admits a metric $d$ whose open balls
  \emph{generate the given topology}, that is every open set is a union of such balls.
}
are Baire spaces; for details of the proof see
e.g.~\cite[Prop.\,1.1.8~(B')]{Infusino:2016a}.
Theorem~\ref{thm:Baire} part~(C) holds
for metrizable complete topological vector spaces.
\end{remark}

%%%%%%%%%%%%%%%%%%%%%%%%%%%%%%%%%%
%%%%%%%%%%%%%%%%%%%%%%%%%%%%%%%%%%
%%% SECTION %%%%%%%%%%%%%%%%%%%%%%%%
%%%%%%%%%%%%%%%%%%%%%%%%%%%%%%%%%%
%%%%%%%%%%%%%%%%%%%%%%%%%%%%%%%%%%
\section{Downward gradient equation}\label{sec:DGF}

Throughout $(M,\omega)$ is a closed
symplectic manifold.
To emphasize time dependence
we denote time $1$-periodic Hamiltonians
$H\in C^\infty(\SS^1\times M)$~by~$H_t$.~Let
\begin{equation}\label{eq:J-fam-comp}
     J=\{J_t=J_{t+1}\}\subset\Jj(M,\omega)
\end{equation}
be a $1$-periodic family
of $\omega$-compatible almost complex structures
with associated $1$-periodic families of Riemannian metrics $g_J=\{g_{J_t}=:g_t\}$
and Levi-Civita connections $\nabla=\{\nabla(g_t)=:\nabla^t\}$.
Let $\Ll_0 M=C^\infty_{\rm contr}(\SS^1,M)$ be
the space of free contractible loops in $M$
and $\Pp_0(H)=\Crit\Aa_H$ the set of $1$-periodic
\emph{contractible} trajectories of the Hamiltonian flow $\psi$
given by~(\ref{eq:Ham-flow}).

As we indicated in Remark~\ref{rem:no-flow}
the initial value problem of the $L^2$ gradient $\grad\Aa_H$
is ill-posed on the loop space, no matter which Hilbert
or Banach completion one takes into consideration;
see also Remark~\ref{rem:dyn-comp}.
The way out is to interpret a curve $\R\to\Ll M$ in the loop
space asymptotic to critical points $z^\mp\in\Crit \Aa_H$
as a cylinder in $M$ and the formal downward
gradient equation on $\Ll M$ as a PDE for the cylinder
$\R\times \SS^1\to M$ in the manifold with
asymptotic boundary conditions given by
the two $1$-periodic orbits $z^\mp$.
The key property that makes the analysis work
is non-degeneracy of the critical points $z^\mp$:
This assumption leads to a Fredholm problem,
hence to solution spaces of \emph{finite} dimension.
In Section~\ref{sec:DGF}
we follow mainly~\citerefFH{salamon:1999a,Floer:1995a}
and~\cite[\S 6.5]{hofer:2011a}.
\begin{equation*}
\begin{gathered}
  \textsf{Throughout Section~\ref{sec:DGF} we 
  fix $J$ as in~(\ref{eq:J-fam-comp})
  and pick $H$ Morse-regular.}   %, that is $H\in\Hhreg$.}
\end{gathered}
\end{equation*}

%%%%%%%%%%%%%%%%%%%%%%%%%%%%%%%%%%
%%% SUBSECTION %%%%%%%%%%%%%%%%%%%%%
%%%%%%%%%%%%%%%%%%%%%%%%%%%%%%%%%%
\subsection{Connecting trajectories}\label{sec:conn-traj}

Fix $J$ as in~(\ref{eq:J-fam-comp}).
Pick $H\in\Hhreg$, that is the functional $\Aa_H$ is Morse, so
its critical points are isolated.
A smooth map $u:\R\times \SS^1\to M$, $(s,t)\mapsto u(s,t)$,
is called a \textbf{\Index{trajectory}}
or a \textbf{\Index{Floer cylinder}}
if it satisfies the perturbed non-linear Cauchy-Riemann type elliptic
PDE, also called \textbf{\Index{Floer's equation}}, given by
\begin{equation}\label{eq:FLOER}
     \Ff(u):=\p_su+\grad\Aa_H(u_s)
     =\p_s u-J_t(u)\p_t u-\nabla H_t(u)=0 .
\end{equation}
Here $u_s$ denotes the loop $u(s,\cdot)$, for
the $L^2$ gradient $\grad\Aa_H(u_s)$ see~(\ref{eq:dA_H}).
Floer's equation generalizes three theories, as indicated in
Figure~\ref{fig:fig-Floers-interpol-eq}.
\begin{figure}[h]
  \centering
  \includegraphics%[width=0.9\textwidth]
                             [height=4cm]
                             {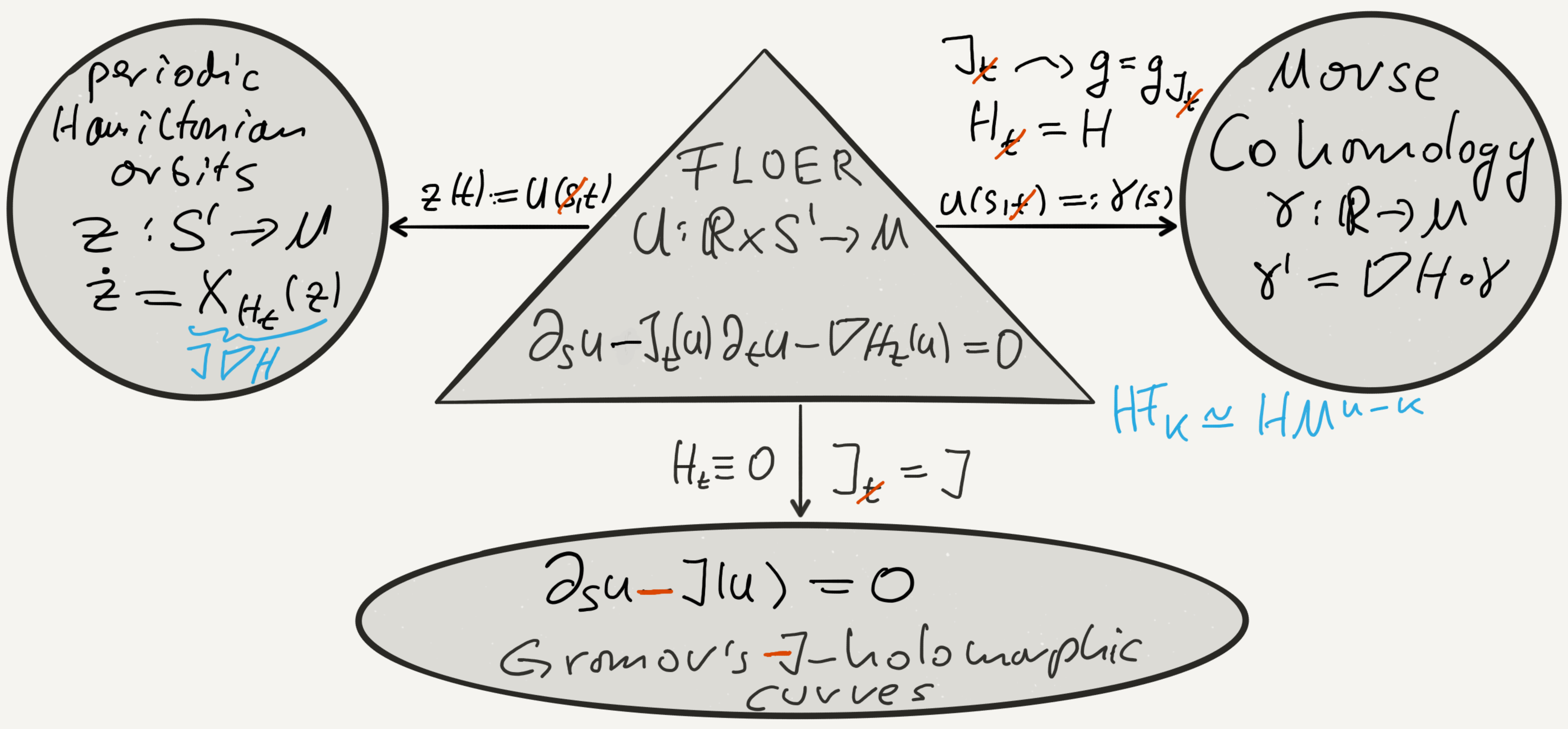}
  \caption{Floer's equation interpolates between three theories}
  \label{fig:fig-Floers-interpol-eq}
\end{figure}

For $H\in\Hhreg$ and $z^\mp\in\Crit\Aa_H$ the
\textbf{set of connecting trajectories} or the
\textbf{\Index{connecting manifold}}\footnote{
  Although at this stage $\Mm(z^-,z^+)$ is not yet a manifold.
  }
or\index{connecting trajectories!set of --} 
the \textbf{\Index{moduli space}} of connecting solutions
\begin{equation}\label{eq:conn-mf}
     \Mm(z^-,z^+)=\Mm(z^-,z^+;H,J)
\end{equation}
consists of all Floer cylinders $u$ with asymptotic limits
\begin{equation}\label{eq:LIMITS}
     \lim_{s\to\mp\infty} u(s,t)=z^\mp(t)
\end{equation}
where the convergence is uniform in $t\in\SS^1$,
in other words in $C^0(\SS^1)$.

\begin{remark}[Asymptotic convergence: $C^0$ versus $W^{1,2}$]
Only asking $C^0(\SS^1)$ convergence in the boundary
condition of a connecting trajectory $u$
may come as a surprise, given that the
natural domain of the symplectic action functional $\Aa_H$
is the Banach manifold of $W^{1,2}$ loops in $M$.
For instance, one would expect that the energy
of a connecting trajectory is the difference
of the action values of the two asymptotic boundary conditions
whenever the asymptotic convergence happens
with respect to the functional's natural topology, in this case $W^{1,2}$.
However, Lemma~\ref{prop:ENERGY-ACTION}
below shows that $C^0$ asymptotic convergence~(\ref{eq:LIMITS})
is already sufficient to enforce finite energy of $u$.
But is this really that surprising?\,\footnote{
  As the $L^2$ gradient does not generate a flow,
  the cylinder substitutes are already special to start with.
  Secondly, being solutions to a PDE,
  as opposed to an ODE, they are rather exotic, hence rare, creatures.
  But from exceptional objects one may expect
  exceptional behavior.
  }
Indeed, together with non-degeneracy of the boundary conditions
$z^\mp$, Theorem~\ref{thm:finite-ENERGY=LIMITS-exist} below
guarantees even exponential convergence -- based on just a $C^0$ convergence 
assumption, but not to forget that $u$ solves an elliptic PDE..
\end{remark}

%%% SUBSUBSECTION %%%%%%%%%%%%%%%%%%%
%%%%%%%%%%%%%%%%%%%%%%%%%%%%%%%%%%
\subsubsection{Energy}
A useful notion concerning gradient type equations is
the \textbf{energy} of arbitrary
paths,\index{energy $E(u)$ of curve}
that is arbitrary cylinders in $M$ in our case. It is defined by
\begin{equation}\label{eq:ENERGY-Floer}
     E(u):=\frac12 \int_0^1\int_{-\infty}^\infty\left(
     \Abs{\p_su}^2 +\Abs{J_t(u)\p_t u+\nabla H_t(u)}^2\right) ds\, dt
     \ge 0
\end{equation}
for \emph{any} smooth cylinder $u:\R\times\SS^1\to M$
and where the integrand is evaluated at $(s,t)$, of course.
Note the following.
\begin{itemize}
\item
  The energy $E(u)\ge 0$ vanishes precisely
  \emph{on the periodic orbits} viewed as \textbf{constant trajectories}
  $u(s,\cdot)\equiv z\in\Pp_0(H)$.
\item
  The energy of a \emph{trajectory} is the square of the $L^2$ norm, namely
  \begin{equation}\label{eq:E=L2-norm}
     E(u)=\norm{\p_s u}_2^2.
  \end{equation}
\item
  Among all smooth cylinders $w:\R\times\SS^1\to M$
  subject to the same asymptotic boundary conditions~(\ref{eq:LIMITS}),
  whether $z^\mp\in\Crit\Aa_H$ are non-degenerate or not,
  it is precisely the Floer cylinders $u$ that \emph{minimize
  the energy} with $E(u)=\Aa_H(z^-)-\Aa_H(z^+)$; see
Exercise~\ref{prop:ENERGY-ACTION-general}.
\end{itemize}

\begin{lemma}[Connecting trajectories are of finite energy]
\label{prop:ENERGY-ACTION}
Given\index{connecting trajectories!finite energy}
a\index{finite energy!connecting trajectories}
connecting trajectory $u\in\Mm(z^-,z^+)$,
non-degeneracy of $z^\mp$ is actually not needed,
then\footnote{
  By definition $\int_{\R\times\SS^1} v^*\omega
  :=\int_0^1\int_{-\infty}^\infty \omega(\p_su,\p_tu)\,dsdt$, mind that
  the order of $s,t$ in $\R\times\SS^1$ must be the same as of
  $\p_s,\p_t$ when inserted into $u^*\omega$.
  }
\begin{equation}\label{eq:ENERGY-FLoer-flow-line}
\begin{split}
     E(u)
   &=\int_{\R\times\SS^1} u^*\omega
     -\int_0^1 H_t(z^-(t))\, dt 
     +\int_0^1 H_t(z^+(t))\, dt\\
   &=\Aa_H(z^-)-\Aa_H(z^+).
\end{split}
\end{equation}
\end{lemma}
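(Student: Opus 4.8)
The statement is the standard ``energy $=$ action difference'' identity for Floer cylinders, and the plan is to prove the first equality in~(\ref{eq:ENERGY-FLoer-flow-line}) by a pointwise algebraic manipulation of the integrand, then identify the resulting surface integral of $u^*\omega$ together with the boundary terms as the action difference. First I would rewrite the integrand of $E(u)$ in~(\ref{eq:ENERGY-Floer}) using the compatibility of the triple $(\omega,J_t,g_t)$. Concretely, since $u$ solves Floer's equation~(\ref{eq:FLOER}), we have $\p_su=J_t(u)\p_tu+\nabla H_t(u)$, so both terms $\Abs{\p_su}^2$ and $\Abs{J_t(u)\p_tu+\nabla H_t(u)}^2$ are equal and $E(u)=\norm{\p_su}_2^2$ as in~(\ref{eq:E=L2-norm}). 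The heart of the computation is then to expand
$$
   \Abs{\p_su}^2
   =\inner{\p_su}{\p_su}_t
   =\inner{\p_su}{J_t(u)\p_tu}_t+\inner{\p_su}{\nabla H_t(u)}_t.
$$
Using $\inner{\cdot}{\cdot}_t=\omega(\cdot,J_t\cdot)$ and $J_t^2=-\1$, the first term equals $\omega(\p_su,J_t^2\p_tu)=-\omega(\p_su,\p_tu)=\omega(\p_tu,\p_su)$, which is exactly the integrand of $u^*\omega$ (with the sign convention recorded in the footnote to the lemma), while the second term equals $dH_t(u)\p_su=\p_s\bigl(H_t(u(s,t))\bigr)$ since $H_t$ carries no explicit $s$-dependence.

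Next I would integrate this pointwise identity over $\R\times\SS^1$. The term $\omega(\p_tu,\p_su)$ integrates to $\int_{\R\times\SS^1}u^*\omega$ by definition. For the term $\p_s\bigl(H_t(u(s,t))\bigr)$ I would apply the fundamental theorem of calculus in the $s$-variable for each fixed $t$, giving
$$
   \int_{-\infty}^{\infty}\p_s\bigl(H_t(u(s,t))\bigr)\,ds
   =\lim_{s\to+\infty}H_t(u(s,t))-\lim_{s\to-\infty}H_t(u(s,t))
   =H_t(z^+(t))-H_t(z^-(t)),
$$
using the asymptotic limits~(\ref{eq:LIMITS}) and continuity of $H_t$; then integrate over $t\in\SS^1$. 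Assembling the two contributions yields
$$
   E(u)=\int_{\R\times\SS^1}u^*\omega+\int_0^1 H_t(z^+)\,dt-\int_0^1 H_t(z^-)\,dt,
$$
which is the first line of~(\ref{eq:ENERGY-FLoer-flow-line}). The second line then follows by recognizing the right-hand side as $\Aa_H(z^-)-\Aa_H(z^+)$: indeed, choosing spanning disks $\bar z^\mp$ and capping off the cylinder $u$ with them produces a sphere over which $\omega$ integrates to zero by the aspherical assumption~(C2), so $\int_{\R\times\SS^1}u^*\omega=\int_{\D}(\bar z^+)^*\omega-\int_{\D}(\bar z^-)^*\omega$; combining with the Hamiltonian terms gives precisely $\Aa_H(z^-)-\Aa_H(z^+)$ as in~(\ref{eq:action-A}).

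The main obstacle is justifying the manipulations that are merely ``formal'' without further input: the fundamental theorem of calculus step requires $s\mapsto H_t(u(s,t))$ to be $C^1$ with the correct limits at $\pm\infty$, and the capping-off argument requires that the asymptotic convergence is good enough (and that the capped sphere is genuinely continuous/Lipschitz) for Stokes' theorem and the asphericity evaluation to apply. Here one uses that $C^0$ convergence of $u(s,\cdot)\to z^\mp$ already suffices, since $u$ is a solution of the elliptic equation~(\ref{eq:FLOER}) and hence smooth, and the limits hold in stronger norms as well (this is exactly the remark preceding the lemma, to be confirmed by Theorem~\ref{thm:finite-ENERGY=LIMITS-exist}); a priori one might worry the integrals diverge, but the identity itself shows $E(u)$ is finite once the right-hand side is. For a clean writeup I would first establish the pointwise identity, then remark that finiteness of the right-hand side is automatic, then do the limit computation, thereby sidestepping any circularity about integrability.
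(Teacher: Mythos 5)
Your proof is correct, and your computational route differs mildly but usefully from the paper's. The paper works at the loop-space level: it writes $E(u)=\lim_{T\to\infty}\int_{-T}^{T}\norm{\p_su_s}^2_{L^2(\SS^1)}\,ds=\lim_{T\to\infty}\bigl(\Aa_H(u_{-T})-\Aa_H(u_T)\bigr)$, unfolds the action values by choosing spanning disks for the loops $u_{\mp T}$ coherently along the cylinder (so that the formally infinite pieces $\int_\D\widebar v^*\omega$ and $\int_{(-\infty,-T]\times\SS^1}u^*\omega$ cancel in the difference), and then lets $T\to\infty$ using $C^0$ convergence and the monotonicity bound $\Aa_H(u_{-T})-\Aa_H(u_T)\le\Aa_H(z^-)-\Aa_H(z^+)$. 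You instead split the integrand $\abs{\p_su}^2$ pointwise into a form part and an exact-in-$s$ part $\p_s(H_t\circ u)$, integrate each piece, and so obtain the first line of~(\ref{eq:ENERGY-FLoer-flow-line}) without ever mentioning spanning disks; asphericity and spanning disks only enter in your derivation of the second line, where they genuinely belong. Your pointwise identity is precisely the $s$-derivative of the paper's loop-space identity, so the mathematical content is the same, but your organization is cleaner and makes the logical role of hypothesis~(C2) more transparent. One caution on signs: with the conventions $g_{J_t}=\omega(\cdot,J_t\cdot)$ and Floer's equation $\p_su-J_t\p_tu-\nabla H_t=0$, your algebra correctly produces the integrand $-\omega(\p_su,\p_tu)=\omega(\p_tu,\p_su)$, and it is this sign that is consistent with the right-hand side of~(\ref{eq:ENERGY-FLoer-flow-line}); the footnote to the lemma as printed records $\omega(\p_su,\p_tu)$, the opposite sign. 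So your parenthetical claim that your integrand ``is exactly the integrand of $u^*\omega$ with the sign convention recorded in the footnote'' is not literally true; the footnote as printed appears to carry a sign misprint, and you should flag the discrepancy rather than assert agreement with it.
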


\begin{proof}
By~(\ref{eq:E=L2-norm}) and the gradient nature of the Floer equation
we obtain that
\begin{equation*}
\begin{split}
     E(u)
   &=\int_{-\infty}^\infty\inner{\p_s u_s}{\p_s u_s}_{L^2(\SS^1)}\, ds\\
   &=\lim_{T\to\infty}
     \underbrace{\bigl(\Aa_H(u_{-T})-\Aa_H(u_T)\bigr)}_{\le\Aa_H(z^-)-\Aa_H(z^+)}\\
   &=\lim_{T\to\infty}\left(\int_{[-T,T]\times\SS^1}
     u^*\omega-\int_0^1 H\circ u_{-T}\, dt+\int_0^1 H\circ u_T\, dt\right).
\end{split}
\end{equation*}
To obtain the final step we fixed an extension $\widebar{v}:\D\to M$ of
$z^-:\SS^1\to M$. Then we chose the extensions of the loops $u_{-T}$
and $u_T$ required by the definition of $\Aa_H$
by simply connecting these loops along the cylinder $u$ to $z^-$ and
its spanning disk $\widebar{v}$.
In the difference $\Aa_H(u_{-T})-\Aa_H(u_T)$ each of the integrals
$\int_\D\widebar{v}^*\omega$ and $\int_{(-\infty,-T]\times\SS^1}u^*\omega$
appears twice, but with opposite signs.
Because the difference is uniformly bounded from above, 
the limit as $T\to\infty$ exists and
is given by the RHS of the first identity
in~(\ref{eq:ENERGY-FLoer-flow-line});
here $C^0$ convergence~(\ref{eq:LIMITS}) enters.
\\
The second identity in~(\ref{eq:ENERGY-FLoer-flow-line})
holds by the earlier argument: Given an extension $\widebar{v}:\D\to
M$ of the loop $z^-$ choose the natural extension $u\# v$ of $z^+$.
\end{proof}

\begin{remark}[Circumventing non-boundedness of $\Aa_H$ -- finite energy]
\label{rem:non-bound-Aa}
To construct a Morse complex one needs that the trajectories used to
define the boundary operator have precisely one critical point sitting
asymptotically at each of the two ends.
In our case, suppose $u:\R\times \SS^1\to M$ is a trajectory,
how can we guarantee existence of asymptotic limits $z^\mp\in\Crit\Aa_H$?
Well, if they exist, the energy identity~(\ref{eq:ENERGY-Floer-no-sol})
shows that $u$ is of \emph{finite energy}.
Indeed it turns out, see Theorem~\ref{thm:finite-ENERGY=LIMITS-exist},
that finite energy of a trajectory $u$ is sufficient to enforce existence of
asymptotic limits $z^\mp\in\Crit\Aa_H$ -- under the assumption that
our functional $\Aa_H$ is Morse. This is why we pick $H\in\Hhreg$ in
Section~\ref{sec:DGF}.
\end{remark}

\begin{exercise}\label{prop:ENERGY-ACTION-general}
Any smooth cylinder $w:\R\times\SS^1\to M$
subject to the asymptotic boundary conditions~(\ref{eq:LIMITS}),
whether $z^\mp\in\Crit\Aa_H$ are non-degenerate or not,
satisfies the identity
\begin{equation}\label{eq:ENERGY-Floer-no-sol}
\begin{split}
     E(u)
   &=\frac12 \int_0^1\int_{-\infty}^\infty
     \Abs{\p_s u-J_t(u)\p_t u-\nabla H_t(u)}^2\, ds\, dt\\
   &\quad
     +\Aa(z^-)-\Aa(z^+).
\end{split}
\end{equation}
[Hint: Start at the integral term in~(\ref{eq:ENERGY-Floer-no-sol}).
Permute the integrals, so the integral over $t$ becomes
the $L^2(\SS^1)$ inner product of $\p_s u_s+\grad\Aa_H(u_s)$ with itself.
Use the gradient nature of the Floer equation to end up with
$E(u)+\lim_{T\to\infty}\left(\Aa_H(u_{T})-\Aa_H(u_{-T})\right)$
which is equal to $E(u)+\Aa_H(z^+)-\Aa_H(z^-)$,
as shown in the proof of Lemma~\ref{prop:ENERGY-ACTION}.]
\end{exercise}

%%% SUBSUBSECTION %%%%%%%%%%%%%%%%%%%
%%%%%%%%%%%%%%%%%%%%%%%%%%%%%%%%%%
\subsubsection{Finite energy trajectories} 
Consider the \textbf{set of finite energy trajectories}
$$
     \Mm:=
     \{\text{solutions $u:\R\times\SS^1\to M$ of Floer's
     equation~(\ref{eq:FLOER})} \mid E(u)<\infty\}.
$$
If $\Aa_H$ is Morse, then by
Theorem~\ref{thm:finite-ENERGY=LIMITS-exist}
below every finite energy trajectory is connecting
and this non-trivial fact contributes the inclusion $\subset$
in the identity
\begin{equation}\label{eq:Mm=Mm_xy}
     \Mm=\bigcup_{z^\mp\in\Pp_0(H)} \Mm(z^-,z^+).
\end{equation}
The other inclusion $\supset$ already
holds without the Morse assumption
by~(\ref{eq:ENERGY-FLoer-flow-line}).
In the non-degenerate case, still assuming
that $\omega$ and $c_1(M)$ vanish over $\pi_2(M)$,
counting with appropriate signs the 1-dimensional components
appearing on the RHS of~(\ref{eq:Mm=Mm_xy})
defines the Floer boundary operator.
In the special case of an autonomous
$C^2$ small Morse function $H$,
see~Proposition~\ref{prop:HZ-C2small},
and autonomous $J$, the count defines the Morse
boundary operator and the RHS of~(\ref{eq:Mm=Mm_xy})
is naturally homeomorphic to $M$ itself.
Consequently Floer homology
represents the singular integral
co/homology of $M$.\footnote{
  As $M$ is closed and automatically orientable,
  Poincar\'{e} duality identifies $\Ho^{2n-k}\cong\Ho_k$.
  }
These remarks show the significance~of

\begin{theorem}\label{thm:finite-ENERGY=LIMITS-exist}
Let $u:\R\times\SS^1\to M$ be a Floer cylinder
in the non-degenerate~case,\footnote{
  That is $u$ satisfies the Floer equation~(\ref{eq:FLOER}) on the
  closed manifold $M$ and $H\in\Hhreg$.
  }
then the following are equivalent.
\begin{enumerate}
\item[\rm \texttt{(finite energy)}]
  $E(u)<\infty$.
  \index{\rm \texttt{(finite energy)}}
\item[\rm \texttt{(asymp.limits)}]
  There are periodic orbits $z^\mp\in\Pp_0(H)$ which are
  the $C^0$ limits of the family of loops
  $s\mapsto u_s$; see~(\ref{eq:LIMITS}).
  Moreover, the derivative $\p_su_s(t)\to 0$ decays, as $s\mp\infty$,
  again uniformly in~$t$. \index{\rm \texttt{(asymp.limits)}}
\item[\rm \texttt{(exp.decay)}]
  There exist constants $\delta,c>0$ such that
  $$
     \Abs{\p_su(s,t)}\le ce^{-\delta\abs{s}}
  $$
  at every point $(s,t)$ of the cylinder $\R\times\SS^1$.
  \index{\rm \texttt{(exp.decay)}}
\end{enumerate}
\end{theorem}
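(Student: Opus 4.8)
The strategy is to prove the chain of implications
\texttt{(exp.decay)} $\Rightarrow$ \texttt{(finite energy)} $\Rightarrow$ \texttt{(asymp.limits)} $\Rightarrow$ \texttt{(exp.decay)}, which is the most economical route. The first implication is essentially immediate: by~(\ref{eq:E=L2-norm}) the energy of a trajectory equals $\norm{\p_s u}_2^2$, and if $\abs{\p_su(s,t)}\le ce^{-\delta\abs s}$ then
$$
     E(u)=\int_0^1\int_{-\infty}^\infty\Abs{\p_su}^2\,ds\,dt
     \le\int_0^1\int_{-\infty}^\infty c^2e^{-2\delta\abs s}\,ds\,dt
     =\frac{c^2}{\delta}<\infty.
$$
So the content is in the remaining two implications, and I would treat \texttt{(finite energy)} $\Rightarrow$ \texttt{(asymp.limits)} first. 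Here the plan is the standard Floer-theoretic bubbling-free compactness argument, which is available to us because $(M,\omega)$ is closed and symplectically aspherical: finite energy plus the absence of $\pi_2$ prevents energy concentration, so elliptic bootstrapping (the a priori gradient bound from no bubbling, followed by Floer's regularity estimates) gives uniform $C^\infty_{\rm loc}$ bounds on the shifted cylinders $u(\cdot+s_k,\cdot)$. One extracts, along any sequence $s_k\to+\infty$, a limit which by the energy identity~(\ref{eq:ENERGY-FLoer-flow-line}) (applied to truncations) has zero energy, hence is a constant trajectory sitting at some $z^+\in\Pp_0(H)$; the Morse (non-degeneracy) hypothesis $H\in\Hhreg$ forces this limit to be independent of the sequence, so $u_s\to z^+$ in $C^0(\SS^1)$ and likewise $u_s\to z^-$ as $s\to-\infty$, with $\p_su_s\to 0$ uniformly in $t$ because $\p_su_s=-\grad\Aa_H(u_s)$ and $\grad\Aa_H$ vanishes exactly on $\Crit\Aa_H$.

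The implication \texttt{(asymp.limits)} $\Rightarrow$ \texttt{(exp.decay)} is the technical heart and the main obstacle. The plan is the classical exponential-decay estimate near a non-degenerate critical point. Set $\varepsilon(s):=E(u|_{[s,\infty)\times\SS^1})=\frac12\int_s^\infty\norm{\p_s u_\sigma}_2^2\,d\sigma$ for $s$ large; by~(\ref{eq:ENERGY-FLoer-flow-line}) this equals $\Aa_H(u_s)-\Aa_H(z^+)$. The key analytic input is that, because $z^+$ is non-degenerate, the Hessian operator $A_{z^+}$ of~(\ref{eq:Hess-A}) is invertible, so its spectrum is bounded away from zero by some $\delta_0>0$; by continuity of the spectrum and the fact that $u_s\to z^+$ in $C^0$, the operators $A_{u_s}$ (or rather the frozen Hessians along the trajectory, in a unitary trivialization as in~(\ref{eq:Hess-A-triv})) also have spectrum outside $(-\delta,\delta)$ for some $0<\delta<\delta_0$ once $s\ge s_0$. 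A standard computation with the second derivative of $\Aa_H$ along the gradient flow — differentiating $\varepsilon(s)$ twice and using the spectral gap to get the differential inequality $\varepsilon''(s)\ge(2\delta)^2\varepsilon(s)$, hence $\varepsilon(s)\le\varepsilon(s_0)e^{-2\delta(s-s_0)}$ — yields exponential decay of the energy tail. Upgrading this to the pointwise bound $\abs{\p_su(s,t)}\le ce^{-\delta\abs s}$ is then a matter of elliptic interior estimates: $\p_su$ satisfies a linear elliptic equation (the linearization of Floer's equation), so its sup-norm over the unit cylinder centered at $(s,t)$ is controlled by its $L^2$-norm over a slightly larger cylinder, which is $\varepsilon(s-1)-\varepsilon(s+1)=O(e^{-2\delta s})$. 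The same argument at the negative end handles $s\to-\infty$, and combining gives the bound for all $(s,t)$ after adjusting the constant $c$.

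The subtle points I expect to require care are: (i) justifying that no bubbling occurs — here one invokes symplectic asphericity, $\Io_\omega=0$, together with the standard mean-value/monotonicity estimate, so that a finite-energy Floer cylinder has uniformly bounded gradient; (ii) the continuity-of-spectrum argument, which is cleanest after passing to a unitary trivialization along $u$ (possible by Proposition~\ref{prop:unitary-triv} after choosing a spanning disk) so that $A_{u_s}$ becomes a genuine perturbation of the constant-coefficient operator $-J_0\frac{d}{dt}-S^+_t$ with invertible limit, as in the winding-number discussion of Section~\ref{sec:CZ}; and (iii) keeping the elliptic bootstrapping and the exponential-decay differential inequality logically separate, since the former needs only a local gradient bound while the latter needs the global energy identity~(\ref{eq:ENERGY-FLoer-flow-line}) and the non-degeneracy of $z^\mp$. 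For the detailed estimates I would refer to~\citerefFH{salamon:1999a} and~\citerefFH{Floer:1995a}, where all three ingredients are carried out in precisely this setting.
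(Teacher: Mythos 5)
Your overall route coincides with the paper's (which likewise only sketches the estimates and defers to Salamon's lectures): explicit integration for \texttt{(exp.decay)}$\Rightarrow$\texttt{(finite energy)}; annular energy decay, a mean--value/bootstrapping argument and isolatedness of non-degenerate critical points for \texttt{(finite energy)}$\Rightarrow$\texttt{(asymp.limits)}; and a second-order differential inequality followed by an elliptic upgrade for the exponential decay. The one place where your sketch has a genuine gap is the choice of the quantity satisfying the differential inequality. You take $\varepsilon(s)=\Aa_H(u_s)-\Aa_H(z^+)$ and assert $\varepsilon''\ge(2\delta)^2\varepsilon$ ``using the spectral gap''. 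But along the negative gradient flow
$$
     \varepsilon'(s)=-\Norm{\p_su_s}_{L^2(\SS^1)}^2,
     \qquad
     \varepsilon''(s)=2\bigl\langle \p_su_s,\,A_{u_s}\p_su_s\bigr\rangle_{L^2(\SS^1)},
$$
and invertibility of the asymptotic Hessian only yields $\Norm{A_{u_s}\xi}\ge\delta\Norm{\xi}$; it does \emph{not} bound the quadratic form $\langle\xi,A_{u_s}\xi\rangle$ from below, because $A_{u_s}$ is strongly indefinite (infinitely many negative eigenvalues, cf.~Example~\ref{ex:inf-M-index}). So the claimed inequality does not follow from the stated hypothesis; salvaging it would require showing separately that $\p_su_s$ is asymptotically concentrated in the positive spectral subspace of $A_{z^+}$.

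The fix --- and what the paper does --- is to run the argument for $f(s):=\tfrac12\int_0^1\abs{\p_su(s,t)}^2\,dt$ instead. Since $\xi_s:=\p_su_s$ lies in the kernel of the linearized operator $D_u$, i.e.\ $\nabla_{\!s}\xi=-A_{u_s}\xi$ up to the decaying correction term, one computes
$$
     f''(s)=2\Norm{A_{u_s}\xi_s}^2-\bigl\langle\xi_s,(\nabla_{\!s}A_{u_s})\xi_s\bigr\rangle ,
$$
and here the spectral gap enters through the manifestly non-negative term $\Norm{A\xi}^2\ge\delta^2\Norm{\xi}^2$, while $\nabla_{\!s}A_{u_s}\to0$ by \texttt{(asymp.limits)} (note this needs $C^1$, not just $C^0$, convergence of $u_s$, which you do get from $\p_su_s\to0$ and Floer's equation). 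This gives $f''\ge\delta^2 f$ for $\abs{s}$ large, hence $L^2$ exponential decay, and your mean--value/elliptic upgrade to the pointwise bound then goes through unchanged. The remaining parts of your proposal (the cyclic order of the implications, asphericity to exclude bubbling, non-degeneracy to fix the asymptotic limits) match the paper.
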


\begin{proof}[Outline of proof.]
The proof is non-trivial, for details we
recommend~\citerefFH{salamon:1999a}. Roughly speaking,
\texttt{(finite energy)}, namely by~(\ref{eq:E=L2-norm})
finiteness of the integral $\norm{\p_s u}^2_2=E(u)<\infty$
over the whole cylinder $\R\times\SS^1$, enforces via
additivity of the integral with respect to disjoint union of the
domain that the integrals over annuli $(T,T-1)\times\SS^1$
must converge to zero, as $T\to\mp\infty$.
But a mean value inequality for
$e=\abs{\p_su(s,t)}^2$ based on a differential
inequality of the form
$\Delta e:={\p_s}^2+{\p_t}^2\ge -c_1-c_2 e^2$
provides a pointwise estimate of $\abs{\p_su(s,t)}$
in terms if the $L^2$ norm over some annulus
$A_s$ which does not depend on $s$.
Thus $\abs{\p_su_s}=\abs{\p_tu_s-X_{H_t}(u_s)}$ converges to zero,
as $s\to\mp\infty$, uniformly in $t$.
From this one already concludes
existence of a sequence $s_k\to\infty$
such that $u_{s_k}$ converges uniformly
to some periodic orbit $z^+$.
But by non-degeneracy all periodic orbits are isolated
which implies that any sequence diverging
to $+\infty$ leads to the same limit; similarly
for $s\to-\infty$. One has confirmed
\texttt{(asymptotic limits)}.
But \texttt{(asymptotic limits)} implies \texttt{(finite energy)}
by Lemma~\ref{prop:ENERGY-ACTION}.

Obviously \texttt{(exponential decay)}
immediately leads to \texttt{(finite energy)}
by~(\ref{eq:E=L2-norm}) and explicit integration.
Conversely, how \texttt{(finite energy)}
leads to \texttt{(exponential decay)}
is hard to illustrate, have a look
at~\citerefFH[\S 2.7]{salamon:1999a}.
A key observation is that $\xi_s(t)=\p_s u_s(t)$
lies in the kernel of the trivialization
$D=\frac{d}{d s}+A(s)=\p_s-\Jbar_0\p_t-S(s,t)-C(s,t)$
of the linearized operator $D_u$,
see~(\ref{eq:D_u-lin-Floer})
and~(\ref{eq:triv-D_u-lin-Floer}),
and that for kernel elements the function
$f(s):=\frac12\int_0^1\abs{\xi_s(t)}^2\, dt$
satisfies a differential inequality
$f^{\prime\prime}(s)\ge\delta^2 f(s)$
for $\abs{s}$ sufficiently large; this is based on
invertibility of the Hessian operators $A(s)$
at $\mp\infty$, hence near $\mp\infty$.
It is here where non-degeneracy of the asymptotic boundary
conditions $z^\mp$ enters.
This way one arrives at an $L^2$ version
of the desired estimate, namely
$f(s)\le c^\prime e^{-\delta\abs{s}}$.
Application of the operator
$\p_s+\Jbar_0\p_t$ to $D\xi=0$
leads to a differential inequality
$\Delta \abs{\xi_s(t)}^2\ge -c^{\prime\prime}\abs{\xi_s(t)}^2$,
thus to a mean value inequality
for $\abs{\xi_s(t)}^2$
which together with the formerly obtained
$L^2$ estimate establishes \texttt{(exponential decay)}.
\end{proof}

%%%%%%%%%%%%%%%%%%%%%%%%%%%%%%%%%%
%%% SUBSECTION %%%%%%%%%%%%%%%%%%%%%
%%%%%%%%%%%%%%%%%%%%%%%%%%%%%%%%%%
\subsection{Fredholm theory}\label{sec:Fredholm}
For convenience of the reader
we enlist some basic notions and tools of Fredholm theory.
Concerning details we highly recommend~\cite{mcduff:2004a}.

Suppose throughout that $X,Y,Z$ are Banach
spaces. Denote by $\Ll(X,Y)$ the
\textbf{Banach space of bounded linear operators} $L:X\to Y$
\index{$\Ll(X,Y)$ bounded linear operators}
equipped with the \textbf{\Index{operator norm}}
$\norm{L}:=\sup_{\norm{x}=1}\{\norm{Lx}\colon x\in X\}$.
A bounded linear operator $D:X\to Y$ is a
\textbf{\Index{Fredholm operator}} if it has a closed
range and finite dimensional kernel and cokernel.
The latter is the quotient space $\coker D:= Y/\im D$.
It inherits a Banach space structure from $Y$ since $\im D$ is closed.
The map defined on the space of Fredholm operators
$\Fred(X,Y)$ by
$$
     \INDEX D:=\dim\ker D-\dim\coker D
$$
is called the \textbf{\Index{Fredholm index}} of $D$.

\begin{theorem}[semi-Fredholm estimate]\label{thm:fredholm-estimate}
Given $D\in\Ll(X,Y)$ and a compact operator $K:X\to Z$,
suppose there is a constant $c>0$ such that
\begin{equation}\label{eq:semi-Fredholm}
     \Norm{x}_X\le c\left(\Norm{Dx}_Y+\Norm{Kx}_Z\right)
\end{equation}
for every $x\in X$. Then $D$ has closed range and
finite dimensional kernel.
\end{theorem}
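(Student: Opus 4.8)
The plan is to extract both conclusions from the single inequality~(\ref{eq:semi-Fredholm}) by using compactness of $K$ twice, each time in the form of a ``Cauchy subsequence'' argument: along a sequence that looks like it violates boundedness below, pass to a subsequence on which $Kx_n$ converges, and then read off from~(\ref{eq:semi-Fredholm}) that $x_n$ itself is Cauchy. This is the standard proof of a semi-Fredholm estimate, and it splits cleanly into the kernel statement and the closed-range statement.

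\textbf{Finite dimensional kernel.} First I would restrict~(\ref{eq:semi-Fredholm}) to $x\in\ker D$, where it reads $\Norm{x}_X\le c\Norm{Kx}_Z$. To conclude $\dim\ker D<\infty$ it suffices, by Riesz's lemma, to show that the closed unit ball $B$ of $\ker D$ is compact. Given $(x_n)\subset B$, compactness of $K$ gives a subsequence along which $(Kx_n)$ converges in $Z$; the restricted inequality $\Norm{x_n-x_m}_X\le c\Norm{Kx_n-Kx_m}_Z$ then shows this subsequence is Cauchy in $X$, hence convergent, with limit in $\ker D$ since $\ker D=\ker(D)$ is closed ($D$ bounded). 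So $B$ is compact and $\ker D$ is finite dimensional.

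\textbf{Closed range.} Since $\ker D$ is finite dimensional it admits a closed complement, so write $X=\ker D\oplus X_1$ with $X_1$ closed; then $\im D=D(X_1)$ and $D|_{X_1}$ is injective. It is enough to prove that $D|_{X_1}$ is bounded below, because an injective bounded-below operator on a Banach space has closed range. If it were not bounded below, I would pick $x_n\in X_1$ with $\Norm{x_n}_X=1$ and $\Norm{Dx_n}_Y\to 0$, pass (compactness of $K$) to a subsequence with $(Kx_n)$ convergent, and apply~(\ref{eq:semi-Fredholm}) to the differences:
\[
     \Norm{x_n-x_m}_X\le c\bigl(\Norm{Dx_n-Dx_m}_Y+\Norm{Kx_n-Kx_m}_Z\bigr)\longrightarrow 0 .
\]
Hence $(x_n)$ is Cauchy, converging to some $x\in X_1$ with $\Norm{x}_X=1$; but $Dx=\lim Dx_n=0$ forces $x\in\ker D\cap X_1=\{0\}$, a contradiction. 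Therefore $D|_{X_1}$, and with it $D$, has closed range.

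\textbf{Main obstacle.} Honestly there is no deep obstacle here: the proof is a two-fold subsequence argument, and the only points requiring a citation rather than an argument are the routine facts that finite dimensional subspaces of a Banach space are complemented, that a bounded-below injective operator has closed range, and Riesz's characterization of finite dimensionality (all available from standard functional analysis, e.g.\ Appendix~\ref{sec:functional-analysis}). The one thing to be careful about is the order of operations -- first establish finiteness of $\dim\ker D$ so that the complement $X_1$ exists, then run the contradiction argument on $X_1$. It is worth stressing, as the name ``semi-Fredholm estimate'' suggests, that~(\ref{eq:semi-Fredholm}) yields only the left half of Fredholmness (closed range, finite dimensional kernel); controlling $\coker D$ requires a companion estimate for the adjoint, which is not assumed here.
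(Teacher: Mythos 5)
Your proof is correct and follows exactly the strategy the paper itself suggests (the result is left as an exercise there, with the hint to show compactness of the unit ball in $\ker D$ and then to choose a complement $X_1$ of $\ker D$ to reduce the closed-range claim to the injective, bounded-below case). The only point worth making explicit is that the auxiliary fact "injective and bounded below implies closed range" should be proved directly (Cauchy image sequence gives Cauchy preimage sequence) rather than cited from Lemma~\ref{le:bhjjg767}, whose proof in the paper invokes the present theorem.
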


\begin{exercise}
Prove the previous theorem. Use it to show openness $\Fred\subset\Ll$.
[Hint: Concerning finite dimensionality it suffices to show that
the unit ball in $\ker D$ is compact. Use finite dimension
together with the Hahn-Banach theorem to reduce the proof
of closed range to the case in which $D$ is injective;
choose a complement $X_1$ of $\ker D$ and replace $X$ by $X_1$.
Concerning openness use again finiteness of $k=\dim\ker D$
to define an augmentation $(D,K_0):X\to Y\oplus \R^k$
of $D$ which is injective and has closed range $Z$.
Apply the open mapping theorem, cf. proof of Lemma~\ref{le:bhjjg767},
to conclude that the inverse of the bounded linear bijection
$(D,K_0):X\to Z$ is continuous.]
\end{exercise}

\begin{exercise}[Stability properties]\label{exc:Fred-stability}
a)~Show that the subset $\Fred(X,Y)\subset\Ll(X,Y)$ is open with respect to
the operator norm and the index is \textbf{\Index{locally constant}},
that is constant on each component.
Show that\footnote{
  [Hint: Use constant index and the fact that the kernel dimension
  at most decreases locally, see e.g.~\citerefFH[Le.\,3.7]{weber:2002a},
  to conclude that $\dim \coker(D+L)=\dots\le 0$ if $\norm{L}$ is small.]
  }
surjectivity of $D\in\Fred(X,Y)$ is an open property with respect to
the operator norm as well.

b)~A \textbf{\Index{compact operator}} is a (bounded) linear
operator $K:X\to Y$ which takes bounded sets to
\textbf{\Index{pre-compact set}s}, that is sets of compact closure.
Show that the sum $D+K$ of a Fredholm operator $D$
and a compact operator $K$ is Fredholm and of the same index.
\end{exercise}

\begin{lemma}\label{le:bhjjg767}
For a bounded linear operator $D:X\to Y$
are equivalent:
\begin{itemize}
\item %[\rm (i)] 
  $D$ is an injection with closed range.
\item %[\rm (ii)] 
  There is a constant $c>0$ such that
  \begin{equation}\label{eq:inj-est}
     \Norm{x}_X\le c\Norm{Dx}_Y
  \end{equation}
  for every $x\in X$.
\end{itemize}
\end{lemma}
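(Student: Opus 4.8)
The statement is an instance of the bounded inverse theorem, so the plan is to prove the two implications separately, with the open mapping theorem doing the real work in one direction and a straightforward Cauchy-sequence argument in the other.

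First I would treat the implication from the estimate~(\ref{eq:inj-est}) to injectivity with closed range, as this requires no heavy machinery. Injectivity is immediate: if $Dx=0$, then $\Norm{x}_X\le c\Norm{Dx}_Y=0$, hence $x=0$. For closedness of the range, take any sequence $(x_n)\subset X$ with $Dx_n\to y$ in $Y$. Applying~(\ref{eq:inj-est}) to $x_n-x_m$ gives $\Norm{x_n-x_m}_X\le c\Norm{Dx_n-Dx_m}_Y$, so $(x_n)$ is Cauchy in $X$; by completeness it converges to some $x\in X$, and continuity of $D$ yields $Dx_n\to Dx$, whence $y=Dx\in\im D$. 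Thus $\im D$ is closed.

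For the converse, suppose $D$ is injective with closed range and set $Z:=\im D$. Being a closed subspace of the Banach space $Y$, $Z$ is itself a Banach space, and $D\colon X\to Z$ is a continuous linear bijection between Banach spaces. The open mapping theorem then gives that $D^{-1}\colon Z\to X$ is bounded; putting $c:=\max\{1,\Norm{D^{-1}}\}$ and writing $x=D^{-1}(Dx)$ yields $\Norm{x}_X\le\Norm{D^{-1}}\,\Norm{Dx}_Y\le c\Norm{Dx}_Y$ for every $x\in X$. (Taking the maximum with $1$ is only a harmless convenience in case $X=\{0\}$.)

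I do not expect any genuine obstacle here: both directions are short once one recognizes that the single nontrivial input is the open mapping theorem applied to the corestriction $D\colon X\to\im D$, which is legitimate precisely because $\im D$ is assumed closed (hence complete). The only point worth stating carefully is that the estimate~(\ref{eq:inj-est}) is exactly the boundedness of the inverse on the range.
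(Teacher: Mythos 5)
Your proof is correct and follows the same strategy as the paper: the open mapping theorem applied to the corestriction $D:X\to\im D$ handles the ``injection with closed range $\Rightarrow$ estimate'' direction in both. The only difference is in the converse: where you give the direct Cauchy-sequence argument for closedness of the range, the paper simply cites its earlier semi-Fredholm estimate (Theorem~\ref{thm:fredholm-estimate}) with $K=0$, which yields the same conclusion. Your version is the more self-contained one; the paper's is a one-line reduction to a result it has already stated.
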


\begin{proof}
'$\Rightarrow$'
By the open mapping theorem\footnote{
  \textbf{The \Index{open mapping theorem}.} A bounded linear
  surjection between Banach spaces is open.
  A map is called\index{open map} \textbf{open}
  if it maps open sets to open sets.
  }
\emph{the inverse of a bounded linear bijection between
Banach spaces is continuous}. 
Now pick $c:=\norm{(\tilde D)^{-1}}_{\Ll(\im D,X)}$
where $\tilde D:X\to\im D$, $x\mapsto Dx$.
'$\Leftarrow$' By contradiction the inequality itself shows
that $\ker D=\{0\}$. By Theorem~\ref{thm:fredholm-estimate}
the range of $D$ is closed.
\end{proof}

\begin{definition}
A \textbf{\Index{complement}}, often called
\textbf{\Index{topological complement}},
of a closed linear subspace $X_0\subset X$ is a closed linear
subspace $Z\subset X$ such that $X_0\oplus Z=X$.\footnote{
  Here $\oplus$ denotes the \textbf{internal direct
  sum}\index{direct sum~internal} 
  of two closed subspaces which by definition
  means $X_0+Z=X$ and $X_0\cap Z=\{0\}$.
  }
If $X_0$ admits a complement it is a \textbf{\Index{complemented
subspace}}.
\end{definition}

Examples of complemented subspaces are finite dimensional
subspaces and closed subspaces of finite codimension.
In a Hilbert space every closed subspace is complemented.
We recommend the book by Brezis~\cite[II.4]{Brezis:1983a}.

\begin{definition}
A \textbf{\Index{right inverse}} of a surjective bounded linear
operator $D:X\to Y$ is a bounded linear operator $T:Y\to X$
such that $DT=\1_Y$.
\end{definition}

\begin{exercise}\label{exc:RI<=>complemented}
Given a surjective bounded linear operator $D:X\to Y$, then
$$
     \text{$D$ admits a right inverse $T$}
     \quad\Leftrightarrow\quad
     \text{$\ker D$ is complemented}.
$$
[Hint: '$\Rightarrow$'
A natural try is $Z:=\im T$. Use $DT=\1_Y$ to derive
the injectivity estimate~(\ref{eq:inj-est}) for $T$, so $\im T$ is
closed, and to conclude $\im T\cap\ker D=\{0\}$. Writing $x=x-TDx+TDx$
shows that $X=(\ker D)+Z$.
'$\Leftarrow$'
Note that the restriction $D|_Z:Z\to Y$ to the complement
is a bounded bijection.]
\end{exercise}

The previous exercise shows that any surjective Fredholm
operator admits a right inverse. This generalizes, see part ii),
as follows; for details see e.g.~\citerefFH{weber:2002a}.

\begin{exercise}\label{exc:gen-surj-Fred}
Let $D:X\to Y$ be Fredholm and $L\in\Ll(Z,Y)$. It~holds~that:
\begin{enumerate}
\item[i)] 
  The range of the bounded operator
  $$
     D\oplus L:X\oplus Z\to Y,\quad
     (x,z)\mapsto Dx+Lz,
  $$
  is closed with finite dimensional complement.
  How about $\dim\ker (D\oplus L)$? 
  Give an example in which $\ker (D\oplus L)$
  is infinite dimensional.
\item[ii)] 
  If $D\oplus L$ is surjective, then $\ker(D\oplus L)$ admits a
  complement, thus a right inverse. Moreover, the projection
  $P:\ker (D\oplus L)\to Z$, $(x,z)\mapsto z$,
  to the second component is a Fredholm operator with
  $$
     \ker P\simeq\ker D,\qquad
     \coker P\simeq\coker D,
  $$
  thus $\INDEX P=\INDEX D$.
\end{enumerate}
\end{exercise}

\begin{remark}[Non-linear Fredholm theory]
\label{rem:nonlin-Fred}
A continuously differentiable map $f:X\to Y$
%, say of class $C^\ell$ with $\ell\ge 1$,
is a \textbf{\Index{Fredholm map}}
if the linearization $df(x):X\to Y$ is a Fredholm operator
for every $x\in X$. In this 
case \index{Fredholm index!of Fredholm map} 
$\INDEX(f):=\INDEX df(x)$ is the
\textbf{Fredholm index} of the Fredholm map~$f$.
If $df(x)$ is not onto one calls $x$ a \textbf{critical point}
of\index{critical point!of Fredholm map}
the\index{Fredholm map!critical point of --}
Fredholm map $f$.
\newline
For a map $f:X\to Y$ of class $C^\ell$ with $\ell\ge 1$,
Fredholm or not, an element $y\in Y$ is called a
\textbf{\Index{regular value}} of $f$ if for each pre-image
$x\in f^{-1}(y)$ the linear operator $D:=df(x):X\to Y$ is onto 
and admits a \emph{right inverse}.\footnote{
  Any $y$ outside the image of $f$, i.e. with empty pre-image
  $f^{-1}(y)=\emptyset$, is a regular value.
  }
Then by the implicit function theorem,
see e.g.~\cite[Thm.~A.3.3]{mcduff:2004a},
the \textbf{\Index{regular level set}}\index{level set!regular --}
\begin{equation}\label{eq:RVT}
     \Mm:=f^{-1}(y)\subset X
\end{equation}
is a $C^\ell$ Banach manifold whose tangent spaces
are given by the kernels, that is
$$
     T_x\Mm=\ker df(x).
$$
This result is called the \textbf{\Index{regular value
theorem}}.\index{theorem!regular value --}
If $f$ is even a Fredholm map, then $\Mm$ is finite
dimensional and $\dim\Mm=\INDEX(f)$.
\end{remark}

\begin{exercise}
a) The Fredholm index of a Fredholm map is well defined.
b)~The set of critical points of a Fredholm map is closed.
\newline
[Hint: b) Exercise~\ref{exc:Fred-stability}\,a).]
\end{exercise}

%%%%%%%%%%%%%%%%%%%%%%%%%%%%%%%%%%
%%% SUBSECTION %%%%%%%%%%%%%%%%%%%%%
%%%%%%%%%%%%%%%%%%%%%%%%%%%%%%%%%%
\subsection{Connecting manifolds}\label{sec:mod-space}
Consider a connecting manifold
$\Mm(z^-,z^+;H,J)$ as defined by~(\ref{eq:conn-mf}).
Recall that $H\in\Hhreg$ is Morse-regular.
In this section we show that it is a smooth manifold for
generic Hamiltonian $H$ and its dimension is
the difference of the canonical Conley-Zehnder indices of $z^\mp$.

To start with denote the left hand side of Floer's
equation~(\ref{eq:FLOER}) by $\Ff_H(u)$, often denoted by $\bar\p_{H,J}$
to emphasize its Cauchy-Riemann type nature. Let
\begin{equation*}
\begin{split}
     D_u=D\Ff_H(u):W^{1,p}_u:=W^{1,p}(\R\times\SS^1,u^*TM)
   &\to
     L^p_u
\end{split}
\end{equation*}
denote linearization at a zero $u$. Here $p>2$ is a constant;
see Remark~\ref{rem:p>2}. As outlined below,
see~(\ref{eq:D_u-lin-Floer-gen}), the linearized operator is of the form
\begin{equation}\label{eq:D_u-lin-Floer}
\begin{split}
     D_u\zeta
   &=\Nabla{s}\zeta-J_t(u)\Nabla{t}\zeta
     -\Nabla{\zeta}\nabla H_t(u)-\left(\Nabla{\zeta} J\right)(u)\p_t u
     \\
   &=\left(\tfrac{D}{ds}+A_{u_s}\right)\zeta
\end{split}
\end{equation}
for every smooth compactly supported vector field $\zeta$ along $u$.
Here $A_{u_s}$ denotes the covariant Hessian operator~(\ref{eq:Hess-A})
of $\Aa_H$ based at a loop $u_s:=u(s,\cdot)$.
Actually formula~(\ref{eq:D_u-lin-Floer}) not only makes sense for
$p>2$, let us allow $p>1$.
The \Index{Sobolev spaces} $L^p_u$ and $W^{1,p}_u$ are the closures of
the\index{$W^{1,p}_u$ Sobolev space of vector fields along $u$}
vector space of smooth compactly supported vector fields $\zeta$
along $u$ with respect to the Sobolev
norms\index{$\norm{\cdot}_{1,p}$ Sobolev $W^{1,p}$ norm}
$$
     \Norm{\zeta}_p
     :=\left(\int_{-\infty}^\infty\int_0^1\Abs{\zeta}^p\right)^{\frac{1}{p}}
     ,\quad
     \Norm{\zeta}_{1,p}
     :=\left(\int_{-\infty}^\infty\int_0^1\Abs{\zeta}^p
     +\Abs{\Nabla{s}\zeta}^p+\Abs{\Nabla{t}\zeta}^p\right)^{\frac{1}{p}}.
$$

\begin{definition}
Abbreviate $\Hh:=C^\infty(\SS^1\times M)$. The elements of the
set\index{$\Hhreg(J)$ MS-regular Hamiltonians}
$$
     \Hhreg(J):=\{H\in\Hh\mid
     \text{$D\Ff_H(u)$ onto
     $\forall u\in\Mm(z^-,z^+;H,J)$
     $\forall z^\mp\in\Pp_0(H)$}\}
$$
are\index{Hamiltonian!M(orse)-S(male) regular --}
called \textbf{\Index{MS-regular Hamiltonian}s}
or \textbf{\Index{Morse-Smale Hamiltonian}s}.
\end{definition}

\begin{exercise}[Constant connecting trajectories, MS-regular $\Rightarrow$ M-regular]
Suppose $H\in\Hhreg(J)$. Show that every
Hamiltonian loop $z\in\Pp_0(H)$ is non-degenerate,
hence $\Aa_H$ is Morse. This shows that $\Hhreg(J)\subset\Hhreg$.
\newline
[Hint: Consider the constant trajectory $u_s\equiv z$.]
\end{exercise}

\begin{exercise}[Morse-Smale condition]
As indicated in Figure~\ref{fig:fig-Floers-interpol-eq},
if $u,H,J$ are independent of $t$, then the Floer equation
recovers the gradient flow of $\nabla H$ on the closed Riemannian
manifold $(M,g_J)$.
Show that in this case the elements of $\Hhreg(J)$ are precisely
those Morse functions on $M$ which satisfy the
\textbf{\Index{Morse-Smale condition}},
i.e. all stable and unstable manifolds intersect transversely.
\end{exercise}

The significance of MS-regular Hamiltonians $H\in\Hhreg(J)$
lies in the fact that their connecting manifolds
are smooth manifolds, even of finite dimension,
as a consequence of the regular value theorem;
cf. Remark~\ref{rem:nonlin-Fred}.
However, to apply that theorem, two assumptions need to be verified:
Firstly, for zero to be a regular value, the kernel of $D_u$
needs to be complemented; cf.
Exercise~\ref{exc:RI<=>complemented}.
Secondly, to get to finite dimension of $\Mm(z^-,z^+;H,J)$
the kernel has to be finite dimensional.
Fredholm operators satisfy both criteria.

Thus, modulo proving that $D_u$ is a Fredholm operator
and calculating its Fredholm index, the regular value theorem
provides part~(ii) of 

\begin{theorem}[Connecting manifolds]\label{thm:A-MS}
Given a closed symplectic manifold $(M,\omega)$
and a family of $\omega$-compatible almost complex
structures $J_t=J_{t+1}$, then
\begin{itemize}
\item[\rm (i)]
  the set $\Hhreg(J)$ of regular Hamiltonians is a residual
  of\, $\Hh:=C^\infty(\SS^1\times M)$;
\item[\rm (ii)]
  for any $H\in\Hhreg(J)$ any %connecting manifold
  $\Mm(z^-,z^+):=\Mm(z^-,z^+;H,J)$ is a smooth
  %finite dimensional
  manifold. The dimension of the
  component of $u$ is given by\,\footnote{
  Do not miss our standing assumptions $\Io_{c_1}=0=\Io_\omega$.
  }
  $$
     \dim \Mm(z^-,z^+)_u=\INDEX\, D\Ff_H(u)
     =\CZcan(z^-)-\CZcan(z^+)
  $$ 
  where $\CZcan$ is the canonical Conley-Zehnder index normalized
  by~(\ref{eq:CZ-normalization-canonical-88}).
\end{itemize}
\end{theorem}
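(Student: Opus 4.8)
The plan is to prove both assertions by the standard Floer--Smale machinery: first show the linearized operator $D_u=D\Ff_H(u)$ is Fredholm and compute its index, then run a Sard--Smale argument with the Hamiltonian as parameter to obtain surjectivity for generic $H$, and finally feed this into the regular value theorem of Remark~\ref{rem:nonlin-Fred}. Throughout I use the standing hypotheses $\Io_\omega=0=\Io_{c_1}$ and that $H\in\Hhreg$ is already Morse-regular, so $\Pp_0(H)$ is finite by Proposition~\ref{prop:fin-crit-pts}.

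\emph{Fredholm property and index.} Fix $u\in\Mm(z^-,z^+;H,J)$. Since the cylinder is contractible and $c_1(M)$ vanishes on $\pi_2(M)$, the bundle $u^*TM\to\R\times\SS^1$ admits a unitary trivialization (Proposition~\ref{prop:unitary-triv} applied on exhausting pieces, glued along the ends); conjugating by it turns $D_u$ into an operator $W^{1,p}(\R\times\SS^1,\R^{2n})\to L^p(\R\times\SS^1,\R^{2n})$ of the form $\zeta\mapsto\p_s\zeta-\Jbar_0\p_t\zeta-S(s,t)\zeta$ with $S(s,\cdot)$ a smooth family of symmetric matrices, cf.~(\ref{eq:D_u-lin-Floer}), (\ref{eq:Hess-A-triv}), (\ref{eq:S-triv}). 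By Theorem~\ref{thm:finite-ENERGY=LIMITS-exist} (exponential decay of $\p_s u$), $S(s,\cdot)$ converges exponentially as $s\to\pm\infty$ to the symmetric loops $S^\pm$ attached to $z^\pm$; non-degeneracy of $z^\mp$ says precisely that the asymptotic operators $A^\pm=-J_0\frac{d}{dt}-S^\pm$ on $L^2(\SS^1,\R^{2n})$ are invertible, by~(\ref{eq:ker_A=Eig_1}) and Exercise~\ref{exc:non-deg-Crit-pp}. This is the setting of the semi-Fredholm estimate, Theorem~\ref{thm:fredholm-estimate}: an interior elliptic estimate together with invertibility of $A^\pm$ near the ends gives $\norm{\zeta}_{1,p}\le c(\norm{D_u\zeta}_p+\norm{\zeta}_{L^p(K)})$ for a compact $K\subset\R\times\SS^1$, and $W^{1,p}(K)\hookrightarrow L^p(K)$ is compact; applying the same to the formal adjoint shows $D_u$ is Fredholm. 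Its index is the spectral flow of $s\mapsto A_{u_s}$ from $A^-$ to $A^+$, which by the Robbin--Salamon computation equals $\CZcan(z^-)-\CZcan(z^+)$ in the canonical normalization~(\ref{eq:CZ-normalization-canonical-88}); this yields the displayed dimension formula in (ii) once surjectivity is established.

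\emph{Transversality.} Let $\Bb^{1,p}=\Bb^{1,p}(z^-,z^+)$ be the Banach manifold of $W^{1,p}_{\rm loc}$ cylinders converging exponentially to $z^\mp$ (modeled on $W^{1,p}$-perturbations, in exponentially weighted norms, of a fixed smooth reference cylinder), and consider the section $\Ff\colon\Bb^{1,p}\times\Hh\to\Ee$, $(u,H)\mapsto\bar\p_{H,J}(u)$, of the Banach bundle with fiber $L^p_u$. Its vertical differential at a zero is $(\zeta,h)\mapsto D_u\zeta-\nabla h_t(u)$. The crux is that this is surjective with complemented kernel. Since $D_u$ already has closed range of finite codimension, it suffices to rule out a nonzero $\eta\in L^q_u$, $1/p+1/q=1$, annihilating both $\im D_u$ and every section $u\mapsto\nabla h_t(u)$: such $\eta\in\ker D_u^*$ is smooth by elliptic regularity and solves a Cauchy--Riemann type equation, so by Aronszajn--Carleman unique continuation it is nonvanishing on an open dense subset of $\R\times\SS^1$ unless $\eta\equiv0$. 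Any non-constant connecting cylinder $u$ has a point $(s_0,t_0)$ with $\p_su(s_0,t_0)\neq0$ whose image avoids the finitely many orbits in $\Pp_0(H)$ — indeed, if $\p_su$ vanished on an open set, unique continuation would force $u$ to be a constant trajectory, excluded here; pairing $\eta$ against variations $h$ supported near such a point, and letting the point range over the open dense set of such points, gives $\eta\equiv0$. Hence the universal moduli space $\Mm^*=\Ff^{-1}(0)$ is a $C^\infty$ Banach manifold, and $\pi\colon\Mm^*\to\Hh$ is a Fredholm map of index $\INDEX D_u$. By the Sard--Smale theorem its regular values form a residual subset of $\Hh$; for such $H$ every $D_u$ is onto, and the regular value theorem (Remark~\ref{rem:nonlin-Fred}) makes $\Mm(z^-,z^+;H,J)$ a smooth manifold of dimension $\INDEX D_u=\CZcan(z^-)-\CZcan(z^+)$. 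Intersecting over the countably many homotopy classes of pairs $(z^-,z^+)$, and applying Taubes's nested-intersection argument over $p\in\N$ together with elliptic regularity to produce a single residual set of \emph{smooth} Hamiltonians independent of $p$, gives the residual set $\Hhreg(J)$ of (i); the exercise following the definition of $\Hhreg(J)$ then records $\Hhreg(J)\subset\Hhreg$.

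\emph{Main obstacle.} The genuinely delicate step is surjectivity of the parametric operator — that varying $H$ alone kills every cokernel direction. This rests on unique continuation for the adjoint Cauchy--Riemann operator and on the ``somewhere-injectivity surrogate'' that each connecting cylinder has a point where $\p_su\neq0$ and whose image is disjoint from $\Pp_0(H)$; both are robust here because a Floer cylinder with $\p_su\equiv0$ on an open set is forced to be one of the excluded constant trajectories $z^\mp$. The remaining bookkeeping (the $p$-independence of $\Hhreg(J)$ via elliptic bootstrapping, and the countable intersection over asymptotic data) is routine given Theorems~\ref{thm:fredholm-estimate} and~\ref{thm:finite-ENERGY=LIMITS-exist} and the Fredholm toolbox of Section~\ref{sec:Fredholm}.
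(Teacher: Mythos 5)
Your approach matches the paper's: Fredholm property via the semi-Fredholm estimate at the ends plus compactness of a restriction operator, index via spectral flow of the Hessian family equalling the $\CZcan$-difference, and transversality via the Thom--Smale machinery feeding into Sard--Smale. The paper defers part~(i) entirely to Floer--Hofer--Salamon and sketches only the Fredholm analysis of part~(ii); you fill in the transversality argument in the FHS style, which is the right route.

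Two corrections on the details you supply beyond the paper. First, a sign: the unitary trivialization of Proposition~\ref{prop:unitary-triv} carries $J$ to $J_0$, so the trivialized operator in~(\ref{eq:triv-D_u-lin-Floer}) reads $\p_s-J_0\p_t-S$, not $\p_s-\Jbar_0\p_t-S$. Second, and more substantively, your description of the ``regular points'' used to kill the cokernel is incomplete. The variation $h=h_t$ is time-dependent and acts on the linearized Floer equation at \emph{every} $(s,t_0)$ with $u(s,t_0)$ in the support of $h_{t_0}$, not only at the chosen point $(s_0,t_0)$. If $u(s,t_0)=u(s_0,t_0)$ for some $s\ne s_0$, the pairing $\int\inner{\eta}{\nabla h_t(u)}$ picks up contributions from both preimages, and nothing in your argument forbids these from cancelling. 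One therefore needs, in addition to $\p_su(s_0,t_0)\ne0$ and disjointness of $u(s_0,t_0)$ from the finitely many periodic orbit images, that $s\mapsto u(s,t_0)$ be injective at $s_0$. The regular-point lemma in Floer--Hofer--Salamon is precisely the statement that the set of $(s_0,t_0)$ satisfying all three conditions is open and dense; ``$\p_su\ne0$ on an open dense set'' alone is a strictly weaker statement and does not suffice. With that adjustment the rest of your argument is sound.
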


For the proof of part~(i) we refer to~\citerefFH[\S5]{Floer:1995a}.
It utilizes a tool called Thom-Smale transversality
theory, see Section~\ref{sec:Thom-Smale-transversality} below.
It is crucial that the linearized operator $D\Ff_H(u)$
is already Fredholm to start with, but this holds true
precisely for Morse-regular Hamiltonians. This explains
one reason for our standing assumption $H\in\Hhreg$.
Concerning part~(ii) we shall sketch below the proof of the Fredholm
property of $D_u=D\Ff_H(u)$ and the calculation of its Fredholm index denoted by
$\INDEX\, D_u$.

\begin{remark}[Critical points unaffected by MS-perturbation]
\label{rem:M->MS}
By~\citerefFH[Thm.\,5.1\,(ii)]{Floer:1995a} one can $C^\infty$
approximate a given Morse-regular $H$ by MS-regular Hamiltonians $H^\nu$
which $C^2$ agree with $H$ along its (finitely many) $1$-periodic orbits.
Thus $\Crit\Aa_H\subset\Crit_{H^\nu}$. 
In~\citerefFH[Rmk.\,5.2\,(ii)]{Floer:1995a} it is pointed out that it
is\index{{open \color{red} problems \color{black}}}
an \emph{open problem} whether it is sufficient to perturb
$H$ outside some open neighborhood $U\subset M$ of the images
of the $1$-periodic orbits of $H$. This would guarantee equality
$\Crit\Aa_H=\Crit\Aa_{H^\nu}$ for large $\nu$.
However, the authors point out that it is possible to perturb
the family $J$ outside such neighborhood $U$ to achieve
MS-regularity for the given M-regular Hamiltonian $H$ itself,
just with respect to a perturbed $1$-periodic family of
$\omega$-compatible structures.
\end{remark}

%%% SUBSUBSECTION %%%%%%%%%%%%%%%%%%%
%%%%%%%%%%%%%%%%%%%%%%%%%%%%%%%%%%
\subsubsection*{Linearization at general cylinders $\mbf{u}$}

When it comes to gluing, in Section~\ref{sec:FH-gluing},
it is necessary to linearize $\Ff=\Ff_H$
given by~(\ref{eq:FLOER})
not only at zeroes of $\Ff_H$, but at more general smooth cylinders
$u:\R\times\SS^1\to M$.
The procedure is completely analogous
to the definition of the covariant Hessian~(\ref{eq:Hess-A}),
just replace the map in~(\ref{eq:F_z-dA}) by the map
\begin{equation}\label{eq:D_u-lin-Floer-gen}
     f_u:L^p_u\supset W^{1,p}_u\to L^p_u,\quad
     \zeta\mapsto\Tt_u(\zeta)^{-1}\Ff_H(\exp_u \zeta),
\end{equation}
given any cylinder $u\in \Bb^{1,p}(z^-,z^+)$.
Here $\Bb=\Bb^{1,p}(z^-,z^+)$ denotes the Banach manifold
which, roughly speaking, consists of all
continuous cylinders $u:\R\times\SS^1\to M$
which are locally of Sobolev class\footnote{
  Here it is crucial to choose $p>2$, because in this case
  $W^{1,p}$ implies continuity, so one can work with local coordinate
  charts on $M$ to analyze $u$.
  }
$W^{1,p}$ and converge asymptotically in a suitable way
to the given periodic orbits $z^\mp$. Convergence
of the elements $u$ of $\Bb$
must be such that the tangent space $T_u \Bb$
coincides with the space $W^{1,p}_u:=W^{1,p}(\R\times\SS^1,u^*TM)$
of $W^{1,p}$ vector fields along $u$;
see~\citerefFH[Thm.~5.1]{Floer:1995a} for details.
Note that $\Ff_H(u)\in L^p_u$ is a vector field
along $u$ of class $L^p$. All these spaces fit together
in the form of a Banach space bundle $\Ee\to\Bb$
whose fiber over $u\in\Bb$ is $\Ee_u=L^p_u$;
see Figure~\ref{fig:fig-L2-Hilbert-bdle} for a similar case.
Now $\Ff_H$ is a section of $\Ee$ whose
regularity depends on the regularity of the function $H$;
which is smooth here.
To show that 
\begin{equation}\label{eq:linearization}
     D\Ff_H(u)\zeta:=d f_u(0)\zeta
     =\left.\frac{d}{d\tau}\right|_0 f_u(\tau\zeta)
\end{equation}
is equal to the operator $D_u$ displayed in~(\ref{eq:D_u-lin-Floer})
one can either utilize local coordinates on $M$ or work with global
notions.\footnote{
  We use the notation $D\Ff_H(u)\zeta$ of a \Index{Fr\'{e}chet derivative},
  but\index{derivative!Fr\'{e}chet --}\index{derivative!G\^{a}teaux --}
  define it by \Index{G\^{a}teaux derivative}
  $\tfrac{D}{d\tau}\mid_{\tau=0}\Ff_H(\exp_u\tau\zeta)$.
  See e.g.~\cite[\S\,3.1]{Lebedev:2003a} for these notions and when they coincide.
  }
Details of both possibilities
can be found in~\citesymptop[App.\,A]{weber:1999a}
in the slightly different context of Section~\ref{sec:FH-T^*M}.

\begin{remark}[The condition $p>2$]\label{rem:p>2}
Roughly speaking, maps on a ${\color{blue} 2}$-dimensional domain of
Sobolev class $W^{k,p}$ are continuous, so one can localize,
and well behaved with respect to relevant compositions and products
whenever $kp>{\color{blue} 2}$; for details see in~\cite{mcduff:2004a}
the paragraphs prior to Prop.~3.1.9 and App.\,B,
see also the Blog
\href{https://symplecticfieldtheorist.wordpress.com/2015/05/08/lp-or-not-lp-that-is-the-question/}{$L^p$ or not $L^p$, that is the question}.
\newline
For instance, the non-linear Fredholm theory,
see Remark~\ref{rem:nonlin-Fred},
requires to equip the domain $\Bb$
of the section $\Ff=\Ff_H$ in~(\ref{eq:FLOER})
with the structure of a differentiable Banach manifold.
Choosing $\Bb=\Bb^{1,p}(z^-,z^+)$ with $p>{\color{blue} 2}$,
as we did in the previous remark, does the job
(and is the usually selected option), but
leads into the realm of $L^p$ estimates
which are much harder to obtain than $L^2$ estimates.
Another option would be to choose $k={\color{cyan} 2}$
and $p>\frac{{\color{blue} 2}}{{\color{cyan} 2}}$,
of course $p=2$, and view $\Ff_H$ as a section of the Hilbert
space bundle $\Ee^{1,2}\to\Bb^{2,2}(z^-,z^+)$.
Unfortunately, this choice brings in higher derivatives.
\end{remark}

%%% SUBSECTION %%%%%%%%%%%%%%%%%%%%%
%%%%%%%%%%%%%%%%%%%%%%%%%%%%%%%%%%
\subsection*{The Fredholm operator $\mbf{D_u}$}
Consider two (non-degenerate) critical points
$z^\mp\in\Crit\Aa_H$ and suppose that
$u\in\Mm(z^-,z^+;H,J)$ is a connecting trajectory.

\begin{theorem}[Fredholm operator]\label{thm:D_u-Fredholm}
By non-degeneracy of $z^\mp$
the linear operator $D_u:W^{1,p}_u\to L^p_u$
given by~(\ref{eq:D_u-lin-Floer}) is Fredholm
for $1<p<\infty$ and
\begin{equation}\label{eq:D_u-F_index-CZ}
     \INDEX D_u=\CZcan(z^-)-\CZcan(z^+).
\end{equation}
\end{theorem}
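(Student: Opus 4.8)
The plan is to reduce the Fredholm statement for $D_u$ to a model problem on the trivial bundle and then invoke the spectral-flow description of the index. First I would choose a unitary trivialization of $u^*TM\to\R\times\SS^1$ which is asymptotic, as $s\to\mp\infty$, to fixed unitary trivializations $\Phi_{z^\mp}$ of $(z^\mp)^*TM\to\SS^1$ extending over the spanning disks; this is possible because $\R\times\SS^1$ is homotopy equivalent to a circle and a Hermitian bundle over it is unitarily trivial (Proposition~\ref{prop:unitary-triv} for the relevant pieces, plus the fact that $\U(n)$ is connected to match up the asymptotic ends). Under such a trivialization, and using~(\ref{eq:D_u-lin-Floer}) together with the trivialized form~(\ref{eq:Hess-A-triv}) of the Hessian, the operator $D_u$ becomes an operator of Cauchy-Riemann type
\begin{equation*}
     D=\tfrac{d}{ds}+A(s)=\p_s-\Jbar_0\p_t-S(s,t)-C(s,t):W^{1,p}(\R\times\SS^1,\R^{2n})\to L^p(\R\times\SS^1,\R^{2n}),
\end{equation*}
where $A(s)=-J_0\tfrac{d}{dt}-S(s,t)-C(s,t)$ is a path of self-adjoint operators on $L^2(\SS^1,\R^{2n})$, the term $C(s,t)$ decays to zero as $s\to\mp\infty$ (by the exponential decay of $\p_su_s$ from Theorem~\ref{thm:finite-ENERGY=LIMITS-exist}), and $S(s,t)\to S^\mp(t)$ where $S^\mp$ are the symmetric families associated to $z^\mp$ via~(\ref{eq:S-triv}). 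The limit operators $A^\mp=-J_0\tfrac{d}{dt}-S^\mp(t)$ are exactly the operators $L_{S^\mp}$ from Section~\ref{sec:CZ}, and non-degeneracy of $z^\mp$ means precisely that $0\notin\spec(A^\mp)$, i.e. $A^\mp$ is invertible.

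Next I would establish the Fredholm property. The standard route: since $A^\mp$ are invertible self-adjoint operators with compact resolvent, one proves a semi-Fredholm estimate
\begin{equation*}
     \Norm{\zeta}_{1,p}\le c\bigl(\Norm{D\zeta}_p+\Norm{\zeta}_{L^p(K)}\bigr)
\end{equation*}
for a compact set $K\subset\R\times\SS^1$ (cut off near $\pm\infty$ using the invertibility of $A^\mp$ to control $\zeta$ on the ends, and use interior elliptic estimates for the Cauchy-Riemann operator in the middle). By Theorem~\ref{thm:fredholm-estimate} this gives closed range and finite-dimensional kernel; applying the same estimate to the formal adjoint $D^*$ (which has the same structure, with $A(s)$ replaced by $-A(-s)$ up to sign conventions, still with invertible limits) gives finite-dimensional cokernel. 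Hence $D_u$ is Fredholm, and by the stability results of Exercise~\ref{exc:Fred-stability} the lower-order decaying term $C(s,t)$ — which induces a compact perturbation after composing with the compact Sobolev embedding on compact subsets, handled by a standard cutoff/gluing argument — does not change the index; so it suffices to compute the index for the operator with $C\equiv 0$, and even (by a homotopy of paths $A(s)$ through operators with fixed invertible endpoints) only the endpoints $A^\mp$ matter.

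The final step is to identify the index with the difference of Conley-Zehnder indices. I would invoke the spectral-flow theorem for such operators: $\INDEX D_u$ equals the spectral flow of the path $s\mapsto A(s)$ from $A^-$ to $A^+$, i.e. the net number of eigenvalues crossing zero. Then, using the winding-number/eigenvalue description of $\CZ$ from Section~\ref{sec:CZ} (the analytic description via $L_S$, $\wind(\lambda)$, and the identity $2\alpha(S)+p(S)=\CZ$), one shows spectral flow $=\mu^\prime(\Psi_{z^-})-\mu^\prime(\Psi_{z^+})=\CZ(z^-)-\CZ(z^+)$; passing to the canonical normalization via $\CZcan=-\CZ$ (cf.~(\ref{eq:def-CZ-orbit})) flips both signs and leaves the difference unchanged, yielding $\INDEX D_u=\CZcan(z^-)-\CZcan(z^+)$. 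Independence of all trivialization choices follows because any two asymptotic trivializations differ by a loop in $\Sp(2n)$ whose Maslov index contributes equally (with opposite sign) at the two ends, cancelling — here the standing assumption $\Io_{c_1}=0$ guarantees that the absolute indices $\CZcan(z^\mp)$ are themselves well defined. The main obstacle is the spectral-flow computation: relating the analytically defined spectral flow of the operator family to the topologically defined Conley-Zehnder indices of the endpoints requires the careful bookkeeping of eigenvalue winding numbers, and this is where I would lean most heavily on the detailed treatment in~\citerefFH{salamon:1999a}.
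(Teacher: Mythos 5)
Your outline agrees with the paper on the first two stages: unitarily trivialize $u^*TM$ over the cylinder with prescribed asymptotic trivializations, obtain the model operator $D=\p_s - J_0\p_t - S - C$ with decaying non-symmetric correction $C$, establish the semi-Fredholm estimate of Theorem~\ref{thm:fredholm-estimate} for $D$ and for the formal adjoint $\Hdual{D}=-\p_s - J_0\p_t - S$, and dispose of $C$ as a compact perturbation. (The paper compactifies the cylinder and applies Proposition~\ref{prop:unitary-triv} directly; your asymptotic-matching argument via connectedness of $\U(n)$ is an acceptable substitute. Also note that it is $A(-s)$, not $-A(-s)$, that controls the adjoint after the reflection $s\mapsto -s$, but the asymptotic operators are still invertible so your conclusion stands.)

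The third stage is where a genuine gap appears. You propose to identify $\INDEX D_u$ with the spectral flow of $s\mapsto A(s)$ and then to compute that spectral flow using the winding-number/eigenvalue description from Section~\ref{sec:CZ}, namely the identity $\mu'(\Psi)=2\alpha(S)+p(S)$ relating $\CZ$ to the winding numbers $\wind(\lambda)$ of the eigenvalues of $L_S$. That description is developed in the paper \emph{only in the case $n=1$} (the whole subsection is titled ``Winding number descriptions of $\CZ$ in the case $n=1$''); it does not directly give you the identification for general $n$, which is what the theorem requires. The paper avoids this obstruction by invoking the Robbin--Salamon index and its additivity under concatenation: one travels once around the boundary of the rectangle $[0,1]\times[-T,T]$ of symplectic matrices $\Psi_s(t)$, obtains the balance equation~(\ref{eq:mu-RS-CZ}), and reads off $\SF(A)=\RS(s\mapsto\Psi_s(1))=\CZcan(\Psi_{z^-})-\CZcan(\Psi_{z^+})$, valid in all dimensions. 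You would need either that homotopy-additivity argument or some other dimension-independent identification of $\SF$ with the endpoint indices; the $n=1$ winding picture does not supply it.

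There is also an algebraic inconsistency in your last step. You claim $\SF=\mu'(\Psi_{z^-})-\mu'(\Psi_{z^+})=\CZ(z^-)-\CZ(z^+)$ and then assert that passing to $\CZcan=-\CZ$ ``flips both signs and leaves the difference unchanged.'' It does not: if $a\mapsto -a$ and $b\mapsto -b$, then $a-b\mapsto -(a-b)$. The correct identity, which one can check already in the toy model of a $C^2$-small autonomous Morse function $h$ using~(\ref{eq:mu_H=ind_-H}) and the fact that Floer trajectories degenerate to upward Morse trajectories, is $\SF(A)=\CZcan(z^-)-\CZcan(z^+)=\CZ(z^+)-\CZ(z^-)$. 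So your proposed spectral-flow identity has the wrong sign and your translation to canonical normalization has a compensating wrong sign; the final formula comes out right by accident, but neither individual step is correct as written.
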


To prove the theorem it is convenient
to represent $D_u$ by an operator $D$
acting on vector fields that take values in $\R^{2n}$.

%%% SUBSUBSECTION %%%%%%%%%%%%%%%%%%%
%%%%%%%%%%%%%%%%%%%%%%%%%%%%%%%%%%
\subsubsection{Trivialization} 
The connecting trajectory $u\in\Mm(z^-,z^+;H,J)$ extends continuously
from the open cylinder $Z=\R\times\SS^1$ to its compactification
$\widebar Z=(\R\cup\{\mp\infty\})\times\SS^1$, because there are the two
periodic orbits $z^\mp$ sitting at the ends by the asymptotic $C^0$ limit
condition~(\ref{eq:LIMITS}).
Thus we have in fact a hermitian vector bundle $\widebar{u}^*TM$
over the compact cylinder-with-boundary $\widebar Z$. Pick a canonical
unitary trivialization $\Phi(s,t):\R^{2n}\to T_{\widebar u(s,t)}M$
according to Proposition~\ref{prop:unitary-triv}.
In fact, for ease of notation, let us right away
agree to omit any 'bars' from now on completely.
Proceed as in~(\ref{eq:Hess-A-triv})
to represent $D_u:W^{1,p}_u\to L^p_u$ by the operator
\[
     D:W^{1,p}:=W^{1,p}(\R\times\SS^1,\R^{2n})
     \to L^{p}:=L^{p}(\R\times\SS^1,\R^{2n})
\]
given by
\begin{equation}\label{eq:triv-D_u-lin-Floer}
     D=\tfrac{d}{ds}+A(s)=\p_s-J_0\p_t-S(s,t)+\text{\st{$C(s,t)$}}.
\end{equation}
Here the family of symmetric matrizes
$S(s,t)=S(s,t)^T\in\R^{2n\times 2n}$ is given by
replacing in~(\ref{eq:S-triv}) the loop $z$ by the family
of loops $s\mapsto u_s$ and $\Phi(t)$ by $\Phi(s,t)$.
The only new element is the matrix family
$C(s,t)=\Phi(s,t)^{-1}\left(\Nabla{s}\Phi(s,t)\right)$
which converges to zero, as $s\mp\infty$, uniformly in $t$,
because $\Nabla{s}$ actually stands for $\Nabla{\p_s u}$
and $\p_su$ vanishes asymptotically
by~\texttt{(asymptotic limits)}.\footnote{
  In fact, for any cylinder $u\in\Bb^{1,p}(z^-,z^+)$
  asymptotic convergence $\p_su\to 0$ holds.
  }
The bad news is that in general $C$ is not symmetric,
thereby destroying self-adjointness of $A(s)$.
The good news is that $C$ only amounts to a compact
perturbation of $D$ (cf.\citerefFH[Le.~3.18]{robbin:1995a}),
so Fredholm property and index, if any, will not change by
Exercise~\ref{exc:Fred-stability} if we simply ignore $C$
(while investigating \emph{these two} properties).

\begin{figure}%[h]
\hfill
\begin{minipage}[b]{.32\linewidth}                % {.335\linewidth}
  \centering
  \includegraphics%[width=0.85\textwidth]
                             [height=3.3cm]                 % [height=3.2cm]
                             {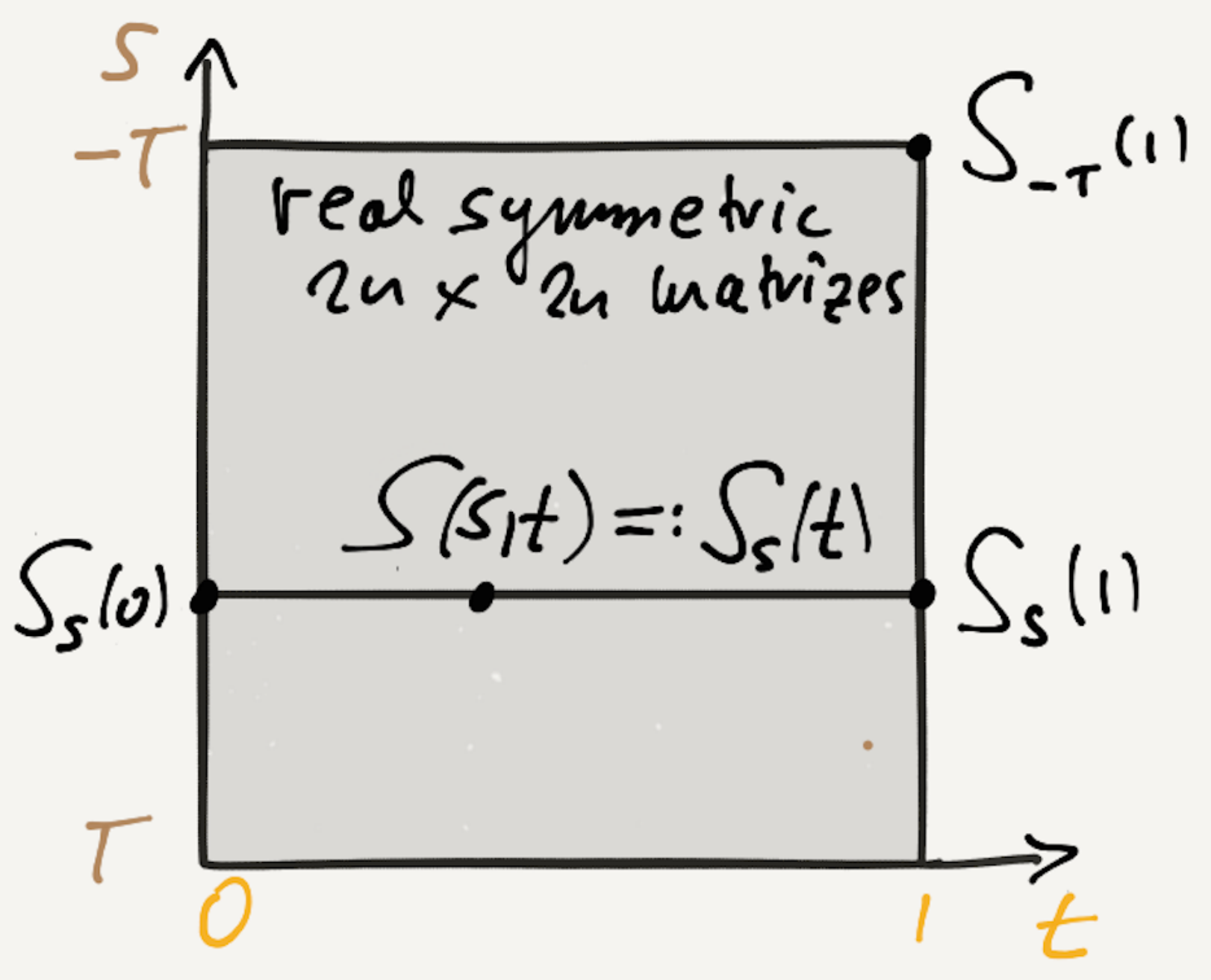}
  %\caption{Symmetric}
  %\label{fig:fig-S}
\end{minipage}
\hfill
\begin{minipage}[b]{.32\linewidth}                % {.325\linewidth}
  \centering
  \includegraphics%[width=0.95\textwidth]
                              [height=3.3cm]                % [height=3.2cm]
                             {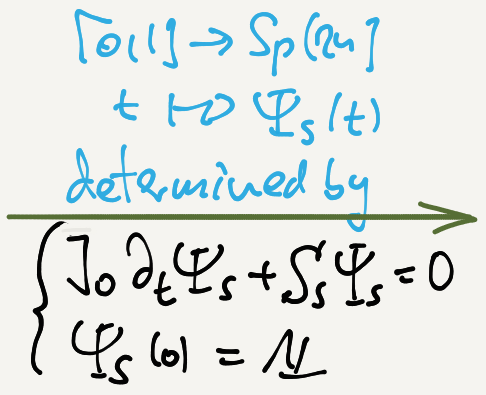}
  %\caption{$\longrightarrow$}
  %%%%%%%%%\label{fig:fig-SmSp}
\end{minipage}
\hfill
\begin{minipage}[b]{.32\linewidth}                % {.32\linewidth}
  \centering
  \includegraphics%[width=0.85\textwidth]
                             [height=3.3cm]                % [height=3.2cm]
                             {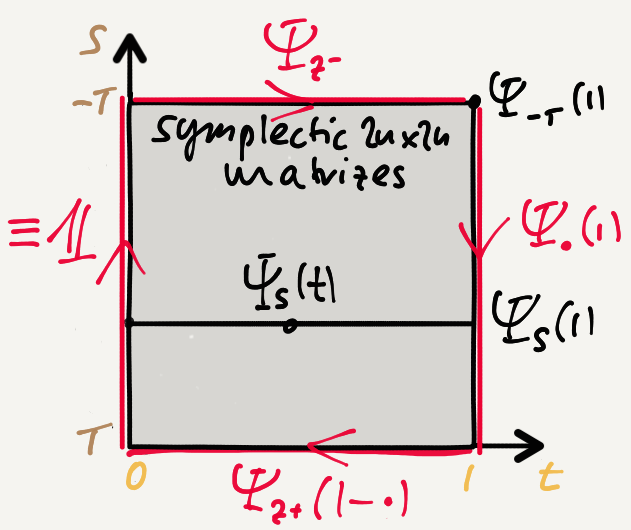}
  %\caption{symplectic}
  %\label{fig:fig-Sp}
\end{minipage}
\hfill
\vspace{-.4cm}
\caption{Rectangles worth of matrices: Symmetric $\longrightarrow$
  symplectic}
\label{fig:fig-SmSp}
\end{figure}

From now on we set $C=0$.
For fixed $s\in\R$ the path of symmetric matrizes
$S_s:[0,1]\to\R^{2n\times 2n}$, $t\mapsto S(s,t)$,
determines a symplectic path $t\mapsto \Psi_s(t)$ by
$$
     \dot\Psi_s
     =J_0 S_s\Psi_s
     =\Jbar_0 (-S_s)\Psi_s
     ,\qquad
     \Psi_s(0)=\1,
$$
see Figure~\ref{fig:fig-SmSp} and Exercise~\ref{exc:symm-symp}. The
asymptotic limit matrizes
$
     S^\mp(t):=\lim_{s\to\mp\infty} S(s,t)
$
are given by~(\ref{eq:S-triv}) with $z=z^\mp$
and their corresponding symplectic paths $\Psi^\mp$ lie
in $\SsPp^*(2n)$: Indeed the paths $\Psi^\mp$ coincide with
the paths $\Psi_{z^\mp,v^\mp}$ in~(\ref{eq:CZ-well-def-Ham}),
because they satisfy the same ODE and initial condition,
but the latter lie in $\SsPp^*(2n)$ by non-degeneracy of $z^\pm$;
cf. Exercises~\ref{exc:CZ-well-def-Ham}
and~\ref{exc:non-deg-Crit-pp}.
Thus $\Psi^\mp$ do admit Conley-Zehnder indices
for which we shall choose the canonical
normalization~(\ref{eq:CZ-normalization-canonical-88}), of course.
By the uniform limit condition~(\ref{eq:LIMITS})
these indices are actually already shared
by the paths $\Psi_{\mp T}\in\SsPp^*(2n)$ whenever
$T>0$ is sufficiently large. For simplicity we set
\begin{equation}\label{eq:pm-infty-Psi}
     \Psi_{z^\mp}:=\Psi_{\mp T}.
\end{equation}

%%% SUBSUBSECTION %%%%%%%%%%%%%%%%%%%
%%%%%%%%%%%%%%%%%%%%%%%%%%%%%%%%%%
\subsubsection{Fredholm property}
We aim to show that the bounded linear operator
\[
     D=\p_s-J_0\p_t-S:W^{1,p}\to L^p
\]
in~(\ref{eq:triv-D_u-lin-Floer})
is Fredholm whenever $1<p<\infty$. To ease notation we shall omit
domain $\R\times\SS^1$ and target space $\R^{2n}$ in our notation, in
general. Pick $p\in(1,\infty)$.

The first step is to establish for $D$ the semi-Fredholm
estimate~(\ref{eq:semi-Fredholm}) with some suitable compact
operator, say the operator
\[
     K:W^{1,p}\to W^{1,p}([-T,T]\times\SS^1,\R^{2n})
     \hookrightarrow L^p([-T,T]\times\SS^1,\R^{2n})=:L^p_T
\]
given by composing restriction (continuous) with inclusion (compact for
large $T$ by~\citerefFH[Le.\,3.8]{robbin:1995a}).
Details are non-trivial, see~\citerefFH[\S\,2.3]{salamon:1999a}.
One gets that
\[
     \Norm{\xi}_{W^{1,p}}\le c\left(\Norm{D\xi}_{L^p}+\Norm{\xi}_{L^p_T}\right)
\]
for all $\xi\in W^{1,p}$. The constant $c=c(p)$
depends on the choice of $p\in(1,\infty)$.\footnote{
  Here we do \emph{linear} Fredholm theory.
  The need for $p>2$ only arises later when we
  use \emph{non-linear} Fredholm theory,
  cf. Remark~\ref{rem:p>2},  to show the manifold
  property of the spaces of connecting flow lines.
  The non-linear application
  is based on the present linear findings.
  }
Thus $D:W^{1,p}\to L^p$ has finite dimensional kernel
and closed range by Theorem~\ref{thm:fredholm-estimate}.
Now consider the \textbf{\Index{formal adjoint operator}}
\[
     \Hdual{D}=-\p_s-J_0\p_t-S: W^{1,q}\to L^q,\qquad
     \tfrac{1}{p}+\tfrac{1}{q}=1,\quad
     1<p<\infty,
\]
given by Proposition~\ref{prop:formal-adjoint}
whose construction is detailed in 
Appendix~\ref{sec:formal-adjoint}.\footnote{
  The idea is to view $D$ as an unbounded operator on $L^p$ with dense
  domain $W^{1,p}$ and construct the unbounded functional analytic
  adjoint $\Bdual{D}$ on the dual space $\Hdual{(L^p)}$
  with suitably defined domain.
  Calculate the representative of $\Bdual{D}$ under the duality
  isomorphism, notation $\Hdual{D}$ on $L^q$.
  It turns out that $\dom \Hdual{D}=W^{1,q}$;
  the inclusion '$\subset$' relies on \emph{elliptic regularity}.
  Forget the ambient space $L^q$ of the unbounded operator
  $\Hdual{D}$ and interpret $\Hdual{D}\in\Ll(W^{1,q},L^q)$.
  }
The semi-Fredholm estimate for $D$
is equally valid for $\Hdual{D}$, just carry out the
variable transformation $s\mapsto-s$ in the integral
appearing in the term $\norm{D\xi}_{L^p}$.
Hence $\Hdual{D}$ has finite dimensional kernel and closed range,
as well. The cokernel of $D$ is analyzed in
Appendix~\ref{sec:annihilators} where we introduce the
annihilator $\pann{(\im D)}$ and show that
there are two isometric isomorphisms of Banach
spaces, namely
\begin{equation}\label{eq:kerD*=cokerD}
\begin{aligned}
     \ker\Hdual{D}
     \simeq\pann{(\im D)}
     \simeq\Hdual{(\coker D)}
     :=\Ll(\coker D,\R).
\end{aligned}
\end{equation}
The first isomorphism is due to the identity~(\ref{eq:formal-adjoint-identity})
that characterizes $\Hdual{D}$, together with
restricting the duality isomorphism
$L^q\to\Hdual{(L^p)}$, $\eta\mapsto\inner{\eta}{\cdot}$
to $\ker\Hdual{D}$, see~(\ref{eq:iso-ker-coker}), and the second isomorphism
$\Xi$ is defined by~(\ref{eq:iso-annih-coker}). So
\[
     \dim\Hdual{(\coker D)}=\dim \ker \Hdual{D}<\infty,
\]
hence $\dim\coker D<\infty$. Thus the operator $D:W^{1,p}\to L^p$ is Fredholm.

Analogously the corresponding two isometric Banach space isomorphisms
\begin{equation}\label{eq:kerD=cokerD*}
\begin{aligned}
     \ker D
     \simeq\pann{(\im\Hdual{D})}
     \simeq\Hdual{(\coker\Hdual{D})}
\end{aligned}
\end{equation}
show that $\dim\coker\Hdual{D}<\infty$.
Thus $\Hdual{D}:W^{1,q}\to L^q$ is Fredholm, too.

%%% SUBSUBSECTION %%%%%%%%%%%%%%%%%%%
%%%%%%%%%%%%%%%%%%%%%%%%%%%%%%%%%%
\subsubsection{Fredholm index} 

Let's get back to the rectangle $[0,1]\times[-T,T]\ni(t,s)$ worth of
symplectic matrizes $\Psi_s(t)\in\Sp(2n)$ introduced prior
to~(\ref{eq:pm-infty-Psi}) and illustrated by Figure~\ref{fig:fig-SmSp}.
Let ${\color{red} \Gamma}$ denote the {\color{red} loop of symplectic matrizes}
obtained by cycling along the rectangle's boundary once.
In~\citesymptop{Robbin:1993a} Robbin and Salamon introduced a
Conley-Zehnder type index for rather general symplectic paths in the
sense that there are no restrictions on initial and endpoint; see
Section~\ref{sec:RS-index}.
Among the most useful features of the Robbin-Salamon index $\RS$
is that it is additive under concatenations of paths.
Furthermore, constant paths have index zero
and paths homotopic with fixed endpoints
share the same index. Moreover, the Robbin-Salamon index
coincides with the Conley Zehnder index $\CZ$ ($=-\CZcan$)
on the set $\SsPp^*(2n)$ of admissible paths;
see Section~\ref{sec:CZ}. Thus
\begin{equation}\label{eq:mu-RS-CZ}
     0=\RS(\Gamma)
     =\underbrace{\RS(\1)}_{0}+\underbrace{\RS(\Psi_{z^-})}_{-\CZcan(\Psi_{z^-})}
     +\RS\left(\Psi_{\cdot}(1)\right)
     +\underbrace{\RS(\Psi_{z^+}(1-\cdot))}_{\CZcan(\Psi_{z^+})}.
\end{equation}
So to conclude the proof of the index
formula~(\ref{eq:D_u-F_index-CZ}) it remains to identify the yet
anonymous term in the sum with the Fredholm index of $D_u$. 
We relate the unkown term in an intermediate step to another quantity
called spectral flow.

The Robbin-Salamon index counts with multiplicities
the intersections (called crossings) of a symplectic path
$s\mapsto \Psi(s)$ with the Maslov cycle $\Cc$
in the symplectic linear group $\Sp(2n)$.
Let us repeat the definition given
in~\citerefFH[\S\,2.4]{salamon:1999a} that starts from
$\Phi^\prime=J_0 S\Phi$.
Slightly perturbing the path, if necessary,
leads to finitely many \emph{regular crossings} $s_i$,
that is crossings at which the following quadratic form is
non-degenerate. Suppose that $\Psi$ has only regular crossings.
The multiplicity at a regular crossing $s$
is measured by the signature of the quadratic form
$$
     \Gamma(\Psi,s):\ker\left(\1-\Psi(s)\right)\to\R,\quad
     \zeta_0\mapsto\omega_0\left(\zeta_0,\Psi^\prime(s)\zeta_0\right)
     =\langle\zeta_0,S(s)\zeta_0\rangle_0,
$$
called the \emph{crossing form of the symplectic path}
at $s$. The Robbin-Salamon index, in case there are no crossings at
the endpoints, is the sum of signatures
$$
     \RS(\Psi):=\sum_{s_i}\sign\Gamma(\Psi,s_i)
$$
over all crossings. As long as endpoints are fixed,
the definition does not depend on perturbations, if any,
required to obtain regular crossings.

Recall that by~(\ref{eq:ker_A=Eig_1})
the domain of the crossing form $\Gamma(\Psi,s)$
is isomorphic to the kernel of the unbounded self-adjoint operator
$A(s)=-J_0\p_t-S(s)$ on the Hilbert space $L^2=L^2(\SS^1,\R^{2n})$.
Thus in the present context a crossing $s$
corresponds to $A(s)$ having non-trivial kernel.
The quadratic form
$$
     \Gamma(A,s):\ker A(s)\to\R,\quad
     \zeta\mapsto\langle\zeta,A^\prime(s)\zeta\rangle_{L^2},
$$
is called the \textbf{crossing form of the family of selfadjoint
operators} $A(s)$.\index{crossing form!spectral flow}
One\index{spectral flow!crossing form}
can show, see~\citerefFH[Le.~2.6]{salamon:1999a},
that the two crossing forms are isomorphic
under the natural isomorphism~(\ref{eq:ker_A=Eig_1}).
Thus the integer
$$
     \SF(A):=\sum_{s_i}\sign\Gamma(A,s_i)
     =\RS\left(s\mapsto \Psi_{s}(1)\right),
$$
called the \textbf{\Index{spectral flow} of the operator family} $A=\{A(s)\}$, fits
in~(\ref{eq:mu-RS-CZ}).\index{$\SF(A)$ spectral flow}

\begin{exercise}
a)~Check that the signature of the crossing form $\Gamma(A,s_i)$
at a regular crossing $s_i$ measures the number of
eigenvalues of $A(s)$, with multiplicities, that change from
negative to positive minus those changing from positive
to negative. So the total change $\SF(A)$
is a \textbf{\Index{relative Morse index}}. 
\index{Morse index!relative}
\newline
b)~Suppose the asymptotic operators $A^\mp$
have finitely many negative eigenvalues -- the number of
which, including multiplicities, is called the
\textbf{\Index{Morse index}} of $A^\mp$ and denoted by
$\IND(A^\mp)$. Check that $\SF(A)=\IND(A^-)-\IND(A^+)$.
\end{exercise}

To conclude the proof of the index formula~(\ref{eq:D_u-F_index-CZ})
we cite the result in~\citerefFH[Thm.~4.21]{robbin:1995a}
that the spectral flow of the family $A=\{A(s)\}$
is equal to the Fredholm index of the operator
$D=\frac{d}{ds}+A(s)$ given by~(\ref{eq:triv-D_u-lin-Floer}).\footnote{
  The sign conventions in~\citerefFH{robbin:1995a}
  differ from ours in two locations
  neutralizing each other.
  }

This concludes the (sketch of
the) proof of Theorem~\ref{thm:D_u-Fredholm}.

\newpage 
%%%%%%%%%%%%%%%%%%%%%%%%%%%%%%%%%%
%%% SUBSECTION %%%%%%%%%%%%%%%%%%%%%
%%%%%%%%%%%%%%%%%%%%%%%%%%%%%%%%%%
\subsection{Thom-Smale transversality theory}
\label{sec:Thom-Smale-transversality}
It\index{transversality!Thom-Smale --}
happens frequently that one is interested in the set, say $\Mm_V$, of
solutions $u$ to some differential equation which depends on some
parameter, say a function $V$. Bringing all terms of the differential equation to
the same, say left hand, side the problem takes on the form $\Ff_V(u)=0$.
So we have reformulated $\Mm_V$ as the zero set of a map
$\Ff_V:\Uu\to\Ee$ between suitable spaces.
Ideally one hopes that a) zero is a regular value of $\Ff_V$
and b) the linearization of $\Ff_V$ is a Fredholm operator, so that
a) the zero set $\Mm_V$ inherits the structure of a manifold
whose dimension b) is given by the Fredholm index.

In Section~\ref{sec:Thom-Smale-transversality} we will detail the
standard technique to achieve
this, called Thom-Smale transversality theory.\footnote{
  Wassermann~\citerefFH{Wasserman:1969a} established
  equivariant transversality theory in finite dimensions.
  Example: genericity of Morse-Bott functions;
  cf. Theorem~\ref{thm:RF-2nd-cat} in infinite dimensions.
  }
We illustrate the theory in Example~\ref{ex:generic-Morse} by working
out the proof of Theorem~\ref{thm:A-Morse} which, roughly speaking,
asserts that the symplectic action functional $\Aa_H$ is Morse for
generic Hamiltonian $H$. In the example we will utilize certain
Banach spaces and manifolds whose lengthy constructions
we postpone to an appendix at the end of the present
Section~\ref{sec:Thom-Smale-transversality}.

%%% SUBSUBSECTION %%%%%%%%%%%%%%%%%%%
%%%%%%%%%%%%%%%%%%%%%%%%%%%%%%%%%%
\subsubsection{Thom-Smale transversality -- abstract theory}
From now on and
throughout\index{$\mathrm{TS-}\pitchfork$ Thom-Smale transversality}
this section suppose $\ell\ge 1$ is an integer
and $\Uu$ and $\Vv$ are Banach manifolds
of class $C^\ell$ each of which admits a \emph{countable atlas}
and is modeled on a \emph{separable}\footnote{
  A topological space, e.g. a metric space, is called \textbf{separable}
  if it admits a dense\index{separable space}
  sequence.\index{topological space!separable}\index{metric space!separable}
  While non-open sub\emph{spaces} $S\subset X$ (subsets with induced
  structure) of separable topological spaces $X$ need not be separable
  (counter-example Sorgenfrey plane), separability of general sub\emph{spaces}
  $S\subset X$ holds if either $S$ is open or $X$ is a separable
  \emph{metric} space.
  }
Banach space.\footnote{
  Such topological spaces $\Uu$ and $\Vv$ are
  second countable (topology has countable base),
  cf. Remark~\ref{rem:sep-B-countable},
  a property stronger than separable and also called
  \textbf{completely separable}.\index{separable!completely --}
  }
Moreover, suppose that
\[
     \Ee\to\Uu\times\Vv
\]
is a Banach space bundle with a section $\Ff$,
both of class $C^\ell$. Let $\Ee_{(u,V)}$ denote the fiber over $(u,V)$.
It is convenient to introduce the notation
$$
     \Ff_V(u):=\Ff(u,V)=:\Ff_u(V),\quad
     \text{$u\in\Uu$, $V\in\Vv$}.
$$
Recall that the tangent bundle of a vector bundle
splits naturally along the zero section.
In what follows we use the notation $D$
to denote the \textbf{\Index{linearization of a section}}
at a point of the zero section: The (Fr\'{e}chet)
differential composed with projection onto the fiber.
For instance, at a zero $(u,V)\in\Ff^{-1}(0)$ the linearization is denoted by
$$
     D\Ff(u,V): T_{(u,V)}\left(\Uu\times\Vv\right)
     \simeq T_u\Uu\oplus T_V\Vv
     \to\Ee_{(u,V)}
$$
and defined by composing the differential
$$
     d\Ff(u,V):
     T_{(u,V)}\left(\Uu\times\Vv\right)
%     \simeqT_u\Uu\oplus T_V\Vv
     \to T_{\Ff(u,V)}\Ee
     \simeq
     T_{(u,V)}\left(\Uu\times\Vv\right)
     \oplus\Ee_{(u,V)}
$$
with projection onto the second component $\Ee_{(u,V)}$.
Note that restriction of $\Ee$ to $\Uu\times\{V\}$ 
yields a bundle $\Ee^V\to\Uu$ of which $\Ff_V$ is a section
and with fibers $\Ee^V_u=\Ee_{(u,V)}$;
similarly $\Ee^u\to\Vv$ with section $\Ff_u$.\footnote{
  To make this precise, consider the inclusion
  $\iota_V:\Uu\to\Uu\times\Vv$, $u\mapsto (u,V)$, and denote
  the pull-back bundle $(\iota_V)^*\Ee\to\Uu$ by $\Ee^V$;
  analogously for $\Ee^u$.
  }
Note also that
\begin{equation}\label{eq:split-D}
\begin{split}
     D\Ff(u,V)=\overbrace{D\Ff_V(u)}^{=:D}\oplus
     \overbrace{D\Ff_u(V)}^{=:L} :
     T_u\Uu\oplus T_V\Vv
   &\to \Ee_{(u,V)}
     \\
     (\xi,v)
   &\mapsto D\xi+Lv
\end{split}
\end{equation}
at every zero $(u,V)$ of $\Ff$.

\begin{theorem}[{\Index{Thom-Smale transversality}}]
\label{thm:TS-transversality}
Let $\Ff$ be a $C^\ell$ section of $\Ee$ with
\begin{enumerate}
\item[{\rm\bf (F)}]
  $D=D\Ff_V(u): T_u\Uu\to\Ee^V_u$ is Fredholm
  and $\ell\ge\max\{1,1+\INDEX D\Ff_V(u)\}$
  for each $V\in\Vv$ and every $u\in\Ff_V^{-1}(0)$.
\item[{\rm\bf (S)}]
  $D\Ff(u,V) =D\oplus L: T_u\Uu\oplus T_V\Vv\to\Ee_{(u,V)}$ is surjective,
  $\forall(u,V)\in\Ff^{-1}(0)$.
\end{enumerate}
Then the subset of the \emph{parameter manifold} $\Vv$
given by\index{parameter manifold in TS-$\pitchfork$}
$$
     \Vvreg:=\{V\in\Vv\mid\text{$D\Ff_V(u)$ surjective
     $\forall u\in\Ff_V^{-1}(0)$}\}
$$
is residual, hence dense, in $\Vv$.
\end{theorem}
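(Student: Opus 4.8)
The plan is to follow the classical Sard--Smale argument. First I would introduce the \emph{universal moduli space}
\[
     \Mm(z^-,z^+;\Vv):=\{(u,V)\in\Uu\times\Vv\mid \Ff_V(u)=0\},
\]
(or, more precisely, the full zero set $\Ff^{-1}(0)\subset\Uu\times\Vv$), and observe that by hypothesis \textbf{(S)} the section $\Ff$ has zero as a regular value: at every $(u,V)\in\Ff^{-1}(0)$ the linearization $D\Ff(u,V)=D\oplus L$ is surjective, and since $D=D\Ff_V(u)$ is Fredholm, its kernel is complemented (finite-dimensional, hence split), so $\ker(D\oplus L)$ is complemented as well by Exercise~\ref{exc:gen-surj-Fred}(i). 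Thus $D\oplus L$ admits a right inverse and, by the regular value theorem (Remark~\ref{rem:nonlin-Fred}), $\Ff^{-1}(0)$ is a $C^\ell$ Banach manifold with tangent space $\ker D\Ff(u,V)$ at each point.

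Next I would consider the projection $\pi:\Ff^{-1}(0)\to\Vv$, $(u,V)\mapsto V$. The key linear-algebra observation is that, at a point $(u,V)$ lying over $V$, the differential $d\pi(u,V):\ker(D\oplus L)\to T_V\Vv$ is a \emph{Fredholm} operator with the same index as $D$; this is exactly the content of Exercise~\ref{exc:gen-surj-Fred}(ii), which gives $\ker d\pi\cong\ker D$ and $\coker d\pi\cong\coker D$, hence $\INDEX d\pi(u,V)=\INDEX D\Ff_V(u)$. In particular $\pi$ is a Fredholm map. Moreover a parameter $V$ is a \emph{regular value} of $\pi$ precisely when, for every $u$ with $(u,V)\in\Ff^{-1}(0)$, the map $d\pi(u,V)$ is onto; and $d\pi(u,V)$ is onto exactly when $\coker D\Ff_V(u)=0$, i.e. when $D\Ff_V(u)$ is surjective. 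Therefore the set of regular values of $\pi$ is exactly $\Vvreg$.

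Now I would invoke the infinite-dimensional Sard--Smale theorem: a $C^\ell$ Fredholm map between second-countable Banach manifolds, with $\ell>\max\{0,\INDEX\}$, has a residual set of regular values. The countable-atlas and separability hypotheses on $\Uu$ and $\Vv$ are what guarantee second countability (cf. the footnotes in the statement), so Sard--Smale applies to $\pi$, with the index bound supplied by hypothesis \textbf{(F)}, $\ell\ge\max\{1,1+\INDEX D\Ff_V(u)\}$. This yields that the set of regular values of $\pi$, namely $\Vvreg$, is residual in $\Vv$; density then follows from the Baire category Theorem~\ref{thm:Baire}(C), since $\Vv$ is modeled on a Banach space and is therefore a Baire space. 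One technical point to handle carefully: Sard--Smale is usually stated for maps defined on a single second-countable Banach manifold, so I would cover $\Ff^{-1}(0)$ by countably many charts (possible by second countability) and apply the theorem chart-by-chart, using that a countable intersection of residual sets is residual (Exercise~\ref{exc:Baire-exc}(iii)); this is the step where the countable-atlas assumption is genuinely used.

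\textbf{Main obstacle.} The conceptual content is entirely in the two linear-algebra lemmas (the split kernel giving the manifold structure of $\Ff^{-1}(0)$, and the Fredholm/index transfer from $D$ to $d\pi$), both already available as Exercise~\ref{exc:gen-surj-Fred}, together with the Sard--Smale theorem itself, which I would cite. Thus the only real work in the write-up is the bookkeeping: verifying that $\pi$ is $C^\ell$, that its regular values coincide with $\Vvreg$, and that second countability lets one reduce to a single chart or to a countable union of charts. I expect the mild friction to be the chart-by-chart reduction and making sure the index bound in \textbf{(F)} is exactly the hypothesis Sard--Smale needs ($\ell>\INDEX\pi$), which it is since $\INDEX\pi=\INDEX D$.
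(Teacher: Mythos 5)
Your proof is correct and follows essentially the same route as the paper: construct the universal moduli space $\Mm=\Ff^{-1}(0)$ via (S) and the regular value theorem, identify $\pi:\Mm\to\Vv$ as a $C^\ell$ Fredholm map of index $\INDEX D$ via Exercise~\ref{exc:gen-surj-Fred}, equate the regular values of $\pi$ with $\Vvreg$ (what the paper calls the ``Wonder-Lemma''), and conclude by a chart-by-chart application of Sard--Smale together with Baire's theorem. Your shortcut of deducing the Wonder-Lemma directly from $\coker d\pi\cong\coker D$ is a clean way to get what the paper proves by a separate two-sided argument.
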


\begin{remark}\label{rem:cond-(A)}
In practice, instead of verifying the conditions (F) \& (S) for a
given section $\Ff$, it is often more convenient to verify the
Fredholm condition~(F) and the trivial-annihilator condition (A),
cf. Appendix~\ref{sec:annihilators}, namely:
\begin{enumerate}
\item[{\rm\bf (A)}]
  At every zero $(u,V)$ of the section $\Ff$
  the \textbf{\Index{annihilator}}
  \begin{equation*}
  \begin{split}
     \Ann_{(u,V)}
  :&=\pann{\left(\im D\Ff(u,V)\right)}\\
  :&=\{\eta\in\Ll(\Ee_{(u,V)},\R)\mid
     \text{$\eta(\xi)=0$ for every $\xi\in\im D\Ff(u,V)$}\}\\
   &=\{0\}
  \end{split}
  \end{equation*}
  of the image of the linearization $D\Ff(u,V)$ is trivial.
\end{enumerate}
\end{remark}

\begin{lemma}\label{le:cond-(A)}
If $\Ff$ satisfies (F) \& (A), then it satisfies (F) \& (S).
\end{lemma}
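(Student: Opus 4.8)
The statement to prove is Lemma~\ref{le:cond-(A)}: if a section $\Ff$ satisfies conditions (F) and (A), then it satisfies (F) and (S). Since (F) is literally the same in both, the real content is the implication (F) \& (A) $\Rightarrow$ (S), i.e.\ that triviality of the annihilator $\Ann_{(u,V)} = \pann{(\im D\Ff(u,V))}$ at every zero $(u,V)$ forces surjectivity of $D\Ff(u,V)$ at every zero.

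The plan is to show that, \emph{because of the Fredholm hypothesis (F)}, the image $\im D\Ff(u,V)$ is a closed subspace of the Banach space $\Ee_{(u,V)}$, and that a closed subspace of a Banach space equals the whole space precisely when its annihilator is trivial. First I would observe that $D\Ff(u,V) = D\oplus L$ where $D = D\Ff_V(u)$ is Fredholm by (F); in particular $\im D$ is closed and of finite codimension in $\Ee_{(u,V)}$. Then I would invoke Exercise~\ref{exc:gen-surj-Fred}(i), which states exactly that for a Fredholm operator $D:X\to Y$ and a bounded operator $L\in\Ll(Z,Y)$, the range of $D\oplus L:X\oplus Z\to Y$ is closed with finite-dimensional complement. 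Hence $W := \im D\Ff(u,V)$ is a closed subspace of $\Ee := \Ee_{(u,V)}$.

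Next I would use the standard Hahn--Banach consequence: if $W \subsetneq \Ee$ is a proper closed subspace, then there exists a nonzero $\eta \in \Ll(\Ee,\R)$ vanishing on $W$, i.e.\ $\eta \in \pann{W} = \Ann_{(u,V)}$ and $\eta \neq 0$. This contradicts (A), which asserts $\Ann_{(u,V)} = \{0\}$. Therefore $W = \Ee$, meaning $D\Ff(u,V)$ is surjective. Since $(u,V) \in \Ff^{-1}(0)$ was arbitrary, condition (S) holds. Conjoining this with the untouched hypothesis (F) gives (F) \& (S), completing the proof.

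The argument is short and the only mild subtlety — the \emph{main obstacle}, such as it is — lies in making sure the closedness of $\im D\Ff(u,V)$ is genuinely available: without Fredholmness of $D$, the image of $D\oplus L$ need not be closed, and then triviality of the annihilator would only give \emph{density} of the image, not surjectivity. So the proof must explicitly route through (F) (via Exercise~\ref{exc:gen-surj-Fred}) before applying Hahn--Banach. I would present it roughly as follows.

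\begin{proof}
Conditions (F) are identical in the two hypotheses, so it suffices to show that (F) \& (A) imply (S). Fix a zero $(u,V)\in\Ff^{-1}(0)$ and write $D\Ff(u,V)=D\oplus L$ as in~(\ref{eq:split-D}), where $D=D\Ff_V(u):T_u\Uu\to\Ee_{(u,V)}$ is Fredholm by~(F) and $L=D\Ff_u(V)\in\Ll(T_V\Vv,\Ee_{(u,V)})$. By Exercise~\ref{exc:gen-surj-Fred}\,(i) the range
$$
     W:=\im D\Ff(u,V)=\im(D\oplus L)\subset\Ee_{(u,V)}
$$
is closed (indeed of finite codimension). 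Suppose $W\not=\Ee_{(u,V)}$. Then $W$ is a proper closed subspace of a Banach space, so by the Hahn--Banach theorem there is a bounded linear functional $\eta\in\Ll(\Ee_{(u,V)},\R)$ with $\eta\not=0$ and $\eta|_W=0$. Such $\eta$ lies in $\pann{W}=\Ann_{(u,V)}$, contradicting~(A), which asserts $\Ann_{(u,V)}=\{0\}$. Hence $W=\Ee_{(u,V)}$, i.e.\ $D\Ff(u,V)$ is surjective. As $(u,V)\in\Ff^{-1}(0)$ was arbitrary, condition~(S) holds, and therefore $\Ff$ satisfies (F) \& (S).
\end{proof}
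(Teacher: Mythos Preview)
Your proof is correct and takes essentially the same approach as the paper: both use Exercise~\ref{exc:gen-surj-Fred}\,(i) (invoking the Fredholm hypothesis~(F)) to get closedness of $\im D\Ff(u,V)$, and then Hahn--Banach to pass from trivial annihilator to surjectivity. The only cosmetic difference is that the paper phrases the Hahn--Banach step as ``trivial annihilator $\Rightarrow$ dense image, and closed $+$ dense $=$ surjective'', whereas you argue by contradiction that a proper closed subspace has a nonzero annihilating functional; these are equivalent.
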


\begin{proof}
Suppose $\Ff(u,V)=0$.
It is a consequence of the geometric form of the Hahn-Banach
theorem, see e.g.~\cite[Thm.\,1.7 and Cor.\,1.8]{brezis:2011a},
that triviality of the annihilator $\pann{(\im D\Ff(u,V))}$
implies density of the image of $D\Ff(u,V)$. (Reversely, density implies
triviality by the extension by continuity principle, Exercise~\ref{exc:ECP}.)
But the image is also closed by Exercise~\ref{exc:gen-surj-Fred}~i).
\end{proof}

The proof of Theorem~\ref{thm:TS-transversality} rests on two
pillars. Firstly, the slightly miraculous equality of the set
$\Vvreg$ of parameters for which the Fredholm operator
$D=D\Ff_V(u)$ is surjective for all $u\in\Ff_V^{-1}(0)$
and the set of regular values of the projection
$\pi:\Uu\times\Vv\supset\Ff^{-1}(0)\to\Vv$
onto the second component.
Secondly, the generalization of Sard's theorem
to infinite dimensions, due to Smale~\citerefFH{smale:1965a}.
Applied locally to $\pi$, Sard-Smale yields the assertion of
Theorem~\ref{thm:TS-transversality}.

\begin{theorem}[Sard-Smale]\label{thm:Sard-Smale}
Suppose\index{Sard-Smale theorem}\index{theorem!Sard-Smale --}
$W$ is a separable Banach space and
$U\subset W$ an open connected\,\footnote{
  If $U$ is not connected, simply impose one condition
  $\ell\ge 1+\INDEX (f)$ for each component.
  }
subset. Let $Z$ be a Banach space.
If $f:U\to Z$ is a Fredholm map of class $C^\ell$ with
$\ell\ge 1+\INDEX (f)$,\footnote{
  Recall that being a Fredholm map already requires $\ell\ge 1$.
  }
then the set
$$
     \Zreg:=\{z\in Z\mid \text{$\im df(x)=Z$ for every $x\in U$
     with $f(x)=z$}\},
$$
of regular values is a residual subset of $Z$, thus dense, hence non-empty.\footnote{
  However, note that $\Zreg$ contains any element $z\in Z$ with empty
  pre-image $f^{-1}(z)=\emptyset$.
  }
\end{theorem}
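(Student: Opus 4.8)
The plan is to reduce the global statement to the classical Sard theorem in finite dimensions applied fibrewise over a suitable local decomposition of the domain, and then patch the local conclusions together using second countability. First I would fix a point $x_0\in U$ and exploit the Fredholm property: the operator $D:=df(x_0):W\to Z$ has finite-dimensional kernel $K=\ker D$ and closed range of finite codimension, so we may choose topological complements $W=K\oplus W_1$ and $Z=C\oplus Z_1$ where $Z_1=\im D$ and $\dim C=\dim\coker D$. By the implicit function theorem one straightens $f$ near $x_0$: after a $C^\ell$ change of coordinates the map looks, in a neighbourhood, like $(\xi,\eta)\mapsto(\xi, g(\xi,\eta))$ with $\xi\in K$, $\eta\in W_1$ and $g$ taking values in $C$, where the $Z_1$-component has been absorbed into $\eta$ by the IFT (this uses that $D|_{W_1}:W_1\to Z_1$ is an isomorphism). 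The key local reduction is then that $z=(\bar\xi,\bar c)\in Z_1\oplus C$ is a regular value of $f$ near $x_0$ if and only if $\bar c$ is a regular value of the finite-dimensional map $\xi\mapsto g(\xi,\eta(\xi))$ on the slice, where $\eta(\xi)$ solves the $Z_1$-equation; the domain has dimension $\dim K$ and the target dimension $\dim C$, so $\dim K-\dim C=\INDEX(f)$ and the hypothesis $\ell\ge 1+\INDEX(f)$ is exactly what classical Sard needs (Sard requires the map to be $C^r$ with $r\ge\max\{1,\dim\text{domain}-\dim\text{target}+1\}$). Thus locally the set of non-regular values is contained in a measure-zero, hence nowhere dense closed, subset of $Z$.

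Next I would globalize. Cover $U$ by countably many such coordinate neighbourhoods $U_i$ — this is possible because $W$ is separable, hence $U$ is second countable (as noted in the excerpt's footnotes, separability of an open subset of a separable metric space holds). On each $U_i$ the set of values $z$ that fail to be regular \emph{for some $x\in U_i$ with $f(x)=z$} is meager in $Z$ by the local argument, being contained in a countable union of images of measure-zero sets under $C^1$ maps. The set of genuinely non-regular values of $f$ on all of $U$ is the countable union $\bigcup_i(\text{bad set on }U_i)$, hence meager; its complement $\Zreg$ is residual. Since $Z$ is a complete metric space (a Banach space), the Baire category Theorem~\ref{thm:Baire}, part~(B), gives that $\Zreg$ is dense, in particular non-empty. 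Finally the footnoted remark that $\Zreg$ contains every $z$ with empty preimage is immediate from the vacuous quantifier in the definition of regular value, and needs no argument.

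The main obstacle is bookkeeping rather than deep content: one must carefully verify that the local straightening genuinely turns $f$ into the product form $(\xi,g(\xi,\eta))$ with $g$ finite-rank-valued, and that the correspondence ``$z$ regular for $f$ near $x_0$'' $\Longleftrightarrow$ ``$\bar c$ regular for the reduced finite-dimensional map'' is exact — including checking that surjectivity of $df(x)$ onto $Z$ corresponds precisely to surjectivity of the differential of the reduced map onto $C$, using that the $Z_1$-direction is automatically hit by the IFT-solved variable. A secondary subtlety is that ``measure zero in $Z$'' must be interpreted correctly in the infinite-dimensional target: one works in the finite-dimensional complement $C$ (where Lebesgue measure makes sense) and uses that a subset of $Z=Z_1\oplus C$ of the form $Z_1\times N$ with $N\subset C$ of measure zero is nowhere dense in $Z$, which is what feeds Baire. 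Once these local-to-global compatibilities are pinned down, the conclusion follows mechanically from classical Sard plus Baire.
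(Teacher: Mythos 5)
Your plan has the right skeleton and follows essentially the same route as the paper's sketch (Remark~\ref{rem:sep-B-countable}\,c): locally straighten the Fredholm map by the implicit function theorem, reduce to the classical finite-dimensional Sard theorem on a $\dim\ker$-to-$\dim\coker$ map (which is why the hypothesis $\ell\ge 1+\INDEX(f)$ appears), globalize using that a separable metric space is Lindel\"of/second countable, and finish with Baire. That correspondence is the content of the approach, and you have it.

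However, there is a genuine gap in the local-to-Baire step. You write that ``locally the set of non-regular values is contained in a measure-zero, hence nowhere dense closed, subset of $Z$,'' and later that a set of the form $Z_1\times N\subset Z_1\oplus C$ with $N\subset C$ of Lebesgue measure zero ``is nowhere dense in $Z$.'' Neither implication holds: a measure-zero subset of $\R^k$ need not be closed, need not be nowhere dense (take $\mathbb{Q}^k$), and a priori need not even be measurable-friendly under a $C^1$ image. Measure zero and meager are incomparable notions, and the classical Sard theorem only delivers measure zero. To bridge that gap one must arrange that the local set of critical values is moreover \emph{closed}; then measure zero combined with closed does give nowhere dense, and the union of countably many nowhere dense sets is meager, which is what Baire needs.

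The standard way to force closedness is precisely what the paper's sketch encodes in the choice of \emph{closed} neighborhoods $A_p$: one restricts the straightened map to a set on which the finite-dimensional slice is compact. Then the critical set intersected with $A_p$ is compact (critical points of a Fredholm map form a closed set, and compactness holds in the finite-dimensional $\ker$-direction), so its continuous image in $Z$ is compact, hence closed, hence nowhere dense once measure zero is known. Equivalently, one shows directly that $\Zreg(A_p)$ is \emph{open} and dense in $Z$ (openness by a properness argument on $A_p$), and then $\Zreg=\bigcap_i\Zreg(A_i)$ is a countable intersection of open dense sets, hence residual. Your plan needs that compactness/closedness ingredient; as written it silently promotes ``measure zero'' to ``nowhere dense,'' which is false without it. A minor secondary slip: after straightening, the product form should read $(\xi,\eta)\mapsto(\eta,g(\xi,\eta))$ (the first component is the $W_1\cong Z_1$ piece solved by the IFT, not the $\ker$-variable $\xi$), but that is notational rather than substantive.
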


\begin{remark}\label{rem:sep-B-countable}
a) The Fredholm condition is necessary; cf.~\citerefFH{smale:1965a}.

b) Sometimes separability of the target Banach space $Z$ is added to
the assumptions, see e.g.~\citerefFH{smale:1965a} and~\cite{mcduff:2004a},
although seemingly it is not used in the proofs.
References without that assumption
include~\cite{Abraham:1967a,Chang:2005a}.

c) Separability of the domain Banach space $W$
is essential, because the proof uses that separable metric spaces
are \textbf{\Index{Lindel\"of}}
(\emph{every open cover admits a countable subcover};
see e.g.~\cite[Prop.~A.5.2]{mcduff:2004a}).\footnote{
  In metric spaces second-countable, separable, and Lindel\"of
  are three equivalent properties.
  }
\\
To prove the theorem one shows that for every point $p\in W$ there is
a closed neighborhood $A_p$ which provides a dense open subset
$\Zreg(A_p)\subset Z$. One uses Lindel\"of to extract a countable
subcover $\{A_i\}$ of the collection of all $A_p$'s. Then one shows
that $\Zreg=\cap_i\Zreg(A_i)$; well explained in~\cite{Abraham:1967a}.
By countability of the intersection the Baire Theorem~\ref{thm:Baire} applies.

d) Banach \emph{manifolds}:
Theorem~\ref{thm:Sard-Smale} extends to Banach manifolds
$W$ and $Z$ by applying the Banach space version to local
coordinate representations of $f$. Here one adds the assumptions
that $W$ is modeled on a separable Banach space and that both
$W$ and $Z$ can be covered by countably many coordinate charts.
In this case $\Zreg$ can be written as a \emph{countable} intersection of the
dense open sets obtained from the coordinate representations; cf. Step III below.
\end{remark}

%%% SUBSUBSECTION %%%%%%%%%%%%%%%%%%%
%%%%%%%%%%%%%%%%%%%%%%%%%%%%%%%%%%

%%%%%%%%%%%%%%%%%%%%%%%%%%%%%%%
\vspace{.05cm}\noindent
\textbf{\boldmath Proof of Theorem~\ref{thm:TS-transversality}
  (Thom-Smale transversality).}
Suppose $\Ff$ is a $C^\ell$ section with $\ell\ge 1$
of the Banach space bundle
$\Ee\to\Uu\times\Vv$ and it satisfies conditions
(F) \& (S). Let us call $\Ff$ a
\textbf{\Index{universal section}}, just to emphasize
that $\Ff_V$ and $\Ff_u$ are its restrictions.
The proof takes four steps I--IV.

\vspace{.1cm}
\textsc{Step~I.} Suppose $\Ff(u,V)=0$ and recall the notation
$D\Ff(u,V)=D\oplus L$ from~(\ref{eq:split-D}) where
$D=D\Ff_V(u)$ is Fredholm from $X=T_u\Uu$ to $Y=\Ee_{(u,V)}$
by assumption (F) and $L=D\Ff_u(V)$ is bounded from $Z=T_V\Vv$
to $Y$ due to $\ell\ge 1$. As $D\Ff(u,V)$ is surjective by assumption (S),
Exercise~\ref{exc:gen-surj-Fred}~ii) asserts that
\begin{itemize}
\item[a)]
  $\ker D\Ff(u,V)=\ker(D\oplus L)$ admits a complement,
  thus a right inverse;
\item[b)]
  projection onto the second component
  $$
     \Pi=\Pi_{(u,V)}:
     T_u\Uu\oplus T_V\Vv\supset
     \ker D\Ff(u,V)=\ker(D\oplus L)\to T_V\Vv
  $$
  is a Fredhom operator with $\INDEX(\Pi)=\INDEX(D)$.
\end{itemize}

\textsc{Step~II.} Zero is a regular value of $\Ff$ by a). Thus by the
regular value theorem, see Remark~\ref{rem:nonlin-Fred}, 
the \textbf{\Index{universal moduli space}},
namely the zero set 
$$
     \Mm:=\Ff^{-1}(0)\subset\Uu\times\Vv
$$
of the universal section $\Ff$, is a $C^\ell$ Banach manifold
that admits a countable atlas. Moreover, each component of $\Mm$
is modeled on a separable Banach space,\footnote{
  A finite sum, say $T_u\Uu\oplus T_V\Vv$, of separable Banach spaces
  is a separable Banach space and so is any subspace,
  say $T_{(u,V)}\Mm=:W$.
  }
say $W$, and the tangent spaces are given by
$$
     T_{(u,V)}\Mm
     =\ker D\Ff(u,V)
     =\{(\xi,v)\mid D\xi+Lv=0\}
     \subset T_u\Uu\oplus T_V\Vv.
$$
Since $\Mm$ is a $C^\ell$ manifold,
projection to the second component
\begin{equation}\label{eq:pi}
     \pi:\Mm\to\Vv,\quad
     (u,V)\mapsto V,
\end{equation}
is a map of class $C^\ell$. But the linearization of this
map at $(u,V)$ is precisely the Fredholm operator $\Pi_{(u,V)}$.
Thus $\pi$ is a $C^\ell$ Fredholm map whose Fredholm index 
along the \textbf{component} $\Mm^{(u,V)}$ of $(u,V)$ is equal to the
index of $D\Ff_V(u)$.

\vspace{.1cm}
\textsc{Step~III.} For each of the countably
many local coordinate charts $\varphi:\Mm\supset C\to W$
of $\Mm$ and $\psi:\Vv\supset B_\psi\to Z$ of $\Vv$ the
coordinate representative\footnote{
  Here $W$ and $Z$ are the separable Banach spaces on which $\Mm$
  and $\Vv$ are modeled.
  }
$$
     \pi_{\varphi,\psi}
     :=\psi\circ\pi\circ\varphi^{-1}: W\supset\varphi(C)\to Z
$$
of the $C^\ell$ Fredholm map $\pi$ satisfies the
assumptions of the Sard-Smale Theorem~\ref{thm:Sard-Smale}.
Therefore the set $\Rr(\pi_{\varphi,\psi})$ of regular values of
$\pi_{\varphi,\psi}$ is residual in $Z$. Hence $\psi^{-1}\Rr(\pi_{\varphi,\psi})$
is residual in the open subset $B_\psi\subset\Vv$ since $\psi$ is a
homeomorphism onto its image. Thus the union
$A_{\varphi,\psi}:=\psi^{-1}\Rr(\pi_{\varphi,\psi})\cup\comp{B_\psi}$
with the complement of $B_\psi$ is residual in $\Vv$ and represents
the regular values of the map $\pi\circ\varphi^{-1}: W\supset\varphi(C)\to\Vv$.
(Recall that elements outside an image are regular values automatically.)
The regular values of $\pi$ are those elements of $\Vv$
which are regular for all of these maps, that is $\Rr(\pi)
=\cap_{\varphi,\psi} A_{\varphi,\psi}=\cap_{\varphi,\psi}\psi^{-1}\Rr(\pi_{\varphi,\psi})$.
But a countable intersection of residuals $A_{\varphi,\psi}$ is residual by
Exercise~\ref{exc:Baire-exc}.
Thus $\Rr(\pi)$ is residual in the Banach manifold $\Vv$ which --
being locally modeled on a Banach space -- is a Baire space.
Hence $\Rr(\pi)$ is dense in $\Vv$.

\vspace{.1cm}
\textsc{Step~IV.} Theorem~\ref{thm:TS-transversality}
is now reduced to the following
Wonder-Lemma:\index{Wonder-Lemma}\index{lemma!wonder-}

\begin{lemma}\label{le:wonder-lemma}
For $\Ff$ as in Theorem~\ref{thm:TS-transversality}
the map $\pi:\Mm\to\Vv$ is defined~and
\begin{equation*}
\begin{split}
     \Rr(\pi):
   &=\{\text{regular values of $\pi$}\}\\
   &=\{V\in\Vv\mid \text{$D\Ff_V(u)$
     surjective $\forall u\in\Ff_V^{-1}(0)$}\}\\
   &=:\Vvreg.
\end{split}
\end{equation*}
\end{lemma}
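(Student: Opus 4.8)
The plan is to read off the equality of sets directly from Steps~I--II together with an elementary pointwise computation in which hypothesis~(S) carries the whole load. First I would recall that by Step~II the zero set $\Mm=\Ff^{-1}(0)$ is a $C^\ell$ Banach manifold, so $\pi$ is indeed defined, with $T_{(u,V)}\Mm=\ker D\Ff(u,V)=\{(\xi,v)\in T_u\Uu\oplus T_V\Vv\mid D\xi+Lv=0\}$ where $D:=D\Ff_V(u)$ and $L:=D\Ff_u(V)$, and that the differential of $\pi$ at $(u,V)$ is the operator $\Pi_{(u,V)}\colon\ker D\Ff(u,V)\to T_V\Vv$, $(\xi,v)\mapsto v$, which is Fredholm by Step~I\,b). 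Since $\Pi_{(u,V)}$ is Fredholm, its kernel is finite dimensional, hence complemented, so by Exercise~\ref{exc:RI<=>complemented} it admits a right inverse as soon as it is onto; therefore the definition of regular value in Remark~\ref{rem:nonlin-Fred} reduces, for $\pi$, to: $V$ is a regular value of $\pi$ precisely when $\Pi_{(u,V)}$ is \emph{surjective} for every $(u,V)\in\pi^{-1}(V)$. The same remark applies to $D$ itself: if all the relevant $D$ are onto, they are Fredholm by (F), hence automatically admit right inverses, so being in $\Vvreg$ is exactly the surjectivity condition in its definition.

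The heart of the argument is the pointwise equivalence, at a fixed $(u,V)\in\Mm$,
\[
     \Pi_{(u,V)}\ \text{surjective}\quad\Longleftrightarrow\quad D:=D\Ff_V(u)\ \text{surjective}.
\]
For ``$\Leftarrow$'': given $v\in T_V\Vv$, surjectivity of $D$ gives $\xi\in T_u\Uu$ with $D\xi=-Lv$, whence $(\xi,v)\in\ker D\Ff(u,V)=T_{(u,V)}\Mm$ and $\Pi_{(u,V)}(\xi,v)=v$. For ``$\Rightarrow$'': let $y\in\Ee_{(u,V)}$; by hypothesis~(S) the operator $D\oplus L$ is onto, so $y=D\xi_0+Lv_0$ for some $(\xi_0,v_0)\in T_u\Uu\oplus T_V\Vv$; by surjectivity of $\Pi_{(u,V)}$ there is $\xi_1$ with $(\xi_1,v_0)\in\ker D\Ff(u,V)$, i.e. $D\xi_1+Lv_0=0$; subtracting yields $D(\xi_0-\xi_1)=y$, so $D$ is onto. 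This is the only place where (S) is used, and it is what makes the lemma ``miraculous''.

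Finally I would dispose of the vacuous case: if $\pi^{-1}(V)=\emptyset$, i.e. $\Ff_V^{-1}(0)=\emptyset$, then $V$ is a regular value of $\pi$ by convention and also lies in $\Vvreg$, since the quantifier ``$\forall u\in\Ff_V^{-1}(0)$'' is empty; so the two sets agree on such $V$ trivially. Combining this with the pointwise equivalence over all $(u,V)\in\pi^{-1}(V)$ gives $\Rr(\pi)=\Vvreg$, which is the assertion of the Wonder-Lemma; together with Steps~I--III and the Sard--Smale/Baire input already in place, this completes the proof of Theorem~\ref{thm:TS-transversality}. I do not expect a genuine obstacle here: the argument is routine, and the only point that needs care is the bookkeeping around the ``admits a right inverse'' clause in the definition of a regular value, which is handled entirely by the Fredholm property from Step~I.
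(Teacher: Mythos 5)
Your proof is correct and takes essentially the same route as the paper's: you reduce ``regular value'' to surjectivity via the Fredholm property from Step~I, and then prove the pointwise equivalence between surjectivity of $d\pi(u,V)=\Pi_{(u,V)}$ and surjectivity of $D=D\Ff_V(u)$ by exactly the two-line arguments the paper uses, with hypothesis~(S) doing the work in the ``$\Rightarrow$'' direction. The only addition is your explicit mention of the vacuous case $\pi^{-1}(V)=\emptyset$, which the paper leaves implicit; this is a harmless (and arguably clarifying) extra sentence, not a different approach.
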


\begin{proof}
Suppose $\Ff(u,V)=0$. As the linearization $d\pi(u,V)=\Pi_{(u,V)}$
is Fredholm, surjectivity implies existence of a right inverse.
Thus it suffices to show
$$
     \text{$d\pi(u,V)$ surjective}\quad\Longleftrightarrow\quad
     \text{$D:=D\Ff_V(u)$ surjective}.
$$
Let us use the notation $D\Ff(u,V)=D\oplus L$ with $L:=D\Ff_u(V)$;
see~(\ref{eq:split-D}).
\newline
'$\Rightarrow$'
Pick $\eta\in\Ee_{(u,V)}$, then $\eta=D\Ff(u,V)(\xi_0,v)=D\xi_0+Lv$
for some tangent vector $(\xi_0,v)\in T_{(u,V)}(\Uu\times\Vv)$
by the surjectivity assumption (S).
Given $v$, then $v=d\pi(u,V)(\xi_1,v)$
by surjectivity of $d\pi(u,V)$ for some element
$(\xi_1,v)\in T_{(u,V)}\Mm=\ker D\Ff(u,V)$, that is
$D\xi_1+Lv=0$. Hence $D(\xi_0-\xi_1)=\eta$.
\newline
'$\Leftarrow$'
Pick $v\in T_V\Vv$, then $Lv=D\xi$ for some
$\xi\in T_u\Uu$ by surjectivity of $D$.
Thus the pair $(-\xi,v)$ lies in $T_{(u,V)}\Mm$.
It gets mapped to $v$ under $d\pi(u,V)$.
\end{proof}

%%% SUBSUBSECTION %%%%%%%%%%%%%%%%%%%
%%%%%%%%%%%%%%%%%%%%%%%%%%%%%%%%%%
\subsubsection{Thom-Smale transversality -- proper implies open}

\begin{theorem}\label{thm:TS-transversality-openness}
\index{Thom-Smale transversality!openness}
Consider a $C^\ell$ section $\Ff:\Uu\times\Vv\to\Ee$
satisfying the assumptions of Theorem~\ref{thm:TS-transversality}.
Suppose, in addition, that the restriction $\pi_a$ of the projection
$\pi:\Mm\to\Vv$, see~(\ref{eq:pi}), to some open subset
$\Mm^a\subset\Mm=\Ff^{-1}(0)$ is a proper map.\footnote{
  \textbf{\Index{Proper map}}: Pre-images of compact sets are compact.
  Actually the proof only uses that pre-images under $\pi_a$
  of convergent sequences in $\Vv$, limit included, are compact.
  }
Then the set of regular parameters
$$
     \Vvreg^a:=\{V\in\Vv\mid\text{$D\Ff_V(u)$ is surjective
     whenever $(u,V)\in \Mm^a$}\}
$$
is not only dense but also open in the Banach manifold $\Vv$ of all parameters.
\end{theorem}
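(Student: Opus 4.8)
\textbf{Proof plan for Theorem~\ref{thm:TS-transversality-openness}.}
The plan is to argue by contradiction: suppose the regular-parameter set $\Vvreg^a$ is not open at some $V_0\in\Vvreg^a$. Then there is a sequence $V_k\to V_0$ in $\Vv$ with $V_k\notin\Vvreg^a$ for every $k$. By definition of $\Vvreg^a$, for each $k$ there exists a point $(u_k,V_k)\in\Mm^a$ at which the Fredholm operator $D\Ff_{V_k}(u_k)$ fails to be surjective. The idea is to extract from $(u_k,V_k)$ a convergent subsequence using properness of $\pi_a$, pass to the limit to obtain a point $(u_\infty,V_0)\in\Mm^a$, and then derive a contradiction with $V_0\in\Vvreg^a$ by showing that non-surjectivity of $D\Ff_{V_k}(u_k)$ is inherited (in the limit) by $D\Ff_{V_0}(u_\infty)$.

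First I would invoke properness. The sequence $(V_k)_{k\in\N}\cup\{V_0\}$ is a compact subset of $\Vv$ (a convergent sequence together with its limit), so $\pi_a^{-1}$ of this set is compact in $\Mm^a$; since $\pi_a(u_k,V_k)=V_k$ lies in this set, the sequence $(u_k,V_k)$ has a subsequence converging in $\Mm^a$ to some $(u_\infty,W)$, and necessarily $W=V_0$ by continuity of $\pi$. Thus $(u_\infty,V_0)\in\Mm^a\subset\Ff^{-1}(0)$, and since $V_0\in\Vvreg^a$ we know $D:=D\Ff_{V_0}(u_\infty)$ is surjective, hence (being Fredholm by (F)) admits a bounded right inverse and a topological complement of its kernel. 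The second ingredient I would use is the stability of surjectivity for Fredholm operators under small perturbation in the operator norm, which is Exercise~\ref{exc:Fred-stability}\,a): if $D\Ff_{V_k}(u_k)\to D\Ff_{V_0}(u_\infty)$ in $\Ll(T_u\Uu,\Ee)$ — or rather the appropriate statement after trivializing the Banach bundle $\Ee$ near $(u_\infty,V_0)$ — then $D\Ff_{V_k}(u_k)$ is surjective for all large $k$, contradicting the choice of $(u_k,V_k)$.

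The technical heart of the argument, and the step I expect to be the main obstacle, is making precise the convergence ``$D\Ff_{V_k}(u_k)\to D\Ff_{V_0}(u_\infty)$ in operator norm'' when the domain and codomain Banach spaces $T_{u_k}\Uu$ and $\Ee_{(u_k,V_k)}$ themselves vary with $k$. The clean way to handle this is to work in a local bundle trivialization: since $\Ee\to\Uu\times\Vv$ is a $C^\ell$ Banach bundle with $\ell\ge1$, near $(u_\infty,V_0)$ one can choose local coordinate charts on $\Uu$ and $\Vv$ and a local trivialization of $\Ee$, so that for $k$ large $D\Ff_{V_k}(u_k)$ is represented by a bounded operator $\widetilde D_k$ between two fixed Banach spaces, and the map $(u,V)\mapsto\widetilde D_{u,V}$ is continuous (indeed $C^{\ell-1}$) because $\Ff$ is $C^\ell$ and linearization of a $C^\ell$ section depends continuously on the base point. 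Hence $\widetilde D_k\to\widetilde D_\infty$ in operator norm, and $\widetilde D_\infty$ is the representative of the surjective operator $D$, so $\widetilde D_k$ is surjective for large $k$; undoing the trivialization, $D\Ff_{V_k}(u_k)$ is surjective for large $k$.

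Finally I would note that this surjectivity for $(u_k,V_k)\in\Mm^a$ with $k$ large is exactly the desired contradiction with the standing assumption $V_k\notin\Vvreg^a$, which required the existence of \emph{some} point over $V_k$ in $\Mm^a$ at which $D\Ff_{V_k}$ is not surjective — but the only way we produced $V_k\notin\Vvreg^a$ was via the specific points $(u_k,V_k)$, and a careful rereading shows one should instead choose, for each $k$, a witness $(u_k,V_k)\in\Mm^a$ of non-regularity and run the properness argument on exactly those witnesses; the contradiction is then immediate. Combining with density of $\Vvreg^a$ (which follows from Theorem~\ref{thm:TS-transversality} applied to the restricted setup, since $\Vvreg\subset\Vvreg^a$) gives that $\Vvreg^a$ is open and dense, completing the proof. $\qed$
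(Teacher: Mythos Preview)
Your proposal is correct and follows essentially the same route as the paper's proof: contradict openness at some $V_0\in\Vvreg^a$, pick witnesses $(u_k,V_k)\in\Mm^a$ of non-surjectivity, use properness of $\pi_a$ on the compact set $\{V_k\}\cup\{V_0\}$ to extract a subsequence converging in $\Mm^a$ to $(u_\infty,V_0)$, and then invoke openness of surjectivity for Fredholm operators (Exercise~\ref{exc:Fred-stability}\,a)) together with $C^1$-dependence of $D\Ff_V(u)$ on $(u,V)$ in local trivializations. Your last paragraph's self-correction is unnecessary---the argument was already set up correctly from the start with the $(u_k,V_k)$ chosen as witnesses---and the density claim via $\Vvreg\subset\Vvreg^a$ matches the paper exactly.
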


\begin{proof}
The set $\Vvreg^a$ is dense in $\Vv$, as by
Theorem~\ref{thm:TS-transversality}~already~its~subset~$\Vvreg$~is.

To prove openness pick $V\in \Vvreg^a$ and
assume by contradiction that there is no open neighborhood
of $V$ in $\Vv$ contained in $\Vvreg^a$.
Then there is a sequence $V_i\in\Vv$ converging to $V$
and a sequence $u_i\in\Uu$ such that every pair $(u_i,V_i)$ lies in
$\Mm^a$ and the corresponding Fredholm operator
$D_i:=D\Ff_{V_i}(u_i)$ is not surjective.
Consider the compact set $K:=\{V_i\}_i\cup\{V\}$ that consists of
the convergent sequence together with its limit. The pre-image
$\pi_a^{-1}(K)\subset\Mm^a$ is compact by assumption
and it contains the sequence $(u_i,V_i)$.
Thus there is a convergent subsequence, still denoted by $(u_i,V_i)$,
with limit $(u,V)$ for some $u\in\Uu$. By continuity of the section
$\Ff$ the limit $(u,V)$ of the zeroes $(u_i,V_i)$ also lies in the
zero set $\Mm=\Ff^{-1}(0)$.
So the operator $D:=D\Ff_V(u)$ is Fredholm by assumption~(F)
and it is surjective by our choice $V\in\Vvreg^a$.
\newline
Recall from Exercise~\ref{exc:Fred-stability}\,a) that
surjectivity of $D$ is an open property with respect to the
operator norm. On the other hand, the assumption
$\Ff\in C^1$ implies that the partial derivatives
$D_i$ converge to $D$ in the operator norm; use local
coordinates about $(u,V)$. None of the $D_i$ is surjective. Contradiction.
\end{proof}

Given the nice assertion of the theorem, the proof is elementary.
This indicates that the hypothesis might be non-trivial
to verify in applications.

%%%%%%%%%%%%%%%%%%%%%%%%%%%%%%%%%%
%%%%%%%%%%%%%%%%%%%%%%%%%%%%%%%%%%
\subsubsection{Morse functions on a closed manifold
form an open and dense set}
\begin{exercise}
Given a closed manifold $Q$, show that the set
$\Vvreg^2$ of $C^2$ Morse functions on $Q$ is open and dense in the Banach
space $\Vv:=C^2(Q)$.
\newline
[Hint: How about the manifold $\Uu\cong Q$ that consists of constant
maps $\gamma:\R\to Q$, $\gamma(t)\equiv q$ for some $q\in Q$, and the
map $\Ff_f(\gamma):=\dot\gamma+\nabla f(\gamma)=\nabla f(\gamma)$
taking values in $\Ee\cong TQ$?
Here $\nabla f$ is the gradient of $f$ with respect to any fixed
auxiliary Riemannian metric $g$ on $Q$.]
\end{exercise}

Sharpen the exercise by showing that, given a $C^2$-function
$f_0:Q\to\R$ and an open neighborhood $U$ of $\Crit f$ in $Q$,
then $f_0+f$ is Morse for some $C^2$-small function $f$ supported in $U$.

%%%%%%%%%%%%%%%%%%%%%%%%%%%%%%%%%%
%%%%%%%%%%%%%%%%%%%%%%%%%%%%%%%%%%
\subsubsection{Symplectic action $\mbf{\Aa_H}$ is Morse for
  generic Hamiltonian $\mbf{H}$}
\begin{example}[Proof of Theorem~\ref{thm:A-Morse}]\label{ex:generic-Morse}
Let $(M,\omega)$ be a closed symplectic manifold.
Pick a compatible almost complex structure $J$, not depending
on time $t$, and let $g=g_J$ be the associated Riemannian metric
and $\nabla$ the Levi-Civita connection.
Recall that \textbf{\Index{smooth}} means $C^\infty$ smooth.
Let $\Hhreg$ be the set of all smooth Hamiltonians
$H:\SS^1\times M\to\R$ for which the symplectic action functional 
$\Aa_H:\Ll_0 M\to\R$, given by~(\ref{eq:action-A}) on the space of
smooth contractible loops $z:\SS^1\to M$, is Morse. Our goal is to
show that $\Hhreg$ is a dense open subset of the complete metric space 
\begin{equation}\label{eq:complete-45}
     \Hh:=\left(C^\infty(\SS^1\times M),d\right).
\end{equation}
Here the metric $d$ is given by~(\ref{eq:Cinfty-d}) and it is complete.\footnote{
  To see that $d$ is a metric carry over the arguments
  in~\cite[\S 1.1]{Infusino:2016a}.
  Completeness of $d$ follows from the fact that convergence with respect to $d$
  means convergence with respect to every $C^k$ sup-norm
  $p_k:=\norm{\cdot}_{C^k}:C^k(\SS^1\times M)\to[0,\infty)$
  which includes partial derivatives up to order $k$.
  But $p_k$ is complete since the domain $\SS^1\times M$
  is, firstly, compact by assumption and, secondly, Hausdorff by
  definition of a manifold; see e.g.
  Examples\,1.1.6 and\,1.1.10 in~\cite{Zimmer:1990a}.
  Now apply Lemma~1.1.14 in~\cite{Zimmer:1990a}.
  }

Thom-Smale transversality requires Banach manifolds, so the $C^\infty$
category is not suitable. As $\Uu$ pick the completion
$
     \Lambda M
$
of the space $\Ll_0M:=C^\infty_{\rm contr}(\SS^1,M)$
with respect to the Sobolev $W^{1,2}$ norm;
cf.~Proposition~\ref{prop:B-mf}.\footnote{
  $\Uu$ is the Hilbert manifold of absolutely continuous
  contractible maps $z:\SS^1\to M$ with square integrable derivative $\dot z$.
  The tangent space $T_z\Uu=W^{1,2}(\SS^1,z^*TM)=:W_z^{1,2}$ consists of
  absolutely continuous vector fields $\zeta$ along $z$ 
  with square integrable derivative $\Nabla{t}\zeta$.
  }
As parameter manifold $\Vv$ of Hamiltonians $H$
we choose the separable Banach space\footnote{
  Cf. Proposition~\ref{prop:complete-separable}; compactness of
  $M$ is crucial for completeness of $\norm{\cdot}_{C^k}$.
  }
\begin{equation}\label{eq:Vv-45}
     \Hh^k:=(C^k(\SS^1\times M),\norm{\cdot}_{C^k}),\quad k\ge 2,
\end{equation}
and as $C^1$ section the $L^2$ gradient $\Ff=\grad\Aa:
\Lambda M\times\Hh^k\to\Ee$. Let $\Hhreg^k\subset\Hh^k$
be the subset whose elements $H$ make $\Aa_H:\Lambda M\to\R$ Morse.

Observe that $\Hhreg^k$ coincides with the set of regular values
of the section $\Ff_H$ that appears in the Thom-Smale transversality
Theorem~\ref{thm:TS-transversality}. To see this note that
$\Crit\Aa_H=\Ff_H^{-1}(0)$ since $\Ff_H=\grad\Aa_H$ and since
the zeros $z\in\Lambda M$ of $\Ff_H$, although
apriori only of class $W^{1,2}$, automatically inherit
$C^k$-smoothness from $H$ by Lemma~\ref{le:regularity-crit}.
Recall that, by Exercise~\ref{exc:non-deg-Crit-pp},
non-degeneracy of $z$ as a critical point of $\Aa_H$
is equivalent to surjectivity of the linear operator
$A_z$ that represents the Hessian of $\Aa_H$ at $z$.
But $A_z=D\Ff_H(z)$ since $\Ff_H=\grad\Aa_H$. 

To summarize $\Hhreg=\cap_k\Hhreg^k$ and both definitions
\begin{equation*}
\begin{split}
     \Hhreg^k
   &:=\{H\in\Hh^k\mid\text{$\Aa_H$ Morse}\}\\
   &=\{H\in\Hh^k\mid\text{$D\Ff_H(z)$ surjective
     $\forall z\in\Ff_H^{-1}(0)$}\}\\
   &=:\Hhreg^k
\end{split}
\end{equation*}
agree.
Theorem~\ref{thm:TS-transversality} will ultimately
prove, see Step III, that
\begin{equation}\label{eq:backdoor}
     \emph{
     the set $\Hhreg^{k}$ is {\color{cyan} dense}
     and {\color{red} open} in $\Hh^k$ for $k\ge 2$.
     }
\end{equation}
Suppose that~(\ref{eq:backdoor}) has already been established.
Then we can approach the $C^\infty$ smooth scenario $\Hh$ through the
back door, namely, by approximation:

% approx-1
\textbf{\boldmath Density of $\Hhreg$ in $\Hh$.}
Given a smooth Hamiltonian $H\in\Hh$, viewed as being of class $C^k$, let
us exploit {\color{cyan} density}, in order to $C^k$-approximate $H$ by some
${\color{cyan} h_k}\in\Hhreg^{k}$ as illustrated by
Figure~\ref{fig:fig-TS-approx}.
\begin{figure}[h]
  \centering
  \includegraphics%[width=0.9\textwidth]
                             [height=4.5cm]
                             {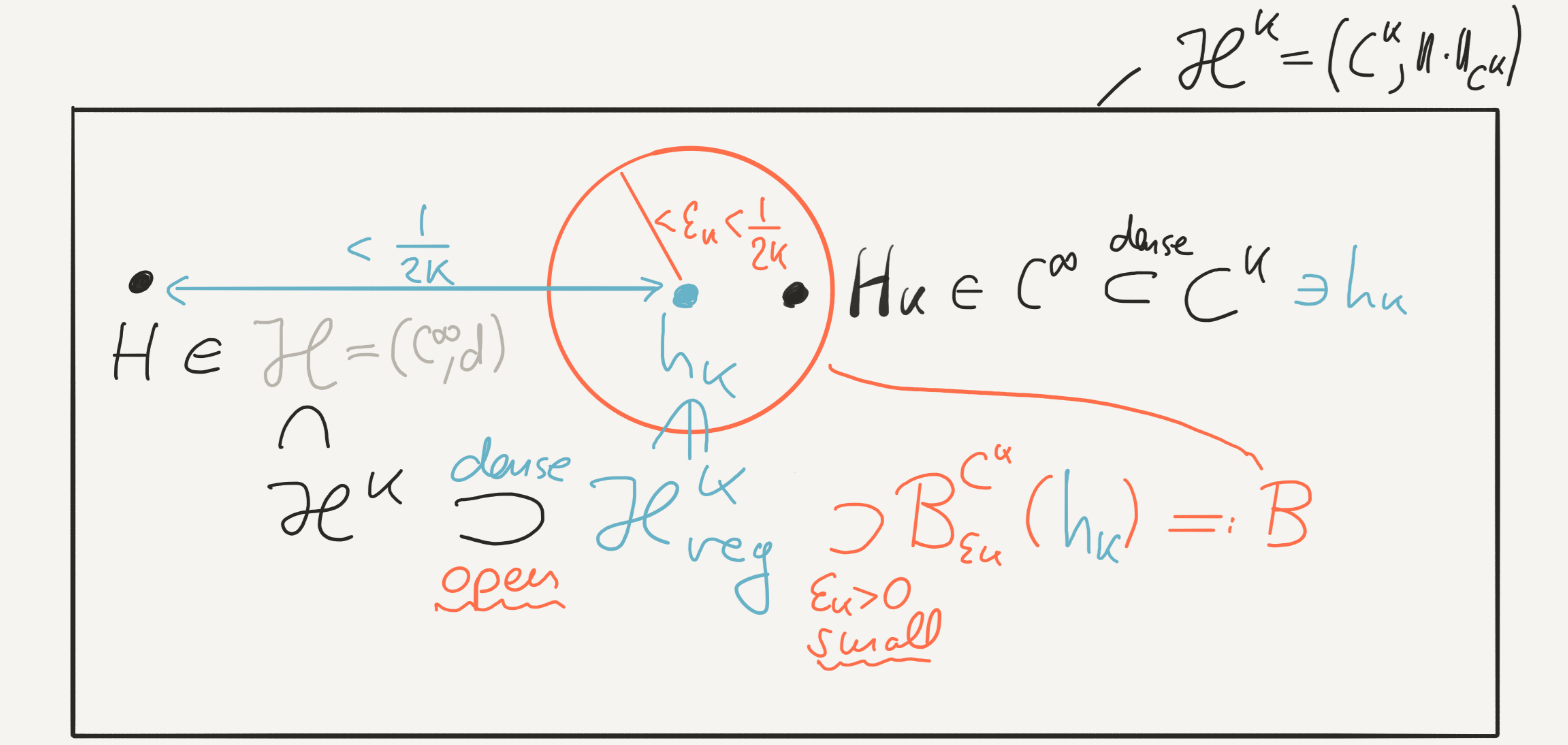}
  \caption{Any $H\in\Hh$ is approximated by a sequence
                $H_k\in\Hhreg=C^\infty\cap\Hhreg^{k}$}
  \label{fig:fig-TS-approx}
\end{figure}
By {\color{red} openness} $\Hhreg^{k}$
contains a whole open $C^k$-ball ${\color{red} B}$ about $h_k$.
% approx-2
Now $C^k$-approximate $h_k$ by a smooth Hamiltonian
${\color{red} H_k}\in B$ using the standard fact that $C^\infty$-functions
are dense in the Banach space of $C^k$-functions,
see e.g.~\cite[Thm.\,2.6]{hirsch:1976a}, hence in the open
subset $B$.
% conclusion
Choosing in each of the two approximation steps a $C^k$-distance less than
$\frac{1}{2k}$ we get that $\norm{H-H_k}_{C^k}<\frac{1}{k}$ for every $k\ge 2$;
set $H_0:=H_1:=H_2$.
At first sight somewhat amazingly, this is already enough to obtain
convergence $d(H,H_k)\to 0$, despite not
having any control whatsoever on the higher $C^\nu$-norms of
$H-H_k$ that appear in the infinite sum $d$. 
However, a second look reveals that the weights $1/2^\nu$ take care of
the higher $C^\nu$-norms:
\[
     d(H,H_k)
     =\underbrace{\sum_{\nu=0}^{\nu_\eps}\frac{1}{2^\nu}}_{\le 2}
     \underbrace{\frac{\norm{H-H_k}_{C^\nu}}{1+\norm{H-H_k}_{C^\nu}}}
          _{\text{$<\frac{1}{k}$ whenever $\nu\le k$}}
     +\underbrace{\sum_{\nu=\nu_\eps+1}^\infty\frac{1}{2^\nu}}_{=:\chi(\nu_\eps)}\,
     \underbrace{\frac{\norm{H-H_k}_{C^\nu}}{1+\norm{H-H_k}_{C^\nu}}}_{\le 1}.
\]
Given $\eps>0$, choose the integer $\nu_\eps$ sufficiently large
such that $\chi(\nu_\eps)<\frac{\eps}{2}$, then for every integer
$k>k_0:=\max\{\nu_\eps,\frac{4}{\eps}\}$ it holds that
$d(H,H_k)<\frac{2}{k}+\frac{\eps}{2}<\eps$.
Since each $H_k$ lies in $\Hhreg^{k}\cap C^\infty=:\Hhreg$ this proves
density in $\Hh$ of the set $\Hhreg$ of \emph{smooth} regular Hamiltonians.

% approx-2
\textbf{\boldmath Openness of $\Hhreg$ in $\Hh$.}
Pick $H_*\in\Hhreg\subset\Hhreg^2\:{\color{red} \subset}\:\Hh^2$.
Then by {\color{red} openness}~(\ref{eq:backdoor}) for $k=2$ there is
a small radius $\delta>0$ such that the whole open $C^2$ ball
$B_\delta^{C^2}(H_*)$ about $H_*$ lies in $\Hhreg^2$.
Set $\eps_*:=\frac{1}{4}\frac{\delta}{1+\delta}$ and {\color{blue} pick}
${\color{blue} H} \in\Hh$ with
$$
     {\color{blue}\eps_* >}\:\,
     d(H,H_*)
     =\sum_{\nu=0}^{\infty}\frac{1}{2^\nu}
     \frac{\norm{H-H_*}_{C^\nu}}{1+\norm{H-H_*}_{C^\nu}}
     >\frac{1}{4} \cdot \frac{\norm{H-H_*}_{C^2}}{1+\norm{H-H_*}_{C^2}}.
$$
But this inequality implies that $\norm{H-H_*}_{C^2}<\delta$ and therefore
$$
     {\color{blue} H} \in\Bigl(B_{\delta}^{C^2}(H_*)\cap\Hh\Bigr)
     \subset\Bigl(\Hhreg^2\cap C^\infty\Bigr)
     =\Hhreg.
$$
Thus the whole open ball ${\color{blue} B_{\eps_*}^{d}(H_*)}$ is contained
in $\Hhreg$. So $\Hhreg\subset\Hh$ is open.

Let us now prove~(\ref{eq:backdoor}), hence
Theorem~\ref{thm:A-Morse}, in three steps I-III.

%%% %%%%%%%%%%%%%%%%%%%%%%%%%%%%
\vspace{.05cm}\noindent
\textbf{I. Setup and Fredholm condition~(F).}
Pick an integer $k\ge 2$. We apply Thom-Smale transversality theory to
these choices: The Banach manifolds
\[
     \Uu:=\Lambda M=W^{1,2}_{\rm contr}(\SS^1,M),\qquad
     \Vv:=\Hh^k:=(C^k(\SS^1\times M),\norm{\cdot}_{C^k}),
\]
each admits a countable atlas and is modeled on a separable
Banach space, see Appendix~\ref{sec:B-spaces-B-mfs},
and the $C^1$ section $\Ff:\Lambda M\times\Hh^k\to\Ee$
given by the $L^2$ gradient
\[
     \Ff(z,H):=\grad\Aa_H(z)=-J(z)\dot z-\nabla H_t(z)
\]
of the symplectic action functional $\Aa_H:\Lambda M\to\R$ defined
by~(\ref{eq:action-A}).
Here
$$
     \Ee\to\Lambda M\times\Hh^k,\qquad
     \Ee_{(z,H)}=L^2(\SS^1,z^*TM)=:L^2_z,
$$
is the Hilbert bundle whose fiber over $(z,H)$ consists of the square
integrable vector fields along the loop $z$ in $M$; fibers do not
depend on $H$.
Bundle setup and the calculation of the $L^2$ gradient
have been carried out in Section~\ref{sec:FH-gradient}.

Observe that $H\mapsto\Ff(z,H)$ is linear, so together with our
previous calculation in Section~\ref{sec:Hess-A} of the Hessian $A_z$
of $\Aa_H$ at a critical point $z$, see~(\ref{eq:Hess-A}), we observe
that the linearization of $\Ff$ at a zero
\[
     (z,H)\in\Mm:=\Ff^{-1}(0)
     =\bigcup_{H\in\Hh}\underbrace{\Crit\Aa_H\times\{H\}}_{=\Ff_H^{-1}(0)}
     \subset\Lambda M\times\Hh^k
\]
is given by
\begin{equation}
\begin{split}
     D\Ff(u,H)=D\oplus L:W^{1,2}_z\oplus \Hh^k&\to L^2_z
     \\
     (\zeta,h)&\mapsto
     A_z\zeta-\nabla h_t(z)
\end{split}
\end{equation}
where $D\zeta:=D\Ff_H(z)\zeta=A_z\zeta$ is given by~(\ref{eq:Hess-A})
and where
\[
     L:=D\Ff_z(H):\Hh^k\to L^2_z,\quad
     h\mapsto -\nabla h_t(z).
\]
Note that
$$
     \norm{Lh}_{L^2_z} =\norm{\nabla h_t(z)}_{L^2_z}
     \le\norm{h}_{C^1(\SS^1\times M)}\le\norm{h}_{\Hh^k},
$$
so $\norm{L}_{\Ll(\Hh^k,L^2_z)}\le 1$.
By Exercise~\ref{exc:symm-Hess},
as an unbounded operator on $L^2_z$ with dense domain $W^{1,2}_z$,
the Hessian $A_z$ has a finite dimensional
kernel which, by self-adjointness, coincides with the cokernel.
As an operator from $W^{1,2}_z$ to $L^2_z$ the Hessian
is bounded, notation $D$, thus a Fredholm operator with $\INDEX D=0$.

Hence assumption (F) in Theorem~\ref{thm:TS-transversality} is
satisfied once $\Ff$ is of class~$C^1$.
But this is readily verified by estimating,
based on $k\ge 2$ and after choosing suitable trivializations, firstly,
the $L^2$ norm of the difference of $\nabla h_t(z)$ and
$\nabla h_t(\tilde z)$ in terms of $\norm{h}_{C^2}$
and, secondly, the $L^2$ norm of the difference of
$D\Ff_H(z)$ and $D\Ff_{\tilde H}(\tilde z)$ applied to $\zeta$.
The second estimate should be in terms of $\norm{\zeta}_{1,2}$.
Continuity of the two partial derivatives
then implies that the derivative of $\Ff$ exists
at each point and varies continuously itself
(in~\cite[Ch.\,1]{ambrosetti:1993a} combine Thm.\,4.3 with Prop.\,4.2).

%%%%%%%%%%%%%%%%%%%%%%%%%%%%%%%
\vspace{.05cm}\noindent
\textbf{\boldmath II. Transversality condition~(S).}
By Lemma~\ref{le:cond-(A)} it suffices to verify condition~(A):
At every zero $(z,H)$ of $\Ff$ the annihilator
$\Ann_{(z,h)}\subset (L^2_z)^*\simeq L^2_z$ of the image of
$D\Ff(z,H):W^{1,2}_z\oplus\Hh^k\to L^2_z$ is the trivial vector space.
In other words, given $\eta\in L^2_z$, we have to show that the
two assumptions
\begin{equation*}
\begin{cases}
     \inner{\eta}{D\zeta}_{L^2_z}=0&\text{, $\forall\zeta\in W^{1,2}_z$,}\\
     \inner{\eta}{L h}_{L^2_z}=0&\text{, $\forall h\in\Hh^k$,}
\end{cases}
\end{equation*}
together imply that $\eta=0$. As usual, the assumption involving
the Fredholm operator will improve the regularity of $\eta$
which one then exploits to invalidate the other assumption
whenever $\eta\not=0$ is not the trivial vector field.

As $D\zeta=A_z\zeta$, assumption one means that
${A_z}^*\zeta=0$, thus by self-adjointness
$\zeta\in\ker {A_z}^*=\ker A_z\subset W^{1,2}_z$.
Hence the $L^2$ vector field $\eta$ is in fact continuous by the
Sobolev embedding theorem, see e.g.~\cite[Thm.\,B.1.11]{mcduff:2004a}.
From now on suppose by contradiction that
\[
     \eta(t_*)\not=0,\quad\text{for some $t_*\in\SS^1=\R/\Z$.}
\]
Below we are going to construct an element
$h\in\Hh^k$ which has the property that
$\inner{\eta}{L h}_{L^2_z}=\inner{\eta}{\nabla h_t(z)}_{L^2_z}>0$. 
Contradiction.

The construction of $h$ will be local. Let $\iota$ be the injectivity
radius of $(M,g)$. We utilize coordinates about the point
$z_*=z(t_*)\in M$ provided by the exponential map
$\exp:T_{z_*M}\supset B_\iota(0)\to M$ and modeled on the
radius $\iota$ ball $B_\iota(0)\subset T_{z_*}M$. The tangent space
comes with the inner product $g_{z_*}$ denoted by $\inner{\cdot}{\cdot}$.
The piece of the loop $z$ inside the coordinate patch determines a
curve $Z(t)$ in $T_{z_*}M$ via the identity $z(t)=\exp_{z_*} Z(t)$.
\begin{itemize}
\item
  As $Z(t_*)=0$ and $z$ is continuous, there is a time interval $I_1$
  about $t_*$ with
  \[
     \Abs{Z(t)}\le\iota/2,\qquad
     \forall t\in I_1=[t_*-\delta_1,t_*+\delta_1].
  \]
\item
  As $\eta(t_*)\not=0$ and $\eta$ is continuous,\footnote{
    Let $E_2(z_*,\xi):=d(\exp_{z_*})_\xi$ be the derivative of $\exp_{z_*}$ at $\xi$.
    As $E_2(z_*,0)=d(\exp_{z_*})_0=\1$ is invertible, also $E_2(z_*,\xi)$
    remains invertible for all $\xi$ near the origin.
  }
  there is a time
  interval $I_2$ about $t_*$~with
  \[
     \INNER{E_2(z_*,Z(t))^{-1}\eta(t)}{\eta(t_*)}>0,\qquad
     \forall t\in I_2=[t_*-\delta_2,t_*+\delta_2].
  \]
\item
  With $\delta:=\min\{\frac12,\delta_1,\delta_2\}$ pick a cutoff function
  $\gamma\in C^\infty_0((t_*-\delta,t_*+\delta),[0,1])$
  as in Figure~\ref{fig:fig-cutoff-gamma}. Denote the $1$-periodic
  extension of $\gamma$ to $\R$ again by $\gamma$.\footnote{
    Viewing $\eta$ as a map on $\R$ such that $\eta(t+1)=\eta(t)$
    $\forall t\in\R$ we may suppose that $t_*\in[0,1)$.
    }
\item
  Pick a cutoff function $\beta\in C^\infty_0((-\iota,\iota),[0,1])$
  as in Figure~\ref{fig:fig-cutoff-beta}.
\item
  Define a smooth function $h:\R\times M\to\R$ with $h_{t+1}\equiv
  h_t$ for all $t$ by
  \[
     h_t(p):=
     \begin{cases}
       \gamma(t)\beta(\abs{v}^2)\INNER{\eta(t_*)}{v}&\text{,
       $p=\exp_{z_*} v$ with $\abs{v}^2<\iota^2$,}
       \\
       0&\text{, else.}
     \end{cases}
  \]
\end{itemize}
\begin{figure}%[h]
\hfill
\begin{minipage}[b]{.49\linewidth}
  \centering
  \includegraphics[width=0.95\textwidth]
                             %[height=2.7cm]
                             {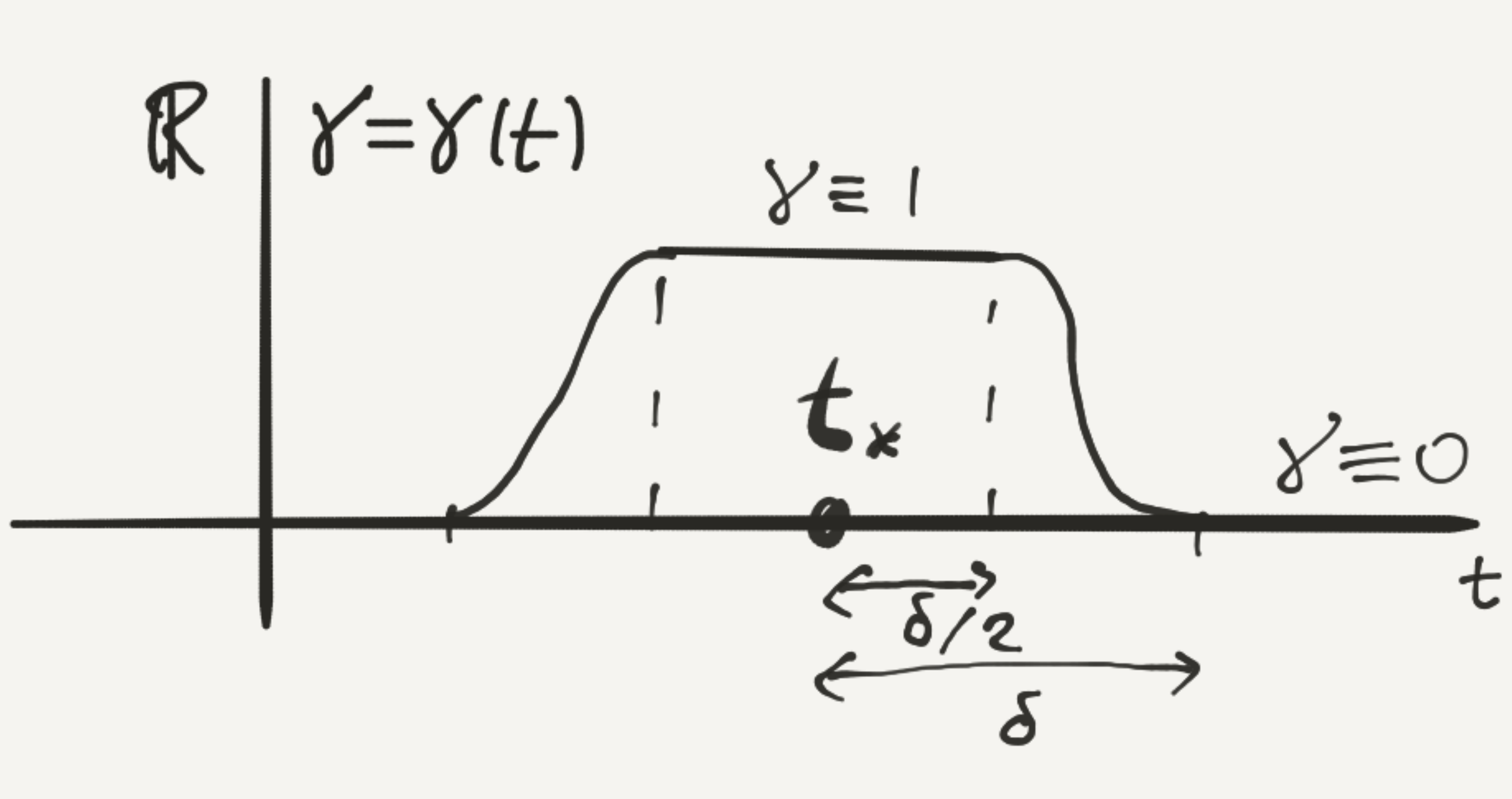}
  \caption{$\gamma\in C^\infty_0((t_*-\delta,t_*+\delta))$}
  \label{fig:fig-cutoff-gamma}
\end{minipage}
\hfill
\begin{minipage}[b]{.49\linewidth}
  \centering
  \includegraphics[width=0.95\textwidth]
                              %[height=2.7cm]
                             {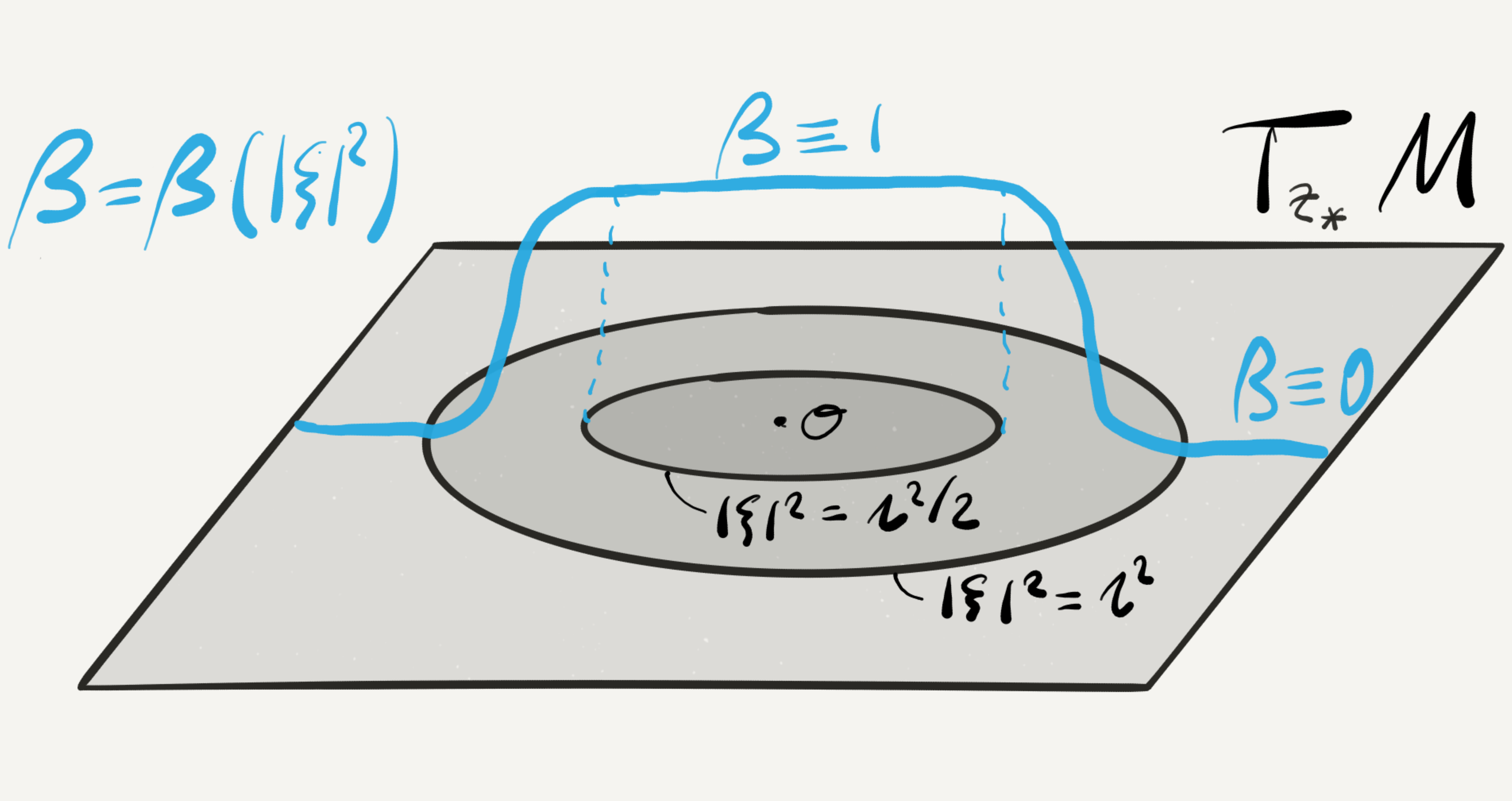}
  \caption{$\beta\in C^\infty_0((-\iota,\iota))$}
  \label{fig:fig-cutoff-beta}
\end{minipage}
\hfill
\end{figure}
Straightforward calculation shows that
\begin{equation*}
\begin{split}
     \INNER{\eta}{\nabla h_t(z)}_{L^2_z}
   &=\int_0^1 dh_t(z(t))\, \eta(t)\: dt
     \\
   &=\int_{T:=\{t\,:\,\abs{Z(t)}<\iota\}}
     \left.\tfrac{d}{d\tau}\right|_{\tau=0}
     \underbrace{h\bigl(\overbrace{\exp_{z(t)}\tau\eta(t)}^{=\exp_{z_*}v_\tau(t)}\bigr)}
     _{\gamma(t)\beta(\abs{v_\tau}^2)\INNER{\eta(t_*)}{v_\tau}} \: dt
     \\
   &=\int_{t_*-\delta}^{t_*+\delta}\biggl(
     2\gamma(t)\overbrace{\beta^\prime(\abs{Z(t)}^2)}^{\equiv 0}
     \,\INNER{Z(t)}{\left.\tfrac{D}{d\tau}\right|_0 v_\tau(t)}
     \,\INNER{\eta(t_*)}{Z(t)}
     \\
   &\qquad\qquad\quad
     +\gamma(t)\overbrace{\beta(\underbrace{\abs{Z(t)}^2}_{\le\iota^2/4})}^{\equiv 1}\,
     \langle
       \eta(t_*),\hspace{-.4cm}
       \underbrace{\left.\tfrac{D}{d\tau}\right|_0 v_\tau(t)}_{E_2(z_*,Z(t))^{-1}\eta(t)}
       \hspace{-.35cm}
     \rangle
     \biggr) \: dt
     \\
   &\ge\int_{t_*-\delta/2}^{t_*+\delta/2}
     \underbrace{\gamma(t)}_{\equiv 1}
     \underbrace{\INNER{\eta(t_*)}{E_2(z_*,Z(t))^{-1}\eta(t)}}_{>0}\: dt 
     >0.
%     \\
%   &>0.
\end{split}
\end{equation*}
But this is the desired contradiction to assumption two.
The second equality holds since for values outside $T$ the function $h$ is
identically zero by definition. Equality three holds since $\gamma$ is
supported in $[t_*-\delta,t_*+\delta]$. Observe that $v_\tau(t)$ is defined
by the overbraced identity; set $\tau=0$ to get that $v_0(t)=Z(t)$ and apply
$\left.\tfrac{D}{d\tau}\right|_0$ to both sides to get that
$E_2(z_*,v_0(t)) \left.\tfrac{D}{d\tau}\right|_0 v_\tau(t)
=E_2(z(t),0)\,\eta(t)=\eta(t)$.

%%%%%%%%%%%%%%%%%%%%%%%%%%%%%%%
\vspace{.1cm}\noindent
\textbf{\boldmath III. Density and openness in the $C^k$ category.}
To conclude the proof of Theorem~\ref{thm:A-Morse} we
show~(\ref{eq:backdoor}), that is density and openness of $\Hhreg^k$ in $\Hh^k$.

\begin{proof}[Proof of Theorem~\ref{thm:A-Morse}]
\emph{Density}: By Steps I and II the Thom-Smale transversality
Theorem~\ref{thm:TS-transversality} applies to
$\Ff=\grad\Aa:\Lambda M\times \Hh^k$ with $\ell=1$.
\newline
\emph{Openness}: By Theorem~\ref{thm:TS-transversality-openness}
it remains to show that the projection $\pi:\Mm\to\Vv^k$ in~(\ref{eq:pi})
is a proper map. The $C^1$ Banach manifold $\Mm$ is given by
\begin{equation*}
     \Mm
     :=\Ff^{-1}(0)
     =\{(z,H)\in\Lambda M\times\Hh^k\mid\dot z=X_{H_t}(z)\}.
\end{equation*}
\emph{Properness}: 
Given a compact subset $K\subset\Hh^k$
and a sequence $(z^\nu,H^\nu)$ in
$$
     \pi^{-1}(K) =\bigcup_{H\in K}
     \left(\Crit\Aa_H\times\{ H\}\right)
     \subset\left(\Lambda M\times\Hh^k\right),
$$
we need to extract a convergent subsequence (the limit of which then
lies automatically in the zero set $\Mm$ by continuity of $\Ff$).
\newline
Observe that the sequence $H^\nu$ lives in $K$.
Compactness of $K\subset\Hh^k$ allows to extract a convergent
subsequence, still denoted by $H^\nu$, and a limit $H\in\Hh^k$.
This has the consequence that the right hand sides of the sequence
of differential equations $\dot z^\nu=X_{H_t^\nu}(z^\nu)$,
each $z^\nu$ being $C^k$ by Lemma~\ref{le:regularity-crit},
are well behaved in the sense that they have a continuous limit
if the sequence $z^\nu$ has one.

By compactness of $M$ the sup norm over $\SS^1\times M$
of the $C^{k-1}$ vector field $X_H$ is finite.
Hence the families $\Ff_0=\{z^\nu\}$ and $\Ff_1=\{\dot z^\nu\}$ are
pointwise bounded.
Since $z^\nu\in C^k(\SS^1,M)$ with $k\ge 2$
both families are equicontinuous by Exercise~\ref{exc:AA}\,b).
Now apply the Arzel\`{a}-Ascoli Theorem~\ref{thm:AA}
to extract subsequences, without changing the notation,
and continuous loops $x$ and $y$
such that $z^\nu\to z$ and $\dot z^\nu\to y$ in $C^0$.
In this case $y=\dot z$,  see e.g.~\cite[Le.\,1.1.14]{Zimmer:1990a},
hence $z\in C^1$.
On the other hand, using the equations, the sequence $\dot z^\nu$
converges in $C^0$ to the vector field $X_{H_t}(z)$.\footnote{
  Set $z^\nu=\exp_z\zeta^\nu$, then
  $\norm{X_{H_t}(z)-\Tt_z(\zeta^\nu)^{-1} X_{H^\nu}(z^\nu)}
  _{C^0}\to 0$, as $\nu\to\infty$; cf.~(\ref{eq:F_z-dA}).
  }

Thus we found a subsequence of $(z^\nu,H^\nu)$ which admits a limit
$(z,H)\in \Crit\Aa_H\times\{H\}\subset \pi^{-1}(H)$.
The latter inclusion holds since $H\in K$.
\end{proof}
This concludes Example~\ref{ex:generic-Morse}.
\end{example}

%%%%%%%%%%%%%%%%%%%%%%%%%%%%%%%%%%
%%%%%%%%%%%%%%%%%%%%%%%%%%%%%%%%%%
\subsubsection{Classical action $\mbf{\Ss_V}$ is Morse for
generic potential $\mbf{V}$}
\begin{example}
Given a closed Riemannian manifold $Q$,
the classical action functional $\Ss_V$ defined
by~(\ref{eq:classical-action}) is Morse for generic smooth potential
functions $V:\SS^1\times Q\to\R$; for details
see~\citerefFH{weber:2002a}.
Here the critical sets $\Crit\Ss_V$ are not necessarily compact,
but there parts in sublevel sets $\{\Ss_V<a\}$ are
for each regular value $a$ of $\Ss_V$.
Hence Step III above will require refined arguments.
\end{example}

\newpage 
%%%%%%%%%%%%%%%%%%%%%%%%%%%%%%%%%%
%%%%%%%%%%%%%%%%%%%%%%%%%%%%%%%%%%
%%% SECTION %%%%%%%%%%%%%%%%%%%%%%%%
%%%%%%%%%%%%%%%%%%%%%%%%%%%%%%%%%%
%%%%%%%%%%%%%%%%%%%%%%%%%%%%%%%%%%
\section{Floer chain complex and homology}\label{sec:FC}

In this section we define the Floer complex associated to a
regular pair $(H,J)$ on a \emph{closed} symplectic manifold
$(M,\omega)$. We only consider the simplest case in which
$\omega$ and $c_1(M)$ vanish over $\pi_2(M)$;
cf.~(\ref{eq:symp-asph-c_1}).
The corresponding homology, called Floer homology,
transforms by isomorphisms when changing regular pairs
and in the end of the day represents, again naturally, the
singular homology of $M$.
For simplicity we take $\Z_2$ coefficients in all homology
theories in Section~\ref{sec:FC}.

\begin{definition}\label{def:regular-pair}
A \textbf{\Index{regular pair}} $(H,J)$ consists of a
Hamiltonian $H\in C^\infty(\SS^1\times M)$, that is a periodic
family of functions $H_{t+1}=H_t:=H(t,\cdot)$ of functions on $M$,
and a periodic family $J_{t+1}=J_t\in\Jj(M,\omega)$
of $\omega$-compatible almost complex structures
such that the following is true: The action functional
$\Aa_H:\Ll M\to\R$ given by~(\ref{eq:action-A}) is Morse and
the linearized operators $D_u$ given by~(\ref{eq:D_u-lin-Floer})
are surjective for all connecting trajectories $u\in\Mm(x,y;H,J)$
and all Hamiltonian loops $x,y\in\Pp_0(H)=\Crit\Aa_H$.
\end{definition}

Earlier we showed how to obtain a regular pair:
To satisfy the Morse condition, pick an element $H^0$
of the residual subset $\Hhreg\subset C^\infty(\SS^1\times M)$
provided by Theorem~\ref{thm:A-Morse}.
Now pick a family $J_{t+1}=J_t$ of $\omega$-compatible almost complex
structures on $M$.
Then, either perturb $H^0$ away from its critical points,
see Theorem~\ref{thm:A-MS}, or stay with $H^0$ and perturb
the family $J_t$, see~\citerefFH[Thm.~5.1]{Floer:1995a},
to obtain a regular pair denoted by $(H,J)$.

\begin{definition}\label{def:CF}
Pick a regular pair $(H,J)$. Then the
$\Z_2$ vector spaces
$$
     \CF_k(H)=\CF_k(M,\omega,H)
     :=\bigoplus_{z\in\Pp_0(H)\atop \CZcan_H(z)=k}
     \Z_2 z
$$
graded by the canonical, that is clockwise normalized, Conley-Zehnder index,
see~(\ref{eq:CZ-normalization-canonical-88}) and (\ref{eq:def-CZ-orbit}),
are called the \textbf{\Index{Floer chain group}s}
associated to the Hamiltonian $H$.\index{chain group!Floer --}
By convention the empty set generates the trivial vector space.
\newline
The set $\Pp_0(H)=\Crit:=\Crit \Aa_H$ of all contractible $1$-periodic
Hamiltonian orbits is finite by Proposition~\ref{prop:fin-crit-pts}.
The subset $\Crit_k$ of those of index $k$ is a basis of
$\CF_k(H)$, called the \textbf{\Index{canonical basis} (over $\mbf{\Z_2}$)}.

The \textbf{Floer boundary operator}\index{Floer!boundary operator}\index{boundary operator!Floer --}
is given on a basis element $x\in\Crit_k$ by
\begin{equation}\label{eq:Floer-boundary-operator}
\begin{split}
     \p=\p^{\rm F}(H,J):
     \CF_*(H)&\to\CF_*(H)
     \\
     x&\mapsto
     \sum_{y\in\Crit_{k-1}} \#_2(m_{xy})\, y
\end{split}
\end{equation}
where $\#_2(m_{xy})$ is the number {\rm (mod 2)}
of \textbf{connecting flow lines}.\footnote{
  A \textbf{\Index{connecting flow line}}
  is\index{flow line!connecting --}
  an unparametrized solution curve,
  the image of a flow trajectory $\R\to\Ll M$, $s\mapsto u_s$,
  between two critical points $x,y$.
  Connecting flow lines are in bijection with the set $m_{xy}$ of those $u\in\Mm(x,y;H,J)$
  with\index{$m_{xy}$ connecting flow lines}
  $\Aa_H(u_0)=\frac12(\Aa_H(x)+\Aa_H(y))$.
  This is a finite set by Exercise~\ref{exc:finite-set-flow-lines-ind-diff-1}.
  Let $\#_2(m_{xy})$ be the number of elements~modulo~2.
  }
\end{definition}

\begin{proposition}[Boundary operator]\label{prop:HF-bound-op}
It holds that $\p^2=0$.
\end{proposition}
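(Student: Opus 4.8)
The strategy is the standard one for Floer-type theories: show that $\p^2 x$, expanded on the basis $\Crit_{k-2}$, has all coefficients zero because the relevant one-dimensional moduli spaces of connecting trajectories are compact up to exactly two broken configurations. Concretely, fix a basis element $x\in\Crit_k$. By definition $\p^2 x=\sum_{z\in\Crit_{k-2}}\bigl(\sum_{y\in\Crit_{k-1}}\#_2(m_{xy})\,\#_2(m_{yz})\bigr)\,z$, so it suffices to prove that for each pair $x,z$ with $\CZcan(x)-\CZcan(z)=2$ the total count of broken trajectories $\sum_{y}\#_2(m_{xy})\cdot\#_2(m_{yz})$ is even. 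First I would invoke Theorem~\ref{thm:A-MS}: for the regular pair $(H,J)$ the moduli space $\Mm(x,z;H,J)$ is a smooth manifold, and by the index formula its component through any $u$ has dimension $\CZcan(x)-\CZcan(z)=2$. Quotienting by the free $\R$-action of time-shift $s\mapsto\sigma+s$ (free because stationary trajectories have lower-dimensional components, and here $x\neq z$) gives a smooth $1$-dimensional manifold $\widehat\Mm(x,z)$ of unparametrized connecting flow lines.

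The heart of the argument is a compactification statement: $\widehat\Mm(x,z)$ is compact up to simple breaking, i.e. it is the interior of a compact $1$-manifold-with-boundary whose boundary points are precisely the once-broken configurations $(\hat u,\hat v)$ with $\hat u\in\widehat\Mm(x,y)$, $\hat v\in\widehat\Mm(y,z)$ for some $y\in\Crit_{k-1}$ (no other $y$ can occur since the index must strictly decrease along each piece and drops by exactly $2$ in total, forcing each piece to have index difference $1$). This rests on three ingredients, all of which I would cite from the references indicated in the excerpt (\citerefFH{salamon:1999a}, \cite[\S 6.5]{hofer:2011a}) rather than reprove: (a) the energy identity~(\ref{eq:ENERGY-FLoer-flow-line}) giving the uniform energy bound $E(u)=\Aa_H(x)-\Aa_H(z)$ on $\Mm(x,z)$; (b) Gromov-Floer compactness, which together with the symplectic asphericity assumption~(C2) (no bubbling) shows every sequence in $\widehat\Mm(x,z)$ has a subsequence converging to a broken trajectory; (c) the gluing theorem, which shows each broken pair $(\hat u,\hat v)$ as above is the limit of a unique end of $\widehat\Mm(x,z)$. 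Ingredients (b) and (c) identify the ends of the $1$-manifold $\widehat\Mm(x,z)$ bijectively with $\bigsqcup_{y\in\Crit_{k-1}} m_{xy}\times m_{yz}$.

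Once this compactification is in hand, the conclusion is purely topological: a compact $1$-manifold-with-boundary has an even number of boundary points, so $\#_2\bigl(\bigsqcup_{y} m_{xy}\times m_{yz}\bigr)=\sum_{y\in\Crit_{k-1}}\#_2(m_{xy})\cdot\#_2(m_{yz})=0$ in $\Z_2$. Since this holds for every $z\in\Crit_{k-2}$, every coefficient of $\p^2 x$ vanishes, hence $\p^2=0$. Working over $\Z_2$ spares us any discussion of orientations of moduli spaces and push-forward signs; the partner-pair picture is exactly the one already sketched for the Morse case in Figure~\ref{fig:fig-partner-pairs}.

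\textbf{Main obstacle.} The genuine analytic content is entirely in the compactification claim, and within it the gluing theorem (ingredient (c)) is the hard part: constructing, for each broken trajectory, a pregluing, then correcting it to an honest solution via a Newton-iteration/implicit-function-theorem argument using the uniform invertibility of the linearized operators (which in turn uses exponential decay, Theorem~\ref{thm:finite-ENERGY=LIMITS-exist}, and the surjectivity built into the regular pair), and finally showing this gluing map is a diffeomorphism onto a half-open collar of the corresponding end. I would not carry this out, but instead state it precisely and refer to \citerefFH{salamon:1999a} and \cite{mcduff:2004a}; the remaining steps (energy identity, no-bubbling from (C2), and the $\Z_2$-counting) are then routine.
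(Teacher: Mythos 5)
Your proposal is correct and follows essentially the same strategy as the paper: reduce $\p^2=0$ to showing that the number of once-broken flow lines from $x$ to $z$ (index difference $2$) is even, then use compactness up to breaking (Section~\ref{sec:FH-comp}) and the gluing map (Section~\ref{sec:FH-gluing}) to identify this set with the boundary of a compact $1$-manifold, whose boundary point count is even. The paper phrases this as ``ends of finitely many open intervals'' and packages the evenness as partner pairs (Figure~\ref{fig:fig-partner-pairs}), but the content is identical to your compactification statement; both defer the analytic work on energy bounds, bubbling exclusion via~(C2), and gluing to the cited sections.
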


\begin{exercise}\label{exc:HF-bound-op}
Given $x\in\Crit_{k+1}$, show that $\p^2 x$
is equal to the sum over all $z\in\Crit_{k-1}$
where the coefficient of each $z$ is the number (mod 2)
of 1-fold broken flow lines $(u,v)\in m_{xy}\times m_{yz}$ that start at $x$, end at $z$,
and pass an intermediate critical point $y$ of index $k$
at which $u$ and $v$ meet; cf. footnote.
\end{exercise}

So to prove $\p^2=0$ it suffices to show that
the number of such 1-fold broken flow lines $(u,v)$ between
$x$ and $z$ is even. To see this one shows that
for each $(u,v)$ there exists precisely one partner pair
$(\tilde u,\tilde v)$ which is determined by the property
that there is a 1-dimensional connected non-compact manifold,
i.e. an open interval, of trajectories running straight from $x$ to $z$
and whose two ends correspond to the two partner pairs;
see Figure~\ref{fig:fig-partner-pairs}.
The sense in which the two ends correspond to partner pairs
is introduced and detailed in Section~\ref{sec:FH-comp} on compactness
up to broken flow lines. The gluing procedure developed in
Section~\ref{sec:FH-gluing} excludes that two families
converge to the same broken flow line $(u,v)$.
To summarize, the set of 1-fold broken flow lines from $x$ to $z$
is in bijection with the ends of finitely many open intervals,
so the number of them is even.

\begin{definition}\label{def:HF}
The chain complex
$$
     \CF(H)=\CF(M,\omega,H,J):=\left(\CF_*(H),\p^{\rm F}(H,J)\right)
$$
is called the \textbf{Floer complex}\index{Floer!complex}
associated to a regular pair $(H,J)$. The corresponding
homology groups, called
\textbf{Floer homology groups},\index{Floer!homology}
are graded $\Z_2$ vector spaces. They are denoted by
$$
     \HF_*(H)=\HF_*(M,\omega,H;J).
$$
\end{definition}

\begin{theorem}[Continuation]\label{thm:HF-continuation}
For any two regular pairs $(H^\alpha,J^\alpha)$ and
$(H^\beta,J^\beta)$ there is a natural\,\footnote{
  Here \textbf{\Index{natural isomorphism}} means that there
  are no (further) choices involved.
  }
isomorphism
$$
     \Psi^{\beta\alpha}:\HF_*(\alpha)\to\HF_*(\beta).
$$
Furthermore, given a third regular pair $(H^\gamma,J^\gamma)$, then
$$
     \Psi^{\gamma\beta}\Psi^{\beta\alpha}=\Psi^{\gamma\alpha}
     ,\qquad
     \Psi^{\alpha\alpha}=\1.
$$
\end{theorem}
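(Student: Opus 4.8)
The strategy is the standard continuation argument in Floer theory, run in parallel to the Morse-theoretic discussion in Section~\ref{sec:Morse-homology}. First I would pick, for the two regular pairs $(H^\alpha,J^\alpha)$ and $(H^\beta,J^\beta)$, a smooth homotopy $(H^s,J^s)_{s\in\R}$ of Hamiltonians and $\omega$-compatible almost complex structures which is $s$-independent outside a compact interval and equals $(H^\alpha,J^\alpha)$ for $s\le -R$ and $(H^\beta,J^\beta)$ for $s\ge R$. One then studies the $s$-dependent version of Floer's equation~(\ref{eq:FLOER}), namely $\p_s u-J^s_t(u)\p_t u-\nabla H^s_t(u)=0$, with asymptotic conditions $u(s,\cdot)\to x\in\Pp_0(H^\alpha)$ as $s\to-\infty$ and $u(s,\cdot)\to y\in\Pp_0(H^\beta)$ as $s\to+\infty$. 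The first step is the analytic package: a finite-energy/asymptotic-convergence statement as in Theorem~\ref{thm:finite-ENERGY=LIMITS-exist} (using non-degeneracy of the two end functionals), Fredholm theory for the linearized operator $D_u$ along the lines of Section~\ref{sec:Fredholm} and Theorem~\ref{thm:D_u-Fredholm} (the index is now $\CZcan_{H^\alpha}(x)-\CZcan_{H^\beta}(y)$, with no extra shift because the homotopy does not affect the spectral-flow computation), and a Thom-Smale transversality statement (Theorem~\ref{thm:TS-transversality}, applied to homotopies as a parameter space, exactly as in Step~I/II of Example~\ref{ex:generic-Morse}) giving that for generic homotopies all these moduli spaces are manifolds of the expected dimension. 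Counting mod $2$ the zero-dimensional components defines a chain map $\psi^{\beta\alpha}:\CF_*(\alpha)\to\CF_*(\beta)$; the chain-map property $\psi^{\beta\alpha}\p^\alpha=\p^\beta\psi^{\beta\alpha}$ comes from the compactness-up-to-breaking analysis of the one-dimensional moduli spaces (Section~\ref{sec:FH-comp}) together with gluing (Section~\ref{sec:FH-gluing}), precisely as $\p^2=0$ is proved in Proposition~\ref{prop:HF-bound-op}, the only difference being that a boundary point of a $1$-dimensional $s$-dependent moduli space breaks either into an $H^\alpha$-Floer trajectory followed by an $s$-dependent one, or an $s$-dependent one followed by an $H^\beta$-Floer trajectory.

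The second step is to show $\psi^{\beta\alpha}$ induces an isomorphism on homology and that the induced map $\Psi^{\beta\alpha}$ is independent of the chosen homotopy. For this I would use homotopies of homotopies: given two homotopies from $(\alpha)$ to $(\beta)$, a generic two-parameter family interpolating between them gives, by counting the $1$-dimensional moduli spaces and analyzing their (now two types of) boundary strata, a chain homotopy between $\psi^{\beta\alpha}_0$ and $\psi^{\beta\alpha}_1$; hence the maps on homology agree, and one writes $\Psi^{\beta\alpha}$ unambiguously. The concatenation relation $\Psi^{\gamma\beta}\Psi^{\beta\alpha}=\Psi^{\gamma\alpha}$ then follows by a standard gluing-of-homotopies argument: concatenating a homotopy $(\alpha)\rightsquigarrow(\beta)$ with one $(\beta)\rightsquigarrow(\gamma)$ and stretching the neck, one shows the composition $\psi^{\gamma\beta}\circ\psi^{\beta\alpha}$ is chain homotopic to the continuation chain map for the glued homotopy $(\alpha)\rightsquigarrow(\gamma)$, which computes $\Psi^{\gamma\alpha}$ on homology; this is the exact analogue of the relations displayed for the Morse continuation maps $\Psi^{\beta\alpha}_k$ in Section~\ref{sec:Morse-homology}. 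Finally, taking $(H^\beta,J^\beta)=(H^\alpha,J^\alpha)$ and the constant homotopy, the $s$-independent moduli spaces have no zero-dimensional part except constant cylinders, so $\psi^{\alpha\alpha}(\text{const})=\1$ on the chain level, giving $\Psi^{\alpha\alpha}=\1$; combined with the concatenation relation this shows each $\Psi^{\beta\alpha}$ is invertible with inverse $\Psi^{\alpha\beta}$, completing the proof that it is an isomorphism.

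The main obstacle, as usual, is not the formal homological bookkeeping but the analytic foundations transported to the $s$-dependent setting: establishing the a priori energy bound and $C^0$ asymptotic convergence for solutions of the homotopy equation (the energy is no longer exactly $\Aa_{H^\alpha}(x)-\Aa_{H^\beta}(y)$ but acquires a curvature term $\int\int(\p_s H^s_t)(u)\,ds\,dt$, which must be shown to be bounded — this needs a sign/monotonicity or a suitable cutoff on the homotopy), together with the compactness and gluing statements for the one- and two-dimensional moduli spaces. All of these are parallel to the $s$-independent case treated in Sections~\ref{sec:DGF}--\ref{sec:FC}, so I would invoke~\citerefFH{salamon:1999a} and~\citerefFH{Floer:1995a} for the details rather than redo them, and present here only the homotopy-of-homotopies and gluing-of-homotopies arguments that yield well-definedness, functoriality, and invertibility.
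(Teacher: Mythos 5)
Your proposal follows essentially the same route the paper takes in Section~\ref{sec:FH-continuation}: construct continuation chain maps $\psi^{\beta\alpha}$ from $s$-dependent Floer equations, prove the chain-map property and homotopy-independence by compactness-plus-gluing arguments on the one-dimensional moduli spaces, obtain the concatenation relation by neck-stretching (the paper's Lemma~\ref{le:Salamon-homotopy-s}), verify $\psi^{\alpha\alpha}(\text{const})=\1$ (Exercise~\ref{exc:const-htpy}), and deduce invertibility from these. The only cosmetic difference is that the paper fixes $J$ throughout Section~\ref{sec:FH-continuation} and only homotops $H$ (leaving the simultaneous $(H,J)$-homotopy to the cited references), whereas you run the homotopy in both $H$ and $J$; this is a slightly more general but equivalent setup, and your identification of the curvature term $\int\!\!\int(\p_s H^s_t)(u)$ as the main analytic issue is exactly the point the paper addresses via the cited literature.
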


\begin{theorem}[Calculation]\label{thm:HF=H-CSF}
Suppose $(M,\omega)$ is a closed symplectic manifold
such that $\omega$ and $c_1(M)$ vanish over $\pi_2(M)$.
Then for any regular pair $(H^\alpha,J^\alpha)$ there is an
isomorphism of degree $n$ denoted by
$$
     \Psi^\alpha:\HF_{\ell-n}(\alpha)\to \Ho_{\ell}(M)
$$
and these isomorphisms are natural in the sense that
$$
     \Psi^\beta \Psi^{\beta\alpha}=\Psi^\alpha.
$$
\end{theorem}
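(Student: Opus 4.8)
The plan is to prove Theorem~\ref{thm:HF=H-CSF} by the ``small Hamiltonian'' method: reduce the computation of $\HF_*(\alpha)$ for a \emph{general} regular pair $(H^\alpha,J^\alpha)$ to the computation of $\HF_*$ for a very specific regular pair built from a $C^2$-small Morse function, where the Floer complex collapses onto the Morse complex of $M$ and the claimed degree shift $n$ appears through the identity~(\ref{eq:mu_H=ind_-H}). Naturality in $\alpha$ will then be an automatic consequence of the continuation isomorphisms of Theorem~\ref{thm:HF-continuation}, so that the whole statement amounts to (a) identifying one convenient model and (b) checking that model carefully.

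\textbf{Step 1: Reduction to a single model pair.} By Theorem~\ref{thm:HF-continuation}, for \emph{any} two regular pairs there is a canonical isomorphism $\Psi^{\beta\alpha}$, and these compose correctly. Hence it suffices to exhibit one regular pair $(H^0,J^0)$ together with an isomorphism $\Psi^0:\HF_{\ell-n}(H^0)\to\Ho_\ell(M)$; for a general $\alpha$ one then \emph{defines} $\Psi^\alpha:=\Psi^0\circ\Psi^{0\alpha}$, and naturality $\Psi^\beta\Psi^{\beta\alpha}=\Psi^\alpha$ is immediate from the cocycle relations $\Psi^{0\beta}\Psi^{\beta\alpha}=\Psi^{0\alpha}$ in Theorem~\ref{thm:HF-continuation}. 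So the real content is the model computation. (One should remark that this fixes $\Psi^\alpha$ only up to the initial choice of $\Psi^0$, which is the usual — and harmless — ambiguity; if a genuinely choice-free $\Psi^\alpha$ is wanted, one invokes in addition that the specific $\Psi^0$ below, being induced by an inclusion of complexes, is itself natural under continuation among small Morse data.)

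\textbf{Step 2: The small-Morse model.} Fix an autonomous $J^0\in\Jj(M,\omega)$, let $g=g_{J^0}$, and pick a Morse function $f:M\to\R$ that is Morse--Smale for $g$ and small enough in $C^2$ that Proposition~\ref{prop:HZ-C2small} applies to $H^0:=\eps f$ for $\eps>0$ small; after a further generic perturbation supported away from $\Crit f$ (using Theorem~\ref{thm:A-MS} or the remark following it) we may assume $(H^0,J^0)$ is a regular pair. By Proposition~\ref{prop:HZ-C2small} the only $1$-periodic orbits are the constant ones at critical points of $f$, so $\Pp_0(H^0)=\Crit f$ as sets, and each is non-degenerate. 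The grading is computed by~(\ref{eq:mu_H=ind_-H}): for $z_0\in\Crit f$ one has $\CZcan(z_0)=n-\IND_{H^0}(z_0)=n-\IND_f(z_0)$, equivalently $\CZcan(z_0)=\IND_{-f}(z_0)-n$. Thus $\CF_{\ell-n}(H^0)$ is precisely the $\Z_2$-vector space on $\{z_0:\IND_{-f}(z_0)=\ell\}$, i.e. $\CM_\ell(-f;\Z_2)$. Next one identifies the boundary operators: for $\eps$ small the Floer cylinders between index-difference-one orbits are exactly the gradient trajectories of $-f$ (this is the autonomous/$t$-independent regime of Floer's equation indicated in Figure~\ref{fig:fig-Floers-interpol-eq}, together with the standard ``$\eps\to 0$'' argument that rules out genuinely $t$-dependent cylinders — here one cites the relevant energy and a-priori estimates; I would state it as a cited fact rather than reprove it). Consequently the Floer boundary operator~(\ref{eq:Floer-boundary-operator}) equals the Morse boundary operator~(\ref{eq:bound-Morse-Z2}) of $(-f,g)$, and $\CF_{*-n}(H^0)=\CM_*(-f;\Z_2)$ as chain complexes. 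Taking homology and composing with the canonical isomorphism $\HM_*(M;(-f,g))\xrightarrow{\ \cong\ }\Ho_*(M)$ of~(\ref{eq:fund-Morse-hom}) gives the desired degree-$n$ isomorphism $\Psi^0:\HF_{\ell-n}(H^0)\to\Ho_\ell(M)$.

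\textbf{Step 3: Assembling and the main obstacle.} Combining Steps~1 and~2, for general $\alpha$ set $\Psi^\alpha:=\Psi^0\circ\Psi^{0\alpha}$; the identity $\Psi^\beta\Psi^{\beta\alpha}=\Psi^\alpha$ follows from $\Psi^{0\beta}\Psi^{\beta\alpha}=\Psi^{0\alpha}$. The main obstacle is Step~2's identification of the chain complexes, specifically the claim that for sufficiently small $\eps$ \emph{every} index-difference-one Floer trajectory of $(\eps f,J^0)$ is $s$-independent in the second variable and hence a $-\nabla f$ trajectory, with the correct (mod $2$) count. This requires the standard but nontrivial compactness/transversality package — uniform energy bounds forcing images into a single Darboux/coordinate region, an a-priori $C^1$ estimate, and an implicit-function-theorem bijection with Morse trajectories — essentially the content of~\citerefFH{salamon:1999a} and~\cite{mcduff:2004a}; I would quote it precisely rather than redo it. A secondary (genuinely routine) point is bookkeeping the Conley--Zehnder normalization: because we use the \emph{canonical} clockwise index $\CZcan$ and the axiom $\rm\texttt{(signature)}_{\texttt{can}}$, the shift in~(\ref{eq:mu_H=ind_-H}) comes out as $n-\IND_f$, which is why the final isomorphism has degree exactly $n$ and matches $\Ho_\ell(M)$ via the index $\IND_{-f}=n-\IND_f$; this must be checked but involves no new ideas.
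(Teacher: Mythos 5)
Your proposal is correct and follows essentially the same route as the paper's Method~1: continuation to a $C^2$-small Morse Hamiltonian, collapse of the Floer complex via the (cited) Hofer--Salamon fact that index-difference-$\le 1$ cylinders are $t$-independent, the grading shift~(\ref{eq:mu_H=ind_-H}), and the fundamental Morse isomorphism~(\ref{eq:fund-Morse-hom}); your identification $\CF_{*-n}(\eps f)=\CM_*(-f;\Z_2)$ is literally the paper's $\CF_{*-n}(\eps h)\equiv\CM^{2n-*}(\eps h)$ followed by Morse-theoretic Poincar\'e duality, so the difference is purely one of packaging. One small slip: since the Floer equation degenerates to the \emph{upward} gradient equation $\p_s u=\nabla f(u)$, see~(\ref{eq:FC-equation-eq}), the limiting cylinders are $+\nabla f$ trajectories (i.e.\ downward gradient trajectories of $-f$), not ``$-\nabla f$ trajectories'' as written in your Step~3 --- the identification in your Step~2 is the correct one.
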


\begin{corollary}\label{cor:AC}
The weak non-degenerate {Arnol\,$^\prime$d} conjecture~(\ref{eq:non-deg-AC}) is true.
\end{corollary}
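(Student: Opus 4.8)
\textbf{Proof plan for Corollary~\ref{cor:AC}.}
The plan is to combine Theorem~\ref{thm:HF=H-CSF} (the Floer isomorphism $\HF_{\ell-n}(\alpha)\cong\Ho_\ell(M)$ for closed symplectically aspherical $M$) with elementary linear algebra over $\Z_2$, and then invoke Proposition~\ref{prop:fin-crit-pts} to control $\abs{\Pcont(H)}$. First I would fix an arbitrary smooth $H:\SS^1\times M\to\R$ and argue that it suffices to prove the bound~(\ref{eq:non-deg-AC}) under the additional hypothesis that all contractible $1$-periodic orbits of $X_H$ are non-degenerate, i.e.\ that $\Aa_H$ is Morse: indeed, by Theorem~\ref{thm:A-Morse} the set $\Hhreg$ of Morse-regular Hamiltonians is dense in $\Hh=(C^\infty(\SS^1\times M),d)$, so a general $H$ can be $C^\infty$-approximated by Morse-regular ones; and since in the non-degenerate case~(\ref{eq:non-deg-AC}) asserts $\abs{\Pcont(H)}\ge\SB(M)$, a lower bound which persists under perturbation only strengthens matters --- more precisely, if $\Aa_H$ is degenerate then either $\Pcont(H)$ is already infinite (in which case there is nothing to prove) or each $z\in\Pcont(H)$ is isolated and, by a standard local degree / Conley-index count, contributes at least one nearby orbit for a small Morse-regular perturbation, so $\abs{\Pcont(H)}$ dominates the count for a nearby Morse-regular Hamiltonian. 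So from now on assume $H$ is Morse-regular; by perturbing $J$ as in Theorem~\ref{thm:A-MS} and Remark~\ref{rem:M->MS} we may assume $(H,J)$ is a regular pair in the sense of Definition~\ref{def:regular-pair} \emph{without changing $\Pcont(H)=\Crit\Aa_H$}.

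Next I would assemble the chain-level estimate. By Proposition~\ref{prop:fin-crit-pts} the set $\Pcont(H)$ is finite, so each Floer chain group $\CF_k(H)$ is a finite-dimensional $\Z_2$ vector space with the canonical basis $\Crit_k\Aa_H$, and
\[
     \abs{\Pcont(H)}=\sum_{k\in\Z}\dim_{\Z_2}\CF_k(H).
\]
For any chain complex $(\CF_*,\p)$ of finite-dimensional vector spaces over a field one has, for each $k$, the inequality $\dim\CF_k\ge\dim\HF_k$ (since $\HF_k$ is a subquotient of $\CF_k$, being $\ker\p_k/\im\p_{k+1}$). Summing over $k$ gives
\[
     \abs{\Pcont(H)}=\sum_k\dim_{\Z_2}\CF_k(H)\ \ge\ \sum_k\dim_{\Z_2}\HF_k(M,\omega,H;J).
\]
Here I would use that $M$ is closed, so that $\Ho_*(M;\Z_2)$ is finite-dimensional and the sums are finite; and I would point out that the grading shift by $n$ in Theorem~\ref{thm:HF=H-CSF} does not affect the \emph{total} dimension.

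Finally I would apply Theorem~\ref{thm:HF=H-CSF}: since $(M,\omega)$ is closed with $\omega$ and $c_1(M)$ vanishing on $\pi_2(M)$ --- which is precisely the standing Assumption of Chapter~\ref{sec:FH}, i.e.\ symplectic asphericity~(\ref{eq:symp-asph-c_1}) --- there is for the regular pair $(H,J)$ an isomorphism $\HF_{\ell-n}(M,\omega,H;J)\cong\Ho_\ell(M;\Z_2)$ for every $\ell$. Therefore
\[
     \sum_k\dim_{\Z_2}\HF_k(M,\omega,H;J)=\sum_\ell\dim_{\Z_2}\Ho_\ell(M;\Z_2)=:\SB(M),
\]
the sum of the $\Z_2$-Betti numbers of $M$. (If one wishes the bound in terms of rational or integral Betti numbers one notes $\dim_{\Z_2}\Ho_\ell(M;\Z_2)\ge\dim_\Q\Ho_\ell(M;\Q)=b_\ell(M)$ by the universal coefficient theorem, so $\SB(M)\ge\sum_\ell b_\ell(M)$ with equality when $\Ho_*(M;\Z)$ is torsion-free; either way the stated inequality~(\ref{eq:non-deg-AC}) holds.) Combining the two displays yields $\abs{\Pcont(H)}\ge\SB(M)$ for every Morse-regular $H$, hence for every $H$ by the reduction in the first paragraph, which is exactly~(\ref{eq:non-deg-AC}).

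\textbf{Main obstacle.} The genuinely hard analytic content --- Theorem~\ref{thm:HF=H-CSF}, i.e.\ that Floer homology of a symplectically aspherical closed manifold is isomorphic to its singular homology --- is quoted as already established, so the only real subtlety in \emph{this} corollary is the reduction from a general (possibly degenerate) $H$ to the Morse-regular case: one must be careful that the weak conjecture~(\ref{eq:non-deg-AC}) is only claimed in the non-degenerate case, and that the passage to a nearby regular pair via Theorem~\ref{thm:A-MS}/Remark~\ref{rem:M->MS} does not decrease the orbit count. I expect the bookkeeping of that perturbation argument --- rather than any new estimate --- to be where one must be most attentive.
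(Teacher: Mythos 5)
Your second and third paragraphs are exactly the intended proof: finiteness of $\Pcont(H)=\Crit\Aa_H$ from Proposition~\ref{prop:fin-crit-pts}, the elementary fact that over a field $\dim\CF_k\ge\dim\HF_k$ because homology is a subquotient, summation over $k$ (the degree shift being irrelevant for the total dimension), and Theorem~\ref{thm:HF=H-CSF} to identify the right-hand side with $\SB(M)$. The observation that one perturbs $J$ rather than $H$ (Theorem~\ref{thm:A-MS}, Remark~\ref{rem:M->MS}) so as to obtain a regular pair \emph{without altering} $\Pcont(H)$ is precisely the point one must not skip, and you have it.

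Your first paragraph, however, is both unnecessary and, as sketched, incorrect. It is unnecessary because~(\ref{eq:non-deg-AC}) is only asserted \emph{under the hypothesis} that all contractible $1$-periodic solutions are non-degenerate; there is nothing to reduce from --- if $H$ is degenerate the corollary makes no claim about it (the degenerate case is governed by the cup-length bound~(\ref{eq:deg-AC}), which requires entirely different methods). It is incorrect because the inequality you invoke points the wrong way: under a small Morse-regular perturbation a single degenerate orbit can split into \emph{several} non-degenerate ones (think of a Morse--Bott circle of orbits breaking into two critical points of an auxiliary Morse function), so $\abs{\Pcont(H)}$ does \emph{not} dominate the orbit count of a nearby Morse-regular Hamiltonian; a local degree count only gives existence of \emph{at least one} nearby orbit per isolated degenerate orbit, which yields no lower bound by $\SB(M)$. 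Delete that paragraph, keep the hypothesis of~(\ref{eq:non-deg-AC}) as given, and the remaining argument is complete.
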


%%%%%%%%%%%%%%%%%%%%%%%%%%%%%%%%%%
%%% SUBSECTION %%%%%%%%%%%%%%%%%%%%%
%%%%%%%%%%%%%%%%%%%%%%%%%%%%%%%%%%
\subsection{Compactness -- bubbling off analysis}
\label{sec:FH-comp}
Throughout $\Aa_H$ is Morse.
Let $\Mm_{x,y}$ be the space $\Mm(x,y;H,J)$,
see~(\ref{eq:conn-mf}), of connecting trajectories
between contractible $1$-periodic orbits $x,y\in\Crit\Aa_H$.

\begin{definition}[Convergence to broken trajectory]
\label{def:convergence-to-broken-line}
We say that a sequence $(u^\nu)\subset\Mm_{xy}$ of connecting trajectories
\textbf{\boldmath converges to a $(k-1)$-fold broken trajectory}\footnote{
  or, alternatively,
  \textbf{converges to a broken trajectory with $\mbf{k}$ components}
  }
$(u_k,\dots,u_1)$
if\index{$C^\infty_{\rm loc}$ convergence}
the\index{convergence!in $C^\infty_{\rm loc}$}
following holds
true.\index{convergence to broken trajectory}
There\index{broken trajectory!convergence to --}
are\index{broken trajectory}
\begin{itemize}
\item[-]
  pairwise different periodic orbits $z_k=x,z_{k-1},\dots,z_0=y$,
\item[-]
  (non-constant) connecting trajectories $u_j\in\Mm_{z_j z_{j-1}}$ for
  $j=1,\dots,k$,
\item[-]
  sequences $(s^\nu_j)\subset\R$ of reals for $j=1,\dots,k$,
\end{itemize}
such that the sequence of maps $\R\times\SS^1\to M$,
$(s,t)\mapsto u^\nu(s+s^\nu_j,t)$ converges to $u_j$ in
$C^\infty_{\rm loc}$, as $\nu\to\infty$, i.e. uniformly with all
derivatives on compact sets. By
$$
     u^\nu\to(u_k,\dots,u_1),\quad \text{as $\nu\to \infty$,}
$$
we henceforth denote \textbf{convergence to a broken trajectory};
see Figure~\ref{fig:fig-conv-broken-flow-line}.
\end{definition}

\begin{figure}[h]
  \centering
  \includegraphics%[width=0.9\textwidth]
                             [height=4cm]
                             {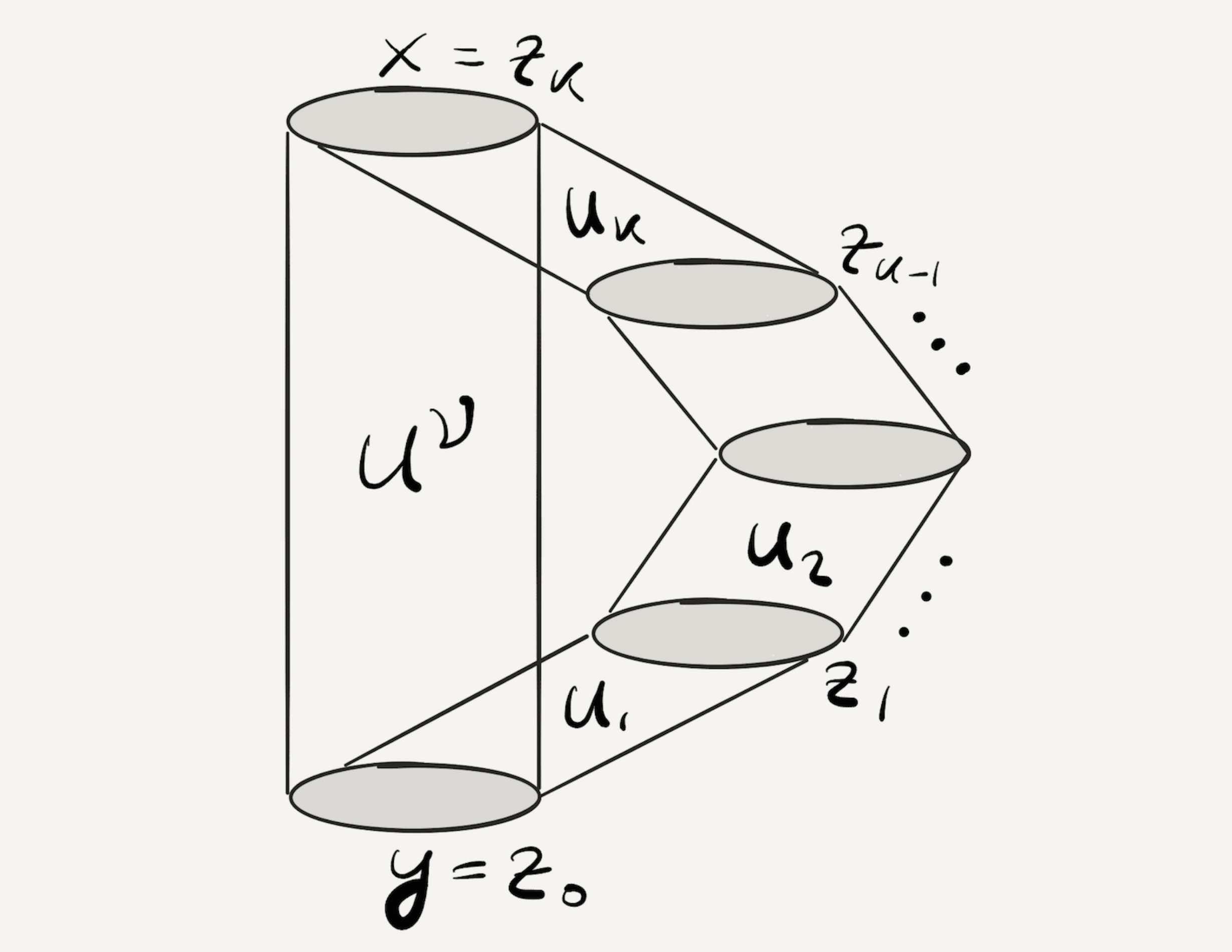}
  \caption{Convergence of sequence $u^\nu$ to broken trajectory
                 $(u_k,\dots,u_1)$}
  \label{fig:fig-conv-broken-flow-line}
\end{figure}

\begin{exercise}[Strictly decreasing index in case of a regular pair]
\label{exc:FH-comp}
Show that the index $\CZcan$,
%cf. Remark~\ref{rem:shift-CZ}, 
strictly decreases
along the members of a broken trajectory $(u_k,\dots,u_1)$
whenever $(H,J)$ is a regular pair.
\newline
[Hint: By non-constancy each $u_j$ comes in a family
of dimension at least $1$. But this dimension
is a Fredholm index. Now recall~(\ref{eq:D_u-F_index-CZ}).]
\end{exercise}

\begin{proposition}[Compactness up to broken trajectories]
\label{prop:compact-up-to-broken-line}
Suppose $\Aa_H$ is Morse.
Then any sequence $(u^\nu)\subset\Mm_{xy}$ of connecting trajectories
admits a subsequence which converges to a broken trajectory
$(u_k,\dots,u_1)$, as $\nu\to\infty$.
\end{proposition}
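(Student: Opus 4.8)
The plan is to prove Proposition~\ref{prop:compact-up-to-broken-line} by the standard bubbling-off-and-rescaling argument, which divides naturally into three stages: a uniform energy bound, a local compactness (Gromov compactness) argument that uses the aspherical hypothesis to exclude bubbling, and a final iteration that extracts the broken trajectory by following the ``lost'' energy. First I would observe that every $u^\nu\in\Mm_{xy}$ satisfies $E(u^\nu)=\Aa_H(x)-\Aa_H(y)$ by Lemma~\ref{prop:ENERGY-ACTION}, so the sequence has \emph{constant} energy, in particular a uniform bound $E(u^\nu)\le E_0:=\Aa_H(x)-\Aa_H(y)$. This is the crucial input for everything that follows.

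Next I would establish the local-compactness step. Suppose first there is a uniform $C^1_{\rm loc}$ bound on the $u^\nu$, i.e. $\sup_\nu\sup_{K}\abs{\partial_s u^\nu}+\abs{\partial_t u^\nu}<\infty$ on every compact $K\subset\R\times\SS^1$. Then elliptic bootstrapping for the (perturbed) Cauchy-Riemann equation~(\ref{eq:FLOER}) upgrades this to uniform $C^\infty_{\rm loc}$ bounds, and Arzel\`a-Ascoli (Theorem~\ref{thm:AA}) yields a subsequence converging in $C^\infty_{\rm loc}$ to a Floer cylinder $u_\infty$, which has finite energy $\le E_0$ and hence, since $\Aa_H$ is Morse, is a connecting trajectory between some critical points by Theorem~\ref{thm:finite-ENERGY=LIMITS-exist}. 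If instead the gradients blow up, pick points $(s^\nu,t^\nu)$ where $\abs{\partial_s u^\nu}^2+\abs{\partial_t u^\nu}^2$ realizes (up to a factor) its supremum on a slowly growing window, rescale the domain by the blow-up rate and translate so the maximum sits at the origin; in the limit one obtains a non-constant finite-energy $J$-holomorphic sphere $v:\SS^2\to M$ (the Hamiltonian term scales away). But $\Io_\omega=0$ by our standing assumption~(C2), so $E(v)=\int_{\SS^2}v^*\omega=0$, forcing $v$ constant -- a contradiction. Hence no bubbling occurs and uniform $C^1_{\rm loc}$ bounds are automatic. I expect the careful formulation of the rescaling and the removal-of-singularity argument to be the main obstacle; here I would cite the detailed treatment in~\citerefFH{salamon:1999a}, since the mechanism is entirely standard but technical.

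With local compactness in hand I would run the iteration producing the broken trajectory. Normalize each $u^\nu$ by the action level, say choose the (unique) reparametrization with $\Aa_H(u^\nu(0,\cdot))=\tfrac12(\Aa_H(x)+\Aa_H(y))$, and extract a $C^\infty_{\rm loc}$ limit $u_1$, a non-constant connecting trajectory from $x$ to some $z_1$. If $z_1=y$ we stop. Otherwise a fixed amount of energy, at least the minimal action gap $\hbar:=\min\{\,\Aa_H(z')-\Aa_H(z'')>0 : z',z''\in\Crit\Aa_H\,\}>0$ (finite set of critical values, Lemma~\ref{le:compactness-crit}), has ``escaped to $+\infty$''; one then finds shifts $s^\nu\to+\infty$ with $\Aa_H(u^\nu(s^\nu,\cdot))$ pinned at the midpoint between $\Aa_H(z_1)$ and $\Aa_H(y)$, and repeats. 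Each iteration consumes at least $\hbar$ of the fixed total energy $E_0$, so the process terminates after $k\le E_0/\hbar$ steps, yielding $z_k=x,z_{k-1},\dots,z_0=y$ pairwise distinct (the action strictly decreases along the chain) and non-constant $u_j\in\Mm_{z_jz_{j-1}}$ with $u^\nu(\,\cdot+s^\nu_j,\cdot)\to u_j$ in $C^\infty_{\rm loc}$. This is exactly convergence to a $(k-1)$-fold broken trajectory in the sense of Definition~\ref{def:convergence-to-broken-line}, which completes the proof. The only subtlety in this last stage is a standard diagonal argument to make all the shifts $s^\nu_j$ compatible and to verify no energy is lost in the ``necks'' between consecutive pieces -- again I would defer the bookkeeping to~\citerefFH{salamon:1999a}.
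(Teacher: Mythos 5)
Your proposal is correct and follows essentially the same route as the paper's proof: uniform energy bound from the energy identity, exclusion of bubbling via rescaling to a $J$-holomorphic sphere killed by $\Io_\omega=0$, elliptic bootstrapping plus Arzel\`a--Ascoli for $C^\infty_{\rm loc}$ convergence, and a finite energy-consuming iteration to assemble the broken trajectory (the paper uses the Hofer-Lemma to organize the rescaling and bounds the number of steps by $\abs{\Crit\Aa_H}$ via Proposition~\ref{prop:fin-crit-pts}, which is also the correct reference for the positivity of your action gap $\hbar$, rather than Lemma~\ref{le:compactness-crit}).
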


It is interesting to observe that by, and in the sense of, the
proposition the space $\Mm$ of finite energy trajectories,
see~(\ref{eq:Mm=Mm_xy}), is compact for a regular pair.

\begin{exercise}[Finite set in case of a regular pair $(H,J)$]
\label{exc:finite-set-flow-lines-ind-diff-1}
Show that in case of index difference one, that is $\CZcan(x)-\CZcan(y)=1$,
the set $m_{xy}$ of those connecting trajectores $u\in\Mm_{xy}$ whose
initial loop $u_0=u(0,\cdot)$ lies on a fixed intermediate action level,
say $r=\frac12(\Aa_H(x)+\Aa_H(y))$, is a
finite\index{$u_{(\sigma)}:=u(\sigma+\cdot,\cdot)$ time-shift}
set.\footnote{
  The set $m_{xy}$ parametrizes the flow lines from $x$ to $y$,
  in other words, the set of connecting trajectores $u$ modulo time shift.
  As maps $u$ and $u_{(\sigma)}:=u(\sigma+\cdot,\cdot)$ are different, but their
  image curve in the loop space, i.e. their flow line, is the same.
  }
\end{exercise}

%%% SUBSUBSECTION %%%%%%%%%%%%%%%%%%%
%%%%%%%%%%%%%%%%%%%%%%%%%%%%%%%%%%
\subsubsection{Proof of Proposition~\ref{prop:compact-up-to-broken-line} (Compactness up to broken trajectories)}
The proof has four steps the first of which is highly trivial,
at least in the present case of a \emph{compact} manifold $M$.
We only sketch the main ideas. Actually the first three steps
do not require that $\Aa_H$ is Morse, not even that
$x,y$ are non-degenerate.
The Morse requirement enters in Step~IV
through Theorem~\ref{thm:finite-ENERGY=LIMITS-exist}.

%%%%%%%%%%%%%%%%%%%%%%%%%%%%%% Step I
\vspace{.2cm}\noindent
\textsc{Step I.} (Uniform $C^0$ bound for $u^\nu$)

\vspace{.1cm}\noindent
Obvious by compactness of $M$.

%%%%%%%%%%%%%%%%%%%%%%%%%%%%%% Step II
\vspace{.2cm}\noindent
\textsc{Step II.} (Uniform $C^0$ bound for $\p_su^\nu$ -- bubbling
off analysis)

\vspace{.1cm}\noindent
In fact, to carry out Step~III below a uniform $W^{1,p}$ bound, for
some constant $p>2$, for the sequence of connecting trajectories
$u^\nu$ would be sufficient. Note that we get a uniform
$W^{1,2}$ bound for free due to the energy
identity~(\ref{eq:ENERGY-FLoer-flow-line}).
But this is not good enough to get uniform $C^0$ bounds;
cf. also Remark~\ref{rem:p>2}.

On the positive side, investing some work even leads to
a uniform $C^1$ bound, that is
$$
     \sup_{\nu}\Norm{\p_s u^\nu}_{L^\infty}<\infty.
$$
To prove this, assume by contradiction there was a sequence
of points $\zeta^\nu$ along which the derivative
$
     \abs{\p_s u^\nu(\zeta^\nu)}\to\infty
$
explodes, as $\nu\to \infty$. It is convenient to
view maps defined on $\R\times\SS^1$ likewise
as maps on $\C\ni s+it$ which are 1-periodic in the imaginary part $t$.
Exploiting the invariance of $\Mm_{xy}$ under shifts in the $s$ variable
in order to replace $u^\nu(s,t)$ by the shifted sequence $u^\nu(s-s^\nu,t)$
and then by compactness of $\SS^1$ picking a subsequence,
we assume without loss of generality that
$\zeta^\nu=0+i \tau^\nu\to 0+i t_0=:z_0$, as $\nu\to\infty$.
Now there appears yet another \Index{Wonder-Lemma}, called the
\textbf{\Index{Hofer-Lemma}}~\cite[\S 6.4 Le.~5]{hofer:2011a}.
When\index{lemma!Hofer-}\index{lemma!wonder-}
applied for each $\nu$ to the continuous non-negative function
$g:=\abs{\p_s u^\nu}$ on the complete metric space
$X:=[-1,1]\times\SS^1$, the point $x_0:=\zeta^\nu\in X$,
and the constant $\eps_0:=\abs{\p_s u^\nu(\zeta^\nu)}^{-1/2}>0$,
the Hofer-Lemma yields a sequence of points
$z^\nu\in X$ and constants $\eps^\nu>0$ such that
\begin{itemize}
\item
     $0<\eps^\nu\le \abs{\p_s u^\nu(\zeta^\nu)}^{-1/2}\to 0$
\item
     $R^\nu\eps^\nu:=\Abs{\p_s u^\nu(z^\nu)}\eps^\nu
     =g(z^\nu)\eps^\nu
     \ge\frac{g(\zeta^\nu)}{\abs{\p_s u^\nu(\zeta^\nu)}^{1/2}}
     =\Abs{\p_s u^\nu(\zeta^\nu)}^{1/2}\to\infty$
\item
     $\Norm{z^\nu-\zeta^\nu}
     \le\frac{2}{\abs{\p_s u^\nu(\zeta^\nu)}^{1/2}}\to 0$
\item
     $\sup_{{\color{cyan} B_{\eps^\nu}(z^\nu)}} \Abs{\p_s u^\nu}
     \le 2\Abs{\p_s u^\nu(z^\nu)}=2R^\nu$
\end{itemize}
Note that $R^\nu:=\abs{\p_s u^\nu(z^\nu)}\to\infty$,
so the derivative explodes as well along the new sequence of points
$z^\nu$ which also converges to $z_0$, but for which we have
more information than we had for $\zeta^\nu$.
The key step is then to consider the sequence of
rescaled smooth maps
$$
     v^\nu:\C\to M,\quad
     z\mapsto u^\nu(z^\nu+(R^\nu)^{-1} z).
$$
These maps have the property that they are non-constant
and uniformly $C^1$ bounded on balls
$B_{R^\nu\eps^\nu}(0)$ whose radius $R^\nu\eps^\nu$ tends to infinity.
More precisely,
\begin{itemize}
\item
  $\abs{\p_s v^\nu(0)}=1$
\item
  $\abs{\p_s v^\nu}\le 2$ on {\color{cyan} $B_{R^\nu\eps^\nu}(0)$},
  see Figure~\ref{fig:fig-bubbling}
\item
  $\p_s v^\nu+J_{t^\nu+(t/R^\nu)}(v^\nu)\p_t v^\nu
  =\frac{1}{R^\nu}\nabla H_{t^\nu+(t/R^\nu)}(v^\nu)$
\end{itemize}
Now one shows that there is a smooth map $v:\C\to M$
and a subsequence, still denoted by $v^\nu$, such that
$v^\nu\to v$ in $C^\infty(\C,M)$, that is uniformly with all
derivatives on $\C$.
In view of the Arzel\`{a}-Ascoli Theorem~\ref{thm:AA}
it suffices to establish uniform in $\nu$
bounds on compact sets for $v^\nu$ and each of its derivatives.
In view of the Sobolev embedding theorem, see
e.g.~\cite[Thm.~B.1.11]{mcduff:2004a}, it suffices
to establish uniform $W^{k,p}_{\rm loc}$ bounds for $v^\nu$,
that is for each $k\in\N$ and each compact set $K\subset\C$
find a $W^{k,p}$ bound $c$ for the restriction $v^\nu|_K$ to $K$
such that $c$ serves for all $\nu$.
The key input is the Calder\'{o}n-Zygmund
inequality~(\ref{eq:CaldZyg-delbar}) for the
Cauchy-Riemann operator $\delbar$.
The desired bounds are established by induction on $k$.
For details see e.g.~\citerefFH[Le.~5.2]{salamon:1990a}, using
elliptic bootstraping techniques, or~\cite[\S 6.4 Le.~6]{hofer:2011a},
using the \textbf{\Index{Gromov trick}} to first get rid off the
Hamiltonian term followed by a proof by contradiction.

The limit map $v:\C\to M$ satisfies the equation
\begin{equation}\label{eq:J-hol-plane}
     \p_s v+J(v)\p_t v=0
\end{equation}
where $J(v)=J_{t_0}(v)$. The solutions $w:\C\to M$ of this
elliptic PDE are called \textbf{\Index{$J$-holomorphic planes}}
or \textbf{\Index{pseudo-holomorphic planes}}.
They have been introduced in Gromov's
1985 landmark paper~\citeintro{gromov:1985a}.
Since
$$
     \abs{\p_s v(0)}=1,\qquad
     \abs{\p_s v}\le 2,\qquad
     \int_\C v^*\omega=\norm{\p_s v}_{L^2}^2>0,
$$
we arrive at a contradiction as soon as we can show that
$v$ extends continuously from $\C$ to the Riemann
sphere, that is to a continuous map
$\tilde v:\SS^2=\C\cup\{\infty\}\to M$.
Indeed in this case the proof of
Proposition~\ref{prop:compact-up-to-broken-line} is complete since
$$
     0=\int_{\SS^2} \tilde v^*\omega
     =\int_\C v^*\omega>0.
$$
Here the first identity uses that by our standing assumption~(C2)
the evaluation map $\Io_\omega=0$ of $\omega$ over $\pi_2(M)$
vanishes, see~(\ref{eq:I-omega}), and the second identity
holds since a point has measure zero. 

So it remains to construct
the continuous extension $\tilde v$. For each radius $R>0$
the map $v:\C\to M$ gives rise to a loop in $M$ by restriction
to the radius-$R$ sphere in $\C$ centered at the origin;
notation $v_R:\C\supset {\color{red} \{\abs{z}=R\}}\to M$;
indicated {\color{red}red} in Figure~\ref{fig:fig-bubbling}.
In~\citerefFH[Pf. of Prop.~4.2]{salamon:1990a} it is shown,
see also~\cite[paragraph before~(6.94)]{hofer:2011a}, that
the lengths of the image circles {\color{red} $\gamma_R$} in $M$
of the maps $v_R$ tends to zero, as $R\to\infty$.
\begin{figure}%[h]
  \centering
  \includegraphics%[width=0.9\textwidth]
                             [height=4.5cm]
                             {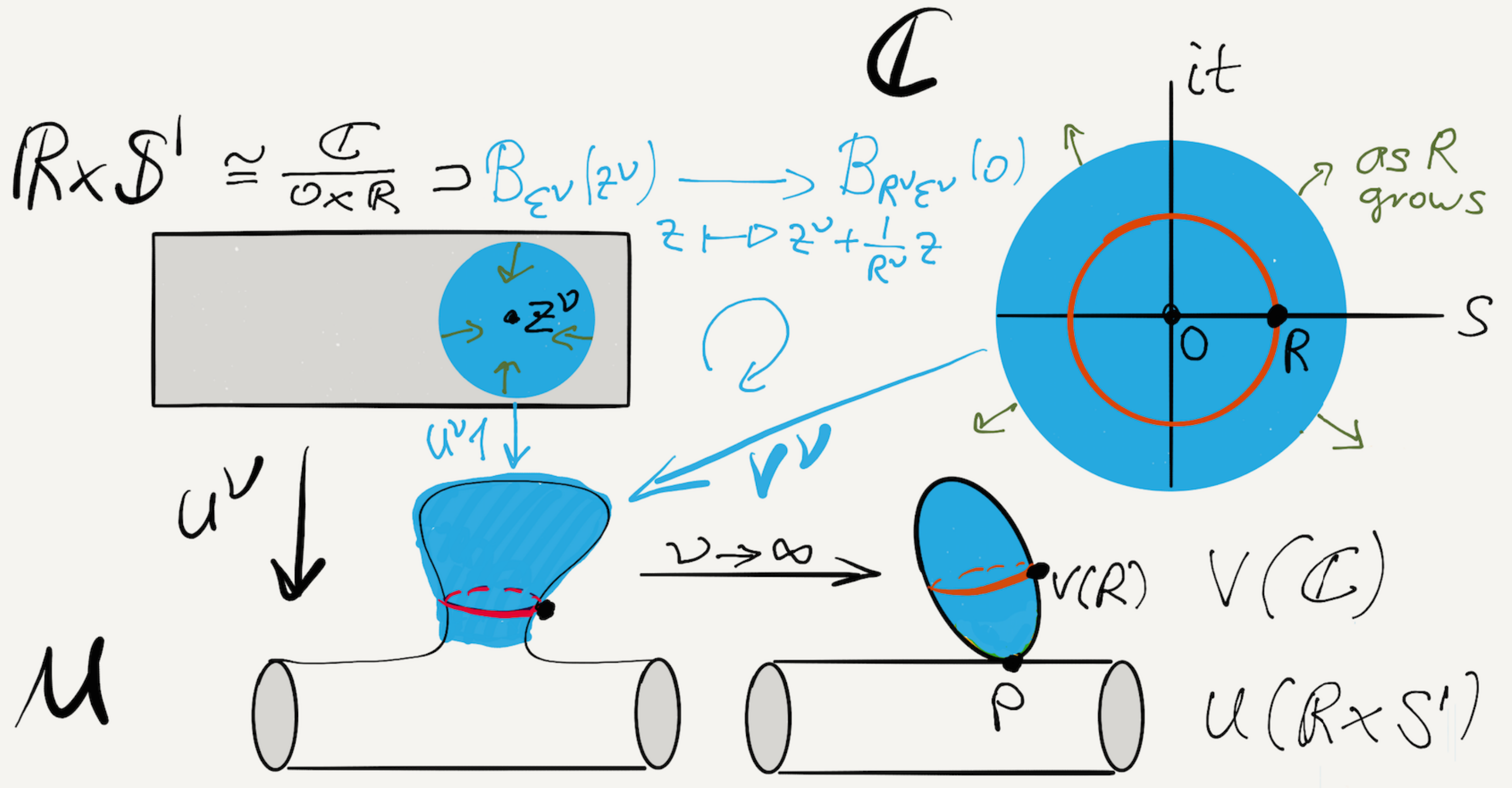}
  \caption{Bubbling off of spheres requires that $\omega$ does
                not vanish over $\pi_2(M)$}
  \label{fig:fig-bubbling}
\end{figure}
While along any sequence $R_j\to\infty$
the family of points $\{v(R)=v(R+i 0)\}_{R>0}$
admits a convergent subsequence by compactness of $M$,
continuity of $v$ together with the length shrinking property
imply uniqueness of the limit $p$ and independence of the
choice of the sequence $R_j\to\infty$.
Clearly $p:=\lim_{R\to\infty} v(R)$ completes the image
$v(\C)$ to be a 2-sphere and we are done.\footnote{
  As $p$ lies in the closure of the image of $v$,
  it lies in the closure of the union of all images
  $u^\nu(\R\times\SS^1)$. So the bubble is attached
  at $p$ to whatever is the limit, see Step~IV, of the $u^\nu$'s.
  }

%%%%%%%%%%%%%%%%%%%%%%%%%%%%%% Step III
\vspace{.2cm}\noindent
\textsc{Step III.} (Uniform $C^\infty_{\rm loc}$ bound for
$u^\nu$ -- elliptic bootstrapping)

\vspace{.1cm}\noindent
First of all, in view of the uniform $C^1$ bound
for our sequence of connecting trajectories $u^\nu$,
obtained in Steps~I and~II,
the Arzel\`{a}-Ascoli Theorem~\ref{thm:AA}
provides a continuous map $u:\R\times\SS^1\to M$
to which some subsequence, still denoted by $u^\nu$,
converges uniformly on compact sets. This allows
to analyze the problem in local coordinates on $M$,
hence for maps taking values in $\R^{2n}$.
\newline
To derive uniform $C^\infty$ estimates for the sequence $u^\nu$
on compact sets is, surely without surprise, an iterative procedure.
To illustrate the basic mechanism behind,
let us \emph{incorrectly oversimplify} things by assuming
that~(i) each $u^\nu$ is a map $\C\to(\R^{2n},J_0)$,
$s+i t\mapsto u^\nu(s,t)$, supported in a compact set $K$
-- which it clearly isn't, given the non-linear target $M$
and the asymptotic boundary conditions $x$ and $y$ --
and that~(ii) each $u^\nu$ satisfies the much simpler PDE
$$
     \delbar u^\nu=\nabla H(u^\nu),\qquad
     \delbar:=\p_s+ J_0\p_t.
$$
The iteration rests on
the\index{Calder\'{o}n-Zygmund inequality}
\textbf{Calder\'{o}n-Zygmund type inequality}
\begin{equation}\label{eq:CaldZyg-delbar}
     \Norm{v}_{W^{1,p}}\le c_p\Norm{\delbar v}_{L^p}
\end{equation}
and its immediate consequence (replace $v$ by derivatives of $v$)
$$
     \Norm{v}_{W^{k,p}}\le c_{p,k}\Norm{\delbar v}_{W^{k-1,p}}.
$$
These hold true for any compactly supported
map $v\in C^1_0(\C,\R^{2n})$ whenever $k\in\N$ and
$1<p<\infty$ and where the constant
$c_p>0$ only depends on~$p$;
see e.g.~\citerefFH[III \S 1 Prop.~4]{Stein:1970a}
or~\citerefCG[Thm.~2.6.1]{Wendl:2015a}. By continuity the estimates
continue to hold for $v$ in the closure $W^{k,p}_0$
of $C_0^\infty$ with respect to the $W^{k,p}$ norm.

Application to our oversimplified maps $u^\nu\in W^{1,p}_0(K,\R^{2n})$
shows that
$$
     \Norm{u^\nu}_{W^{k,p}}
     \le c_p\Norm{\nabla H(u^\nu)}_{W^{k-1,p}}
     \le C(p,k,K,H)
$$
where the constant $C$ does not depend on $\nu$.
More precisely, starting with $k=2$
we get a uniform $W^{2,p}$ bound on $K$ which then leads to
a uniform $W^{3,p}$ bound and so on for every $k\in\N$.
The Sobolev embedding theorem, see
e.g.~\cite[Thm.~B.1.11]{mcduff:2004a}, then provides
uniform $C^k$ bounds on $K$ for every $k$.
Now apply the Arzel\`{a}-Ascoli Theorem~\ref{thm:AA}
to each derivative of $u^\nu$.

However, the general case is much harder, of course, mainly
due to the non-linearity $J(u^\nu)$ in front of the
highest order term $\p_t u^\nu$. The fact that the maps
$u^\nu$ are not of compact support at all, requires the use of
cutoff functions leading to additional terms as well.
For details see e.g.~\cite[\S 6.4 Le.~6]{hofer:2011a}
or~\citerefFH[Le.~5.2]{salamon:1990a}.

%%%%%%%%%%%%%%%%%%%%%%%%%%%%%% Step III
\vspace{.2cm}\noindent
\textsc{Step IV.} (The limit broken trajectory)

\vspace{.1cm}\noindent
Pick $T>0$ and consider the restrictions
of $u^\nu$ to $Z_T:=[-T,T]\times\SS^1$.
By Step~III there are uniform $C^\infty$ bounds for $u^\nu$
on $Z_T$, thus there is a subsequence, still denoted by $u^\nu$
converging uniformly with all derivatives to some
smooth map $u:Z_T\to M$ which solves
the Floer equation~(\ref{fig:fig-Floers-interpol-eq}) as well.
Replacing $T$ by $2T$ and choosing subsequences if
necessary one concludes that $u^\nu$ restricted to $Z_{2T}$
converges to a smooth solution $Z_{2T}\to M$ which coincides with $u$
on $Z_T$. Iteration leads to a smooth limit solution again denoted by
$u:\R\times\SS^1\to M$. If we knew that $u$ was of finite energy
we would apply Theorem~\ref{thm:finite-ENERGY=LIMITS-exist}
to obtain existence of periodic orbits $z^\mp\in\Pp_0(H)$
sitting at the ends of $u$, that is $u\in\Mm_{z^- z^+}$.
With this understood apply the finite\footnote{
  The downward procedure can only end at $y$
  and it ends after at most $\abs{\Crit\Aa_H}$ many
  steps: The action strictly decreases
  from $z_\ell$ to $z_{\ell-1}$
  iff $z_\ell\not=z_{\ell-1}$ iff $E(u_\ell)>0$, but
  there are only finitely many periodic orbits by
  Proposition~\ref{prop:fin-crit-pts}.
  }
iteration detailed in~\citerefFH[Pf. of Prop.~4.2]{salamon:1990a}
to get the desired limit broken trajectory $(u_k,\dots,u_1)$.

Let us check that the energy of the limit solution
$u:\R\times\SS^1\to M$ is indeed finite.
By the energy identity~(\ref{eq:ENERGY-FLoer-flow-line})
for connecting trajectories we get
\begin{equation*}
\begin{split}
     \Aa_H(x)-\Aa_H(y)&=E(u^\nu)\\
   &\ge\int_0^1\int_{-T}^T \Abs{\p_su^\nu(s,t)}^2 ds\, dt=:E_{[-T,T]}(u^\nu)\\
   &=\Aa_H(u^\nu_{-T})-\Aa_H(u^\nu_T)
\end{split}
\end{equation*}
where $u^\nu_T:=u^\nu(T,\cdot)$ and the last identity is true since
$\p_s u^\nu$ is the downward $L^2$ gradient of $\Aa_H$.
By uniform $C^\infty$ convergence we can take the limit
over $\nu$ to get the estimate
\begin{equation}\label{eq:A-diff}
     \Aa_H(x)-\Aa_H(y)
     \ge\Aa_H(u_{-T})-\Aa_H(u_T)=E_{[-T,T]}(u)
\end{equation}
for every $T$, in particular, for $T=\infty$.
Thus $\infty >\Aa_H(x)-\Aa_H(y)\ge E(u)$.\footnote{
  The case $E_{[-T,T]}(u)=0$ for every $T>0$
  is not excluded: Given $v\in\Mm_{xy}$,
  consider the upward shifts $u^\nu(s,t):=v(s-\nu,t)$. So
  $u^\nu_{-T}=v(-\nu-T,\cdot)$ and
  $u^\nu_T=v(-\nu+T,\cdot)$. Then
  $\Aa_H(u^\nu_{-T})$ and $\Aa_H(u^\nu_{T})$
  both converge to $\Aa_H(x)$, as $\nu\to\infty$.
  Indeed on \emph{compact} sets $u^\nu$ converges uniformly
  with all derivatives to the constant
  trajectory $u(s,\cdot)=x(\cdot)$.
  }

This concludes the proof of Step~IV and
Proposition~\ref{prop:compact-up-to-broken-line}.

%%%%%%%%%%%%%%%%%%%%%%%%%%%%%%%%%%
%%% SUBSECTION %%%%%%%%%%%%%%%%%%%%%
%%%%%%%%%%%%%%%%%%%%%%%%%%%%%%%%%%
\subsection{Gluing}\label{sec:FH-gluing}

Suppose $(H,J)$ is a regular pair and
pick periodic orbits $x,y,z$ of $\CZcan$-indices
$k+1,k,k-1$, respectively.
To conclude the proof of Proposition~\ref{prop:HF-bound-op}
($\p^2=0$) let us construct a continuous map\footnote{
  \emph{Warning.} Here $u\#_R v$ denotes the true zero,
  in~\citerefFH[\S 3.4]{salamon:1999a} the approximate zero $\tilde w_R$.
  }
$$
     \cdot\#_\cdot \cdot :m_{xy}\times[R_0,\infty)\times m_{yz}
     \to m_{xz},\quad
     (u,R,v)\mapsto u\#_R v,
$$
called the \textbf{\Index{gluing map}}, whose image lies in
one component of the 1-dimensional manifold $m_{xz}$
of which it covers one 'end' in the sense that
$u\#_R v\to(u,v)$, as $R\to\infty$. In other words, the
broken trajectory $(u,v)$ represents the boundary point of that end.
Furthermore, and most importantly, as it concludes the proof of
$\p^2=0$, no sequence in $m_{xz}\setminus u\#_{[R_0,\infty)} v$,
that is no sequence away from the {\color{cyan} image} of the glued family
$R\mapsto u\#_R v$, converges to $(u,v)$;
see Figure~\ref{fig:fig-gluing-map}.
\begin{figure}%[h]
  \centering
  \includegraphics%[width=0.9\textwidth]
                             [height=3cm]
                             {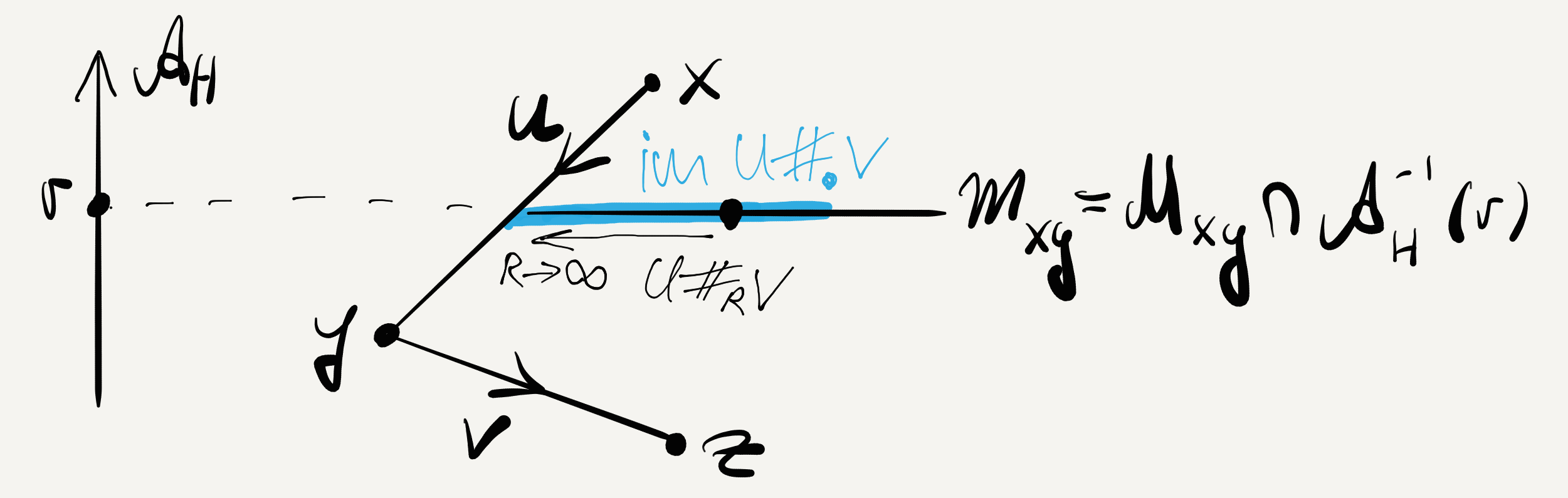}
  \caption{(Gluing map) Convergence $u\#_R v\to (u,v)$,
                 as $R\to\infty$}
  \label{fig:fig-gluing-map}
\end{figure}

The construction is by the \textbf{\Index{Newton method}}
to find a zero of a map $\Ff_H$ near a given approximate
zero $\tilde w_R$: Roughly speaking, one needs that
$\Ff_H(\tilde w_R)$ is 'small', the derivative
$D_R:=D_{\tilde w_R}:=D\Ff_H(\tilde w_R)$ at $\tilde w_R$ is
'steep',\footnote{
  More precisely, in our context $D_R$
  needs to admit a uniformly bounded right inverse $T$.
  }
and does not vary 'too much' near the approximate zero;
see~Figure~\ref{fig:fig-Newton-method}.
\begin{figure}[h]
  \centering
  \includegraphics%[width=0.9\textwidth]
                             [height=3cm]
                             {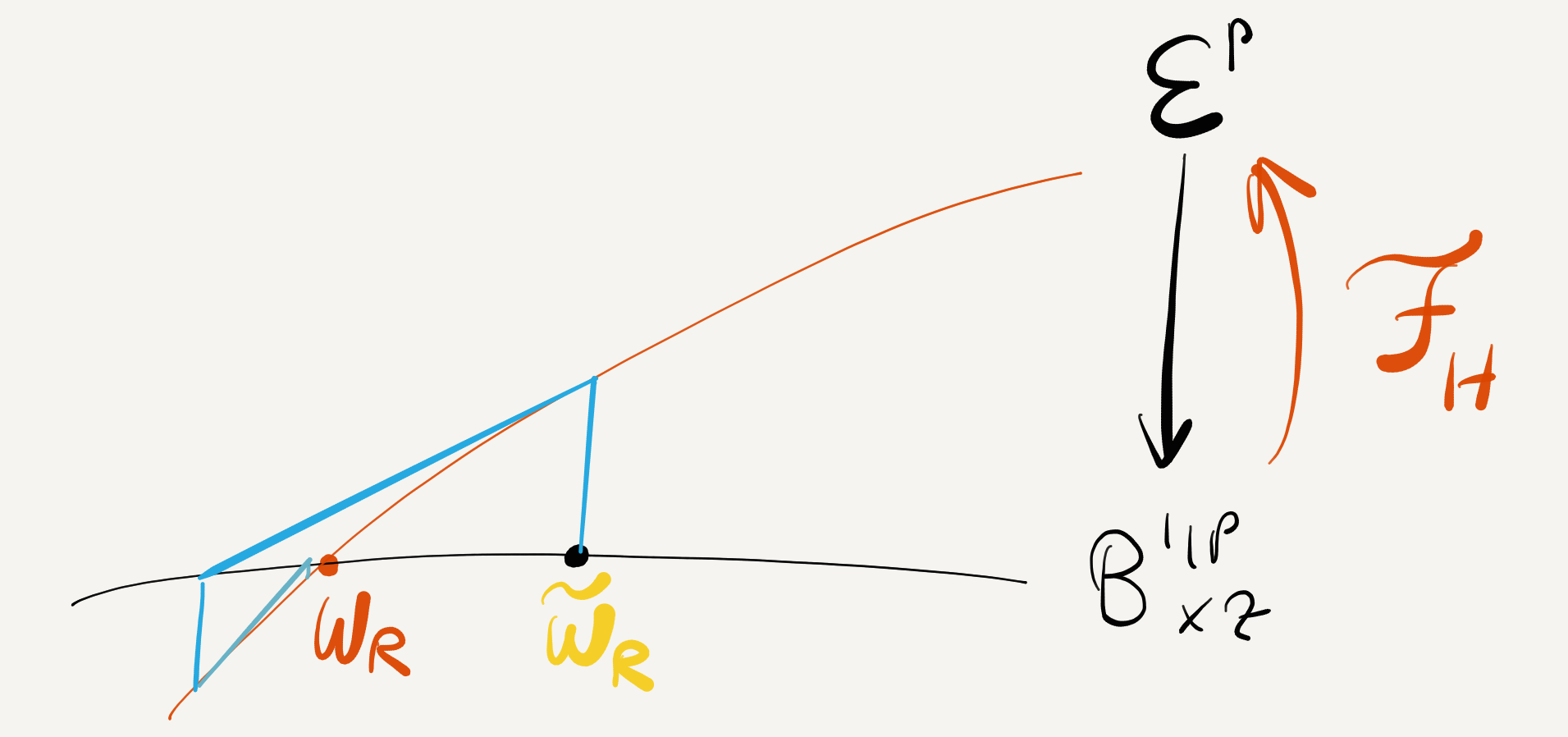}
  \caption{(Newton method) Find true zero $w_R$ nearby
                 approximate zero $\tilde w_R$}
  \label{fig:fig-Newton-method}
\end{figure}

We only outline the construction and refer
to~\citerefFH[\S 3.3]{salamon:1999a} for details.
Obviously $\Ff_H$ is the Floer section~(\ref{eq:FLOER})
of the Banach bundle $\Ee^p\to\Bb^{1,p}(x,z)$;
cf.~(\ref{eq:D_u-lin-Floer-gen}).
To start with pick $u\in m_{xy}$ and $v\in m_{yz}$
and consider the approximate zero $\tilde w_R=\tilde w_R(u,v)$
used in~\citerefFH{salamon:1999a} and illustrated by
Figure~\ref{fig:fig-approx-zero}.
Next, as we wish to use the implicit function
theorem, we need to move from Banach bundles
to Banach spaces. So, we replace $\Ff_H$ by the~map
$$
     f_R:=f_{\tilde w_R}:L^p_{\tilde w_R}\supset W^{1,p}_{\tilde w_R}
     \to L^p_{\tilde w_R},\quad
     \xi\mapsto\Tt_{\tilde w_R}(\xi)^{-1}\Ff_H(\exp_{\tilde w_R} \xi),
$$
where $\Tt$ denotes parallel transport; cf.~(\ref{eq:F_z-dA})
and~(\ref{eq:D_u-lin-Floer-gen}).

\begin{exercise}
a)~The map $\xi\mapsto \exp_{\tilde w_R} \xi$ is
a bijection between the zeroes of $f_R$ and those of $\Ff_H$.
b)~The linearization 
$df_R(0)\xi:=\left.\frac{d}{d\tau}\right|_{\tau =0}f_R(\tau \xi)$
is $D_R$.\newline
[Hint: Compare~\citesymptop[Pf. of Thm.~A.3.1]{weber:1999a}.]
\end{exercise}
So the tasks at hand are to
\begin{itemize}
\item[(a)]
  show that $f_R$ admits a unique zero $\eta_R$ whenever $R$
  is sufficiently large;
\item[(b)]
  show that $w_R:=\exp_{\tilde w_R}\eta_R\to(u,v)$, as $R\to\infty$;
\item[(c)]
  define $u\#_R v:=w_R$.
\end{itemize}

Obviously we start with task~(c). Next, concerning task~(b), let us
argue geometrically by looking at Figure~\ref{fig:fig-approx-zero}
where $\beta:\R\to[0,1]$ is a cutoff function that equals zero for $s\le 0$
and one for $s\ge 1$.
Observe that the curve $s\mapsto \tilde w_R(s\cdot)$
follows more and more, the larger $R$, all of $u$ and all of $v$.
Thus $\tilde w_R$ restricted to a fixed compact subdomain,
say of the form $[-T,T]\times \SS^1$, runs into
the constant in $s$ solution $y$, unless it is shifted appropriately
backward or forward in $s$ in which case it runs towards a piece of
$u$ or $v$, respectively.
\begin{figure}[h]
  \centering
  \includegraphics%[width=0.9\textwidth]
                             [height=6cm]
                             {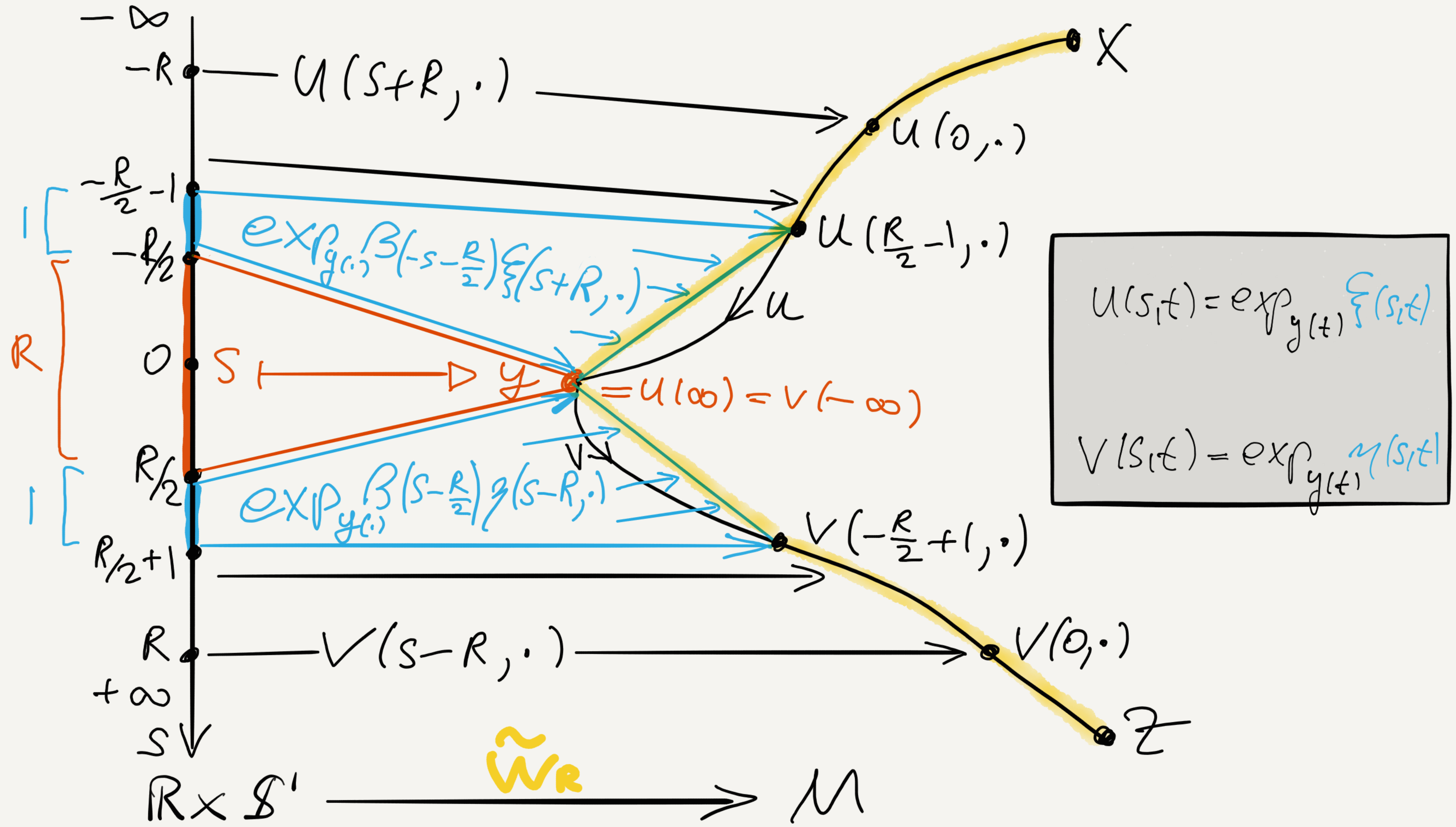}
  \caption{The approximate zero
                 ${\color{yellow} \tilde w_R}=\tilde w_R(u,v)$
                 of the Floer section $\Ff_H$}
  \label{fig:fig-approx-zero}
\end{figure}

Concerning task~(a) let us discuss informally the three
ingredients needed to carry out Newton's method.

%%%%%%%%%%%%%%%%%%%%%%%%%%%%%% Step I
\vspace{.1cm}\noindent
\textsc{I. Approximate zero.}
The $L^p$ norm of $f_R(\tilde w_R)$,
hence of $\Ff_H(\tilde w_R)$ since parallel transport
$\Tt$ is an isometry, is equal to the $L^p$ norm of $\Ff_H$
applied to the $\exp$ parts of $\tilde w_R$ along the length
1 domain $[-\frac{R}{2}-1,-\frac{R}{2}]\times\SS^1$
and an analogous length 1 domain in the positive half cylinder.
Let us pick the $\p_s$-piece of $\Ff_H$ to illustrate
what happens.\footnote{
  Concerning the other piece of $\Ff_H$, check how
  $J(\cdot)\p_t\cdot-\nabla H_t(\cdot)$ applied to
  $\exp_y\beta\xi$ approaches $J(x)\dot x-\nabla H_t(x)=0$,
  as $s\to\infty$.
  }
The derivative $\p_s(\exp_y\beta\xi)$
is a sum of two terms: Term one approaches zero, as
$s\to\infty$, since $u_s$ approaches $y$ uniformly in $t$
and term two approaches zero, as
$s\to\infty$, since $\p_su_s$ does.
By \texttt{(exp.~decay)} in
Theorem~\ref{thm:finite-ENERGY=LIMITS-exist}
both terms converge to $0$ exponentially, as $s\to\infty$.
So the integral over the length 1 interval $[-\frac{R}{2}-1,-\frac{R}{2}]$
-- which moves to $+\infty$ with $R$ --  becomes as small as desired by
picking $R$ large.

%%%%%%%%%%%%%%%%%%%%%%%%%%%%%% Step II
\vspace{.1cm}\noindent
\textsc{II. Right inverse.}
By assumption both Fredholm operators $D_u$ and $D_v$,
see~(\ref{eq:D_u-lin-Floer}), are
surjective. Salamon proves in~\citerefFH[Prop.~3.9]{salamon:1999a}
that there are constants $c>0$ and $R_0>2$ such that
for any $R>R_0$ the Fredholm operator $D_R:=D_{\tilde w_R}$
based on the approximate zero is surjective as well and,
moreover, there is an injectivity
estimate for $D_R$ on the range of ${D_R}^*$, namely
$$
     \Norm{{D_R}^*\eta}_{W^{1m,p}}\le c\Norm{D_R{D_R}^*\eta}_{L^p}
$$
for every $\eta\in W^{2,p}_{\tilde w_R}$. It is, of course, crucial that
the estimate is uniform in $R$.

\begin{exercise}
Suppose $R>R_0$.
(i)~Show that $D_R{D_R}^*:W^{2,p}_{\tilde w_R}\to L^{p}_{\tilde w_R}$
is a bijection and admits a continuous inverse.
(ii)~Show that
$$
     T:={D_R}^*\left(D_R{D_R}^*\right)^{-1}:
     L^{p}_{\tilde w_R}\to W^{1,p}_{\tilde w_R}
$$
is a right inverse of $D_R$ and calculate its operator norm.
[Hint: Recall~(\ref{eq:kerD=cokerD*}).]
\end{exercise}

%%%%%%%%%%%%%%%%%%%%%%%%%%%%%% Step III
\vspace{.1cm}\noindent
\textsc{III. Quadratic estimates.}
To see what is meant by quadratic estimates
have a look at~\citerefFH[Prop.~4.5]{salamon:2006a}.

%%%%%%%%%%%%%%%%%%%%%%%%%%%%%%
\vspace{.1cm}\noindent
With all three preparations I-III in place, apply to $f_R$ the
implicit function theorem in the form
of~\citerefFH[Prop.~A.3.4]{salamon:2006a} with $x_0=x_1=0$
to obtain a unique $\eta_R\in\im T$ such that
$f_R(\eta_R)=0$, equivalently, such that
$\Ff_H(w_R)=0$~where
$$
     w_R:=\exp_{\tilde w_R}\eta_R.
$$

This concludes the construction of the gluing map
and thereby the proof of
Proposition~\ref{prop:HF-bound-op} ($\p^2=0$).

%%%%%%%%%%%%%%%%%%%%%%%%%%%%%%%%%%
%%% SUBSECTION %%%%%%%%%%%%%%%%%%%%%
%%%%%%%%%%%%%%%%%%%%%%%%%%%%%%%%%%
\subsection{Continuation}\label{sec:FH-continuation}

Suppose throughout that $J\in\Jj(M,\omega)$
is fixed and that $(H^\alpha,J)$, $(H^\beta,J)$,
and $(H^\gamma,J)$ are \emph{regular pairs}.\footnote{
  E.g. pick three elements $H^\alpha,H^\beta,H^\gamma$
  of the set $\Hhreg(J)$ provided by Theorem~\ref{thm:A-MS}.
  }
By $H^{\alpha\beta}$ or $\{H^{\alpha\beta}_{s,t}\}$ we denote
a \textbf{\Index{homotopy between Hamiltonians}}, that is
a smooth map $\R\times\SS^1\times M\to\R$
such that
$$
     H^{\alpha\beta}_{s,t}=
     \begin{cases}
        H^\alpha_t&\text{, $s\le -1$,}\\
        H^\beta_t&\text{, $s\ge +1$.}
     \end{cases}
$$
The key idea is to replace the $s$-independent
Hamiltonians in the Floer equation~(\ref{eq:FLOER})
by $s$-dependent homotopies, thereby destroying the
occasionally disturbing symmetry under $s$-shifts; see
footnote to~(\ref{eq:A-diff}).
Consider the PDE
\begin{equation}\label{eq:s-FLOER}
     \p_s u+J_t(u)\p_t u-\nabla H^{\alpha\beta}_{s,t}(u)=0
\end{equation}
for smooth cylinders $u:\R\times\SS^1\to M$.
It is called the\index{Floer equation!homotopy --} 
\textbf{homotopy Floer equation}\index{homotopy!Floer equation}
and its solutions $u$\index{trajectory!homotopy --}
\textbf{homotopy trajectories}.\index{homotopy!trajectory}
Impose the usual asymptotic boundary conditions~(\ref{eq:LIMITS})
for two periodic orbits $z^-=x^\alpha\in\Pp_0(H^\alpha)$ and
$z^+=x^\beta\in\Pp_0(H^\beta)$ of \emph{different} Hamiltonians
and denote the set of~such~$u$~by
$$
     \Mm_{x^\alpha x^\beta}=\Mm(x^\alpha,x^\beta;H^{\alpha\beta}).
$$
Just as before, for a generic, called \textbf{regular}, \textbf{homotopy}
this\index{homotopy!regular}\index{regular!homotopy}
moduli space is a smooth manifold for any choice
of $x^\alpha,x^\beta$ and the dimension is
the index difference $\CZcan(x^\alpha)-\CZcan(x^\beta)$.
There are also analogous compactness and gluing properties.
The difference is that due to the
missing invariance under shifts in the $s$-variable
one uses the \emph{index difference zero}
moduli spaces $\Mm_{x^\alpha x^\beta}$ (compact, thus
finite, sets) to define maps which are
given on $x^\alpha\in\Crit_k\Aa_{H^\alpha}$ by
\begin{equation}\label{eq:Floer-continuation-maps}
\begin{split}
     \psi^{\beta\alpha}(H^{\alpha\beta}):\CF_*(H^\alpha)
   &\to\CF_*(H^\beta)
     \\
     x^\alpha
   &\mapsto\sum_{x^\beta\in\Crit_k\Aa_{H^\beta}}\#_2(\Mm_{x^\alpha
     x^\beta}) \, x^\beta.
\end{split}
\end{equation}
The \emph{index difference one} moduli spaces
$\Mm_{x^\alpha y^\beta}$ lead to the chain map property
$$
     \psi^{\beta\alpha}\p^\alpha=\p^\beta \psi^{\beta\alpha}.
$$

\begin{figure}[h]
  \centering
  \includegraphics%[width=0.9\textwidth]
                             [height=3cm]
                             {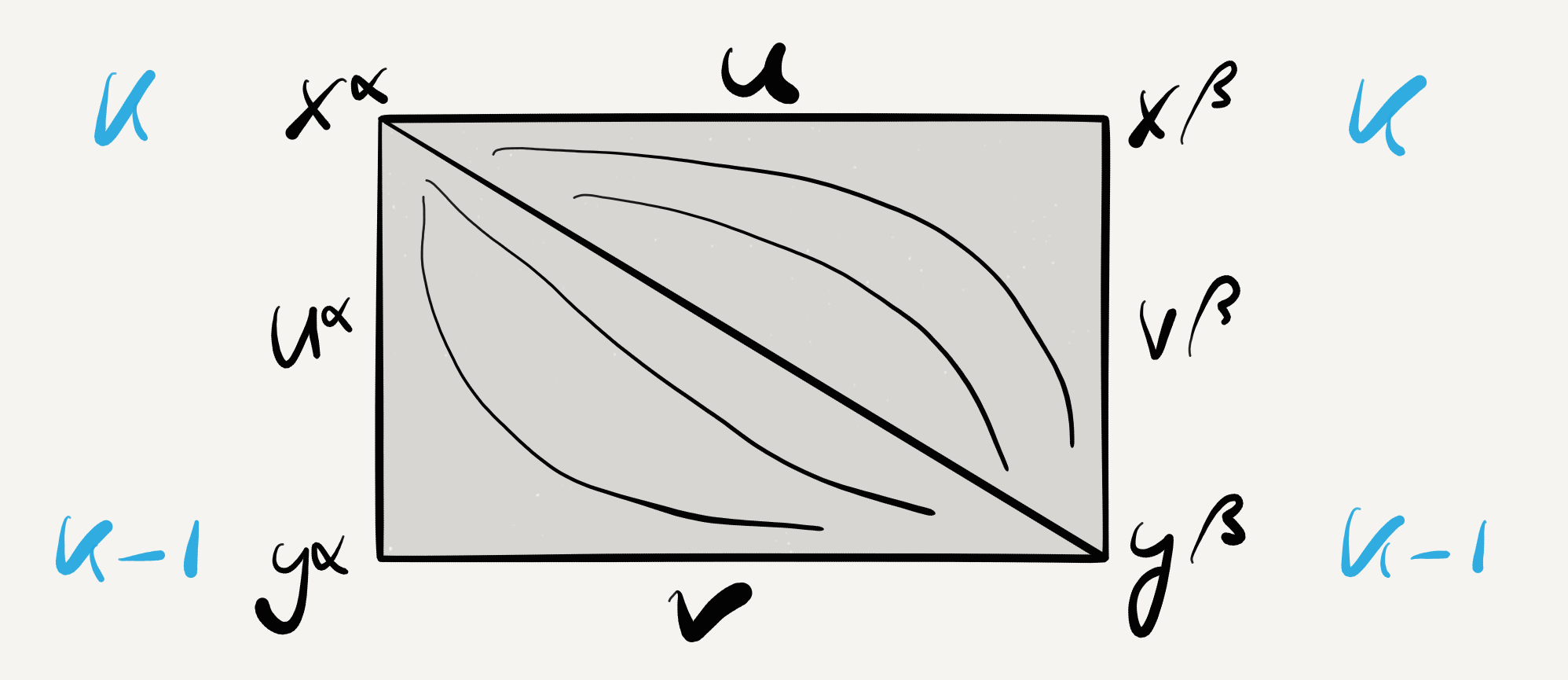}
  \caption{Partner pair property $(u^\alpha,v)\sim(u,v^\beta)$ implies
               $\psi^{\beta\alpha}\p^\alpha=\p^\beta \psi^{\beta\alpha}$}
  \label{fig:fig-partner-pair-s}
\end{figure}

This identity is equivalent to all 1-fold broken trajectories
from $x^\alpha$ to $y^\beta$ appearing as partner pairs
as indicated by Figure~\ref{fig:fig-partner-pair-s}.
Just as before the partner pair property follows from compactness
up to 1-fold broken orbits and a corresponding gluing construction.
The \textbf{induced morphism on homology}
\begin{equation}\label{eq:Floer-continuation-maps-homology}
     \Psi^{\beta\alpha}
     :=[\psi^{\beta\alpha}(H^{\alpha\beta})]:\HF_*(H^\alpha)
     \to\HF_*(H^\beta)
\end{equation}
is called\index{continuation map!Floer --}
\textbf{Floer continuation map}.\index{Floer!continuation map}
Does it depend on the homotopy?

\begin{exercise}\label{exc:const-htpy}
Denote the 
\textbf{constant homotopy}\index{homotopy!constant -- $H^\alpha$}
$H^{\alpha \alpha}\equiv H^\alpha$ again by $H^\alpha$ for simplicity.
Show that $\psi^{\alpha\alpha}(H^\alpha)=\1$ is the identity on
$\CF_*(H^\alpha)$.
\end{exercise}

\begin{lemma}[Salamon~{\citerefFH[Le.~3.11]{salamon:1999a}}]
\label{le:Salamon-homotopy-s}
Given regular homotopies $H^{\alpha\beta}$ from $H^{\alpha}$ to $H^{\beta}$ 
and $H^{\beta\gamma}$ from $H^{\beta}$ to $H^{\gamma}$, define a map
$$
     H^{\alpha\gamma}_{R}= H^{\alpha\gamma}_{R,s,t}:=
     \begin{cases}
        H^{\alpha\beta}_{s+R,t}&\text{, $s\le 0$,}\\
        H^{\beta\gamma}_{s-R,t}&\text{, $s\ge 0$,}
     \end{cases}
$$
where $R\ge 2$ is a constant; see
Figure~\ref{fig:fig-Salamon-homotopy-s}.
Then there is a constant $R_0>0$ such that for $R>R_0$
the map $H^{\alpha\gamma}_{R}$ is a regular homotopy
from $H^\alpha$ to $H^\gamma$ and the induced morphism
$$
     \psi^{\gamma\alpha}_R:\CF_*(H^\alpha)\to\CF_*(H^\gamma)
$$
is given by 
\begin{equation}\label{eq:Salamon-homotopy-s}
     \psi^{\gamma\alpha}_R=\psi^{\gamma\beta}\circ\psi^{\beta\alpha}.
\end{equation}
\end{lemma}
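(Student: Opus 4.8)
\textbf{Proof proposal for Lemma~\ref{le:Salamon-homotopy-s}.}

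The plan is to identify the induced chain map $\psi^{\gamma\alpha}_R$ of the ``stretched concatenated homotopy'' $H^{\alpha\gamma}_R$ with the composition $\psi^{\gamma\beta}\circ\psi^{\beta\alpha}$ of the two constituent continuation maps, for $R$ large. First I would address regularity: by the same Thom--Smale transversality argument used for single homotopies (see Section~\ref{sec:Thom-Smale-transversality}, and the references to~\citerefFH[\S5]{Floer:1995a}), the homotopies $H^{\alpha\beta}$ and $H^{\beta\gamma}$ can be taken regular; then for all sufficiently large $R$ the map $H^{\alpha\gamma}_R$ is regular too, because near the ``neck'' region the nonlinearity is essentially $s$-independent and equal to $H^\beta$, which is itself regular, so no new degenerate solutions can appear for $R\gg 0$. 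This gives the existence of $R_0$ and makes $\psi^{\gamma\alpha}_R$ well defined as a chain map.

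The heart of the proof is a gluing/compactness argument for the index-zero moduli spaces $\Mm(x^\alpha,x^\gamma;H^{\alpha\gamma}_R)$. The key step is to show that, for $R>R_0$, there is a bijection
$$
   \Mm(x^\alpha,x^\gamma;H^{\alpha\gamma}_R)
   \;\cong\;
   \bigcup_{x^\beta\in\Pp_0(H^\beta)}
   \Mm(x^\alpha,x^\beta;H^{\alpha\beta})\times\Mm(x^\beta,x^\gamma;H^{\beta\gamma}),
$$
where on the right only the pieces with $\CZcan(x^\alpha)=\CZcan(x^\beta)=\CZcan(x^\gamma)$ (index difference zero at each factor) contribute, since additivity of the Fredholm index forces every intermediate index to equal $\CZcan(x^\alpha)$. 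In one direction, given $(u,v)$ in the right-hand set one runs a Newton-method gluing exactly as in Section~\ref{sec:FH-gluing}: build an approximate solution $\tilde w_R$ by cutting off $u$ and $v$ around $s=\mp R$ and inserting the constant loop $x^\beta$ in between, verify that $\Ff_{H^{\alpha\gamma}_R}(\tilde w_R)$ is $L^p$-small (using exponential decay from Theorem~\ref{thm:finite-ENERGY=LIMITS-exist}), that $D_{\tilde w_R}$ has a uniformly bounded right inverse (using surjectivity of $D_u$ and $D_v$ and non-degeneracy of $x^\beta$), and that quadratic estimates hold; then the implicit function theorem produces a unique nearby honest solution $u\#_R v\in\Mm(x^\alpha,x^\gamma;H^{\alpha\gamma}_R)$. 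In the other direction, a compactness argument (bubbling is excluded by $\Io_\omega=0$, and breaking at interior non-neck points is excluded by the index-zero constraint via Exercise~\ref{exc:FH-comp}) shows every element of $\Mm(x^\alpha,x^\gamma;H^{\alpha\gamma}_R)$ for large $R$ lies in the image of this gluing, and uniqueness in the implicit function theorem shows no two pairs glue to the same solution.

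Once this bijection is established and checked to be orientation/count compatible (here trivial, since we work with $\Z_2$ coefficients and merely count points mod $2$), the identity~(\ref{eq:Salamon-homotopy-s}) follows by a direct bookkeeping: on a basis element $x^\alpha\in\Crit_k\Aa_{H^\alpha}$,
$$
   \psi^{\gamma\alpha}_R(x^\alpha)
   =\sum_{x^\gamma}\#_2\,\Mm(x^\alpha,x^\gamma;H^{\alpha\gamma}_R)\,x^\gamma
   =\sum_{x^\gamma}\Bigl(\sum_{x^\beta}\#_2\Mm_{x^\alpha x^\beta}\cdot\#_2\Mm_{x^\beta x^\gamma}\Bigr)x^\gamma
   =(\psi^{\gamma\beta}\circ\psi^{\beta\alpha})(x^\alpha),
$$
using~(\ref{eq:Floer-continuation-maps}) for the definitions of $\psi^{\beta\alpha}$ and $\psi^{\gamma\beta}$. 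I expect the main obstacle to be the uniform-in-$R$ gluing analysis of Step~II and~III above: establishing that the right inverse of $D_{\tilde w_R}$ is bounded uniformly in $R$ (which is where the non-degeneracy of the intermediate orbit $x^\beta$ and the linear gluing estimates genuinely enter), and ruling out, via compactness, that solutions of the $R$-family escape to configurations other than the expected glued ones. Both are standard in the Floer literature but technical; I would cite~\citerefFH[\S3.3, Le.~3.11]{salamon:1999a} for the detailed estimates rather than reproduce them.
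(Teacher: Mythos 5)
Your outline is correct and is exactly the argument of the cited source: the paper itself gives no proof of this lemma, deferring entirely to Salamon's Lemma~3.11, and your gluing-plus-compactness identification of the index-zero moduli spaces $\Mm(x^\alpha,x^\gamma;H^{\alpha\gamma}_R)$ with the fiber product over intermediate orbits $x^\beta$ (forced to have equal index by additivity of the Fredholm index, with extra breaking excluded since any additional $s$-independent piece would carry index at least one) is precisely that proof. The only cosmetic slip is calling the inserted middle piece ``the constant loop $x^\beta$'' --- it is the $s$-constant solution $u(s,\cdot)\equiv x^\beta$, as in the approximate zero $\tilde w_R$ of Section~\ref{sec:FH-gluing} --- and, as you note, the genuinely technical content (the $R$-uniform right inverse and the quadratic estimates) is what one cites Salamon for.
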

\begin{figure}[h]
  \centering
  \includegraphics%[width=0.9\textwidth]
                             [height=3cm]
                             {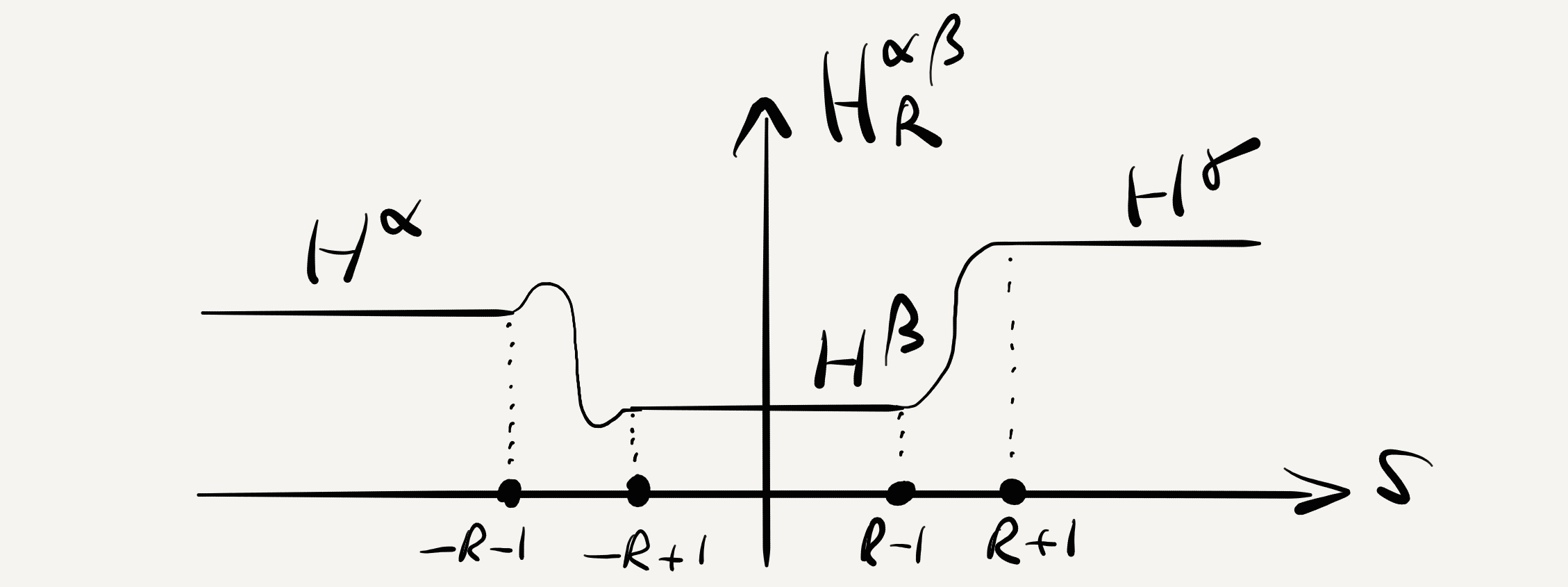}
  \caption{The homotopy $H^{\alpha\gamma}_R$ from $H^\alpha$ to
                $H^\gamma$}
  \label{fig:fig-Salamon-homotopy-s}
\end{figure}\index{partner pair property}

\begin{exercise}[Continuation maps $\Psi^{\beta\alpha}$ independent of homotopy]
\label{exc:htps-of-htps}
Given two regular homotopies $H^{\alpha\beta}_0$ and
$H^{\alpha\beta}_1$ from $H^{\alpha}$ to $H^{\beta}$,
show that $\psi^{\beta\alpha}_0$ and $\psi^{\beta\alpha}_1$
are chain homotopy equivalent: Define a homomorphism
$$
     T:\CF_*(H^\alpha)\to\CF_*(H^\beta)
$$
such that
\begin{equation}\label{eq:chain-htpy-equiv}
     \psi^{\beta\alpha}_1-\psi^{\beta\alpha}_0=
     \p^\beta T+T\p^\alpha.
\end{equation}
Note that such $T$ raises the grading by $+1$.
\newline
[Hint: Pick a regular homotopy $\{H^{\alpha\beta}_{\lambda}\}$ of
homotopies $H^{\alpha\beta}_{\lambda}=H^{\alpha\beta}_{\lambda,s,t}$
from $H^\alpha$ to $H^\beta$ which agrees with $H^{\alpha\beta}_0$
for $\lambda=0$ and with $H^{\alpha\beta}_1$ for $\lambda=1$.
In case of \emph{index difference $-1$} the
parametrized moduli
spaces\index{moduli spaces!parametrized}
$$
     \Mm(y^\alpha,x^\beta;\{H^{\alpha\beta}_{\lambda}\})
     :=\{(\lambda,u)\mid\lambda\in[0,1],
     u\in\Mm(y^\alpha,x^\beta;H^{\alpha\beta}_{\lambda})\}
$$
are 0-dimensional manifolds, in fact finite sets.
Count them appropriately to define $T$.
Analyze compactness up to 1-fold broken orbits
of the (1-dimensional) \emph{index difference $0$} moduli spaces
$\Mm(x^\alpha,x^\beta;\{H^{\alpha\beta}_{\lambda}\})$
and set up corresponding gluing maps
to prove the desired identity~(\ref{eq:chain-htpy-equiv});
cf.~\citerefFH[Pf. of Le.~3.12]{salamon:1999a}.]
\end{exercise}

%%% SUBSUBSECTION %%%%%%%%%%%%%%%%%%%
%%%%%%%%%%%%%%%%%%%%%%%%%%%%%%%%%%
\subsubsection{Proof of Theorem~\ref{thm:HF-continuation} (Continuation)}
To prove that $\Psi^{\beta\alpha}$ is an isomorphism with
inverse $\Psi^{\alpha\beta}$ pick a regular homotopy
$H^{\alpha\beta}=H^{\alpha\beta}_{s,t}$ from $H^\alpha$ to $H^\beta$
and denote the (regular) reverse homotopy by
$H^{\beta\alpha}:=H^{\alpha\beta}_{-s,t}$. With
the associated homotopy $H^{\alpha\alpha}_R$
of Lemma~\ref{le:Salamon-homotopy-s} we get
\begin{equation*}
\begin{split}
     \psi^{\alpha\beta}(H^{\alpha\beta}_{-s,t})\circ
     \psi^{\beta\alpha}(H^{\beta\alpha}_{s,t})
   &=\psi^{\alpha\alpha}(H^{\alpha\alpha}_R)\\
   &=\psi^{\alpha\alpha}(H^{\alpha})+\p^\alpha T+T\p^\alpha\\
   &=\1+\p^\alpha T+T\p^\alpha
\end{split}
\end{equation*}
where identity two is by~(\ref{eq:chain-htpy-equiv}) for
the two regular homotopies $H^{\alpha\alpha}_R$ and $H^{\alpha}$
from $H^{\alpha}$ to itself.
Identity three is by Exercise~\ref{exc:const-htpy}.
Hence $\Psi^{\alpha\beta}\Psi^{\beta\alpha}=\1$.
Repeat the argument starting with a homotopy
from $H^\beta$ to $H^\alpha$ to get
$\Psi^{\beta\alpha}\Psi^{\alpha\beta}=\1$. This shows that
$\Psi^{\beta\alpha}$ is an isomorphism with inverse $\Psi^{\alpha\beta}$.
That $\Psi^{\alpha\alpha}=\1$ follows from
Exercises~\ref{exc:const-htpy} and~\ref{exc:htps-of-htps}.
The identity $\Psi^{\gamma\beta}\Psi^{\beta\alpha}=\Psi^{\gamma\alpha}$
holds by~(\ref{eq:Salamon-homotopy-s}).
This proves Theorem~\ref{thm:HF-continuation}.

%%%%%%%%%%%%%%%%%%%%%%%%%%%%%%%%%%
%%% SUBSECTION %%%%%%%%%%%%%%%%%%%%%
%%%%%%%%%%%%%%%%%%%%%%%%%%%%%%%%%%
\subsection{Isomorphism to singular homology}
\label{sec:iso-sing}

Recall that we use $\Z_2$ coefficients.
After introducing in great length a new homology theory
for the data $(M,\omega,H,J)$ --
which does not depend on $(H,J)$
as we saw in the previous section on continuation --
it is natural to ask if Floer homology relates
to any known homology theory and if so, how?
The answer is that Floer homology relates to \emph{singular
homology of the closed manifold $M$ itself} by isomorphisms
$\Psi^{\eps H^\alpha}=\PsiPSShom$ compatible with the
continuation~maps~$\Psi^{\beta\alpha}$.

%%% SUBSUBSECTION %%%%%%%%%%%%%%%%%%%
%%%%%%%%%%%%%%%%%%%%%%%%%%%%%%%%%%
\subsubsection{Method 1 ($\mbf{C^2}$ small Morse functions)}
Floer observed in~\citerefFH[Thm.~2]{floer:1989c} that,
roughly speaking, if one chooses a sufficiently $C^2$ small
Morse function $h:M\to\R$ as Hamiltonian,
then not only the $1$-periodic orbits of $h$ are precisely
its critical points, see Proposition~\ref{prop:HZ-C2small},
but also the connecting Floer trajectories $u=u(s,t)$ will not depend on $t$
and turn into connecting Morse trajectories $\gamma=\gamma(s)$.

\begin{remark}[$C^2$ small: Floer trajectories reduce to
Morse -- not always!]\label{rem:t-indep-flow-lines}
Actually the situation is a bit more complex
as we said right above: Hofer and Salamon proved
in~\citerefFH[\S 7]{Hofer:1995a} that for sufficiently
$C^2$ small Morse functions all connecting trajectories
of index difference \emph{one or less}
are independent of $t$ and therefore connecting Morse
trajectories. But index difference one (and zero) \emph{is all that is
needed} in either chain complex. (See~\citerefFH[Ex.~7.2]{Hofer:1995a}
for an example how $t$-independence  fails
in case of index difference two or larger.
See also~\citerefFH[Rmk.~7.5]{Hofer:1995a} saying
that their proof does not work for symplectic manifolds of
minimal Chern number $n-1$.)
\end{remark}

Suppose from now on that $h:M\to\R$ is Morse and $C^2$ small.
A closer look shows that the Floer equation~(\ref{eq:FLOER})
actually turns for $t$-independent trajectories
$u(s,\text{\st{$t$}})=:\gamma(s)$ into the \emph{up}ward
gradient equation
\begin{equation}\label{eq:FC-equation-eq}
     \gamma^\prime=\p_su=-\grad\Aa_h(u)=\nabla h
\end{equation}
for curves $\gamma=:\R\to M$; cf. Figure~\ref{fig:fig-Floers-interpol-eq}.
Note that $\Crit\Aa_h=\Crit h$ by Proposition~\ref{prop:HZ-C2small}
and that $\Aa_h=-h$ along the critical set.
Furthermore, by~(\ref{eq:mu_H=ind_-H}) the Floer grading
$z$ as a constant periodic orbit and the Morse index of $z$
as a critical point of $h$ are related by
$\CZcan(z)=n-\IND_h(z)$.

Recall from Section~\ref{sec:Morse-cohom},
see~\cite{weber:2015-MORSELEC-In_Preparation} for details,
that the Morse cochain groups of a Morse function $h$ on a closed
manifold are generated by the critical points of $h$,
graded by their Morse index $\IND_h$,
and the coboundary operator is given by counting
the flow lines of the upward gradient $\nabla h$,
that is from critical points $x$ of index, 
say $2n-k-1=\IND_h(x)$, equivalently $k+1-n=\CZcan(x)$
to those of index $2n-k=\IND_h(y)$, equivalently $k-n=\CZcan(y)$.

\begin{figure}[h]
  \centering
  \includegraphics%[width=0.9\textwidth]
                             [height=3cm]
                             {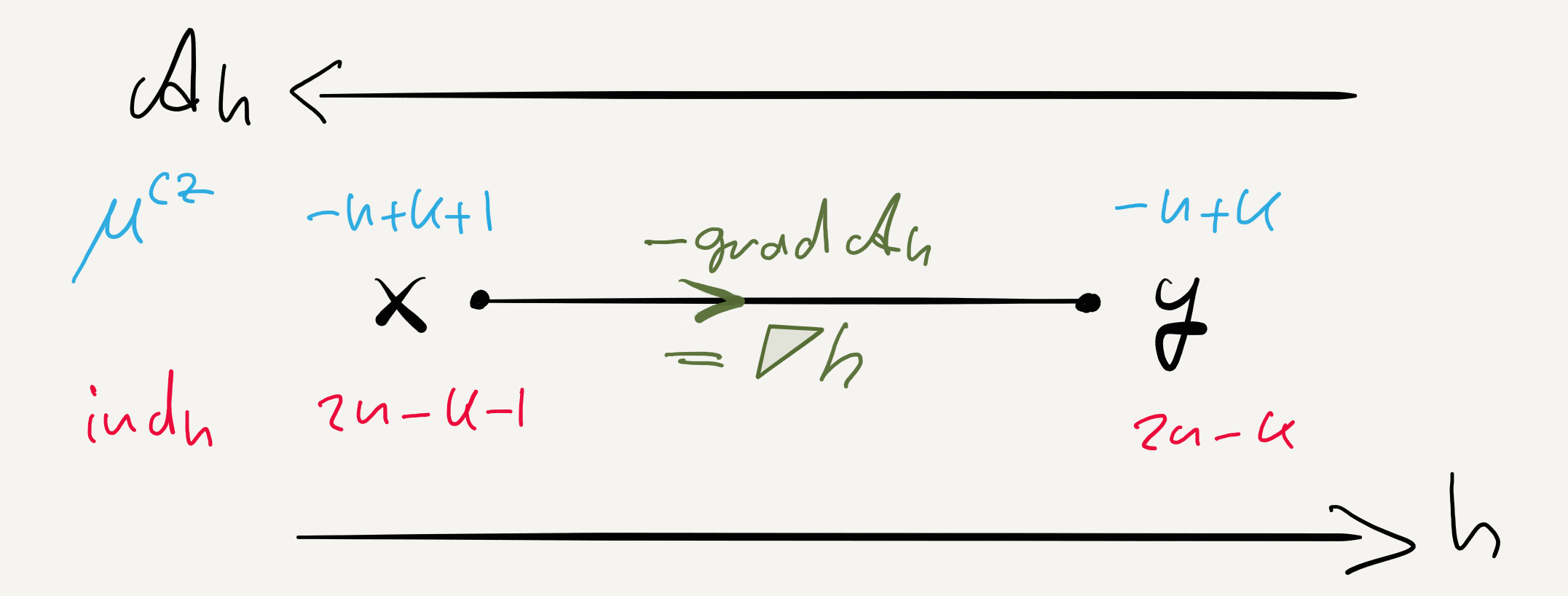}
  \caption{Downward Floer gradient degenerates to upward Morse gradient}
  \label{fig:fig-Floer-iso}
\end{figure}

Note that $\p^{\rm F}x=\delta^{\rm M}x$ by~(\ref{eq:FC-equation-eq}).
Consequently for any sufficiently
$\mbf{C^2}$ \textbf{small Morse function}, denoted for
distinction\index{$\eps h$: $C^2$ small Morse function}
by\index{Morse function!$C^2$ small --}
$$
     \eps h:M\to \R,
$$
the following chain and cochain complexes are naturally equal
\begin{equation}\label{eq:FC-M-Coho}
     \left(\CF_{*-n}(\eps h),\p^{\rm F}(\eps h)\right)
     \equiv\left(\CM^{2n-*}(\eps h),\delta^{\rm M}(\eps h)\right),
\end{equation}
up to shifting the grading, that is
$\CF_{n-\ell}(\eps h)$ is naturally equal to $\CM^{\ell}(\eps h)$.

To prove Theorem~\ref{thm:HF=H-CSF}
consider a regular pair $(H^\alpha,J^\alpha)$
and define the desired isomorphism, say denoted by $\Psi^{\eps H^\alpha}$,
by composing the following three isomorphisms:
Firstly, the Floer continuation map
$$
     \HF_{k-n}(H^\alpha)\to\HF_{k-n}(\eps h)
$$
for some $C^2$ small Morse function $\eps h$. Secondly, the
isomorphism
$$
     \Psi^{\eps h}:\HF_{k-n}(\eps h)\to \HM^{2n-k}(\eps h)
$$
induced by the chain level identity~(\ref{eq:FC-M-Coho}), followed by
Poincar\'{e} duality\footnote{
  Any symplectic manifold is naturally \emph{oriented}
  and our $M$ is \emph{closed} by assumption.
  }
\begin{equation*}\label{eq:FH-M-Coho}
\begin{split}
     \HM^{2n-k}(\eps h)
     \simeq\HM_{k}(\eps h).
\end{split}
\end{equation*}
Thirdly, the fundamental isomorphism~(\ref{eq:fund-Morse-hom})
of Morse homology given by
$$
     \HM_{k}(\eps h)
     \simeq\Ho_{k}(M)
$$
and compatible with the Morse continuation maps.
We use $\Z_2$ coefficients.

As a consequence $\Psi^{\eps H^\alpha}$ is compatible
with the Floer continuation maps
as stated in Theorem~\ref{thm:HF=H-CSF}.
See also~\citerefFH{Salamon:1992a}
and~\cite[Thm.~12.1.4]{mcduff:2004a}.
For simplicity and in view of~(\ref{eq:nat-iso-Floer-sing})
let us denote $\Psi^{\eps H^\alpha}$ by $\Psi^\alpha$ from now on.

%%% SUBSUBSECTION %%%%%%%%%%%%%%%%%%%
%%%%%%%%%%%%%%%%%%%%%%%%%%%%%%%%%%
\subsubsection{Method 2 (Spiked disks --\Index{PSS isomorphism})}
Piunikhin, Salamon, and Schwarz came up
in~\citerefFH{Piunikhin:1996a} with a rather
different idea to construct an isomorphism
between Floer and Morse homology, denoted~by
$$
     \PhiPSShom:\HM_*(M)\to\HF_{*-n}(M,\omega).
$$
Fix a Morse function $f$ and a (generic) Riemannian
metric on $M$ such that the Morse complex
is defined and pick a generator $x\in\Crit_k f$.
Suppose $(H,J)$ is a regular pair,
so Floer homology is defined, and pick a
generator $z\in\Crit_{k-n} \Aa_H$.
The idea is to relate Morse and Floer
flow lines by first following
a Morse flow line $\gamma$ that comes from $x$
and then, say at time $s=0$, change
over to a Floer flow line that goes to $z$.
Of course, the transition from a family of points
to a family of circles requires some interpolation, i.e. some
thought. The idea is to look at $J$-holomorphic
planes $v:\C\to M$, see~(\ref{eq:J-hol-plane}), and
homotop, as the polar radius $s$ of points in $\C$
traverses the interval $[1,2]$, the zero Hamiltonian inside the
unit circle (polar radius $s=1$) to the given Hamiltonian $H_t$
on and outside the circle of polar radius $s=2$.
Denote such a homotopy by $H_s=H_{s,t}$.
Moreover, one requires $v(e^{2\pi(s+i\cdot)})$ to converge to
the given periodic orbit $z$, as $s\to\infty$.
The key condition that couples the two worlds
is then that $\gamma$ meets $v$ at $s=0$.
Such a configuration, called a spiked disk,
is illustrated by Figure~\ref{fig:fig-spiked-disk}.
\begin{figure}[h]
  \centering
  \includegraphics%[width=0.9\textwidth]
                             [height=3cm]
                             {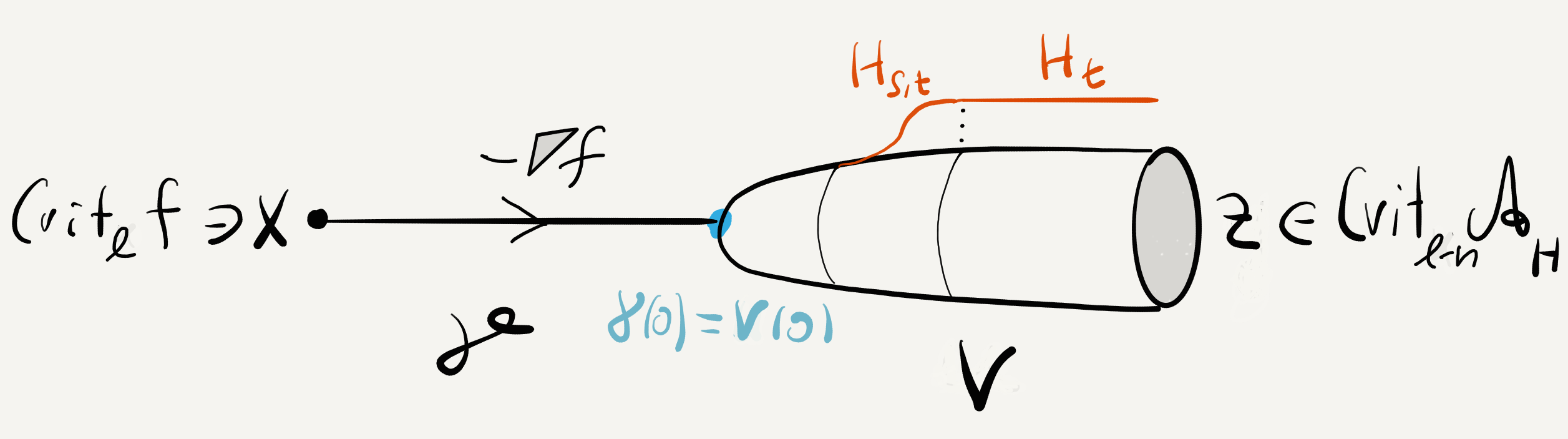}
  \caption{Counting spiked disks defines the
                 PSS chain map $\phiPSShom(H,f)$}
  \label{fig:fig-spiked-disk}
\end{figure}
Now the codimension of the set of all possible points $\gamma(0)$
is $2n-\IND_f(x)$. And the set of all $v$
which satisfy the homotopy Floer equation~(\ref{eq:s-FLOER})
for $H_s$ and converge to $z$, as $s\to\infty$,
has dimension $n-\CZcan(z)$.\footnote{
  The dimension formula $2n-\mu_H(z)$ in~\citerefFH[\S\,3.5]{salamon:1999a}
  involves $\mu_H(z):=n-\CZ(z)=n+\CZcan(z)$.
  }
Thus the moduli space of spiked disks is (for generic homotopy $H_s$)
a 0-dimensional manifold in case the Morse index of $x$
is equal to $\CZcan(z)+n$. Also there is 'no index left' for broken
flow lines, so one has compactness, thus finiteness of index
difference zero moduli spaces.
It is now clear that on the chain level the homomorphism
$$
     \phiPSShom=\phiPSShom(H,f):\CM_\ell(f)\to\CF_{\ell-n}(H)
$$
is defined by counting (modulo 2) the finitely many spiked
disks from $x$ to $z$.
The index difference one moduli spaces are
compact up to 1-fold broken orbits,
the breaking can happen on either side,
which together with a gluing construction
shows that the 1-fold broken orbits
come in pairs, namely as partner pairs.
This shows that $\phiPSShom(H,f)$ is a chain map.
But why is the induced homomorphism
$\PhiPSShom$ on homology an isomorphism?

\begin{exercise}
Replace spiked disks by disks with spikes
to define~chain~maps
$$
     \psiPSShom=\psiPSShom(f,H):\CF_{k}(H)\to\CM_{k+n}(f).
$$
Show by picture that both compositions
\[
     \phiPSShom(H,f)\circ\psiPSShom(f,H)\sim\1_{\CF}
\]
and
\[
     \psiPSShom(f,H)\circ\phiPSShom(H,f)\sim\1_{\CM}
\]
are chain homotopic to the identity.
\newline
[Hint: Draw a configuration for one of the compositions
and see how it can degenerate, that is identify the configurations
in the boundary of moduli space.
Consult~\cite[\S 12.1]{mcduff:2004a} in case you get stuck.]
\end{exercise}

\begin{exercise}
Show that both methods lead to the same isomorphisms
\begin{equation}\label{eq:nat-iso-Floer-sing}
     \PsiPSShom=\Psi^{\eps H^\alpha}=:\Psi^\alpha.
\end{equation}
[Hint: Consult~\cite[\S 12.1]{mcduff:2004a}, prior to Rmk.~12.1.7,
in case you get stuck.]
\end{exercise}

\newpage
%%%%%%%%%%%%%%%%%%%%%%%%%%%%%%%%%%
%%% SUBSECTION %%%%%%%%%%%%%%%%%%%%%
%%%%%%%%%%%%%%%%%%%%%%%%%%%%%%%%%%
\subsection{Action filtered Floer homology}
\label{sec:action-filtered-Floer homology}

We summarize the main features in the form of an exercise;
use $\Z_2$ coefficients.

\begin{exercise}%[Action filtered Floer homology]
\label{exc:action-filtered-Floer homology}
Prove the \textbf{energy identity for connecting homotopy trajectories},
that\index{energy identity!homotopy trajectories}
is\index{homotopy trajectories!energy identity}
$$
     E(u)=\Norm{\p_s u}_{L^2}^2
     =\Aa_{H^\alpha}(x^\alpha)-\Aa_{H^\beta}(x^\beta)
     -\int_0^1\int_{-\infty}^\infty\left(\p_s H_{s,t}\right)(u)\, ds\, dt
$$
for every connecting homotopy trajectory
$u\in \Mm(x^\alpha,x^\beta;H^{\alpha\beta})$ where $E(u)$
is defined by~(\ref{eq:ENERGY-Floer}) for $H_{s,t}$.
A \textbf{\Index{monotone homotopy}}\index{homotopy~monotone}
is a homotopy $H_{s,t}$ such that $\p_s H_{s,t}\ge 0$ pointwise on
$\SS^1\times M\times\R$.
Show that for monotone homotopies
the action of $x^\alpha$ is strictly larger than that of $x^\beta$,
unless the connecting homotopy flow line $u$ is constant in $s$
in which case $u\equiv x^\alpha=x^\beta$.
Suppose that $a$ and $b$ are regular values of $\Aa_{H^\alpha}$
and define the \textbf{\Index{action filtered Floer chain groups}}\index{Floer chain groups!action filtered --}
$$
     \CF_*^{(a,b)}(H^\alpha)
$$
as usual except for only employing critical points
whose actions lie in the \textbf{\Index{action window}} $(a,b)$.
Define the boundary operator as before
in~(\ref{eq:Floer-boundary-operator}) except for only employing
critical points of action in $(a,b)$.
Show that $\p^2=0$. The homology $\HF_*^{(a,b)}(H^\alpha)$ of this
chain complex is called 
\textbf{\Index{action filtered Floer homology}}.\index{Floer homology!action filtered --}

Given a monotone homotopy $H^{\alpha\beta}$,
define the corresponding continuation map
$\psi^{\beta\alpha}(H^{\alpha\beta})$ as before
in~(\ref{eq:Floer-continuation-maps}) except for only employing
critical points of action in $(a,b)$.
Check that this defines a chain map. The induced
homomorphism $\Psi^{\beta\alpha}$ on homology is called
the\index{continuation map!monotone --}
\textbf{\Index{monotone continuation map}}.
\newline
Now consider Hamiltonians that have the property that
$H^\alpha\le H^\beta$ pointwise on $\SS^1\times M$.
Pick a smooth cutoff function $\rho:\R\to [0,1]$ which is
$0$ for $s\le-1$ and $1$ for $s\ge1$. Check that for
any Hamiltonians with $H^\alpha\le H^\beta$ the
\textbf{\Index{convex combination}}
$H^{\alpha\beta}_\rho:=(1-\rho) H^\alpha+\rho H^\beta$
is a monotone homotopy. Show that the corresponding
monotone continuation maps have composition properties
$$
     \Psi^{\gamma\beta}\Psi^{\beta\alpha}=\Psi^{\gamma\alpha}
    ,\qquad
     \Psi^{\alpha\alpha}=\1,
$$
whenever $H^\alpha\le H^\beta\le H^\gamma$
analogous to Theorem~\ref{thm:HF-continuation}.
\end{exercise}

Recall from Proposition~\ref{prop:HZ-C2small}
that on a closed symplectic manifold $M$ there
are Hamiltonians $H:\SS^1\times M\to\R$ without
non-contractible $1$-periodic orbits. So by continuation
Floer homology is trivial on components of the
free loop space of $M$ other than the component $\Ll_0 M$
consisting of contractible loops.
However, sometimes combining a suitable action window
with a geometric constraint leads to nontrivial results;
see e.g.~\citerefFH{weber:2006a} for such a situation, although
for a class of non-closed symplectic manifolds, namely cotangent bundles.

%%%%%%%%%%%%%%%%%%%%%%%%%%%%%%%%%%
%%% SUBSECTION %%%%%%%%%%%%%%%%%%%%%
%%%%%%%%%%%%%%%%%%%%%%%%%%%%%%%%%%
\subsection{Cohomology and Poincar\'{e} duality}\label{sec:FH-cohomology}
Recall that we work with $\Z_2$ coefficients.
Cohomology arises\index{cohomology!Floer}\index{Floer cohomology}
from homology by dualization. In Section~\ref{sec:Morse-cohom}
this is explained in detail including the geometric realization,
just replace Morse co/homology by Floer co/homology.
So here we just summarize the geometric realization of the Floer
cochain complex. Given a regular pair $(H,J)$,
the \textbf{Floer cochain group}\index{Floer!cochain group}
is the $\Z_2$ vector space
$$
     \CF^*(H):=\Hom(\CF_*(H),\Z_2).
$$
Obviously the canonical basis $\Bb_H$ of the $\Z_2$ vector space $\CF_*(H)$
is the set of generators $\Crit\Aa_H=\Pp_0(H)$, namely the finite set
of contractible $1$-periodic Hamiltonian orbits.
Hence the (finite) dual set
$$
     \Bb_H^\#:=\{\eta^x\mid x\in\Pp_0(H)\}
$$
of Dirac $\delta$-functionals\footnote{
  Define $\eta^x$ on basis elements $y\in\Bb_H$ by $\eta^x(y)=1$, if
  $y=x$, and by $\eta^x(y)=0$, otherwise.
  }
on $\CF_*(H)$ is a basis of $\CF^*(H)$, called the
\textbf{\Index{canonical basis of $\CF^*(H)$}},
and $\CF^*(H)$ is a $\Z_2$ vector space of finite dimension.
Let $\CF^k(H)$ be the subspace generated by
those Dirac functionals whose canonical Conley-Zehnder index
$\CZcan(\eta^x):=\CZcan(x)$ is equal to $k$.
As we saw in~(\ref{eq:formal-sum-678}),
the \textbf{\Index{Floer coboundary operator}}
$$
     \delta^k:=\left(\p_{k+1}\right)^\#:\CF^k(H)\to\CF^{k+1}(H)
$$
acts on basis elements by
$$
     \delta^k\eta^y
     =\sum_{x\in\Crit_{k+1} \Aa_H} \#_2(m_{xy})\,\eta^x.
$$
Here $\#_2(m_{xy})$ is the number {\rm (mod 2)}
of connecting Floer flow lines,
cf.~(\ref{eq:Floer-boundary-operator}), and $\Crit_{k} \Aa_H$
is the set of critical points $x$ with $\CZcan(x)=k$.
By Proposition~\ref{prop:HF-bound-op} we get that
$\delta^2=(\p^2)^\#=0$.

Tu put it in a nut shell, the Floer cochain groups are
generated by the contractible $1$-periodic orbits and graded by the
canonical Conley-Zehnder index, whereas the Floer coboundary
operator is given by the {\rm (mod 2)} upward count of connecting Floer
flow lines between critical points of index difference one.

The quotient space
$$
     \HF^k(H)=\HF^k(M,\omega,H;J):=\frac{\ker\delta^k}{\im\delta^{k-1}}
$$
is called the $\mbf{k^{\mathrm{th}}}$ \textbf{\Index{Floer cohomology}}
with $\Z_2$ coefficients associated to $H\in\Hhreg(J)$.
Given regular pairs $(H^\alpha,J^\alpha)$ and $(H^\beta,J^\beta)$,
continuation isomorphisms of degree zero are given by the
transposes
$$
     (\Psi^{\beta\alpha})^\#=[\psi^{\beta\alpha}(H^{\alpha\beta})^\#]
     :\HF^*(\beta)\to\HF^*(\alpha)
$$
of the continuation maps in Theorem~\ref{thm:HF-continuation};
see~(\ref{eq:Floer-continuation-maps}) and~(\ref{eq:cont-maps-trans}).
The transpose of the natural isomorphism $\Psi^\alpha$
to singular homology in Theorem~\ref{thm:HF=H-CSF}
provides the isomorphism
$$
     \left(\Psi^\alpha\right)^\#:
     \Ho^*(M)\to\HF^{*-n}(\alpha)
$$
which is of degree $-n$ and compatible with the continuation maps.

%%% SUBSUBSECTION %%%%%%%%%%%%%%%%%%%
%%%%%%%%%%%%%%%%%%%%%%%%%%%%%%%%%%
\subsubsection{\Index{Poincar\'{e} duality}}
Suppose $(H,J)$ is a regular pair and $x\in\Crit\Aa_H$ is a
contractible $1$-periodic Hamiltonian orbit.
Consider the maps
$$
     \hat x(t):=x(-t),\qquad
     \hat H_t:=-H_{-t},\qquad
     \hat J_t:=J_{-t}.
$$

\begin{exercise}
Given a regular pair $(H,J)$ and $x,y\in\Crit\Aa_H$,
show that
$$
     \CZcan(\hat x;\hat H)=-\CZcan(x;H),\qquad
     \Aa_{\hat H}(\hat \gamma)=-\Aa_H(\gamma),
$$
for every contractible loop $\gamma:\SS^1\to M$ and that
$$
     u\in\Mm(x,y;H,J)\quad\Leftrightarrow\quad
     \hat u\in\Mm(\hat y,\hat x;\hat H,\hat J)
$$
where $\hat u(s,t):=u(-s,-t)$.
\end{exercise}

\begin{exercise}\label{exc:PD-commute}
Given a regular pair $(H,J)$, pick $x\in\Crit_{k+1}\Aa_H$.
Check that the horizontal maps in the diagram
\begin{equation*}
\begin{tikzcd} [row sep=tiny, column sep=scriptsize]
% Line 1
  &x
    \arrow[r, mapsto]
    &\hat x
      \arrow[r, mapsto]
      &\eta^{\hat x}
\\
% Line 2
\Pd_{k+1}:
  &\CF_{k+1}(H)
    \arrow[r, "\widehat{}"]
    \arrow[ddd, "{\p_{k+1}(H,J)}"', "{-\grad\Aa_H(u_s)}"]
    &\CF_{-k-1}(\hat H)
      \arrow[r, "\#"]
      &\CM^{-k-1}(\hat H)
        \arrow[ddd, "{\hat\delta^{-k-1}(\hat H,\hat J)}"]
\\
% Line 3
\mbox{}&&&\mbox{}
\\
% Line 4
\mbox{}&&&\mbox{}
\\
% Line 5
\Pd_{k}:
  &\CF_{k}(H)
    \arrow[r, "\widehat{}"]
    &\CF_{-k}(\hat H)
      \arrow[uuu, cyan, "{\substack{-\grad\Aa_{\hat H}(\hat u_s)
         \\=\grad\Aa_{H}(u_s)}}"', "{\hat\p_{-k}}"]
      \arrow[r, "\#"]
      &\CF^{-k}(\hat H)
\\
% Line 6
  &\sum_y \#_2 m_{xy} \cdot y
    \arrow[r, mapsto]
    &\sum_{\hat y} \#_2 m_{xy} \cdot\hat y
      \arrow[r, mapsto]
      &\sum_{\hat y} \substack{\#_2 m_{\hat y\hat x}\\\#_2 m_{xy}} \cdot\eta^{\hat y}
\end{tikzcd}
\end{equation*}
are isomorphisms and that the diagram commutes,\footnote{
  The sums in the last line are over all critical points of
  canonical Conley-Zehnder index equal to the grading of the
  corresponding (co)chain groups in the previous line.
  }
that is
$$
     \hat\delta^{-k-1}\circ\Pd_{k+1}=\Pd_{k}\circ\p_{k+1}.
$$
[Hint: Compare the {\rm mod 2} counts $\#_2 m_{xy}(H,J)$ and
$\#_2 m_{\hat y\hat x}(\hat H,\hat J)$; cf.~(\ref{eq:Floer-boundary-operator}).]
\end{exercise}

\begin{definition}[Poincar\'{e} duality]
By Exercise~\ref{exc:PD-commute} the chain level
isomorphisms $\Pd_k^{\hat\alpha\alpha}$, where $\alpha$ abbreviates $(H,J)$,
descend to isomorphisms
$$
     \PD_k^{\hat\alpha\alpha}:=[\Pd_k^{\hat\alpha\alpha}]:
     \HF_k(\alpha)\stackrel{\simeq}{\longrightarrow}
     \HF^{-k}(\hat\alpha)
$$
which together with continuation provide the
Poincar\'{e} duality isomorphisms
$$
     \PD_{k-n}^\alpha
     :=\left(\Psi^{\hat\alpha\alpha}\right)^\#\circ\PD_{k-n}^{\hat\alpha\alpha}:
     \underbrace{\HF_{k-n}(\alpha)}_{\simeq\Ho_{k}(M)}
     \stackrel{\simeq}{\longrightarrow}
     \HF^{n-k}(\hat\alpha)\stackrel{\simeq}{\longrightarrow}
     \underbrace{\HF^{n-k}(\alpha)}_{\simeq\Ho^{2n-k}(M)}
$$
for every $k$ and any regular pair $\alpha=(H,J)$
and where $2n=\dim M$.
\end{definition}

%\newpage
%%%%%%%%%%%%%%%%%%%%%%%%%%%%%%%%%%
%%%%%%%%%%%%%%%%%%%%%%%%%%%%%%%%%%
%%% SECTION %%%%%%%%%%%%%%%%%%%%%%%%
%%%%%%%%%%%%%%%%%%%%%%%%%%%%%%%%%%
%%%%%%%%%%%%%%%%%%%%%%%%%%%%%%%%%%
\section{Cotangent bundles and loop spaces}
\label{sec:FH-T^*M}
Suppose $(Q,g)$ is a closed Riemannian manifold.
Pick a smooth function $V$ on $\SS^1\times M$,
called \textbf{\Index{potential energy}},
and set $V_t(q):=V(t,q)$.
For $v\in T_qQ$ we abbreviate $g_q(v,v)$
by $\abs{v}_q^2=:\abs{v}^2$.

The Lagrange function or \textbf{\Index{Lagrangian}}
$L_{V_t}(q,v)=\frac12\abs{v}_q^2-V_t(q)$, defined on $\SS^1\times TQ$,
is the difference of \emph{kinetic} and \emph{potential energy}. The
functional $\Ss_V(x):=\int_0^1 L_{V_t}(x(t),\dot x(t))\, dt$ defined
on the free loop space $\Ll Q:=C^\infty(\SS^1,Q)$
of\index{$\Ll Q:=C^\infty(\SS^1,Q)$ free loop space}
$Q$ is called
the\index{action functional!classical}
\textbf{\Index{classical action} functional}. 
Explicitly it is given by
\begin{equation}\label{eq:classical-action}
     \Ss_V=\Ss_{V,g}:\Ll Q\to\R,\quad
     x\mapsto\int_0^1\frac12\Abs{\dot x(t)}^2-V_t(x(t))\, dt.
\end{equation}
Its extremals, that is the critical points of $\Ss_V$, are the
perturbed\footnote{
  Here the \emph{perturbation} is $V$. If $V\equiv const$,
  the critical points are the closed geodesics.
  }
closed geodesics on the Riemannian manifold $Q$ given by
\[
     \Crit\,\Ss_V=\Pp(V)
     :=\{-\Nabla{t}\dot x-\nabla V_t(x)=0 \}.
\]
On the other hand, by exactness of the symplectic manifold
$(T^*Q,d\lambdacan)$ the corresponding symplectic action functional
$\Aa_H^{\lambdacan}$, see~(\ref{eq:action-can-cot}),
is naturally defined on arbitrary loops, not just contractible ones.
It is convenient to identify $TQ\simeq T^*Q$ via the isomorphism
$\xi\mapsto g(\xi,\cdot)$ provided by the Riemannian metric. For
mechanical Hamiltonians $H_{V_t}(q,v):=\frac12\abs{v}_q^2+V_t(q)$ 
on $\SS^1\times TQ$ of the form kinetic plus potential energy
the (perturbed)\index{action functional!symplectic}
\textbf{\Index{symplectic action functional}}
$$
     \Aa_V=\Aa_{V,g}:\Ll TQ\to\R,\quad
     z=(x,y)\mapsto\int_0^1 g\left( y(t),\dot x(t)\right) -H_{V_t}(z(t))\, dt,
$$
has the same critical points as $\Ss_V$, up to identification under the embedding
$$
     \natinc:\Ll Q\INTO\Ll TQ,\quad
     x\mapsto (x,\dot x)=:z_x ,\qquad
     \natinc\left(\Crit\Ss_V\right)=\Crit\Aa_V.
$$
One has the inequality\footnote{
  To memorize the inequality direction, remember that $\Ss_V$ is
  bounded below, but not $\Aa_V$.
  }
$\Aa_V(x,y)\le\Ss_V(x)$ for general loops $(x,y)$ in $TQ$
with equality $\Aa_V(x,\dot x)=\Ss_V(x)$ along the image of
$\natinc$, in particular, on critical points.
Furthermore, for generic $V$ both functionals are Morse and
the Morse index
\begin{equation}\label{eq:IND=CZ}
     \IND_{\Ss_V}(x)=\CZcan(z_x)
\end{equation}
of a critical point $x$ of $\Ss_V$ coincides with the canonical
Conley-Zehnder index, see~(\ref{eq:CZ-normalization-canonical}),
of the corresponding critical point
$z_x=(x,\dot x)$ of $\Aa_V$ whenever the vector bundle
$x^*TQ\to Q$ is orientable; otherwise a correction term $\sigma(x)=+1$
adds to $\CZcan(z_x)$. For proofs of these facts see~\citerefFH{weber:2002a}.
It turns out, see~\citerefFH{salamon:2006a}, that the downward $L^2$ gradient
equation of $\Ss_V$ is the \textbf{\Index{heat equation}}
\begin{equation}\label{eq:heat}
     \p_su-\Nabla{t}\p_t u-\nabla V_t(u)=0
\end{equation}
for smooth cylinders $u:\R\times\SS^1\to Q$ in the manifold $Q$.
Imposing on $u$ asymptotic boundary conditions
$x^\pm\in\Pp(V)$, similarly to~(\ref{eq:LIMITS}), the operators
$D_u$ obtained by linearizing the heat equation~(\ref{eq:heat}) are
Fredholm whenever the $1$-periodic Hamiltonian orbits
$x^\mp$ are non-degenerate.
In case all $x^\pm\in\Pp(V)$ are non-degenerate and all linearizations $D_u$
are surjective, in other words, if the
\textbf{\Index{Morse-Smale condition}}
holds for~(\ref{eq:heat}), then counting flow lines between critical points
of Morse index difference one, say modulo 2, defines a boundary operator
on the Morse chain groups. With $\Z_2$ coefficients these are defined by
$$
     \CM_*(\Ss_{V,g}):=\bigoplus_{x\in\Pp(V)}\Z_2 x
$$
and they are graded by the Morse index $\IND_{\Ss_V}$, whatever
coefficient ring.
With integer coefficients the Morse complex $\CM(\Ss)=(\CM_*,\p_*)$
has been constructed in~\citerefFH{weber:2013a,weber:2013b}.
In~\citerefFH{weber:2014c} it is shown that there is a natural isomorphism
$$
     \HM_*(\Ss_V)\simeq\Ho_*(\Ll Q)
$$
to singular homology of the free loop space.
In~\citerefFH{salamon:2006a} a natural isomorphism
$$
     \HF_*(T^*Q,H_V)\simeq\HM_*(\Ss_V)
$$
was established (over $\Z_2$);
concerning the integers see subsection below.
Assume $\Z_2$ coefficients.
The isomorphism of homology groups
\begin{equation}\label{eq:FH=LM}
     \HF_*(T^*Q,\omegacan,\Aa_V)
     \simeq\Ho_*(\Ll Q)
\end{equation}
is\index{isomorphism!Viterbo --}
called the \textbf{Viterbo isomorphism}; see~\citerefFH{Viterbo:1998a}
for Viterbo's approach or \citerefFH{salamon:2006a,weber:2014c}
and~\citerefFH{abbondandolo:2006b,abbondandolo:2006a} for others.
For a comparison see~\citerefFH{weber:2005a}.

\begin{exercise}[Pendulum watched by uniformly rotating observer]
\label{ex:pendulum-rotating_observer}
Consider the simplest closed manifold $Q=\SS^1$
and work out explicitly the Morse and Floer chain complexes
leading to~(\ref {eq:FH=LM}).
While one quickly sees that a pendulum subject to gravity
is described by a Hamiltonian of the form
$H_V(q,v)=\frac12\abs{v}^2+V(q)$ on $T^*\SS^1=(\R/\Z)\times\R$,
how could one change the system in order to make the potential $V$ not only
time-dependent, but even time-$1$-periodic?
\newline
[Hint: Consult~\citerefFH{Weber:1996a} in case you get
stuck.]
\end{exercise}

%
%
%%%%%%%%%%%%%%%%%%%%%%%%%%%%%%%%%%
%%% SUBSECTION %%%%%%%%%%%%%%%%%%%%%
%%%%%%%%%%%%%%%%%%%%%%%%%%%%%%%%%%
\subsection*{Orientations}
For general closed manifolds $Q$ the relation between the
Morse and the Conley-Zehnder index of $x\in\Pp(V)$ has
been established in~\citerefFH{weber:2002a}.

That even for orientable closed manifolds $Q$ there is a problem to construct
coherent orientations of the spaces of connecting flow lines has been
discovered by Kragh\citerefFH{2007arXiv0712.2533K};
cf.~\citerefFH{Seidel:2010a}.
The problem arises when the second Stiefel-Whitney class
does not vanish over 2-tori.
In such cases Abouzaid~\citerefFH{Abouzaid:2011a} resolved the
problem by using local coefficients
to construct the Floer homology groups of the cotangent bundle;
see also~\citerefFH{abbondandolo:2014b,Abbondandolo:2015c}
and~\citerefFH{Kragh:2013a}.
The case of general closed manifolds $Q$, orientable or not,
is treated in~\citerefFH{Abouzaid:2015a}.

\begin{remark}
Suppose $Q$ is a closed manifold.
Then $Q$ is orientable
iff the first Stiefel-Whitney class (of its tangent bundle) is
trivial, that is $w_1(Q)=0\in\Ho^1(Q;\Z_2)$;
see e.g.~\cite[p.148]{milnor:1974a}.
An orientable manifold is called \textbf{\Index{spin}}
if it carries what is called a spin structure
and this is equivalent to $w_2(Q)\in\Ho^1(Q;\Z_2)$ being trivial;
see e.g.~\cite[II Thm.~2.1]{Lawson:1989a}.
In other words, a manifold being spin is equivalent to both $w_1(Q)$
and $w_2(Q)$ being trivial.
\end{remark}

\noindent
In particular, the isomorphism~(\ref{eq:FH=LM}) holds true over the integers
\begin{itemize}
\item
  for orientable closed manifolds $Q$ such that $w_2(Q)$ vanishes on all ($2$-)tori;
\item
  in particular, for closed manifolds that carry a spin structure.
\end{itemize}

\noindent
Examples of spin manifolds $Q$ are
\begin{itemize}
\item[-]
  all closed orientable manifolds of dimension $n\le 3$;
\item[-]
  all spheres (which is non-obvious only for two-spheres);
\item[-]
  all odd complex projective spaces $\CP^{2n+1}$,
  e.g. the Riemann sphere $\CP^1$.
\end{itemize}
The even complex projective spaces $\CP^{2n}$ are not
spin, in particular, the complex projective plane $\CP^2$ is not.

%%%%%%%%%%%%%%%%%%%%%%%%%
%%%%%%%%% REFERENCES %%%%%%
%%%%%%%%%%%%%%%%%%%%%%%%
%\bibliographysty le{plain}
         %   erzeugt:     [1] Joa Weber
%\bibliographystyle{abbrv}
         %  erzeugt:      [1] J. Weber and 
\bibliographystylerefFH{alpha}
         %  article:    [Web05]  J. Weber
         %  book:      [Web05]  Joa Weber
         % more authors: [HZ87]
%%%%%%%%%%%%%%%%%%%%%%%%%
%% include Bibliography in TOC %%
% en.wikibooks.org/wiki/LaTeX/Bibliography_Management#Using_tocbibind
%%%%%%%%%%%%%%%%%%%%%%%%%
% Using hyperref, one should say:
\cleardoublepage
\phantomsection
\addcontentsline{toc}{section}{References}
%\bibliographyrefFH{$HOME/Dropbox/0-Libraries+app-data/Bibdesk-BibFiles/library_math}{}

\begin{bibliographyrefFH}{}
% imports file refFH.bbl
\end{bibliographyrefFH}

\cleardoublepage
\phantomsection

               % FLOER HOMOLOGY

% PART II -- REEB
\cleardoublepage
\phantomsection
\part{Reeb dynamics}\label{sec:Reeb-Dyn}
%%%%%%%%%%%%%%%%%%%%%%%%%%%%%%%%%%
%%%%%%%%%%%%%%%%%%%%%%%%%%%%%%%%%%
%%%%%%%%%%%%%%%%%%%%%%%%%%%%%%%%%%
%% CHAPTER %%%%%%%%%%%%%%%%%%%%%%%%%
%%%%%%%%%%%%%%%%%%%%%%%%%%%%%%%%%%
%%%%%%%%%%%%%%%%%%%%%%%%%%%%%%%%%%
%%%%%%%%%%%%%%%%%%%%%%%%%%%%%%%%%%
\chapter{Contact geometry}
\chaptermark{Contact geometry}
\label{sec:contact-geometry}

From now on we consider \emph{autonomous} Hamiltonians
$F:M\to\R$ on symplectic manifolds $(M,\omega)$
and restrict our search for periodic orbits
to closed regular level sets $S=F^{-1}(c)$
equipped with the Hamiltonian vector field $X_F$.
One says that $F$ defines $S$.
If $K$ defines $S$, too, then $X_F=fX_K$ along $S$
for some non-vanishing function $f$ on $S$.
So, given $S$, the set $\Cc(S)$ of closed flow lines $P$, called
closed characteristics of $S$, does not
depend on the defining Hamiltonian $F$.
But the natural parametrizations of these embedded
circles $P$ depend on $F$; just multiplicate $F$ by constants
$\alpha>1$ to run faster, $\alpha\in(0,1)$ to run slower,
or $\alpha<0$ to run in the opposite direction along $P$.
So in this context it doesn't make sense to fix the period.
One looks for periodic orbits, any period $\tau\not=0$.
But, even in $(\R^{2n},\omega_0)$, not every
regular level set $S$ admits a periodic orbit.

To guarantee existence of periodic orbits one imposes
geometric conditions on a closed hypersurface in
$(M,\omega)$. Firstly, co-orientability
and, secondly, existence of a contact form $\alpha$ on the hypersurface
which is compatible with the ambient symplectic manifold in the sense
that the 2-form $d\alpha$ coincides with the restriction of $\omega$.
Such co-orientable closed hypersurfaces are called
\emph{hypersurfaces of contact type} and we denote them by
$\Sigma$ or $(\Sigma,\alpha)$ for distinction
from ordinary energy surfaces $S$.
Our main reference in Chapter~\ref{sec:contact-geometry} is~\cite{hofer:2011a}.
We also recommend the excellent overview and survey,
\citerefCG{Etnyre:2006a} and~\citerefCG{Geiges:2001a}, respectively,
and the very nicely written introduction in~\citerefCG[\S 1.6]{Wendl:2015a}.

\begin{NOTATION}\label{not:mfs-Hams}
We use the notation $(M^{2n},\omega)$ for symplectic and
$(W^{2n-1},\alpha)$ for contact manifolds.
Exact symplectic manifolds are denoted by $(V,\lambda)$
with symplectic form $\omega:=d\lambda$.
For \emph{autonomous} Hamiltonians and their flows
we use the letters $F:M\to\R$ and $\phi=\phi^F$, whereas for
potentially time-dependent quantities we write $H$ and $\psi=\psi^H$;
see also Notation~\ref{not:notations_and_signs}.
Energy surfaces are closed hypersurfaces $S$ of the form $F^{-1}(c)$
where $c$ is a regular value of $F$.
Hypersurfaces of contact type in a symplectic manifold $(M,\omega)$
are denoted by $(\Sigma,\alpha)$, where the contact form $\alpha$
has to satisfy a compatibility condition with $\omega$ which
can be formulated, equivalently, in terms of existence of a Liouville vector field $\LVF$
near $\Sigma$.
We assume that the submanifolds $S$ and $\Sigma$ are closed.
\end{NOTATION}

%%%%%%%%%%%%%%%%%%%%%%%%%%%%%%%%%%
%%%%%%%%%%%%%%%%%%%%%%%%%%%%%%%%%%
%%% SECTION %%%%%%%%%%%%%%%%%%%%%%%%
%%%%%%%%%%%%%%%%%%%%%%%%%%%%%%%%%%
%%%%%%%%%%%%%%%%%%%%%%%%%%%%%%%%%%
\section{Energy surfaces in $(\R^{2\lowercase{n}},\omega_0)$}
\label{sec:en-surf-R2n}
Unless mentioned differently, let $\R^{2n}$ be equipped with the standard
symplectic form $\omega_0=d\lambda_0=:dx\wedge dy$;
cf.~(\ref{eq:R2n-omega-standard}).
In fundamental difference to Chapter~\ref{sec:FH} we
consider now \emph{autonomous} Hamiltonians $F:\R^{2n}\to\R$. 
A \textbf{\Index{hypersurface}} is a submanifold of codimension one.

\begin{definition}\label{def:bounding-hypersurface}
A hypersurface $\Sigma$ in a non-compact manifold $V$ is called
\textbf{\Index{bounding}} if it is closed and its complement
$V\setminus\Sigma$ consists of two connected components, one of which,
called the \textbf{inside}, has\index{hypersurface!inside $\interior{M}$ of bounding --}
compact\index{inside of bounding hypersurface}
closure, 
say $M$.\index{hypersurface!bounding $M$}
Then $M$ is a compact manifold-with-boundary
and $\p M=\Sigma$ is connected and closed.
In\index{$\Sigma$ bounds $M$}
this case we say that \textbf{$\mbf{\Sigma}$ bounds $\mbf{M}$}.
\end{definition}

To understand the dynamics of the flow $\phi=\phi^F$ on
$(\R^{2n},\omega_0)$ generated by the Hamiltonian
vector field $X_F$ defined by~(\ref{eq:X_H})
recall that, by level preservation~(\ref{eq:energy-preservation}),
it suffices to understand the dynamics on each level set $F^{-1}(c)$.
To have a realistic goal, still ambitious though, we only consider
compact level sets. In addition, we require that $c$
is a regular value of $F$. Hence $F^{-1}(c)\subset\R^{2n}$
is a closed submanifold of codimension one by
the\index{hypersurface!closed --}
regular value theorem.
Thus $(F^{-1}(c),\phi^F)$ is a compact smooth dynamical system.
This is true for almost every $c\in\R$. (By Sard's
theorem the non-regular values form a measure zero subset of $\R$.)
The converse is somewhat less obvious.

\begin{lemma}\label{le:S=pre-image_of_0}
Every connected closed hypersurface
$S\subset\R^m$ is co/orientable and of the form $f^{-1}(0)$ for
some smooth function $f:\R^m\to\R$ with $\nabla f\pitchfork S$.
\newline
\textbf{Smooth \Index{Jordan Brouwer separation theorem}.}
$S$ is bounding.\footnote{
  False for general manifolds $N\not=\R^{m}$
  e.g. $N=\SS^1\times\SS^1$ or $N=\SS^1\times\R$ with $S=\SS^1\times\{0\}$.}
\end{lemma}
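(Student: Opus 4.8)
The statement bundles together three assertions about a connected closed (compact, boundaryless) hypersurface $S\subset\R^m$: (i) that $S$ can be written as a regular level set $f^{-1}(0)$ for some $f\in C^\infty(\R^m)$ with $\nabla f$ transverse to $S$; (ii) that $S$ is co-orientable, hence orientable (since $\R^m$ is orientable); and (iii) the smooth Jordan--Brouwer separation theorem, namely that $\R^m\setminus S$ has exactly two components, the bounded one having compact closure $M$ with $\partial M=S$. I would treat these in the order (ii)$\Rightarrow$(i)$\Rightarrow$(iii), because co-orientability is really the geometric heart of the first two claims and separation is a topological consequence that then follows by a tubular-neighborhood argument combined with a degree/intersection-number argument.

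\textbf{Step 1: Co-orientability of $S$.} The key point is that a closed hypersurface in an orientable ambient manifold has trivial normal bundle. I would invoke the tubular neighborhood theorem: $S$ has a tubular neighborhood $U$ diffeomorphic to the total space of the normal bundle $\nu\to S$, which is a real line bundle. This line bundle is trivial iff its first Stiefel--Whitney class vanishes, and by the Whitney sum formula $w_1(TS)\oplus w_1(\nu)=w_1(T\R^m)|_S=0$; but we do not even need this since a cleaner route is available: $w_1(\nu)\in\Ho^1(S;\Z_2)$ is the obstruction, and one shows directly it vanishes because $\R^m$ is simply connected for $m\ge 2$ (and the case $m=1$ is trivial: $S$ would be a point, which is not a hypersurface, or the statement is vacuous) so that any loop in $S$ bounds a disk in $\R^m$, along which the normal bundle must be trivial, forcing the orientation of $\nu$ to be preserved around the loop. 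Hence $\nu$ is trivial, $S$ is co-oriented by a choice of continuous unit normal field $\nnn:S\to\R^m$, and $S$ is orientable.

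\textbf{Step 2: Realizing $S$ as a regular level set.} Using the trivialized tubular neighborhood, identify $U\cong S\times(-\eps,\eps)$ with $S\times\{0\}=S$ and the coordinate $r\in(-\eps,\eps)$ the signed distance in the direction of $\nnn$. On $U$ define $f_0(x):=r(x)$, which has $S=f_0^{-1}(0)$ and $df_0\ne 0$ on $S$, with $\nabla f_0$ pointing along $\nnn$, hence transverse to $S$. Extend $f_0$ to all of $\R^m$: pick a cutoff $\beta\in C^\infty_0(U,[0,1])$ equal to $1$ near $S$, and set $f:=\beta f_0 + (1-\beta)\,\mathrm{sgn}$-type function on the two sides — more carefully, one takes $f$ to agree with $f_0$ near $S$ and to be locally constant ($\pm 1$) away from a slightly larger neighborhood, using that $\R^m\setminus S$ will turn out to have well-defined "sides"; to avoid circularity one only needs here that near $S$ the two local sheets $\{r>0\}$ and $\{r<0\}$ are distinguished, which the tubular neighborhood already gives, and then a standard partition-of-unity patching produces a global $f\in C^\infty(\R^m)$ with $f^{-1}(0)=S$ (at least as far as a neighborhood of $S$ is concerned) and $\nabla f\pitchfork S$. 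This is the routine part and I would not belabor it.

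\textbf{Step 3: Jordan--Brouwer separation.} This is the main obstacle and the genuinely topological step. First, $\R^m\setminus S$ is open; I would show it has at most two components: any point near $S$ lies, via the tubular neighborhood, in the $r>0$ or $r<0$ sheet, and these two local sheets are connected within a collar; since $S$ is connected, the collar shows every point of $\R^m\setminus S$ sufficiently close to $S$ is reachable from one of two "sides", and a standard argument (a path in $\R^m$ from any point to near $S$, made transverse to $S$ and then pushed off) shows there are at most two components. Second, there are at least two: this is where one uses a $\modulo 2$ intersection / degree argument — for a generic ray or line $L$ meeting $S$ transversally, the parity of $\abs{L\cap S}$ distinguishes points on opposite sides (this is exactly the classical mod-2 linking / Jordan--Brouwer argument, as in Guillemin--Pollack or Milnor's "Topology from the differentiable viewpoint"), so the two sheets are not in the same component. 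Finally, exactly one component is bounded: $S$ compact implies $\R^m\setminus S$ has exactly one unbounded component (the complement of a large ball is connected and lies in $\R^m\setminus S$), so the other component, call it $\interior M$, has bounded hence compact closure $M=\interior M\cup S$; that $\partial M=S$ and $M$ is a manifold-with-boundary follows from the collar structure. I expect Step 3 to consume most of the effort, and within it the "at least two components" assertion is the crux — I would cite the smooth Jordan--Brouwer theorem from a standard reference (e.g. Guillemin--Pollack, or Hirsch) rather than reproving the mod-2 intersection machinery from scratch, noting that the footnote's counterexamples $\SS^1\times\SS^1$, $\SS^1\times\R$ show the ambient hypothesis $N=\R^m$ (or at least $\Ho_{m-1}(N;\Z_2)=0$) is essential.
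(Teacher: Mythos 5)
Your overall strategy is sound, and your bounding argument in Step~3 (the complement of a large ball is connected, so exactly one component of $\R^m\setminus S$ is unbounded) matches the only non-citation part of the paper's proof. But your proposed ordering (ii)$\Rightarrow$(i)$\Rightarrow$(iii) has a genuine gap, which you yourself flag but do not resolve. In Step~2 you claim that a cutoff argument produces a global $f\in C^\infty(\R^m)$ with $f^{-1}(0)=S$. On the tubular neighborhood $U\cong S\times(-\eps,\eps)$ you can take $f_0=r$ and reshape it so that $f_0\equiv\pm1$ near the two boundary sheets of $U$. To extend $f_0$ to $\R^m\setminus U$ without introducing new zeros you must assign a definite sign to each component of $\R^m\setminus U$, \emph{compatibly} with the sign on the adjacent sheet of $\p U$. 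That compatibility is precisely what fails if the two local sheets $\{r>0\}$ and $\{r<0\}$ happened to lie in the same global component of $\R^m\setminus S$: then both $+1$ and $-1$ would be demanded on the same component. So the existence of a global $f$ with $f^{-1}(0)=S$ is logically equivalent to separation, and you cannot prove (i) before (iii) by a patching argument alone. The phrase ``using that $\R^m\setminus S$ will turn out to have well-defined sides'' is where the circle closes.

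The clean fix is one of two reorderings. Either prove Step~3 first (via the mod-2 intersection number of a generic ray with $S$, which you correctly identify as the crux and for which Guillemin--Pollack or Milnor suffices), and then Step~2 is trivial: set $f=-1$ on the bounded component, $+1$ on the unbounded one, and interpolate through $r$ on a collar. Or, alternatively, define the mod-2 ``parity'' function $\R^m\setminus S\to\Z_2$ directly by intersection counting and use it \emph{both} to pick the sign of $f$ and to prove separation in one stroke. The paper sidesteps the whole issue by citing Lima's construction, which produces $f$ together with the connectedness of $\{f<0\}$ and $\{f>0\}$ simultaneously, and then only needs to add the one-line compactness argument for boundedness. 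Your route is more self-contained but requires you to actually invoke the classical mod-2 Jordan--Brouwer argument \emph{before}, not after, the construction of $f$.

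Your Step~1 (co-orientability) is correct; the simply-connectedness argument for triviality of the normal line bundle is standard and in fact more elementary than the Stiefel--Whitney route you mention as a backup. The footnote's counterexamples are indeed tied to nonvanishing of $\Ho_{m-1}(N;\Z_2)$, as you note.
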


Note that $\nabla f\pitchfork S$ tells that $0$ is a regular value of
$f$ and $[\nabla f]$ co-orients $S$.

\begin{proof}
Based on the fact that any closed hypersurface in
$\R^m$ is orientable, a construction of a function $f$ on
$\R^m$ such that $f^{-1}(0)=S$ and $\nabla f\pitchfork S$ and
such that both sets $\{f<0\}$ and $\{f>0\}$ are connected is given
in~\citerefCG{Lima:1988a}.\footnote{
  The construction of $f$ works if $S$ is any codimension one
  submanifold, compact or not, of any simply-connected
  manifold; see~\citerefCG[Rmk.]{Lima:1988a}.
  Show that 1-connected is \emph{necessary}.
  }
By compactness of $S$ there is a radius $R$ such that $S$ lies
inside the radius $R$ ball $B$ centered at the origin.
Suppose by contradiction that there are elements $x\in\{f<0\}$ and $y\in\{f>0\}$
that \emph{both} lie outside the ball $B$. Connect $x$ and $y$ by a
continuous path that lies outside $B$. Then $f$ must be zero somewhere
along the path. Contradiction.
\end{proof}

To summarize, in $\R^{2n}$ connected compact regular level sets $F^{-1}(c)$ and
connected closed hypersurfaces $S$ are the same. However, one is
related to functions, the other one to geometry. 
Whereas on $F^{-1}(c)$ dynamics arises through the ODE provided by $X_F$,
on the geometry side the dynamical
information has its description as well, namely, through integral submanifolds
of what is called the characteristic line bundle $\Ll_S\to S$.
In the following we discuss both versions.

%%% SUBSUBSECTION %%%%%%%%%%%%%%%%%%%
%%%%%%%%%%%%%%%%%%%%%%%%%%%%%%%%%%
\subsubsection{Hamiltonian dynamics on energy surfaces --
  periodic orbits}
A Hamiltonian $F:\R^{2n}\to\R$ being \emph{autonomous}
has useful consequences:
\begin{itemize}
\item[(i)]
  Level sets, called \textbf{\Index{energy levels}},
  are preserved by the Hamiltonian~flow
  $$
     \phi=\phi^F=\{\phi^F_t\}_{t\in\R}
  $$
  as we saw in~(\ref{eq:energy-preservation}). Throughout we
  assume compactness
  of level sets, so $\phi$ is indeed a \textbf{\Index{complete flow}},
  i.e. exists for all times in $\R$.
\item[(ii)]
  If $c$ is a \textbf{\Index{regular value}} of $F$,
  that\index{$S=F^{-1}(c)$ energy surface}
  is $dF$ nowhere vanishes on $F^{-1}(c)$,
  and $F^{-1}(c)$ is compact, then we call the closed codimension 1 submanifold
  $$
     S^c:=F^{-1}(c)\subset\R^{2n}
  $$
  an \textbf{\Index{energy surface}}.
  It may have finitely many connected components.
\item[(iii)]
  By Lemma~\ref{le:S=pre-image_of_0} a connected\footnote{
    For a connected hypersurface it is much easier to decide
    whether it is a level set.
    }
  closed hypersurface
  $S\subset\R^{2n}$ is a level set
  \begin{equation}\label{eq:compact-energy-surface}
     S:=S^0=F^{-1}(0)
  \end{equation}
  for some smooth function $F:\R^{2n}\to\R$ with regular value $0$.
  \begin{equation*}
     \textsf{In $\R^{2n}$ connected closed hypersurfaces are
     energy surfaces.}
  \end{equation*}
  Observe that compactness is part of our definition of an energy surface.
  Note that $X_F\not= 0$ everywhere on $S$,
  so $\phi$ admits no stationary points~on~$S$.
\item[(iv)]
  An energy surface $S$ is naturally co-oriented ($\nabla F\perp S$),
  thus oriented.
\item[(v)]
  A non-constant flow line that closes up 
  is an \emph{embedded circle} $P\INTO S$.
  Pick a point $x\in P$ to obtain the natural parametrization of $P$
  given by $\gamma_x:\R\to S$, $t\mapsto \phi_t x$, whose prime
  period $\tau_{\gamma_x}=:\tau_P$ is called
  \textbf{\emph{the} period of the
  \Index{closed flow line}} $P$ on $S$.
  It does not depend on the choice of $x$.
  (Check that the period groups $\Per(\gamma_x)=\Per(\gamma_y)$
  are equal for any two points $x$ and $y=\phi_T x$ on $P$.)
\end{itemize}

Energy preservation shows that Hamiltonian systems
describe physical systems without friction, so for instance
oscillations never decrease. This indicates that Hamiltonian
flows might be rather \emph{complicated}. Indeed, as opposed
to gradient flows, under a Hamiltonian flow any particle
\emph{returns close to its origin} again and again
(for a proof see e.g.~\cite[\S 1.4]{hofer:2011a}):

\begin{theorem}[\Index{Poincar\'{e} recurrence theorem}]
\label{thm:Poincare-recurence}
Under the Hamiltonian flow $\phi$ on a (closed)
energy surface $S$ almost every\,\footnote{
  with respect to the regular measure on $S$
  associated to the induced volume form on $S$
  }
point on $S$ is a \textbf{\Index{recurrent point}}: For almost
every $x\in S$ there is a sequence $t_j\to \infty$ such that
$\lim_{j\to\infty}\phi_{t_j} x=x$.
\end{theorem}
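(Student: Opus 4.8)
The plan is to prove the classical measure-theoretic statement: on a closed energy surface $S=F^{-1}(c)$, the Hamiltonian flow $\phi=\phi^F$ preserves a finite measure $\mu$, and for such a measure-preserving flow almost every point is recurrent. First I would pin down the invariant measure. Restricting $dF$ to the unit normal determined by $\nabla F$ gives a volume form on $S$ (equivalently, contract the Liouville volume $\omega_0^{\wedge n}/n!$ with $\nabla F/|\nabla F|^2$ and restrict), and the resulting regular Borel measure $\mu$ is finite because $S$ is compact. The key point is that $\phi_t$ preserves $\mu$: since $\phi_t$ preserves $\omega_0$ (Liouville's theorem, a consequence of $\mathcal{L}_{X_F}\omega_0 = d i_{X_F}\omega_0 = -ddF = 0$, cf.~(\ref{eq:Lie-derivative})) and preserves $F$, it preserves the induced volume form on each level set. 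This is a short computation with Cartan's formula that I would include.

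Next I would prove the abstract recurrence lemma: if $(\Omega,\mu)$ is a finite measure space and $T=\phi_1$ is a $\mu$-preserving measurable map, then for any measurable $A$ with $\mu(A)>0$, almost every point of $A$ returns to $A$ under iteration of $T$. The standard argument: let $N\subset A$ be the set of points $x\in A$ with $T^k x\notin A$ for all $k\ge 1$. Then the sets $N, T^{-1}N, T^{-2}N,\dots$ are pairwise disjoint (if $T^{-i}N\cap T^{-j}N\ne\emptyset$ with $i<j$, applying $T^i$ produces a point of $N$ returning to $A$ after $j-i\ge 1$ steps, contradiction), each has measure $\mu(N)$ by invariance, and their union has finite measure; hence $\mu(N)=0$. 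Applying this to a countable basis $\{U_m\}$ of the topology of $S$ and using that $S$ is second countable, one gets a full-measure set of points each of which returns infinitely often to every basic neighborhood containing it, i.e. a full-measure set of recurrent points for the time-one map.

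Finally I would upgrade from the time-one map to the flow. A point recurrent for $T=\phi_1$ satisfies $\phi_{n_j}x\to x$ along integers $n_j\to\infty$; since $S$ is compact and each $\phi_t$ is continuous, a standard equicontinuity/diagonal argument over the countable collection of basic neighborhoods produces, for a.e.\ $x$, a sequence $t_j\to\infty$ (not necessarily integer) with $\phi_{t_j}x\to x$ — one simply takes $t_j$ inside shrinking neighborhoods of $x$ whose $\phi$-images also shrink. Alternatively, apply the recurrence lemma directly to the flow by noting that $\bigcup_{k\ge 1}\phi_{-k}(A)$ differs from $\bigcup_{t\ge 1}\phi_{-t}(A)$ by a null set, or invoke the $\mathbb{Z}$-action version on $\phi_1$ and observe the full-measure recurrent set for $\phi_1$ is also $\phi$-invariant up to null sets.

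The main obstacle is bookkeeping the passage from a single set $A$ to the \emph{simultaneous} recurrence to all basic neighborhoods and from integer times to real times; each step is routine, but getting a clean statement requires the second-countability of $S$ (guaranteed by our manifold conventions) and a careful intersection-of-full-measure-sets argument. The measure-invariance step, by contrast, is essentially immediate from Liouville's theorem already recorded in the excerpt.
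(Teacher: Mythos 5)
Your proposal is correct and is precisely the standard argument the paper points to in \cite[\S 1.4]{hofer:2011a}: Liouville measure restricted to the compact energy surface is preserved by the Hamiltonian flow, the abstract measure-preserving recurrence lemma applies, and a countable base for the topology of $S$ promotes set-wise recurrence to a.e.\ pointwise recurrence. Two small clean-ups are worth noting: your recurrence lemma as stated produces a single return, whereas to extract $t_j\to\infty$ you need returns of arbitrarily large time --- apply the lemma to each power $T^n$, or replace $N$ by $E_n:=\{x\in A\mid T^kx\notin A\ \forall\, k>n\}$ and rerun the pairwise-disjointness argument with the sets $T^{-(n+1)j}E_n$; and your final ``upgrade to the flow'' is superfluous, since integer times $n_j\to\infty$ with $\phi_{n_j}x\to x$ already form a sequence $t_j\to\infty$ of reals satisfying the stated conclusion, so no equicontinuity or diagonal argument is needed.
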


It sounds like some points, if not many, might close up
in finite time, returning exactly to their origins.
For generic Hamiltonians of class $C^2$ this is
indeed true! This result of Pugh and
Robinson~\citerefCG{Pugh:1983a} is called the
\textbf{\Index{Closing-Lemma}}.

As opposed to this \emph{generic} phenomenon,
the general existence question is:
\begin{equation*}
  \textsf{Does any connected energy surface
  $S\subset(\R^{2n},\omega_0)$
  admit a periodic orbit?}
\end{equation*}

This question has only relatively recently been given
the answer
\begin{equation}\label{eq:RF-counter-Ham-Seif}
  \textsf{"No."}
\end{equation}
by Ginzburg and G\"urel~\citerefCG{Ginzburg:2003a}
for a $C^2$ smooth energy surface $S\subset\R^4$ diffeomorphic to
$\SS^3$; see also references therein and the featured
review~\href{http://www.ams.org/mathscinet/search/publdoc.html?pg1=INDI&s1=314002&vfpref=html&r=26&mx-pid=2031857}{MR2031857}.
This answered the Hamiltonian version of
Seifert's question from 1950, commonly known as the
\textbf{\Index{Seifert conjecture}}: \index{conjecture!Seifert --} 
Does any non-vanishing vector field
on the unit sphere $\SS^3$ admit a periodic orbit?
Restricting the classes of vector fields,
starting with the original class of $C^1$ vector fields,
one gets a hierarchy of questions.
For the history of counterexamples,\footnote{
  Examples for ``No'' are referred to as \textbf{counterexamples
  to the Seifert conjecture}.
  }
including references, we refer to the
survey~\citerefCG{Ginzburg:2001a} and also the
review~\href{http://www.ams.org/mathscinet/search/publdoc.html?pg1=INDI&s1=314002&vfpref=html&r=31&mx-pid=1909955}{MR1909955}.

%%% SUBSUBSECTION %%%%%%%%%%%%%%%%%%%
%%%%%%%%%%%%%%%%%%%%%%%%%%%%%%%%%%
\subsubsection{Geometric reformulation -- closed characteristics
on energy surfaces}
The Hamiltonian vector field $X_F=X_F^{\omega_0}$ is non-zero on an
\emph{energy surface} $S=F^{-1}(0)$ by definition.
Thus $X_F$ generates an \emph{oriented} line bundle
\begin{equation*}
\begin{tikzcd}[row sep=large]
\Ll_F=\Ll_F^{\omega_0}:\R
    \arrow[r,hook]
    &\R\cdot X_F
      \arrow[d]
    &&TS\arrow[d]
\\
  &S
    &&S
          \arrow[u, bend right=50,"X_F\not= 0"']
\end{tikzcd}
\end{equation*}
in other words, a \textbf{\Index{distribution}}\footnote{
  A \textbf{distribution of rank $\mbf{k}$} in a tangent bundle
  is a subbundle of rank $k$.
  }
of rank one in $TS$.

\begin{exercise}
Consider the inclusion $\iota:S\hookrightarrow \R^{2n}$
and the restriction $\omega_0|_S:=\iota^*\omega_0$
of the symplectic form to $S=F^{-1}(0)$. Show that
$$
     \Ll_S=\Ll_S^{\omega_0}:=\ker\omega_0|_S=\R\cdot X_F=:\Ll_F.
$$
[Hint: Show this pointwise.
Inclusion $\supset$ is easy and $\dim \ker (\omega_0|_S)_x=1$:
Exclude $=0$ by odd dimension of $S$ and $\ge 2$ by
non-degeneracy of the 2-form.]
\end{exercise}

So $\Ll_S$ is a line bundle with a non-vanishing section, namely
\begin{equation}\label{eq:char-line-bdle}
\begin{tikzcd}[row sep=large]
\Ll_S:\R
    \arrow[r,hook]
    &\ker \omega_0|_S
      \arrow[d]
\\
  &S
          \arrow[u, bend right=50,"{\text{$X_F\not= 0$ whenever $F^{-1}(0)=S$ regular}}"']
\end{tikzcd}
\end{equation}

\begin{definition}
One calls $\Ll_S$ the \textbf{\Index{characteristic line bundle}}
of the energy surface $S$ in $(R^{2n},\omega_0)$.
A \textbf{\Index{closed characteristic}} of $\Ll_S$ is a closed integral
curve of the distribution $\Ll_S$, i.e. an embedded
circle $C\subset S$, likewise denoted by $P\subset S$,
whose tangent bundle $TC$ is equal to the restriction $\Ll_S|_C$.
\end{definition}

\begin{remark}[Energy surfaces in symplectic manifolds]
The same constructions work if $S=F^{-1}(0)$ is a closed regular level set
in a general symplectic manifold $(M,\omega)$; such $S$ is called an
\textbf{\Index{energy surface} in $\mbf{(M,\omega)}$}.
However, not every connected closed hypersurface
in a manifold is a level set, as the example $S=\SS^1\times\{0\}$ in
$M=\SS^1\times\SS^1$ shows. A sufficient condition
is simply-connectedness of the manifold; see
Exercise~\ref{exc:S-level-set-1-connected}.
\end{remark}

\begin{definition}
Given a symplectic manifold $(M,\omega)$, suppose $S\subset M$
is a closed hypersurface. A function $F:M\to\R$ is called
a\index{Hamiltonian!defining --}
\textbf{\Index{defining Hamiltonian} for $\mbf{S}$}
if $S=F^{-1}(c)$ for some regular value $c$ of $F$.
Let $\Ff(S)$
be\index{defining Hamiltonians!set of --}
the\index{$\Ff(S)$ defining Hamiltonians for $S$}
\textbf{\Index{set of defining Hamiltonians} for $\mbf{S}$}.
\end{definition}

Summarizing, in $\R^{2n}$ any connected closed hypersurface $S$ is an
energy surface for some Hamiltonian, that is $\Ff(S)\not=\emptyset$.
Furthermore, for any energy surface $F^{-1}(0)=:S\subset(\R^{2n},\omega_0)$
closed characteristics of the line bundle $\Ll_S^{\omega_0}$ coincide
with closed flow lines of the Hamiltonian vector field $X_F^{\omega_0}$,
as $X_F$ is a (non-vanishing) section of $\Ll_S$.

\begin{exercise}[Line bundle $\Ll_S^{\omega_0}$
independent of defining Hamiltonian]\label{exc:hj767}
Suppose $S$ is closed hypersurface of $(\R^{2n},\omega_0)$, then
$$
     F,K\in\Ff(S)\quad\Rightarrow\quad
     \text{$X_F=f X_K$ along $S$}
$$
for some non-vanishing function $f$ on $S$.
\newline
[Hint:
  Note that $\nabla F(x)$ and $\nabla K(x)$ are
  both non-zero and orthogonal to the codimension-1
  subspace $T_x S\subset\R^{2n}$. Alternatively, their
  co-vectors are colinear, as the kernel of each is precisely $T_x S$, 
  and non-zero, as $\codim T_x S>0$.]
\end{exercise}

Since the line bundle $\Ll_S$ only depends on the closed (smooth)
hypersurface $S\subset(\R^{2n},\omega_0)$,
we denote the set of closed characteristics of $\Ll_S$ by
$
     \Cc(S)=\Cc(S;\omega_0)
$.
The\index{$\Cc(S)$ closed characteristics of energy surface $S\subset(M,\omega)$}
earlier question can now be reformulated geometrically
as follows:
\begin{equation*}
  \textsf{Is $\Cc(S)$ non-empty for any connected closed hypersurface
  $S\subset(\R^{2n},\omega_0)$?}
\end{equation*}
As we saw above, in this generality the answer is ``No''.

%%% SUBSUBSECTION %%%%%%%%%%%%%%%%%%%
%%%%%%%%%%%%%%%%%%%%%%%%%%%%%%%%%%
\subsubsection{Energy surfaces of contact type -- Weinstein conjecture}
\begin{figure}%[h]
  \centering
  \includegraphics%[width=0.9\textwidth]
                             [height=4cm]
                             {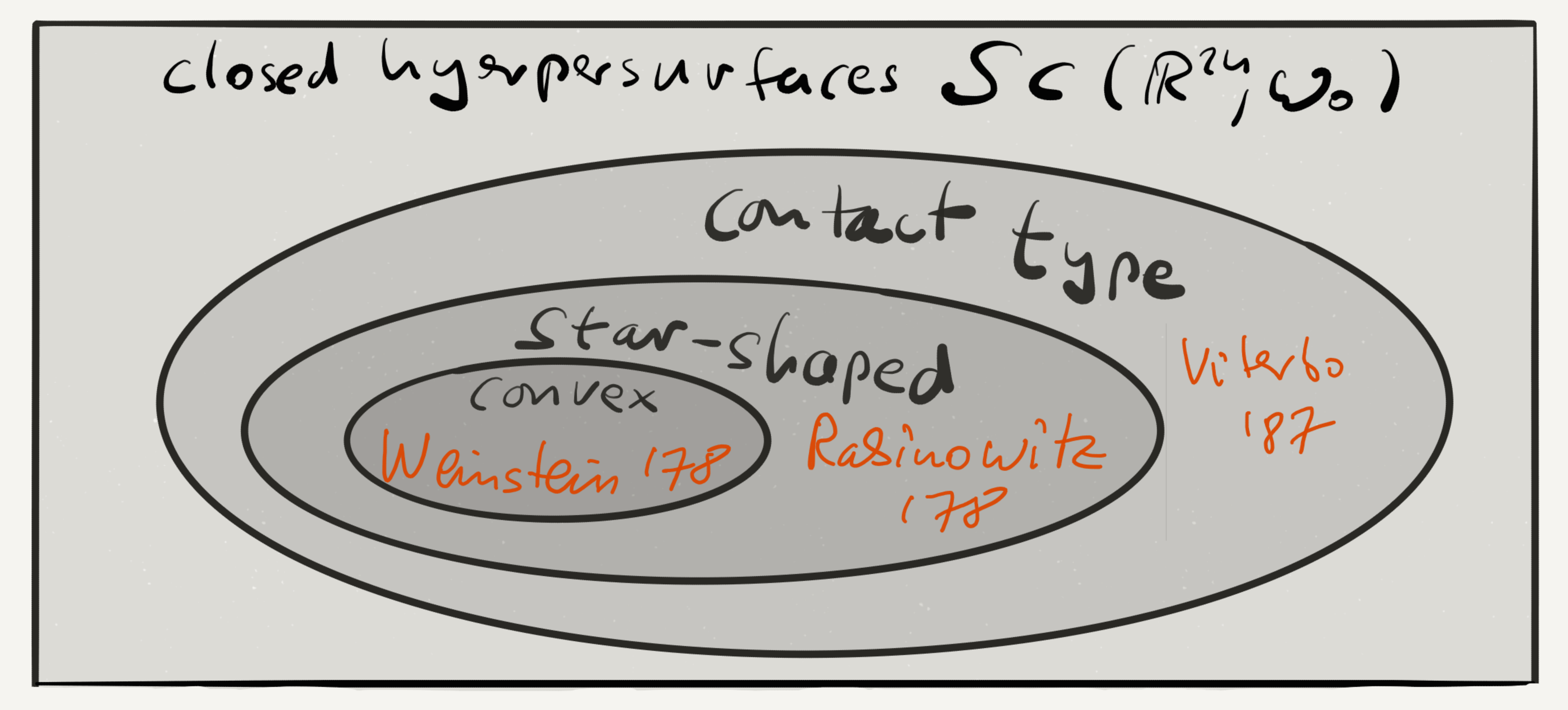}
  \caption{Classes of closed hypersurfaces $\Sigma\subset(\R^{2n},\omega_0)$
                 with ${\color{red} \Cc(\Sigma)\not=\emptyset}$}
  \label{fig:fig-Weinstein}
\end{figure}
The previous question was answered positively for convex and star-shaped
hypersurfaces by~Weinstein\citerefCG{Weinstein:1978a}
and Rabinowitz~\citerefCG{Rabinowitz:1978a}, respectively.
Weinstein isolated key geometric features of a star-shaped
hypersurface in $\R^{2n}$ and introduced the notion of
\emph{hypersurface of contact type} in a symplectic manifold.

\begin{conjecture}[\textbf{\Index{Weinstein conjecture}}~\citeintro{Weinstein:1979a}]
\label{conj:Weinstein}
  A closed hypersurface\,\footnote{
  in general, in any symplectic manifold $(M,\omega)$.
  }
  of contact type with trivial first real cohomology carries a
  closed characteristic.
\end{conjecture}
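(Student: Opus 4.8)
The Weinstein conjecture in full generality — for a closed contact-type hypersurface in an arbitrary symplectic manifold $(M,\omega)$ — is open; it is a theorem in dimension three (Taubes, via Seiberg--Witten theory) and for $\Sigma\subset(\R^{2n},\omega_0)$ (Viterbo), among others. The plan here is therefore to sketch the version relevant to these notes: a closed hypersurface $\Sigma$ of \emph{restricted contact type} bounding a compact domain $M\subset V$ in an \emph{exact} symplectic manifold $(V,\lambda)$, with $\Ho^1(\Sigma;\R)=0$, treated by the Rabinowitz action functional exactly as Floer homology was treated in Chapter~\ref{sec:FH}, with the Rabinowitz--Floer package itself deferred to Chapter~\ref{sec:RF}. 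The first step is to translate the problem into Reeb dynamics: since $\Sigma$ is of contact type, the characteristic line bundle $\Ll_\Sigma=\ker\omega|_\Sigma$ is spanned by the Reeb vector field $R$ of the contact form $\alpha$ (this is the content of the compatibility condition $d\alpha=\omega|_\Sigma$), so a closed characteristic is nothing but a closed Reeb orbit, and it suffices to produce one periodic Reeb trajectory on $(\Sigma,\alpha)$ of whatever period.

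Next, following Rabinowitz and Cieliebak--Frauenfelder, I would pick a defining Hamiltonian $F:V\to\R$ with $\Sigma=F^{-1}(0)$, $X_F=R$ along $\Sigma$, and $F$ suitably cylindrical/convex outside $M$, and introduce the Rabinowitz action functional $\Aa^F:\Ll V\times\R\to\R$, $(z,\tau)\mapsto\int_{\SS^1}z^*\lambda-\tau\int_0^1 F(z(t))\,dt$. Computing the differential and the $L^2$ gradient as in Section~\ref{sec:FH-gradient} shows that the critical point equation is $\dot z=\tau X_F(z)$ together with $\int_0^1 F(z)=0$; hence critical points with $\tau\neq0$ are exactly reparametrized closed Reeb orbits on $\Sigma$ of period $\tau$, while $\tau=0$ gives the constant loops sitting on $\Sigma$, a Morse--Bott critical manifold $\Sigma\times\{0\}$. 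Then I would build Rabinowitz--Floer homology $\RFH(\Sigma,V):=\HF(\Aa^F)$ by running the program of Chapter~\ref{sec:FH} essentially verbatim: establish the Fredholm property of the linearized operators (Section~\ref{sec:Fredholm}), achieve transversality for generic auxiliary data by Thom--Smale theory (Section~\ref{sec:Thom-Smale-transversality}), prove compactness up to breaking, and define a boundary operator with $\partial^2=0$ (Section~\ref{sec:FC}); the one genuinely new feature is the Morse--Bott treatment of $\Sigma\times\{0\}$.

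The conclusion would then come from nonvanishing: $\RFH(\Sigma,V)\neq0$ because it always receives the fundamental class of $\Sigma$ from the constant-loop part, which survives precisely because $\Ho^1(\Sigma;\R)=0$ removes the action/Novikov ambiguity that could otherwise cancel it. Nonvanishing of $\RFH(\Sigma,V)$ forces $\Crit\Aa^F$ to contain critical points with $\tau\neq0$, i.e. a closed Reeb orbit, which is the desired closed characteristic. As an alternative, valid in $\R^{2n}$ and bypassing Floer homology, one can instead show that the Hofer--Zehnder capacity $\cHZ$ is finite on a bounded neighborhood of $\Sigma$, deduce almost-existence of periodic orbits on nearby level sets, and promote this to existence on $\Sigma$ itself by a squeezing/monotonicity argument exploiting the contact structure — Viterbo's original line of proof.

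The main obstacle, as everywhere in this circle of ideas, is compactness. Along a Floer trajectory for $\Aa^F$ the Lagrange multiplier $\tau$ must be shown to remain in a bounded interval, which requires a uniform a priori $L^\infty$ bound on $\tau$ obtained from the Liouville flow near $\Sigma$ — this is exactly where "contact type" is indispensable. Simultaneously the loops must be prevented from escaping to infinity in $V$, controlled by a maximum principle using convexity of $(V,\lambda)$ at infinity — hence the restricted contact type / exact symplectic hypothesis. Bubbling of $J$-holomorphic spheres is excluded by exactness of $\omega=d\lambda$. Making all three estimates uniform, and making the Morse--Bott continuation argument that detects $\RFH\neq0$ rigorous, is the technical heart of the matter; in the non-exact case, or for a general ambient manifold $M$, the requisite compactness is simply unavailable, which is why the conjecture is still open outside the cases listed above.
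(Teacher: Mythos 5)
This statement is a \emph{conjecture}; the paper does not prove it, and you correctly flag at the outset that it is open in general. The paper only later reconfirms it for special classes (displaceable convex exact hypersurfaces, via Rabinowitz--Floer homology in Chapter~\ref{sec:RF}; convex and star-shaped hypersurfaces in $(\R^{2n},\omega_0)$ via earlier capacity/variational methods). Your sketch of the Rabinowitz--Floer route is therefore a reasonable response, and the structural description of $\Aa^F$, its critical points, and the compactness issues ($\tau$-bound from contact type, maximum principle from convexity at infinity, no bubbling by exactness) matches the paper's treatment.

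However, the logical engine in your conclusion is reversed and, as stated, incorrect. You claim that nonvanishing of $\RFH(\Sigma,V)$, coming from the constant-loop component $\Sigma\times\{0\}$, ``forces $\Crit\Aa^F$ to contain critical points with $\tau\neq0$.'' It does not: the constant critical points \emph{are} what makes $\RFH$ nonvanishing in the absence of Reeb orbits. Indeed (Exercise~\ref{exc:RFH-Sigma} in the paper), if $\Sigma$ carries no contractible closed Reeb orbit, then $\RFH_*(\Sigma)\simeq\Ho_*(\Sigma;\Z_2)\neq 0$. The paper's actual argument (Section~\ref{sec:Weinstein-RFH}) runs the other way: one proves the Vanishing Theorem~\ref{thm:RF-vanishing}, that $\RFH(\Sigma)=0$ whenever $\Sigma$ is \emph{displaceable}, by a homotopy-of-homotopies argument that produces a perturbed Rabinowitz action $\Aa^{F^\chi}_{H^\rho}$ with empty critical set (and hence identity $=$ zero on homology). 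Combined with the isomorphism $\RFH_*(\Sigma)\simeq\Ho_*(\Sigma;\Z_2)$ in the absence of Reeb orbits, this is a contradiction, so a closed Reeb orbit must exist. So the key input is \emph{vanishing} (from displaceability), not nonvanishing; and the hypothesis that does the work in the paper is displaceability, not $\Ho^1(\Sigma;\R)=0$ (the latter enters Weinstein's original formulation but plays no explicit role in the Rabinowitz--Floer proof sketched in the paper). Replacing your ``nonvanishing forces non-constant critical points'' step with the ``vanishing contradicts $\RFH\simeq\Ho(\Sigma;\Z_2)$'' argument would align your sketch with the paper.
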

The Weinstein conjecture in $(\R^{2n},\omega_0)$
\index{conjecture!Weinstein} was confirmed by
Viterbo~\citerefCG{Viterbo:1987a}, even without any assumption
on the first cohomology.
More generally, existence of a Reeb loop for any contact form
on the 3-sphere, even for any closed orientable contact 3-manifold
with trivial $\pi_2$, was shown by Hofer~\citerefCG{Hofer:1993a}
and generalized to arbitrary $\pi_2$ by Taubes~\citerefCG{Taubes:2007a};
cf.~\citerefCG{Hutchings:2010a}.
Let us now follow Weinstein identifying the key geometric features.
Consider the radial vector~field
\begin{equation}\label{eq:LVF}
     \LVF_0:\R^{2n}\to \R^{2n},\quad
     z=(x,y)\mapsto z=\sum_{j=1}^n \left(x_j\p_{x_j}+y_j\p_{y_j}\right).
\end{equation}
The (local) flow $\Lflow=\Lflow^{\LVF_0}=\{\Lflow^{\LVF_0}_t\}$ generated by $\LVF_0$
is called \textbf{Liouville flow}.

\begin{exercise}\label{exc:Liouville-VF}
Show that $\LVF_0$ is a \emph{Liouville vector field}, i.e.
$L_{\LVF_0} \omega_0=\omega_0$.
\end{exercise}

\begin{definition}\label{def:star-shaped}
A closed hypersurface $\Sigma$ in $(\R^{2n}\setminus\{0\},\omega_0)$
is called \textbf{\Index{star-shaped}} (with respect to the origin)
if it is transverse to $\LVF_0$, see Figure~\ref{fig:figstar-shaped},
or equivalently if the projection $\R^{2n}\setminus\{0\}\to\SS^{2n-1}$,
$z\mapsto z/\norm{z}$, restricts to a diffeomorphism $\Sigma\to\SS^{2n-1}$.
\end{definition}

\begin{figure}[h]
  \centering
  \includegraphics%[width=0.9\textwidth]
                             [height=3cm]
                             {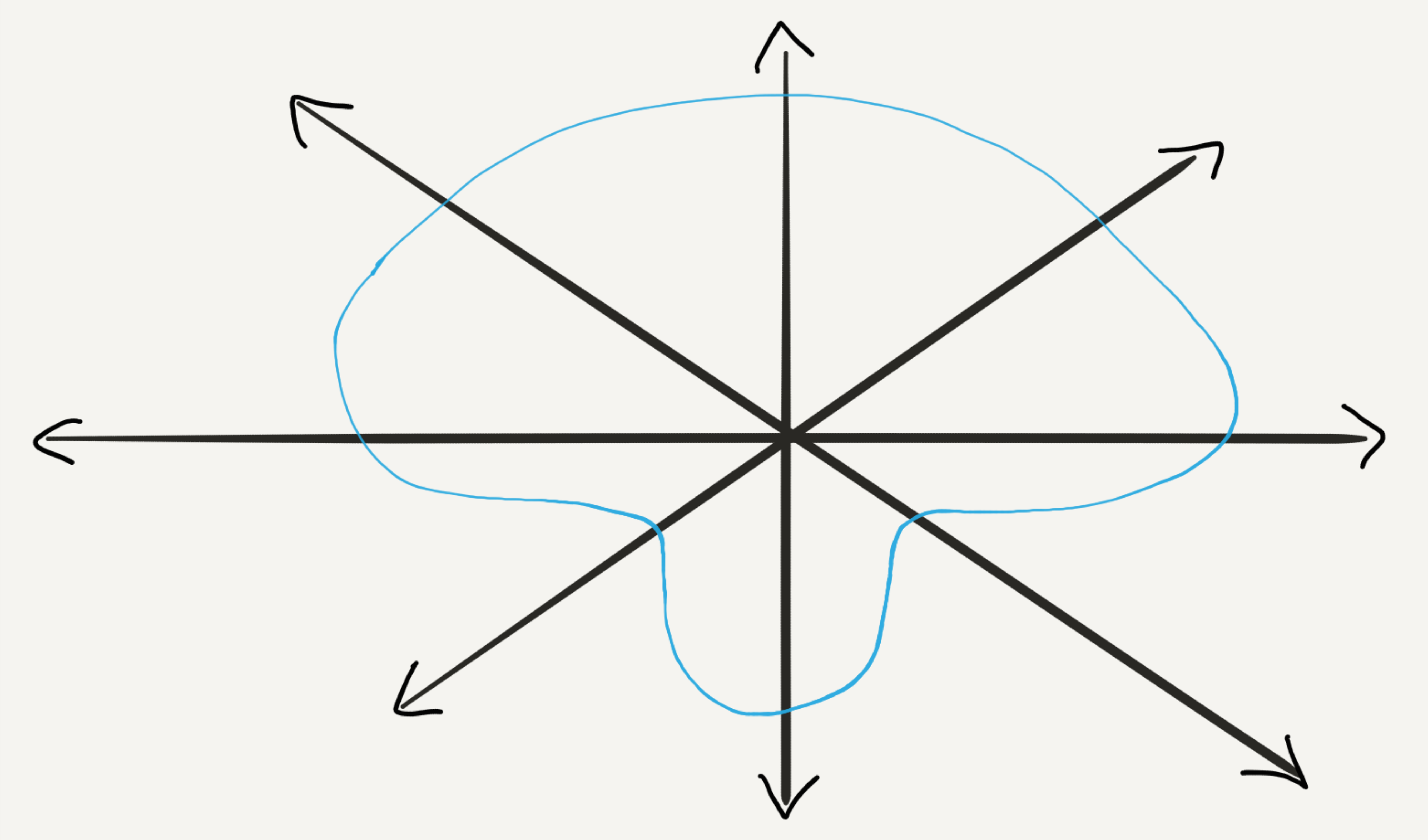}
  \caption{A hypersurface ${\color{cyan} \Sigma}\pitchfork\LVF_0$ not meeting the origin
                is called star-shaped}
  \label{fig:figstar-shaped}
\end{figure}

\begin{remark}[Contact type energy surfaces $\Sigma\subset (\R^{2n},\omega_0)$]
\label{rem:cont-type-R2n}\mbox{ }
\begin{itemize}
\item[(i)]
Existence of the Liouville vector field $\LVF_0$ has strong geometric and
dynamical consequences: Transversality of $\LVF_0$ along an energy surface
$\Sigma$ induces on some neighborhood the structure of a foliation
whose leaves are energy surfaces $\Sigma_\eps:=\Lflow_\eps \Sigma$,
i.e. Liouville flow copies of $\Sigma$.
The apriori slightly obscure condition that $\LVF_0$ is a
\textbf{symplectic dilation}, that is $L_{\LVF_0}\omega_0=\omega_0$,
causes\index{symplectic!dilation}
that the copies $\Sigma_\eps$ are even \emph{dynamical} copies in the sense that
the linearized diffeomorphisms $d\Lflow_\eps:T\Sigma\to T\Sigma_\eps$ identify
the characteristic foliations $\Ll_\Sigma:=\ker\omega_0|_\Sigma$ and $\Ll_{\Sigma_\eps}$
isomorphically; cf.~(\ref{eq:family-modelled-on-S})
and Figure~\ref{fig:fig-contact-type-char-bundles}.
\item[(ii)]
Conclusion: Given an energy surface $\Sigma\subset(\R^{2n},\omega_0)$,
the two key structures are, firstly, existence near $\Sigma$ of a
\textbf{Liouville vector field} $\LVF$,
that is having the dilation property $L_\LVF\omega_0=\omega_0$,
which is, secondly, transverse to $\Sigma$.
Such pair $(\Sigma,\LVF)$ is called an \textbf{energy surface of contact type}.
\item[(iii)]
Alternative definition:
One can show, see Section~\ref{sec:CTH-SM}, that existence of $\LVF$ in~(ii) is
equivalent to existence of a 1-form $\alpha$ \emph{on $\Sigma$ itself}
such that, firstly, the restriction $\omega_0|_\Sigma$ is $d\alpha$
(thus $\ker d\alpha$ is the characteristic line bundle
$\Ll_\Sigma$ in~(\ref{eq:char-line-bdle})),
and such that, secondly, the $1$-form $\alpha$
is non-vanishing on $\Ll_\Sigma$
(evaluation $\alpha_x (\Ll_\Sigma)_x=\R$ is non-trivial
$\forall x\in \Sigma$,\footnote{
  Thus $\rank(\xi:=\ker\alpha\to \Sigma)=2n-2$,
  hence $T \Sigma=\Ll_\Sigma\oplus\xi$, so
  $\Ll_\Sigma\pitchfork\xi$ in $T\Sigma$.
  }
see Figure~\ref{fig:fig-cont_type_hypsurf-cont_str}).
\item[(iv)]
Given data $(\Sigma,\alpha)$ as in the previous item~(iii), the two conditions
$$
     \RVF_\alpha\in\Ll_{d\alpha}:=\ker d\alpha,\qquad
     \alpha(\RVF_\alpha)=1,
$$
uniquely determine a vector field $\RVF_\alpha$ on the contact type energy surface
$\Sigma$, called the \textbf{\Index{Reeb vector field}} associated to $\alpha$.
\end{itemize}
\end{remark}

\begin{figure}%[b]
  \centering
  \includegraphics%[width=0.9\textwidth]
                             [height=3.8cm]
                             {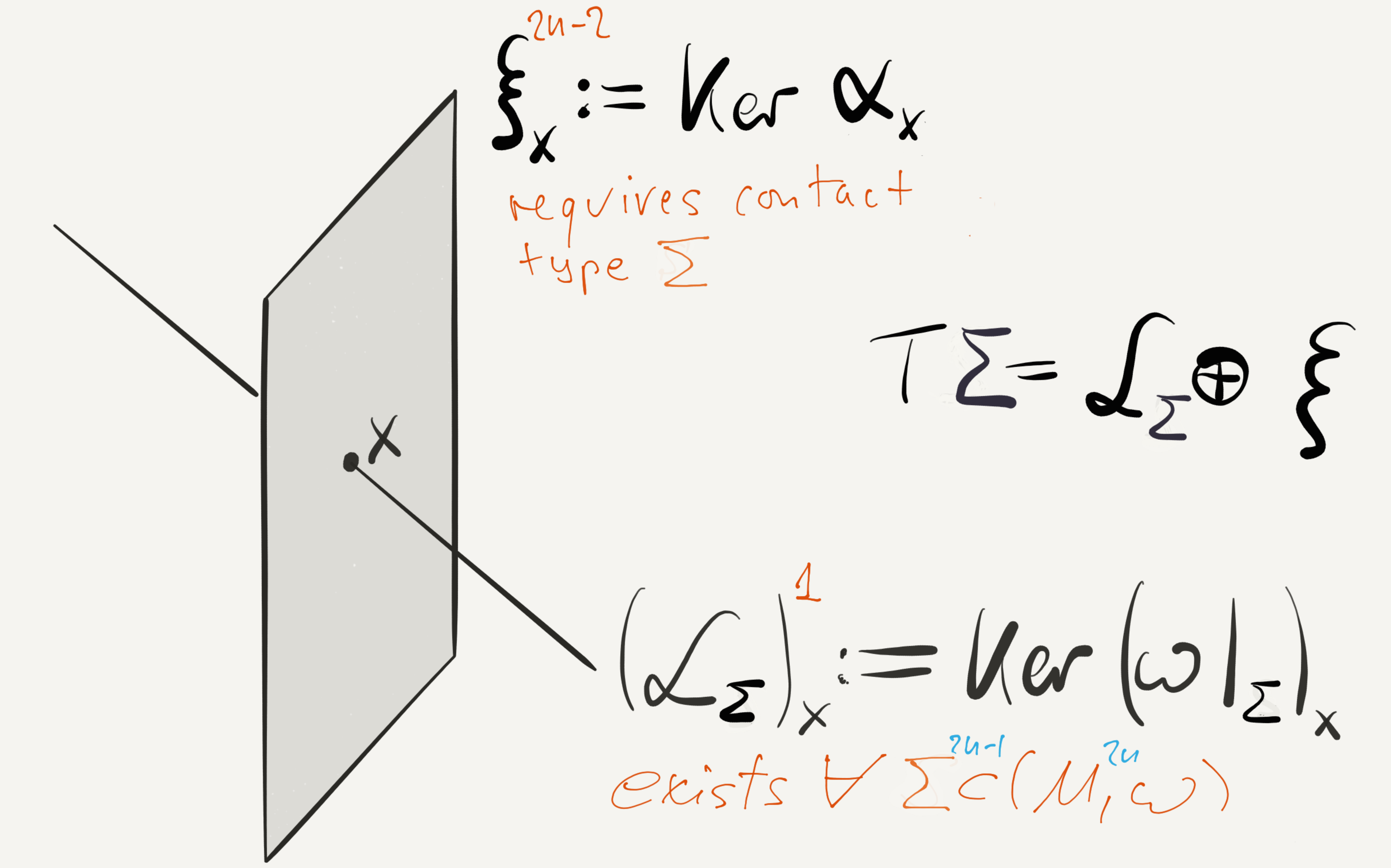}
  \caption{Contact structure $\xi:=\ker \alpha$ of contact type
                 hypersurface $\Sigma$}
  \label{fig:fig-cont_type_hypsurf-cont_str}
\end{figure}

\begin{exercise}[For contact type energy surfaces
Hamiltonian and Reeb dynamics coincide up to reparametrization]
\label{exc:Liouville-VF-en-surf}
Given the data $(\Sigma,\alpha)$ and $\RVF_\alpha$ in the previous item~(iv)
where $\Sigma=F^{-1}(0)$ for some $F:\R^{2n}\to\R$.
Show that $\RVF_\alpha=f\, X_F$ for some
non-vanishing function $f$ on $\Sigma$.
[Hint: $\Ll_F=\Ll_\Sigma=\Ll_{d\alpha}$.]
\end{exercise}

\begin{example}[Non-contact type]
An energy surface $S\subset(\R^{2n},\omega_0)$ diffeomorphic
to the sphere, but not of contact type is shown
in~\cite[\S 4.3 Fig.~4.8]{hofer:2011a}.
\end{example}

%%%%%%%%%%%%%%%%%%%%%%%%%%%%%%%%%%
%%%%%%%%%%%%%%%%%%%%%%%%%%%%%%%%%%
%%% SECTION %%%%%%%%%%%%%%%%%%%%%%%%
%%%%%%%%%%%%%%%%%%%%%%%%%%%%%%%%%%
%%%%%%%%%%%%%%%%%%%%%%%%%%%%%%%%%%
\section{Contact manifolds}

Exercise~\ref{exc:Liouville-VF-en-surf}
shows that the dynamics on a contact type hypersurface $\Sigma$
is determined, up to reparametrization,
by the $1$-form $\alpha$ on $\Sigma$ itself,
independent of the ambient symplectic manifold.

\begin{definition}\label{def:contact-mf}
A \textbf{\Index{contact form}} on a ($2n-1$)-dimensional manifold
$W$ is a 1-form $\alpha$ on $W$ such that $d\alpha$ is at any point
$x$ a non-degenerate skew-symmetric bilinear form on the subspace
$\xi_x:=\ker \alpha_x$ of the tangent space. The hyperplane distribution
$\xi$ is called a \textbf{\Index{contact structure}} and $(W,\xi)$
a \textbf{\Index{contact manifold}}.
\end{definition}

The \textbf{\Index{Gray stability theorem}}, see 
\index{theorem!Gray stability --} 
e.g.~\cite[Thm.~2.2.2]{geiges:2008a}, tells that for any given smooth
family $\{\xi_t\}_{t\in[0,1]}$ of contact structures there
is a smooth family $\{\varphi_t\}_{t\in[0,1]}$ of diffeomorphisms of
$W$ such that $(\varphi_t)_*\xi_0=\xi_t$.
But there is no such result for contact forms.
This indicates that contact forms are objects less
geometrical than contact structures.

\begin{exercise}[$\alpha$ contact
$\Leftrightarrow$ $\alpha\wedge (d\alpha)^{n-1}\not= 0$]
\label{exc:contact-volume_form}
Given a contact form $\alpha$, show:
\begin{itemize}
\item[(a)]
At every $x\in W$ the co-vector $\alpha_x$ is non-zero,
thus the vector space $\xi_x$ is necessarily of dimension $2n-2$,
in other words a hyperplane.
\item[(b)]
The defining condition of a contact structure $\xi=\ker\alpha$, namely
$d\alpha$ being non-degenerate on $\xi$, is equivalent to
$\alpha\wedge (d\alpha)^{n-1}$ being a \textbf{\textbf{volume form}} on $W$,
i.e. this $(2n-1)$-form is at no point $x$ of $M$ the zero-form.
\item[(c)]
The kernel $\Ll_{d\alpha}:=\ker d\alpha$ is a line bundle over $W$ and
$
     \Ll_{d\alpha}\oplus\ker\alpha=T W
$.
There is a unique 'unit' section $\RVF_\alpha$ of the line bundle $\Ll_{d\alpha}$
determined by $\alpha(\RVF_\alpha)=1$ and
called the \textbf{Reeb vector field} associated to $\alpha$.
The (local) flow generated by $\RVF_\alpha$ on $W$
is called \textbf{Reeb flow associated to $\alpha$} and denoted by
$\Rflow=\Rflow^{R_\alpha}=\{\Rflow^{R_\alpha}_t\}$.\footnote{
  The dynamical behavior of Reeb vector fields associated to two contact forms
  representing the same contact structure $\xi=\ker\alpha=\ker\alpha^\prime$
  is in general very different.
  }\index{Reeb!flow}\index{Reeb!vector field}\index{$\RVF_\alpha$ Reeb vector field}\index{$\Rflow^{R_\alpha}_t$ Reeb flow}
\item[(d)]
The distribution $\xi=\ker\alpha$ is nowhere integrable.\footnote{
  Hint: Frobenius; see e.g.~\cite[Ch.\,VI]{lang:2001a}
  or\index{theorem!Frobenius' --}
  \cite[Sec.~II.5]{Sternberg:1983a} or
  \cite[Ch.\,1]{warner:1983a}.
  }
\item[(e)]
The distribution $\xi=\ker\alpha$ is co-oriented.
\end{itemize}
\end{exercise}

\begin{exercise}[Contact manifolds are orientable]
Suppose $(W,\xi=\ker\alpha)$ is a contact manifold of
dimension $2n-1$.
Show that $\alpha$ induces an orientation of $W$.
If $n$ is even, then this orientation only depends on the
hyperplane distribution~$\xi$, but not on the choice of
contact form whose kernel is $\xi$.
[Hint: Pick $-\alpha$.]
\end{exercise}

\begin{exercise}[Reeb flow preserves contact structure]
\label{exc:Reeb-preserves-contact}
Show that the Reeb flow preserves the contact form $\alpha$, thus
the contact structure $\xi$.
\newline
[Hint: Check that $L_{R_\alpha}\alpha=0$.]
\end{exercise}

\begin{exercise}[Standard contact structure on $\R^3$]
Consider $\R^3$ with coordinates $(x,y,z)$
and set $\alpha:=dz-ydx$ and $\xi:=\ker \alpha$.
Check that $\alpha$ is a contact form on $\R^3$
and $\xi$ is spanned by the vector fields $\p_y$
and $\p_x+y\p_z$ whose commutator is the vector field
$R_\alpha=\p_z$ that is transverse to the plane field $\xi$.
\end{exercise}

%%%%%%%%%%%%%%%%%%%%%%%%%%%%%%%%%%
%%%%%%%%%%%%%%%%%%%%%%%%%%%%%%%%%%
%%% SECTION %%%%%%%%%%%%%%%%%%%%%%%%
%%%%%%%%%%%%%%%%%%%%%%%%%%%%%%%%%%
%%%%%%%%%%%%%%%%%%%%%%%%%%%%%%%%%%
\section{Energy surfaces $S$ in $(M,\omega)$}\label{sec:en-surf-V}

Throughout $(M,\omega)$ denotes a symplectic manifold of dimension $2n$.
This section parallels Section~\ref{sec:en-surf-R2n}
on \emph{energy surfaces $F^{-1}(0)$ in $\R^{2n}$}. We don't repeat proofs.

Define the \textbf{\Index{characteristic line bundle}} of a closed
hypersurface $S\subset M$~by
$$
     \Ll_S:=\ker \omega|_S\to S.
$$

Whereas any connected closed hypersurface in $\R^{2n}$ is a regular level
set of some Hamiltonian $F:\R^{2n}\to\R$, the situation is slightly
different in a manifold.

\begin{exercise}\label{exc:S-level-set-1-connected}
Suppose $N$ is a simply-connected manifold.
Then any connected closed hypersurface $S\subset N$
is a regular level set $S=S^0:=F^{-1}(0)$
for some function $F:B\to\R$.  [Hint: Cf. Lemma.~\ref{le:S=pre-image_of_0}.]
\end{exercise}

\begin{exercise}\label{exc:S-co-orient-ext}
For a closed hypersurface $S\subset (M,\omega)$
are equivalent:
\begin{itemize}
\item[\rm (i)]
$S$ is orientable.
\item[\rm (ii)]
$S$ is co-orientable.
\item[\rm (iii)]
$\Ll_S$ is orientable.
\item[\rm (iv)]
$S=F^{-1}(0)$ is a regular level set with $F$ defined on a neighborhood $U(S)$.
\item[\rm (v)]
There\index{parametrized family of hypersurfaces modeled on~$S$}
exists a
\textbf{parametrized family of hypersurfaces modeled on~$\mbf{S}$},
that is a diffeomorphism
\begin{equation}\label{eq:FHMS}
     \Phi:(-\delta,\delta)\times S\to U\subset M,\quad
     (\eps,x)\mapsto\Phi(\eps,x)=:\Phi_\eps(x),
\end{equation}
onto some neighborhood $U$ of $S$ that has compact closure
and such that $\Phi_0=\id_S$. We abbreviate $S_\eps:=\Phi_\eps S$
and sometimes we denote $\Phi$ by $(S_\eps)$.
\end{itemize}
\end{exercise}

\begin{remark}\label{rem:vhjgj676}
An \textbf{\Index{energy surface} in a symplectic manifold} is a
closed regular level set $S$ of the form $S=S^c:=F^{-1}(c)$ where
$F:M\to\R$ is a function and $c$ is a regular value.
We may assume that $S$ is of the form $F^{-1}(0)$; otherwise, add a
constant to $F$.
Note that an energy surface $S$ is a closed co-/orientable hypersurface
that may consist of finitely many components.
As earlier in Section~\ref{sec:en-surf-R2n},
a non-vanishing section of $\Ll_S$ is provided by the Hamiltonian
vector field $X_F$ of any Hamiltonian having $S$ as regular level set. A
\textbf{\Index{closed characteristic} on an energy surface \boldmath$S$}
is an\index{energy surface!closed flow line on --}
embedded\index{energy surface!closed characteristic on --}
circle $C\subset S$, likewise denoted by the letter $P$, 
such that $TC=\Ll_S|_C$.
Recall from~(\ref{eq:Cc-clos-char-3})
that $\Cc(S)=\Cc(S;\omega)$ denotes the set of closed characteristics
on the energy surface $S$.
\end{remark}

%%%%%%%%%%%%%%%%%%%%%%%%%%%%%%%%%%
%%% SUBSECTION %%%%%%%%%%%%%%%%%%%%%
%%%%%%%%%%%%%%%%%%%%%%%%%%%%%%%%%%
\subsection{Stable hypersurfaces}

\begin{definition}\label{def:stable-hypersurface}
A closed hypersurface $S\subset(M,\omega)$
is called \textbf{stable} if there is
a\index{stable hypersurface}\index{hypersurface!stable}
parametrized family $(S_\eps)$ modeled on $S$
such that for each $\eps$ the linearization
$$
     d\Phi_\eps:\Ll_S\to\Ll_{S_\eps}
$$
is a bundle isomorphism.
\end{definition}

%%%%%%%%%%%%%%%%%%%%%%%%%%%%%%%%%%
%%%%%%%%%%%%%%%%%%%%%%%%%%%%%%%%%%
%%% SECTION %%%%%%%%%%%%%%%%%%%%%%%%
%%%%%%%%%%%%%%%%%%%%%%%%%%%%%%%%%%
%%%%%%%%%%%%%%%%%%%%%%%%%%%%%%%%%%
\section{Contact type hypersurfaces $\Sigma$ in $(M,\omega)$}
\label{sec:CTH-SM}

In~\citeintro{Weinstein:1979a} Weinstein introduced the following notion; cf.
Conjecture~\ref{conj:Weinstein}.

\begin{definition}\label{def:equiv-cont-type}
A closed hypersurface $\iota:\Sigma\hookrightarrow(M,\omega)$
in a symplectic manifold is said to be of \textbf{\Index{contact type}}
if $\Sigma$ is co-orientable\footnote{
  Corresponds to $\LVF\pitchfork \Sigma$ in the alternative
  Definition~\ref{def:contact-type}.
  Moreover, co-orientability of $\Sigma$ enables extensions from $\Sigma$ to
  neighborhoods; cf. Exercise~\ref{exc:S-co-orient-ext}.
  }
and there is a 1-form $\alpha$ on $\Sigma$ such that
\begin{itemize}
\item[(i)]
  $d\alpha=\iota^*\omega$, that is the restriction
  $\omega|_\Sigma:=\iota^*\omega$ to $\Sigma$ is exact.
\item[(ii)]
  $\alpha$ is non-vanishing on the characteristic line bundle
  (except zero section)
  $$
     \Ll_\Sigma:=\ker\omega|_\Sigma=\ker d\alpha.
  $$
\end{itemize}
\end{definition}

More precisely, condition~(ii) means that evaluation
$\alpha_x (\Ll_\Sigma)_x=\R$ is non-trivial at any point $x\in\Sigma$,
likewise (cf. Figure~\ref{fig:fig-cont_type_hypsurf-cont_str})
$$
%     \Ll_\Sigma\pitchfork\ker\alpha,\qquad
     \Ll_\Sigma\oplus\ker\alpha=T \Sigma.
$$

\begin{exercise}[Contact type hypersurfaces are contact manifolds]
\label{exc:cont-type=xi}
To see that a contact type hypersurface $(\Sigma,\alpha)$
in a symplectic manifold $(M,\omega)$ 
is a contact manifold with contact structure $\xi=\ker\alpha$
show the following.
\newline
a)~The linear functional $\alpha_x\in T_x^*\Sigma$ is non-zero
at any point $x\in\Sigma$.
So $\xi:=\ker \alpha$ is a $(2n-2)$-plane distribution
in the tangent bundle $T\Sigma$. 
\newline 
b)~Show that $d\alpha$ restricts
to a non-degenerate two-form on $\xi$, denoted by $d\alpha|\xi$.
\newline
c)~Show that $\alpha\wedge(d\alpha)^{\wedge(n-1)}$ is a volume form on
$\Sigma$. (Thus such $\alpha$ orients $\Sigma$.)
\newline
[Hint: b)~$\Ll_\Sigma=\ker d\alpha$.
c)~$\ker d\alpha\oplus\ker\alpha=T\Sigma$.
Recall Exercise~\ref{exc:contact-volume_form}~(b).]
\end{exercise}

\begin{remark}[Reeb flow on contact type hypersurface]
The characteristic line bundle of a contact type hypersurface $(\Sigma,\alpha)$
admits by co-orientability a natural section, namely the Reeb vector field $R_\alpha$
normalized by $\alpha(R_\alpha)\equiv 1$.
\end{remark}

There is a second, of course equivalent, definition of contact type
which reveals more of the interaction of contact type with the ambient
symplectic manifold. The key element is a vector field $\LVF$ transverse to
$\Sigma$ -- a useful property to generate copies $\Sigma_\eps$ of
$\Sigma$ -- and which \emph{dilates the symplectic form}:

\begin{definition}\label{def:Liouville-VF}
A vector field $\LVF$ on a symplectic manifold
$(M,\omega)$
is\index{$\LVF$ Liouville vector field}
called\index{Liouville!vector field}
a\index{Euler!vector field}
\textbf{Liouville vector field}, or an \textbf{Euler vector field},
if $L_\LVF\omega=\omega$.
\end{definition}

\begin{exercise}\label{exc:Liouville-VF-2}
If $\LVF$ is Liouville on an open subset $U\subset M$,
then so is $\LVF+X_h$ for any function $h:U\to\R$.
\end{exercise}

\begin{definition}\label{def:contact-type}
A closed\footnote{
  Assuming \emph{closed}, that is compact and without boundary,
  makes several things so much simpler: Firstly, transversality is much
  easier to handle and, secondly, flows on $\Sigma$ are complete.
}
hypersurface $\Sigma\subset (M,\omega)$
is said to be of \textbf{\Index{contact type}} if some
neighborhood $U$ of $\Sigma$ admits a Liouville vector field $\LVF$
transverse to $\Sigma$, in symbols $\LVF\pitchfork \Sigma$.
In case of global existence, namely $U=M$, the hypersurface $\Sigma$
is said to be of \textbf{\Index{restricted contact type}}.
The\index{contact type!restricted --}
\textbf{Liouville flow} is the flow $\Lflow=\Lflow^\LVF$ of $\LVF$
on\index{Liouville!flow}\index{$\Lflow^\LVF_t$ Liouville flow}
$U$. It satisfies $\Lflow_t^*\omega=e^t\omega$ wherever it is
defined.\footnote{
  Thus a closed symplectic manifold cannot admit
  a global Liouville vector field.
  }
\end{definition}

%%% SUBSUBSECTION %%%%%%%%%%%%%%%%%%%
%%%%%%%%%%%%%%%%%%%%%%%%%%%%%%%%%%
\subsubsection{Both definitions of contact type are equivalent}
{\boldmath$\LVF\mapsto \alpha_\LVF$} :
Given a Liouville vector field $\LVF$ on $U$,
define
\begin{equation}\label{eq:alpha_Y}
     \lambda_\LVF:=i_\LVF\omega,\qquad
     \alpha_\LVF:=\iota^*\lambda_\LVF,
\end{equation}
to obtain the desired 1-form $\alpha_\LVF$ on $\Sigma$.
Abbreviate $\lambda=\lambda_\LVF$ and $\alpha=\alpha_\LVF$.
Then indeed $d\lambda=di_\LVF\omega=L_\LVF\omega=\omega$
and for any non-zero
$v\in(\Ll_\Sigma)_x:=(\ker\omega|_\Sigma)_x\subset T_x\Sigma$,
together with $\LVF(x)\notin T_x\Sigma$ as $\LVF\pitchfork\Sigma$, we have that
\begin{equation}\label{eq:hjhkd34}
     0\not=\omega(\LVF(x),v)=\lambda_x(v)=\alpha_x(v).
\end{equation}
{\boldmath$\alpha\mapsto \LVF_\alpha$} :
This way is harder as it involves in the first step to extend the 1-form $\alpha$
from $\Sigma$ to a 1-form $\lambda=\lambda_\alpha$ on some
neighborhood $U$ of $\Sigma$ such that $d\lambda=\omega|U$;
see~\cite[\S 4.3 Le.~3]{hofer:2011a}. Once one has the extension,
the identity $\lambda_\alpha=\omega(\LVF,\cdot)$ determines $\LVF=\LVF_\alpha$.
On $U$ one has $L_\LVF\omega=di_\LVF\omega=d\lambda=\omega$.
To show $\LVF\pitchfork\Sigma$ observe that now in~(\ref{eq:hjhkd34})
the right hand side is non-zero, thus $\LVF(x)$ cannot be in $T_x\Sigma$
as $v$ already is.

%%% SUBSUBSECTION %%%%%%%%%%%%%%%%%%%
%%%%%%%%%%%%%%%%%%%%%%%%%%%%%%%%%%
\subsubsection{Geometrical and dynamical consequences of contact type}
Suppose $\Sigma\subset (M,\omega)$ is a closed hypersurface
of contact type. Let $\LVF=\LVF_\lambda$ and $\lambda=\lambda_\LVF$
be the two associated structures, one corresponding to the other one
(see discussion right above), on some neighborhood $U$ of $\Sigma$.

The contact type property has powerful geometrical and dynamical
consequences illustrated by Figure~\ref{fig:fig-contact-type-char-bundles}:
\begin{figure}%[h]
  \centering
  \includegraphics%[width=0.9\textwidth]
                             [height=4cm]
                             {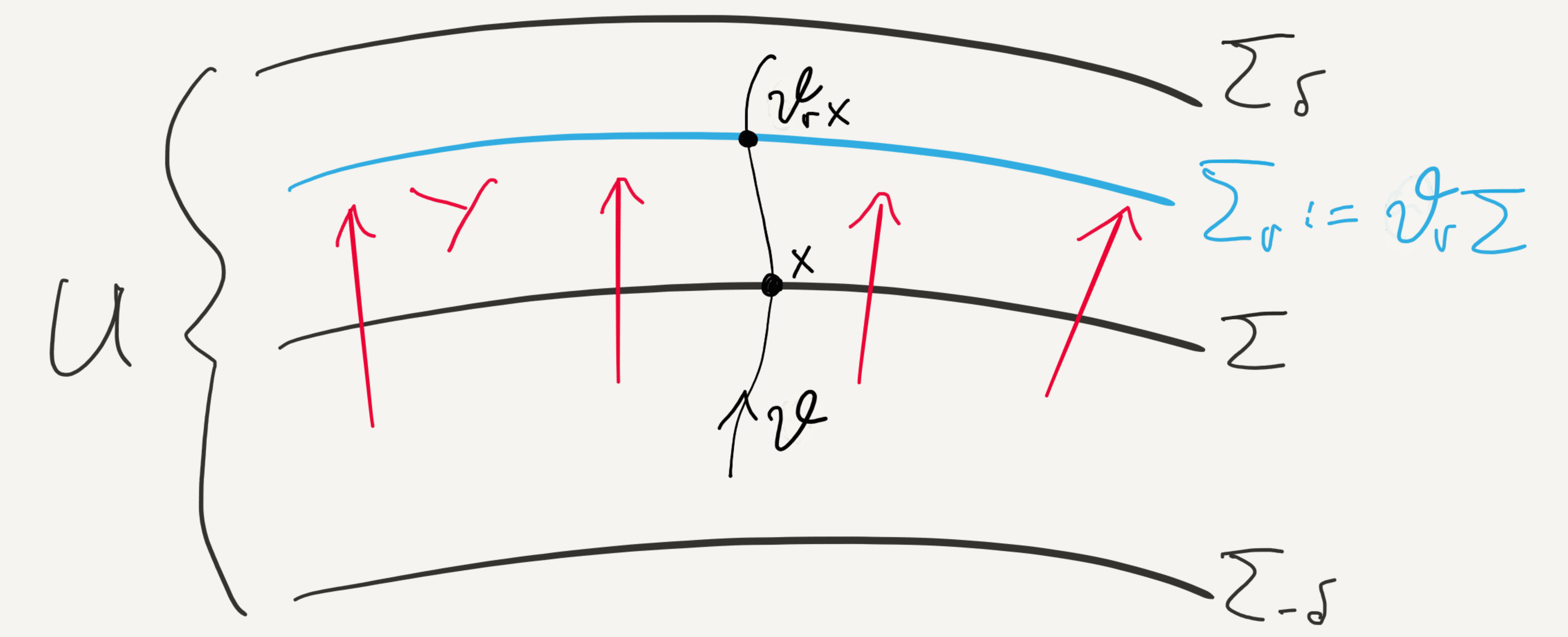}
  \caption{Contact type induces foliation $(\Sigma_r)$ and
                 isomorphisms $\Ll_\Sigma\cong\Ll_{\Sigma_r}$}
  \label{fig:fig-contact-type-char-bundles}
\end{figure}
Transversality $\LVF\pitchfork\Sigma$ leads to a foliation
of a neighborhood of $\Sigma$ by copies of $\Sigma$ under the Liouville
flow $\Lflow=\Lflow^\LVF$, that is by the hypersurfaces defined by
$\Sigma_r:=\Lflow_r \Sigma$ for $r>0$ small. These copies are even
\emph{dynamical copies} in the sense that the linearization of the
diffeomorphism $\Lflow_r:\Sigma\to \Sigma_r$ identifies the characteristic
foliations $\Ll_\Sigma$ and $\Ll_{\Sigma_r}$ isomorphically.

To see this consider the parametrized family
of hypersurfaces modeled on $\Sigma$:
\begin{equation}\label{eq:family-modelled-on-S}
     \Phi:(-\delta,\delta)\times\Sigma\to U\subset M,\quad
     (r,x)\mapsto \Lflow_r x.
\end{equation}
Here $\delta>0$ is a sufficiently small constant whose
existence is guaranteed by compactness of $\Sigma$;
choose for $U$ the image of $\Phi$.
Use $L_\LVF\omega=\omega$ in~(\ref{eq:Lie-derivative})
and $\Lflow_0=\id$ to obtain the identity
$$
     (\Lflow_r)^*\omega=e^r \omega.
$$
Given some non-zero vector $v\in(\Ll_\Sigma)_x=\ker (\omega|_\Sigma)_x$, then
$$
     0
     =\omega\left(v,w\right)
     =e^r \omega\left(v,w\right)
     =(\Lflow_r)^*\omega\left(v,w\right)
     =\omega\bigl(d\Lflow_r(x)v,
     \underbrace{d\Lflow_r(x)w}_{\in T_{\Lflow_r x}\Sigma_r}\bigr)
$$
for every $w\in T_x\Sigma$. So the non-zero vector
$d\Lflow_r(x)v$ lies in $\ker(\omega|_{\Sigma_r})_{\Lflow_r x}$. Thus
\begin{equation}\label{eq:CT=>stable}
     d\Lflow_r:\Ll_\Sigma\to \Ll_{\Sigma_r}
\end{equation}
is an isomorphism of line bundles proving that
\emph{contact type implies stable}. Hence $\Lflow_r$ induces a
bijection $\Cc(\Sigma)\cong\Cc(\Sigma_r)$, $C\mapsto\Lflow_r C$.
This motivates Definition~\ref{def:stable-hypersurface}.

\begin{exercise}[Stable, but not of contact type]
Show that the hypersurface $\Sigma:=\SS^2\times\SS^1$ in the symplectic
manifold $(M,\Omega):=(\SS^2\times\R^2,\omega\oplus \omega_0)$ is
stable, but not of contact type. Here $\omega$ is any symplectic
form on $\SS^2$; cf.~Exercise~\ref{exc:2-sphere}.
\end{exercise}

%%%%%%%%%%%%%%%%%%%%%%%%%%%%%%%%%%
%%% SUBSECTION %%%%%%%%%%%%%%%%%%%%%
%%%%%%%%%%%%%%%%%%%%%%%%%%%%%%%%%%
\subsection{Energy surfaces of contact type}

\begin{proposition}\label{prop:char=per}
Suppose a  closed hypersurface $\Sigma$ in a
symplectic manifold $(M,\omega)$ is both,
firstly, of contact type with respect to some Liouville vector field
$\LVF$ on some neighborhood $U$ of $\Sigma$
and, secondly,\footnote{
  This assumption is void if $\Sigma$ is bounding:
  One easily constructs $F$ even globally on $M$.
  }
a regular level set $\Sigma=F^{-1}(c)$ of some function $F:U\to\R$.
In this case the Reeb vector field and the Hamiltonian vector field
are pointwise co-linear along $\Sigma$. In symbols,
along $\Sigma$ it holds that
\begin{equation}\label{eq:non-van-f}
     \RVF_{\alpha_\LVF}=f\, X_F
\end{equation}
for some non-vanishing function $f$ on $\Sigma$.
In other words, on $\Sigma$ the Reeb flow and the Hamiltonian flow
coincide up to reparametrization.
\end{proposition}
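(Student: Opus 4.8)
The statement to prove is that, along a contact type energy surface $\Sigma=F^{-1}(c)$, the Reeb vector field $\RVF_{\alpha_\LVF}$ and the Hamiltonian vector field $X_F$ are pointwise co-linear via a non-vanishing function $f$. The key observation is that both vector fields are non-vanishing sections of the \emph{same} line bundle, namely the characteristic line bundle $\Ll_\Sigma=\ker\omega|_\Sigma$. Once this is established, co-linearity is immediate: a one-dimensional vector space has only one line, and any two non-zero vectors in it differ by a non-zero scalar, which varies smoothly because both sections are smooth and one of them is nowhere zero.

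First I would recall why $X_F$ is a section of $\Ll_\Sigma$. This is exactly the computation from Section~\ref{sec:en-surf-R2n} (the exercise identifying $\ker\omega_0|_S=\R\cdot X_F$), carried over verbatim to the symplectic manifold $(M,\omega)$: for $x\in\Sigma$ and $v\in T_x\Sigma$ one has $(\omega|_\Sigma)_x(X_F(x),v)=\omega_x(X_F(x),v)=-dF_x(v)=0$ since $v$ is tangent to the level set $\Sigma=F^{-1}(c)$. Hence $X_F(x)\in(\ker\omega|_\Sigma)_x=(\Ll_\Sigma)_x$, and $X_F(x)\neq 0$ because $c$ is a regular value so $dF_x\neq 0$ and therefore $X_F(x)\neq 0$. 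As noted before $\Ll_\Sigma$ is a genuine line bundle (the odd dimension of $\Sigma$ rules out rank zero and non-degeneracy of $\omega$ rules out rank $\ge 2$), so $X_F|_\Sigma$ is a nowhere-vanishing section of the line bundle $\Ll_\Sigma$.

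Next I would recall why $\RVF_{\alpha_\LVF}$ is also a section of $\Ll_\Sigma$. Write $\alpha=\alpha_\LVF=\iota^*\lambda_\LVF$ with $\lambda_\LVF=i_\LVF\omega$; by the equivalence of the two definitions of contact type established right above in the text, $d\alpha=\iota^*\omega=\omega|_\Sigma$, so $\Ll_\Sigma=\ker\omega|_\Sigma=\ker d\alpha=\Ll_{d\alpha}$. By definition (Remark~\ref{rem:cont-type-R2n}(iv) / Exercise~\ref{exc:contact-volume_form}(c)) the Reeb vector field $\RVF_\alpha$ is the unique section of $\Ll_{d\alpha}$ with $\alpha(\RVF_\alpha)\equiv 1$; in particular it is a nowhere-vanishing section of $\Ll_\Sigma=\Ll_{d\alpha}$. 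At this point both $X_F|_\Sigma$ and $\RVF_{\alpha_\LVF}$ are nowhere-vanishing sections of the rank-one bundle $\Ll_\Sigma\to\Sigma$.

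Finally I would conclude. Fix $x\in\Sigma$. Since $(\Ll_\Sigma)_x$ is one-dimensional and $X_F(x)\neq 0$, there is a unique scalar $f(x)\in\R$ with $\RVF_{\alpha_\LVF}(x)=f(x)\,X_F(x)$, and $f(x)\neq 0$ because $\RVF_{\alpha_\LVF}(x)\neq 0$. Smoothness of $f$ follows by a local frame argument: near any point choose a smooth nowhere-zero local section $e$ of $\Ll_\Sigma$ (e.g. $e=X_F$ itself); write $X_F=g\,e$ and $\RVF_{\alpha_\LVF}=h\,e$ with $g,h$ smooth and $g$ nowhere zero; then $f=h/g$ is smooth. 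One can even pin down $f$ explicitly: applying $\alpha$ to both sides of $\RVF_{\alpha_\LVF}=f\,X_F$ and using $\alpha(\RVF_{\alpha_\LVF})=1$ gives $f=1/\alpha(X_F)=1/(i_\LVF\omega)(X_F)=1/\omega(\LVF,X_F)$, and transversality $\LVF\pitchfork\Sigma$ together with $X_F(x)\neq 0$, $X_F(x)\in T_x\Sigma$ guarantees $\omega_x(\LVF(x),X_F(x))\neq 0$ (this is essentially identity~(\ref{eq:hjhkd34})), re-confirming that $f$ is a smooth nowhere-vanishing function. The ``reparametrization'' phrasing in the statement is then just the standard fact that two vector fields differing by a nowhere-zero smooth factor have the same (unparametrized) integral curves. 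I do not expect any serious obstacle here; the only mild point of care is the smoothness of $f$, which the local frame argument handles cleanly, and making sure one invokes the already-proved equivalence $\Ll_\Sigma=\ker d\alpha$ rather than re-deriving it.
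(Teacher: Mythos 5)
Your proof takes essentially the same approach as the paper's: both identify $\ker d\alpha = \ker\omega|_\Sigma = \Ll_\Sigma$ and observe that $R_\alpha$ and $X_F$ are both nowhere-vanishing sections of this line bundle. The paper's proof is a two-line version of this; you have merely spelled out the supporting details (non-vanishing of $X_F$, smoothness of $f$ via a local frame, and the explicit formula $f=1/\omega(\LVF,X_F)$), all of which are correct.
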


\begin{proof}
Set $\alpha:=\alpha_\LVF$, cf.~(\ref{eq:alpha_Y}), so
$\ker d\alpha=\ker\omega|_{\Sigma}=:\Ll_{\Sigma}=\Ll_{F^{-1}(c)}$.
But $R_\alpha$ is a section of $\ker d\alpha$ by definition and $X_F$
is one of $\Ll_{F^{-1}(c)}$ by Remark~\ref{rem:vhjgj676}.
\end{proof}

\begin{exercise}[Reeb flows on level sets are Hamiltonian near\footnote{
  Reeb flows on \emph{bounding} contact type hypersurfaces are Hamiltonian
  for some $F:M\to\R$.
  }
the level set]\label{exc:Reeb=Ham}
Suppose the regular value $c$ of $F:U\to\R$ in
Proposition~\ref{prop:char=per} is \emph{zero}; otherwise replace $F$
by $F-c$. Firstly, extend the non-vanishing function $f$ in~(\ref{eq:non-van-f})
from $\Sigma=F^{-1}(0)$ to a non-vanishing function on some open
neighborhood, still denoted by $U$ and $f$, constant outside a compact
neighborhood $D\subset U$ of $\Sigma$.
[Hint: Co-orientability of $\Sigma$, tubular neighborhoods $D$.]
Secondly, show that
\begin{itemize}
\item
  zero is a regular value of the product function $fF:U\to\R$;
\item
  the pre-image $(fF)^{-1}(0)$ is still $\Sigma$;
\item
  along $\Sigma$ there are the identities
  $\RVF_{\alpha_\LVF}=f\, X_F=X_{fF}$.
\end{itemize}
[Hint: As $X_{fF}=fX_F+FX_f$ it helps that $\Sigma=F^{-1}(0)$ is the
pre-image~of~\emph{zero}.]
\end{exercise}

%%%%%%%%%%%%%%%%%%%%%%%%%%%%%%%%%%
%%%%%%%%%%%%%%%%%%%%%%%%%%%%%%%%%%
%%% SECTION %%%%%%%%%%%%%%%%%%%%%%%%
%%%%%%%%%%%%%%%%%%%%%%%%%%%%%%%%%%
%%%%%%%%%%%%%%%%%%%%%%%%%%%%%%%%%%
\section{Restricted contact type -- exact symplectic}
\label{sec:CTH-ESM}

\begin{exercise}\label{exc:restricted-CT=>exact}
Let $\Sigma$ be a closed hypersurface
in a symplectic manifold $(M,\omega)$.
\\
a)~If $\Sigma$ is of restricted contact type, see
Definition~\ref{def:contact-type}, with respect to a Liouville vector
field $\LVF$ on $M$, then $\omega$ is an exact symplectic 
form with primitive $\lambda_\LVF:=i_\LVF\omega:=\omega(\LVF,\cdot)$.
In particular, the manifold $M$ cannot be closed.
\\
b)~If $\omega=d\lambda$ is exact, then every
simply-connected closed hypersurface of contact type is of
restricted contact type.
\end{exercise}

\begin{definition}\label{def:exact symplectic manifold}\index{symplectic manifold!exact}
An \textbf{\Index{exact symplectic manifold}} $(V,\lambda)$
is a manifold $V$ with a $1$-form $\lambda$ such that
$\omega:=d\lambda$ is a symplectic form.
It automatically comes with
the\index{$\LVF_\lambda$ associated Liouville vector field}
\textbf{associated Liouville vector field} $\LVF_\lambda$
determined by\index{Liouville vector field!associated}
$i_{\LVF_\lambda}d\lambda=\lambda$.
\end{definition}

\begin{exercise}
Show $L_{\LVF_\lambda}d\lambda=d\lambda$
and the following.
An exact symplectic manifold $(V,\lambda)$ is necessarily non-compact.
The boundary $\p M$ of a compact exact symplectic manifold-with-boundary
$(M,\lambda)$ is necessarily non-empy.
\end{exercise}

\begin{exercise}[Liouville vector fields are outward pointing]
\label{exc:symp-mf-with-bdy}
Suppose $(M,\omega=d\lambda)$ is a compact exact
symplectic\index{Liouville vector field!outward pointing}
manifold-with-boundary and consider the associated Liouville vector field
$\LVF_\lambda$ defined by $\omega(\LVF_\lambda,\cdot)=\lambda$ on $U=M$.
Suppose that $\LVF_\lambda\pitchfork\p M$.
(In other words, suppose that the boundary $\p M$ is of (restricted)
contact type with respect to $\LVF_\lambda$.)
Let $\iota:\p M\hookrightarrow M$ be inclusion and set $\alpha:=\iota^*\lambda$.
\begin{itemize}
\item[(i)]
  Use the fact that $\LVF_\lambda$ lies in the kernel of $\lambda$,
  i.e. $\lambda(\LVF_\lambda)=0$, to prove the relation
  $$
     i_{\LVF_\lambda}\omega^n=\lambda\wedge(d\lambda)^{n-1}
  $$
  between the natural volume form $\omega^n$ on $M$ and its primitive.
  Recall that the restriction $\iota^*\left(\lambda\wedge(d\lambda)^{n-1}\right)
  =\alpha\wedge(d\alpha)^{n-1}$ is a volume form on $\p M$.
\item[(ii)]
  Let $M$ be equipped with the orientation provided by $\omega^n$
  and let the boundary $\p M$ be equipped with the induced orientation
  according to the 'put outward normal first' rule;
  cf.~\cite[Ch.\,3 \S 2]{guillemin:1974a}. Verify that
  \begin{equation*}
  \begin{split}
     0
   &<\int_{M}\omega^n\\
   &=\int_{\p M}\iota^*\left(\lambda\wedge(d\lambda)^{n-1}\right)\\
   &=\int_{\p M}\iota^*i_{\LVF_{\lambda}}\omega^n
  \end{split}
  \end{equation*}
  where the first identity is Stoke's theorem;
  see~\cite[Ch.\,4 \S 7]{guillemin:1974a}.
  Let $\nu$ be an outward pointing vector field along $\p M$.
  Then verify that the integral $\int_{M}\omega^n$ is a positive multiple of
  $\int_{\p M}\iota^* i_\nu \omega^n$ due to the 'put outward normal first' rule.
  So $\LVF_\lambda$ points in the same half-space
  as $\nu$, that is the outer one.
  Hence $\LVF_\lambda$ generates a complete backward flow.
\end{itemize}
\end{exercise}

%%%%%%%%%%%%%%%%%%%%%%%%%%%%%%%%%%
%%% SUBSECTION %%%%%%%%%%%%%%%%%%%%%
%%%%%%%%%%%%%%%%%%%%%%%%%%%%%%%%%%
\subsection{Bounding hypersurfaces of restricted contact type}
An exact symplectic manifold $(V,\lambda)$ already comes
equipped with the globally defined 
\textbf{\Index{associated Liouville vector field}} $\LVF_\lambda$
determined\index{$\LVF_\lambda$ associated Liouville vector field}
by the identity\index{Liouville vector field!associated}
$$
     d\lambda(\LVF_\lambda,\cdot)=\lambda.
$$

\begin{proposition}[Defining Hamiltonians]\label{prop:char=per-exact}
Suppose $\Sigma$ is a bounding hypersurface in an exact symplectic manifold
$(V,\lambda)$ transverse $\Sigma\pitchfork\LVF_\lambda$
to the associated Liouville vector field
$\LVF_\lambda$ on~$V$.\footnote{
  Such $\Sigma$ is of (restricted) contact type
  with $\alpha_{\lambda}:=\lambda|_\Sigma$.
}
Denote the closure of the inside of $\Sigma$ by $M$.
Then there is a global Hamiltonian $F:V\to\R$ with regular level set
$F^{-1}(0)=\Sigma$ such that $F$ is negative\footnote{
  If one does not prescribe the same sign for all $F$'s inside
  $\Sigma$ and the opposite sign outside, then one looses convexity of the
  space of such, since $F+(-F)=0$.
  }
inside $\Sigma$, equal to a positive constant outside some compact
neighborhood of $M$, and such that the Hamiltonian
and the Reeb vector field coincide
\begin{equation}\label{eq:Ham=Reeb}
     \RVF_{\alpha_\lambda}=X_F,\quad\text{along $\Sigma=F^{-1}(0)$
     where $\alpha_{\lambda}:=\lambda|_\Sigma$.}
\end{equation}
Such $F$ is called a \textbf{\Index{defining Hamiltonian}} for the
bounding (restricted) contact type hypersurface $\Sigma$;
see Figure~\ref{fig:fig-defining-Hamiltonian}.
The\index{Hamiltonian!defining}\index{defining Hamiltonians!space of --}
\textbf{\Index{space of defining Hamiltonians}}, denoted by
$\Ff(\Sigma)=\Ff(\Sigma,V,\lambda)$,
is\index{$\Ff(\Sigma)=\Ff(\Sigma,V,\lambda)$ space of defining Hamiltonians}
convex.
\end{proposition}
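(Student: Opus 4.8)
The plan is to obtain $F$ directly from collar coordinates supplied by the Liouville flow, and then to deduce convexity of $\Ff(\Sigma)$ from the single normalization identity $\LVF_\lambda(F)|_\Sigma\equiv1$, which simultaneously encodes the Reeb condition~(\ref{eq:Ham=Reeb}).

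First I would fix a collar. Since $\Sigma$ is compact and $\Sigma\pitchfork\LVF_\lambda$, the Liouville flow $\Lflow=\Lflow^{\LVF_\lambda}$ is defined on $(-\delta,\delta)\times\Sigma$ for some $\delta>0$ and $\Phi\colon(-\delta,\delta)\times\Sigma\to V$, $(r,x)\mapsto\Lflow_r(x)$, is a diffeomorphism onto an open neighbourhood $U$ of $\Sigma$ with $\overline U$ compact --- a parametrized family of hypersurfaces modelled on $\Sigma$ in the sense of Exercise~\ref{exc:S-co-orient-ext}(v), with $\partial_r$ corresponding to $\LVF_\lambda$ (one even has $\Phi^*\lambda=e^r\alpha_\lambda$, though only $\partial_r=\LVF_\lambda$ is needed below). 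Applying Exercise~\ref{exc:symp-mf-with-bdy} to the compact exact symplectic manifold-with-boundary $(M,\lambda)$, where $\partial M=\Sigma$, shows that $\LVF_\lambda$ points outward along $\Sigma$, so $U\cap M=\Phi\bigl((-\delta,0]\times\Sigma\bigr)$ and $U\setminus M=\Phi\bigl((0,\delta)\times\Sigma\bigr)$.

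Next I would pick a smooth $g\colon\R\to\R$ with $g(r)=r$ for $|r|\le\delta/4$, $g$ strictly increasing on $[-\delta/2,\delta/2]$, and $g\equiv-\eps_0$ for $r\le-\delta/2$, $g\equiv\eps_1$ for $r\ge\delta/2$, with $\delta/4<\eps_0,\eps_1$; thus $g(0)=0$, $g'(0)=1$, $g<0$ on $(-\infty,0)$ and $g>0$ on $(0,\infty)$. Define $F$ by $F(\Phi(r,x)):=g(r)$ on $U$, extend it over $M$ by the constant $-\eps_0$ and over $V\setminus M$ by the constant $\eps_1$; these are consistent and $F$ is smooth because near the two interfaces $F$ already equals the respective constant (the region $\{r\le-\delta/2\}$ of the collar separates $\Sigma$ from $M\setminus U$, and likewise $\{r\ge\delta/2\}$ on the other side) --- this is where the hypothesis that $\Sigma$ is bounding enters, giving that $M$ is compact and $V\setminus\Sigma=\interior{M}\sqcup(V\setminus M)$. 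By construction $F<0$ on $\interior{M}$ and $F>0$ on $V\setminus M$, so $F^{-1}(0)=\Sigma$, and $F\equiv\eps_1$ outside a compact neighbourhood of $M$. Along $\Sigma$ one has $\LVF_\lambda(F)=g'(0)=1\neq0$, and since $\LVF_\lambda\pitchfork\Sigma$ this forces $dF|_\Sigma\neq0$, so $0$ is a regular value. For the Reeb identity I would argue as in Proposition~\ref{prop:char=per}: from $X_F(F)=-\omega(X_F,X_F)=0$ the field $X_F$ is tangent to $\Sigma$ and lies in $\ker(\omega|_\Sigma)=\ker d\alpha_\lambda=\Ll_\Sigma$, so $X_F|_\Sigma=c\,\RVF_{\alpha_\lambda}$ for a function $c$ on $\Sigma$; using $\lambda=i_{\LVF_\lambda}\omega$ and $i_{X_F}\omega=-dF$ one gets the identity
\begin{equation*}
  \lambda(X_F)=\omega(\LVF_\lambda,X_F)=dF(\LVF_\lambda)=\LVF_\lambda(F),
\end{equation*}
valid on all of $V$, whence $c=\alpha_\lambda(X_F)|_\Sigma=\LVF_\lambda(F)|_\Sigma=1$, i.e. $\RVF_{\alpha_\lambda}=X_F$ along $\Sigma$. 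This gives existence.

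For convexity, let $F_0,F_1\in\Ff(\Sigma)$ and $F_t:=(1-t)F_0+tF_1$, $t\in[0,1]$. Each sign condition passes to $F_t$: $F_t<0$ on $\interior{M}$, and $F_t$ is a positive constant outside a compact neighbourhood of $M$ (take the union of the two), so $F_t^{-1}(0)=\Sigma$ since $V\setminus M$ is connected. The displayed identity applied to $F_0,F_1$ gives $\LVF_\lambda(F_i)|_\Sigma\equiv1$, hence $\LVF_\lambda(F_t)|_\Sigma\equiv1$; this makes $0$ a regular value of $F_t$, and then exactly as above $X_{F_t}|_\Sigma$ spans $\Ll_\Sigma$ and $\alpha_\lambda(X_{F_t})|_\Sigma=\LVF_\lambda(F_t)|_\Sigma=1$ forces $X_{F_t}|_\Sigma=\RVF_{\alpha_\lambda}$. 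So $F_t\in\Ff(\Sigma)$. I expect the only genuinely delicate part to be the global smooth extension of $F$ past the collar, which is where the bounding hypothesis is used; once that is in place, the Reeb normalization and the convexity come essentially for free from the elementary identity $\lambda(X_F)=\LVF_\lambda(F)$.
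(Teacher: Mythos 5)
Your proof is correct, and it takes a slightly cleaner route than the paper's. The paper first picks an arbitrary $F$ with $F^{-1}(0)=\Sigma$ from a tubular neighbourhood, observes $R_\alpha=fX_F$ on $\Sigma$ for some non-vanishing $f$, extends $f$ globally (using again that $\Sigma$ bounds), and renormalizes by passing to $fF$; convexity is then attributed to the linearity $X_{F+G}=X_F+X_G$. You instead use the \emph{Liouville flow} collar $\Phi_r=\Lflow_r$ specifically, and build the Reeb normalization directly into the profile $g$ via $g'(0)=1$, so no correction factor $f$ is needed. The identity $\lambda(X_F)=\omega(\LVF_\lambda,X_F)=\LVF_\lambda(F)$ is what makes this work: it turns the Reeb condition $X_F|_\Sigma=R_{\alpha_\lambda}$ into the single scalar, linear-in-$F$ condition $\LVF_\lambda(F)|_\Sigma\equiv 1$, which simultaneously (i) is achieved by design in the collar, (ii) forces $0$ to be a regular value (by transversality of $\LVF_\lambda$), and (iii) is manifestly preserved by convex combinations. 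This linear reformulation is essentially the same observation behind the paper's $X_{F+G}=X_F+X_G$, just packaged so that existence and convexity fall out of one normalization identity rather than via a separate renormalization step. One small presentational point: the phrase that the collar region $\{r\le -\delta/2\}$ ``separates'' $\Sigma$ from $M\setminus U$ is a little informal; the cleaner way to say it is that $F$ is defined consistently by the two open sets $U$ and $V\setminus\Phi([-3\delta/4,3\delta/4]\times\Sigma)$, which cover $V$ and on whose overlap (two components of the collar where $g$ has already become constant) the two formulas agree. This is exactly where the bounding hypothesis is used, as you say.
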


\begin{proof}
Use a tubular neighborhood of $\Sigma$
to define $F$ near $\Sigma$ and then, using that $\Sigma$ bounds,
extend that function, say by $-1/+1$, to the remaining parts
of the inside/outside of $\Sigma$.
Then $R_\alpha=fX_{F}$ along $\Sigma$
for some non-vanishing $f$ on $\Sigma$
by~(\ref{eq:non-van-f}).
Use again that $\Sigma$ bounds
to extend $f$ to a non-vanishing function on $V$.
Then the product function
$
     fF:V\to\R
$
has the desired properties since $d(fF)=f(dF)$ on $\Sigma$;
cf. Exercise~\ref{exc:Reeb=Ham}.
Convexity essentially follows from the identity $X_{F+G}=X_F+X_G$
and the chosen regular value being \emph{zero}.
\end{proof}

%%%%%%%%%%%%%%%%%%%%%%%%%%%%%%%%%%
%%% SUBSECTION %%%%%%%%%%%%%%%%%%%%%
%%%%%%%%%%%%%%%%%%%%%%%%%%%%%%%%%%
\subsection{Convexity}
One gets a natural ambience of Rabinowitz-Floer homology by replacing
\begin{equation*}
\begin{gathered}
  \textsf{bounding hypersurfaces of (restricted) contact type}\\
  \textsf{in exact symplectic manifolds}
\end{gathered}
\end{equation*}
by
\begin{equation*}
\begin{gathered}
  \textsf{convex exact hypersurfaces $\Sigma$}\\
  \textsf{in convex exact symplectic manifolds $(V,\lambda)$.}
\end{gathered}
\end{equation*}
What is the difference?
Whereas the restriction $\lambda|_\Sigma$ to $\Sigma$
needs to become contact only after adding some exact $1$-form on $\Sigma$,
non-compactness of exact symplectic manifolds is tamed and made
'controlable' outside compact parts by requiring what is called the
manifold being \emph{convex}, or \emph{cylindrical}, near infinity;
see~\citeintro{Eliashberg:2000a} and~\citerefRF{Bourgeois:2003a}.
From now on $\Sigma$ and $V$ are connected manifolds.

\begin{definition}\label{def:exact-convex-symplectic-manifold}
A \textbf{\Index{convex exact symplectic manifold}}
$(V,\lambda)$\index{symplectic manifold!convex exact --}
consists of a \underline{connected} manifold
$V$ of dimension $2n$ equipped with a 1-form $\lambda$~such~that
\begin{itemize}
\item[(i)]
  $\omega:=d\lambda$ is a symplectic form on $V$ and
\item[(ii)]
  the exact symplectic manifold $(V,\lambda)$ is \textbf{\Index{convex at infinity}},
  that is there is an exhaustion $V=\cup_k M_k$ of $V$ by compact
  %, not necessarily connected,
  manifolds-with-boundary $M_k\subset M_{k+1}$ such that
  $\alpha_k:=\lambda|_{\p M_k}$ is a contact form on $\p M_k$ for every $k$.
  \index{symplectic manifold!convex at infinity}
\end{itemize}
\end{definition}

\begin{remark}
Suppose $(V,\lambda)$ is a convex exact symplectic manifold.
  Given any \emph{compactly} supported (smooth) function $f$ on $V$,
  then $(V,\lambda+df)$ is also a convex exact symplectic manifold:
  Indeed the 1-forms $\lambda$ and $\lambda+df$, called \textbf{equivalent}
  $1$-forms, generate the same symplectic form $\omega$ on
  $V$. One obtains a suitable exhaustion by forgetting the first
  $M_k$'s, use only those on which $df=0$.
\end{remark}

By Exercise~\ref{exc:symp-mf-with-bdy}
the associated Liouville vector field $\LVF_\lambda$
points out of $M_k$ along $\p M_k$. So the Liouville
flow is automatically \emph{backward} complete.
A convex exact symplectic manifold $(V,\lambda)$ is called
\textbf{complete}\index{exact convex symplectic manifold!complete --}\index{complete!flow} if
the vector field $\LVF_\lambda$ generates a complete flow on $V$.
If $\LVF_\lambda\not= 0$ outside some compact set one says that
$(V,\lambda)$ has \textbf{\Index{bounded topology}}.\footnote{
  To understand this choice of terminology recall that Morse theory
  describes the change of topology of sublevel sets $\{f\le c\}$ when $c$
  crosses a critical level. But critical points of $f$ are the zeroes of the
  gradient vector field $\nabla f$, whatever Riemannian metric one picks.
  }
Call a subset
$A\subset V$ \textbf{displaceable}
if\index{displaceable subset}
$$
     A\cap\psi_1^H A=\emptyset
$$
for some compactly supported Hamiltonian
$H:[0,1]\times V\to\R$.

\begin{exercise}
a)~If $(V,\lambda)$ is a convex exact symplectic manifold,
then so is its \textbf{\Index{stabilization}}
$(V\times\C,\lambda\oplus\lambda_\C)$. The 1-form
$\lambda_\C$ on $\C$ is given by $\frac12(x\,dy-y\,dx)$.
b)~In $(V\times\C,\lambda\oplus\lambda_\C)$
every compact subset is displaceable.
\end{exercise}

Main examples of convex exact symplectic manifolds are
\begin{itemize}
\item
  Euclidean space $\R^{2n}$ equipped with the $1$-form
  $\lambda_0$ given by~(\ref{eq:R2n-omega-standard}).
  Indeed the radial Liouville vector field $\LVF_0(z)=z$
  in~(\ref{eq:LVF}) is transverse to the boundary of each ball $M_k$
  about the origin of radius $k$.
\item
  cotangent bundles $T^* Q$ equipped with the Liouville form
  $\lambdacan$ and the canonical fiberwise radial Liouville vector
  field $\LVFcan$, see~(\ref{eq:Liouv-VF-cot-bdl}).
  These are complete and of bounded topology whenever the base
  manifold is closed. More generally,
\item
  Stein manifolds,
  see~\citerefCG{Eliashberg:1990a,Eliashberg:1991a}
  or~\cite[Thm.~1.5]{Cieliebak:2012a}.
\end{itemize}

%%% SUBSUBSECTION %%%%%%%%%%%%%%%%%%%
%%%%%%%%%%%%%%%%%%%%%%%%%%%%%%%%%%
\subsubsection{Cylindrical ends}
Symplectic manifolds with cylindrical ends
have been introduced to construct 
\emph{\Index{symplectic field theory (SFT)}}
in~\citeintro{Eliashberg:2000a}.

\begin{exercise}\label{exc:cyl-ends}
Show that a convex exact symplectic manifold
$(V,\lambda)$ is complete and of bounded topology iff
there exists an embedding $\phi:N\times\R_+\to V$,
for some closed, not necessarily connected, manifold $N$,
such that $\phi^*\lambda=e^r\alpha_N$ with contact form
$\alpha_N:=\phi^*\lambda|_{N\times\{0\}}$ and such that
$V\setminus\phi(N\times\R_+)$ is compact.
\newline
[Hint: Apply the Liouville flow to $N:=\p M_k$ for some large $k$;
cf. Figure~\ref{fig:fig-cylindrical-ends}.]
\end{exercise}

Each connected component $N_j\times\R_+$ of $N\times\R_+$ is
called a \textbf{\Index{cylindrical end}} of $(V,\lambda)$ and comes
equipped with the symplectic form $\phi^*d\lambda=d(e^r\alpha_{N_j})$.

\begin{definition}\label{def:acs-cylindrical}
A $d\lambda$-compatible almost complex structure $J$
on $(V,\lambda)$, i.e. $J\in\Jj(V,d\lambda)$, is called
\textbf{cylindrical}
if\index{cylindrical almost complex structure}
it\index{almost complex structure!cylindrical}
is cylindrical on the cylindrical ends: Namely, the corresponding
almost complex structure $\phi^*J$ on $N\times\R_+$
\begin{itemize}
\item
  couples Liouville and Reeb vector field,
  that is $J\p_r=R_{\alpha_N}$ along $N$;
\item
  leaves $\ker\alpha_N$ invariant;
\item
  is invariant under the semi-flow $(x,0)\mapsto(x,r)$
  for $(x,r)\in N\times\R_+$.
\end{itemize}
\end{definition}

Concerning existence of cylindrical almost complex structures
see~\citerefRF[\S 2 \S 3]{Bourgeois:2003a} or~\cite[\S 2.1]{Abbas:2014a}.

\begin{figure}%[h]
  \centering
  \includegraphics%[width=0.9\textwidth]
                             [height=4cm]
                             {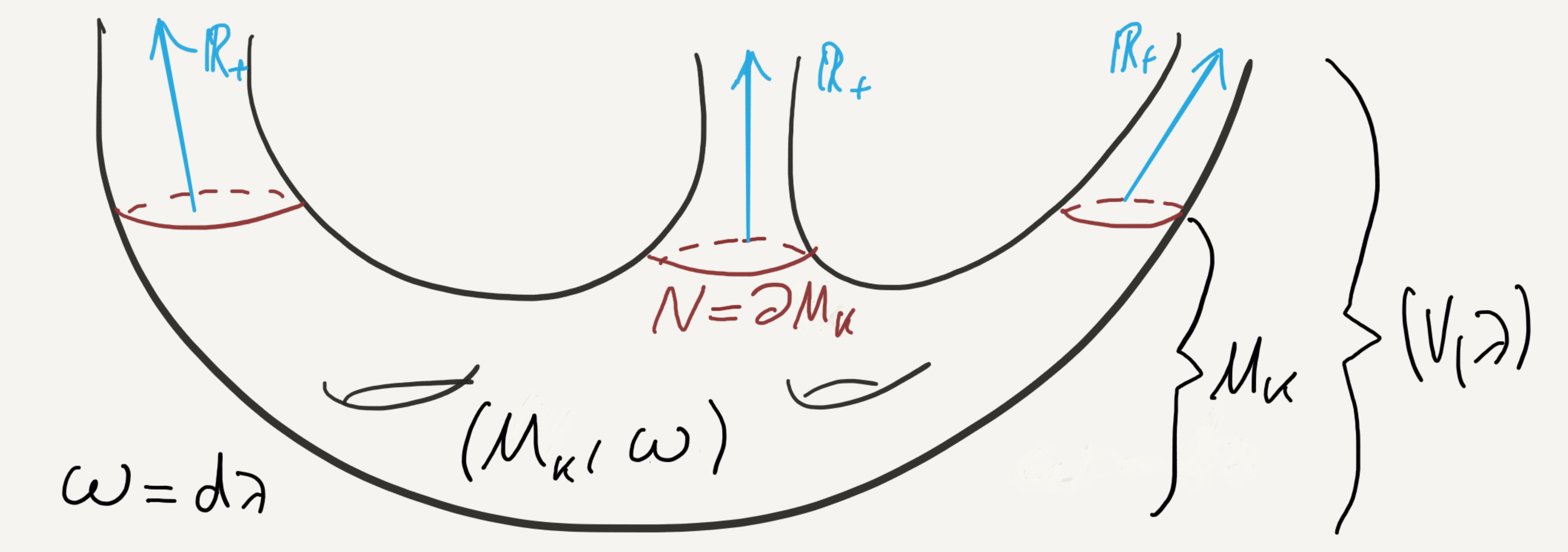}
  \caption{Convex exact symplectic manifold $(V,\lambda)$ with 3 cylindrical ends}
  \label{fig:fig-cylindrical-ends}
\end{figure}

%%% SUBSUBSECTION %%%%%%%%%%%%%%%%%%%
%%%%%%%%%%%%%%%%%%%%%%%%%%%%%%%%%%
\subsubsection{Convex exact hypersurfaces}
\begin{definition}\label{def:exact-convex-hypersurface}
A \textbf{\Index{convex exact hypersurface}} in a convex exact
symplectic manifold $(V,\lambda)$ is a \underline{connected} closed hypersurface
$\Sigma\subset V$ such that
\begin{itemize}
\item[(i)]
  there is a contact 1-form $\alpha$ on $\Sigma$ such that
  $\alpha-\lambda|_\Sigma$ is exact and
\item[(ii)]
  the hypersurface $\Sigma$ is bounding,\footnote{
    Assuming that $\Sigma$ bounds, in particular being connected and closed,
    together with being of codimension $1$ in a symplectic manifold, is a
    sufficient condition that $\Sigma$ admits a defining Hamiltonian --
    thereby relating Reeb and Hamiltonian dynamics.
    }
  say $M$. (So $V\setminus\Sigma$ has two connected components,
  one of compact closure, namely $M$, the other one not.)
\end{itemize}
\end{definition}

\begin{remark}\label{rem:ghg676}
The next Exercise~\ref{exc:equiv-1-form-restrict}
shows that a convex exact hypersurface $\Sigma$ in a
convex exact symplectic manifold $(V,\lambda)$
and with associated contact form $\alpha$ is of restricted contact
type with respect to an equivalent $1$-form $\mu:=\lambda+dh$
which restricts to the same contact form $\alpha=\mu|_\Sigma$.
Moreover, the new Liouville vector field, given by
$\LVF_\mu=\LVF_\lambda-X_h^\omega$ where $\omega=d\lambda=d\mu$,
is still transverse to $\Sigma=\p M$ and still outward pointing;
see Exercise~\ref{exc:symp-mf-with-bdy}.
\end{remark}

\begin{exercise}\label{exc:equiv-1-form-restrict}
Consider a convex exact hypersurface $\Sigma$ in $(V,\lambda)$ with
associated contact form $\alpha$. Show the following.
\begin{itemize}
\item[(a)]
  There is a compactly supported function $h:V\to\R$
  such that the $1$-form $\mu:=\lambda+ dh$ on $V$
  restricts to $\alpha=\mu |_{\Sigma}$.
\item[(b)]
  The new Liouville vector field is given by
  $\LVF_\mu=\LVF_\lambda-X_h^\omega$ and
  it is transverse to $\Sigma$ whenever $\LVF_\lambda$ is.
\end{itemize}
[Hint: (a)~Consult~\citeintro[p.\,253 Rmk.\,(2)]{Cieliebak:2009a} if you get stuck.]
\end{exercise}

For a list of further useful consequences
see~\citeintro[p.\,253]{Cieliebak:2009a}. For instance,
a closed hypersurface is bounding whenever
$\Ho_{2n-1}(V;\Z)=0$. This holds, for example, if $V$
is Stein of dimension $>2$ or a stabilization.

%%%%%%%%%%%%%%%%%%%%%%%%%%%%%%%%%%
%%% SUBSECTION %%%%%%%%%%%%%%%%%%%%%
%%%%%%%%%%%%%%%%%%%%%%%%%%%%%%%%%%
\subsection{Cotangent bundles}\label{sec:cot-bdls}
Given a closed manifold $Q$ of dimension $n$,
consider the cotangent bundle $(T^*Q,\omegacan=d\lambdacan)$
with its canonical exact symplectic form.

\begin{exercise}\label{exc:T^*Q-contact}
Show that $\LVFcan$ determined by $\omegacan(\LVFcan,\cdot)=\lambdacan$
takes in natural local coordinates the form
of a \emph{fiberwise radial} vector field, namely
\begin{equation}\label{eq:Liouv-VF-cot-bdl}
     \LVFcan=2\sum_{i=1}^n p_i\p_{p_i}=2\LVFfrad.
\end{equation}
It is called\index{canonical!Liouville vector field}
the\index{$\LVFcan$ canonical Liouville vector field}
\textbf{canonical}\index{Liouville vector field!canonical}
or\index{Liouville vector field!fiberwise radial}
\textbf{fiberwise radial Liouville vector field}.
The\index{canonical!fiberwise radial vector field}
\textbf{fiberwise radial vector field}
$\LVFfrad:T^*Q\to TT^*Q$
is given by\index{$\LVFfrad$ fiberwise radial vector field}
the\index{vector field!fiberwise radial --}
derivative
\begin{equation}\label{eq:Liouv-VF-vert-cot-bdl}
     \LVFcan(\eta)=2\LVFfrad(\eta):=2\left.\frac{d}{d\tau}\right|_{\tau=1} \tau\eta
\end{equation}
of the curve $\tau\eta$ in the manifold $T^* Q$ at time $1$.
Note that $\LVFfrad$ exists on any co/tangent bundle;
cf.~\href{https://en.wikipedia.org/wiki/Tangent_bundle}{wiki/Tangent\_bundle}.
\end{exercise}

\begin{definition}\label{exc:T^*Q-fiberwise-star-shaped}
A hypersurface $\Sigma\subset T^*Q$ is called
\textbf{\Index{fiberwise star-shaped}}\index{star-shaped!fiberwise}
(with respect to the zero section)
if $\Sigma$ is bounding, disjoint from the zero section, and transverse
$\Sigma\pitchfork\LVFcan$ to the fiberwise radial vector
field.\footnote{
  Why is \emph{fiberwise} radial fine, whereas
  in $\R^{2n}$ one uses the \emph{fully} radial vector field~(\ref{eq:LVF})?
  }
\end{definition}

\begin{exercise}\label{def:T^*Q-fiberwise-star-shaped}
The intersection of a fiberwise star-shaped hypersurface
$\Sigma\subset T^*Q$ with each fiber $T_q^*Q$ is diffeomorphic to a
sphere of dimension $n-1$.
\end{exercise}

\begin{exercise}\label{exc:T^*Q-kin+potential}
Pick a Riemannian metric $g$ and a smooth function $V$
on $Q$, consider the Hamiltonian
$F(q,p)=\frac12g_q(p,p)+V(q)$.
Show that if $c>\max_Q V$, then $\Sigma^c:=F^{-1}(c)$
is a fiberwise star-shaped hypersurface; cf.~\cite[(4.11)]{hofer:2011a}.
\end{exercise}

\begin{example}[Canonical contact structure on $S^*Q$]\label{ex:S*Q-contact}
The unit sphere cotangent bundle $S^*Q$ of $(Q,g)$ is fiberwise
star-shaped and of restricted contact type in
$(T^*Q,\omegacan=d\lambdacan)$.
In other words, the boundary of the unit disk cotangent bundle
$D^*Q$ of the closed Riemannian manifold $(Q,g)$ is fiberwise
star-shaped and of restricted contact type.
\end{example}

%%%%%%%%%%%%%%%%%%%%%%%%%%%%%%%%%%
%%%%%%%%%%%%%%%%%%%%%%%%%%%%%%%%%%
%%% SECTION %%%%%%%%%%%%%%%%%%%%%%%%
%%%%%%%%%%%%%%%%%%%%%%%%%%%%%%%%%%
%%%%%%%%%%%%%%%%%%%%%%%%%%%%%%%%%%
\section{Techniques to find periodic orbits}
For convenience of the reader we enlist and summarize,
following~\cite{hofer:2011a}, some key techniques to find
closed flow lines of (autonomous) Hamiltonian systems.

%%%%%%%%%%%%%%%%%%%%%%%%%%%%%%%%%%
%%% SUBSECTION %%%%%%%%%%%%%%%%%%%%%
%%%%%%%%%%%%%%%%%%%%%%%%%%%%%%%%%%
\subsection{Via finite capacity neighborhoods}
Based on the Hofer-Zehnder capacity function $c_0$ established
in~\cite[Ch.\,3]{hofer:2011a} one derives the following
existence results for closed Hamiltonian flow lines.

\begin{itemize}
\item
  \textsc{Nearby existence}~\cite[Thm.~4.1, p.106]{hofer:2011a}.
  Given a closed regular level set $S=S^1:=F^{-1}(1)\subset (M,\omega)$
  that admits a (bounded) neighborhood $U$ of finite capacity
  $c_0(U,\omega)<\infty$, then the set of closed characteristics
  $\Cc(S^{r_j};\omega)\not=\emptyset$ is non-empty
  for some sequence $r_j\to 1$.
  (There is even a dense subset of $(1-\eps,1+\eps)$ of such $r$'s.)

  Idea of proof: Use freedom in choosing the Hamiltonian representing $S$
  to pick a certain 'radial' one $H=H(r)$.
\item
  \textsc{Existence on $S$ itself.}
  In case the periods $T_j$ of the canonically parametrized\footnote{
    For each $p\in P_j$ the solution $z:\R\to F^{-1}(r_j)$ to $\dot
    z(t)=X_F\circ z(t)$ with $z(0)=p$ traces out $P_j$ and comes back
    to itself for the first time at some positive time, say $T_j$.
    }
  closed characteristics $P_j$ on $S^{r_j}=F^{-1}(r_j)$ are bounded, then $S$
  itself admits a periodic orbit, too.

  Idea of proof: Apply the Arzel\`{a}-Ascoli Theorem~\ref{thm:AA}.
\item
  \textsc{One-parameter families}
  \cite[Prop.~4.2, p.110]{hofer:2011a}.
  Consider a Hamiltonian loop $z^*:\R\to F^{-1}(E^*)\subset
  (M,\omega)$, said of energy $E^*$, of period $T^*$
  which admits precisely two \textbf{\Index{Floquet multipliers}}\footnote{
    the eigenvalues of the linear map $d\phi_{T^*}(p):T_pM\to T_p M$
    }
  equal to $1$.
  Application of the Poincar\'{e} continuation method 
  shows that $z^*$ belongs to a unique smooth family of
  periodic orbits $z^E$ parametrized by their energy $E$
  and whose periods $T^E$ converge to $T^*$, as $E\to E^*$.

  Idea of proof: Construct a Poincar\'{e} section map for $z^*$,
  investigate how the eigenvalues of its linearization along $z^*$
  are related to the Floquet multipliers of $z^*$,
  apply the implicit function theorem.
\end{itemize}

%%%%%%%%%%%%%%%%%%%%%%%%%%%%%%%%%%
%%% SUBSECTION %%%%%%%%%%%%%%%%%%%%%
%%%%%%%%%%%%%%%%%%%%%%%%%%%%%%%%%%
\subsection{Via characteristic line bundles}
Consider the characteristic line bundle $\Ll_S$,
see~(\ref{eq:char-line-bdle}),
over a closed co-orientable hypersurface $S$ in a symplectic
manifold $(M,\omega)$. Such $S=F^{-1}(0)$ is an energy surface of some
Hamiltonian $F$ defined \emph{near} $S$; see Exercise~\ref{exc:S-co-orient-ext}.
While the set of closed characteristics $\Cc(S)$ is empty in certain situations,
e.g. for the \textbf{\Index{Zehnder tori}}~\citerefCG{Zehnder:1987b},
cf.~\cite[\S 4.5]{hofer:2011a}, for large classes of closed
co-orientable hypersurfaces existence of closed characteristics
is guaranteed.

They key concept is that of a parametrized family $(S_\eps)$ of hypersurfaces
modeled on a closed hypersurface $S$, introduced earlier in~(\ref{eq:FHMS}).

\emph{Nearby existence} listed above fits into this framework
whenever $U$ is of finite $c_0$ capacity (construct the diffeomorphism
$\Phi$ using the normalized gradient flow of $F$, say with respect to an
$\omega$-compatible Riemannian metric on $M$).

Two classes of hypersurfaces which do admit periodic orbits are the following.

\begin{itemize}
\item
  \textsc{Bounding hypersurfaces.}
  Suppose $S\subset(M,\omega)$ bounds a compact submanifold-with-boundary
  $B$ and $(S_\eps)$ is a parametrized family modeled on $S$.
  Then each $S_\eps=\Phi_\eps S$ is the boundary of the symplectic
  manifold-with-boundary $B_\eps=\Phi_\eps B$.
  Now the key property is monotonicity\footnote{
    By Lebesgue's last theorem monotonicity
    of a function implies differentiability, thus Lipschitz
    continuity, almost everywhere in the sense of measure theory;
    for a proof see e.g.~\cite{pugh:2002a}.
    }
  of the function $C(\eps):=c_0(B_\eps,\omega)$ which holds
  by the \texttt{(monotonicity)}
  axiom of the Hofer-Zehnder capacity $c_0$. For details
  see~\cite[Thm.~4.3, p.116]{hofer:2011a}.
\item
  \textsc{Stable hypersurfaces.} A closed hypersurface $S\subset(M,\omega)$
  is called a \textbf{\Index{stable hypersurface}} if it admits a
  rather nice parametrized family $(S_\eps)$ modeled on $S$, namely one
  for which each linearization $d\Phi_\eps:TS\to TS_\eps$ restricts to a
  line bundle isomorphism $\Ll_S\to\Ll_{S_\eps}$.

  \textsc{Advantage.} It suffices to detect a closed
  characteristic on some member $S_\eps$ of the family
  in order to obtain a closed characteristic
  of the original dynamical system $(S=F^{-1}(0),X_F)$ itself.
  For instance, if in the nearby existence result mentioned above
  $S$ was stable, e.g. of contact type, then $\Cc(S)\not=\emptyset$.
  Since a closed hypersurface in $(\R^{2n},\omega_0)$
  admits a bounded neighborhood $U$ of finite\footnote{
    To see that $c_0(U)<\infty$,
    pick a ball around $U\subset\R^{2n}$
    and apply the axioms \texttt{(monotonicity)}
    and \texttt{(non-triviality)} of a symplectic capacity
    in~\cite[\S 1.2]{hofer:2011a}.
    }
   $c_0$ capacity, this confirms the Weinstein conjecture for contact
   type hypersurfaces in $(\R^{2n},\omega_0)$; cf. Figure~\ref{fig:fig-Weinstein}.

  As we saw earlier in~(\ref{eq:CT=>stable}), a hypersurface $S$ of
  contact type is stable with respect to the parametrized family $(S_\eps)$
  produced by the Liouville flow $\Phi_\eps:=\Lflow^\LVF_\eps$.
  The linearized flow $d\Lflow^\LVF_\eps:\Ll_S\to\Ll_{S_\eps}$ is a
  bundle isomorphism between the characteristic line bundles
  of $S$ and $S_\eps=\Lflow^\LVF_\eps S$.

  For an example of a stable hypersurface which is not of contact type
  see~\cite[p.122]{hofer:2011a}.
\end{itemize}

%%%%%%%%%%%%%%%%%%%%%%%%%
%%%%%%%%% REFERENCES %%%%%%
%%%%%%%%%%%%%%%%%%%%%%%%%
%\bibliographystyle{plain}
         %   erzeugt:     [1] Joa Weber
%\bibliographystyle{abbrv}
         %  erzeugt:      [1] J. Weber and 
\bibliographystylerefCG{alpha}
         %  article:    [Web05]  J. Weber
         %  book:      [Web05]  Joa Weber
         % more authors: [HZ87]
%%%%%%%%%%%%%%%%%%%%%%%%%
%% include Bibliography in TOC %%
% en.wikibooks.org/wiki/LaTeX/Bibliography_Management#Using_tocbibind
%%%%%%%%%%%%%%%%%%%%%%%%%
% Using hyperref, one should say:
\cleardoublepage
\phantomsection
\addcontentsline{toc}{section}{References}
%\bibliographyrefCG{$HOME/Dropbox/0-Libraries+app-data/Bibdesk-BibFiles/library_math}{}

\begin{bibliographyrefCG}{}
% imports file refCG.bbl
\end{bibliographyrefCG}

\cleardoublepage
\phantomsection            % CONTACT GEOMETRY
%%%%%%%%%%%%%%%%%%%%%%%%%%%%%%%%%%
%%%%%%%%%%%%%%%%%%%%%%%%%%%%%%%%%%
%%%%%%%%%%%%%%%%%%%%%%%%%%%%%%%%%%
%% CHAPTER %%%%%%%%%%%%%%%%%%%%%%%%%
%%%%%%%%%%%%%%%%%%%%%%%%%%%%%%%%%%
%%%%%%%%%%%%%%%%%%%%%%%%%%%%%%%%%%
%%%%%%%%%%%%%%%%%%%%%%%%%%%%%%%%%%
\chapter{Fixed energy -- Rabinowitz-Floer homology}% \hfill{\color{red} DONE}}
\chaptermark{Fixed energy -- Rabinowitz-Floer}\label{sec:RF}

In 2007 Cieliebak and Frauenfelder considered a version of the action functional
that involves an additional real variable~$\LM$ (detecting periodic
solutions of whatever period), namely
the\index{action functional!Rabinowitz --}
\textbf{\Index{Rabinowitz action functional}}\footnote{
  For an exposition of the classical free period
  action functional see~\citerefRF{Abbondandolo:2013c}.
  }
\begin{equation*}\label{eq:action-Rab-intro-1}
     \Aa^F:\Ll V\times\R\to\R,\quad
     ( \lpz,\LM)\mapsto
     \int_{\SS^1} \lpz^*\lambda-\LM\int_0^1 F( \lpz(t))\, dt
\end{equation*}
associated to \emph{autonomous} Hamiltonians
$F:V\to\R$ on certain exact\footnote{
  Exactness of a symplectic manifold implies non-compactness,
  an inconvenient property which one gets under control
  by imposing additional conditions, for instance \emph{convexity}.  
}
symplectic connected manifolds~$(V,\lambda)$.
While $\Ll V=C^\infty(\SS^1,V)$ is the set of $1$-periodic loops,
the extra parameter $\LM$, together with time independence of~$F$,
causes that a critical point $(\lpz,\LM)$ of $\Aa^F$ corresponds to
\begin{itemize}
\item
  either a $\LM$-periodic Hamiltonian loop that lies on the level set $F^{-1}(0)$;
\item
  or a constant (period $\LM=0$) loop sitting at a
  point $q=z(0)\in F^{-1}(0)$;
\end{itemize}
see (\ref{eq:crit-Reeb}).
Negative periods $\LM<0$ tell that the Hamiltonian loop follows $-X_F$.

Now one can exploit this \emph{fixed energy property} to study dynamical properties
of connected closed hypersurfaces $\Sigma$ in $V$ which can be represented
as regular level-zero sets of autonomous Hamiltonians $F$ on $V$
with compactly supported differentials $dF$,
called defining Hamiltonians for $\Sigma$.
A sufficient condition that $\Sigma$ is of the form $F^{-1}(0)$
is that $\Sigma=\p M$ bounds a compact manifold-with-boundary $M$.
However, not all connected bounding energy surfaces admit
closed orbits, as we saw in~(\ref{eq:RF-counter-Ham-Seif}).
The tool to prove existence of periodic solutions
is a version of Floer homology for the Rabinowitz functional
$\Aa^F$ on the extended domain $\Ll V\times\R$.
Recall from Chapter~\ref{sec:FH} that the analytic key to set up Floer
homology is compactness, up to broken flow lines, of the spaces of
connecting flow lines.
The extra parameter $\LM$ causes non-compactness in certain cases --
giving way to non-existence~(\ref{eq:RF-counter-Ham-Seif}).
A sufficient condition to fix this
is to require the bounding hypersurface $\Sigma$ to be of
\textbf{\Index{restricted contact type}}, i.e. with contact form
$\alpha=\lambda|_\Sigma$.
Consequently by~(\ref{eq:Ham=Reeb}) there is a convex
set $\Ff(\Sigma)\not=\emptyset$ of \textbf{defining Hamiltonians} $F$
whose Hamiltonian vector field is simply equal, along $\Sigma$, to one
and the same Reeb vector field~$R_\alpha$ -- but $\Sigma$ is compact.
The identity $X_F=R_\alpha$ is furthermore extremely beneficial
in the sense that it allows to utilize the analysis carried
out in the Hamiltonian setting of Chapter~\ref{sec:FH}.
In fact, one can allow slightly more general hypersurfaces;
cf. Definitions~\ref{def:exact-convex-symplectic-manifold}
and~\ref{def:exact-convex-hypersurface}
and Remark~\ref{rem:ghg676}.

\begin{assumption}\label{ass:RF}
In Chapter~\ref{sec:RF}, unless mentioned otherwise, we assume that
\begin{itemize}
\item
    $(V,\lambda)$ is a convex exact symplectic manifold
    of bounded topology whose associated
    Liouville vector field $\LVF=\LVF_\lambda$ generates a complete
    flow on $V$;\footnote{
      By~\citeintro[Le.\,1.4]{Cieliebak:2009a} \emph{bounded topology} and
      \emph{complete flow} can be achieved for any convex exact
      symplectic manifold $(V,\lambda)$
      by modifications outside of $\Sigma$.
      }
\item
    $\Sigma\stackrel{\iota}{\hookrightarrow} V$ is a
    convex exact hypersurface. Let $\alpha$ denote the contact form
    on $\Sigma$ and $M$ the compact manifold-with-boundary
    bounded by $\Sigma=\p M$.
\end{itemize}
According to our conventions both $V$ and $\Sigma$ are
\emph{connected}. By Exercise~\ref{exc:symp-mf-with-bdy} the Liouville
vector field $\LVF$ is outward pointing along the boundary $\Sigma$ of $M$.
By Remark~\ref{rem:ghg676} we may assume whenever convenient that
$\alpha=\lambda|_\Sigma:=\iota^*\lambda$ is the restriction of the
primitive $\lambda$ of the symplectic form
$\omega:=d\lambda=d\mu$.\footnote{
  Otherwise, replace $\lambda$ by $\mu:=\lambda+dh$ where
  $(V,\mu)$ inherits the properties of $(V,\lambda)$.
  }
\end{assumption}

\textit{Differences to Chapter~\ref{sec:FH}.} Now the closed orbits
cannot lie anywhere in the symplectic manifold $(V,\omega)$,
they are constrained to a \emph{fixed} (regular) energy surface $\Sigma=F^{-1}(0)$
required to be a contact manifold with respect to the
restriction $\lambda|_\Sigma=:\alpha$;
after changing the primitive $\lambda$ of $\omega$, if necessary.
(Equivalently the Liouville vector field $\LVF$ determined by
$d\lambda(\LVF,\cdot)=\lambda$ is transverse to $\Sigma$.)
In exchange, now the periods $\LM$ are free -- no restriction to
period $1$ any more. Furthermore, whatever defining Hamiltonian
one picks, the Hamiltonian loops are
precisely the Reeb loops of the contact manifold $(\Sigma,\alpha)$
and their images are called \textbf{\Index{closed characteristics}}.
So what one is really counting
are geometric objects associated to the contact manifold
$(\Sigma,\alpha)$ and, as $\alpha=\lambda|_\Sigma$, the way it sits in the
exact symplectic manifold $(V,\lambda)$.
Last, not least, as defining Hamiltonians are autonomous,
non-constant periodic solutions come at least in $\SS^1$-families.
So the functional $\Aa^F$ is at best Morse-Bott, as opposed to Morse.

Under Assumption~\ref{ass:RF}, appropriately taking account of the, 
at best, Morse-Bott nature of the
functional $\Aa^F$, Cieliebak and Frauenfelder proved

\begin{theorem}[Existence and continuation,~\citeintro{Cieliebak:2009a}]
\label{thm:RF}
Under Assumption~\ref{ass:RF} Floer homology
for the Rabinowitz action functional and with $\Z_2$ coefficients
$$
     \HF(\Aa^F)=\HF(\Aa^F;\Z_2)
$$
is defined.
If $\{F_s\}_{s\in[0,1]}$ is a smooth family of defining
Hamiltonians of convex exact hypersurfaces $\Sigma_s$,
then $\HF(\Aa^{F_0})$ and $\HF(\Aa^{F_1})$ are canonically isomorphic.
\end{theorem}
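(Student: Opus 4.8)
\textbf{Proof proposal for Theorem~\ref{thm:RF}.}

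The plan is to construct the Rabinowitz--Floer chain complex, establish that its homology is well defined, and then establish invariance under homotopies of defining Hamiltonians, all in close analogy with the Hamiltonian case treated in Chapter~\ref{sec:FH}, but with two genuinely new features to handle: the extra Lagrange-multiplier variable $\LM\in\R$, and the Morse--Bott (rather than Morse) character of $\Aa^F$ arising from the $\SS^1$-symmetry of non-constant critical points. First I would compute $d\Aa^F$ and identify $\Crit\Aa^F$: the critical equation $\dot\lpz=\LM X_F(\lpz)$ together with $\int_0^1 F(\lpz)\,dt=0$, and then use that $F$ is a \emph{defining Hamiltonian} so that $X_F=R_\alpha$ along $\Sigma=F^{-1}(0)$, to conclude that critical points with $\LM\neq 0$ are Reeb loops of period $\abs{\LM}$ on $(\Sigma,\alpha)$ (traversed backwards if $\LM<0$) and critical points with $\LM=0$ are constant loops at points of $\Sigma$; see~(\ref{eq:crit-Reeb}). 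Next I would write down the $L^2$-gradient flow equation on $\Ll V\times\R$ with respect to a cylindrical $\om$-compatible almost complex structure $J$ as in Definition~\ref{def:acs-cylindrical}, producing a Floer-type PDE for pairs $(u,\mathpzc{l})$ with $u:\R\times\SS^1\to V$ and $\mathpzc{l}:\R\to\R$, and the appropriate energy functional.

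The core analytic input is $C^0$ and gradient compactness of the space of connecting flow lines, and this is where the hypotheses in Assumption~\ref{ass:RF} do their work. The step I expect to be the main obstacle is the \emph{a priori bound on the Lagrange multiplier} $\mathpzc{l}$ along flow lines: unlike in Chapter~\ref{sec:FH} there is no fixed-period constraint, so one must show that $\sup_{s}\abs{\mathpzc{l}(s)}$ is bounded uniformly over moduli spaces with fixed asymptotics. This is precisely the place where \emph{restricted contact type} (equivalently, the convex-exact-hypersurface hypothesis via Remark~\ref{rem:ghg676}) enters: using $\alpha=\lambda|_\Sigma$ one estimates the $\lambda$-action of $u_s$ in terms of $\mathpzc{l}$ and the energy, and closes the loop. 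Once the $\mathpzc{l}$-bound is in hand, the $C^0$ bound on $u$ follows from convexity at infinity (the cylindrical ends of $(V,\lambda)$, Exercise~\ref{exc:cyl-ends}) via a maximum-principle argument ruling out escape to infinity, and $C^\infty_{\mathrm{loc}}$ bounds follow by elliptic bootstrapping exactly as in Step~III of the proof of Proposition~\ref{prop:compact-up-to-broken-line}; bubbling off of spheres is excluded by exactness of $\om=d\lambda$. The upshot is compactness of moduli spaces up to broken flow lines.

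Because $\Aa^F$ is only Morse--Bott, the chain complex must be set up with this in mind: either perturb $\Aa^F$ by a small Morse function on the critical manifold (a circle for each non-constant Reeb orbit, together with the copy of $\Sigma$ sitting at $\LM=0$), or use the Morse--Bott formalism directly, grading by a transverse Conley--Zehnder-type index plus a correction from the Morse index along the critical manifold; this is the analogue of the equivariant transversality remark after Theorem~\ref{thm:TS-transversality}. One verifies, via Thom--Smale transversality (Theorem~\ref{thm:TS-transversality}) applied to a parameter space of Hamiltonians and almost complex structures, that for generic data the perturbed critical points are nondegenerate and the linearized operators $D_{(u,\mathpzc{l})}$ are surjective Fredholm; the Fredholm property reduces, via the identity $X_F=R_\alpha$, to the Fredholm theory already available in Chapter~\ref{sec:FH} plus a finite-rank contribution from the $\R$-factor. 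Then $\partial^2=0$ follows from the usual compactness-plus-gluing argument (Sections~\ref{sec:FH-comp}--\ref{sec:FH-gluing}), yielding $\HF(\Aa^F;\Z_2)$.

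Finally, for the invariance statement: given a smooth family $\{F_s\}_{s\in[0,1]}$ of defining Hamiltonians of convex exact hypersurfaces $\Sigma_s$, I would interpolate the Rabinowitz functionals, i.e. replace the $s$-independent $F$ in the Floer equation by the homotopy $F_s$ (extended to be constant for $s\le 0$ and $s\ge 1$), and count index-difference-zero solutions to define chain maps $\psi^{10}:\CF(\Aa^{F_0})\to\CF(\Aa^{F_1})$ and $\psi^{01}$ the other way. The compactness needed for these continuation moduli spaces requires a \emph{uniform} $\mathpzc{l}$-bound along the whole homotopy, which holds because the family $\{\Sigma_s\}$ is compact and each $\Sigma_s$ is of restricted contact type, so the action estimates are uniform in $s\in[0,1]$. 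Reversing and concatenating homotopies, together with a homotopy-of-homotopies argument exactly as in Lemma~\ref{le:Salamon-homotopy-s} and Exercise~\ref{exc:htps-of-htps}, shows $\psi^{01}\circ\psi^{10}$ and $\psi^{10}\circ\psi^{01}$ are chain-homotopic to the identity, hence the induced maps on homology are mutually inverse isomorphisms. That these are \emph{canonical} (independent of the chosen homotopy within the family) is the content of the homotopy-of-homotopies step. This completes the plan; the only step I regard as substantially harder than its Chapter~\ref{sec:FH} analogue is the Lagrange-multiplier compactness, which is exactly the point at which the restricted-contact-type hypothesis is indispensable, as the counterexample~(\ref{eq:RF-counter-Ham-Seif}) shows it must be.
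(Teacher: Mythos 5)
Your overall route is exactly the paper's: compute $\Crit\Aa^F$, fight for the a priori bound on the Lagrange multiplier $\LMpath$ using the restricted-contact-type identity $\lambda(X_F)=\alpha(R_\alpha)\equiv 1$ near $\Sigma$ (Proposition~\ref{prop:control-eta}), get $C^0$ bounds on $u$ from cylindrical ends and exclude bubbles via exactness, handle the Morse--Bott degeneracy via a Morse function on the critical set and cascades (Frauenfelder's formalism), prove transverse non-degeneracy is generic via Thom--Smale, and get invariance by interpolating $F_s$ in the Floer equation. Two technical points you omit are, however, not optional. First, the critical manifold $C=\Crit\Aa^F$ has \emph{infinitely many} circle components (the full towers $c_P^k$, $k\in\Z^*$, over each simple closed Reeb orbit, plus $\Sigma\times\{0\}$), so the naive ``$\Z_2$-vector space generated by $\Crit\,h$'' is not what you want: one must define the chain group as the space of formal sums satisfying the upward finiteness condition~(\ref{eq:RF-FINITE}), and one must then check, using the monotone action behaviour of cascade flow lines, that $\p$ really does preserve this condition. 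Without this, the chain groups are not well defined. Second, for the continuation statement, saying ``the action estimates are uniform in $s\in[0,1]$'' glosses over the fact that the family compactness theorem (Theorem~\ref{thm:RF-compactness}) only applies under the smallness hypothesis~(\ref{eq:ass-casc-comp}) on $\norm{\p_\sigma F}_\infty$, which an arbitrary homotopy $\{F_s\}$ need not satisfy; the paper's fix is to subdivide the homotopy into $N$ pieces $F^j_s:=F_{(j-1+s)/N}$ so that each piece satisfies~(\ref{eq:ass-casc-comp}) and then compose the resulting continuation isomorphisms. Both are honest gaps in the plan, but neither alters the strategy, which is the correct one.
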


In particular, as the space of defining Hamiltonians is convex,
see~(\ref{eq:Ham=Reeb}),
%by Proposition~\ref{prop:char=per-exact},
Floer homology $\HF(\Aa^F)$ does not depend
on the defining Hamiltonian, but on the pair
$(\Sigma,V)$, at most.
%
%(Recall that $\Sigma$ comes with a contact form $\alpha$ and the symplectic manifold %$V$ comes with a primitive of $\omega$ which restricted to $\Sigma$ is $\alpha$.)
%
In fact, in~\citerefRF[Prop.\,3.1]{Cieliebak:2010b}
it is shown independence on the unbounded component
of $V\setminus\Sigma$, that is only $\Sigma=\p M$ and its inside,
the compact manifold-with-boundary $M$, are relevant for $\HF(\Aa^F)$.
This justifies the following notation where $\RFH(\Sigma)$ just
serves to abbreviate $\RFH(\p M,M)$.

\begin{definition}%[Rabinowitz-Floer homology,~\citeintro{Cieliebak:2009a}]
\label{def:RF}
The \textbf{\Index{Rabinowitz-Floer homology}}
of a convex exact hypersurface $\Sigma\subset V$ bounding $M$,
see Assumption~\ref{ass:RF}, is the $\Z_2$
vector\index{{open \color{red} problems \color{black}}}
space\footnote{
  To define $\RFH$ with integer coefficients is an \emph{open problem}.
  }
\begin{equation}\label{eq:def-RFH}
     \RFH(\Sigma)=\RFH(\p M,M):=\HF(\Aa^F)
\end{equation}
where\index{$\RFH(\Sigma=\p M,M)$ Rabinowitz-Floer homology}
$F\in\Ff(\Sigma)$ is a defining Hamiltonian for $\Sigma=F^{-1}(0)$.
\end{definition}

By Theorem~\ref{thm:RF} Rabinowitz-Floer homology
does not change under homotopies of convex exact hypersurfaces.
An integer grading $\mu$ of $\RFH$ exists, see~(\ref{eq:RF-grading}),
whenever $\Sigma$ is simply connected and $c_1(V)$ vanishes over $\pi_2(V)$.
The following deep result has major consequences,
e.g. it reconfirms the Weinstein conjecture for displaceable $\Sigma$;
see~\citeintro[Cor.~1.5]{Cieliebak:2009a} and Section~\ref{sec:Weinstein-RFH}.

\begin{theorem}[Vanishing theorem,~\citeintro{Cieliebak:2009a}]
\label{thm:RF-vanishing}
If $\Sigma$ is displaceable,
then Rabinowitz-Floer homology $\RFH(\Sigma)=0$ vanishes.
\end{theorem}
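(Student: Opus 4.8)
The strategy is to show that the whole Rabinowitz-Floer chain complex is, after a change of defining data, acyclic; since $\RFH(\Sigma)$ is invariant under homotopies of convex exact hypersurfaces and independent of the chosen defining Hamiltonian by Theorem~\ref{thm:RF}, it suffices to exhibit \emph{one} presentation in which the homology is forced to vanish. The displaceability hypothesis supplies a compactly supported Hamiltonian $G:[0,1]\times V\to\R$ with $\psi_1^G(\Sigma)\cap\Sigma=\emptyset$, and the plan is to use this $G$ to run an \emph{a priori} estimate: the presence of such a $G$ bounds the Lagrange multiplier $\LM$ of every Rabinowitz-Floer trajectory connecting two critical points, in terms of the oscillation $\|G\|$ of $G$, uniformly in the trajectory. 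This is the contact-type analogue of the action estimates used in displaceability arguments for ordinary Floer homology, and it is where the restricted-contact-type hypothesis and the structure of $\Aa^F$ (the fact that critical points lie on the fixed level $F^{-1}(0)$, with $\LM$ recording the period of a Reeb orbit) enter decisively.

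Concretely, first I would set up the one-parameter family of Rabinowitz action functionals obtained by inserting $G$, or rather a suitable reparametrized/normalized version $G_s$ interpolating between $0$ and $G$, into a continuation equation; the usual trick is to deform $F$ to $\psi_1^G$-conjugate data and to compare the two via a monotone-type homotopy. One shows the continuation maps $\RFH(\Sigma)\to\RFH(\psi_1^G\Sigma)$ and back compose to the identity (this is just Theorem~\ref{thm:RF}, since $\Sigma$ and $\psi_1^G\Sigma$ are homotopic convex exact hypersurfaces through $\psi_t^G\Sigma$). Second, and this is the heart of the matter, one derives the uniform bound on $|\LM|$ along continuation cylinders: for a Floer cylinder $u=(v,\LM)$ solving the homotopy equation with asymptotics $(v^\mp,\LM^\mp)$ the energy identity yields $\Aa(v^-,\LM^-)-\Aa(v^+,\LM^+)$ plus an integral of $\p_s$ of the homotopy term, and because $\Sigma$ is displaced by $\psi_1^G$ there is no critical point of the intermediate functional whose $\LM$-component exceeds $C(G):=\sup G-\inf G$ in absolute value — any such orbit would have to stay on a level set that has been pushed off itself, a contradiction. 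Third, with $|\LM|$ uniformly bounded, the standard compactness package of Section~\ref{sec:DGF} (applied in the convex-exact, bounded-topology, complete-Liouville-flow setting, where $C^0$ bounds on $v$ come from convexity at infinity and the maximum principle for $\Jbar$-holomorphic-type curves) shows the relevant moduli spaces are compact up to breaking, so the continuation homomorphisms are well defined.

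The conclusion is then formal: the identity $\RFH(\Sigma)\xrightarrow{\;\cong\;}\RFH(\Sigma)$ factors through a continuation chain map that, by the $\LM$-bound together with an action filtration argument, is chain-homotopic to zero — one pushes all generators out of the bounded action/period window, exactly as in the vanishing arguments for symplectic homology of displaceable domains. Hence the identity on $\RFH(\Sigma)$ is the zero map, forcing $\RFH(\Sigma)=0$. I would organize the final step as: (i) construct the homotopy of homotopies realizing $\Psi^{\hat\alpha\alpha}\circ\Psi^{\alpha\hat\alpha}\simeq\1$ on the chain level; (ii) observe that $\Psi^{\alpha\hat\alpha}$ lands, for each fixed action window, in a subcomplex that is empty once the window is chosen to exclude the $|\LM|\le C(G)$ band, which by the estimate is \emph{all} of the complex; (iii) conclude $\1\simeq 0$.

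\textbf{Main obstacle.} The genuinely hard part is not the homological bookkeeping but the uniform Lagrange-multiplier bound: one must prove that along \emph{continuation} trajectories for the displacing homotopy the parameter $\LM$ cannot escape to $\pm\infty$. Unlike ordinary Floer homology, here the extra variable $\LM$ is a genuine source of non-compactness (indeed it is exactly what allows $\RFH$ to be non-zero, and non-existence phenomena, in the non-displaceable case), so the argument must genuinely use that $G$ displaces $\Sigma$ — typically via a clever choice of the homotopy so that the perturbed functional has no critical points with large $|\LM|$, combined with a monotonicity/energy estimate controlling how far $\LM$ can drift between asymptotics. A secondary technical point is ensuring all the compactness and transversality machinery of Chapter~\ref{sec:FH} carries over to the Morse--Bott, non-compact, convex-exact setting; this is routine given the stated assumptions but must be invoked with care, particularly the $C^0$-bounds from convexity at infinity and the cylindrical almost complex structures of Definition~\ref{def:acs-cylindrical}.
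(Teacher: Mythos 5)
Your skeleton (continuation maps, a homotopy of homotopies realizing $\1\simeq 0$, and the Lagrange multiplier as the source of non-compactness) matches the architecture of the paper's second proof, but the engine that actually forces vanishing is missing, and the substitute you propose does not work. The paper's mechanism is that the Rabinowitz functional perturbed by the displacing Hamiltonian has \emph{no critical points at all}: one replaces $F$ by the young Hamiltonian $F^\chi=\chi F$ with $\supp\chi\subset(0,\tfrac12)$ and inserts the displacing Hamiltonian as an elderly perturbation $H^\rho$ supported in $[\tfrac12,1]$. Because the two vector fields then act in disjoint time intervals, the critical-point equation integrates to $\lpz(1)=\psi_1^{H}\circ\psi_{\LM}^{F}\,\lpz(0)$; if $\lpz(0)\in\supp X_F$ this contradicts $\psi_1^H(\supp X_F)\cap\supp X_F=\emptyset$, and if $\lpz(0)\notin\supp X_F$ the constraint $\int_0^1\chi\, F(\lpz)\,dt=0$ fails because $F\circ\lpz$ is a nonzero constant on $[0,\tfrac12]$. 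This time-splitting of the two Hamiltonians is essential and absent from your write-up; without it the flow of the critical-point equation is not a composition of the two flows and displaceability cannot be brought to bear.

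Your substitute --- a uniform bound $\abs{\LM}\le C(G)$ on critical points of the intermediate functional, followed by an action-window argument as in symplectic homology --- fails on two counts. First, the justification is wrong: displaceability of $\Sigma$ does not bound the periods of Reeb orbits on $\Sigma$; the critical points of $\Aa^F$ (Reeb orbits of arbitrary period, plus all of $\Sigma$ as constant critical points with $\LM=0$) exist regardless of displaceability. Second, even granted such a bound it proves nothing: the constant critical points with $\LM=0$ survive any band $\abs{\LM}\le C(G)$ and already generate $\Ho_*(\Sigma;\Z_2)\neq 0$, so the subcomplex you would need to be empty is not, and $\1\simeq 0$ does not follow. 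What the argument genuinely needs is a uniform positive lower bound $\norm{\grad\Aa^{F^\chi}_{H^\rho}}\ge\gamma>0$ on all of $\Ll V\times\R$ (Lemma~\ref{le:no-crit-pts-Aa-FH}, the quantitative form of the no-critical-points statement); this is what forces the $r=1$ map in the homotopy of homotopies to vanish for large gluing parameter $R$, since any connecting trajectory would have energy at least $\gamma^2$ times the length of the stretch where the perturbation is switched on, exceeding the available action difference. The uniform bound on $\LM$ along continuation trajectories that you flag as the main obstacle (the paper's Lemma~\ref{le:RF-compactness-htps}) is indeed needed, but only to make the continuation maps well defined; it is not the vanishing mechanism.
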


The idea of proof is to decompose life $[0,1]$ into two parts
$[0,\frac12]$ and $[\frac12,1]$, identifying $\SS^1$ and $[0,1]/\{0,1\}$.
Then push the Hamiltonian flow of $F$ into the first part of life
using a young (support in $[0,\frac12]$) cutoff function $\chi$
and, in the second part of life, allow for elderly (support in
$[\frac12,1]$) but time-experienced Hamiltonian perturbations
$H\in\Hh^\dagger$. This way one gets to the perturbed Rabinowitz action
$\Aa^{F^\chi}_H$ in~(\ref{eq:pert-Rab-action}) and proving
Theorem~\ref{thm:RF-vanishing} reduces to a smart homotopy
argument; cf.~(\ref{eq:RF-thpy-arg}).

Albers and Frauenfelder~\citerefRF{Albers:2010b} realized that the critical points of $\Aa^{F^\chi}_H$ are Moser's~\citerefRF{Moser:1978a} leaf-wise intersection points ($\LIP$s). They obtained existence results for $\LIP$s by associating (Rabinowitz) Floer homology groups $\HF(\Aa^{F^\chi}_H)$ to the perturbed action; see Section~\ref{sec:LIPS}.

\vspace{.1cm}
\textit{Outline.}
In Sections~\ref{sec:Rab-action}--\ref{sec:RF-CC}
we indicate the proof of Theorem~\ref{thm:RF}
following closely the original, excellently written,
paper~\citeintro{Cieliebak:2009a}.
We also recommend the survey~\citerefRF{Albers:2012a}.
Section~\ref{sec:LIPS} is on the
perturbed action functional and $\LIP$s. In Section~\ref{sec:RF-SH}
very briefly we state the relation to loop spaces.

\begin{NOTATION}\label{not:RFH}
Cf. Notation~\ref{not:mfs-Hams}
and Notation~\ref{not:notations_and_signs}.
The elements of $\Ll\Sigma\times\R$,
in particular, the critical points of $\Aa^F$,
are denoted by $(\lpz,\LM)$.
The critical points $(\lpz,\LM)$ correspond, firstly, to the points of
$\Sigma$ via constant loops $\lpz_q\equiv q\in\Sigma$ whenever $\LM=0$
and, secondly, to $\LM$-periodic Reeb loops
$r=(\ror_\lpz)_\LM:\R/\LM\Z=:\SS^1_{\LM}\to\Sigma$;
see~(\ref{eq:Xi}).\footnote{
  Reeb loops are non-constant, as the Reeb vector field is nowhere zero
  by $\alpha(\RVF_\alpha)=1$.
  }
The notation is meant to indicate that for
\emph{negative} $\LM$ the loop $t\mapsto\lpz(t)$, equivalently
$\ror$, follows $-X_F=-R_\alpha$.
Connecting trajectories are pairs ``\textbf{\Index{upsilon}}''
$\upsilon=(u,\LMpath)$ where $u:\R\times\SS^1\to V$ and $\LMpath:\R\to\R$.
\end{NOTATION}

%%%%%%%%%%%%%%%%%%%%%%%%%%%%%%%%%%
%%%%%%%%%%%%%%%%%%%%%%%%%%%%%%%%%%
%% SECTION %%%%%%%%%%%%%%%%%%%%%%%%%
%%%%%%%%%%%%%%%%%%%%%%%%%%%%%%%%%%
%%%%%%%%%%%%%%%%%%%%%%%%%%%%%%%%%%
\section[Rabinowitz action functional $\Aa^F$ -- free period]
{Rabinowitz action $\Aa^F$ -- free period}
\sectionmark{Rabinowitz action functional $\Aa^F$}
\label{sec:Rab-action}
Throughout let $\Sigma\stackrel{\iota}{\hookrightarrow} V$
be a convex exact hypersurface in a convex exact symplectic
manifold with symplectic structure $\omega=d\lambda$;
see Assumption~\ref{ass:RF}.
Recall from Section~\ref{sec:CTH-ESM} that $\Sigma=\p M$ bounds a
compact manifold-with-boundary $M$ and comes with the contact form
$\alpha=\lambda|_\Sigma:=\iota^*\lambda$, whereas
$V$ globally carries the Liouville vector field $Y\pitchfork\Sigma$
that is determined by the identity $i_Y\omega=\lambda$.
% and points out of $M$ along the boundary $\p M=\Sigma$.
\newline
Let us repeat from Proposition~\ref{prop:char=per-exact}
the notion of defining Hamiltonian for $\Sigma$.

\begin{definition}%[Defining Hamiltonians]
A \textbf{\Index{defining Hamiltonian} for \boldmath$\Sigma$} 
\index{Hamiltonian!defining}
is an autonomous Hamiltonian $F:V\to\R$,
negative on the inside $\interior{M}$ of the hypersurface $\Sigma$,
zero on $\Sigma$, positive outside $\Sigma$,
constant outside some compact set,
and with $X_F=R_\alpha$ along $\Sigma$
(the Hamiltonian vector field extends the Reeb vector field);
see~(\ref{eq:Ham=Reeb}) and Figure~\ref{fig:fig-defining-Hamiltonian}.
Let $\Ff(\Sigma)$ be the space of Hamiltonians defining~$\Sigma$.
\end{definition}

\begin{figure}[h]
  \centering
  \includegraphics%[width=0.9\textwidth]
                             [height=3cm]
                             {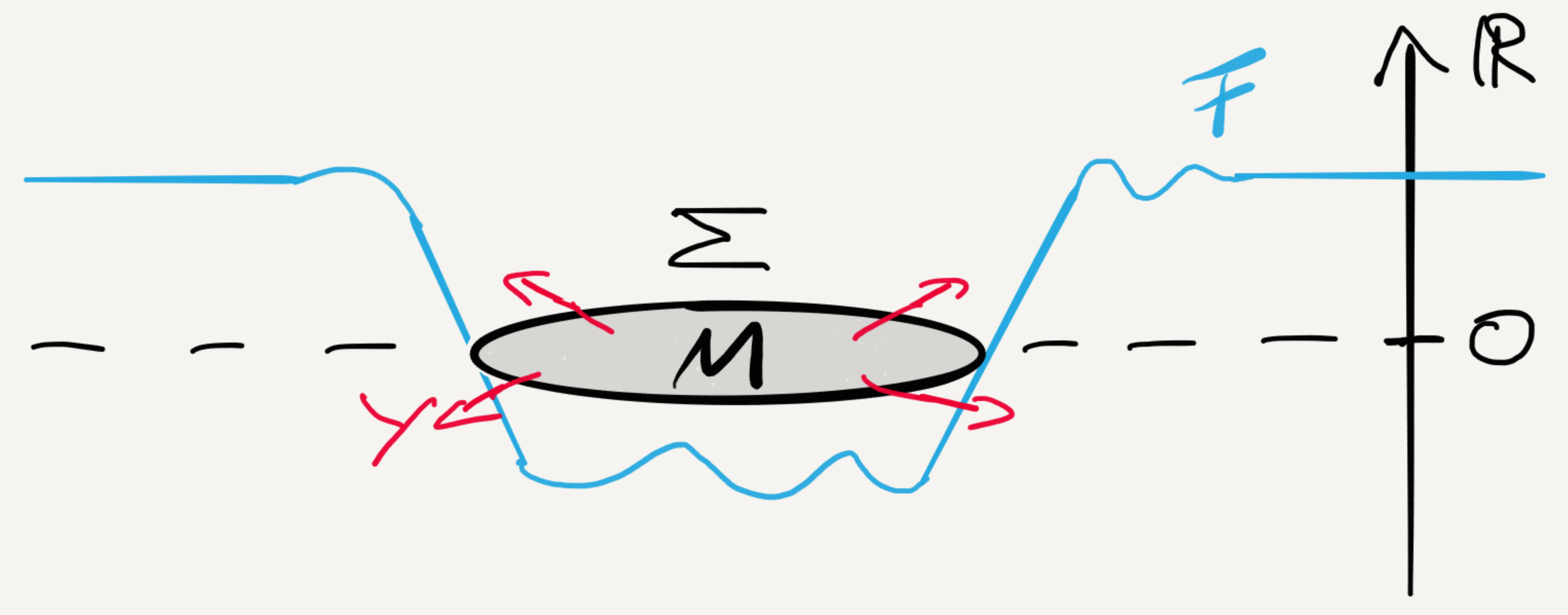}
  \caption{Defining Hamiltonian $F$ for
                 hypersurface $\Sigma=F^{-1}(0)$ bounding $M$}
  \label{fig:fig-defining-Hamiltonian}
\end{figure}

The space of defining Hamiltonians $\Ff(\Sigma)\not=\emptyset$
is non-empty and convex by Proposition~\ref{prop:char=per-exact}.

\begin{remark}
As the Reeb vector field $R_\alpha$ nowhere vanishes, zero is
automatically a regular value of $F\in\Ff(\Sigma)$.
Furthermore, the manifold $V$ and the compact manifold-with-boundary
$M$ bounded by $\Sigma$ are canonically
oriented by the volume form $\omega^n$.
Moreover, the orientation of
$$
     \Sigma=F^{-1}(0)=\p M
$$
as boundary of $M$ by the put-outward-normal-first convention
coincides with the orientation provided by the volume form
$\alpha\wedge(d\alpha)^{n-1}$ on $\Sigma$;
see Exercise~\ref{exc:symp-mf-with-bdy} which also shows that
the Liouville vector field $Y$ points outward along $\Sigma=\p M$.
The gradient of $F$ with respect to any Riemannian metric points also outward.
\end{remark}

\begin{definition} %[Unperturbed Rabinowitz action $\Aa^F$]
For a defining Hamiltonian $F\in\Ff(\Sigma)$
the \textbf{(unperturbed) Rabinowitz action
  functional}\index{Rabinowitz action functional!(unperturbed)
  --}\index{action functional!(unperturbed) Rabinowitz --}\index{$\Aa^F$
  Rabinowitz action (unpert.)}
is defined by
\begin{equation}\label{eq:action-Rab}
     \Aa^F:\Ll V\times\R,\quad
     ( \lpz,\LM)\mapsto
     \int_{\SS^1} \lpz^*\lambda-\LM\int_0^1 F( \lpz(t))\, dt,
\end{equation}
where $\Ll V:=C^\infty(\SS^1,V)$ is the space of $1$-periodic loops in $V$.
\end{definition}

Bringing in the real numbers $\LM$ causes that critical points
$( \lpz,\LM)$ of $\Aa^F$ not only correspond to integral loops of $X_F$,
as was the case in~(\ref{eq:action-A}),
but constrains\footnote{
  To find critical points of a function $f=f(x,y)$, say on $\R^2$ for
  illustration, whose domain is cut out by a constraint, say
  $g(x,y)=c$, one introduces a dummy variable $\lambda\in\R$
  called \textbf{\Index{Lagrange multiplier}} and determines
  the critical points of the function
  $\Lambda(x,y,\lambda)=f+\lambda(g-c)$;
  cf.~\href{https://en.wikipedia.org/wiki/Lagrange_multiplier}{Wikipedia}.
  In our case $\LM$ plays the role of the Lagrange multiplier and $c$ is zero.
  }
them to lie on energy level zero
and allows for periods $\LM$ other than~$1$.

\begin{exercise}[Critical points correspond to Reeb loops and points of $\Sigma$]
\label{exc:Crit-Rab}
Show that the critical points $(\lpz,\LM)$ of $\Aa^F$ are the solutions
$\LM\in\R$ and $\lpz:\SS^1\cong\R/\Z\to V$ of the
ODE and the constraint given by
\begin{equation*}
\begin{cases}
     \dot \lpz(t)=\LM\, X_F(\lpz(t))&\text{, $t\in\SS^1$,}
     \\
      \lpz(t)\in F^{-1}(0)&\text{, $t\in\SS^1$,}
\end{cases}
\end{equation*}
or, equivalently, of
\begin{equation}\label{eq:crit-Reeb}
\begin{cases}
     \dot \lpz(t)=\LM\, \RVF_\alpha(\lpz(t))&\text{, $t\in\SS^1$,}
     \\
     P:=\lpz(\SS^1)\subset\Sigma&.
\end{cases}
\end{equation}
[Hint: Show $\int_0^1F( \lpz(t))\,dt=0$. But $ \lpz(t)=\phi_t \lpz(0)$
where $\phi=\phi^F$ preserves $F$.]
\end{exercise}

Thus a critical point $(\lpz,\LM)$ of $\Aa^F$
corresponds either, in case $\LM>0$, to a $\LM$-periodic\,\footnote{
  Here $\LM$ is just \emph{a} period, not necessarily the
  prime period $\LM_\rorDUMMY$ of the Reeb loop $\rorDUMMY=\lpz(\cdot/\LM)$.
  }
Reeb loop $\lpz(\cdot/\LM)$ on the contact manifold
$(\Sigma,\alpha)$, or in case $\LM<0$ to one that runs backwards
following $-R_\alpha$, or in case $\LM=0$ to a constant loop
$\lpz_p\equiv p=\lpz(0)$
sitting at any point $p$ of $\Sigma$.\footnote{
  The appearance of all point loops in addition to Reeb loops --
  at first glance seemingly an annoying irregularity --
  is actually the \textbf{power plant} of the whole theory;
  cf.\index{power plant of $\RFH$}
  Remark~\ref{rem:powerhouse}~(ii).
  }
A critical point of the form $(\lpz,0)$ is called a \textbf{constant critical point}.
By~(\ref{eq:crit-Reeb})
the critical points of $\Aa^F$ \emph{do not} depend
on the defining Hamiltonian. In the notation~(\ref{eq:conv-or-path})
there is a bijection
\begin{equation}\label{eq:Xi}
     \Crit\,\Aa^F\to\GRO(\Sigma)\cup \Sigma,\quad
     (\lpz,\LM)\mapsto (\rorDUMMY_z)_\LM:=
     \begin{cases}
        \lpz^{\frac{1}{\tau}}=\lpz(\cdot/\LM)&\text{, $\LM\not=0$,}\\
        \lpz(0)&\text{, $\LM=0$,}
%        \lpz(\cdot/-\abs{\LM})&\text{, $\LM<0$,}
     \end{cases}
\end{equation}
onto the set $\GRO(\Sigma)\cup \Sigma$ given by (and then identified with a set of
pairs)\index{$\GRO(\Sigma)$ forward or backward (closed) Reeb loops}
\begin{equation}\label{eq:GRO}
\begin{split}
   &\left\{\rorDUMMY_\pertau:\R/\pertau\Z\to\Sigma\mid
     \text{$\dot \rorDUMMY_\pertau=R_\alpha(\rorDUMMY_\pertau)$, $\LM\not=0$}
     \right\}\cup\Sigma\\
   &\simeq
     \left\{(\rorDUMMY,\pertau)\in\C^\infty(\R,\Sigma)\times\R\mid
        \text{$\dot \rorDUMMY=\sign(\pertau) R_\alpha(\rorDUMMY)$, $\pertau\in\Per(\rorDUMMY)$}
     \right\}.
\end{split}
\end{equation}
Here $\GRO(\Sigma)$ is the
\textbf{set of signed Reeb loops}:\index{Reeb loop!signed --}
These are the forward or backward\footnote{
  Allowing also for periodic solutions of
  $\dot \rorDUMMY={\color{brown}-}R_\alpha(\rorDUMMY)$
  simplifies things, e.g. the above map~(\ref{eq:Xi}) is simply a ``bijection''
  instead of a ``$2:1$ map on non-constant critical points''.
  }
periodic Reeb orbits on $\Sigma$.
To see that the map $\simeq$ given by
$(\rorDUMMY,\pertau)\mapsto\rorDUMMY_\pertau$
is a bijection define \Index{$\sign(0):=0$}
and recall from~(\ref{eq:conv-or-path}) that $\rorDUMMY_\pertau$ is
given by $\rorDUMMY$ restricted to $\R/\pertau\Z$,
subject to direction reversal if $\pertau<0$,
and that $\rorDUMMY_0=\rorDUMMY(0)$.

\begin{definition}[Simple critical points and their covers]
\label{def:critpt-towers}
Pick a \emph{non-constant} critical point $(\lpz,\LM)$
of $\Aa^F$ and consider the (embedded) image circle $P:=\lpz(\SS^1)$.
Suppose $\LM>0$, otherwise take
$(\hat\lpz,\hat\LM)$.
Observe that $1\in\Per(\lpz:\R\to\Sigma)=\LM_\lpz\Z$
where $\LM_\lpz>0$ is the prime period; see~(\ref{eq:orbits-types}).
Thus $1=\ell\LM_\lpz$ for some integer $\ell=\ell(\lpz)\ge 1$.
Rescale $\lpz$ and $\LM$ by
\begin{equation}\label{eq:simple-crit-point}
     \lpz_P:=\lpz^{\LM_\lpz}=\lpz(\LM_\lpz\cdot),\qquad
     \sigma_P:=\LM\LM_\lpz,\quad\LM=\ell\sigma_P.
\end{equation}
The prime period of $\lpz_P:\R\to\Sigma$ is $1$ and
$$
     c_P:=\left(\lpz_P,\sigma_P\right)\in\Crit\,\Aa^F,\quad
     \lpz_P:\SS^1\INTO\Sigma,\quad
     \lpz_P(\SS^1)=P.
$$
As $\lpz_P:\SS^1\INTO\Sigma$ is a simple loop,
we\index{critical point!backward -- of $\Aa^F$}
call\index{critical point!simple -- of $\Aa^F$}
$c_P$\index{simple!critical point of $\Aa^F$}
\textbf{a simple critical point} of $\Aa^F$.\footnote{
  Let us call $\hat c_P:=\left(\lpz_P(-\cdot),-\sigma_P\right)$ the
  \textbf{corresponding \Index{backward simple critical point}}.
  }
The other ones with image $P$ are
obtained by subjecting $\lpz_P$ to time shifts leading to an
$\SS^1$-family denoted by $\SS^1 *c_P$ or $S_{c_P}$.
As $\sigma_P$ divides the speed factor $\LM$ of $\lpz$,
we call it the \textbf{\Index{prime speed}} of the critical points
with image $P$.
\end{definition}

The $k$-fold covers of a simple critical point $c_P$ defined by
\begin{equation}\label{eq:jhjkhkjhkj45}
     c_P^k:=\left(z_P^k,k\sigma_P\right), \quad
      z_P^k:=z_P(k\cdot)\qquad k\in\Z,
\end{equation}
are critical points of $\Aa^F$ as well and, up to the $\SS^1$-action by time
shift, there are no other critical points whose image is $P$.
Observe that $\lpz_P^0\equiv\lpz_P(0)$ is constant.

\begin{exercise}\label{exc:critpt-towers}
Check the assertions in Definition~\ref{def:critpt-towers}.
Show\index{prime!time-$1$ -- speed factor}\index{prime!critical point}
that\index{$\sigma_P$ time-$1$ prime \Index{speed factor}}
$\sigma_P$, modulo time shift also $\lpz_P$, is independent of
$(\lpz,\LM)\in\Crit\,\Aa^F$ as long as $\lpz(0)\in P$.
\newline
[Hints: Let $(y,\chi)$ also be a critical point
with $\chi>0$ and $y(\SS^1)=P$. Check that both paths
$\lpz^{1/\LM}$ and $y^{1/\chi}$ are Reeb solutions
and their prime periods are $\LM\LM_{\lpz}$ and $\chi \LM_y$,
respectively. Hence $\LM\LM_{\lpz}=\chi\LM_y=:\sigma_P$.
But now the paths $\lpz^{\LM_\lpz}$ and $y^{\LM_\chi}$
satisfy the same ODE $\dot x=\sigma_P \RVF_\alpha(x)$,
so they are equal up to time shift.]
\end{exercise}

\begin{exercise}[Simple Reeb loop associated to $(\lpz,\LM)$ via time of first return]
\label{exc:simple-Reeb-1st-return}
Show the assertions of Exercise~\ref{exc:critpt-towers} as follows.
Pick $(\lpz,\LM)\in\Crit\,\Aa^F$ with $\LM\not=0$ and set $P:=\lpz(\SS^1)$.
Now set $p:=\lpz(0)\in P$ and apply the Reeb flow $\Rflow_t$ to get
the Reeb path $\rorDUMMY(t):=\Rflow_t p$ for $t\in\R$ whose
image is $P$. (Hence the images $P=\lpz(\SS^1)$ of non-constant
critical points $(\lpz,\LM)$ are closed characteristics.)
Let $T_P>0$ be the time of first return. Check that
it does not depend on the initial point in $P$, thereby justifying
the notation $T_P$, as opposed to $T_p$.
Since $\Rflow$ is a one-parameter group $T_P$
is a period of $\rorDUMMY$ and as it is the smallest positive one
$T_P=\LM_\rorDUMMY$ is the prime period of $\rorDUMMY$. Thus
to $(\lpz,\LM)$ belongs the simple Reeb loop
\begin{equation}\label{eq:simple-Reeb-loop}
     \ror_P:\R/T_P\Z\to\Sigma,\quad t\mapsto \Rflow_t p,\quad p=\lpz(0),
\end{equation}
a diffeomorphism onto its image $P=\lpz(\SS^1)$.
\begin{itemize}
\item[a)]
  Show that $\LM=k T_P$ for some integer $k\not= 0$.
\item[b)]
  Show that $T_P=\sigma_P$ is the prime speed,
  hence $(\ror_P^{T_P},T_P)=(\lpz_P,\sigma_P)=:c_P$.
\end{itemize}
[Hints: a)~Show that $\rorDUMMY=\lpz^{1/\LM}:\R\to\Sigma$ and
observe that $\LM\in\Per(\lpz^{1/\LM})$.
b)~The pair $(\ror_P^{T_P},T_P)$ is a critical point and
$\ror_P^{T_P}:\SS^1\INTO \Sigma$ is an embedding with image $P$.
Hence $T_P=\ell\sigma_P$ for some $\ell\in\N$
and $\lpz_P:=\ror_P^{T_P}(\cdot/\ell)$ is, in particular, of period $1$.
But $\lpz_P(1)=\lpz_P(0)$, equivalently $\ror_P^{T_P}(1/\ell)=\ror_P^{T_P}(0)$,
implies $\ell=1$.]
\end{exercise}

\begin{exercise}[Action spectrum]\label{exc:action-spectrum}
Show that the action value
\begin{equation}\label{eq:action-growth-by-period}
     \Aa^F(c_P^k)=\Aa^F(\lpz_P^k,k\sigma_P)=k\sigma_P,\qquad
     \Aa^F(\lpz,0)=0,
\end{equation}
of the $k$-fold cover, $k\not=0$, of a simple critical point $c_P$
is given by $k$ times the prime speed $\sigma_P$.
So by Exercise~\ref{exc:simple-Reeb-1st-return}
the\index{spectrum!action --}
\textbf{\Index{action spectrum}}
$\mathfrak{S}(\Aa^F)$,\index{$\mathfrak{S}(\Aa^F)$ action spectrum}
i.e. the set of critical values of $\Aa^F$, consists of all integer
multiples of the prime periods of the Reeb loops, in symbols
$
     \mathfrak{S}(\Aa^F)=\Z\,\mathfrak{S}(\Sigma)
$.
The\index{spectrum!prime period --}
set\index{$\mathfrak{S}(\Sigma)$ periods of simple Reeb loops}
$\mathfrak{S}(\Sigma)$
of\index{spectrum!prime period --}
periods of the \emph{simple} Reeb loops
is the \textbf{\Index{prime period spectrum}}
of the contact manifold $\Sigma$.
\end{exercise}

\begin{figure}%[h]
  \centering
  \includegraphics%[width=0.9\textwidth]
                             [height=5cm]
                             {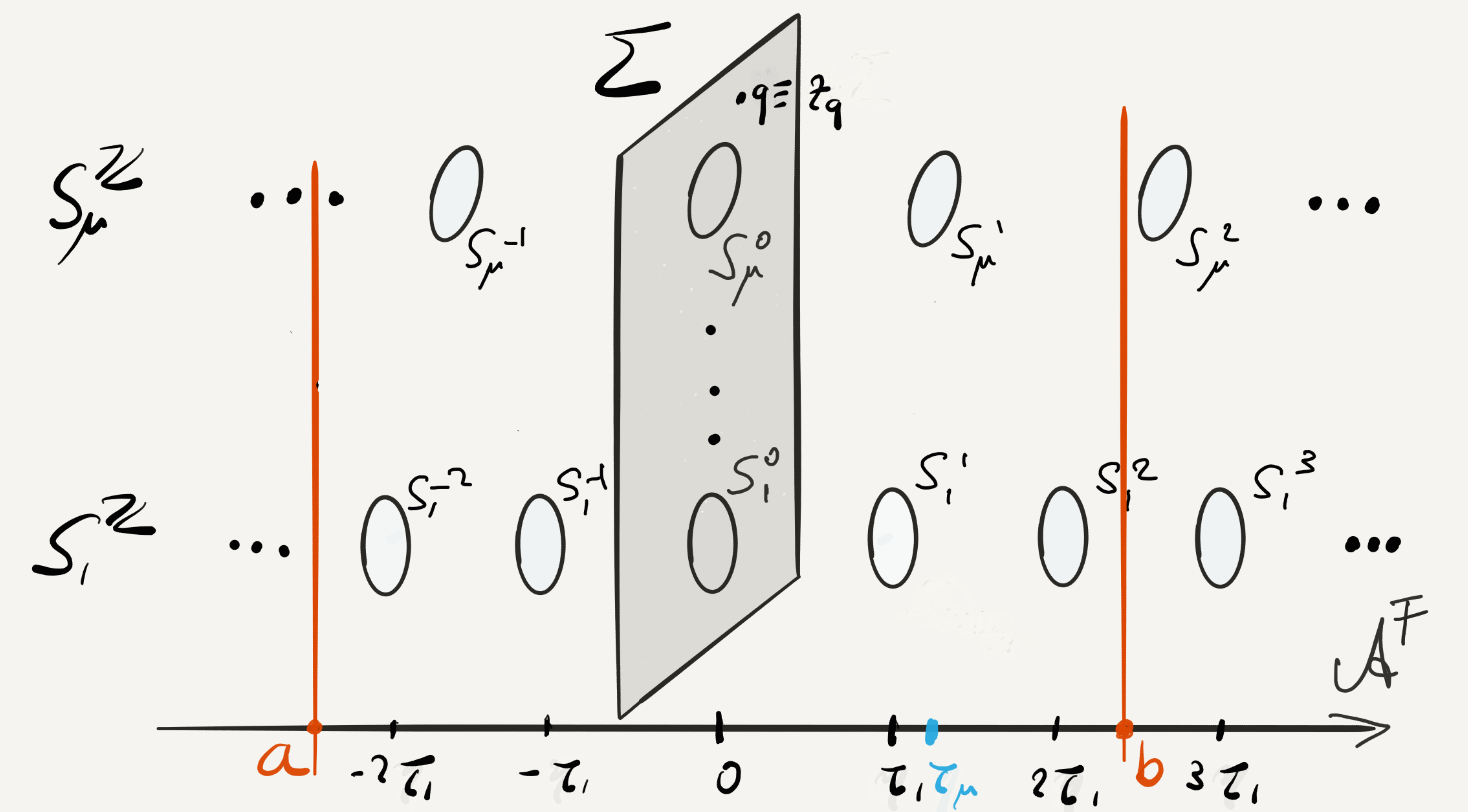}
  \caption{Finitely many critical circle towers:
                $\Crit\,\Aa^F\cong\Sigma\cup S_1^\Z\cup\dots S_\mu^\Z$}
  \label{fig:fig-crit-point-towers}
\end{figure}

\begin{remark}[Critical towers]\label{rem:crit-pt-towers}
By~(\ref{eq:jhjkhkjhkj45})
any\index{towers of critical points}\index{critical point!towers}
simple critical point of $\Aa^F$, that is a pair of the form
$c_P=(\lpz_P,\sigma_P)$, gives rise to
a whole \emph{critical point tower} $c_P^\Z:=\left(\lpz_P^k,k\sigma_P\right)_{k\in\Z}$.
The circle acts on the $k$-fold cover $c_P^k$ of $c_P$ by time shifts
$$
     T * c_P^k:=\left(T* z_P^k,k\sigma_P\right)
     :=\left(z_P^k (T+\cdot ),k\sigma_P\right)
     ,\quad T\in\SS^1=\R/\Z.
$$%
Thus point towers come, at least, as \emph{critical circle towers}
$C_P^\Z:=\left(C^k_P\right)_{k\in\Z}$ where
$$
     C^k_P:=S_P^k\times\{k\sigma_P\}
     ,\quad
     S^k_P
     :=\SS^1 * \lpz_P^k
     :=\left\{T * \lpz_P^k\right\}_{T\in\SS^1}
     ,\quad
     S_P^\Z
     :=\left(S^k_P\right)_{k\in\Z}.
$$
So the action functional $\Aa^F$ can be Morse-Bott, at best, but not Morse.
The set of simple critical points $c_P=(\lpz_P,\sigma_P)$ corresponds
to the set $\Cc(\Sigma)$ of \emph{closed characteristics}
$P$ with distinguished point $p\in P$: a)~Associate to $\lpz_P$ the embedded circle
$P:=\lpz_P(\SS^1)$ with distinguished point $p:=\lpz_P(0)$.
Indeed $P$ is a closed Reeb orbit, thus a closed
characteristic, by Exercise~\ref{exc:simple-Reeb-1st-return}.
b)~Vice versa, given $P$ and $p\in P$, consider the Reeb path
$\ror(t):=\Rflow_tp$. Its prime period is denoted by $\pertau_\ror$.
Then $(\lpz_P,\sigma_P):=(\ror^{\pertau_\ror},\pertau_\ror)$ is a
simple critical point of $\Aa^F$ which gets mapped back to $P$ and $p$ by a).
That the map in~a) followed by the one in~b) is the identity as well
holds by Exercise~\ref{exc:simple-Reeb-1st-return}~b).
In general, the simple critical points can appear in
families larger than circles, unless one has
\end{remark}

%%%%%%%%%%%%%%%%%%%%%%%%%%%%%%%%%%
%% SUBSECTION %%%%%%%%%%%%%%%%%%%%%%
%%%%%%%%%%%%%%%%%%%%%%%%%%%%%%%%%%
\subsection{Transverse non-degeneracy}\label{sec:transverse-ND}

Recall that a function $f$ is called a \textbf{\Index{Morse-Bott function}}
if, firstly, its critical set $C:=\Crit f$ is a submanifold
(whose components might be of different dimensions)
and, secondly, the tangent space $T_p C$ at every point $p$ of $C$
is precisely the kernel of the Hessian $\Hess_p f$ of $f$ at $p$.
In view of invariance of the functional $\Aa^F$
under the time shift circle action,
non-degeneracy is achievable \emph{at most} in directions transverse
to the circles in $\Ll V$ generated by the $\SS^1$-action.

\begin{theorem}[Transverse non-degeneracy,~\citeintro{Cieliebak:2009a}]
\label{thm:RF-2nd-cat}
There is a residual\,\footnote{
  cf. Section~\ref{sec:Baire}
  }
subset\index{$\Ffreg$ regular Rabinowitz-Floer Hamiltonians}
$\Ffreg$ of the complete metric space
\Index{$\Ff:=C^\infty_{0^\prime}(V)$} of smooth functions on $V$ with compactly
supported \emph{differential} such that the following is true.
For every $f\in\Ffreg $ the Rabinowitz action functional $\Aa^f$ is
Morse-Bott and its critical set consists of $f^{-1}(0)$
together with a disjoint union of
(embedded) circles.
\end{theorem}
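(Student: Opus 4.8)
The plan is to realize $\Aa^f$ (for a parameter $f$ ranging over $\Ff = C^\infty_{0'}(V)$) as a section of a Banach bundle and apply the Thom--Smale transversality machinery of Section~\ref{sec:Thom-Smale-transversality}, but in the \emph{equivariant}, i.e.\ Morse--Bott, form appropriate to the free $\SS^1$-action by time shift. First I would fix once and for all a defining Hamiltonian $F_0\in\Ff(\Sigma)$ and note that the relevant parameter space is not all Hamiltonians but the affine-looking space $\Ff=C^\infty_{0'}(V)$ of functions with compactly supported differential, equipped with the complete metric analogous to~(\ref{eq:Cinfty-d}); the point is that varying $f$ by such a perturbation preserves the contact-type condition near $\Sigma$ and hence preserves the class of defining Hamiltonians, so $F_0+f$ stays defining for $f$ small and, more importantly, the \emph{critical set} of $\Aa^{F_0+f}$ is always $\Sigma \cup \GRO(\Sigma)$ by Exercise~\ref{exc:Crit-Rab} --- it does not depend on $f$ at all. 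Thus the $f^{-1}(0)=\Sigma$ component of the critical set is automatic, always a submanifold, and always exactly the kernel-of-Hessian locus there (a short direct computation of $\Hess_{(z_p,0)}\Aa^f$ on constant critical points $z_p\equiv p$: the Hessian in the $\LM$-direction vanishes because $\int F\,dt=0$ along the critical set, and the loop directions split into the tangent directions to $\Sigma$, which are in the kernel, and normal directions, which are nondegenerate by regularity of $0$; this is the "power plant" remark made after~(\ref{eq:GRO})).

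Second, for the non-constant critical points --- the Reeb loops in $\GRO(\Sigma)$ organized into critical circle towers $C_P^{\mathbb Z}$ as in Remark~\ref{rem:crit-pt-towers} --- I would set up the Banach manifold $\Uu$ of $W^{1,2}$ loops $z:\SS^1\to V$ times $\R$ (the $\LM$-variable), the parameter manifold $\Vv=\Ff^k=(C^k_{0'}(V),\norm{\cdot}_{C^k})$ for $k\ge 2$, and the $C^1$ section $\Ff_{\rm univ}(z,\LM;f):=\grad\Aa^{F_0+f}(z,\LM)$ of the appropriate Hilbert bundle, exactly parallel to Example~\ref{ex:generic-Morse}. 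The linearization of $\grad\Aa^{F_0+f}$ at a critical point in the $(z,\LM)$-directions is the Rabinowitz Hessian operator; one checks it is self-adjoint with compact resolvent (the $\LM$-coordinate only adds a rank-one-type correction to the operator already analyzed in Section~\ref{sec:Hess-A}), hence Fredholm of index zero, so Condition~(F) holds. The key analytic input is Condition~(A)/(S): at any critical point $(z,\LM)$ with $z$ a non-constant Reeb loop, I must show the annihilator of the image of the full linearization $D\oplus L$ (with $L$ the derivative in the $f$-direction) is trivial. As in Step~II of Example~\ref{ex:generic-Morse}, an element $\eta$ of this annihilator is first forced to lie in $\ker(\text{Hessian})$ --- hence, by the Morse--Bott-to-be structure, $\eta$ is tangent to the $\SS^1$-orbit plus possibly extra directions --- and then one must construct a compactly supported perturbation $h$ of $f$, localized near a point of the embedded circle $P=z(\SS^1)\subset\Sigma$ but \emph{away from $\Sigma$ in the normal direction}, whose contribution $\inner{\eta}{L h}$ is nonzero unless $\eta$ is purely the infinitesimal time-shift. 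The cutoff-function construction is essentially that of Example~\ref{ex:generic-Morse}, with the one subtlety that the perturbation $h$ must be supported off $\Sigma$ (so as not to disturb the contact condition) yet still couple to $\eta$ along the loop; this works because the embedded loop $z$, being a Reeb orbit, has an open neighborhood in $V$ meeting $\Sigma$ only along a neighborhood of $P$, and one can put the bump in an annular region transverse to $\Sigma$ whose closure still contains an arc of $P$ in the limit.

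Third, from Condition~(A) plus Lemma~\ref{le:cond-(A)} and Theorem~\ref{thm:TS-transversality} one gets a residual set $\Ffreg^k\subset\Ff^k$ for which the linearization of $\grad\Aa^{F_0+f}$ is surjective \emph{modulo the $\SS^1$-symmetry} at every non-constant critical point; translating, $\Aa^{F_0+f}$ is Morse--Bott with non-constant critical set a disjoint union of circles. One then passes from the $C^k$ category to $C^\infty$ by the two-step approximation argument of Example~\ref{ex:generic-Morse} (density of $C^\infty$ in $C^k$, openness via properness of the projection $\pi:\Mm\to\Vv$ --- here properness uses compactness of $\Sigma$ together with an action bound on Reeb loops of bounded period, i.e.\ Arzel\`a--Ascoli, Theorem~\ref{thm:AA}), obtaining $\Ffreg=\bigcap_k\Ffreg^k$, which is residual in the complete metric space $\Ff$. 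Combining with the automatic statement about the $f^{-1}(0)=\Sigma$ component completes the proof. The main obstacle I anticipate is the transversality step~(A) for the non-constant critical points: one must verify that compactly supported perturbations \emph{of the Hamiltonian away from $\Sigma$} suffice to kill all cokernel directions except the symmetry direction, which requires care about where the bump functions can be placed (off $\Sigma$, yet interacting with the Reeb orbit), and one must correctly bookkeep the $\LM$-direction in the linearized operator --- in particular checking that the extra real parameter does not produce a spurious persistent cokernel direction.
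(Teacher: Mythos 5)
Your overall strategy---realize $\grad\Aa^{f}$ as a section of a Banach bundle over loops times $\R$ times the parameter space and run the Thom--Smale machinery in its equivariant (Morse--Bott) form---is indeed the approach of Cieliebak--Frauenfelder, and your treatment of the constant component $\Sigma\times\{0\}$ is essentially right. But the core of your transversality step rests on a false premise. You assert that $\Crit\Aa^{F_0+f}$ ``does not depend on $f$ at all''. It does: by Exercise~\ref{exc:Crit-Rab} the critical loops are constrained to lie on $(F_0+f)^{-1}(0)$, and adding a general $f\in\Ff$ moves this level set and changes the Reeb dynamics on it. What is independent of the Hamiltonian is only the critical set among Hamiltonians defining the \emph{same} hypersurface (Exercise~\ref{exc:hj767}, Proposition~\ref{prop:char=per-exact}). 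This matters because transverse non-degeneracy is a statement about the linearized return map $d\Rflow_T(p)|_{\xi_p}$ of the Reeb flow, and no perturbation that leaves the Reeb flow on $\Sigma$ untouched can ever repair a degenerate orbit.

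This false premise then leads you to place the bump function in exactly the wrong place. If $h$ vanishes on a neighborhood of $\Sigma$, then $X_h=0$ and $h=0$ along every critical loop $\lpz(\SS^1)\subset\Sigma$, so the parameter derivative $Lh=\bigl(\LM J(\lpz)X_h(\lpz),\,-\int_0^1 h(\lpz)\,dt\bigr)$ vanishes identically at every critical point and contributes nothing towards killing the annihilator; your condition (A) argument collapses. The perturbation must instead be supported in a full neighborhood of a point of the orbit $P\subset\Sigma$, accepting that the hypersurface $f^{-1}(0)$ and its contact form move; your stated reason for avoiding $\Sigma$ (``so as not to disturb the contact condition'') is unfounded, since transversality of the Liouville field to the zero level is an open condition (Exercise~\ref{exc:reg_non-deg-nbhd}). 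A secondary gap: since orbits of arbitrarily large period occur, the set of $f$ making \emph{all} Reeb orbits transversally non-degenerate is residual but not open, so the ``dense open in $C^k$, then intersect over $k$'' scheme of Example~\ref{ex:generic-Morse} does not transfer verbatim; one must first stratify by an action/period bound and take a countable intersection over these strata as well. For the record, the text itself does not reprove the theorem but refers to~\citeintro[Thm.~B.1]{Cieliebak:2009a}, flagging precisely the equivariant difficulty you correctly identified (the unavoidable critical circles), which is the one part of your plan that is on target.
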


\begin{proof}
See~\citeintro[Thm.~B.1, p.298]{Cieliebak:2009a}. The proof uses the machinery 
of Thom-Smale transversality, see Section~\ref{sec:Thom-Smale-transversality},
with the \emph{additional difficulty} that the universal
section cannot be surjective due to the unavoidable
critical circles arising by time shifting non-constant critical points.
\end{proof}

\begin{definition}\label{def:RF-non-deg-Sigma}
A contact type hypersurface is called
\textbf{non-degenerate}\index{contact type hypersurface!non-degenerate
--}\index{non-degenerate!contact type hypersurface}
if all Reeb loops $\ror$\index{non-degenerate!transverse}
are\index{transverse non-degenerate!Reeb loop}
\textbf{transverse
non-degenerate}\index{Reeb loop!transverse non-degenerate --}
in the sense that the linearized Reeb flow
$\d\Rflow_T(p):\xi_p\to\xi_p$ at $p=\ror (0)$,
where $\xi_p:=\ker\alpha(p)$ and $T>0$ is the prime period
of $\ror$, cf. Exercise~\ref{exc:Reeb-preserves-contact},
does not have $1$ as eigenvalue.
\end{definition}

From now on suppose Assumption~\ref{ass:RF}.
Hence the bounding hypersurface $\Sigma\subset(V,\lambda)$ is
of restricted contact type. Fix a defining Hamiltonian $F\in\Ff(\Sigma)$.

\begin{exercise}\label{exc:reg_non-deg}
Check that $F\in\Ffreg$ iff $\Sigma$ is non-degenerate.
\newline
[Hint: Cf. Exercise~\ref{exc:non-deg-Crit-pp}.]
\end{exercise}

\begin{exercise}\label{exc:reg_non-deg-nbhd}
a)~Show that there is a convex closed neighborhood $\Uu^F$ of $F$ in
$\Ff=C^\infty_{0^\prime}(V)$ such that the zero set $f^{-1}(0)$ of any
element $f\in\Uu^F$ is a convex exact hypersurface of $(V,\lambda)$.
(In fact, it is of restricted contact type, if $\Sigma$ is.)
b)~Check that $\Uureg^F:=\Uu^F\cap\Ffreg$ is residual in $\Uu^F$.
\newline
[Hint: a)~Given the by $d\lambda(\LVF,\cdot)=\lambda$ globally determined
Liouville vector field~$\LVF$, the contact type property of $\Sigma$
is equivalent to transversality $\LVF\pitchfork \Sigma$. But transversality
$\LVF\pitchfork\tilde\Sigma$ survives, where $\tilde\Sigma:=f^{-1}(0)$,
whenever $f$ is sufficiently close to $F$ in $\Ff$. Define
$\tilde\alpha:=\tilde\iota^*\lambda$; see~(\ref{eq:alpha_Y}).]
\end{exercise}

\begin{remark}\label{rem:generic-Sigma}
Given regular Hamiltonians $F_0,F_1\in\Uureg^F$ near the
defining Hamiltonian $F$ of %the restricted contact type hypersurface
$\Sigma$, by Exercise~\ref{exc:reg_non-deg-nbhd}~a)
convex combination provides a family
\begin{equation}\label{eq:homotopy-cont-RF}
     \Sigma_s:={F_s}^{-1}(0),\quad F_s:=(1-s)F_0+s F_1 ,\quad
     s\in[0,1],
\end{equation}
of bounding restricted contact type hypersurfaces of $(V,\lambda)$.
While the endpoints of the family are non-degenerate,
there is no reason that all members $\Sigma_s$ be.
\end{remark}

\begin{remark}[$\Crit\,\Aa^F$ consists of $\mu<\infty$ circle towers and $\Sigma$]
\label{rem:fin-crit-pt-towers}
If $\Sigma$ is \emph{non-degenerate}, i.e. $F\in\Ffreg$,
then $\Cc(\Sigma)$ consists
of \emph{finitely many} closed characteristics $P_1,\dots,P_\mu$
since $\Sigma$ is compact.
The embedded circles $P_i\hookrightarrow\Sigma$
correspond, up to fixing a point $p_i\in P_i$, likewise modulo
time shifts, to simple Reeb loops
$$
     \ror_i:=\ror_{P_i}=\Rflow_\cdot p_i:\R/T_i\Z\to\Sigma
     ,\quad
     T_i:=T_{P_i},
$$
see~(\ref{eq:simple-Reeb-loop}), so by
Exercise~\ref{exc:simple-Reeb-1st-return}~b)
to simple critical points
\begin{equation}\label{eq:simple-crit-pts}
     c_i:=(\lpz_i,\LM_i)
     :=(\ror_i^{T_i},T_i)=(\lpz_{P_i},\sigma_{P_i})
     \in\Crit\,\Aa^F,\qquad
     i=1,\dots,\mu.
\end{equation}
Observe that $\LM_i:=T_i=\sigma_{P_i}$ is the prime period
(time of first return) of the Reeb loop $\ror_i$ and simultaneously
the prime speed of the simple critical point $z_i=\ror_i^{T_i}$. Time
shifting $\ror_i$ and $\lpz_i$ moves the initial point
$p_i=\ror_i(0)=\lpz_i(0)$ around the circle $P_i$.
Let $c_i^k$ be the $k$-fold cover~(\ref{eq:jhjkhkjhkj45}) of
$c_i$. Let $S_i^k:=S_{P_i}^k=\SS^1 *\lpz_i^k$, so
\begin{equation}\label{eq:crit-circles}
     C_i^k:=C_{P_i}^k=S_i^k\times \{k\LM_i\}
     ,\quad
     k\in\Z^*:=\Z\setminus\{0\} ,
\end{equation}
are the corresponding circles in $\Ll\Sigma\times\R$ arising by time shift;
cf.~Remark~\ref{rem:crit-pt-towers}.
The components of the critical set
$C:=\Crit\,\Aa^F$ are the circles $C_i^k$ along
which the action is constantly $k\LM_i$
and the component $C_0=\Sigma\times\{0\}$
of constant loops in $\Sigma$
where the action is zero; cf.~(\ref{eq:RF-crit-mf-C}).
Note that each critical component is compact and that by
non-degeneracy only finitely many of them lie on action levels
inside any given compact interval $[a,b]$.
This is illustrated by Figure~\ref{fig:fig-crit-point-towers}.
\end{remark}

%%%%%%%%%%%%%%%%%%%%%%%%%%%%%%%%%%
%%%%%%%%%%%%%%%%%%%%%%%%%%%%%%%%%%
%% SECTION %%%%%%%%%%%%%%%%%%%%%%%%%
%%%%%%%%%%%%%%%%%%%%%%%%%%%%%%%%%%
%%%%%%%%%%%%%%%%%%%%%%%%%%%%%%%%%%
\section{Upward gradient flow -- downward count}
Pick a defining Hamiltonian $F\in\Ff(\Sigma)$. So $F$ is
constant, i.e. $X_F=0$, outside a compact subset $K$ of $V$.
In order to turn the differential $d\Aa^F$ of the Rabinowitz action
functional into a gradient, we need to choose a metric on $\Ll V\times \R$.
The fact that $V$ is non-compact, as opposed
to the symplectic manifold in Section~\ref{sec:FH-comp}
on fixed-period Floer homology, causes serious difficulties
when it comes to prove compactness of moduli spaces of
connecting trajectories $\upsilon=(u,\LMpath)$, due to the lack of an
apriori $C^0$ bound; cf. Theorem~\ref{thm:RF-compactness}.
The key idea to obtain nevertheless uniform
$C^0$ bounds for connecting trajectories
is to choose a family $J=(J_t)_{t\in\SS^1}$
of $\omega$-compatible almost complex structures $J_t$
which are \emph{cylindrical}, see Definition~\ref{def:acs-cylindrical},
along the cylindrical ends $N\times\R_+$ of $(V,\lambda)$.

\begin{remark}[$X_F=0$ on cylindrical ends --
causes uniform $C^0$ bound for~$u$'s]
\label{rem:X_F=0-on-cyl-ends}
Recall from Exercise~\ref{exc:cyl-ends} that $N=\p M_k$
is chosen as boundary of one of the members of the exhaustion
of $V=\cup_k M_k$. Choose $k$ larger, if necessary,
such that $M_k$ contains $K$. This guarantees that
$X_F$ vanishes on the cylindrical ends.
\end{remark}

For $( \xi_i, \LM_i)\in T_{(\lpz,\LM)}(\Ll V\times\R)$
an $L^2$ metric on $\Ll V\times\R$ is defined by
\begin{equation}\label{eq:RF-L2-inner-product}
     \left\langle( \xi_1, \LM_1),( \xi_2, \LM_2)\right\rangle_J
     :=\int_0^1 \omega\left( \xi_1(t),J_t(\lpz(t)) \xi_2(t)\right) dt
     +\LM_1\LM_2.
\end{equation}

\begin{exercise}\label{eq:nabla-Aa^F}
The gradient of $\Aa^F$ with respect to the $L^2$ metric is given by
\begin{equation}\label{eq:RF-grad-Aa}
     \grad\Aa^F(\lpz,\LM)   %=\grad^J\Aa^F(\lpz,\LM)
     =\begin{pmatrix}
     -J_t(\lpz)\left(\p_t \lpz-\LM X_F(\lpz)\right)
     \\
     -\int_0^1 F(\lpz(t))\, dt
     \end{pmatrix}.
\end{equation}
\end{exercise}

So for the norm-square of the gradient one gets
\begin{equation}\label{eq:RF-norm-grad-Aa}
\begin{split}
     \Norm{\grad\Aa^F(\lpz,\LM)}^2
   &=\left\langle(\grad\Aa^F(\lpz,\LM),\grad\Aa^F(\lpz,\LM) \right\rangle_J\\
   &=\Norm{\p_t \lpz-\LM X_F(\lpz)}_{2}^2+\mean^2(F\circ \lpz)\\
   &\le\Bigl(\Norm{\p_t \lpz-\LM X_F(\lpz)}_{2}+\Abs{\mean(F\circ \lpz)}\Bigr)^2
\end{split}
\end{equation}
where\footnote{
  Here
  $
     \Norm{\xi}_{2}^2:=\int_0^1\omega\left(\xi(t),J_t(\lpz(t))\xi(t)\right) dt
  $
  for smooth vector fields $\xi$ along the loop $\lpz$.
  }
we used that $J_t$ is compatible
with~$\omega$ \index{$\mean(F\circ \lpz ):=\int F\circ \lpz $ mean value} 
and where
$$
     \mean(F\circ \lpz ):=\int_0^1 F(\lpz(t))\, dt.
$$

The \emph{upward} gradient trajectories of $\grad\Aa^F$ are the
solutions denoted by the letter ``\textbf{\Index{upsilon}}''
$$
     \upsilon=(u,\LMpath)\in
     C^\infty(\R\times\SS^1,V)\times C^\infty(\R,\R)
$$
of\index{$\upsilon=(u,\LMpath)$ is called 'upsilon'}
the elliptic PDE given by
\begin{equation}\label{eq:RF-UGF-ups}
     \p_s\upsilon=\grad\Aa^F(\upsilon),\qquad
     \upsilon=(u,\LMpath),
\end{equation}
or, equivalently, the zeroes of a section,
cf.~(\ref{eq:linearization}), namely
\begin{equation}\label{eq:RF-UGF}
     \Ff_F(u,\LMpath):=
     \begin{pmatrix}
     \p_su+J_t(u)\left(\p_t u-\LMpath X_F(u)\right)
     \\
     \p_s\LMpath+\int_0^1 F(u_s(t))\, dt
     \end{pmatrix}
     =0.
\end{equation}

\begin{remark}[Homology -- upward vs. downward gradient flow]
\label{rem:RF-non-standard-order}
  The advantage of using the \emph{upward} gradient equation
  is that on cylindrical ends, where $X_F=0$, component one
  of~(\ref{eq:RF-UGF}) becomes the well known $J$-holomorphic
  curve equation\footnote{
  The downward gradient leads to the anti $J$-holomorphic
  curve equation $\p_su-J(u)\p_t u=0$.
  }
  in which case to obtain an apriori $C^0$ bound for $u$
  one can simply refer to the literature;
  see~\citeintro[p.268, pf. of Thm.~3.1]{Cieliebak:2009a}.
  In order to define nevertheless \emph{homology}, as opposed to cohomology,
  one must ensure that the action \emph{decreases} along the
  \emph{boundary operator} $\p$. This leads to the non-standard
  order in the coefficients $n(c_-,c_+)c_-$ of $\p c_+$
  in~(\ref{eq:RF-bdy}) below. In other words, to define an action decreasing
  boundary operator $\p c_+$, one can either
\begin{enumerate}
\item[(standard)]
  count downward flow lines emanating from $c_+$ at time $-\infty$ or
\item[(present)]
  count upward flow lines that end at $c_{\color{cyan} +}$
  at time ${\color{cyan} +}\infty$.
\end{enumerate}
  Figure~\ref{fig:fig-RF-UGF} \emph{illustrates}\footnote{
    Be aware that $\p c_+$ will not count connecting flow lines $u/\R:=\tilde u$
    of $\grad\Aa^F$, but connecting \emph{cascade} flow lines
    $\Gamma=\bigl(\gamma^0_-,\tilde\upsilon^1,\gamma^1,\dots,
    \tilde\upsilon^\ell,\gamma^\ell_+\bigr)$;
    cf. Figure~\ref{fig:fig-RF-cascades}. The cascades are unparametrized curves
    $\upsilon^j/\R:=\tilde\upsilon^j=(\tilde u^j,\tilde\LMpath^j)$,
    the $\gamma^j_{(\pm)}$ are (semi-)finite time, thus
    parametrized, Morse gradient trajectories along components of the
    Morse-Bott manifold $C:=\Crit\,\Aa^F$.
  }
  the downward count of upward flows.
\end{remark}

\begin{figure}%[h]
  \centering
  \includegraphics%[width=0.9\textwidth]
                             [height=4cm]
                             {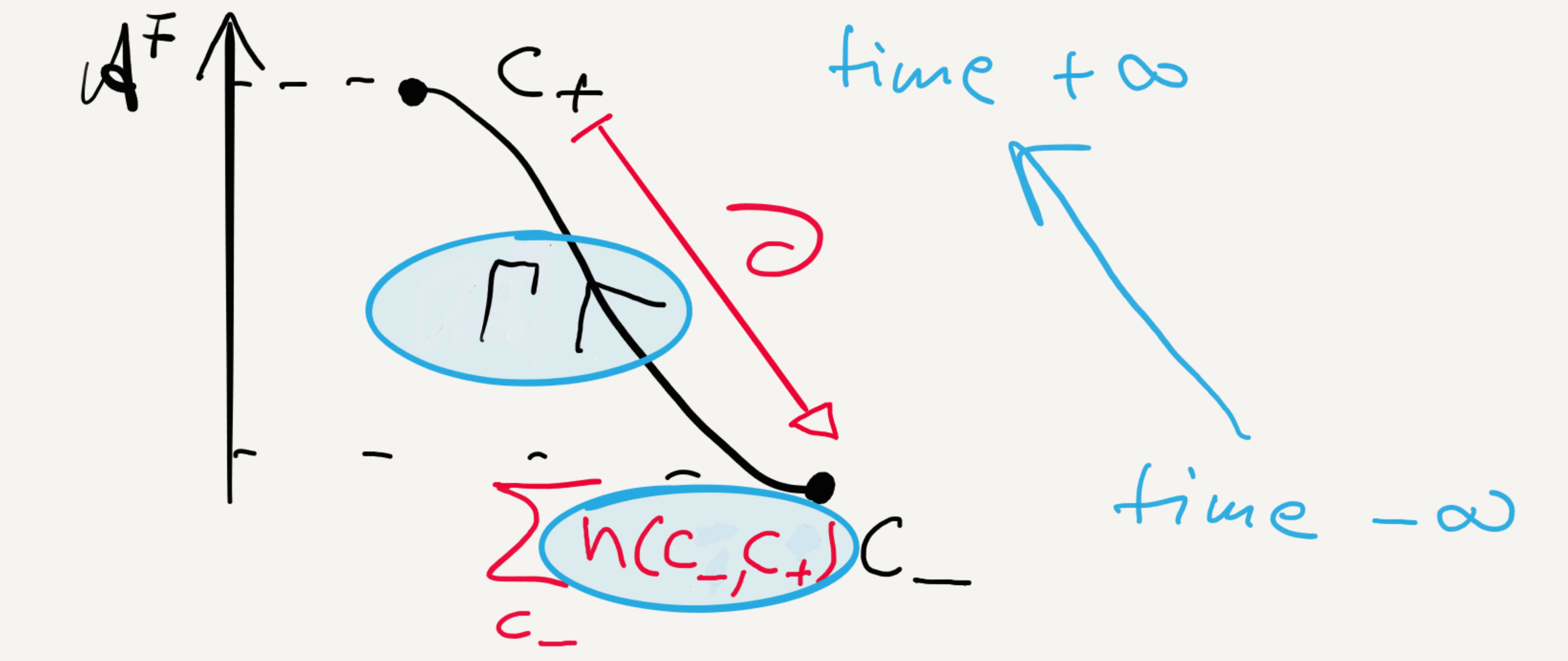}
  \caption{\emph{Downward} count of upward flows $\Gamma$ meeting at $c_+$}
  \label{fig:fig-RF-UGF}
\end{figure}

%%%%%%%%%%%%%%%%%%%%%%%%%%%%%%%%%%
%%%%%%%%%%%%%%%%%%%%%%%%%%%%%%%%%%
%% SECTION %%%%%%%%%%%%%%%%%%%%%%%%%
%%%%%%%%%%%%%%%%%%%%%%%%%%%%%%%%%%
%%%%%%%%%%%%%%%%%%%%%%%%%%%%%%%%%%
\section{Rabinowitz-Floer chain complex}\label{sec:RF-CC}

Consider a convex exact hypersurface $\Sigma\subset (V,\lambda)$,
say of restricted contact type, in a
convex exact symplectic manifold, as in Assumption~\ref{ass:RF}.
Fix a family $J=(J_t)_{t\in\SS^1}$ of almost complex structures $J_t$
compatible with the symplectic structure $\omega:=d\lambda$
on $V$ and cylindrical along the cylindrical ends;
see Definition~\ref{def:acs-cylindrical}.
Choose the associated $L^2$ inner product on $\Ll V\times\R$
given by~(\ref{eq:RF-L2-inner-product}).

There are two goals in Section~\ref{sec:RF-CC}. Firstly, to associate to a
defining Hamiltonian $F\in\Ff(\Sigma)$ of $\Sigma$ Floer
homology groups $\HF(\Aa^F)$ with $\Z_2$ coefficients.
Secondly, to show that these are invariant, up to natural
isomorphism, not only under changing the defining Hamiltonian $F$,
but also under convex exact homotopies of the
hypersurface $\Sigma$ itself. Cieliebak and
Frauenfelder~\citeintro{Cieliebak:2009a} defined
$$
     \RFH(\Sigma,V):=\HF(\Aa^F),
$$
called \textbf{\Index{Rabinowitz-Floer homology}}
of the convex exact hypersurface $\Sigma=\p M$ which,
by assumption, bounds a compact manifold-with-boundary, say $M$.

To construct Floer homology one usually slightly perturbs relevant
quantities in a first step, see Section~\ref{sec:Thom-Smale-transversality},
in order to get to a Morse situation, so one can use the then
discrete critical points themselves as generators of
the Floer chain groups. Here this is impossible:
Since $F$, being defining for~$\Sigma$, is necessarily
\emph{time-independent} the Rabinowitz action
functional $(\lpz,\LM)\mapsto \Aa^F(\lpz,\LM)$ will always be invariant under
the $\SS^1$-action on $\Ll V\times \R$ given by time shifting
the loop component $\lpz$. While $\Aa^F$ is therefore never Morse,
it is of the simplest Morse-Bott type for generic, called regular,
defining Hamiltonian $F$ by Theorem~\ref{thm:RF-2nd-cat}.
For $F\in\Ffreg$ the critical set is the union of $C_0\cong\Sigma$ and
$\mu$ critical point towers
\begin{equation}\label{eq:RF-crit-mf-C}
\begin{split}
     C:=\Crit\,\Aa^F
   &=C_0 \mathop{\dot{\cup}} \mathop{\dot{\bigcup}}_{k\in\Z^*} C_1^k\dots
     \mathop{\dot{\cup}} \mathop{\dot{\bigcup}}_{k\in\Z^*} C_\mu^k\\
   &\subset\Ll\Sigma\times \R
\end{split}
\end{equation}
as illustrated by Figure~\ref{fig:fig-crit-point-towers}.
The floors are compact connected manifolds, namely the ground floor
$C_0\cong\Sigma$ together with the upper ($k>0$) and
lower ($k<0$) circle floors $C_i^k\cong\SS^1$ given by~(\ref{eq:crit-circles}).
It is convenient to identify the constant critical points
$(\lpz,0)$ of $\Aa^F$, action zero,  with the points
$p=z(0)$ of the compact hypersurface $\Sigma$ itself. 
Remark~\ref{rem:fin-crit-pt-towers}
describes how each closed characteristic $P_i$ together with a chosen point
$p_i\in P_i\subset\Sigma$
corresponds to a simple critical point $\lpz_i$ of $\Aa^F$ with
$\lpz_i(0)=p_i$ whose prime speed $\sigma_i$ is the prime period
$\LM_i:=\pertau_{\ror_i}$ of the simple Reeb loop
$\ror_i:\R/\LM_{\lpz_i} \Z\INTO\Sigma$ that parametrizes the closed characteristic
$P_i$ and is determined by the initial condition $\ror_i(0)=p_i$; see
also Remark~\ref{rem:fin-crit-pt-towers}.

\begin{remark}[Critical set]\label{rem:RF-crit-set-bd-action}
The restrictions $\Aa^F|_{C_i^k}\equiv k\LM_i$ and $\Aa^F|_{C_0}\equiv 0$ of the
action functional to components are constant
by Exercise~\ref{exc:action-spectrum}.
So it makes sense to speak of the \emph{action value of a critical component}.
As $i=1,\dots,\mu$ only runs through a \emph{finite} set,
caused by compactness and transverse non-degeneracy of $\Sigma$,
there can only be finitely many critical components
with actions in a given bounded interval $[a,b]$;
see Figure~\ref{fig:fig-crit-point-towers}.
In other words, the set of critical points
$$
     C^{[a,b]}:=\Crit^{[a,b]}\Aa^F:=\{a\le\Aa^F\le b\}\cap\Crit\,\Aa^F
$$
whose actions lie in an interval $[a,b]$ form a \emph{closed} submanifold
$C^{[a,b]}\subset\Ll \Sigma\times\R$ diffeomorphic to a finite union
of embedded circles whenever $0\notin[a,b]$; otherwise,
there is in addition one connected component diffeomorphic to $\Sigma$.
\end{remark}

Perturbing $F$ amounts to perturbing $\Sigma=F^{-1}(0)$, of course.
However, by Exercise~\ref{exc:reg_non-deg-nbhd}~a), small perturbations
will not leave the class of convex exact hypersurfaces
and even the restricted contact type property of $\Sigma$ will be
preserved.\footnote{
  The contact condition is open;
  cf. Exercise~\ref{exc:contact-volume_form}~(b).
  }
So from now on
\begin{itemize}
\item
  we assume that $\Sigma$ in Assumption~\ref{ass:RF}
  is, in addition, non-degenerate\footnote{
  The non-degeneracy assumption on $\Sigma$
  is justified by invariance of $\RFH$ under smooth variations
  of $\Sigma$ up to natural isomorphism;
  see Section~\ref{sec:RF-continuation} and
  Remark~\ref{rem:generic-Sigma}.
  }
\item
  with defining Hamiltonian $F\in\Ff(\Sigma)\cap\Ffreg$;
  cf. Exercise~\ref{exc:reg_non-deg-nbhd}~b).
\end{itemize}
Given such $\Sigma=F^{-1}(0)$, the goal is to find suitable
auxiliary data $(h,g)$ that allows to define Floer homology\footnote{
  meaning that the chain groups should be generated by
  some finite, or at least discrete, critical set and the boundary
  should count isolated flow lines connecting
  critical points
  }
and has the property that different choices lead to naturally isomorphic
homology groups.
A suitable candidate is
\begin{equation*}
  \textsf{Frauenfelder's implementation of Morse-Bott theory by
              cascades~\citerefRF{Frauenfelder:2004a}}
\end{equation*}
which, in addition, requires to fix a Morse-Smale pair $(h,g)$ consisting of
\begin{itemize}
\item
  a Morse function $h:C\to\R$ and
\item
  a Riemannian metric $g$ on the critical manifold $C=\Crit\,\Aa^F$.
\end{itemize}
While the critical set $\Crit\, h\subset\Crit\,\Aa^F$ of
$h$ is not necessarily finite, its subset
\begin{equation}\label{eq:RF-bd-crit-FINITE}
     \Crit^{[a,b]} h:=C^{[a,b]}\cap \Crit\, h=\Crit\, h|_{C^{[a,b]}}
\end{equation}
is finite, as the manifold $C^{[a,b]}$ is closed, see
Remark~\ref{rem:RF-crit-set-bd-action}, and as $h$ is Morse.

%%%%%%%%%%%%%%%%%%%%%%%%%%%%%%%%%%
%% SUBSUBSECTION %%%%%%%%%%%%%%%%%%%
\subsubsection{Chain groups}
For $F\in\Ff(\Sigma)\cap\Ffreg$ the Floer chain group $\CF(\Aa^{F},h)$
is defined as the vector space over $\Z_2$
that consists of all formal sums 
$$
     x=\sum_{c\in\Crit\, h} x_c c
$$
of critical points of $h$ such that the $\Z_2$-coefficients $x_c=x_c(x)$ in
such a formal sum $x$ satisfy the \textbf{\Index{upward finiteness
condition}}\index{finiteness condition!upward}
\begin{equation}\label{eq:RF-FINITE}
     \Bigl|\bigl\{c\in\Crit\, h\mid\text{$x_c(x)\not= 0$
     and $\Aa^{F}(c)\ge \kappa$}\bigr\}\Bigr|
     <\infty,\quad\forall\kappa\in\R.
\end{equation}
In words, given a formal sum $x$, one requires finiteness of the number
of non-zero coefficients $x_c(x)$ above any given action value $\kappa$.
\begin{definition}
The Floer chain group $\CF(\Aa^{F},h)$ is the $\Z_2$-vector space generated by
\textbf{\Index{upward finite formal sums}}\index{formal sum!upward finite}
$x$ of critical points of the Morse function $h$ on the critical set $C$
of the Rabinowitz action functional $\Aa^{F}$.
\end{definition}

%%%%%%%%%%%%%%%%%%%%%%%%%%%%%%%%%%
%% SUBSUBSECTION %%%%%%%%%%%%%%%%%%%
\subsubsection{Connecting cascade flow lines -- upward flows}
\begin{figure}%[h]
  \centering
  \includegraphics[width=0.95\textwidth]
                             %[height=5cm]
                             {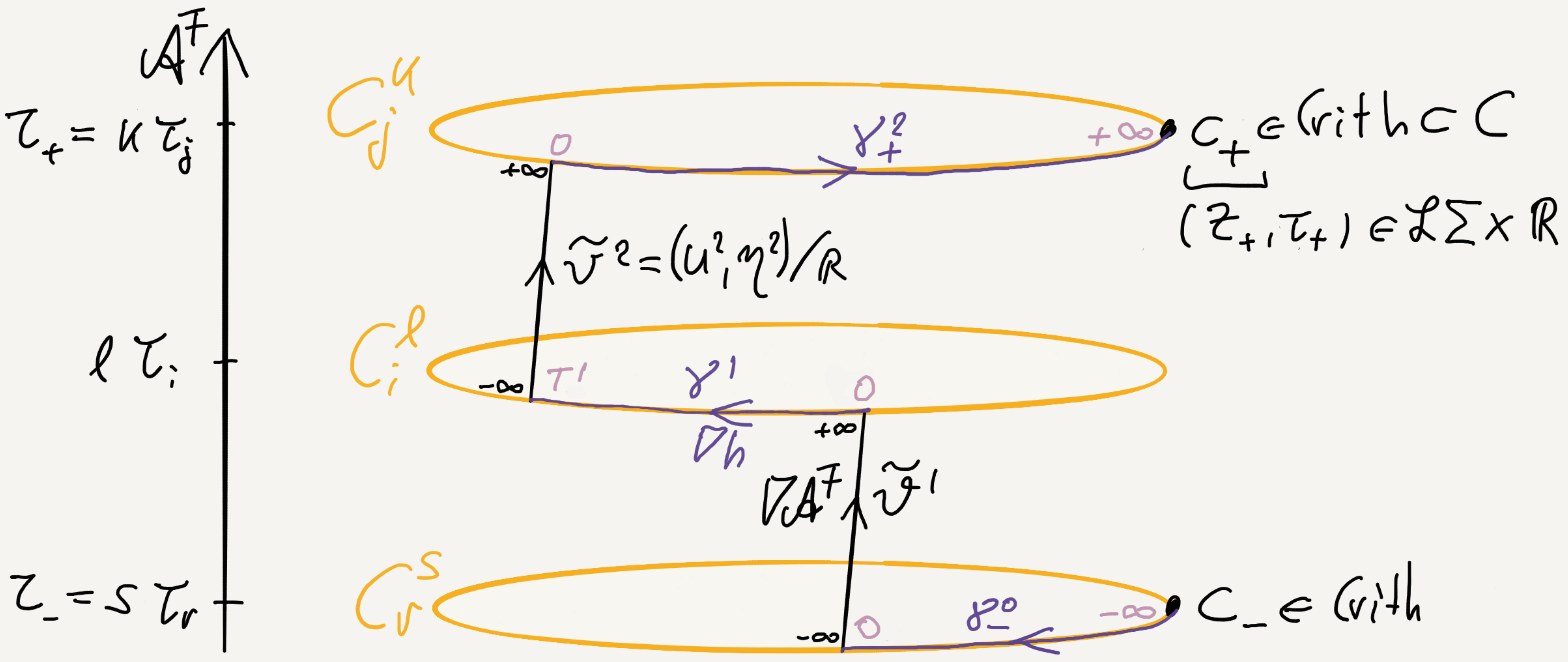}
  \caption{Cascade flow $\Gamma
                 =\bigl(\gamma^0_-,\tilde\upsilon^1,\gamma^1,
                 \tilde\upsilon^2,\gamma^2_+\bigr)\in
                 \Mm_{c_-c_+}(\Aa^{F},h,J,g)$}
  \label{fig:fig-RF-cascades}
\end{figure}
Given $F\in\Ff(\Sigma)\cap\Ffreg$, on the critical manifold $C:=\Crit\,\Aa^F$
(whose components are circles and one component is given by
the closed hypersurface $\Sigma$ according to
Theorem~\ref{thm:RF-2nd-cat}) consider the \emph{Morse gradient flow}
generated by the Morse function $h:C\to\R$ and the
Riemannian metric $g$ on $C$ through the ODE
$\dot\gamma=\nabla^g h(\gamma)$ for smooth maps $\gamma:\R\to\C$.
Given two critical points $c_\pm=(\lpz_\pm,\LM_\pm)\in\Crit\, h\subset C$,
a\index{connecting flow line!with cascades}\index{cascade flow line}
\textbf{connecting upward flow line with cascades}
is a tuple of the form
$$
     \Gamma=\bigl(\gamma^0_-,\tilde\upsilon^1,
     \gamma^1,\dots,\tilde\upsilon^{\ell-1},\gamma^{\ell-1},
     \tilde\upsilon^\ell,\gamma^\ell_+\bigr)
$$
where $\ell\in\N_0$ is the number of cascades $\tilde\upsilon^j$ and such that
$$
     \dot\gamma^j=\nabla h(\gamma^j),\quad
     \p_s\upsilon^j=\grad\Aa^F(\upsilon^j),\quad
     \tilde\upsilon^j=[\upsilon^j:\R\to\Ll V\times \R].
$$
The notation $\tilde\upsilon=[v]="\{\upsilon\}/\R"$ %$
is meant to indicate unparametrized flow lines or, equivalently, flow trajectories
that only differ by shifting the $s$ variable are considered equivalent.
See Figure~\ref{fig:fig-RF-cascades} for a
connecting cascade flow line with $\ell=2$ cascades.
Actually each intermediate Morse trajectory comes with a finite time
$T^j\ge 0$ and is defined on a \emph{finite time} interval, namely
$$
     \gamma^j:[0,T^j]\to C.
$$
In contrast, the two ends $\gamma^0_-:(-\infty,0]\to C$ and
$\gamma^\ell_+:[0,\infty)\to C$ are \emph{semi-infinite} Morse
trajectories. In any case, neither of the Morse trajectories
is invariant under the time-$s$ shift $\R$ action.

More precisely, the tuple $\Gamma$ starts with a semi-infinite Morse trajectory
$\gamma^0_-$ backward asymptotic to the given critical point
$c_-\in\Crit\, h$ and whose position at time zero, namely
$\gamma^0_-(0)$, is the backward asymptote $\upsilon^1_-$ of some
$\grad\Aa^F$ flow trajectory $\upsilon^1\in\tilde\upsilon^1=[\upsilon^1]$
defined on the whole real line, see~(\ref{eq:RF-UGF}), where
\begin{equation}\label{eq:RF-asymp-lim}
     \upsilon^j_\mp=\lim_{s\to\mp\infty}\upsilon^j(s)\in C,\quad
     \upsilon^j\in[\upsilon^j],\quad
     \upsilon^j=(u^j,\LMpath^j):\R\to\Ll V\times\R.
\end{equation}
The other, positive, asymptote $\upsilon^1_+\in C$
provides the initial point $\gamma^1(0)$ of a finite time
Morse trajectory $\gamma^1:[0,T^1]\to C$ whose endpoint
$\gamma^1(T^1)$ is a backward asymptote $\upsilon^2_-$.
Continuing this way one reaches the final piece, namely the
semi-infinite Morse trajectory
$\gamma^\ell_+:[0,\infty)\to C$ that starts at the previous
backward asymptote point $\upsilon^\ell_+$ at time $0$ and is itself
forward asymptotic to the second given critical point $c_+$.
The unparametrized Morse-Bott flow lines $\tilde\upsilon^j$ are called
\textbf{\Index{cascades}}.

The \textbf{moduli space of connecting cascade flow lines}
\begin{equation}\label{eq:cascade-mod-space}
     \Mm_{c_-c_+}=\Mm_{c_-c_+}(\Aa^{F},h,J,g)
\end{equation}
consists of all cascade flow lines $\Gamma$ connecting $c_-$
and\index{$\Mm_{c_-c_+}$ connecting cascade flow lines}
$c_+$.\index{moduli space!of connecting cascade flow lines}
The following are non-trivial -- even in finite dimension -- although well known:
See~\citerefRF[App.\,A]{Frauenfelder:2004a} and~\citeintro[App.\,A]{Cieliebak:2009a}.
Firstly, for generic $J$ and $g$ these moduli spaces are smooth manifolds.
Secondly, it is a consequence of the Compactness
Theorem~\ref{thm:RF-compactness} below
that the 0-dimensional part $\Mm^0_{c_-c_+}$ of $\Mm_{c_-c_+}$
is compact, so a finite~set.

%%%%%%%%%%%%%%%%%%%%%%%%%%%%%%%%%%
%% SUBSUBSECTION %%%%%%%%%%%%%%%%%%%
\subsubsection{Boundary operator and Floer homology}

Pick $F\in\Ff(\Sigma)\cap\Ffreg$.
Since for generic $J$ and $g$ the 0-dimensional part $\Mm^0_{c_-c_+}$ of
the space of connecting flow lines $\Mm_{c_-c_+}(\Aa^{F},h,J,g)$
is compact, hence a finite set, the number of elements modulo two
$$
     n(c_-,c_+):=\#_2(\Mm^0_{c_-c_+})
$$
is well defined. Then the Floer boundary operator
$$
     \p:\CF(\Aa^{F},h)\to \CF(\Aa^{F},h)
$$
is defined as the linear extension of
\begin{equation}\label{eq:RF-bdy}
     \p c_+:=\sum_{c_-\in\Crit\, h} n(c_-,c_+) c_-
\end{equation}
for $c_+\in\Crit\, h$. This is illustrated  by Figure~\ref{fig:fig-RF-UGF}; cf. also
Remark~\ref{rem:RF-non-standard-order}.

\begin{exercise}
The formal sum in the definition of $\p$ satisfies
the finiteness condition~(\ref{eq:RF-FINITE}).
[Hint: The action non-decreases along cascade flow lines,~(\ref{eq:RF-bd-crit-FINITE}),
Exercise~\ref{exc:action-spectrum}, Arzel\`{a}-Ascoli Theorem~\ref{thm:AA}.]
\end{exercise}

The boundary operator property $\p^2=0$ follows by standard gluing and
compactness arguments, compare Proposition~\ref{prop:HF-bound-op},
once one has the Compactness Theorem~\ref{thm:RF-compactness}
for the 1-dimensional part of moduli space, together with
finiteness of the number $\abs{\Crit^{[a,b]} h}=\abs{\Crit\, h|_{C^{[a,b]}}}$
of critical points of the Morse function $h$ in any finite action interval
$[a,b]$ or, equivalently, on the closed manifold $C^{[a,b]}$.
By definition \textbf{Floer homology of the Rabinowitz action functional}
$\Aa^{F}$ is the homology of this chain complex,
namely \index{$\HF(\Aa^{F})$ (Rabinowitz-) Floer homology}
\begin{equation}\label{eq:RF-homology-F}
     \HF(\Aa^{F}):=\frac{\ker\p}{\im\p},\qquad
     F\in\Ff(\Sigma)\cap\Ffreg.
\end{equation}
We already dropped $(h,J,g)$ from the notation since
$\HF(\Aa^{F})$ does not depend on that choice, up to
canonical isomorphism, as follows by the standard continuation
techniques detailed in Section~\ref{sec:FH-continuation}.
Via continuation one also shows independence of the
regular defining Hamiltonian $F\in\Ff(\Sigma)\cap\Ffreg$.

%%%%%%%%%%%%%%%%%%%%%%%%%%%%%%%%%%
%% SUBSUBSECTION %%%%%%%%%%%%%%%%%%%
\subsubsection{Rabinowitz-Floer homology of a convex exact hypersurface}

\begin{remark}\label{rem:RFH-well-defined}
Given $\Sigma$ as in Assumption~\ref{ass:RF},
observe the following: There is no regular defining Hamiltonian $F\in\Ff(\Sigma)$ iff
$\Sigma$ itself is already degenerate; see Exercise~\ref{exc:reg_non-deg}.
Hence fixing non-regularity of $F$ is equivalent to perturbing~$\Sigma$.
In practice pick $F_0$ in the dense subset $\Uureg^F:=\Uu^F\cap\Ffreg$
of the small open neighborhood $\Uu^F$ of $F$ in $\Ff$
provided by Exercise~\ref{exc:reg_non-deg-nbhd}~b).
In particular, the zero set $\Sigma_0:=F_0^{-1}(0)$
is a non-degenerate convex exact hypersurface nearby~$\Sigma$.
The Floer homology of $\Aa^{F_0}$ is then defined
by~(\ref{eq:RF-homology-F}).
For any two such choices $F_0$ and $F_1$ there is
the standard continuation isomorphism on homology.
\end{remark}

\textbf{\Index{Rabinowitz-Floer homology}}
of\index{$\RFH(\Sigma)$ RF homology of convex exact hypersurface}
a convex exact hypersurface $\Sigma\subset(V,\lambda)$
with defining Hamiltonian $F\in\Ff(\Sigma)$ is defined by
\begin{equation}\label{eq:RF-homology}
     \RFH(\Sigma)
     :=\HF(\Aa^{F_0})
     ,\qquad
     F_0\in\Uureg^F.
\end{equation}
By continuation $\RFH(\Sigma)$ does not depend on the choice of $F_0$;
cf. Remark~\ref{rem:generic-Sigma}.
Observe that $F_0$ lies in $\Ffreg$ and in $\Ff(\Sigma_0)$: It is defining
for the non-degenerate convex exact hypersurface
$\Sigma_0:=F_1^{-1}(0)$ nearby $\Sigma$.
For $\SS^1$-equivariant Rabinowitz-Floer homology
see~\citerefRF{Frauenfelder:2016a}.

%%%%%%%%%%%%%%%%%%%%%%%%%%%%%%%%%%
%% SUBSECTION %%%%%%%%%%%%%%%%%%%%%%
%%%%%%%%%%%%%%%%%%%%%%%%%%%%%%%%%%
\subsection{Compactness of moduli spaces}\label{sec:RF-compactness}
Definition and property $\p^2=0$ of the boundary operator $\p$
both hinge on compactness properties of the moduli spaces
of connecting flow lines with cascades.
Recall that a connecting cascade flow line consists of (semi-)finite
Morse gradient trajectories\footnote{
  in general not connecting ones:
  Morse trajectories have finite life time, except the
  initial and ending one which are semi-infinite, also called
  semi-connecting.
  }
$\gamma^j$ along the critical set $C$ of $\Aa^F$ and the cascades
$\tilde\upsilon^j$ themselves. Cascades are, modulo $s$-shift,
connecting trajectories $\upsilon=(u,\LMpath)$ of $\grad\Aa^F$
between two critical sets in $\Ll V\times\R$.\footnote{
  \emph{Connecting} trajectories necessarily live on the infinite domain
  $(\R\times\SS^1)\times\R$ and their moduli spaces are subject to division
  by the free $\R$-action given by shifting the $s$-variable.
  }
While compactness up to broken trajectories
for Morse trajectories can even be handled within
finite dimensional dynamical systems,
see e.g.~\cite{weber:2015-MORSELEC-In_Preparation},
in the case of cascades compactness of the loop space
component $u$ is also standard, but this
is not so for the Lagrange multiplier component $\LMpath$.
Let us detail this: To prove compactness up to broken
trajectories of the set of $\grad\Aa^F$ trajectories $\upsilon=(u,\LMpath)$
that are subject to a uniform action bound
\begin{equation}\label{eq:RF-unif-action-bd-ass}
     a\le\Aa^F(\upsilon(s))\le b
\end{equation}
for all times $s\in\R$, the following apriori bounds are sufficient: Namely,
\begin{itemize}
\item[(i)]
  a uniform $C^0$ bound on $u$;
\item[(ii)]
  a uniform $C^0$ bound on $\LMpath$;
\item[(iii)]
  a uniform $C^0$ bound on $\p_su$ (thus on $\p_tu$).
\end{itemize}
(i) is fine
due to our choice of \emph{cylindrical} almost complex
structures; see Remark~\ref{rem:RF-non-standard-order}.
(ii)~was new and required new techniques when
it was established in~\citeintro[\S 3.1]{Cieliebak:2009a}.
Thus we shall outline their argument below.
(iii)~follows from the exactness assumption of the symplectic form
$\omega=d\lambda$ via standard bubbling-off analysis as shown
in Section~\ref{sec:FH-comp}.

\begin{exercise}
Doesn't one need a uniform $C^0$ bound on the derivative $\LMpath^\prime$
as well? And why is the $C^0$ bound requirement (iii) for the
derivative $\p_s u$ not listed directly after condition (i) for $u$ itself?
\end{exercise}

Let us state the compactness theorem
in a rather general form that will serve simultaneously
\begin{itemize}
\item
  the continuation problem, where $s$-dependent Hamiltonians appear;
\item
  the proof of the Vanishing Theorem~\ref{thm:RF-vanishing}, where
  the rescalings $\chi$ help.
\end{itemize}
Of course, the following compactness theorem
for families requires three $C^0$ bounds as in (i--iii)
which are, in addition, uniform in the family parameters.

\begin{theorem}[Cascade compactness]\label{thm:RF-compactness}
Suppose that $F:[0,1]\times V\to\R$ is a smooth function
such that each Hamiltonian $F_\sigma:=F(\sigma,\cdot)$ defines
a convex exact hypersurface $\Sigma_\sigma$
and $\chi:[0,1]\times\SS^1\to[0,\infty)$ is a smooth function such that each
$\chi_\sigma:=\chi(\sigma,\cdot)$ integrates to one.
Let $\eps>0$ and $c<\infty$ be constants as provided
by~{\citeintro[Prop.~3.4]{Cieliebak:2009a}},\footnote{
  {\citeintro[Prop.~3.4]{Cieliebak:2009a}} is a generalization
  of Proposition~\ref{prop:control-eta} to families
  $(F_\sigma,\Sigma_\sigma,\chi_\sigma)$.
  }
and suppose that the following inequality holds
\begin{equation}\label{eq:ass-casc-comp}
     \left(c+\frac{\norm{F}_{\infty}}{\eps^2}\right)\cdot
     \left(\norm{\p_\sigma F}_{\infty}+\norm{\p_\sigma\chi}_{\infty}\cdot
     \norm{F}_{\infty}\right)\le\frac{1}{8}.
\end{equation}
Then the following is true.
Assume that $\upsilon_\nu=(u_\nu,\LMpath_\nu)\in C^\infty(\R\times\SS^1,V)
\times C^\infty(\R,\R)$ is a sequence of trajectories of the $s$-dependent
gradient $\grad\Aa^{\chi_s F_s}$ and there are bounds $a,b\in\R$ such that
$$
     a\le\lim_{s\to-\infty}\Aa^{\chi_s F_s}(\upsilon_\nu(s))
%    ,\qquad
$$
for all times $s\in\R$ and that
$$
     \lim_{s\to\infty}\Aa^{\chi_s F_s}(\upsilon_\nu(s))\le b.
$$
Then there is a subsequence, still denoted by $\upsilon_\nu$,
and a trajectory $\upsilon$ of $\grad\Aa^{\chi_s F_s}$ such that
the subsequence $\upsilon_\nu=(u_\nu,\LMpath_\nu)$ converges to
$\upsilon=(u,\LMpath)$ in the $C_{\rm loc}^\infty$-topology.
\end{theorem}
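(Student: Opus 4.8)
The plan is to establish the three uniform $C^0$ bounds (i)--(iii) for the sequence $\upsilon_\nu=(u_\nu,\LMpath_\nu)$ and then invoke the standard elliptic bootstrapping and Arzel\`{a}-Ascoli machinery from Section~\ref{sec:FH-comp} to extract a $C^\infty_{\rm loc}$-convergent subsequence. First I would record the energy identity for the $s$-dependent gradient: since $\upsilon_\nu$ solves $\p_s\upsilon_\nu=\grad\Aa^{\chi_sF_s}(\upsilon_\nu)$, one has $\int_{-\infty}^\infty\norm{\p_s\upsilon_\nu(s)}_J^2\,ds=\lim_{s\to\infty}\Aa^{\chi_sF_s}(\upsilon_\nu(s))-\lim_{s\to-\infty}\Aa^{\chi_sF_s}(\upsilon_\nu(s))+\int\int(\p_\sigma(\chi_sF_s))(u_\nu)$, and the assumed two-sided limit bounds $a,b$ together with the controlled $\sigma$-derivatives give a uniform $L^2$ energy bound on $\p_s u_\nu$ and $\LMpath_\nu'$. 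The uniform $C^0$ bound (i) on $u_\nu$ follows exactly as in Remark~\ref{rem:X_F=0-on-cyl-ends} and Remark~\ref{rem:RF-non-standard-order}: on the cylindrical ends $X_{F_s}=0$, so component one of the flow equation becomes the honest $J$-holomorphic curve equation with a cylindrical almost complex structure, and the maximum principle confines $u_\nu$ to a fixed compact subset of $V$; this is where convexity of $(V,\lambda)$ and the cylindrical choice of $J$ are essential.

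The main obstacle is bound (ii), the uniform $C^0$ bound on the Lagrange multiplier component $\LMpath_\nu$, which is the genuinely new phenomenon in the Rabinowitz setting (and whose failure is exactly what produces the non-existence examples~(\ref{eq:RF-counter-Ham-Seif})). The plan here is to follow the Cieliebak--Frauenfelder argument: combine the uniform action bound~(\ref{eq:RF-unif-action-bd-ass}) with the fundamental estimate (their Proposition~3.4, the family version of the $L^2$-control statement cited in the hypotheses) that bounds $\abs{\LMpath}$ in terms of $\norm{\grad\Aa^{\chi_sF_s}(\upsilon)}$ plus the action, valid outside a neighborhood of the critical set where $F$ vanishes. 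Concretely, away from $\Sigma_s$ the mean value of $F_s$ along a loop is bounded below, so $\abs{\int_0^1 F_s(u_s)\,dt}$ controls $\abs{\LMpath}$; near $\Sigma_s$ one instead uses that $\alpha_s=\lambda|_{\Sigma_s}$ is contact (the $\LVF$-transversality) to get an estimate of the form $\abs{\Aa^{\chi_sF_s}(\upsilon)}\le$ const$\cdot\norm{\grad\Aa^{\chi_sF_s}(\upsilon)}+$ const$\cdot\abs{\LMpath}\cdot\norm{\grad\Aa^{\chi_sF_s}(\upsilon)}$. Feeding this into the energy bound from the first paragraph, and using inequality~(\ref{eq:ass-casc-comp}) to absorb the quadratic cross-term, one obtains $\sup_{s}\abs{\LMpath_\nu(s)}\le$ const uniformly in $\nu$ and in the family parameters. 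This is the step where hypothesis~(\ref{eq:ass-casc-comp}) is actually consumed, and where the restricted-contact-type assumption on $\Sigma$ does all the work.

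Once (i) and (ii) are in hand, bound (iii) — a uniform $C^0$ bound on $\p_s u_\nu$, hence on $\p_t u_\nu$ — follows from the uniform $L^2$ energy bound via the standard bubbling-off analysis of Step~II in Section~\ref{sec:FH-comp}: a mean value inequality for $e_\nu=\abs{\p_su_\nu}^2$ based on a differential inequality $\Delta e_\nu\ge -c_1-c_2e_\nu^2$, with the only possible obstruction being the bubbling-off of a $J$-holomorphic sphere, which is excluded because $\omega=d\lambda$ is exact and $u_\nu$ stays in a compact set. (Here the $\LMpath_\nu X_{F_s}$ term is lower-order and uniformly bounded by (ii) and compactness of $V$ on the relevant set.) With the uniform $C^1$ bounds on $u_\nu$, the uniform $C^0$ bound on $\LMpath_\nu$, and the $L^\infty$ bound on $\LMpath_\nu'=-\int_0^1F_s(u_{\nu,s})\,dt$ that follows immediately from (i), elliptic bootstrapping (Calder\'{o}n--Zygmund estimates applied inductively, then Sobolev embedding) upgrades these to uniform $C^k_{\rm loc}$ bounds for every $k$, and Arzel\`{a}-Ascoli (Theorem~\ref{thm:AA}) applied on an exhaustion of $\R\times\SS^1$ by compact sets, with a diagonal subsequence, yields a limit $\upsilon=(u,\LMpath)$ with $\upsilon_\nu\to\upsilon$ in $C^\infty_{\rm loc}$; passing to the limit in the flow equation shows $\upsilon$ is again a trajectory of $\grad\Aa^{\chi_sF_s}$. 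This completes the proof.
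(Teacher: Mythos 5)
Your proposal follows the same route as the paper: the paper's ``proof'' is simply the citation to Cieliebak--Frauenfelder's Theorem~3.6, and the surrounding text of Section~\ref{sec:RF-compactness} outlines exactly the three apriori bounds (i)--(iii) that you establish before invoking bubbling-off, elliptic bootstrap, and Arzel\`a-Ascoli. Your treatment of (i) via cylindrical almost complex structures, of (ii) via the family version of Proposition~\ref{prop:control-eta} and the $\Tt_\sigma$-type time-to-critical-set argument with condition~(\ref{eq:ass-casc-comp}) absorbing the $\LMpath$-dependent cross-term in the $s$-dependent energy identity, and of (iii) via exactness precluding sphere bubbles, matches the intended argument.

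One small caution on the sequencing of your first two paragraphs: you present the uniform $L^2$ energy bound as a preliminary step feeding into the $\LMpath$ bound, but in the $s$-dependent case the correction term $\int(\p_\sigma\Aa^{\chi_\sigma F_\sigma})(\upsilon_\nu)\,ds$ already carries a factor of $\LMpath_\nu$, so the energy estimate and the multiplier bound are not sequential but coupled; the smallness hypothesis~(\ref{eq:ass-casc-comp}) is precisely what closes this feedback loop and yields a two-sided action bound at all times $s$, not just at $s=\pm\infty$. You flag the quadratic cross-term in paragraph two, but to be safe one should record explicitly that the first consequence of~(\ref{eq:ass-casc-comp}) is the uniform bound $\abs{\Aa^{\chi_sF_s}(\upsilon_\nu(s))}\le\kappa$ for all $s$ (not only the asymptotic values), which is the hypothesis that Proposition~\ref{prop:control-eta} and Corollary~\ref{cor:C0-eta} actually consume. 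With that reordering made explicit, the argument is exactly the one in the reference.
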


\begin{proof}
\citeintro[Thm.~3.6]{Cieliebak:2009a}.
\end{proof}

%%%%%%%%%%%%%%%%%%%%%%%%%%%%%%%%%%
%% SUBSUBSECTION %%%%%%%%%%%%%%%%%%%
\subsubsection{Uniform bounds for multiplier paths $\mbf{\LMpath}$ --
  contact type enters}

For simplicity we only consider the case of
trajectories of the $s$-independent gradient $\grad\Aa^F$.
Moreover, we only sketch proofs; for details
see~\citeintro[\S 3.1]{Cieliebak:2009a}.

\begin{remark}[Contact type of $\Sigma$ enters]
\label{rem:powerhouse}
(i)~So far the geometric condition on $\Sigma$ to be not just any
bounding hypersurface in $(V,\lambda)$, but a convex exact one,\footnote{
  We assume contact type for $\Sigma$, otherwise, add an exact
  form to $\lambda$; cf. Exercise~\ref{exc:equiv-1-form-restrict}.
  }
has not been used. That will change now in order to obtain a uniform
$C^0$ bound for the Lagrange multiplier components $\LMpath$
of all trajectories $\upsilon=(u,\LMpath)$ subject to the same
action bounds $a$ and $b$ as in~(\ref{eq:RF-unif-action-bd-ass}).

(ii)~Some condition on the energy surface $\Sigma$
is indeed necessary, given that there are energy surface
counterexamples $\Sigma^\prime$ in $\R^{2n}$ to the Hamiltonian
Seifert conjecture; cf.~(\ref{eq:RF-counter-Ham-Seif}).
But even in the absence of closed characteristics the constant loops
corresponding to the points of $\Sigma$ are still critical points of $\Aa^F$
thereby giving rise to nontriviality
$\RFH(\Sigma^\prime)\simeq\Ho(\Sigma;\Z_2)\not=0$;
cf. Exercise~\ref{exc:RFH-Sigma}
and~\citeintro[Pf. of Cor.~1.5]{Cieliebak:2009a}.
But this would contradict the Vanishing Theorem~\ref{thm:RF-vanishing}
since in $\R^{2n}$ any compact subset is displaceable.
\end{remark}

\begin{proposition}\label{prop:control-eta}
Let $F\in\Ff(\Sigma)$ be a defining Hamiltonian.
Then there are constants $\eps>0$ and $c<\infty$ such that
for pairs $(\lpz,\LM)\in\Ll V\times\R$ it holds
$$
     \Norm{\grad\Aa^F(\lpz,\LM)}\le\eps\quad\Rightarrow\quad
     \Abs{\LM}\le c\left(\abs{\Aa^F(\lpz,\LM)}+1\right).
$$
\end{proposition}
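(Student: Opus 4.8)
The argument will rest on three elementary properties of a defining Hamiltonian $F\in\Ff(\Sigma)$: (a) its differential $dF$ has compact support, since $F$ is constant outside a compact set; (b) $dF(X_F)=-\omega(X_F,X_F)=0$; and (c) along $\Sigma=F^{-1}(0)$ one has $\lambda(X_F)=\alpha(\RVF_\alpha)=1$, so that the function $g:=\lambda(X_F)-F$ on $V$ satisfies $g|_\Sigma\equiv 1$. By the formula (\ref{eq:RF-norm-grad-Aa}) for the norm of the gradient, the hypothesis $\norm{\grad\Aa^F(\lpz,\LM)}\le\eps$ yields \emph{two} estimates simultaneously, namely $\norm{\dot\lpz-\LM X_F(\lpz)}_2\le\eps$ and $\abs{\mean(F\circ\lpz)}\le\eps$; abbreviate $v:=\dot\lpz-\LM X_F(\lpz)$, so $\norm{v}_2\le\eps$ and (by Cauchy--Schwarz on $[0,1]$) $\norm{v}_1\le\eps$.

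\emph{Step 1 (the loop is confined near $\Sigma$).} By (b) we have $\tfrac{d}{dt}F(\lpz(t))=dF_{\lpz(t)}(\dot\lpz(t))=dF_{\lpz(t)}(v(t))$, so by (a) the oscillation of $F\circ\lpz$ over $\SS^1$ is at most $c_1\norm{v}_1\le c_1\eps$, where $c_1:=\sup_{t,z}\abs{\nabla^{g_{J_t}}F(z)}<\infty$ by compact support of $dF$. Since $\mean(F\circ\lpz)$ lies between the min and the max of $F\circ\lpz$ and $\abs{\mean(F\circ\lpz)}\le\eps$, this forces $\abs{F(\lpz(t))}\le(1+c_1)\eps$ for every $t$. \emph{Step 2 (a positive lower bound for $g$ near $\Sigma$).} A standard compactness argument --- using that $g$ is continuous, $g|_\Sigma\equiv 1$, and that $D_\delta:=\{|F|<\delta\}$ has compact closure for small $\delta$ (indeed $\overline{D_\delta}\subset M\cup\overline{\supp dF}$ once $\delta$ is below the constant value of $F$ at infinity) --- produces $\delta>0$ with $g\ge\tfrac12$ on $D_\delta$. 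I then choose $\eps$ so small that $(1+c_1)\eps<\delta$; by Step 1 the whole loop $\lpz$ then lies in $D_\delta$. In particular $c_2:=\sup\{\abs{\lambda_z}_{g_{J_t}^*}:t\in\SS^1,\ z\in\overline{D_\delta}\}<\infty$.

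\emph{Step 3 (the identity and the conclusion).} Splitting $\dot\lpz=v+\LM X_F(\lpz)$ in $\int_{\SS^1}\lpz^*\lambda=\int_0^1\lambda_{\lpz(t)}(\dot\lpz(t))\,dt$ and using $\LM\int_0^1\lambda_{\lpz}(X_F(\lpz))\,dt-\LM\,\mean(F\circ\lpz)=\LM\int_0^1 g(\lpz(t))\,dt$ gives
\[
     \Aa^F(\lpz,\LM)=\int_0^1\lambda_{\lpz(t)}(v(t))\,dt+\LM\int_0^1 g(\lpz(t))\,dt .
\]
By Step 2, $\int_0^1 g(\lpz(t))\,dt\ge\tfrac12$, while $\bigl|\int_0^1\lambda_{\lpz(t)}(v(t))\,dt\bigr|\le c_2\norm{v}_1\le c_2\eps$ by Steps 1 and 2. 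Hence $\tfrac12\abs{\LM}\le\abs{\Aa^F(\lpz,\LM)}+c_2\eps$, i.e.\ $\abs{\LM}\le c\bigl(\abs{\Aa^F(\lpz,\LM)}+1\bigr)$ with $c:=\max\{2,\,2c_2\eps\}$, which is the claim.

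\emph{Routine vs.\ delicate points.} The finiteness of $c_1$ and $c_2$ and the compactness of $\overline{D_\delta}$ and of $F^{-1}([-1,1])$ are routine. The one genuinely delicate point is Step 1: a smallness bound on the $L^2$-norm of $\dot\lpz-\LM X_F(\lpz)$ does not, a priori, control $\dot\lpz$ or $\LM$ individually, and $\lambda$ is unbounded on the non-compact manifold $V$; nevertheless it pins the entire loop into a fixed compact neighbourhood of $\Sigma$. This is exactly where $F$ being a \emph{defining} Hamiltonian (constant near infinity, so $dF$ compactly supported) is essential --- the proposition is false without it --- and it is also the geometric place where the contact-type hypothesis enters, through $g|_\Sigma\equiv 1>0$.
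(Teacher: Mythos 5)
Your proof is correct and takes essentially the same route as the paper's: confine the loop to a tubular neighbourhood of $\Sigma$ using smallness of $\norm{\grad\Aa^F(\lpz,\LM)}$, then extract the bound on $\abs{\LM}$ from the contact-type identity $\lambda(X_F)|_\Sigma\equiv 1$ on that neighbourhood. The one place the executions differ is the confinement step: the paper handles it by a two-case analysis (either the loop straddles the shell $U_\delta\setminus U_{\delta/2}$, giving a lower bound on the first component of the gradient, or it lies entirely outside $U_{\delta/2}$, giving a lower bound on the second), whereas you obtain the pointwise bound $\abs{F(\lpz(t))}\le(1+c_1)\eps$ in one stroke by combining $\tfrac{d}{dt}\bigl(F\circ\lpz\bigr)=dF(v)$ with the bound on $\mean(F\circ\lpz)$. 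Both arguments rest on the same two ingredients ($dF(X_F)=0$ and compact support of $dF$), but your version is tidier. You also write out in full the identity $\Aa^F(\lpz,\LM)=\int_0^1\lambda(v)\,dt+\LM\int_0^1 g\circ\lpz\,dt$ with $g:=\lambda(X_F)-F$ and the ensuing estimate, which the paper's Step~I only alludes to as a ``rather mild generalization'' of the critical-point computation $\Aa^F=\LM$; note that the paper's choice of $\delta$ with $\lambda(X_F)\ge\tfrac12+\delta$ on $U_\delta$ is equivalent to your $g\ge\tfrac12$ there, since $\abs{F}<\delta$ on $U_\delta$.
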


The proposition tells that near critical points
the multiplier part of a pair $(\lpz,\LM)\in\Ll V\times\R$
is bounded in terms of the pair action.
So if $(\lpz,\LM)$ is one element of a whole trajectory
$(u,\LMpath):\R\to \Ll V\times\R$ subject to the
bound~(\ref{eq:RF-unif-action-bd-ass}), thus
\begin{equation}\label{eq:ac-bound-7687}
     \abs{\Aa^F(u(s),\LMpath(s))}\le\kappa:=\max\{\abs{a},\abs{b}\},\quad s\in\R,
\end{equation}
then the proposition provides a uniform bound on $\abs{\LMpath(s)}$ for $s$
near $\pm\infty$. So it remains to deal with $\abs{\LMpath}$ along compact
intervals in $\R$, one compact interval for each trajectory of the, generally
non-compact, family under consideration.

\begin{corollary}[Uniform $\LMpath$ bound]\label{cor:C0-eta}
Let $\upsilon=(u,\LMpath)\in C^\infty(\R\times\SS^1,V)
\times C^\infty(\R,\R)$ be a trajectory of
$\grad\Aa^{F}$ along which the action remains in a compact interval,
say in $[a,b]$; cf.~(\ref{eq:RF-unif-action-bd-ass}).
Then the $L^\infty$-norm of $\LMpath$ is bounded uniformly in terms of
a constant $c$ that depends on $a,b$, but not on $\upsilon$.
\end{corollary}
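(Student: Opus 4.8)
The plan is to deduce the corollary from Proposition~\ref{prop:control-eta} --- which already bounds the multiplier near critical points in terms of the action --- together with the finite-energy inequality for the upward gradient flow. Throughout write $\kappa:=\max\{\Abs{a},\Abs{b}\}$ and $m:=\norm{F}_{\infty}$; the latter is finite because a defining Hamiltonian is constant outside a compact set. First I would record two elementary facts about a trajectory $\upsilon=(u,\LMpath)$ of $\grad\Aa^F$ with $a\le\Aa^F(\upsilon(s))\le b$ for all $s$. The multiplier component of the gradient formula~(\ref{eq:RF-grad-Aa}) together with $\p_s\upsilon=\grad\Aa^F(\upsilon)$ gives $\p_s\LMpath(s)=-\mean(F\circ u_s)$, hence $\Abs{\p_s\LMpath(s)}\le m$ for every $s$. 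And since $\tfrac{d}{ds}\Aa^F(\upsilon(s))=\Norm{\grad\Aa^F(\upsilon(s))}^2\ge 0$, the function $s\mapsto\Aa^F(\upsilon(s))$ is non-decreasing and bounded, so its limits at $\pm\infty$ exist and the total energy obeys $\int_{-\infty}^{\infty}\Norm{\grad\Aa^F(\upsilon(s))}^2\,ds\le b-a$.

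Next I would split the line by the threshold $\eps>0$ from Proposition~\ref{prop:control-eta}, with companion constant $c$: call a time $s$ \emph{good} if $\Norm{\grad\Aa^F(\upsilon(s))}\le\eps$ and \emph{bad} otherwise. At a good time, Proposition~\ref{prop:control-eta} and the bound $\Abs{\Aa^F(\upsilon(s))}\le\kappa$ yield $\Abs{\LMpath(s)}\le c(\kappa+1)$ immediately. For a bad time $s_0$, let $(s_-,s_+)$ be the connected component of $\{\,s:\Norm{\grad\Aa^F(\upsilon(s))}>\eps\,\}$ containing $s_0$; on it $\Norm{\grad\Aa^F(\upsilon)}^2>\eps^2$, so the energy inequality forces $\eps^2(s_+-s_-)\le b-a$, i.e. $s_+-s_-\le(b-a)/\eps^2<\infty$. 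In particular the interval is bounded, so $s_\pm$ are finite and --- being boundary points of an open component --- satisfy $\Norm{\grad\Aa^F(\upsilon(s_\pm))}\le\eps$, i.e. they are good times. Integrating $\p_s\LMpath$ from $s_-$ to $s_0$ then gives $\Abs{\LMpath(s_0)}\le\Abs{\LMpath(s_-)}+\int_{s_-}^{s_0}\Abs{\p_s\LMpath}\,ds\le c(\kappa+1)+m(b-a)/\eps^2$.

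Combining the two cases would yield $\norm{\LMpath}_{L^\infty}\le c(\kappa+1)+m(b-a)/\eps^2$, and the right-hand side depends only on $a$, $b$ and the fixed ambient data $F$ (through $c$, $\eps$, $m$), not on the trajectory $\upsilon$ --- which is exactly the assertion. As for difficulty: once Proposition~\ref{prop:control-eta} is granted, essentially nothing remains, and the only step deserving a moment's care is the observation that a maximal ``bad'' interval cannot escape to $\pm\infty$, which is precisely where the finiteness of $\int_{\R}\Norm{\grad\Aa^F(\upsilon)}^2\,ds\le b-a$ enters. The genuine obstacle in this circle of ideas sits upstream in Proposition~\ref{prop:control-eta} itself, where the restricted contact type of $\Sigma$ is used; cf. Remark~\ref{rem:powerhouse}.
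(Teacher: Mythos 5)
Your proof is correct and follows essentially the same route as the paper. The paper introduces the quantity $\Tt_\sigma:=\inf\{s\ge 0\mid\norm{\grad\Aa^F(\upsilon(\sigma+s))}<\eps\}$, shows $\Tt_\sigma\le(b-a)/\eps^2$ from the energy bound, applies Proposition~\ref{prop:control-eta} at time $\sigma+\Tt_\sigma$, and integrates $\p_s\LMpath$ back to $\sigma$; your good/bad-time decomposition with maximal bad intervals is the same mechanism dressed slightly differently, and gives the same constant.
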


\begin{proof}[Sketch of proof of Proposition~\ref{prop:control-eta}]
(For details see~{\citeintro[Prop.~3.2]{Cieliebak:2009a}}.)
The key input, due to $\Sigma$ being both an energy surface $F^{-1}(0)$
\emph{and} of contact type with respect to $\alpha$, is the
\underline{coupling $X_F=R_\alpha$} of Hamiltonian and Reeb dynamics
along $\Sigma$; see~(\ref{eq:Ham=Reeb}). To illustrate the effect of the
coupling note that for a critical point $(\lpz,\LM)$ of $\Aa^F$
\begin{equation}\label{eq:RF-warm-up}
     \Aa^F(\lpz,\LM)=\int_0^1
     \Bigl(\underbrace{\lambda|_{\lpz}(\dot\lpz)}_{\alpha(\LM R_\alpha)\equiv \LM}
     -\LM\underbrace{F\circ \lpz}_{0} \Bigr) dt
     =\LM.
\end{equation}
This actually solves Exercise~\ref{exc:action-spectrum}.
Let's see how much of this identity survives
for a general pair $(u,\LMpath)$ whose only restriction is
that the loop part $u$ must stay in a small
neighborhood $U_\delta$ of $\Sigma$.

%%%%%%%%%%%%%%%%%%%%%%%%%%%%%% Step I
\vspace{.1cm}\noindent
\textbf{I.}
\textit{There are constants $\delta>0$ and $c_\delta<\infty$
with the following significance. For every pair $(\lpz,\LM)\in\Ll V\times\R$
whose loop part $\lpz$ remains $\delta$-near to $\Sigma$
in the sense that $\lpz(\SS^1)\subset U_\delta:=F^{-1}(-\delta,\delta)$
the Lagrange multiplier satisfies the estimate}
$$
     \Abs{\LM}\le2\Abs{\Aa^F(\lpz,\LM)}+c_\delta\Norm{\grad \Aa^F(\lpz,\LM)}.
$$

The key step is to obtain the two constants.
By compactness of $\Sigma=F^{-1}(0)$
and zero being a regular value of $F$ the closure of $U_\delta$
is compact for sufficiently small $\delta>0$. Now choose $\delta>0$
smaller, if necessary, such that
$$
     \lambda_p \left(X_F(p)\right)\ge\tfrac12+\delta,\quad p\in U_\delta.
$$
Such $\delta$ exists since 
\underline{$\lambda(X_F)=\alpha(R_\alpha)\equiv 1$}
along $\Sigma=F^{-1}(0)$: To see this use
contact type, see~(\ref{eq:Ham=Reeb}), and the definition
of $R_\alpha$, see Exercise~\ref{exc:contact-volume_form}~(c).
The constant $c_\delta:=2\norm{\lambda|_{U_\delta}}_\infty$
is finite since  $U_\delta$ is of compact closure.
The desired estimate is a rather mild generalization
of~(\ref{eq:RF-warm-up}), see~\citeintro[p.264]{Cieliebak:2009a}.

%%%%%%%%%%%%%%%%%%%%%%%%%%%%%% Step II
\vspace{.1cm}\noindent
\textbf{II.}
\textit{For each $\delta>0$ there is a constant $\eps=\eps(\delta)>0$
such that whenever a pair $(\lpz,\LM)$ satisfies
$\norm{\grad \Aa^F(\lpz,\LM)}\le\eps$, then
the loop part $\lpz$ remains in $U_\delta$.
}

\vspace{.1cm}
To show II one first analyzes $\lpz$ in two cases, in each case
forgetting one of the two components of $\norm{\grad \Aa^F(\lpz,\LM)}$;
see~(\ref{eq:RF-norm-grad-Aa}). Excluding both~cases~yields~II.
\newline
\textsc{Case~1:}
There are times $t_0,t_1\in\SS^1$ with
$\abs{F(\lpz(t_0))}\ge\delta$ and $\abs{F(\lpz(t_1))}\le\delta/2$.
In this case by periodicity of $\lpz$ and continuity of $F$ there are two points,
again denoted by $t_0,t_1\in\SS^1$, with $t_0<t_1$ and
$\abs{F\circ \lpz(t)}\in[\frac{\delta}{2},\delta]$ on $[t_0,t_1]$.
Set $\mu:=\max_{x\in \widebar{U}_\delta, t\in\SS^1}\abs{\nabla^{J_t} F(x)}_{g_{J_t}}$
and forget component \emph{two} in~(\ref{eq:RF-norm-grad-Aa}) to get\footnote{
  Use that by Cauchy-Schwarz 
  $\norm{f}_2=\norm{f}_2\norm{1}_1\ge \langle f,1\rangle_{L^2}=\norm{f}_1$
  for $f\in C^\infty(\SS^1,\R)$.
  }
\begin{equation*}
\begin{split}
     \Norm{\grad \Aa^F(\lpz,\LM)}
   &\ge\Norm{\dot \lpz-\LM X_F(\lpz)}_2\\
   &\ge\int_{t_0}^{t_1}\Abs{\dot \lpz-\LM X_F(\lpz)} dt\\
   &\ge\dots\\
   &\ge\frac{\delta}{2\mu}.
\end{split}
\end{equation*}
The omitted steps, using e.g. $dF(X_F)=0$,
are detailed in~\citeintro[p.265]{Cieliebak:2009a}.
\newline
\textsc{Case~2:}
The loop $\lpz$ lives outside $U_{\delta/2}$.
Forgetting component \emph{one}
in~(\ref{eq:RF-norm-grad-Aa})\footnote{
  To see the identity note that $F(\lpz(t))$ is non-zero, hence will
  not change sign; cf.~(\ref{eq:jhjkhkj7876}).
  }
$$
     \Norm{\grad \Aa^F(\lpz,\LM)}
     \ge\Bigl|\int_0^1 F(\lpz(t))\, dt\Bigr|
     =\int_0^1 \underbrace{\abs{F(\lpz(t))}}_{\ge\delta/2}\, dt
     \ge\frac{\delta}{2}.
$$

To prove Step~II pick any $\delta>0$, set
$$
     \eps:=\frac{\delta}{4\max\{1,\mu\}}
     <\min\{\frac{\delta}{2},\frac{\delta}{2\mu}\},
$$
and assume $(\lpz,\LM)\in\Ll V\times\R$ satisfies
$\norm{\grad \Aa^F(\lpz,\LM)}\le\eps$.
Neither case 1 nor 2 applies to $\lpz$.
So $\lpz$ hits $U_{\delta/2}$ ($\neg$ case 2),
but then it cannot leave $U_\delta$ ($\neg$ case 1).

%%%%%%%%%%%%%%%%%%%%%%%%%%%%%% Step III
\vspace{.1cm}\noindent
\textbf{III.}
\textit{We prove the proposition.
}

\vspace{.1cm}
Choose the constants $\delta$ and $\eps=\eps(\delta)$
of Steps~I and~II, respectively, and set $c:=\max\{2,c_\delta\eps\}$.
Suppose $\norm{\grad \Aa^F(\lpz,\LM)}\le \eps$. Then by Step~II the loop part
$\lpz$ remains in $U_\delta$, hence Step~I applies and yields
\begin{equation*}
\begin{split}
     \Abs{\LM}
   &\le2\Abs{\Aa^F(\lpz,\LM)}+c_\delta\Norm{\grad\Aa^F(\lpz,\LM)}\\
   &\le c\Abs{\Aa^F(\lpz,\LM)}+c_\delta\eps.
\end{split}
\end{equation*}
This concludes the outline of the proof of Proposition~\ref{prop:control-eta}.
\end{proof}

\begin{proof}[Proof of Corollary~\ref{cor:C0-eta} (Trajectories)]
Pick $\eps>0$ as in Proposition~\ref{prop:control-eta}.\footnote{
  By Step~II above $\eps=\eps(\delta)$ given any sufficiently small
  constant $\delta>0$.
  }
Set $u_s:=u(s,\cdot)$ and $\LMpath_s:=\LMpath(s)$.
The key tool is the quantity given for $\sigma\in\R$ by
$$
     \Tt_\sigma
     :=\inf\{s\ge 0\mid
     \norm{\grad\Aa^F((u,\LMpath)_{\sigma+s})}<\eps\}
     ,\quad
     (u,\LMpath)_{s}:= (u_s,\LMpath_s).
$$
%By definition $\inf\emptyset:=\infty$.
This quantity helps twice. Firstly, given a trajectory $(u,\LMpath)$
and a time $\sigma$ element $(u,\LMpath)_\sigma$ of gradient norm
$\ge\eps$ (otherwise $\Tt_\sigma=0$), the function $\Tt_\sigma$
measures for how long the gradient norm will remain $\ge\eps$, so
for how long the trajectory will not get too close to a critical point. By
assumption~(\ref{eq:RF-unif-action-bd-ass}) we get the estimate
\begin{equation*}
\begin{split}
     b-a
   &\ge\lim_{s\to\infty}\Aa^F((u,\LMpath)_s)-\lim_{s\to-\infty}\Aa^F((u,\LMpath)_s)\\
   &=\int_{-\infty}^\infty\norm{\grad\Aa^F(u,\LMpath)}^2\, ds\\
   &\ge\int_\sigma^{\sigma+\Tt_\sigma}
     \underbrace{\norm{\grad\Aa^F(u,\LMpath)}^2}
        _{\text{$\ge\eps^2$ on $(\sigma,\sigma+\Tt_\sigma)$}}\, ds\\
   &\ge \Tt_\sigma\eps^2
\end{split}
\end{equation*}
which holds true for every $\sigma\in\R$ and, by the way,
also shows finiteness $\Tt_\sigma<\infty$.
Secondly, for any $\sigma\in\R$ the gradient norm of
the trajectory element $(u,\LMpath)_{\sigma+\Tt_\sigma}$
at time $\sigma+\Tt_\sigma$ is $\le\eps$.
Hence the pair $(u,\LMpath)_{\sigma+\Tt_\sigma}$ satisfies the
assumption of  Proposition~\ref{prop:control-eta}, so together with
the action bound $\kappa$ in~(\ref{eq:ac-bound-7687}) we get that
\begin{equation*}
\begin{split}
     \Abs{\LMpath(\sigma+\Tt_\sigma)}
   &\le c\left(\abs{\Aa^F((u,\LMpath))_{\sigma+\Tt_\sigma}}+1\right)\\
   &\le c(\kappa+1)=:c_\kappa
\end{split}
\end{equation*}
for every $\sigma\in\R$. Putting things together one obtains the
desired estimate
\begin{equation*}
\begin{split}
     \Abs{\LMpath(\sigma)}
   &=\Bigl|\LMpath(\sigma+\Tt_\sigma)-\int_{\sigma}^{\sigma+\Tt_\sigma}
     \p_s\LMpath(s)\, ds\Bigr|\\
   &\le \Abs{\LMpath(\sigma+\Tt_\sigma)}
     +\int_{\sigma}^{\sigma+\Tt_\sigma}
     \underbrace{\abs{\p_s\LMpath(s)}}_{\le\Norm{F}_\infty\int_0^1dt}\, ds\\
   &\le c_\kappa+\Tt_\sigma\Norm{F}_\infty\\
   &\le c_\kappa+\frac{\norm{F}_\infty(b-a)}{\eps^2}
\end{split}
\end{equation*}
for every $\sigma\in\R$ using that $\p_s\LMpath$ satisfies the
gradient flow equation~(\ref{eq:RF-UGF}).
\end{proof}

%%%%%%%%%%%%%%%%%%%%%%%%%%%%%%%%%%
%% SUBSECTION %%%%%%%%%%%%%%%%%%%%%%
%%%%%%%%%%%%%%%%%%%%%%%%%%%%%%%%%%
\subsection{Continuation}\label{sec:RF-continuation}
To prove invariance of Floer homology $\HF(\Aa^F)$ defined
by~(\ref{eq:RF-homology-F}) for a
convex exact hypersurface $\Sigma$ with regular defining Hamiltonian $F$,
not only under change of the regular defining Hamiltonian, but even under convex
exact deformations of $\Sigma$ itself, see Theorem~\ref{thm:RF},
suppose $\{F_s\}_{s\in[0,1]}$ is a smooth family of defining
Hamiltonians of convex exact hypersurfaces $\Sigma_s$ in
$(V,\lambda)$.
The construction of natural continuation maps follows precisely
the same steps as in Section~\ref{sec:FH-continuation},
see~\citeintro[\S 3.2]{Cieliebak:2009a} for details,\footnote{
  The $r$-homotopy of $s$-homotopies $\widebar H^{\bar\chi,r}_s$
  in~\citeintro[p.\,276]{Cieliebak:2009a} should be
  $H^{\chi_{1-s}}_{r(1-s)}$, not $H^{\chi_{1-s}}_{1-rs}$,
  in order that for each $r$ the initial point (at $s=0$)
  coincides with the endpoint (at $s=1$) of the $s$-homotopy
  $H^{\chi,r}_s:=H^{\chi_s}_{rs}$ and so the two homotopies can be
  concatenated.
  }
once appropriate compactness properties of the spaces of
connecting cascade flow lines for the $s$-dependent
gradient $\grad \Aa^{F_s}$ have been established.
\\
The only problem is that condition~(\ref{eq:ass-casc-comp})
on smallness of the product $\norm{F}_\infty\norm{\p_s F}_\infty$
might not be satisfied in general.
A common technique is to carry out the homotopy $\{F_s\}_{s\in[0,1]}$
in $N$ steps, namely sucessively via the homotopies
$$
     F^j_s:=F_{\frac{j-1+s}{N}},\quad s\in[0,1],\quad j=1,\dots,N,
$$
and show that the continuation map provided by each of them is an
isomorphism. Since 
$$
     \Norm{\p_s F^j_s}_\infty
     =\frac{1}{N}\Norm{\p_s F_{\frac{j-1+\sigma}{N}}}_\infty
     \le\frac{1}{N} \Norm{\p_s F}_\infty
$$
condition~(\ref{eq:ass-casc-comp}) is satisfied indeed for each
homotopy $F_j$ whenever $N$ is chosen sufficiently large.
But the composition of the continuation isomorphisms
provided by each individual $F^j$ is the continuation map, hence
isomorphism, provided by $F$; cf.~(\ref{eq:Salamon-homotopy-s}).

To show that $\RFH(\Sigma)$ is well defined by~(\ref{eq:RF-homology})
requires to pick $F_0,F_1\in\Uureg^F$ as in
Remark~\ref{rem:RFH-well-defined} and show that
$\HF(\Aa^{F_0})\simeq\HF(\Aa^{F_1})$ by continuation.
For this it is sufficient, as mentioned above, to have
a smooth family $\{F_s\}_{s\in[0,1]}$ of \emph{defining
Hamiltonians} of \emph{convex exact} hypersurfaces $\Sigma_s$ in
$(V,\lambda)$ interpolating $F_0$ and $F_1$. Note that such
family is obtained simply by convex combination of $F_0$ and $F_1$;
see Remark~\ref{rem:generic-Sigma}.

In~\citerefRF[Prop.\,3.1]{Cieliebak:2010b}
it is even shown independence on the unbounded component
of $V\setminus\Sigma$, that is only $\Sigma=\p M$ and its inside,
the compact manifold-with-boundary $M$, are relevant for $\HF(\Aa^F)$.
This leads to the notation $\RFH(\p M,M)$ for $\HF(\Aa^F)$,
often abbreviated by $\RFH(\Sigma)$; see Definition~\ref{def:RF}.

%%%%%%%%%%%%%%%%%%%%%%%%%%%%%%%%%%
%% SUBSECTION %%%%%%%%%%%%%%%%%%%%%%
%%%%%%%%%%%%%%%%%%%%%%%%%%%%%%%%%%
\subsection{Grading}\label{sec:RF-grading}
Suppose $\Sigma$ is a convex exact hypersurface, say of restricted
contact type, and $F\in\Ff(\Sigma)$ is defining.
Throughout Section~\ref{sec:RF-grading} suppose that
\begin{itemize}
\item[(i)]
  the contact manifold $(\Sigma,\alpha)$ is simply-connected, that is
  $\pi_1(\Sigma)=0$;
\item[(ii)]
  $\Aa^F:\Ll V\times\R\to\R$ is Morse-Bott;
\item[(iii)]
  $(V,d\lambda)$ has trivial first Chern class
  over $\pi_2(V)$, that is $\Io_{c_1}=0$;
\item[(iv)]
  $h:C\to\R$ is Morse on the critical manifold
  $C:=\Crit\,\Aa^F\subset\Ll\Sigma\times\R$.
\end{itemize}
Under these conditions there exists an integer grading
$\mu=\RStrans+\INDsign_h$ taking values in $\tfrac12\Z$,
see~(\ref{eq:RF-grading}), of the Rabinowitz-Floer complex
in terms of the sum of the transverse Robbin-Salamon index
of (rescaled) Reeb loops and the signature index of a critical
point of the Morse function $h$.
For non-degenerate $\Sigma$ the transverse Robbin-Salamon
index reduces to the transverse Conley-Zehnder index $\CZtrans$
and $\mu$ will be half-integer valued.

%% SUBSUBSECTION %%%%%%%%%%%%%%%%%%%
%%%%%%%%%%%%%%%%%%%%%%%%%%%%%%%%%
\subsubsection{Transverse Robbin-Salamon index}
Pick a critical point $(\lpz,\LM)$ of $\Aa^F$, i.e. $\lpz:\SS^1\to
\Sigma$ satisfies $\dot \lpz=\LM R_\alpha(\lpz)$
by~(\ref{eq:crit-Reeb}). In words, the loop
$\lpz:\R/\Z\to\Sigma$ integrates the rescaled Reeb vector field
$\LM\RVF_\alpha$.\footnote{
  Alternatively consider the corresponding $\LM$-periodic
  Reeb path $\ror(t)=\Rflow_t p$ with $p=\lpz(0)$ on the
  $\LM$-dependent interval $[0,\LM]$;
  see Exercise~\ref{exc:simple-Reeb-1st-return}.
  }
Recall that $\xi:=\ker\alpha\to\Sigma$ defines a -- with respect to
$d\alpha$ symplectic -- vector bundle of rank $2n-2$
and that $T\Sigma=\xi\oplus\R\RVF_\alpha$.
By~(i) pick a smooth extension $\widebar \lpz:\D\to\Sigma$ of the loop $\lpz$
and choose a unitary trivialization of the symplectic vector bundle
$(\widebar \lpz^*\xi,\widebar \lpz^*d\alpha)$.
Then the linearization of the flow generated by
$\LM\RVF_\alpha$,\index{Reeb flow!linearized rescaled --}
informally called the \textbf{linearized rescaled Reeb flow},
provides along the trajectory $\lpz:[0,1]\to\Sigma$ by~(ii) a path in the
symplectic linear group $\Sp(2n-2)$ with initial point $\1$
and Robbin-Salamon index denoted by
\begin{equation*}\label{eq:RStrans}
     \RStrans(\lpz,\LM) 
     \in\tfrac12\Z.
\end{equation*}
It is\index{index!transverse Robbin-Salamon --}
called\index{$\RStrans$ transverse Robbin-Salamon index}
the \textbf{\Index{transverse Robbin-Salamon index}}
of the critical point $(\lpz,\LM)$ or, alternatively,
the $1$-periodic solution $\lpz$ of the rescaled Reeb vector field
$\LM\RVF_\alpha$.

\begin{exercise}
Show that the definition of $\RStrans$ is independent of the choice of,
firstly, the extending disk by~(iii) and, secondly, of the unitary trivialization.
\end{exercise}

\begin{exercise}%[Constant critical points]
Calculate $\RStrans(\lpz,0)$
for constant critical points $(\lpz,0)$ of $\Aa^F$.
\end{exercise}

%% SUBSUBSECTION %%%%%%%%%%%%%%%%%%%
%%%%%%%%%%%%%%%%%%%%%%%%%%%%%%%%%
\subsubsection{Transverse Conley-Zehnder index}
Suppose a critical point $(\lpz,\LM)$ of $\Aa^F$ is \emph{transverse
non-degenerate};\footnote{
  Call $(\lpz,\LM)$ \textbf{transverse non-degenerate}
  iff $\LM\not=0$ and the corresponding Reeb
  loop\index{critical point!transverse non-degenerate --}
  is.\index{transverse non-degenerate!critical point}
  }
cf. Definition~\ref{def:RF-non-deg-Sigma}
and Exercise~\ref{exc:reg_non-deg}.
So the image $\lpz(\SS^1)$ is an
isolated closed characteristic $P\cong\SS^1$ of $\Sigma$.
Moreover, the corresponding path in $\Sp(2n-2)$ ends away
from the Maslov cycle, i.e. the path is an element of $\SP^*(2n-2)$,
so the Robbin-Salamon index is nothing but the Conley-Zehnder
index of this path. In order to implicitly signalling the assumption
of transverse non-degeneracy, as opposed to just general Morse-Bott,
we use the notation
\begin{equation*}\label{eq:RStrans-ND}
     \CZtrans(\lpz,\LM)
     \in\tfrac12+\Z
\end{equation*}
for transverse non-degenerate critical points and call
this index\index{$\CZtrans$ transverse Conley-Zehnder index}
the\index{index!transverse Conley-Zehnder --}
\textbf{\Index{transverse Conley-Zehnder index}}.

\begin{exercise}[Half-integers]
Show that $\CZtrans$ is half-integer valued.
\end{exercise}

%% SUBSUBSECTION %%%%%%%%%%%%%%%%%%%
%%%%%%%%%%%%%%%%%%%%%%%%%%%%%%%%%
\subsubsection{Signature index and Morse index}
Let $f:N\to\R$ be a Morse-Bott function
on a finite dimensional manifold~$N$.
Given a  critical point $x$ of $f$, recall that
the \textbf{\Index{Morse index}} $\IND_f(x)$ is the
number\index{index!Morse --}
of negative eigenvalues, with multiplicities,
of the Hessian of $f$ at $x$. The\index{index!signature --}
\textbf{\Index{signature index}} of $x\in\Crit f$ is defined by
$$
     \INDsign_f(x):=-\frac12\sign \Hess_x f.
$$

\begin{exercise}[Morse-Bott]\label{exc:sign-index-MB}
Let $f:N\to\R$ be Morse-Bott. Show that\footnote{
  Set $\IND_f(C_i):=\IND_f(x)$ and $\INDsign_f(C_i):=\INDsign_f(x)$
  for some, hence any, $x\in C_i$.
  }
$$
     \INDsign_f(C_i)=\IND_f(C_i)-\tfrac12\left(\dim N-\dim C_i\right)
$$
for every connected component $C_i$ of the critical manifold $C=\Crit f$.
\end{exercise}

\begin{exercise}[Morse]\label{exc:sign-index-M}
If $x\in\Crit f$ is a non-degenerate critical point, then
$$
     \INDsign_f(x)=\IND_f(x)-\tfrac12\dim N\in
     \begin{cases}
        \Z&\text{, $\dim N$ even,}\\
        \frac12+\Z&\text{, $\dim N$ odd.}
     \end{cases}
$$
\end{exercise}

%% SUBSUBSECTION %%%%%%%%%%%%%%%%%%%
%%%%%%%%%%%%%%%%%%%%%%%%%%%%%%%%%
\subsubsection{Morse-Bott grading of the Rabinowitz-Floer complex}

\begin{definition}[Grading]
For $c\in\Crit\, h\subset C=\Crit\,\Aa^F$, define
\begin{equation}\label{eq:RF-grading}
     \mu(c):=\RStrans(c)+\INDsign_h(c)\in\tfrac12\Z.
\end{equation}
\end{definition}

\begin{exercise}[$\Sigma$ non-degenerate $\Rightarrow$ half-integers]
For non-degenerate $\Sigma$, show that
$
     \mu(c) \in\tfrac12+\Z
$
is half-integer valued for every $c\in\Crit\, h$.
Is this also true if $c=(\lpz,0)$ is a constant critical point?
\end{exercise}

Given $c_-,c_+\in\Crit\, h$, consider a connecting cascade flow line
$\Gamma$, an element of the moduli space
$\Mm_{c_-c_+}(\Aa^{F},h,J,g)$ defined by~(\ref{eq:cascade-mod-space}).
For generic $J$ and $g$ this space is a smooth manifold
whose \textbf{\Index{local dimension}}\footnote{
  dimension of the component that contains $\Gamma$
  }
at $\Gamma$ is given by\index{dimension!local --}
\begin{equation*}
\begin{split}
     \dim_\Gamma \Mm_{c_-c_+} %(\Aa^{F},h,J,g)\\
   &=\RStrans(c_+)+\INDsign_h(c_+)
     -\left(\RStrans(c_-)+\INDsign_h(c_-)\right)-1\\
   &=\mu(c_+)-\mu(c_-)-1.
\end{split}
\end{equation*}
The first step is non-obvious
even for the Morse-Bott cascade complex
in finite dimensions; see~\citeintro[(65)]{Cieliebak:2009a}.
One crucial ingredient to obtain the first step,
cf.~\citeintro[(64)]{Cieliebak:2009a}, is the following even less trivial
formula. Let $\Mm$ be the moduli space of finite energy
trajectories $\upsilon$ of $\grad\Aa^F$.
By Morse-Bott, firstly, the asymptotic limits $\upsilon^\mp$
in~(\ref{eq:RF-asymp-lim}) exist, let $C_\mp$ be their
components of $C$, and secondly the linearization $D\Ff_F(\upsilon)$
of the gradient equation~(\ref{eq:RF-UGF}) at a flow trajectory $\upsilon$,
cf.~(\ref{eq:linearization}), is Fredholm between suitable spaces.
By~\citeintro[Prop.\,4.1]{Cieliebak:2009a} %it holds
\begin{equation*}
\begin{split}
     \dim_\upsilon\Mm
   &=\INDEX D\Ff_F(\upsilon)+\dim C_++\dim C_-\\
   &=\RStrans(\upsilon_+)-\RStrans(\upsilon_-)+\tfrac{\dim C_++\dim C_-}{2}.
\end{split}
\end{equation*}
For further details concerning index computations
see~\citeintro{Cieliebak:2009a} and~\citerefRF{Merry:2011b}.

%%%%%%%%%%%%%%%%%%%%%%%%%%%%%%%%%%
%% SUBSECTION %%%%%%%%%%%%%%%%%%%%%%
%%%%%%%%%%%%%%%%%%%%%%%%%%%%%%%%%%
\subsection{Relation to homology of $\Sigma$}

\begin{exercise}\label{exc:RFH-Sigma}
If $\Sigma$ 
carries no Reeb loops that are contractible in $V$, then
$$
     \RFH_*(\Sigma)\simeq \Ho_*(\Sigma;\Z_2)
$$
is a grading preserving isomorphism
where $\RFH_*(\Sigma):=\HF_*(\Aa^F)$ for any defining Hamiltonian
$F\in\Ff(\Sigma)$; see~(\ref{eq:def-RFH}).
Why should there be a grading without assuming triviality
of $\pi_1(\Sigma)$ and $\Io_{c_1}$?
\end{exercise}

%%%%%%%%%%%%%%%%%%%%%%%%%%%%%%%%%%
%% SUBSECTION %%%%%%%%%%%%%%%%%%%%%%
%%%%%%%%%%%%%%%%%%%%%%%%%%%%%%%%%%
\subsection{Example: Unit cotangent bundle of spheres}\label{sec:RF-ex}
Consider the unit cotangent bundle $\Sigma:=S^*\SS^n$
of the unit sphere $Q:=\SS^n\subset\R^{n+1}$ in euclidean
space equipped with the induced Riemannian metric.
Observe that $\Sigma$ is a hypersurface of restricted contact
type of the convex exact symplectic manifold
$(V,\lambda)=(T^*\SS^n,\lambdacan)$; see Example~\ref{ex:S*Q-contact}.
In particular, the restriction $\alpha=\lambdacan|\Sigma$
is a contact form on the hypersurface $\Sigma =S^*\SS^n$ that bounds
the (compact) unit disk cotangent bundle $M=D^*\SS^n$.

\begin{theorem}[Unit cotangent bundle of unit sphere,~\citeintro{Cieliebak:2009a}]
For $n\ge 4$
$$
     \RFH_k(S^*\SS^n) %,D^*\SS^n)
     =
     \begin{cases}
     \Z_2&\text{, $k\in\left\{-n+\frac12,-\frac12,\frac12,n-\frac12\right\}
               +\Z\cdot(2n-2)$,}\\
     0&\text{, else.}
     \end{cases}
$$
\end{theorem}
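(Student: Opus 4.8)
The plan is to use the fact that $\SS^n$ with its round metric is a compact rank-one symmetric space: all of its geodesics are closed and of the same length, so the Reeb flow on $\Sigma=S^*\SS^n$ is periodic, and $\RFH_*$ can be read off from the resulting Morse--Bott picture, which is exactly the Morse--Bott geometry of the energy functional on $\Lambda\SS^n$ transported into the Rabinowitz setting.

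First I would fix the round metric $g$ on $\SS^n$, so that $\Sigma=S^*\SS^n\subset(T^*\SS^n,\lambdacan)$ is a hypersurface of restricted contact type with $\alpha=\lambdacan|_\Sigma$ (Example~\ref{ex:S*Q-contact}), and pick a defining Hamiltonian $F\in\Ff(\Sigma)$. The Reeb vector field $R_\alpha$ generates the unit-speed geodesic flow, which is $2\pi$-periodic; hence by~(\ref{eq:Xi}) and Exercise~\ref{exc:action-spectrum} the critical set decomposes as $\Crit\Aa^F=C_0\sqcup\bigsqcup_{k\in\Z^*}C_k$, where $C_0\cong\Sigma=S^*\SS^n$ is the component of constant loops at action $0$ and, for $k\in\Z^*$, the component $C_k$ consists of the marked $|k|$-fold iterated (forward or backward) closed Reeb orbits at action $2\pi k$. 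A marked iterated geodesic is determined by its underlying oriented closed geodesic (an element of the Grassmannian $\widetilde{\Gr}_2(\R^{n+1})$ of oriented $2$-planes, of dimension $2(n-1)$) together with its marked point, so $C_k\cong S^*\SS^n$, a closed $(2n-1)$-manifold, for every $k\in\Z$. Next I would verify that $\Aa^F$ is Morse--Bott, i.e.\ that the nullity of each iterated Reeb orbit equals $\dim C_k$, equivalently that the transverse linearized return map has eigenvalue $1$ of the expected multiplicity $2(n-1)$; this is Bott's classical analysis of iterated geodesics on the round sphere, so no generic perturbation of $F$ is needed. One then feeds $(\Sigma,F)$ into the Morse--Bott cascade construction of Section~\ref{sec:RF-CC}, after choosing a Morse--Smale pair $(h,g_C)$ on $C=\Crit\Aa^F$.

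The cascade complex carries the action filtration, hence a spectral sequence whose $E_1$-page is $\bigoplus_{k\in\Z}\Ho_{*-d_k}(S^*\SS^n;\Z_2)$, where by~(\ref{eq:RF-grading}) and Exercise~\ref{exc:sign-index-M} the shift applied to $\Ho_j(S^*\SS^n)$ is $d_k=\RStrans(c_k)+j-n+\tfrac12$, with $\RStrans(c_k)$ constant on $C_k$. Bott's iteration formula (equivalently the index computations relating $\RStrans$ to the Morse index of closed geodesics) gives $\RStrans(c_{k+1})-\RStrans(c_k)=2(n-1)$, so the $E_1$-page is $2(n-1)$-periodic; this is the origin of the period $2n-2$ in the answer. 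Since $\Ho_*(S^*\SS^n;\Z_2)=\Z_2$ in degrees $0,n-1,n,2n-1$ and vanishes otherwise, $E_1$ has exactly four generators per action level $k$. The grading $\mu$ of $\RFH$ exists here (Section~\ref{sec:RF-grading}), since $S^*\SS^n$ is simply connected for $n\ge3$ and $c_1(T^*\SS^n)=0$; and for $n\ge4$ the four degrees $-n+\tfrac12,-\tfrac12,\tfrac12,n-\tfrac12$ are pairwise non-congruent mod $2n-2$, which is where the hypothesis $n\ge4$ enters (the cases $n=2,3$ are exceptional — e.g.\ $S^*\SS^2\cong\RP^3$ is not even simply connected, and its $\RFH$ is only $\Z_2$-graded).

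The hard part is computing the differentials. Over all $k$ there are infinitely many $E_1$-generators, while the claimed answer has only four per period block, so $d_1$ (together with the vanishing, or irrelevance, of higher differentials) must cancel three quarters of them. I would identify $d_1\colon E_1^{(k)}\to E_1^{(k-1)}$, which by the Morse--Bott gluing analysis is induced by a topological correspondence on $S^*\SS^n$, with the loop-rotation ($S^1$-Gysin, BV-type) operator; this is the content of the relation between $\RFH(S^*Q)$ and loop space (co)homology outlined in Section~\ref{sec:RF-SH}, and for $Q$ a compact rank-one symmetric space it reduces to the Morse--Bott geometry of the energy functional on $\Lambda\SS^n$. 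Concretely, $(E_1,d_1)$ becomes a complex whose homology is the $S^1$-equivariant loop space homology of $\SS^n$, up to the standard degree shift and the forward/backward doubling of the Reeb spectrum; inserting Ziller's computation of this homology for $\SS^n$, together with the (Poincar\'{e}-dual) pairing of the contribution of $C_0$ with that of the iterated orbits, leaves precisely the four surviving classes in degrees $\{-n+\tfrac12,-\tfrac12,\tfrac12,n-\tfrac12\}+(2n-2)\Z$ and nothing else. As consistency checks: the result is nonzero, as it must be, since $S^*\SS^n$ is not displaceable in $T^*\SS^n$, so Theorem~\ref{thm:RF-vanishing} does not apply; and it differs from $\Ho_*(S^*\SS^n;\Z_2)$, consistently with the fact that every closed Reeb orbit is contractible in $T^*\SS^n$ (as $\pi_1(T^*\SS^n)=0$), so Exercise~\ref{exc:RFH-Sigma} does \emph{not} apply.
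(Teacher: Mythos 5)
Your setup --- round metric, Morse--Bott critical manifold $\Crit\Aa^F\cong\bigsqcup_{k\in\Z}S^*\SS^n$, Bott's iteration formula giving the shift $2n-2$ between consecutive components, a perfect Morse function $h_0$ on $S^*\SS^n$ with critical points of index $0,n-1,n,2n-1$, and the grading of Section~\ref{sec:RF-grading} --- is exactly the paper's. But the core of your argument, the computation of the differential, contains a miscount that derails it. Each component $C_k$ contributes four generators, and since the degree shift per component is $2n-2$, your own $E_1$-page already has exactly four generators per period block of length $2n-2$, sitting precisely in the degrees $\left\{-n+\tfrac12,-\tfrac12,\tfrac12,n-\tfrac12\right\}+\Z\cdot(2n-2)$. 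Nothing needs to be cancelled: the content of the proof is the opposite of what you assert, namely that \emph{all} differentials vanish (the lacunary principle), so that $\RFH$ equals the chain complex. Your proposed identification of $d_1$ with an $S^1$-equivariant (BV/Gysin) operator computing Ziller's equivariant loop space homology is therefore both unnecessary and wrong: $\RFH(S^*Q)$ is related to \emph{non-equivariant} loop space homology and cohomology, see~(\ref{eq:RFH=LOOP}), and a differential killing three quarters of the generators would contradict the stated answer.

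What you actually have to prove is that no pair of generators $c_-,c_+$ admits a rigid connecting cascade, i.e.\ satisfies simultaneously $\mu(c_+)-\mu(c_-)=1$ and $\Aa^F(c_-)<\Aa^F(c_+)$ (or lies on the same action level with vanishing Morse differential of $h_0$, which holds over $\Z_2$ since $h_0$ is perfect). Checking the degree gaps $1,\,n-1,\,n-2,\,n$ between the four generators of one component and those of the adjacent one shows that the only dangerous pair compatible with the action ordering occurs when $n-2=1$. This also corrects your explanation of the hypothesis $n\ge4$: for $n=3$ the four degrees $\tfrac12,\tfrac32,\tfrac52,\tfrac72$ are still pairwise distinct mod $2n-2=4$, so your criterion would admit $n=3$; what fails there is the lacunary argument, because the generator of degree $\tfrac12+4k$ in $C_k$ and the generator of degree $\tfrac32+4k=-n+\tfrac12+(k+1)(2n-2)$ in $C_{k+1}$ have index difference one \emph{and} increasing action, so a nonzero differential cannot be excluded by index and action bookkeeping alone.
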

\begin{figure}[h]
  \centering
  \includegraphics[width=\textwidth]
                             %[height=4cm]
                             {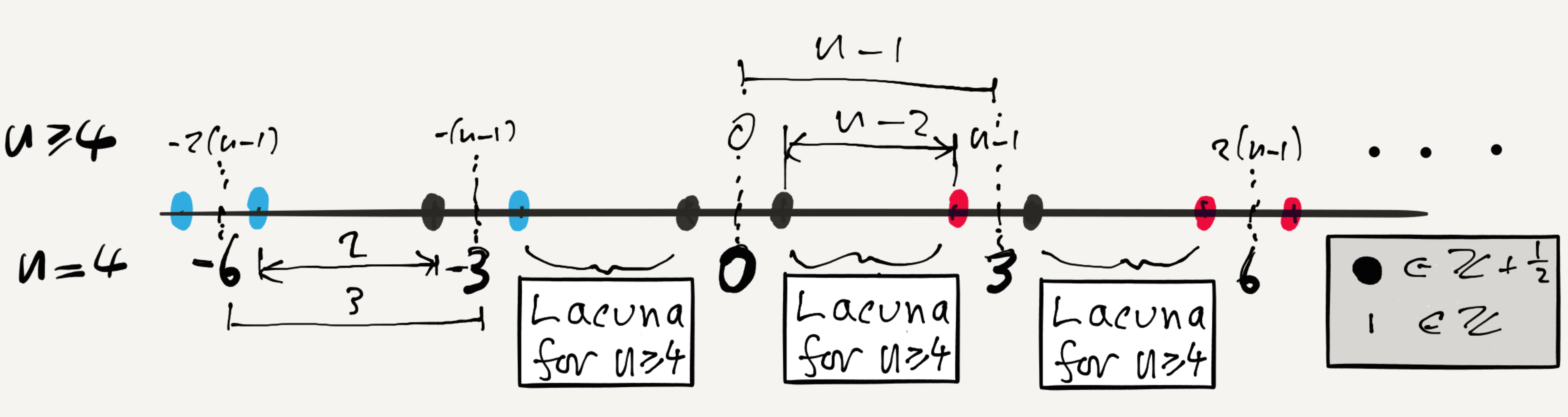}
  \caption{Indices ${\color{cyan}\bullet\color{black}\bullet\color{red}\bullet}
                 \in\Z+\frac12$ for which
                 $\RFH_{\color{cyan}\bullet\color{black}\bullet\color{red}\bullet}(S^*S^{n})=\Z_2$,
                 $n\ge4$}
  \label{fig:fig-indices}
\end{figure}

\textit{Idea of proof (Lacunary principle -- Floer homology equals
chain complex).}
One exploits the facts that for the round metric on $\SS^n$,
suitably normalized, all geodesics are periodic with prime period~$1$,
that the functional $\Aa^F$ is Morse-Bott,\footnote{
  Do not confuse with the stronger notion ``transverse non-degenerate''
  in Theorem~\ref{thm:RF-2nd-cat}.
  }
and that the critical manifold $C=\Crit\,\Aa^F$ is given by
$\Sigma=S^*\SS^n$ (the constant loops -- we don't call them geodesics),
together with $\Z^*$ copies of $S^*\SS^n$ where $\Z^*$ labels the
periods of the periodic geodesics.
Now one picks a Morse function $h_0$ on $S^*\SS^n$ with precisely four
critical points of Morse indices $0,n-1,n,2n-1$ and defines $h$ to be
the Morse function on $C$ which coincides with
$h_0$ on each component. Then
$$
     \Crit\, h\cong\Crit\, h_0\times\Z.
$$
Now one pits action knowledge~(\ref{eq:action-growth-by-period})
in terms of the prime period and positivity of the action difference
of two critical points $c_-$ and $c_+$ sitting on the two ends
of a non-constant connecting trajectory
against the facts that the boundary operator decreases the grading
$\RStrans$ precisely by $1$ and that $\RStrans$ can be related
via the Morse index theorem for geodesics to the particular Morse
indices $0,n-1,n,2n-1$ encountered among the critical points of $h$
as mentioned above. In the end, for $n\ge 4$, one gets to the
conclusion that there cannot be a connecting
trajectory between critical points of $\RStrans$-index difference one.
For details see~\citeintro[p.\,293]{Cieliebak:2009a}.

%%%%%%%%%%%%%%%%%%%%%%%%%%%%%%%%%%
%%%%%%%%%%%%%%%%%%%%%%%%%%%%%%%%%%
%% SECTION %%%%%%%%%%%%%%%%%%%%%%%%%
%%%%%%%%%%%%%%%%%%%%%%%%%%%%%%%%%%
%%%%%%%%%%%%%%%%%%%%%%%%%%%%%%%%%%
\section{Perturbed Rabinowitz action $\Aa^F_H$}
To prove the Vanishing Theorem~\ref{thm:RF-vanishing} motivates
to allow for more general Hamiltonians in the action functional;
cf.~\citeintro[\S 3.3]{Cieliebak:2009a}. 
Pick a defining, thus autonomous, Hamiltonian $F\in\Ff(\Sigma)$ and a
\textbf{young}\index{cutoff function!young --}\index{young cutoff function}
cutoff function\footnote{
  The \textbf{\Index{support} of \boldmath$\chi$}, notation
  $\supp\chi$, is the closure of its non-vanishing locus $\{\chi\not=0\}$.
  }
\begin{equation}\label{eq:chi-young}
     \chi:\SS^1\to [0,\infty)
     ,\qquad
     \supp\chi\subset(0,\tfrac12),\qquad
     \int_0^1\chi(t)\, dt=1.
\end{equation}
The assumption $\chi\,{\color{magenta} \ge 0}$ is used
in~(\ref{eq:jhjkhkj7876}). Let us call the Hamiltonian
\begin{equation}\label{eq:F-young}
     F^\chi:=\chi F:\SS^1\times V\to\R
\end{equation}
a \textbf{\Index{young Hamiltonian}},\index{Hamiltonian!young --}
as its flow is active only in the first half $[0,\tfrac12]$ of life.
Suppose $H\in C^\infty(\SS^1\times V)$ is a possibly non-autonomous
$1$-periodic Hamiltonian.
The \textbf{\Index{perturbed Rabinowitz action} functional}
on $\Ll V\times\R$ is\index{action functional!perturbed --}
defined\index{$\Aa^{F^\chi}_H$ perturbed Rabinowitz action}
by\index{Rabinowitz action functional!perturbed --}
\begin{equation}\label{eq:pert-Rab-action}
     \Aa^{F^\chi}_H(\lpz,\LM)
     :=\Aa^{F^\chi}(\lpz,\LM)-\int_0^1 H_t(\lpz(t))\, dt.
\end{equation}

In the notation of~(\ref{eq:RF-UGF}) and taking the same choices,
such as a cylindrical almost complex structure $J$ with induced
$L^2$ metric $g_J$ on $\Ll V\times \R$, the upward gradient
trajectories of $\grad \Aa^{F^\chi}_H$ are the solutions
$\upsilon=(u,\LMpath)$ of the PDE
\begin{equation}\label{eq:RF-perturbed-UGF}
\begin{split}
   &\p_s\upsilon-\grad \Aa^{F^\chi}_H(\upsilon)
     \\
   &=
     \begin{pmatrix}\p_su\\\p_s\LMpath\end{pmatrix}
     +
     \begin{pmatrix}
     J_t(u)\left(\p_t u-\LMpath\chi X_F(u)-X_H(u)\right)
     \\
     \int_0^1 \chi(t) F(u_s(t))\, dt
     \end{pmatrix}
     \\
   &=0.
\end{split}
\end{equation}
Compactness in Section~\ref{sec:RF-compactness},
thus the definition of Floer homology
in Section~\ref{sec:RF-CC}, goes through for
$\Aa^{F^\chi}$ with only minor modifications;
see~\citeintro[\S 3.1]{Cieliebak:2009a}.

%%%%%%%%%%%%%%%%%%%%%%%%%%%%%%%%%%
%% SUBSECTION %%%%%%%%%%%%%%%%%%%%%%
%%%%%%%%%%%%%%%%%%%%%%%%%%%%%%%%%%
\subsection{Proof of Vanishing Theorem}
The Vanishing Theorem~\ref{thm:RF-vanishing}
asserts triviality $\RFH(\Sigma)=0$ of Rabinowitz-Floer homology
whenever $\Sigma$ is displaceable.

%% SUBSUBSECTION %%%%%%%%%%%%%%%%%%%%
%%%%%%%%%%%%%%%%%%%%%%%%%%%%%%%%%%
\subsubsection{Proof of Vanishing Theorem~\ref{thm:RF-vanishing} -- v1}
The first version of the proof is short and rather illustrative,
hence well suited to communicate the main ideas. But  -- 
it has the disadvantage that it involves a class of Hamiltonians
for which we haven't introduced Floer homology
$\HF(\Aa^{F^\chi}_{H^\rho})$; however,
this is done in~\citerefRF{Albers:2010b}.
In other words, in proof v1 the required technical work is moved elsewhere,
so we are just left with the nice bits.

The idea is to find Hamiltonians such that there are no critical points
\begin{equation}\label{eq:Crit=empty}
     \Crit\,\Aa^{F^\chi}_{H^\rho}=\emptyset.
\end{equation}
In this case there are no generators of the
corresponding Floer complex. Hence
\begin{equation}\label{eq:VThm-v1}
     0=\HF(\Aa^{F^\chi}_{H^\rho})\simeq\HF(\Aa^{F^\chi})\simeq\HF(\Aa^F)=:\RFH(\Sigma).
\end{equation}
The isomorphisms are by continuation.
Section~\ref{sec:RF-CC} establishes $\HF(\Aa^F)$ only, but
the same construction goes through for $F^\chi$ replacing
$F$, including the construction of the second continuation isomorphism.
The first continuation isomorphism and $\HF(\Aa^{F^\chi}_{H^\rho})$
itself are constructed in~\citerefRF[\S 2.3]{Albers:2010b};
see Section~\ref{sec:LIPS}.

In preparation of the proof of~(\ref{eq:Crit=empty})
we take the following choices, given a compactly supported
Hamiltonian $H:{\color{red} [0,1]}\times V\to\R$ that displaces $\Sigma$.
By compactness of $\Sigma$ the map $\psi_1^H$ not
only displaces $\Sigma$ from itself, but also a small
neighborhood $U$ of $\Sigma$. Observe that $\Sigma$
is contained in $\supp X_F$ for any of its defining Hamiltonians $F$,
since 0 is a regular value of $F$. Change or modify $F$, if necessary,
such that $\supp X_F\subset U$. Hence this support also gets displaced:
\begin{equation}\label{eq:supp-X_F-displace}
     \supp X_F\cap \psi_1^H(\supp X_F)=\emptyset.
\end{equation}
Now pick an \textbf{elderly} cutoff
function\index{cutoff function!elderly --}\index{elderly cutoff function}
\begin{equation}\label{eq:rho-elderly}
     \rho:[0,1]\to[0,1],\qquad
     \rho|_{[0,1/2]}\equiv 0,\qquad
     \text{$\rho\equiv 1$ near $t=1$},
\end{equation}
as illustrated by Figure~\ref{fig:fig-young-elderly} and consider the
\textbf{\Index{elderly Hamiltonian}},\index{Hamiltonian!elderly --}
defined by
\begin{equation}\label{eq:H-elderly}
     H^\rho=H_t^\rho:=\dot\rho(t) H_{\rho(t)} \in C^\infty(V),\quad t\in{\color{red} \SS^1}.
\end{equation}
\emph{Elderly} means that the flow of $H^\rho$ is active
only in the second half $[\tfrac12,1]$ of life.

\begin{figure}[h]
  \centering
  \includegraphics%[width=\textwidth]
                             [height=3cm]
                             {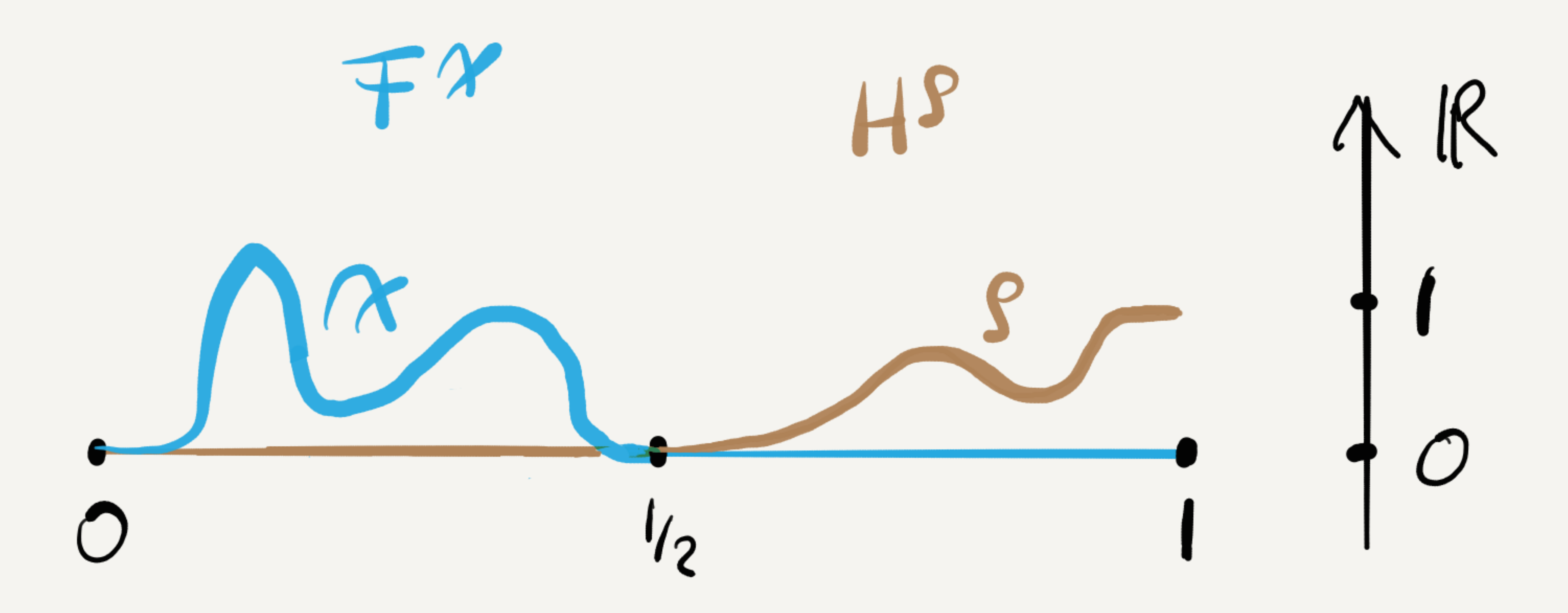}
  \caption{{\color{blue} Young} and {\color{brown} elderly}
                 cutoff functions and Hamiltonians}
  \label{fig:fig-young-elderly}
\end{figure}

\begin{exercise}\label{exc:flow-rho-H}
Show a) $\psi_t^{H^\rho}=\psi_{\rho(t)}^H$ for $t\in\R$
and b) $\psi_1^{\LM F^\chi}=\psi_{\LM}^F$ for $\LM\in\R$.
[Hint: a) Footnote.\footnote{
  $\frac{d}{dt}\psi^H_{\rho(t)}=\frac{d\,\psi^H_{\rho(t)}}{d\,\rho(t)}\dot\rho(t)
  =X_{H^\rho}\circ\psi^H_{\rho(t)}$
  }
b)~Note that $\LM F^\chi=F^{\LM\chi}=\dot \rho F$ with
$\rho(t):=\LM\int_0^t\chi(\sigma)\, d\sigma$.]
\end{exercise}

The critical points of $\Aa^{F^\chi}_{H^\rho}$
are the solutions $(\lpz,\LM)\in\Ll V\times\R$ of the equations
\begin{equation}\label{eq:Crit-emptyness}
\begin{cases}
     \dot \lpz(t)=\LM\chi(t) X_F(\lpz(t))+\dot\rho(t)
     X_{H_{\rho(t)}}(\lpz(t))&\text{, $t\in\SS^1$,}
     \\
     \int_0^1\chi(t) F(\lpz(t))\,dt=0\text{.}&
\end{cases}
\end{equation}

It remains to prove emptyness~(\ref{eq:Crit=empty}).
By contradiction suppose that $(\lpz,\LM)$ is a solution of~(\ref{eq:Crit-emptyness}).
There are two cases.

\vspace{.1cm}
\noindent
\textbf{I. \boldmath$\lpz(0)\notin \supp X_F$.}
On $[0,\tfrac12]$ the first equation in~(\ref{eq:Crit-emptyness})
describes a reparametrization of the Hamiltonian flow of $F$, so $F$
is preserved. Thus
$$
     F\circ \lpz|_{[0,\tfrac12]}\equiv F(\lpz(0))
     =:c\not= 0
$$
is non-zero since by assumption
$\lpz(0)\notin\supp X_F\supset\Sigma=F^{-1}(0)$. Thus
$$
     \int_0^1\chi F(\lpz)\, dt
     =\int_0^{\frac12}\chi F(\lpz)\, dt
     =c\int_1^{\frac12}\chi\, dt=c\not= 0
$$
as $\supp \chi$ lies in $[0,\tfrac12]$ where $F\circ \lpz\equiv c$.
This contradicts equation two in~(\ref{eq:Crit-emptyness}).

\vspace{.1cm}
\noindent
\textbf{II. \boldmath$\lpz(0)\in \supp X_F$.}
As $X_{F^\chi}$ is young and $X_{H^\rho}$ is elderly,
these two vector fields are supported in disjoint time intervals.
Thus the flow of their sum is the composition of their individual flows.
Together with Exercise~\ref{exc:flow-rho-H} we get that
\begin{equation*}
\begin{split}
     \overbrace{\supp X_F\ni \lpz(0)}^{\rm hypothesis}
   &=\lpz(1)\\
   &=\psi_1^{H^\rho}\circ\psi_1^{\LM F^\chi}  \lpz(0)\\
   &=\psi_1^H\circ\hspace{-.45cm}
     \underbrace{\psi_\LM^F \lpz(0)}_{\text{hyp.}\Rightarrow\in\supp X_F}
     \hspace{-.45cm} .
\end{split}
\end{equation*}
But this is impossible since $\psi_1^H$ displaces
$\supp X_F$ by~(\ref{eq:supp-X_F-displace}). Contradiction.

%% SUBSUBSECTION %%%%%%%%%%%%%%%%%%%%
%%%%%%%%%%%%%%%%%%%%%%%%%%%%%%%%%%
\subsubsection{Proof of Vanishing Theorem~\ref{thm:RF-vanishing} -- v2}
This version of proof gets away with Floer
homology as introduced in Section~\ref{sec:RF-CC}
and is based on continuation and the following
stronger version of the absence~(\ref{eq:Crit=empty})
of critical points of the perturbed action $\Aa^{F^\chi}_{H^\rho}$
associated to the young Hamiltonians $F^\chi$ in~(\ref{eq:F-young})
and the elderly ones $H^\rho$ in~(\ref{eq:H-elderly}).

\begin{lemma}[No critical points]\label{le:no-crit-pts-Aa-FH}
There is a constant $\gamma=\gamma(J)>0$ such that
$$
     \bigl\|\grad \Aa^{F^\chi}_{H^\rho}(\lpz,\LM)\bigr\|\ge\gamma
$$
for every $(\lpz,\LM)\in\Ll V\times\R$.\footnote{
  Both $\grad$ and $\norm{\cdot}$ depend on $J$;
  see Exercise~\ref{eq:nabla-Aa^F}.
  }
\end{lemma}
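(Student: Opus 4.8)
The plan is to show that $\grad\Aa^{F^\chi}_{H^\rho}$ is bounded below in norm by exploiting, just as in proof v1, that the young vector field $X_{F^\chi}$ and the elderly vector field $X_{H^\rho}$ are supported in the disjoint time intervals $(0,\tfrac12)$ and $(\tfrac12,1)$, and that $\psi_1^H$ displaces $\supp X_F$. The key point is to upgrade the dichotomy (Case I: $\lpz(0)\notin\supp X_F$; Case II: $\lpz(0)\in\supp X_F$) from a statement ``there is no solution'' to a quantitative statement ``the gradient norm is bounded away from zero''. I would argue by contradiction: if no such $\gamma$ exists, there is a sequence $(\lpz_\nu,\LM_\nu)\in\Ll V\times\R$ with $\norm{\grad\Aa^{F^\chi}_{H^\rho}(\lpz_\nu,\LM_\nu)}\to 0$, i.e. a Palais--Smale-type sequence, and I would derive a contradiction from it.

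First I would establish a uniform $C^0$ bound on the loops $\lpz_\nu$. Since $X_F$ is compactly supported and $H$ is compactly supported, the first component of the gradient, namely $\p_t\lpz-\LM\chi X_F(\lpz)-X_H(\lpz)$, being small in $L^2$ forces $\lpz_\nu$ to stay (up to small error) inside a fixed compact set: away from $\supp X_F\cup\supp X_H$ the loop is nearly constant, and a nearly-constant loop cannot escape a large ball. Next I would obtain a uniform bound on $\abs{\LM_\nu}$: this is exactly the content of Proposition~\ref{prop:control-eta} (or its straightforward adaptation to the perturbed functional, via the young cutoff $\chi\ge 0$ used in an inequality analogous to~(\ref{eq:jhjkhkj7876})), which says that small gradient norm forces $\abs{\LM}\le c(\abs{\Aa^{F^\chi}_{H^\rho}(\lpz,\LM)}+1)$, combined with a uniform bound on the action value along such a sequence (the action is controlled once $\lpz_\nu$ lies in a compact set and $\LM_\nu$ enters only through the bounded term $-\LM\int_0^1\chi F(\lpz)\,dt$, whose integrand is small by the second gradient component). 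With $C^0$ bounds on $\lpz_\nu$, $\LM_\nu$ and $\p_t\lpz_\nu$ in hand, Arzel\`a--Ascoli (Theorem~\ref{thm:AA}) yields a subsequence converging in $C^0$, and elliptic/ODE bootstrapping yields $C^\infty$ convergence to a limit $(\lpz,\LM)$ with $\grad\Aa^{F^\chi}_{H^\rho}(\lpz,\LM)=0$, i.e. an honest critical point.

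Then I would invoke the emptyness argument of proof v1 verbatim: the limit $(\lpz,\LM)$ solves~(\ref{eq:Crit-emptyness}), and the two-case analysis (using $F\circ\lpz\equiv F(\lpz(0))$ on $[0,\tfrac12]$ in Case~I, and the composition-of-flows identity $\psi_1^{H^\rho}\circ\psi_1^{\LM F^\chi}=\psi_1^H\circ\psi_\LM^F$ together with the displacement~(\ref{eq:supp-X_F-displace}) in Case~II) shows no such critical point exists. Contradiction. This proves the lemma, and then the Vanishing Theorem follows as in~(\ref{eq:VThm-v1}): since $\grad\Aa^{F^\chi}_{H^\rho}$ has no zeros, and in fact is uniformly bounded below, there are no generators (and one can run the continuation/compactness machinery of Section~\ref{sec:RF-CC} directly with the Hamiltonian $F^\chi$), whence $\RFH(\Sigma)=\HF(\Aa^F)\simeq\HF(\Aa^{F^\chi})\simeq\HF(\Aa^{F^\chi}_{H^\rho})=0$.

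The main obstacle I anticipate is the compactness input in the second paragraph: obtaining the uniform $\LM$-bound along a Palais--Smale sequence for the \emph{perturbed} functional requires the analogue of Proposition~\ref{prop:control-eta} with the extra Hamiltonian term $H^\rho$ present, and one must check that the contact-type coupling $X_F=R_\alpha$ along $\Sigma$ still does its job when the loop is only forced close to $\Sigma$ in the time-window $\supp\chi\subset(0,\tfrac12)$ where $H^\rho\equiv 0$ anyway — so in fact the elderly perturbation is invisible precisely where the estimate is made, which is why the argument should go through; making this localization rigorous is the one genuinely technical step. A cleaner alternative, avoiding Palais--Smale compactness altogether, would be to prove the norm lower bound \emph{directly and pointwise}: split $\norm{\grad\Aa^{F^\chi}_{H^\rho}(\lpz,\LM)}^2=\norm{\p_t\lpz-\LM\chi X_F(\lpz)-X_H(\lpz)}_2^2+\bigl(\int_0^1\chi F(\lpz)\bigr)^2$ and show that on the region $\{\lpz(0)\notin\supp X_F\}$ the second summand is bounded below (as in Case~I, where $\int_0^1\chi F(\lpz)=F(\lpz(0))\ne 0$ with a quantitative lower bound once $\lpz(0)$ stays a definite distance from $\Sigma$), while on the region $\{\lpz(0)\in\supp X_F\}$ the first summand is bounded below because the time-$1$ flow map moves $\lpz(0)$ off $\supp X_F$ by a definite amount (continuity of the displacement plus compactness of $\supp X_F$), so $\lpz$ cannot close up unless $\p_t\lpz$ deviates substantially from the prescribed vector field somewhere. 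I would pursue this direct route if the Palais--Smale version runs into difficulties.
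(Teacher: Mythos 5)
Your primary (Palais--Smale) route is not what the paper does, and as sketched it has a circularity: you want to bound $\abs{\LM_\nu}$ via an analogue of Proposition~\ref{prop:control-eta}, but that proposition bounds $\abs{\LM}$ in terms of the \emph{action value}, and your proposed action bound in turn relies on a $C^0$ bound on $\lpz_\nu$ and on controlling the term $\LM_\nu\int_0^1\chi F(\lpz_\nu)\,dt$ --- a product of an unbounded quantity with a quantity going to zero, which has no a~priori size. Without an independent action bound along a Palais--Smale sequence the loop is broken. The paper avoids all of this by proving the lower bound directly and pointwise; no Arzel\`a--Ascoli, no bootstrapping, no $\LM$ estimate at all.

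Your ``cleaner alternative'' is much closer in spirit, but the dichotomy you choose --- on whether $\lpz(0)\in\supp X_F$ --- does not give what you claim in the first branch. For a pair $(\lpz,\LM)$ that is \emph{not} a critical point the identity $\int_0^1\chi F(\lpz)\,dt=F(\lpz(0))$ simply fails: that identity holds only when $\lpz$ solves the ODE on $\supp\chi$. A loop can easily start far from $\Sigma$ (so $\lpz(0)\notin\supp X_F$, $\abs{F(\lpz(0))}$ large) and yet dip onto $\Sigma$ throughout $\supp\chi\subset(0,\tfrac12)$, making the second gradient component $\int_0^1\chi F(\lpz)\,dt$ vanish; in that case the dichotomy ``second summand large in Case~I'' breaks. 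The paper's fix is to split on the \emph{size of the first gradient component}: if $I:=\norm{\p_t\lpz-\LM\chi X_F(\lpz)-\dot\rho X_{H_\rho}(\lpz)}_2\ge\gamma$ you are done (forget component two); if $I<\gamma$ one first shows (Step~1, a quantitative version of the displacement argument exploiting the young/elderly split) that not both $\lpz(0)$ and $\lpz(\tfrac12)$ lie in $\supp X_F$, and then (Step~2) that $\abs{F\circ\lpz}\ge\delta/2$ on the \emph{entire} interval $[0,\tfrac12]$. Because $\chi\ge 0$ is supported in $(0,\tfrac12)$ and integrates to one, this forces $\abs{\int_0^1\chi F(\lpz)\,dt}\ge\delta/2$, so the second component is large (forget component one). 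The structural idea --- ``play the two gradient components off against each other'' --- is exactly what you had in mind; what needs changing is the case split and the realization that the approximate-flow-following (small $I$) is what propagates the nonvanishing of $F\circ\lpz$ from one time to the whole window.
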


Fix a smooth monotone cutoff function $\beta:\R\to[0,1]$ with
$\beta(s)=1$ for~$s\ge 1$ and $\beta(s)=0$ for $s\le -1$
in order to define a homotopy $H_{\mbf{\cdot}}$, and its
reverse~$\widebar{H}_{\mbf{\cdot}}$, between the zero Hamiltonian $0$
and the Hamiltonian $H^\rho=H^\rho_t$, namely
$$
     H_s:=\beta(s)\, H^\rho,\qquad
     \widebarsub{H}{s} :=(1-\beta(s))\, H^\rho,\qquad
     s\in\R.
$$
For each real parameter value $R\ge 1$ consider the homotopy
$\widebarsub{H}{\mbf{\cdot}}\#_R H_{\mbf{\cdot}}$ from
the zero Hamiltonian $0$ back to itself which is defined by the concatenation
$$
     \R\ni s\mapsto\widebarsub{H}{s}\#_R H_s
     :=\begin{cases}
        H_{s+R}&\text{, $s\le 0$,}\\
        \widebarsub{H}{s-R}&\text{, $s\ge 0$,}
     \end{cases}
$$
of the homotopy $H_{\mbf{\cdot}}$ followed by its reverse
$\widebarsub{H}{\mbf{\cdot}}$, as
illustrated by Figure~\ref{fig:fig-RF-concatenation-H}.

\begin{figure}%[h]
  \centering
  \includegraphics%[width=\textwidth]
                             [height=3cm]
                             {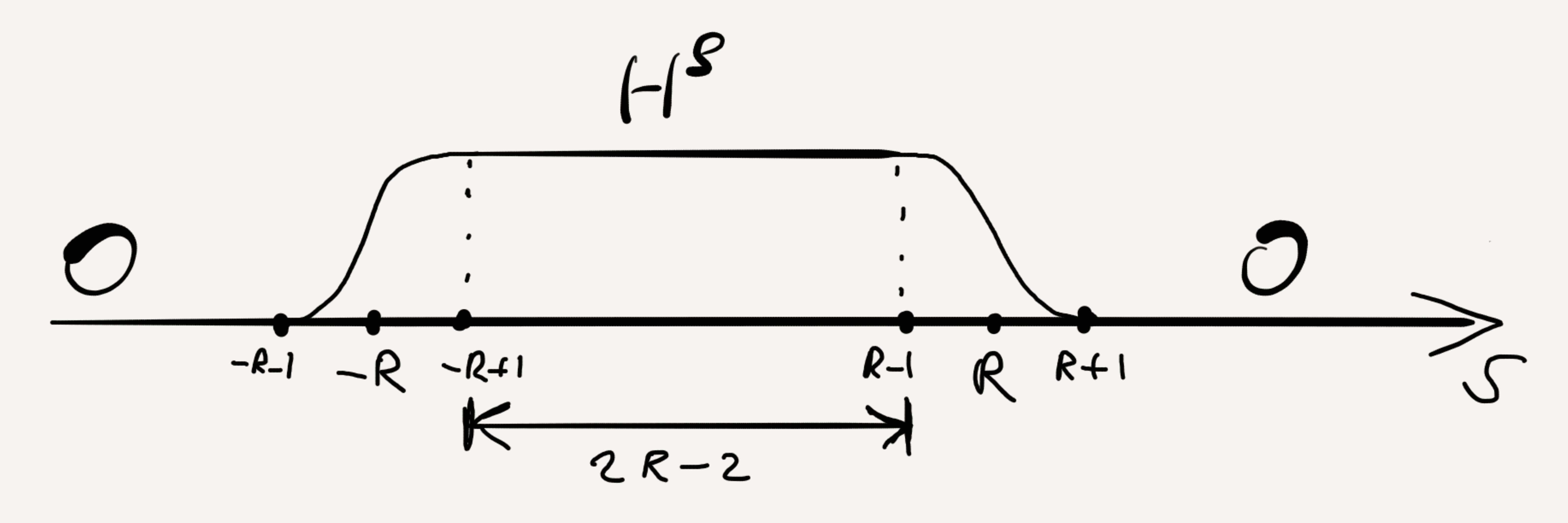}
  \caption{Concatenation homotopy $s\mapsto{\bar{H}}_{{\mspace{-2mu} s}}\#_R H_s$
                from $0$ over $H^\rho$ back to $0$}
  \label{fig:fig-RF-concatenation-H}
\end{figure}

Now consider the homotopy, in $r\in[0,1]$, of homotopies
of Hamiltonians $s\mapsto F^\chi + r\widebarsub{H}{s}\#_R\, rH_s$
and their corresponding action functionals
$$
     \Aa_{r,s}:=\Aa^{F^\chi}_{r\widebarsub{H}{s}\#_R\, rH_s}.
$$
\begin{figure}[h]
  \centering
  \includegraphics%[width=\textwidth]
                             [height=4cm]
                             {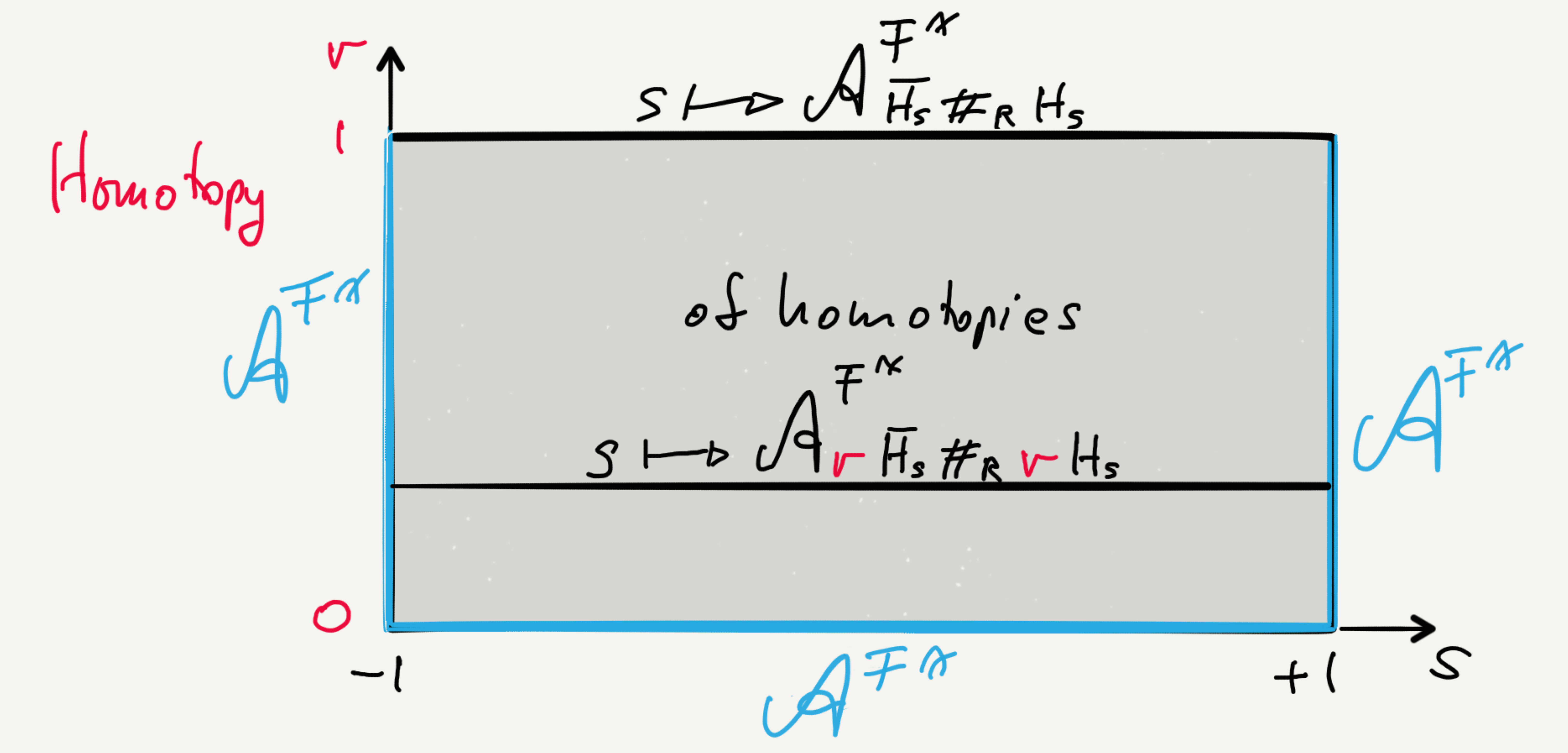}
  \caption{Homotopy, in $r$, of homotopies
                 $s\mapsto \Aa_{s,r}$, each from $\Aa^{F^\chi}$ to $\Aa^{F^\chi}$}
  \label{fig:fig-RF-htpy-of-htps-Aa}
\end{figure}

%see Figure~\ref{fig:fig-RF-htpy-of-htps-Aa}.
The homotopies at $r=0$ and $r=1$ have special properties.
The one at $r=0$ is constant, given by $s\mapsto \Aa^{F^\chi}$,
and the one at $r=1$ has -- as a consequence of the no-critical-points
Lemma~\ref{le:no-crit-pts-Aa-FH} -- no connecting~flow~lines:

\begin{lemma}[No finite energy trajectories]\label{le:RF-no-flow-lines}
There is a constant $R_0$ depending only on $F^\chi$, $\Norm{H^\rho}_\infty$,
and the action values $\Aa^{F^\chi}(\upsilon^\pm)$ of two fixed
critical points such that the following is true. 
For each real $R\ge R_0$ there are no trajectories of the $s$-dependent gradient
$\grad \Aa_{1,s}$ converging asymptotically to $\upsilon^\pm$.
\end{lemma}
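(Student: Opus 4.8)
The plan is to suppose that such a connecting trajectory exists and derive a contradiction by showing that the action gained along it grows at least linearly in $R$, whereas the action values at its two ends are fixed numbers. Write $\upsilon(s)=(u(s,\cdot),\LMpath(s))$ for a trajectory of the $s$-dependent upward gradient $\grad\Aa_{1,s}$, where $\Aa_{1,s}=\Aa^{F^\chi}_{\widebarsub{H}{s}\#_R H_s}$, and recall the basic identity
$$\tfrac{d}{ds}\,\Aa_{1,s}(\upsilon(s)) = \Norm{\grad\Aa_{1,s}(\upsilon(s))}^2+\bigl(\p_s\Aa_{1,s}\bigr)(\upsilon(s)),$$
in which, since $\Aa_{1,s}=\Aa^{F^\chi}-\int_0^1(\widebarsub{H}{s}\#_R H_s)_t(\cdot)\,dt$, the last term equals $-\int_0^1\p_s(\widebarsub{H}{s}\#_R H_s)_t(u(s,t))\,dt$. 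At $s=\pm\infty$ the concatenated homotopy vanishes, so the asymptotic limits $\upsilon^\mp$ are critical points of $\Aa^{F^\chi}$ and $\Aa_{1,s}(\upsilon(s))\to\Aa^{F^\chi}(\upsilon^\mp)$ as $s\to\mp\infty$; these two values are the only input that enters $R_0$ besides $F^\chi$ and $\Norm{H^\rho}_\infty$. This is just the energy identity of Exercise~\ref{exc:action-filtered-Floer homology} adapted to the Rabinowitz setting.

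First I would split the $s$-line into the \emph{plateau} $[-R+1,R-1]$, on which $\widebarsub{H}{s}\#_R H_s\equiv H^\rho$ is $s$-independent, and the two \emph{ramp} regions $(-\infty,-R+1]$ and $[R-1,\infty)$. On the plateau $\p_s\Aa_{1,s}\equiv 0$, so the no-critical-points Lemma~\ref{le:no-crit-pts-Aa-FH} gives $\tfrac{d}{ds}\Aa_{1,s}(\upsilon(s))=\Norm{\grad\Aa^{F^\chi}_{H^\rho}(\upsilon(s))}^2\ge\gamma^2$, whence the action increases by at least $\gamma^2(2R-2)$ across the plateau. On the two ramp regions the term $\Norm{\grad\Aa_{1,s}}^2\ge 0$ is harmless, while $\p_s(\widebarsub{H}{s}\#_R H_s)=\pm\dot\beta(s\pm R)\,H^\rho$, so its contribution to the action is bounded in absolute value by $\Norm{H^\rho}_\infty\int_{\R}\dot\beta=\Norm{H^\rho}_\infty$ on each ramp, a bound independent of $R$; this is exactly the step where it matters that $\beta$ is a fixed cutoff and that the concatenation only inserts the plateau between the two unchanged ramps. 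Chaining the three estimates, and using that the action tends to $\Aa^{F^\chi}(\upsilon^\pm)$ at the two ends, I obtain
$$\gamma^2(2R-2)\ \le\ \Aa^{F^\chi}(\upsilon^+)-\Aa^{F^\chi}(\upsilon^-)+2\Norm{H^\rho}_\infty.$$

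Finally I would set $R_0:=1+\tfrac{1}{2\gamma^2}\bigl(\Aa^{F^\chi}(\upsilon^+)-\Aa^{F^\chi}(\upsilon^-)+2\Norm{H^\rho}_\infty\bigr)$, which depends only on $F^\chi$ (through $\gamma=\gamma(J)$ and the auxiliary choices), on $\Norm{H^\rho}_\infty$, and on the fixed action values $\Aa^{F^\chi}(\upsilon^\pm)$, as claimed; for every $R\ge R_0$ the displayed inequality fails, so no trajectory of $\grad\Aa_{1,s}$ asymptotic to $\upsilon^\pm$ can exist. I expect the heart of the matter to lie not here but in Lemma~\ref{le:no-crit-pts-Aa-FH}, which supplies the uniform lower bound $\gamma$ on $\Norm{\grad\Aa^{F^\chi}_{H^\rho}}$ via the disjoint-supports/displacement argument used in version~1 of the Vanishing Theorem; granting that lemma, the only mildly delicate point in the present proof is the uniform-in-$R$ control of the two ramp contributions together with the identification of the asymptotic action values at the unperturbed ends.
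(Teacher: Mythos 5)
Your proposal is correct and follows the same strategy the paper indicates: it is the ``integrated version'' of Lemma~\ref{le:no-crit-pts-Aa-FH}, where a uniform lower gradient bound is played off against fixed asymptotic action values. The paper only sketches the $s$-independent case and remarks that the $s$-dependent case is ``rather similar,'' deferring the details to the original Cieliebak--Frauenfelder paper; you actually carry out the $s$-dependent bookkeeping. Your plateau/ramp decomposition, the identity $\tfrac{d}{ds}\Aa_{1,s}(\upsilon(s))=\norm{\grad\Aa_{1,s}(\upsilon(s))}^2+(\p_s\Aa_{1,s})(\upsilon(s))$, the $R$-independent bound $2\norm{H^\rho}_\infty$ on the ramp contributions via $\int_\R\dot\beta=1$, and the final inequality $\gamma^2(2R-2)\le\Aa^{F^\chi}(\upsilon^+)-\Aa^{F^\chi}(\upsilon^-)+2\norm{H^\rho}_\infty$ are exactly what is needed, and this is essentially Step~1 on p.\,283 of~\citeintro{Cieliebak:2009a} to which the paper defers. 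The only cosmetic point is that with your choice $R_0=1+\tfrac{1}{2\gamma^2}\bigl(\Aa^{F^\chi}(\upsilon^+)-\Aa^{F^\chi}(\upsilon^-)+2\norm{H^\rho}_\infty\bigr)$ one should argue strictness (or take $R_0$ slightly larger), but this is a trivial adjustment.
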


\begin{lemma}[Uniform $C^0$ bound]\label{le:RF-compactness-htps}
Pick $R\ge 1$ and assume that $\upsilon(s)=\left(u(s),\LMpath(s)\right)$ is a
trajectory~of $\grad\Aa_{r,s}$, for some
$r\in[0,1]$, converging asymptotically to
$\upsilon^\pm=(u^\pm,\LMpath^\pm)\in\Crit\,\Aa^{F^\chi}$.
Then $\LMpath(s)$ is uniformly bounded by a constant that depends only on
$F^\chi$, $\Norm{H^\rho}_\infty$, $R$, and the action values of $\upsilon^\pm$.
\end{lemma}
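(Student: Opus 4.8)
\textbf{Proof plan for Lemma~\ref{le:RF-compactness-htps} (uniform $C^0$ bound on the multiplier along homotopies).}

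The plan is to adapt the argument of Corollary~\ref{cor:C0-eta} (and its underlying Proposition~\ref{prop:control-eta}) to the $s$-dependent setting, tracking how the perturbation $r\widebar{H}_{s}\#_R\, rH_s$ enters the estimates. The starting point is the same bookkeeping quantity
\[
     \Tt_\sigma:=\inf\{s\ge0\mid \Norm{\grad\Aa_{r,s}(\upsilon(\sigma+s))}<\eps\},
\]
for a threshold $\eps>0$ still to be fixed. First I would record a uniform action bound: since $\upsilon$ converges asymptotically to $\upsilon^\pm\in\Crit\,\Aa^{F^\chi}$ and since along any $\grad\Aa_{r,s}$ trajectory the action changes by
\[
     \Aa_{r,\infty}(\upsilon^+)-\Aa_{r,-\infty}(\upsilon^-)
     =\int_{-\infty}^\infty\Norm{\p_s\upsilon}^2\,ds
     +\int_{-\infty}^\infty\int_0^1\left(\p_s(r\widebar{H}_{s}\#_R\, rH_s)\right)(\upsilon)\,dt\,ds,
\]
and the $\p_s$-derivative of the homotopy is pointwise bounded by $2\Norm{H^\rho}_\infty$ and supported on an $s$-interval of length at most a constant times $R$, one gets $|\Aa_{r,s}(\upsilon(s))|\le\kappa$ for all $s$, with $\kappa$ depending only on $F^\chi$, $\Norm{H^\rho}_\infty$, $R$, and $\Aa^{F^\chi}(\upsilon^\pm)$. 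Combining this with the energy bound $\int\Norm{\p_s\upsilon}^2\le 2\kappa$ gives $\Tt_\sigma\le 2\kappa/\eps^2$ exactly as in the proof of Corollary~\ref{cor:C0-eta}.

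The second ingredient is a version of Proposition~\ref{prop:control-eta} for the perturbed gradient. Here I would note that the perturbation term $X_{r\widebar{H}_{s}\#_R\, rH_s}$ is \emph{compactly supported}, with support contained in a fixed compact set independent of $r,s,R$ (namely $\supp X_{H^\rho}$), and it does not involve the real variable $\LMpath$ at all. Consequently the key geometric estimate of Step~I in that proof — the lower bound $\lambda(X_F)=\alpha(R_\alpha)\equiv1$ on the contact-type hypersurface $\Sigma$, which controls $|\LM|$ in terms of the action of $(\lpz,\LM)$ — is unaffected: the contact form $\alpha$ and $\Sigma$ are fixed. The only change is in Step~II, the statement that small gradient norm forces the loop part to stay near $\Sigma$; there one must absorb an extra term $\Norm{X_{r\widebar{H}_s\#_R rH_s}}_\infty\le c'\Norm{H^\rho}_\infty$ into the estimates of Cases~1 and~2. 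Choosing $\eps$ small enough relative to this bound (and to $\delta$ and the sup-norm $\mu$ of $\nabla F$ over $U_\delta$) still yields: if $\Norm{\grad\Aa_{r,s}(\lpz,\LM)}\le\eps$ then $\lpz(\SS^1)\subset U_\delta$, hence $|\LM|\le c\,(|\Aa_{r,s}(\lpz,\LM)|+1)$ with $c$ depending on $F^\chi$, $\Norm{H^\rho}_\infty$ but not on $r,s,R,\upsilon$.

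Putting the two ingredients together finishes the proof in the same way as Corollary~\ref{cor:C0-eta}: for any $\sigma$ the element $\upsilon(\sigma+\Tt_\sigma)$ has gradient norm $\le\eps$, so the perturbed Proposition gives $|\LMpath(\sigma+\Tt_\sigma)|\le c(\kappa+1)$, and then
\[
     |\LMpath(\sigma)|
     \le|\LMpath(\sigma+\Tt_\sigma)|+\int_\sigma^{\sigma+\Tt_\sigma}|\p_s\LMpath|\,ds
     \le c(\kappa+1)+\Tt_\sigma\Norm{F^\chi}_\infty
     \le c(\kappa+1)+\frac{2\kappa\Norm{F^\chi}_\infty}{\eps^2},
\]
using that $\p_s\LMpath=-\int_0^1\chi F(u_s)\,dt$ is bounded by $\Norm{F^\chi}_\infty$ from the gradient equation~(\ref{eq:RF-perturbed-UGF}). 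The right-hand side depends only on $F^\chi$, $\Norm{H^\rho}_\infty$, $R$, and $\Aa^{F^\chi}(\upsilon^\pm)$, as claimed. The main obstacle I anticipate is purely technical: verifying carefully that the constant $\eps$ from the perturbed Step~II can be taken uniform over $r\in[0,1]$ and $R\ge1$ — this works precisely because the perturbation's $C^1$-size is controlled by $\Norm{H^\rho}_\infty$ uniformly (the cutoffs $\beta,\chi,\rho$ are fixed once and for all), but one should double-check there is no hidden $R$-dependence entering through $\supp X_{H^\rho}$ or through the concatenation.
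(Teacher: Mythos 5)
Your proposal follows exactly the route the paper indicates: it adapts Corollary~\ref{cor:C0-eta}, built on the quantity $\Tt_\sigma$, together with the perturbed version of Proposition~\ref{prop:control-eta}, to the $s$-dependent gradient, and it correctly observes that the displacement property of $H$ is never used. The paper's own proof is a one-line pointer to this same argument, which you have fleshed out with the right bookkeeping (uniform action window $\kappa$ from the $\p_s$-term of the homotopy, perturbed Step~II absorbing the $C^1$-size of $H^\rho$, then the $\Tt_\sigma$ estimate).
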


Lemma~\ref{le:RF-compactness-htps} implies compactness of the
relevant components of the moduli spaces appearing in the
definition~(\ref{eq:Floer-continuation-maps-homology})
of the continuation homomorphisms
$$
     \Psi^r=[\psi(r\widebarsub{H}{\mbf{\cdot}}\#_R\, rH_{\mbf{\cdot}})]
     :\HF(\Aa^{F^\chi})\to\HF(\Aa^{F^\chi}),\quad r\in[0,1].
$$
But the $\Psi^r$ are all equal, as the defining homotopies of any two are
homotopic, cf. Exercise~\ref{exc:htps-of-htps},
and $\Psi^0=\1$, because it is induced by the constant homotopy,
cf. Exercise~\ref{exc:const-htpy}. But $\Psi^1=0$ since by
Lemma~\ref{le:RF-no-flow-lines} there are just no connecting
trajectories of $\grad \Aa^{F^\chi}_{\widebarsub{H}{s}\#_R H_s}$
whenever $R\ge R_0$. Thus we conclude
\begin{equation}\label{eq:RF-thpy-arg}
     \1=\Psi^0=\Psi^1=0:\HF(\Aa^{F^\chi})\to\HF(\Aa^{F^\chi}).
\end{equation}
But this is only possible if the domain $\HF(\Aa^{F^\chi})=0$ is
trivial which is what is claimed by the Vanishing Theorem~\ref{thm:RF-vanishing}.
Full details are given in~\citeintro[\S 3.3]{Cieliebak:2009a}.

\vspace{.2cm}
Let us look at the ideas, at least, how to prove the three lemmas.
For details see~\citeintro[p. 281-284]{Cieliebak:2009a}.

\vspace{.1cm}
\noindent
\textsc{No critical points.}
The proof of Lemma~\ref{le:no-crit-pts-Aa-FH}
takes three steps and uses compactness of $\Sigma$,
that $H$ displaces $\Sigma$ and that $F^\chi$ and $H^\rho$
are young and elderly, respectively. Furthermore, it is crucial
that $\grad\Aa$ has two components, see~(\ref{eq:RF-grad-Aa}),
whose norms can be played out, one against the other one.

Step~1: There is a constant $\eps_1(J)$
such that if $(\lpz,\LM)\in\Ll V\times\R$ satisfies
\begin{equation}\label{eq:hyp-eps-1}
     \Norm{\p_t \lpz-\LM\chi X_F(\lpz)-\dot\rho X_{H_{\rho}}(\lpz)}_2
     \le\eps_1
\end{equation}
then 
\begin{equation*}
     \left(\lpz(0),\lpz(\tfrac12)\right)\notin\supp X_F\times\supp X_F.
\end{equation*}
In words, for such loops $\lpz$ not both, birth and \Index{midlife crisis}, can
happen under $F$-action, at least one of them requires a rest from
the defining Hamiltonian $F$.
The proof of this uses that $\Sigma$ is compact and is displaced by
$H^\rho$ only at the end of life $t=1$ where the young $F^\chi$ is not supported.

Step~2: There are $\eps_2,\delta>0$ such that if
$(\lpz,\LM)$ satisfies~(\ref{eq:hyp-eps-1}) for $\eps_2$, then
$$
     \Abs{F\circ \lpz}\ge\frac{\delta}{2},\qquad
     \text{on $[0,\tfrac12]$.}
$$
That is $F$ stays away from zero during the first part of life of any
such loop $\lpz$.
The proof exploits that $H^\rho$ is elderly
and uses Step~1.

Step~3: Pick  $(\lpz,\LM)\in\Ll V\times\R$.
To prove that $\bigl\|\grad\Aa^{F^\chi}_{H^\rho}(\lpz,\LM)\bigr\|\ge\gamma
=:\min\{\eps_2,\delta/2\}$ involves two cases in each of which one
simply forgets one of the two gradient components,
cf.~(\ref{eq:RF-norm-grad-Aa}).
\newline
If $I:=\norm{\p_t \lpz-\LM\chi X_F(\lpz)-\dot\rho X_{H_{\rho}}(\lpz)}_2\ge\gamma$,
we are done: Just forget component two.
Otherwise, if $I<\gamma$, Step~2 applies. Forget component one to
obtain that
\begin{equation}\label{eq:jhjkhkj7876}
\begin{split}
     \bigl\|\grad\Aa^{F^\chi}_{H^\rho}(\lpz,\LM)\bigr\|
   &\ge\Abs{\mean(F^\chi\circ \lpz)}\\
   &=\biggl|\int_0^{\tfrac12}\underbrace{\chi}_{\color{magenta} \ge 0}\cdot\, 
       \underbrace{F(\lpz)}_{\not=0}\, dt\biggr|\\
   &=\int_0^{\tfrac12}\chi
      \cdot\underbrace{\Abs{F(\lpz)}}_{\ge\delta/2}\, dt
     \ge\frac{\delta}{2}\ge\gamma.
\end{split}
\end{equation}
Here we used that $\chi$ is young and integrates to one.
Concerning the second identity use the non-negativity
assumption $\chi\,{\color{magenta} \ge 0}$ in~(\ref{eq:chi-young})
together with the fact that by Step~2 the function
$F\circ \lpz\not= 0$ is non-zero along the interval $[0,\frac12]$, in fact,
it is either $\ge\frac{\delta}{2}$ or $\le-\frac{\delta}{2}$.

\begin{exercise}
It\index{{open \color{red} problems \color{black}}}
seems to be an \emph{open question}, at least we couldn't localize
a proof in the literature, if the conclusion
$\norm{\grad\Aa^{F^\chi}_{H^\rho}(\lpz,\LM)}\ge\frac{\delta}{2}$
in~(\ref{eq:jhjkhkj7876}) remains valid for real-valued
cut-off functions $\chi:\SS^1\to{\color{magenta}\R}$
in~(\ref{eq:chi-young}).
\end{exercise}

\vspace{.1cm}
\noindent
\textsc{No finite energy trajectories.}
This is essentially an integrated version of 
Lemma~\ref{le:no-crit-pts-Aa-FH}.
In the $s$-independent case the proof is trivial:
Any connecting trajectory $(u,\LMpath)$ has finite energy
$E(u,\LMpath):=\int_{-\infty}^\infty\norm{\grad \Aa(u,\LMpath)}^2\, ds$,
namely, it is given by the action difference of the asymptotic limits.
But a positive lower gradient bound as in Lemma~\ref{le:no-crit-pts-Aa-FH}
contradicts finiteness, so such trajectories cannot exist.
The $s$-dependent case is rather similar;
see~\citeintro[p.283, Step~1]{Cieliebak:2009a}.

\vspace{.1cm}
\noindent
\textsc{Uniform bound.}
The proof is similar to the one of the
uniform bound on $\abs{\LMpath}$ in Section~\ref{sec:RF-compactness},
again built on the quantity $\Tt_\sigma$.
That $H$ displaces $\Sigma$ is not used.

%%%%%%%%%%%%%%%%%%%%%%%%%%%%%%%%%%
%% SUBSECTION %%%%%%%%%%%%%%%%%%%%%%
%%%%%%%%%%%%%%%%%%%%%%%%%%%%%%%%%%
\subsection{Weinstein conjecture}\label{sec:Weinstein-RFH}

\begin{theorem}
A displaceable convex exact hypersurface $\Sigma$
in a convex exact symplectic manifold $(V,\lambda)$
carries a closed Reeb loop that is contractible in $V$.
\end{theorem}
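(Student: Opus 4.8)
The plan is to argue by contradiction, deducing nontriviality of Rabinowitz--Floer homology from the hypothetical absence of contractible Reeb loops and then colliding this with the Vanishing Theorem~\ref{thm:RF-vanishing}. First I would arrange the hypotheses of Assumption~\ref{ass:RF}: by the cited lemma of Cieliebak--Frauenfelder the convex exact symplectic manifold $(V,\lambda)$ may be taken of bounded topology with complete Liouville flow after modifications away from $\Sigma$, and by Remark~\ref{rem:ghg676} together with Exercise~\ref{exc:equiv-1-form-restrict} one may replace $\lambda$ by a compactly supported exact perturbation so that $\alpha=\lambda|_\Sigma$ is the associated contact form; displaceability of $\Sigma$ is preserved throughout these normalizations. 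Fix a regular defining Hamiltonian $F\in\Ff(\Sigma)\cap\Ffreg$ (perturbing $\Sigma$ slightly inside the class of convex exact hypersurfaces if necessary, which changes nothing by the continuation results of Section~\ref{sec:RF-continuation}). Now assume, for contradiction, that $\Sigma$ carries no closed Reeb orbit that is contractible in $V$.

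Under this assumption I would invoke Exercise~\ref{exc:RFH-Sigma}, which yields an isomorphism $\RFH_*(\Sigma)\simeq\Ho_*(\Sigma;\Z_2)$. It is worth recalling the mechanism: the critical manifold $C=\Crit\,\Aa^F$ decomposes as the component $C_0\cong\Sigma$ of constant loops, on action level $0$, together with circle towers $C_i^k$ of (forward and backward) Reeb loops sitting on the nonzero action levels $k\LM_i$; and a connecting cascade flow line between two critical points induces, through its loop components, a free homotopy in $V$ between the underlying loops. By hypothesis every Reeb-loop generator is noncontractible in $V$, whereas every generator coming from $C_0$ is a constant, hence contractible, loop, so no connecting cascade can join the two kinds. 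Consequently the Morse-complex generators lying on $C_0$ span a subcomplex; on it the Rabinowitz action is constantly $0$, and since nonconstant cascades strictly change the action, no nontrivial cascade can occur, so the differential there reduces to the ordinary Morse differential of $(\Sigma,h|_{C_0})$, whose homology is $\Ho_*(\Sigma;\Z_2)$. A standard continuation argument then identifies this with the whole of $\RFH(\Sigma)$.

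Finally, since $\Sigma$ is a nonempty closed connected manifold we have $\Ho_*(\Sigma;\Z_2)\neq 0$ --- already $\Ho_0(\Sigma;\Z_2)=\Z_2$ --- so $\RFH(\Sigma)\neq 0$. But $\Sigma$ is displaceable, so the Vanishing Theorem~\ref{thm:RF-vanishing} forces $\RFH(\Sigma)=0$, a contradiction. Hence $\Sigma$ must carry a closed Reeb loop that is contractible in $V$, which is the assertion. The genuinely hard content is entirely packaged in the two cited inputs: in the Vanishing Theorem, the uniform $C^0$-bounds on the Lagrange multiplier component of connecting trajectories, obtained from the contact-type coupling $X_F=\RVF_\alpha$ along $\Sigma=F^{-1}(0)$ (Proposition~\ref{prop:control-eta} and Corollary~\ref{cor:C0-eta}); and in Exercise~\ref{exc:RFH-Sigma}, the splitting of the Morse--Bott cascade complex by free homotopy classes in $V$ together with the no-cascade computation on the component $C_0\cong\Sigma$. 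Granting these, the proof is essentially the two-line clash $0\neq\RFH(\Sigma)=0$.
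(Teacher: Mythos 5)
Your argument is exactly the paper's proof: the paper proves this theorem by citing the Vanishing Theorem~\ref{thm:RF-vanishing} together with Exercise~\ref{exc:RFH-Sigma}, i.e.\ the clash between $\RFH(\Sigma)=0$ (displaceability) and $\RFH_*(\Sigma)\simeq\Ho_*(\Sigma;\Z_2)\neq 0$ (absence of contractible Reeb loops). Your additional unpacking of the exercise --- the splitting of the cascade complex by free homotopy classes in $V$ and the reduction to the Morse complex on the constant-loop component $C_0\cong\Sigma$ --- is precisely the intended mechanism (cf.\ Remark~\ref{rem:powerhouse}), so the proposal is correct and takes essentially the same route.
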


\begin{proof}
Vanishing Theorem~\ref{thm:RF-vanishing}
and Exercise~\ref{exc:RFH-Sigma}.
\end{proof}

The theorem was established in~\citerefRF{Schlenk:2006a}
for stably displaceable hypersurfaces of contact type.
Recall that if a closed contact type hypersurface $\Sigma\subset
(\R^{2n},\omega_0)$ is simply-connected, then
it is of restricted contact type by
Exercise~\ref{exc:restricted-CT=>exact}.

\begin{exercise}
Consider a connected closed hypersurface $\Sigma\subset
(\R^{2n},\omega_0)$.\mbox{ }
\\
a)~Show that $\Sigma$ is bounding and displaceable.
\\
b)~Show that 
if $\Sigma$ is, in addition, transverse to some
global\footnote{
  A locally near $\Sigma$ defined $\LVF$ is sufficient
  whenever $\pi_1(\Sigma)=0$;
  cf. Exercise~\ref{exc:restricted-CT=>exact} b).
  }
  Liouville vector field $\LVF$, then it is convex exact.
\end{exercise}

%%%%%%%%%%%%%%%%%%%%%%%%%%%%%%%%%%
%% SUBSECTION %%%%%%%%%%%%%%%%%%%%%%
%%%%%%%%%%%%%%%%%%%%%%%%%%%%%%%%%%
\subsection{Leaf-wise intersections}\label{sec:LIPS}
Before giving the formal definition of ``leaf-wise intersection''
let us first switch on light by looking at the \textbf{circular restricted
three body problem} in celestial
mechanics;\index{three body problem}
cf.~\cite[Ch.\,10]{abraham:1978a}.

%% SUBSUBSECTION %%%%%%%%%%%%%%%%%%%%
%%%%%%%%%%%%%%%%%%%%%%%%%%%%%%%%%%
\subsubsection*{Motivation: Satellite perturbed by comet}
Following~\citerefRF{Albers:2012b}
consider an almost massless particle, the satellite~$s$,
moving in the gravitational field of two huge massive bodies
called primaries, say earth~$E$ and moon~$M$.
By assumption the whole system is restricted to a given
fixed plane and each of the primaries
moves along a circle about their common center of mass.
So the configuration space of the system is rather restricted. % by assumption.
Now suppose the satellite, so far moving peacefully on its energy surface
$\Sigma=F^{-1}(0)$ in phase space, gets temporarily influenced by a
comet passing by during a time interval, say of length one.
Suppose the previously autonomously via $\phi=\phi^F$ on $\Sigma$ moving
satellite receives extra kinetic energy by gravitational attraction
as the comet appears in front at time zero and looses energy when
the comet disappears behind at time one. The comet's
presence is described by a time-dependent Hamiltonian $H$ with
Hamiltonian flow $\psi=\psi^H$. In other words,
at time zero, say at the phase space location $x\in\Sigma$,
the satellite gets lifted off of $\Sigma$ following $\psi$
for one unit of time after which it gets dropped back onto $\Sigma$
at $\psi_1 x$.
Let $L_x:=\phi_{\R} x\subset\Sigma$, called the
\textbf{leaf of $\mbf{x}$}, be the whole phase space trajectory of
the satellite as it would happen without the comet's appearance.
\\
One would probably not expect that the satellite will get dropped back
at $\psi_1 x$ to its original trajectory $L_x$. However if it happens,
the unexpected penomenon
$$
     \psi_1 x\in L_x,\qquad L_x:=\phi_{\R} x\subset\Sigma,
$$
is called a \textbf{\Index{leaf-wise intersection}}, see
Figure~\ref{fig:fig-RF-LIP}, and $x$ is called
a\index{\LIP~leaf-wise intersection point}
\textbf{\Index{leaf-wise intersection point (\LIP)}}.
\begin{figure}%[h]
  \centering
  \includegraphics%[width=\textwidth]
                             [height=3.5cm]
                             {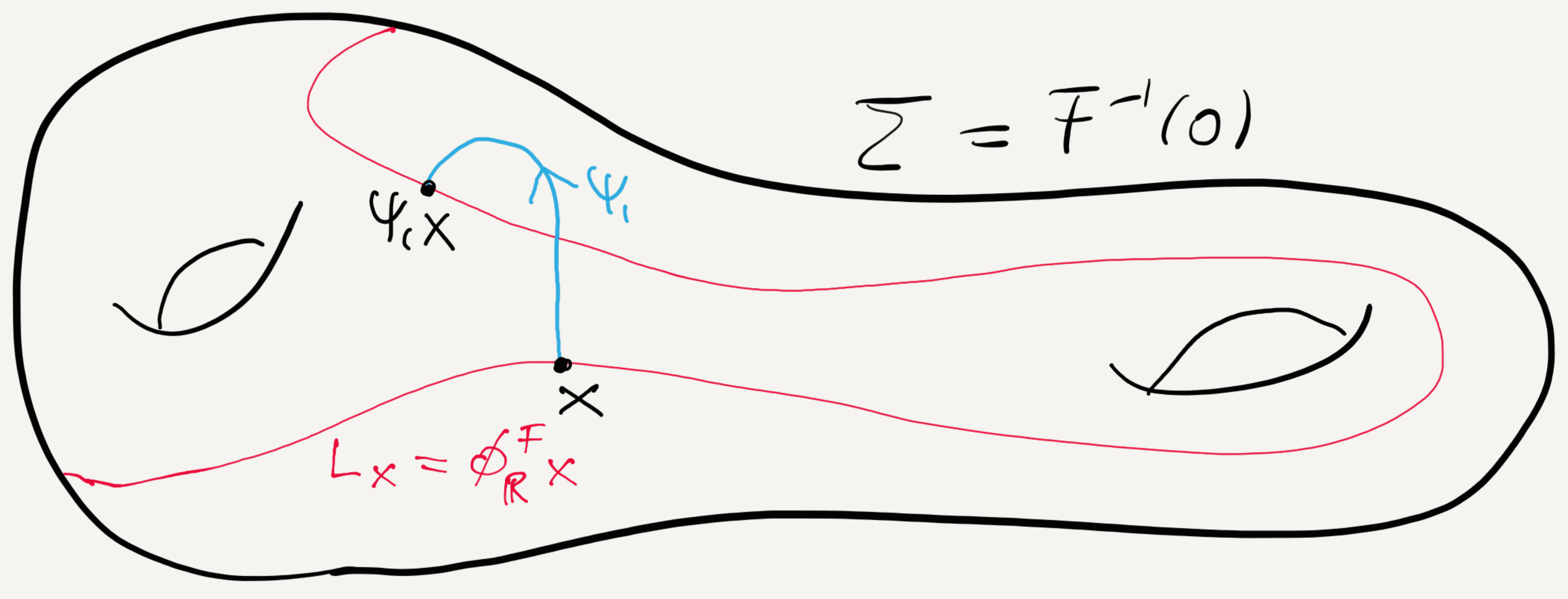}
  \caption{Leaf-wise intersection $\psi_1 x\in L_x$:}
                \centering
                Satellite at $x$ deviates under sudden comet influence
                $\psi_t=\psi_t^{H_t}$,
                \\ \centering
                but happens to end up on its unperturbed trajectory
                $\phi^F_{\R} x$ afterwards
  \label{fig:fig-RF-LIP}
\end{figure}

Surprisingly, the phenomenon indeed happens:
In certain hypersurfaces of exact symplectic manifolds, for instance such
of non-zero Rabinowitz-Floer homology, \emph{for any} comet there
is by~\citerefRF[Thm.\,C]{Albers:2010b} a satellite position $x$ which
ends up on its own unperturbed trajectory afterwards.
In certain cotangent bundle situations there are, for generic
comets, even infinitely many undestroyable satellite trajectories
by~\citerefRF{Albers:2010c};
see also~\citerefRF[Thm.\,1]{Albers:2012b}
and \S2.2 in the survey~\citerefRF{Albers:2012a}.

%% SUBSUBSECTION %%%%%%%%%%%%%%%%%%%%
%%%%%%%%%%%%%%%%%%%%%%%%%%%%%%%%%%
\subsubsection{Rabinowitz-Floer homology for perturbed action
$\mbf{\Aa^{F^\chi}_H}$}
Let $(V,\lambda)$ and $\Sigma=F^{-1}(0)$
satisfy Assumption~\ref{ass:RF} where $F\in\Ff(\Sigma)$ is defining.
Consider the Rabinowitz action functional $\Aa^{F^\chi}_H$
in~(\ref{eq:pert-Rab-action}) for possibly non-autonomous
\textbf{\Index{elderly Hamiltonians}}
$$
     H\in\Hh^\dagger:=\{H\in C^\infty(\SS^1\times V)\mid
     \text{$H_t=0$ for $t\in[0,\tfrac12]$}\}
$$
also called \textbf{elderly perturbations} of $\Aa^{F^\chi}$.
The\index{elderly!perturbations}\index{perturbation!elderly}
perturbed\index{$\Hh^\dagger$ elderly perturbations}
functional $\Aa^{F^\chi}_H$
has a number of useful properties:
  As time-dependence is allowed for $H$, the functional
  $\Aa^{F^\chi}_H$ is Morse for generic $H\in\Hh^\dagger$,
  as shown in~\citerefRF[Thm.\,2.13]{Albers:2010b}. Thus
  no Morse-Bott complex will be needed at all.
  The critical points of $\Aa^{F^\chi}_H$ correspond to leaf-wise
  intersections.\footnote{
    The map $\Crit\to\{{\LIP}s\}$, $(\lpz,\LM)\mapsto \lpz(\frac12)$,
    is injective, unless $L_x\cong \SS^1$ for some $\LIP$ $x$.
    }

\begin{proposition}[Critical points and {\LIP}s,~{\citerefRF{Albers:2010b}}]
If $(\lpz,\LM)\in\Crit\,\Aa^{F^\chi}_H$, then $x:=\lpz(\tfrac12)$ lies in
$\Sigma=F^{-1}(0)$ and $\psi^H_1 x$ lies on the Reeb leaf
$L_x=\phi^F_\R x$.
\end{proposition}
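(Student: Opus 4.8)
The statement to be proved is that every critical point $(\lpz,\LM)$ of the perturbed Rabinowitz action functional $\Aa^{F^\chi}_H$ gives, via $x:=\lpz(\tfrac12)$, a leaf-wise intersection point of the pair $(\Sigma,H)$. The plan is to compute the Euler--Lagrange equations of $\Aa^{F^\chi}_H$ explicitly, exactly as was done for $\Aa^F$ in Exercise~\ref{exc:Crit-Rab}, but now keeping track of the time cut-off $\chi$ (supported in $(0,\tfrac12)$, see~(\ref{eq:chi-young})) and of the elderly perturbation $H$ (vanishing on $[0,\tfrac12]$). From the critical point equations~(\ref{eq:RF-perturbed-UGF}), specialized to $s$-independent data, one reads off that a critical point $(\lpz,\LM)$ solves
\[
     \dot\lpz(t)=\LM\,\chi(t)\,X_F(\lpz(t))+X_{H_t}(\lpz(t)),\qquad
     \int_0^1\chi(t)\,F(\lpz(t))\,dt=0 .
\]

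First I would analyze the loop $\lpz$ on the two halves of $\SS^1\cong\R/\Z$ separately. On $[0,\tfrac12]$ we have $H_t\equiv0$, so $\dot\lpz=\LM\chi\,X_F(\lpz)$; since $X_F$ is the Hamiltonian vector field of the autonomous $F$, the value $F(\lpz(t))$ is constant on $[0,\tfrac12]$. Combined with the constraint $\int_0^1\chi F(\lpz)\,dt=\int_0^{1/2}\chi F(\lpz)\,dt$ and $\int_0^1\chi=1$, this forces $F(\lpz(t))\equiv0$ on $[0,\tfrac12]$; in particular $x=\lpz(\tfrac12)\in F^{-1}(0)=\Sigma$, which is the first assertion. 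On $[\tfrac12,1]$ we have $\chi\equiv0$, so there $\dot\lpz=X_{H_t}(\lpz)$, i.e.\ the arc $\lpz|_{[1/2,1]}$ is a piece of Hamiltonian trajectory of $H$; reading from $t=\tfrac12$ to $t=1$ and using the flow notation of Exercise~\ref{exc:flow-rho-H} (the time reparametrization of $\chi,\rho$ matching $\psi^{H^\rho}_t=\psi^H_{\rho(t)}$ makes this precise), this says $\lpz(1)=\psi^H_1(\lpz(\tfrac12))=\psi^H_1 x$.

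Next I would close the loop: since $\lpz$ is $1$-periodic, $\lpz(1)=\lpz(0)$, and the arc $\lpz|_{[0,1/2]}$ is a reparametrized $X_F$-trajectory through $x$, hence lies entirely on the Reeb leaf $L_x=\phi^F_\R x$ (here one uses the coupling $X_F=R_\alpha$ along $\Sigma$ from~(\ref{eq:Ham=Reeb}) only to identify the leaf as a Reeb orbit, but the statement about leaves needs only $X_F$). Therefore $\psi^H_1 x=\lpz(1)=\lpz(0)\in L_x$, which is precisely the leaf-wise intersection condition. This completes the argument.

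The only genuinely delicate point is the bookkeeping of supports and reparametrizations: one must be careful that the young Hamiltonian $F^\chi=\chi F$ and the elderly $H$ really are supported in \emph{disjoint} time intervals, so that the two halves of the Euler--Lagrange equation decouple cleanly and the flow of the sum factors as a composition of flows — this is exactly the mechanism already used in the proof of the Vanishing Theorem (cases I and II). I expect this to be routine given the earlier material, so the main ``obstacle'' is really just presenting the two-halves computation and the periodicity step in the right order; there is no hard analysis here, only the variational identity $d\Aa^{F^\chi}_H=0$ unwound carefully.
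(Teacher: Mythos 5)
Your proof is correct and follows the same mechanism the paper itself uses (the critical point equations~(\ref{eq:Crit-emptyness}) together with the young/elderly decoupling exploited in the proof of the Vanishing Theorem): constancy of $F\circ\lpz$ on $[0,\tfrac12]$ plus the constraint $\int_0^1\chi F(\lpz)\,dt=0$ and $\int\chi=1$ forces $\lpz([0,\tfrac12])\subset\Sigma$, while on $[\tfrac12,1]$ the loop follows $\psi^H$ so that periodicity gives $\psi^H_1x=\lpz(0)\in\phi^F_\R x$. The only point worth making explicit is that $\lpz(1)=\psi^H_{1,1/2}(\lpz(\tfrac12))=\psi^H_1(\lpz(\tfrac12))$ precisely because $H_t\equiv0$ on $[0,\tfrac12]$ implies $\psi^H_{1/2,0}=\id$, which you have correctly accounted for.
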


The definition of Floer homology $\HF(\Aa^{F^\chi}_H)$ proceeds
pretty much as in Chapter~\ref{sec:FH}
with the little extra twist of an upward finiteness
condition, completely analogous to~(\ref{eq:RF-FINITE}),
that takes care of infinitely many critical points.
More precisely, given a defining Hamiltonian $F\in\Ff(\Sigma)$, pick
a young cutoff function $\chi$ as in~(\ref{eq:chi-young})
and a generic elderly perturbation $H\in\Hh^\dagger$
such that $\Aa^{F^\chi}_H$ is Morse.
Let $\CF(\Aa^{F^\chi}_H)$ be the $\Z_2$ vector space
that consists of all formal sums
$$
     \xi=\sum_{c\in\Crit\,\Aa^{F^\chi}_H} \xi_c c
$$
such that, given $\xi$, for each $\kappa\in\R$ there is only a finite
number of non-zero $\Z_2$-coefficients $\xi_c$ that belong to
critical points $c$ of action $\ge\kappa$; cf.~(\ref{eq:RF-FINITE}).
Let $\Mm(c_+,c_-)$ be the space of connecting (upward) gradient
trajectories, that is solutions $\upsilon=(u,\LM)$ of the
PDE~(\ref{eq:RF-perturbed-UGF}), with asymptotic
boundary conditions $c_\pm\in\Crit\,\Aa^{F^\chi}_H$
sitting at $s=\pm\infty$; cf. Figure~\ref{fig:fig-RF-UGF}.
For generic $\SS^1$-families of cylindrical almost complex
structures $J_t$ the space $\Mm(c_+,c_-)$ is a smooth finite
dimensional manifold that carries a free $\R$-action by $s$-shift.
Let
$$
     n(c_-,c_+):=\#_2(m_{c_-c_+}),\qquad
     m_{c_-c_+}:=\Mm(c_+,c_-)/\R,
$$
be the number {\rm (mod 2)} of zero-dimensional components of
the moduli space of connecting flow lines;
cf.~(\ref{eq:Floer-boundary-operator}). Define the Floer boundary
operator on the chain groups $\CF(\Aa^{F^\chi}_H)$ analogous
to~(\ref{eq:RF-bdy}) by linear extension of
\begin{equation*}
     \p c_+:=\sum_{c_-\in\Crit\, \Aa^{F^\chi}_H} n(c_-,c_+) c_-
\end{equation*}
for $c_+\in\Crit\,\Aa^{F^\chi}_H$; cf. Figure~\ref{fig:fig-RF-UGF} and
Remark~\ref{rem:RF-non-standard-order}.

By definition \textbf{Floer homology of the perturbed Rabinowitz
action functional} $\Aa^{F^\chi}_H$ is the homology of this chain complex,
namely\index{$\HF(\Aa^{F^\chi}_H)$ perturbed Rabinowitz Floer homology}
$$
     \HF(\Aa^{F^\chi}_H):=\frac{\ker\p}{\im\p}.
$$

\begin{theorem}[Invariance of $\RFH$ under elderly
perturbations,~{\citerefRF{Albers:2010b}}]\mbox{ }\\
If $\Aa^{F^\chi}_H$ is Morse for an elderly perturbation $H\in\Hh^\dagger$, then
$$
     \HF(\Aa^{F^\chi}_H)\simeq \HF(\Aa^{F^\chi}_0)=\RFH(\Sigma).
$$
\end{theorem}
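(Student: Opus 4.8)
The plan is to establish the isomorphism by the standard continuation argument, taking care of the two features that are special to the Rabinowitz setting: the upward finiteness condition~(\ref{eq:RF-FINITE}) on the chain groups, and the a priori control of the Lagrange multiplier component. First I would connect the given elderly perturbation $H$ to the zero perturbation through a path of \emph{elderly} perturbations, for instance $H_s:=\beta(s)H$ with $\beta:\R\to[0,1]$ monotone, $\beta\equiv 0$ for $s\le-1$ and $\beta\equiv 1$ for $s\ge 1$; since $(H_s)_t$ vanishes for $t\in[0,\tfrac12]$ and every $s$, each $H_s$ lies in $\Hh^\dagger$ and the perturbed action $\Aa^{F^\chi}_{H_s}$ keeps the structure used throughout Section~\ref{sec:LIPS}. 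Counting modulo two the isolated solutions $\upsilon=(u,\LMpath)$ of the $s$-dependent gradient equation~(\ref{eq:RF-perturbed-UGF}) for $H_s$, with asymptotic limits prescribed at $s=\pm\infty$, then defines a continuation chain map $\psi^{0H}:\CF(\Aa^{F^\chi}_H)\to\CF(\Aa^{F^\chi}_0)$, and the reverse homotopy defines $\psi^{H0}$ in the opposite direction. One subtlety here is that the source $\Aa^{F^\chi}_H$ is genuinely Morse while the target $\Aa^{F^\chi}_0=\Aa^{F^\chi}$ is only Morse--Bott, so the moduli spaces at the $0$-end must be organised with Frauenfelder's cascade formalism exactly as in Section~\ref{sec:RF-CC}; equivalently one interpolates first to a small generic elderly $H'$ making $\Aa^{F^\chi}_{H'}$ Morse and then invokes the Morse--Bott continuation $\HF(\Aa^{F^\chi}_{H'})\cong\HF(\Aa^{F^\chi}_0)$ already available from Section~\ref{sec:RF-CC}. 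This is carried out in \citerefRF[\S 2.3]{Albers:2010b}.

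The technical core is compactness of these continuation moduli spaces up to breaking, which rests on three a priori bounds that must be uniform along the homotopy: a uniform $C^0$ bound on the loop component $u$, a uniform $C^0$ bound on the multiplier path $\LMpath$, and a uniform $C^0$ bound on $\p_s u$. The first holds because on the cylindrical ends of $(V,\lambda)$ the vector fields $X_{F^\chi}$ and $X_{H_s}$ vanish — $H$ is compactly supported and $F$ is constant there — so outside a fixed compact set the equation is the $J$-holomorphic curve equation for a cylindrical $J$, and the $C^0$ bound follows from the usual convexity/maximum-principle argument in the literature, cf. the discussion in the proof of Theorem~\ref{thm:RF-vanishing}. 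The third is obtained by bubbling-off analysis as in Section~\ref{sec:FH-comp}, using exactness of $\omega=d\lambda$ to exclude sphere bubbles. The second is the analog for $\Aa^{F^\chi}_{H_s}$ of Corollary~\ref{cor:C0-eta}: for each $\sigma$ one introduces the time $\Tt_\sigma$ after which the $s$-dependent gradient of $\Aa^{F^\chi}_{H_s}$ along $\upsilon$ first drops below a fixed threshold $\eps$, bounds $\Tt_\sigma$ by the (uniformly bounded) action difference divided by $\eps^2$, and then bounds $\abs{\LMpath(\sigma)}$ in terms of $\abs{\LMpath(\sigma+\Tt_\sigma)}$, of $\Tt_\sigma\,\norm{H_s}_\infty$, and of the pointwise a priori estimate at the near-critical point $\upsilon(\sigma+\Tt_\sigma)$.

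The main obstacle is precisely this near-critical multiplier estimate, the perturbed counterpart of Proposition~\ref{prop:control-eta}. In the unperturbed case its proof is powered by the coupling $X_F=R_\alpha$ along $\Sigma=F^{-1}(0)$, equivalently $\lambda(X_F)=\alpha(R_\alpha)\equiv1$, which forces $\Aa^F(z,\tau)=\tau$ on critical points and yields, for loops staying near $\Sigma$, a bound of the form $\abs{\tau}\le 2\abs{\Aa^F(z,\tau)}+c\,\norm{\grad\Aa^F(z,\tau)}$. The decisive structural point that makes this survive the perturbation is that $F^\chi$ is \emph{young}, supported in $t\in[0,\tfrac12]$, while $H_s$ is \emph{elderly}, supported in $t\in[\tfrac12,1]$: on the half-life $[0,\tfrac12]$ the perturbation is invisible, there $\p_t z=\tau\chi X_F(z)$, and the two-case argument of Proposition~\ref{prop:control-eta} — playing the two components of the gradient against each other and using $\int_0^1\chi\,dt=1$, $\supp\chi\subset(0,\tfrac12)$ — applies verbatim to produce constants $\eps,c>0$, uniform over the compact family $\{H_s\}$, such that $\norm{\grad\Aa^{F^\chi}_{H_s}(z,\tau)}\le\eps$ implies $\abs{\tau}\le c(\abs{\Aa^{F^\chi}_{H_s}(z,\tau)}+1)$. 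A secondary, purely technical, nuisance is that the smallness hypothesis~(\ref{eq:ass-casc-comp}) of the cascade-compactness Theorem~\ref{thm:RF-compactness}, involving products of $\norm{F}_\infty$ and $\p_s$-norms, need not hold for the one-shot homotopy $H_s=\beta(s)H$; I would circumvent this exactly as in Section~\ref{sec:RF-continuation} by splitting $[0,1]$ into $N$ subintervals and running $N$ small homotopies each satisfying~(\ref{eq:ass-casc-comp}), obtaining $N$ continuation isomorphisms whose composition is the desired one.

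With compactness in hand, the chain-map property of $\psi^{0H}$ and $\psi^{H0}$ and the relations $\psi^{0H}\circ\psi^{H0}\simeq\mathrm{id}$ and $\psi^{H0}\circ\psi^{0H}\simeq\mathrm{id}$ follow from the homotopy-of-homotopies argument of Exercises~\ref{exc:htps-of-htps} and~\ref{exc:const-htpy}, giving the isomorphism $\HF(\Aa^{F^\chi}_H)\cong\HF(\Aa^{F^\chi}_0)$. Finally the identification $\HF(\Aa^{F^\chi}_0)=\RFH(\Sigma)$ is the one recorded in the proof of Theorem~\ref{thm:RF-vanishing}: the construction of Section~\ref{sec:RF-CC} applies with $F^\chi$ in place of $F$, and $\HF(\Aa^{F^\chi})\cong\HF(\Aa^F)=\RFH(\Sigma)$ by the continuation isomorphisms there. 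This completes the proposed proof.
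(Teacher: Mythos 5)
Your proposal is correct and follows essentially the same route as the paper, which itself defers this theorem to \citerefRF[\S 2.3]{Albers:2010b} and whose surrounding machinery (continuation via $s$-dependent homotopies, cascade compactness, subdivision of the homotopy, and the near-critical multiplier bound of Proposition~\ref{prop:control-eta}) is exactly what you assemble. The only caveat is that the perturbed multiplier estimate does not apply quite \emph{verbatim} -- the extra terms $\int\lambda(X_{H_t}(\lpz))\,dt$ and $\int H_t(\lpz)\,dt$ make the constants depend on $\norm{H}_{C^1}$, and the loop is confined to $U_\delta$ only on $[0,\tfrac12]$ -- but since $\chi$ is young this is all the argument needs, so your proof stands.
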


The theorem actually concludes the proof of version one of the
Vanishing Theorem~\ref{thm:RF-vanishing}; cf.~(\ref{eq:VThm-v1}).

\begin{remark}[Rabinowitz Floer for Reeb chords and Voyager missions]
Coming back to the previous space travel motivation, there is
practical interest in so-called consecutive collision orbits --
of course, in small perturbations of them.
In~\citerefRF{Frauenfelder:2017a}
consecutive collision orbits, interpreted as Reeb chords,
are encoded as critical points of an adequate version of
the\index{Rabinowitz Floer homology!for Reeb chords}
Rabinowitz action functional. Calculation of the corresponding
Rabinowitz\index{orbits!collision --}
Floer homology then leads to infinitely many \Index{collision orbits}.
\end{remark}

%% SUBSUBSECTION %%%%%%%%%%%%%%%%%%%%
%%%%%%%%%%%%%%%%%%%%%%%%%%%%%%%%%%
\subsubsection{The general picture: Coisotropic intersections}
The previous situation of a closed codimension one submanifold
$\Sigma=F^{-1}(0)$ being foliated by flow lines, that is $1$-dimensional
leaves $L_x$, is the rather special case $r=1$ of the general
leaf-wise intersection problem described in the
masterpiece~\citerefRF{Moser:1978a}.
It is amazing to see how important results fall off as
special cases for particular values of the codimension $r$ of a closed
coisotropic submanifold $\Sigma$ of a simply-connected exact
symplectic manifold $(V,\lambda)$ of dimension $2n$;
see the presentation in~\citerefRF{Moser:1978a}
of consequences 1.--4. of the main theorem that asserts existence of
leaf-wise intersections.

A codimension $r$ submanifold $\Sigma$ of a symplectic manifold
of dimension $2n$ is\index{submanifold!coisotropic}
called a \textbf{\Index{coisotropic submanifold}}
if every tangent space of $\Sigma$ is a coisotropic
subspace of the corresponding tangent space of $V$.
This implies $r\le n$.
The collection of symplectic complements
$(T_p\Sigma)^\omega$ turns out to provide an integrable
distribution of rank $r=\dim (T_p\Sigma)^\omega$ in the tangent bundle
$T\Sigma$. Thus by Frobenius the $2n-r$ dimensional manifold $\Sigma$
is foliated by leaves of dimension $r$.

For $r=0$ the main theorem in~\citerefRF{Moser:1978a}
proves existence of at least two fixed points
of a symplectic diffeomorphism of a closed simply-connected
symplectic manifold, see Remark~\ref{rem:2FIX};
the assumption of being simply-connected
was removed in~\citerefRF{Banyaga:1980a}.

For $r=1$ one gets to the previously described situation of integral
curves of the characteristic line bundle of an energy surface, hence
to Reeb dynamics if $\Sigma$ is of contact type.

For $r=n$ one recovers the Lagrangian intersection problem.

%%%%%%%%%%%%%%%%%%%%%%%%%%%%%%%%%%
%%%%%%%%%%%%%%%%%%%%%%%%%%%%%%%%%%
%% SECTION %%%%%%%%%%%%%%%%%%%%%%%%%
%%%%%%%%%%%%%%%%%%%%%%%%%%%%%%%%%%
%%%%%%%%%%%%%%%%%%%%%%%%%%%%%%%%%%
\section{Symplectic homology and loop spaces}\label{sec:RF-SH}

Running out of time and pages, let us just briefly mention that,
given a closed Riemannian manifold $Q$,
Rabinowitz-Floer homology of the unit sphere cotangent bundle
$\Sigma=S^*Q$, bounding the unit disk cotangent bundle $M=D^*Q$,
in the cotangent bundle $(V,\lambda)=(T^*Q,\lambdacan)$
encodes both homology and cohomology of the free loop space.\footnote{
  Recall that in the present text we work with $\Z_2$ coefficients;
  for field coefficients see the convention prior to Thm.\,1.10
  in~\citerefRF{Cieliebak:2010b}.
  }
It was shown in~\citerefRF[Thm.\,1.10]{Cieliebak:2010b} that
\begin{equation}\label{eq:RFH=LOOP}
     \RFH_{*^\prime}(S^*Q)\simeq
     \begin{cases}
        \Ho_{*^\prime}(\Ll Q)&\text{, $*^\prime>1$,}\\
        \Ho^{-*^\prime+1}(\Ll Q)&\text{, $*^\prime<0$,}
     \end{cases}
\end{equation}
and that for $*^\prime=0,1$ there are isomorphisms involving the Euler class
of the vector bundle $T^*Q\to Q$. Now the grading $*^\prime$
of $\RFH$ is different from the half-integer grading $*$ defined earlier
in~(\ref{eq:RF-grading}), namely
\begin{equation*}
     *^\prime:=
     \begin{cases}
        *+\frac12&\text{, on generators $c$ of positive action
          $\Aa^F(c)>0$,}\\
        *-\frac12&\text{, on generators $c$ of negative action
          $\Aa^F(c)<0$.}
     \end{cases}
\end{equation*}
The isomorphism is obtained by relating via a long exact sequence
Rabinowitz-Floer homology to symplectic homology and cohomology of
$D^*Q$, aka Floer co/homology of the cotangent bundle,
and then use the isomorphism~(\ref{eq:FH=LM}).
A proof of~(\ref{eq:RFH=LOOP}) by a direct construction
is given in~\citerefRF{Abbondandolo:2009a}
and a generalization to twisted cotangent bundles
in~\citerefRF{Merry:2011a}; see~\citerefRF{Bae:2011a} for an
alternative method.

%%%%%%%%%%%%%%%%%%%%%%%%%
%%%%%%%%% REFERENCES %%%%%%
%%%%%%%%%%%%%%%%%%%%%%%%%
%\bibliographystyle{plain}
         %   erzeugt:     [1] Joa Weber
%\bibliographystyle{abbrv}
         %  erzeugt:      [1] J. Weber and
\bibliographystylerefRF{alpha}
         %  article:    [Web05]  J. Weber
         %  book:      [Web05]  Joa Weber
         % more authors: [HZ87]
%%%%%%%%%%%%%%%%%%%%%%%%%
%% include Bibliography in TOC %%
% en.wikibooks.org/wiki/LaTeX/Bibliography_Management#Using_tocbibind
%%%%%%%%%%%%%%%%%%%%%%%%%
% Using hyperref, one should say:
\cleardoublepage
\phantomsection
\addcontentsline{toc}{section}{References}
%\bibliographyrefRF{$HOME/Dropbox/0-Libraries+app-data/Bibdesk-BibFiles/library_math}{}

\begin{bibliographyrefRF}{}
% imports file refRF.bbl
\end{bibliographyrefRF}

\cleardoublepage
\phantomsection
       % RABINOWITZ-FLOER HOMOLOGY

% % PART III -- APPENDIX
\appendix
\part{Appendices}\label{sec:appendices}
%%%%%%%%%%%%%%%%%%%%%%%%%%%%%%%%%%
%%%%%%%%%%%%%%%%%%%%%%%%%%%%%%%%%%
%%%%%%%%%%%%%%%%%%%%%%%%%%%%%%%%%%
%% CHAPTER %%%%%%%%%%%%%%%%%%%%%%%%%
%%%%%%%%%%%%%%%%%%%%%%%%%%%%%%%%%%
%%%%%%%%%%%%%%%%%%%%%%%%%%%%%%%%%%
%%%%%%%%%%%%%%%%%%%%%%%%%%%%%%%%%%
\chapter{Function spaces}\label{sec:function-spaces}

%%%%%%%%%%%%%%%%%%%%%%%%%%%%%%%%%%
%%% SUBSECTION %%%%%%%%%%%%%%%%%%%%%
%%%%%%%%%%%%%%%%%%%%%%%%%%%%%%%%%%

%%%%%%%%%%%%%%%%%%%%%%%%%%%%%%%%%%
%%%%%%%%%%%%%%%%%%%%%%%%%%%%%%%%%%
\section{Some Banach spaces and manifolds}\label{sec:B-spaces-B-mfs}
We shall provide the Banach manifolds $\Uu$ and $\Vv$,
each admitting a countable atlas modeled on a separable Banach space,
that are used in Example~\ref{ex:generic-Morse}.

%%%%%%%%%%%%%%%%%%%%%%%%%%%%%%%%%%
%%%%%%%%%%%%%%%%%%%%%%%%%%%%%%%%%%
\subsection*{The separable Banach space $\mbf{\Vv=C^k(Q)}$
for closed $\mbf{Q}$}
\begin{proposition}[The separable Banach space $\Vv=C^k(Q)$]
\label{prop:complete-separable}
Fix\index{completeness!of $C^k(Q)$}
$k\in\N_0$.\index{$C^k(Q)$}
For\index{separability!of $C^k(Q)$}
a closed manifold $Q$ the vector
space $C^k(Q)$ of $k$ times continuously differentiable functions
$f:Q\to\R$ equipped with the sum $\norm{\cdot}_{C^k}$ of the sup norms
of a function and its derivatives up to order $k$ is
complete and separable. This remains valid for vector valued
functions, that is for $C^k(Q,\R^\ell)$.
\end{proposition}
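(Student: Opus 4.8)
The plan is to handle completeness and separability separately, in each case reducing from the closed manifold $Q$ to compact subsets of Euclidean space by using compactness of $Q$. First I would fix once and for all a \emph{finite} atlas: charts $\varphi_i:U_i\to\R^n$, $i=1,\dots,m$, together with compact sets $K_i\subset U_i$ whose interiors still cover $Q$ (possible since $Q$ is compact). Put $\Omega_i:=\varphi_i(K_i)\subset\R^n$, a compact set, and define
$$
     \norm{f}_{C^k}:=\max_{1\le i\le m}\;\sum_{|\alpha|\le k}
     \norm{\partial^\alpha(f\circ\varphi_i^{-1})}_{C^0(\Omega_i)}.
$$
Any two choices of such a finite atlas produce equivalent norms (on compact sets the transition maps and all of their derivatives are bounded, so the chain rule yields a bi-Lipschitz comparison), and both completeness and separability depend only on the equivalence class of the norm; so it is enough to work with this particular one. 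The vector-valued case $C^k(Q,\R^\ell)$ is then immediate at the end: it is isometric to the $\ell$-fold product of $C^k(Q)$ with a max-norm, and finite products of complete (resp.\ separable) metric spaces are complete (resp.\ separable).

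\textbf{Completeness.}
This is essentially classical; cf.\ the footnote to~(\ref{eq:complete-45}). Given a $\norm{\cdot}_{C^k}$-Cauchy sequence $f_\nu$, in each chart the families of partial derivatives $\partial^\alpha(f_\nu\circ\varphi_i^{-1})$ with $|\alpha|\le k$ are uniformly Cauchy on $\Omega_i$, hence converge uniformly (completeness of $C^0$ on the compact set $\Omega_i$). The standard lemma that the uniform limit of functions whose derivatives also converge uniformly is differentiable with the expected derivative, applied by induction on $|\alpha|$ (e.g.\ \cite[Le.\,1.1.14]{Zimmer:1990a}), shows the limit is $C^k$ in each chart; the chart limits are automatically compatible on overlaps by continuity, so they glue to an element $f\in C^k(Q)$ with $f_\nu\to f$ in $\norm{\cdot}_{C^k}$.

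\textbf{Separability.}
Here the key device is to realize $C^k(Q)$ as a \emph{subspace} of a finite product of $C^0$-spaces. Consider the linear map
$$
     J:C^k(Q)\longrightarrow\prod_{i=1}^m\ \prod_{|\alpha|\le k}C^0(\Omega_i),\qquad
     f\longmapsto\bigl(\partial^\alpha(f\circ\varphi_i^{-1})|_{\Omega_i}\bigr)_{i,\alpha},
$$
which is an isometry onto its image when the target carries the natural norm ($\max$ over $i$ of the sum over $\alpha$). Each factor $C^0(\Omega_i)$ is separable by the Weierstrass/Stone--Weierstrass theorem (polynomials with rational coefficients form a countable sup-norm-dense subset on the compact set $\Omega_i$); a finite product of separable metric spaces is a separable metric space, and a subspace of a separable metric space is again separable (second countability is hereditary). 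Hence $J(C^k(Q))$ is separable, and therefore so is $C^k(Q)$.

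\textbf{Expected main obstacle.}
The only step that is not pure bookkeeping is the separability argument, and the point is to avoid the awkward route of approximating a $C^k$-function chart by chart, in the $C^k$ norm, by rational polynomials damped by a partition of unity. The clean route is to note that separability, for metric spaces, passes to arbitrary subspaces, so it suffices to know that $C^0$ of a compact set is separable --- which is Stone--Weierstrass --- and to exhibit an isometric embedding $J$ of $C^k(Q)$ into a finite product of such $C^0$-spaces. Everything else (equivalence of the $C^k$-norms under change of atlas, the differentiation-under-uniform-limits lemma, and the vector-valued reduction) is routine.
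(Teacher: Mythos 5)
Your proof is correct, and the core strategy is the same as the paper's: realize $C^k(Q)$ as an isometric copy of a subspace of a $C^0$-type space, invoke Stone--Weierstrass for separability of $C^0$, and use that separability passes to arbitrary subspaces of a separable metric space (plus the derivative-of-uniform-limit lemma).

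The variations are minor and worth one remark. The paper proves separability of $C^0(Q)$ \emph{intrinsically}, by building a bespoke countable separating algebra on $Q$ itself from a dense sequence and the metric, and then reduces $C^k$ to $C^0$ by an isometric injection into $C^0(Q,V)$ for a suitable target $V$ (either $\R^\ell$ with local coordinates, covariant derivatives, or a jet bundle --- the paper lists three ways to make ``derivatives of $f$'' precise). You instead pick the finite-atlas option and push everything into chart images $\Omega_i\subset\R^n$, where Stone--Weierstrass is available via rational polynomials; this buys concreteness at the cost of having to check (as you do) that the resulting $C^k$-norm is well defined up to equivalence under change of atlas. For completeness, the paper routes through ``closed subspace of a complete space is complete,'' which also delivers separability via heredity in one stroke, whereas you argue completeness directly chart by chart and handle separability by a separate embedding $J$; both are valid, and the paper's version is slightly more economical since the single observation that $J$ is an isometry onto a \emph{closed} subspace hands both properties at once.
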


\begin{proof}[Idea of proof]
  \textbf{Case \boldmath$k=0$.}
  Asking any topological space, say $Q$, just to be compact and Hausdorff
  already implies that $C^0(Q)$ is \emph{complete} under the sup norm;
  cf.~\cite[Thm.\,IV.8]{reed:1980a}.
  \emph{Separability:}\footnote{
    In case of a bounded open subset $\Omega\subset\R^n$ separability
    of $C^0(\widebar{\Omega})$ follows from density of the (countable) space
    of polynomials with rational coefficients;
    see e.g.~\cite[Cor.\,1.32]{Adams:2003a}.
    }
  A manifold is metrizable; either exploit second
  countable or use a Whitney embedding into some $\R^N$.
  So pick a dense sequence $(x_r)$ in $Q$
  and for $r,s\in\N$ define the function
  $f_{r,s}(x):=\frac{1}{s}-\dist(x,x_r)$ if $\dist(x,x_r)\le\frac{1}{s}$
  and $f_{r,s}(x):=0$ otherwise. Then the (countable) collection
  given by the $\{f_{r,s}\}$ together with a constant function generates a
  countable algebra $\Aa$ by taking linear combinations
  with rational coefficients of finite products of members of the collection.
  The algebra $\Aa$ separates the points of $Q$,
  thus the closure of $\Aa$ in $C^0(Q)$ is $C^0(Q)$ itself
  by the Stone-Weierstrass Theorem; cf.~\cite[Thm.\,IV.9]{reed:1980a}.
  \\
  \textbf{Case \boldmath$k\ge 1$.} Reduction to $k=0$:
  Consider the isometric linear injection
  $C^k(Q)\to C^0(Q,\R^\ell)$ that maps $f$ to the vector whose
  components are $f$ and its derivatives up to order $k$.
    A Cauchy sequence in the image converges by completeness of the
    target space. The first component of the limit vector consists of
    a continuous function $f$ and the
    other components are the partial derivatives of $f$;
    see e.g.~\cite[Le.\,1.1.14]{Zimmer:1990a}.
  So the injection image is closed.
  Every\index{subspace!of metric space}
  subspace\footnote{
    A \textbf{subspace} of a metric space is a subset equipped with the
    \textbf{induced metric}, namely, the restriction of the ambient metric.
    }
  $A$ of\index{metric space!subspace of --}
  a separable \emph{metric} space $X$ is separable.\footnote{
    Consider the set of open balls of rational radii centered
    at the elements of the countable dense subset of $X$.
    Consider the collection of intersections of these balls with $A$.
    In each such intersection select one element. The set $S$
    of selected elements is countable and dense in $A$.
    }
  Every closed subspace of a complete metric space is complete.
  Thus separability and completeness of the target
  are inherited by the closed image, hence via the inverse isometry by
  the domain. But $Q$ is a manifold.
  To make sense of derivatives
  one can either use covariant derivatives $\nabla^j$ or cover $Q$
  by a finite atlas of local coordinates, cf.~\cite[App.\,B]{Wehrheim:2004a},
  or one employs the manifold
  $J^k(Q,\R)$ of $k$-jets, cf.~\cite[Ch.\,2 \S 4]{hirsch:1976a},
  and the associated map $j^k:C^k(Q)\to C^0(Q,J^k(Q,\R))$
  which is continuous, injective, and of closed image by~\cite[Ch.\,2
  Thm.\,4.3]{hirsch:1976a}.
\end{proof}

%%%%%%%%%%%%%%%%%%%%%%%%%%%%%%%%%%
%%%%%%%%%%%%%%%%%%%%%%%%%%%%%%%%%%
\subsection*{The separable Banach space $W^{k,p}(Q)$
for closed $\mbf{Q}$}
An alternative choice for $\Vv$ would be $W^{k,p}_0(Q)$, say with $p=2$
in order to even have a Hilbert space, which we introduce in the following.

We recommend~\cite{Adams:2003a}
for a concise, yet detailed, presentation of spaces of functions
on open subsets $\Omega$ of $\R^n$ -- e.g. continuous functions, Lebesgue
integrable functions, Sobolev functions -- including
the analysis of properties such as completeness and separability.

\begin{definition}[The separable Banach space $W^{k,p}_0(\Omega)$]
\label{def:Wkp}
Let $\Omega$ be an open subset of $\R^n$.
Pick a real $p\in[1,\infty)$.
A \textbf{\Index{multi-index}} is an $n$-tuple
$\alpha=(\alpha_1,\dots,\alpha_n)$ of non-negative integers $\alpha_j$.
The corresponding partial derivative is denoted by
$\p^\alpha:=\p_1^{\alpha_1}\dots\p_n^{\alpha_2}$ where
$\p_j:=\p/\p x_j$. The sum $\abs{\alpha}$ of the
components $\alpha_j$ is called the order of $\alpha$.
The \textbf{\Index{Sobolev space}}
\[
     \mbf{W^{k,p}(\Omega)}
\]
consists of all Lebesgue $p$-integrable functions $f:\Omega\to\R$,
notation $f\in L^p(\Omega)$, that admit \underline{\textbf{w}}eak
derivatives $\p^\alpha f$ up to order $k$ and all of them are in $L^p(\Omega)$.
The norm under which $W^{k,p}(\Omega)$ is complete, see
e.g.~\cite[Thm.\,3.3]{Adams:2003a}, is a sum of the
$L^p$ norms of the weak derivatives, namely
\begin{equation}\label{eq:sobolev-norm}
     \Norm{f}_{k,p}:=\Biggl(\sum_{0\le\abs{\alpha}\le k}
     \Norm{\p^\alpha f}_p^p\Biggr)^{1/p}.
\end{equation}
The Banach space $W^{k,p}(\Omega)$
is\index{$W^{k,p}(\Omega)$ Sobolev space (definition via weak derivatives)}
separable;
see e.g.~\cite[Thm.\,3.6]{Adams:2003a}.
As we are heading towards \emph{closed} manifolds $Q$, we are interested
in\index{$W^{k,p}_0(\Omega):=\overline{C_0^\infty(\Omega)}^{k,p}$
  Sobolev space (comp. supp. closure)}
the subspace
\[
     \mbf{W^{k,p}_0(\Omega)}
     :=\overline{C_0^\infty(\Omega)}^{k,p} 
\]
which by definition, see e.g.~\cite[\S 3.2 (c)]{Adams:2003a},
is the closure of the space $C_0^\infty(\Omega)$ of compactly
supported smooth functions with respect to the $W^{k,p}$ norm.
It is a fact that for smooth functions weak and ordinary derivatives
coincide (if an ordinary derivative is continuous it coincides with
the weak derivative). Thus $C_0^\infty(\Omega)\subset W^{k,p}(\Omega)$,
so we can \emph{forget weak derivatives in our context}.
By definition $W^{k,p}_0(\Omega)$ is a closed, thus complete,
linear subspace of $W^{k,p}(\Omega)$. It inherits separability;
cf. proof of Proposition~\ref{prop:complete-separable}.
\end{definition}

\begin{proposition}[The separable Banach space $W^{k,p}_0(Q)$]
\label{prop:complete-separable-W}
Let\index{completeness!of $W^{k,p}(Q)$}
$k\in\N_0$ and\index{separability!of $W^{k,p} (Q)$}
$p\in[1,\infty)$. Then for any closed manifold
$Q$\index{$W^{k,p}(Q)$ Sobolev space of functions on closed manifold $Q$}
the\index{Sobolev space!of functions on a closed manifold}
Sobolev space $W^{k,p}_0(Q)$ is complete and separable.
This remains valid for vector valued functions, that is for
$W^{k,p}_0(Q,\R^\ell)$.
(As $Q$ itself is compact, the notation $W^{k,p}(Q)$ is common.)
\end{proposition}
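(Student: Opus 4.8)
The strategy is reduction to the flat case \texttt{Proposition~\ref{prop:complete-separable-W}'s} open-subset version, namely Definition~\ref{def:Wkp}, exactly as the proof of Proposition~\ref{prop:complete-separable} reduces $C^k(Q)$ to the Euclidean setting. First I would cover the closed manifold $Q$ by a \emph{finite} atlas $\{(U_i,\varphi_i)\}_{i=1}^N$, $\varphi_i:U_i\to\Omega_i\subset\R^n$, which exists by compactness of $Q$, and choose a subordinate smooth partition of unity $\{\beta_i\}$ with $\supp\beta_i\subset U_i$. Then I would define the norm on $W^{k,p}_0(Q)$ by
\begin{equation*}
     \Norm{f}_{k,p}:=\sum_{i=1}^N\Norm{(\beta_i f)\circ\varphi_i^{-1}}_{W^{k,p}(\Omega_i)},
\end{equation*}
where the right-hand norms are the Euclidean Sobolev norms of Definition~\ref{def:Wkp}. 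One checks, as usual, that a different finite atlas or a different partition of unity yields an equivalent norm (the transition maps $\varphi_j\circ\varphi_i^{-1}$ are smooth on compact sets, hence have bounded derivatives of all orders, so pullback is a bounded isomorphism on each $W^{k,p}$; this is the routine chain-rule bookkeeping I would not grind through).

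Second, I would prove completeness. Given a Cauchy sequence $(f_\nu)$ in $W^{k,p}_0(Q)$, each localized piece $(\beta_i f_\nu)\circ\varphi_i^{-1}$ is Cauchy in $W^{k,p}(\Omega_i)$, hence converges to some $g_i\in W^{k,p}(\Omega_i)$ by completeness of the Euclidean Sobolev space (\cite[Thm.\,3.3]{Adams:2003a}). Using that $\sum_i\beta_i\equiv 1$, one reassembles $f:=\sum_i (g_i\circ\varphi_i)$ and verifies $f\in W^{k,p}_0(Q)$ with $f_\nu\to f$; here one uses that $C_0^\infty$ functions pull back to $C_0^\infty$ functions so the ``$0$-closure'' defining condition passes to the limit, just as in the argument after Definition~\ref{def:Wkp}. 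Third, for separability I would invoke that each $W^{k,p}(\Omega_i)$ is separable (\cite[Thm.\,3.6]{Adams:2003a}), pick a countable dense subset $D_i\subset W^{k,p}(\Omega_i)$ in each chart, and observe that the countable set of finite combinations $\sum_i (d_i\circ\varphi_i)$ with $d_i\in D_i$ is dense in $W^{k,p}_0(Q)$ (approximate a given $f$ chartwise via its localizations $(\beta_i f)\circ\varphi_i^{-1}$, using the triangle inequality and finiteness of the atlas); alternatively one may note directly that any closed subspace of a separable metric space is separable, reusing the selection argument from the proof of Proposition~\ref{prop:complete-separable}. The vector-valued case $W^{k,p}_0(Q,\R^\ell)$ is identical componentwise, and the remark that $W^{k,p}(Q)=W^{k,p}_0(Q)$ for closed $Q$ follows because $C^\infty(Q)$ is dense in the weak-derivative Sobolev space on a compact manifold without boundary (no boundary means no obstruction to the $C_0^\infty$-closure exhausting everything).

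The main obstacle, and the only point requiring genuine care rather than bookkeeping, is verifying that the localization-and-reassembly map respects the Sobolev structure in both directions: concretely, that multiplication by a fixed $C^\infty_0$ cutoff $\beta_i$ is a bounded operator $W^{k,p}\to W^{k,p}$ (Leibniz rule, bounded derivatives of $\beta_i$) and that pullback by a diffeomorphism with uniformly bounded derivatives is a bounded isomorphism on $W^{k,p}$ of precompact domains (higher-order chain rule / Fa\`{a} di Bruno, plus the change-of-variables formula for the $L^p$ norm). These are standard — see e.g.~\cite[Ch.\,3]{Adams:2003a} or the jet-bundle treatment alluded to in the proof of Proposition~\ref{prop:complete-separable} — so I would state them as lemmas and defer the index-juggling; once they are in hand, completeness and separability transfer mechanically from the finitely many Euclidean pieces to $Q$.
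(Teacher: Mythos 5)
Your proposal is correct and follows the same route as the paper: cover $Q$ by a finite atlas with a subordinate partition of unity, localize via $(\rho_i f)\circ\varphi_i^{-1}$ to pieces in $W^{k,p}_0(\Omega_i)$, and transfer completeness and separability chart by chart, noting that different choices of atlas or partition give equivalent norms. The only cosmetic divergences are your plain sum versus the paper's $\ell^p$-sum for the norm (equivalent) and your somewhat more explicit acknowledgment of the cutoff/chain-rule lemmas that the paper leaves tacit.
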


\begin{proof}[Idea of proof]
Let us stick to the notation $W^{k,p}_0(Q)$, so nothing reminds weak
derivatives. To actually define this space pick a finite local
coordinate cover $\{(\varphi_i,U_i)\}_{i=1}^r$ of the closed
manifold $Q$ such that each $U_i$ has \emph{compact closure}.
Pick a \textbf{subordinated partition of unity}
$\rho=\{\rho_i\}_{i=1}^r$,
that\index{partition of unity!subordinated --}
is $\rho_i\in C^\infty_0(U_i)$ and $\sum_i\rho_i\equiv 1$.
Define open subsets $\Omega_i:=\varphi_i(U_i)$ of $\R^n$.
Define a vector space
\begin{equation}\label{eq:Sob-closed-mfs}
     W^{k,p}_0(Q):=\{f:Q\to\R\mid
     f^\rho_i :=(\rho_i f)\circ \varphi_i^{-1}\in W^{k,p}_0(\Omega_i)
     \;\forall i\} .
\end{equation}
Since $W^{k,p}_0(\Omega_i):=\widebar{C_0^\infty(\Omega_i)}^{k,p}$
there are indeed no weak derivatives involved in this definition.
A norm is given by summing up the $W^{k,p}$ norms of the local coordinate
representatives, let us choose the finite sum
\begin{equation}\label{eq:sobolev-norm-Q}
     \Norm{f}_{k,p}:=\left(\sum_{i=1}^r
     \Norm{f^\rho_i}_{k,p}^p\right)^{1/p},\qquad
     f\in W^{k,p}_0(Q).
\end{equation}
Different choices of such local coordinates or subordinated
partitions of unity produce the same set $W^{k,p}_0(Q)$
and equivalent norms. So the induced topology on $W^{k,p}_0(Q)$ is the
same. The normed space $W^{k,p}_0(Q)$ is complete,
as each $W^{k,p}_0(\Omega_i)$ is.
Let $S_i$ be a countable dense subset of $W^{k,p}_0(\Omega_i)$
and consider the corresponding subset
$S_{U_i}:=\{ s\circ\varphi_i\mid s\in S_i\}$
of the set of compactly supported functions on $U_i$.
Note that $S_{U_i}\subset W^{k,p}_0(Q)$.
Let $S$ be the set of finite sums of elements of
$S_{U_1}\cup\dots\cup S_{U_r}$.
Then $S$ is not only countable (one even can allow rational
coefficients), but also a dense subset of the Banach space
$W^{k,p}_0(Q)$: To see this write $f\in W^{k,p}_0(Q)$ as the finite
sum $f=\sum_i \rho_i f$ and approximate each $f^\rho_i\in
W^{k,p}_0(\Omega_i)$ by a sequence $(s_i^\nu)_{\nu\in\N}\subset S_i$.
Then $f^\nu:=s_1^\nu\circ\varphi_1+\dots+s_r^\nu\circ\varphi_r\in S$
converges to $\rho_1 f+\dots+\rho_r f=f$
in the $W^{k,p}$ norm~(\ref{eq:sobolev-norm-Q}).
\end{proof}

%%%%%%%%%%%%%%%%%%%%%%%%%%%%%%%%%%
%%%%%%%%%%%%%%%%%%%%%%%%%%%%%%%%%%
\subsection*{The Banach manifold $\mbf{\Uu=W^{k,p}(\SS^1,Q)}$ for $\mbf{kp>1}$}

\begin{proposition}[The Banach manifold $\Uu=W^{k,p}_0(\SS^1,Q)$]
\label{prop:B-mf}
Let $Q$ be a closed manifold of dimension $n$. Given an integer
$k$ and a real $p\in[1,\infty)$ such that $kp>1$, then $W^{k,p}_0(\SS^1,Q)$
is a smooth Banach manifold which admits a countable
atlas modeled on the separable Banach space $W^{k,p}_0(\SS^1,\R^n)$
whenever $Q$ is orientable.\footnote{
  For non-orientable $Q$ use model spaces
  $W^{k,p}_\sigma([0,1],\R^n)$ as defined in~\citerefFH[(8)]{weber:2002a}.
  }
(Since $\SS^1$ is compact, the notation $W^{k,p}(\SS^1,Q)$ is common.)
\end{proposition}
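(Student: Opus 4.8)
The statement to be proved is that $\Uu = W^{k,p}(\SS^1,Q)$ is a smooth Banach manifold admitting a countable atlas modeled on the separable Banach space $W^{k,p}(\SS^1,\R^n)$, under the hypotheses $k\in\N$, $p\in[1,\infty)$, $kp>1$, and $Q$ closed of dimension $n$ (and orientable, so that one can trivialize pullback bundles over $\SS^1$). The overall strategy is the classical one for manifolds of maps, adapted to the fact that $kp>1$ forces $W^{k,p}\hookrightarrow C^0$, which is exactly what makes the local coordinate construction work. I would first record the key preliminary: the Sobolev embedding $W^{k,p}(\SS^1,\R^\ell)\hookrightarrow C^0(\SS^1,\R^\ell)$ holds precisely because $k - \tfrac1p \ge k - \tfrac1p > 0$ when $kp>1$ (here $\dim\SS^1 = 1$), so every $W^{k,p}$ loop in $Q$ is continuous and hence has a well-defined image lying in $Q$. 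Separability and completeness of the model space $W^{k,p}(\SS^1,\R^n)$ follow from Proposition~\ref{prop:complete-separable-W} applied with $Q$ replaced by the closed manifold $\SS^1$ (or directly by the one-dimensional case of that proposition for $\R^n$-valued maps).

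Next I would build the charts. Fix a Riemannian metric $g$ on $Q$ with exponential map $\exp$ and injectivity radius $\iota>0$. Given a \emph{smooth} loop $z_0\in C^\infty(\SS^1,Q)$, pull back the tangent bundle to get $z_0^*TQ\to\SS^1$; since $Q$ is orientable and $\SS^1$ is one-dimensional, this bundle is trivial, so choose an orthonormal trivialization $\Phi_{z_0}:\SS^1\times\R^n\xrightarrow{\cong} z_0^*TQ$. Define the chart domain to consist of loops $z$ that are $C^0$-close to $z_0$ within distance $\iota$, and the chart map by
\begin{equation*}
     \xi\mapsto \bigl(t\mapsto\exp_{z_0(t)}\Phi_{z_0}(t)\xi(t)\bigr),\qquad
     \xi\in W^{k,p}(\SS^1,\R^n),\ \Norm{\xi}_\infty<\iota.
\end{equation*}
The inverse map sends $z$ to $t\mapsto\Phi_{z_0}(t)^{-1}\exp_{z_0(t)}^{-1}z(t)$. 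One must check that these maps land in the asserted spaces: here the crucial point is that composition with the fixed smooth fibrewise map $\exp$ (and with $\exp^{-1}$, $\Phi_{z_0}$, $\Phi_{z_0}^{-1}$) preserves $W^{k,p}$ regularity when $kp>1$, because $W^{k,p}$ is then a Banach algebra under the relevant composition/multiplication operations --- this is the standard $\omega$-lemma (Palais; see the references to Eliasson and Palais already cited in the paper). Since smooth loops are $W^{k,p}$-dense in $W^{k,p}(\SS^1,Q)$ (approximate in $W^{k,p}(\SS^1,\R^N)$ after a Whitney embedding and project back, using $kp>1$), the chart domains cover $\Uu$; and since $C^0(\SS^1,Q)$ is second countable (indeed a separable metric space, as $Q$ is), one extracts a countable subcover, giving the countable atlas.

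Then I would verify that the transition maps are smooth. A transition map between two such charts, in local coordinates, is a composition of: multiplication by smooth bundle isomorphisms, and the nonlinear operator $\xi\mapsto\bigl(t\mapsto f(t,\xi(t))\bigr)$ for a smooth family $f$ built from $\exp$, $\exp^{-1}$, and the overlap of the two trivializations. The smoothness of such Nemytskii-type operators on $W^{k,p}(\SS^1,\cdot)$ with $kp>1$ --- i.e. that they are $C^\infty$ as maps of Banach spaces --- is again the $\omega$-lemma; I would cite it rather than reprove it, pointing to the same literature the paper uses for manifolds of maps. This gives a smooth Banach manifold structure modeled on $W^{k,p}(\SS^1,\R^n)$, independent of the choices of smooth base loops, metric, and trivializations up to diffeomorphism (different choices yield compatible atlases by the same transition-map argument).

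\textbf{Main obstacle.} The genuine technical heart is the $\omega$-lemma: that post-composition with smooth fibre-preserving maps defines \emph{smooth} maps on $W^{k,p}$ loop spaces precisely in the range $kp>\dim(\mathrm{domain})=1$. Everything else --- triviality of $z_0^*TQ$ over $\SS^1$, density of smooth loops, second countability, the separability/completeness of the model space --- is routine given the results already assembled in Appendix~\ref{sec:function-spaces}. I would therefore structure the proof so that the $\omega$-lemma is invoked as a black box (with precise citation), and spend the written argument on assembling the charts, checking their compatibility formally, and noting where $kp>1$ enters (continuity of loops, Banach-algebra property, well-definedness of evaluation). The orientability hypothesis enters only to guarantee that the model space can be taken to be the \emph{untwisted} $W^{k,p}(\SS^1,\R^n)$; in the non-orientable case one replaces it by the twisted spaces indicated in the paper's footnote, and the rest of the argument is unchanged.
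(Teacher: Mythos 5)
Your proposal follows essentially the same route as the paper's proof: use the Sobolev embedding $W^{k,p}\hookrightarrow C^0$ (valid since $kp>1$) to make sense of $W^{k,p}(\SS^1,Q)$ inside $W^{k,p}(\SS^1,\R^\ell)$ after a Whitney embedding, build exponential-map charts centered at smooth loops with $z_0^*TQ$ trivialized using orientability, and extract a countable subcover via the Lindel\"of property of a separable metric space. The one place where you are more explicit than the paper is the smoothness of transition maps via the $\omega$-lemma --- the paper's proof sketch leaves this to the cited literature (Eliasson) --- and this is a reasonable and correct addition rather than a different approach.
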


\begin{proof}[Idea of proof]
Use Whitney's Theorem to fix an embedding $Q\INTO\R^\ell$.
for some sufficiently large integer $r$.
Consider the separable Banach space $W^{k,p}_0(\SS^1,\R^\ell)$
provided by Proposition~\ref{prop:complete-separable-W}.
Actually here (by periodicity of the domain) separability follows
since the set of Fourier series with coefficients in $\Q^r$
is dense in $C^\infty_0(\SS^1,\R^\ell)$, hence in its closure
$W^{k,p}_0(\SS^1,\R^\ell)$.
Now define $\mbf{W^{k,p}_0(\SS^1,Q)}$ as the (metric) subspace of
$W^{k,p}_0(\SS^1,\R^\ell)$ that consists of those elements
which take values in $Q$.
Thus $W^{k,p}_0(\SS^1,Q)$ inherits separability and it is complete
since it is closed: Closedness follows from compactness of
$Q$ in combination with the Sobolev embedding theorem;
see e.g.~\cite[Thm.\,4.12 III (6)]{Adams:2003a}.
It applies since $kp>\dim \SS^1$ and implies that the elements
of $W^{k,p}_0(\SS^1,\R^\ell)$ are continuous loops.

Fix a Riemannian metric on $Q$. Using the associated
exponential map one can construct,
about each smooth element $\gamma$ of $W^{k,p}_0(\SS^1,Q)$,
a local coordinate chart taking values in
$W^{k,p}_0(\SS^1,\gamma^*TQ)\simeq W^{k,p}_0(\SS^1,\R^n)$.
The isomorphism is natural once one fixes a trivialization
of the pull-back vector bundle $\gamma^*TQ\to Q$;
a trivialization exists since $Q$ is orientable.
The collection of charts associated to smooth
loops is an open cover of $W^{k,p}_0(\SS^1,Q)$.
Every open cover of a separable metric space
admits a countable subcover;
see e.g.~\citerefFH[Rmk.\,3.5]{weber:2002a}.
\end{proof}

\cleardoublepage
\phantomsection
%%%%%%%%%%%%%%%%%%%%%%%%%%%%%%%%%%
%%%%%%%%%%%%%%%%%%%%%%%%%%%%%%%%%%
%%%%%%%%%%%%%%%%%%%%%%%%%%%%%%%%%%
%% CHAPTER %%%%%%%%%%%%%%%%%%%%%%%%%
%%%%%%%%%%%%%%%%%%%%%%%%%%%%%%%%%%
%%%%%%%%%%%%%%%%%%%%%%%%%%%%%%%%%%
%%%%%%%%%%%%%%%%%%%%%%%%%%%%%%%%%%
\chapter{Functional analysis}\label{sec:functional-analysis}

%%%%%%%%%%%%%%%%%%%%%%%%%%%%%%%%%%
%%% SUBSECTION %%%%%%%%%%%%%%%%%%%%%
%%%%%%%%%%%%%%%%%%%%%%%%%%%%%%%%%%

%%% SECTION %%%%%%%%%%%%%%%%%%%%%%%
%%%%%%%%%%%%%%%%%%%%%%%%%%%%%%%%%%
\section{Duality}\label{sec:sub-dual}
The summaries and preparations in Subsection~\ref{sec:annihilators}
and~\ref{sec:formal-adjoint} are directed
towards the analysis of the Fredholm operator
$D_u$ in Section~\ref{sec:mod-space}.

%%% SUBSECTION %%%%%%%%%%%%%%%%%%%%%
%%%%%%%%%%%%%%%%%%%%%%%%%%%%%%%%%%
\subsection{Annihilators and cokernels}\label{sec:annihilators}
For convenience we enlist some useful basics; for details we recommend,
for instance, the excellent
presentations~\cite[\S 5 \S 8]{lax:2002a} and~\cite{Rudin:1991b,brezis:2011a}.
%Following tradition duality in normed linear spaces is indicated by 'primes',
%for instance $\Bdual{X}$ and $\Bdual{D}$, whereas the nice case of Hilbert
%spaces are lit up by 'stars', no stripes though, for instance
%$\Hdual{H}$ and $\Hdual{D}$.

\vspace{.1cm}
%%%%%%%%%%
\textit{Linear spaces.} One defines the quotient $X/Y$ of a (as always real)
linear space by a linear subspace as the set $\{x+Y\mid x\in X\}$ of
cosets equipped with addition defined by choosing representatives, add
them, and form their coset, similarly for multiplication by reals.
The dimension of the linear space $X/Y$ is called the
\textbf{\boldmath\Index{codimension} of the subspace $Y$ in $X$},
in symbols\index{$\codim Y:=\dim X/Y$}
\[
     \codim Y:=\dim X/Y.
\]

\vspace{.1cm}
%%%%%%%%%%
\textit{Normed linear spaces.}
Suppose that $X$ carries a \textbf{\Index{norm}}\footnote{
  \textbf{Norm.} Positive: $\norm{x}=0$ $\Rightarrow$ $x=0$,
  subadditive: $\norm{x+y}\le\norm{x}+\norm{y}$, homogeneous:
  $\norm{\alpha x}=\abs{\alpha}\cdot\norm{x}$.
  }
$\norm{\cdot}: X\to[0,\infty)$, so an associated topology. Whereas any linear
subspace $Y$ inherits a norm by restriction, the quotient space $X/Y$
{\color{cyan}\underline{{\color{black}inherits a norm}}} in case
\underline{$Y$ is closed}, namely
\[
     \Norm{x+Y}:=\inf_{z\in x+Y}\Norm{z},\qquad
     x+Y\in X/Y.
\]
In case the normed space $X$ is in addition complete, in other words, a
\textbf{\Index{Banach space}}, and the linear subspace $Y$ is closed,
thus a Banach space itself, well, in this case the norm on the
quotient space $X/Y$ is complete, too.
\newline
A linear map $D:X\to Y$ between normed linear spaces is called a
\textbf{\Index{bounded linear operator}}\index{linear operator!bounded --}
if there is a constant $c$ such that $\norm{Dx}\le c\norm{x}$ for
all $x\in X$.\footnote{
  Boundedness is equivalent to continuity for
  linear maps between normed linear spaces.
  }
The smallest such $c$ is called the
\textbf{\Index{operator norm}} or\index{norm!operator --}
just the \textbf{norm of $\mbf{D}$}, i.e.
\begin{equation*}
\begin{split}
     \Norm{D}=\Norm{D}_{\Ll(X,Y)}
   :&=\inf\{c : \text{$\norm{Dx}\le c\norm{x}$ $\forall x\in X$}\}\\
   &=\sup_{x\in X\setminus\{0\}}\frac{\norm{Dx}}{\norm{x}}.
\end{split}
\end{equation*}
By $\Ll(X,Y)$ one denotes the set of all bounded linear operators
$D:X\to Y$. It is naturally a normed linear space, even
complete whenever the target $Y$ is complete.
In case of the target space $\R$ one denotes the (Banach)
space $\Ll(X,\R)$ by $\mbf{\Hdual{X}}$ and calls it the \textbf{\Index{dual
space}} of the normed linear space $X$.\index{$\Hdual{X}:=\Ll(X,\R)$ dual space}
The elements of $\Hdual{X}$ are called \textbf{bounded}, likewise
\textbf{continuous}, \textbf{linear functionals} on $X$.

\begin{exercise}[Extension by continuity principle]\label{exc:ECP}
Consider\index{extension by continuity principle}
a Banach space $Y$ and a normed linear space $X$
with a dense linear subspace $U$.
Then any bounded linear map $A:U\to Y$ extends uniquely to a bounded
linear map $\overline{A}:X\to Y$ which has the same operator norm.
\newline
[Hint: There are three crucial assumptions, boundedness of $A:U\to Y$,
completeness of $Y$, and density of $U\subset X$.
Consequently $A$ takes Cauchy sequences to Cauchy sequences,
the latter admit a limit, and the extension is unique.
In case you get stuck consult~\cite[Thm.\,1.12]{Teschl:2016a}.]
\end{exercise}

\begin{figure}[h]
\begin{equation*}
\begin{tikzcd} [row sep=normal, column sep=huge] 
\left(X,\Norm{\cdot}_X\right)
\arrow[r, dashed, "{\bar{A}\in\Ll(X,Y)}"]
  &\left(Y,\Norm{\cdot}_Y\right)\; \text{complete}
\\
U
\arrow[u, white, shift left=2.5, "\scriptstyle{\color{black}\text{dense}}"]
\arrow[u, phantom, "\bigcup"]
\arrow[ur, "A", "\text{linear $\&$ bounded}"']
  &
\end{tikzcd}
\end{equation*}
\caption{Extension by continuity principle}
\label{fig:ECP}
\end{figure}
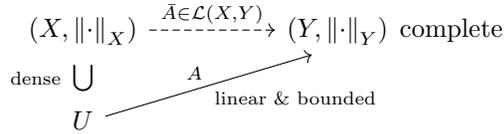

The \textbf{\Index{annihilator}} of a linear subspace $Y\subset X$
of\index{$\pann{Y}\subset\Hdual{X}$!annihilator of subspace $Y\subset X$}
a normed linear space consists of all continuous linear functionals on $X$
which vanish on $Y$, in symbols
\[
     \pann{Y}:=\{\lambda\in \Hdual{X}\mid \text{$\lambda(y)=0$ $\forall y\in Y$}\}.
\]
Occasionally it is more convenient to use the notation $\Ann\, Y$.
The annihilator is a closed linear subspace of the Banach space $\Hdual{X}$.
It holds that $\pann{(Y^\perp)}=\overline{Y}$;
see e.g.~\cite[Prop.\,1.9]{brezis:2011a}.
There is an isometric isomorphism $\Hdual{Y}\simeq \Hdual{X}/\pann{Y}$ of Banach
spaces; cf.~\cite[\S 8 Thm.\,$7^\prime$]{lax:2002a}.
For a proof of this isomorphism, as well as the following one,
see~\cite[Thm.\,4.9]{Rudin:1991b}.
\newline
If in addition \underline{$Y$ is closed}, then there is an isometric Banach space
isomorphism\footnote{Well defined:
  $\lambda$ vanishes on $Y$. Linear: $\lambda$ is linear.
  Injective: Equivalence classes
  partition $X$, so $\Lambda_\lambda=0$ means $\lambda$
  evaluated on any $x\in X$ is zero. Onto: Set $\pi:X\to X/Y$,
  $x\mapsto x+Y$. Given $L\in\Hdual{(X/Y)}$, set $\lambda:=L\circ \pi$,
  check that $\Lambda_\lambda=L$.
  Isometry: Hint~\cite[\S 8 Thm.\,$7$]{lax:2002a}.
}
\begin{equation}\label{eq:iso-annih-quotient}
     \Xi:\pann{Y}\stackrel{\simeq}{\longrightarrow}\Hdual{\left( X/Y\right)},\quad
     \lambda\mapsto \Lambda_\lambda,\qquad
     \text{$Y\subset X$ closed.}
\end{equation}
Here\index{$\Xi:\pann{Y}\simeq\Hdual{( X/Y)}$ isometric isomorphism of
Banach spaces}
$\Lambda_\lambda$ is defined on a coset by evaluating $\lambda$ on
any\index{$\Lambda_\lambda$, $\Lambda_\eta$}
representative, that is
\[
     \Lambda_\lambda(x+Y):=\lambda(z),\quad
     z\in x+Y.
\]

\vspace{.1cm}
%%%%%%%%%%
\textit{Linear spaces associated to a bounded linear operator.}
Let $X$ and $Y$ be normed linear spaces.
By continuity the \textbf{\Index{kernel} of} $\mbf{D}\in\Ll(X,Y)$ defined
as\index{$\ker D=\{x\in X\mid Dx=0\}$ kernel}
the pre-image $\ker D:=D^{-1}(0)$ of the zero vector is always a
\emph{closed} linear subspace.
In\index{image subspace}\index{linear operator!image of --}
contrast,\index{linear operator!kernel of --}
the image $\im D:=D(X)$ need not be closed, in general.
However, the image is a linear subspace of $Y$, so one can form the linear
quotient space
\[
     \coker D:=\frac{Y}{\im D}
\]
called the\index{linear operator!cokernel of --}
\textbf{\Index{cokernel} of $\mbf{D}$}.
If the image of $D$ is closed the cokernel is a normed space
whose dual is isometrically isomorphic to the annihilator of the image:
\begin{equation}\label{eq:iso-annih-coker}
     \Xi:\pann{(\im D)}\stackrel{\simeq}{\longrightarrow}\Hdual{\left(\coker
       D\right)},\quad
     \lambda\mapsto \Lambda_\lambda,\qquad
     \text{$\im D\subset Y$ closed.}
\end{equation}

%%% SUBSECTION %%%%%%%%%%%%%%%%%%%%%
%%%%%%%%%%%%%%%%%%%%%%%%%%%%%%%%%%
\subsection{Formal adjoint operator}\label{sec:formal-adjoint}
The purpose of this section is to formalize the construction
of what is commonly called the \emph{formal adjoint}
by emplyoing unbounded functional analytic adjoints.

Consider the linear space
$C^\infty_0:=C^\infty_0(\R\times\SS^1,\R^{2n})$
of\index{$C_0^\infty$ with inner product $\inner{\cdot}{\cdot}$}
compactly supported smooth functions with values
in euclidean space $\R^{2n}$. The \textbf{\Index{pairing}}
\begin{equation}\label{eq:pairing}
     \INNER{f}{g}
     :=\int_{-\infty}^\infty\int_0^1
     \INNER{f(s,t)}{g(s,t)}_{\R^{2n}}\: dt ds
     \qquad f,g\in C^\infty_0
\end{equation}
defines an inner product on $C^\infty_0$.\footnote{
  \textbf{Inner product.} Real valued, bi-linear,
  symmetric:\index{inner product}
  $\inner{f}{g}=\inner{g}{f}$, non-degenerate:
  $\inner{f}{g}=0$ $\forall g$ $\Rightarrow$ $f=0$.
  One proves non-degeneracy by contradiction
  using continuity of $f$. Continuity plus compact support
  implies real valuedness: $\inner{f}{g}<\infty$.  
  }
It is extremely useful that $C_0^\infty$ is dense\footnote{
  Dense ($p\in[1,\infty)$): See~{\cite[Ch.\,4]{Salamon:2016a}} for domains
  which are locally compact Hausdorff spaces, e.g. $\R\times\SS^1$,
  and a class of measures $\mu$ that includes the Lebesgue measure used here.
  }
in the separable\footnote{
  The metric space $L^p(\R^n)$ is separable for $p\in[1,\infty)$,
  cf.~\cite[Thm.\,4.13]{brezis:2011a}, thus so are subspaces.
  Embedd $L^p(\R\times\SS^1)\INTO L^p(\R^2)$
  via zero-extension of $f:\R\times[0,1)\to\R$.
  }
reflexive\footnote{
  True for $p\in(1,\infty)$, cf.~\cite[Thm.\,4.10]{brezis:2011a}.
  Reflexivity transfers to closed linear subspaces.
  }
Banach\footnote{
  Banach space for $p\in[1,\infty]$,
  cf.~\cite[Thm.\,4.9]{Salamon:2016a}.
  }
spaces $L^p:=L^p(\R\times\SS^1,\R^{2n})$ for any real $p\in[1,\infty)$.
On\index{real numbers!extended --}
the other hand, giving up on real valuedness and allowing for
values in the \textbf{\Index{extended reals}}
\Index{$\overline{\R}:=\R\cup\{-\infty,+\infty\}$},
the pairing makes sense on the larger vector space
of all measurable functions. Indeed H\"older's inequality
\[
     \INNER{f}{g}\le\Norm{f}_{L^p}\Norm{g}_{L^q},\qquad
     \frac{1}{p}+\frac{1}{q}=1,\qquad
     1<p<\infty,
\]
holds true for merely measurable $f$ and $g$; see
e.g.~\cite[Thm.\,4.1]{Salamon:2016a}.
Let us call the reals $p$ and $q$ a pair\index{pair of conjugate exponents}
of \textbf{conjugate exponents}, alternatively, call any one of them
the\index{$1/p+1/q=1$!conjugate exponents}
\textbf{\Index{conjugate exponent}} of the other one.
Observe that the pairing $\inner{\cdot}{\cdot}$ extends
to $L^q\times L^p$ by H\"older and, based on density of $C_0^\infty$,
the extension by continuity principle. The extension is still non-degenerate.
H\"older\index{conjugate exponents $1/p+1/q=1$}
guarantees real valuedness (finiteness) of the
pairing whenever $f\in L^q$ and $g\in L^p$. This trivial observation
turns rather powerful when translated to boundedness, i.e. continuity,
of the linear functionals $f^*:=\inner{f}{\cdot}$ arising by fixing one function
and considering variable only the other one.

For a pair of conjugate exponents $p,q>1$ the map between Banach
spaces\footnote{
  Throughout all domains are cylinders $\R\times\SS^1$,
  targets $\R^{2n}$, e.g. $L^q$ means $L^q(\R\times\SS^1,\R^{2n})$.
  }
\begin{equation}\label{eq:duality-iso}
     L^q:\to (L^p)^*,\quad
     v\mapsto v^*:=\inner{v}{\cdot},
\end{equation}
is an isometric isomorphism, the\index{isomorphism!duality --}
\textbf{\Index{duality isomorphism}};
cf.~\cite[Thm.\,4.35]{Salamon:2016a}.
What follows will use the for $1<p<\infty$ separable reflexive Banach
spaces\footnote{
  The $L^p$ spaces are reflexive, separable Banach
  spaces for $p\in(1,\infty)$, hence so is $E:=L^p\times L^p\times L^p$.
  The map $T:W^{1,p}\to E$, $\xi\mapsto(\xi,\p_s \xi,\p_t \xi)$,
  mapping $\xi$ to its weak derivatives, is an isometry into a Banach space,
  so the image is closed. But completeness, reflexivity, and separability
  are transferred to closed subspaces; for details
  see e.g.~\cite[Proposition~8.1]{brezis:2011a}.
  }
\begin{equation*}
\begin{split}
     W^{1,p}:
   &=\{\xi\in L^p_\loc\mid\text{$\exists$ weak first derivatives in
     $L^p_\loc$ and $\norm{\xi}_{W^{1,p}}<\infty$}\}
     \\
   &=\overline{C_0^\infty}^{1,p}\\
   &=: W^{1,p}_0 .
\end{split}
\end{equation*}
The
equality is a theorem and $\norm{\cdot}_{W^{q,p}}=\norm{\cdot}_{1,p}$
is defined by~(\ref{eq:sobolev-norm}).

%%%%%%%%%%
\subsubsection*{Goal}
Given a pair of conjugate exponents $p,q>1$,
we wish to associate to the \emph{bounded} linear operator $D:W^{1,p}\to L^p$
in~(\ref{eq:triv-D_u-lin-Floer}) a bounded linear operator
$\Hdual{D}:W^{1,q}\to L^q$ which satisfies for each $\eta\in W^{1,q}$ the identity
\[
     \INNER{\eta}{D\xi}=\INNER{\Hdual{D}\eta}{\xi},\qquad \forall \xi\in W^{1,p}.
\]
If existent such $\Hdual{D}$ will be unique by non-degeneracy of the pairing.
Trying the functional analytic adjoint fails, as $W^{1,q}$
is not the dual of $L^p$, and $L^q$ not the one of $W^{1,p}$.
A way out is to view $D$ as an \emph{unbounded} linear operator
\[
    D:L^p\supset\dom D=W^{1,p}\to L^p
\]
and determine\index{$\Bdual{D}$!functional analytic adjoint}
its\index{adjoint!unbounded functional analytic --}
\textbf{unbounded functional analytic adjoint}
\begin{equation*}
\begin{tikzcd} [row sep=tiny]
\Bdual{D}:\Hdual{(L^p)}\supset\dom \Bdual{D}
\arrow[r]
      &\Hdual{(L^p)}
\\
\Hdual{D}:\;\; L^q\;\;\supset\dom \Hdual{D}
\arrow[r]
      &L^q.
\end{tikzcd}
\end{equation*}
Here $\Hdual{D}$ denotes the representative of $\Bdual{D}$
under the duality isomorphism, i.e.
\begin{equation}\label{eq:Dprime=D*}
     \Bdual{D}\eta^*=\INNER{\Hdual{D}\eta}{\cdot}\in\Hdual{(L^p)}.
\end{equation}

%%%%%%%%%%
\subsubsection*{\boldmath Functional analytic adjoint $\Bdual{D}$ of the unbounded linear operator $D$}
Following~\cite[\S 2.6]{brezis:2011a} we will determine the unbounded
dual operator, in particular, its domain.
Once succeeded we simply forget the ambient spaces and consider both
operators as bounded operators on their respective domains.
Set
\[
     \dom\Bdual{D}
     :=\{\eta^*\in\Hdual{(L^p)}\mid\text{$\exists c_\eta\ge 0$ with
       $\abs{\inner{\eta}{D\xi}}\le c_\eta\norm{\xi}_{L^p}$, $\forall\xi\in\dom D$}\}
\]
where $\eta^*=\inner{\eta}{\cdot}$ for a unique $\eta\in L^q$ by the duality isomorphism.
The defined domain is a linear subspace of $\Hdual{(L^p)}$.
Appearance of the $L^p$ norm, as opposed to the $W^{1,p}$ norm,
is consistent with the fact that $D$ is now viewed as an (unbounded)
operator on $L^p$. Before showing that $\dom\Bdual{D}$ corresponds
under duality to $W^{1,q}$, let us define $\Bdual{D} \eta^*$: Pick
$\eta^*=\inner{\eta}{\cdot}\in\dom\Bdual{D}$ and define the linear functional
\[
     \ell_\eta:\dom D\to\R,\quad
     \xi\mapsto\INNER{\eta}{D\xi}
\]
on the domain of $D$. It is $L^p$ bounded. Indeed
\[
     \Abs{\ell_\eta(\xi)}:=\Abs{\INNER{\eta}{D\xi}}
     \le c_\eta\Norm{\xi}_{L^p},\quad\forall\xi\in\dom D=W^{1,p}.
\]
In our particular situation $\dom D$ is
$W^{1,p}$ which is dense in $L^p$ since already its subset
$C_0^\infty$ is.
So the extension by continuity principle, Exercise~\ref{exc:ECP},
applies and shows that $\ell_\eta$ extends uniquely
to a bounded linear functional $L_\eta$ on the
ambient space $L^p$.\footnote{
  The domain of a general unbounded linear operator $D$ on
  a Banach space $X$ might not be dense in $X$.
  In such a case invoke Hahn-Banach, see e.g.\cite[Thm.\,1.1]{brezis:2011a},
  which provides a \emph{linear} extension of
  $\ell_\eta$, no continuity asserted, but subject to the same bound
  function $c_\eta\norm{\cdot}_{X}$ (which indeed makes sense on the
  whole space $X$). To summarize, one gets a function
  \[
     \text{$L_\eta:X\to\R$ linear and subject to
     $\abs{L_\eta(x)}\le c_\eta\norm{x}_{X}$, $\forall x\in X$.}
  \]
  But this means that $L_\eta$ is continuous, hence element of
  $\Hdual{X}$. However, without density of $\dom D$ in $X$
  one cannot expect uniqueness of the extension.
  See also~\cite[\S 2 Rmk.\,14]{brezis:2011a}.
  }
The \textbf{unbounded functional analytic adjoint of $\mbf{D}$}
is by definition the unbounded linear operator
\[
     \Bdual{D} %:=L_\eta|_{\dom\Bdual{D}}
     :\Hdual{(L^p)}\supset\dom\Bdual{D}
     \to\Hdual{(L^p)},\quad
     \eta^*\mapsto L_\eta,\qquad
     L_\eta|_{\dom D}=\INNER{\eta}{D\cdot}.
\]

%%%%%%%%%%
\subsubsection*{\boldmath Calculation of $\dom\Bdual{D}$}
Translated to $L^q$ via the duality isomorphism
the domain of $\Bdual{D}$ corresponds to
\begin{equation*}
\begin{split}
     \dom\Hdual{D}
   &=\{\eta\in L^q\mid\text{$\exists c_\eta\ge 0$ with
     $\abs{\inner{\eta}{D\xi}}\le c_\eta\norm{\xi}_{L^p}$, $\forall \xi\in W^{1,p}$}\}
     \\
   &=W^{1,q}(\R\times\SS^1,\R^{2n})
\end{split}
\end{equation*}
where equality two is to be proved:
It suffices to show, given $\eta\in L^q$, that
\[
     \eta\in W^{1,q}\quad\Leftrightarrow\quad
     \exists c\ge 0:\;\Abs{\INNER{\eta}{(\p_s-J_0\p_t-S)\varphi}}
     \le c\Norm{\varphi}_{L^p},\:\:\forall\varphi\in C_0^\infty.
\]
Observe that we replaced on the right hand side $W^{1,p}$ by the dense subset
$C_0^\infty$. This is justified by the extension by continuity principle.

'$\Rightarrow$' Pick $\eta\in W^{1,q}$ and $\varphi\in
C_0^\infty$, then by definition of weak differentiability of $\eta$
(let $\p_s\eta,\p_t\eta\in L^q$ denote the weak derivatives)
and by symmetry of each matrix $S(s,t)$ we get the identity,
whereas H\"older provides the inequality, in
\[
     \Abs{\INNER{\eta}{D\varphi}}
     =\Abs{\INNER{-\p_s\eta-J_0\p_t\eta-S\eta}{\varphi}}
     \le\underbrace{\left(1+\Norm{S}_{L^\infty}\right)\Norm{\eta}_{W^{1,q}}}_{=:c_\eta}
     \Norm{\varphi}_{L^p}.
\]
The continuous family of symmetric matrices $S(s,t)$ has the
asymptotic limits $S^\mp(t)$, as $s\to\mp\infty$, uniformly in
$t$; cf. text following~(\ref{eq:triv-D_u-lin-Floer}).
So $\Norm{S}_{L^\infty}<\infty$.

'$\Leftarrow$'
Fix $\eta\in L^q$. The linear functional defined by
\[
     \lambda_\eta:C_0^\infty\to\R,\quad
     \varphi\mapsto\INNER{\eta}{(\p_s-J_0\p_t)\varphi}
\]
is bounded with respect to the $L^p$ norm. Indeed by assumption and H\"older
\[
     \Abs{\lambda_\eta(\varphi)}=\Abs{\INNER{\eta}{D\varphi+S\varphi}}
     \le c\Norm{\varphi}_{L^p}
     +\Norm{S}_{L^\infty}\Norm{\eta}_{L^q}\Norm{\varphi}_{L^p}.
\]
By the extension by continuity principle $\lambda_\eta$ extends
uniquely to a bounded linear functional $\Lambda_\eta$ on $L^p$.
By the duality isomorphism there is an element $v_\eta\in L^q$ such
that $\Lambda_\eta=\inner{v_\eta}{\cdot}:L^p\to\R$. Putting things
together we get that
\[
     \INNER{\eta}{(\p_s-J_0\p_t)\varphi}
     =:\lambda_\eta(\varphi)
     =\Lambda_\eta(\varphi)
     =\INNER{v_\eta}{\varphi},\quad
     \forall \varphi\in C_0^\infty.
\]
For $\varphi=(\p_s+J_0\p_t)\psi$ and with
$\Delta:=\p_s\p_s+\p_t\p_t$ we get the identity
$
     \INNER{\eta}{\Delta\psi}
     =\INNER{v_\eta}{(\p_s+J_0\p_t)\psi}
$
for every $\psi\in C_0^\infty$. Elliptic regularity tells that
$\eta\in W^{1,q}$.\footnote{
  Apply~\cite[Thm.\,B.3.2]{mcduff:2004a} which works for
  $C_0^\infty(\R^2)$. In view of our cylinder domain,
  localize the problem using suitable cutoff functions.
  One gets that $\eta$ lies in $W^{1,q}_\loc$ and
  \[
     \Norm{\eta}_{W^{1,q}([k,k+1]\times\SS^1)}
     \le C\left(\Norm{v_\eta}_{L^q([k-1,k+2]\times\SS^1)}
     +\Norm{\eta}_{L^q([k-1,k+2]\times\SS^1)}\right)
  \]
  where $C$ does not depend on $k$. Sum over $k$ to obtain that along
  the whole
  $\R\times\SS^1$ the $W^{1,q}$ norm of $\eta$ is bounded above by
  constants and the $L^q$ norms of $v_\eta$ and $\eta$, but these are finite.
  }
This concludes the proof of the equivalence,\footnote{
  More information around the proof of the equivalence
  can be found, for domains being open subsets $\Omega\subset\R^N$
  for both cases $W^{1,p}$ and $W^{1,p}_0$, in~\cite[Propositions~8.3,
  9.3, 9.18]{brezis:2011a}.
}
hence of $\dom\Hdual{D}=W^{1,q}$.

%%%%%%%%%%
\subsubsection*{\boldmath Formal adjoint $\Hdual{D}$ of the bounded linear operator $D$}
Given a pair of conjugate exponents $1<p,q<\infty$,
consider the bounded linear operator
$D=\p_s-J_0\p_t-S:W^{1,p}\to L^p$ arising in~(\ref{eq:triv-D_u-lin-Floer}).
Viewing $D$ as an unbounded operator on $L^p$ with dense domain
$W^{1,p}$ we have above determined, firstly, the unbounded functional
analytic adjoint $\Bdual{D}$ on $\Hdual{(L^p)}$ with domain
$\dom\Bdual{D}$ and, secondly, its representative under duality,
namely, the unbounded linear operator $\Hdual{D}$ on $L^q$ whose
domain is the dense subspace $W^{1,q}$, as we saw.

\begin{definition}\label{def:formal-adjoint}
By definition the \textbf{\Index{formal adjoint operator}}
of\index{$\Hdual{D}$!formal adjoint}\index{adjoint!formal --}
the bounded linear operator
$D=\p_s-J_0\p_t-S:W^{1,p}\to L^p$\index{operator!formal adjoint --}
is the representative $\Hdual{D}$ of the unbounded functional analytic
adjoint, but viewed as a linear operator from the
Banach space $W^{1,q}$ to $L^q$.
\end{definition}

\begin{proposition}\label{prop:formal-adjoint}
The formal adjoint of $D=\p_s-J_0\p_t-S:W^{1,p}\to L^p$
is the linear operator\,\footnote{
  Recall that $L^p$ abbreviates $L^p(\R\times\SS^1,\R^{2n})$
  and analogously for the other spaces.
  }
\[
     \Hdual{D}=\p_s-J_0\p_t-S:W^{1,q}\to L^q ,\qquad
     \frac{1}{p}+\frac{1}{q}=1,\quad
     1<p<\infty.
\]
The formal adjoint %$\Hdual{D}$ 
is characterized by satisfying, for
each\index{characterizing identity for $\Hdual{D}$}
$\eta\in W^{1,q}$, the identity
\begin{equation}\label{eq:formal-adjoint-identity}
     \INNER{\eta}{D\varphi}
     =\INNER{\Hdual{D}\eta}{\varphi},\qquad \forall \varphi\in C_0^\infty.
\end{equation}
In our case the formal adjoint $\Hdual{D}$ is bounded of the same operator
norm as $D$.
\end{proposition}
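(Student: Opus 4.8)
\textbf{Proof plan for Proposition~\ref{prop:formal-adjoint}.}

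The plan is to assemble the three pieces already constructed in Subsection~\ref{sec:formal-adjoint}: the unbounded functional analytic adjoint $\Bdual{D}$ on $\Hdual{(L^p)}$, its domain computation $\dom\Hdual{D}=W^{1,q}$ via elliptic regularity, and the explicit formula for $\Hdual{D}$ obtained by integration by parts. First I would recall that $D=\p_s-J_0\p_t-S$ with $S(s,t)=S(s,t)^T$ symmetric and $\norm{S}_{L^\infty}<\infty$ (the latter because $S$ has uniform asymptotic limits $S^\mp(t)$ as $s\to\mp\infty$). Then for $\eta\in W^{1,q}$ and $\varphi\in C_0^\infty$ the core calculation is integration by parts in the pairing $\inner{\eta}{D\varphi}$: the term $\inner{\eta}{\p_s\varphi}$ becomes $\inner{-\p_s\eta}{\varphi}$ since $\varphi$ has compact support (no boundary terms on $\R\times\SS^1$), the term $\inner{\eta}{-J_0\p_t\varphi}$ becomes $\inner{J_0\p_t\eta}{\varphi}$ using antisymmetry of $J_0$ (equivalently $J_0^T=-J_0$) and integration by parts in the circle variable $t$, and the term $\inner{\eta}{-S\varphi}$ equals $\inner{-S\eta}{\varphi}$ by symmetry of $S$. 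Collecting, $\inner{\eta}{D\varphi}=\inner{\p_s\eta-J_0\p_t\eta-S\eta}{\varphi}=\inner{\Hdual{D}\eta}{\varphi}$, which is exactly the characterizing identity~(\ref{eq:formal-adjoint-identity}) and simultaneously exhibits the claimed formula $\Hdual{D}=\p_s-J_0\p_t-S$ on $W^{1,q}$.

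Next I would verify boundedness and the operator-norm equality. Boundedness of $\Hdual{D}:W^{1,q}\to L^q$ is immediate from the formula: $\norm{\Hdual{D}\eta}_{L^q}\le(1+\norm{S}_{L^\infty})\norm{\eta}_{W^{1,q}}$, so $\norm{\Hdual{D}}\le 1+\norm{S}_{L^\infty}$. To get $\norm{\Hdual{D}}=\norm{D}$ I would use the characterizing identity together with the duality isomorphism $L^q\xrightarrow{\simeq}\Hdual{(L^p)}$ and density of $C_0^\infty$ in both $W^{1,p}$ and $L^p$: for $\eta\in W^{1,q}$,
\[
     \Norm{\Hdual{D}\eta}_{L^q}
     =\sup_{0\not=\varphi\in C_0^\infty}\frac{\abs{\inner{\Hdual{D}\eta}{\varphi}}}{\Norm{\varphi}_{L^p}}
     =\sup_{0\not=\varphi\in C_0^\infty}\frac{\abs{\inner{\eta}{D\varphi}}}{\Norm{\varphi}_{L^p}}
     \le\Norm{\eta}_{L^q}\cdot\sup_{0\not=\varphi}\frac{\Norm{D\varphi}_{L^p}}{\Norm{\varphi}_{L^p}},
\]
and the reverse inequality follows symmetrically because $D$ is itself the formal adjoint of $\Hdual{D}$ — the construction is manifestly involutive ($(p,q)\leftrightarrow(q,p)$ and the same differential expression reappears), so applying the bound in the other direction gives $\norm{D}\le\norm{\Hdual{D}}$. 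Uniqueness of an operator satisfying~(\ref{eq:formal-adjoint-identity}) is by non-degeneracy of the pairing on $L^q\times L^p$, which was noted to persist after the continuous extension.

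The one genuine subtlety — already handled in the text preceding the proposition — is the inclusion $\dom\Hdual{D}\subset W^{1,q}$, i.e. that an $\eta\in L^q$ for which $\varphi\mapsto\inner{\eta}{D\varphi}$ is $L^p$-bounded automatically has weak derivatives in $L^q$; this is where elliptic regularity for the Laplacian $\Delta=\p_s^2+\p_t^2$ enters, via the identity $\inner{\eta}{\Delta\psi}=\inner{v_\eta}{(\p_s+J_0\p_t)\psi}$ and local $W^{1,q}$ estimates summed over the cylinder. So the main obstacle is not in the present proposition at all but was discharged earlier; here I would simply cite that computation. What remains in the proof of Proposition~\ref{prop:formal-adjoint} proper is therefore essentially bookkeeping: record the integration-by-parts formula, quote $\dom\Hdual{D}=W^{1,q}$, read off the operator norm bound, and invoke the symmetry of the construction plus non-degeneracy of the pairing for the norm equality and uniqueness.
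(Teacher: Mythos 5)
Your overall architecture is the same as the paper's: derive the characterizing identity from the unbounded-adjoint construction, read off the explicit formula by moving derivatives onto $\eta$, cite the earlier domain computation $\dom\Hdual{D}=W^{1,q}$, and get uniqueness from non-degeneracy of the pairing. But there are two genuine problems.

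First, a sign error that makes your computation internally inconsistent. For the $t$-term you claim $\inner{\eta}{-J_0\p_t\varphi}=\inner{J_0\p_t\eta}{\varphi}$; in fact transposing the antisymmetric $J_0$ produces one sign flip and integration by parts in $t$ produces another, so they cancel and the term contributes $\inner{-J_0\p_t\eta}{\varphi}$. Combined with your (correct) $\p_s$-term $\inner{-\p_s\eta}{\varphi}$ and the $S$-term $\inner{-S\eta}{\varphi}$, the result is $\Hdual{D}=-\p_s-J_0\p_t-S$ --- which is exactly what the paper's own proof derives and what appears in the display preceding~(\ref{eq:kerD*=cokerD}); the formula printed in the proposition statement is missing the minus sign on $\p_s$. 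Your ``collected'' formula $\p_s\eta-J_0\p_t\eta-S\eta$ matches that misprint but does not follow from your own term-by-term claims (which sum to $-\p_s\eta+J_0\p_t\eta-S\eta$), nor from a correct computation. You should state and prove $\Hdual{D}=-\p_s-J_0\p_t-S$.

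Second, your operator-norm argument does not work. The chain $\Norm{\Hdual{D}\eta}_{L^q}=\sup_\varphi\abs{\inner{\eta}{D\varphi}}/\Norm{\varphi}_{L^p}\le\Norm{\eta}_{L^q}\cdot\sup_\varphi\Norm{D\varphi}_{L^p}/\Norm{\varphi}_{L^p}$ ends with the $L^p\to L^p$ ``norm'' of the differential operator $D$, which is infinite; it is not the operator norm of $D:W^{1,p}\to L^p$, so the estimate is vacuous and the ``symmetric'' reverse inequality fails for the same reason. The paper instead reads the bound $\Norm{\Hdual{D}\eta}_{L^q}\le(1+\Norm{S}_{L^\infty})\Norm{\eta}_{W^{1,q}}$ directly off the explicit formula and obtains $\Norm{\Hdual{D}\eta}_{L^q}=\Norm{D\eta}_{L^q}$ by the substitution $s\mapsto -s$, which converts $-\p_s-J_0\p_t-S$ into $\p_s-J_0\p_t-S(-\cdot,\cdot)$ while preserving all the norms involved. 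Replace your duality-based norm argument by this direct one.
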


\begin{proof}
Pick $\eta\in W^{1,q}$, then for $\xi\in C_0^\infty$ and
by~(\ref{eq:Dprime=D*}) we get that
\[
     \INNER{\Hdual{D}\eta}{\xi}
     =\left(\Bdual{D}\eta^*\right)\xi
     :=L_\eta(\xi)
     =\ell_\eta(\xi)
     :=\INNER{\eta}{D\xi}.
\]
Here we used that $L_\eta$ restricts to $\ell_\eta$ on
$\dom D=W^{1,p}$ which contains $C_0^\infty$.
Thus $\Hdual{D}$ satisfies~(\ref{eq:formal-adjoint-identity}),
uniqueness holds by non-degeneracy of the pairing.

%To understand the notation $\Hdual{D}=\p_s-J_0\p_t-S$
Let $\sigma\in L^q$ be the weak derivative of $\eta$ with respect
to $s$ and $\tau\in L^q$ the one with respect to $t$.
Then by definition of the weak derivatives
$\forall \varphi\in C_0^\infty$ we get
\[
     \INNER{-\sigma+{J_0}^T\tau-S^T\eta}{\varphi}
     =\INNER{\eta}{(\p_s-J_0\p_t-S)\varphi}
     =\INNER{\eta}{D\varphi}
     =\INNER{\Hdual{D}\eta}{\varphi}.
\]
As $J_0$ is antisymmetric and $S$ is pointwise symmetric,
non-degeneracy of the pairing implies that
$\Hdual{D}\eta=-\sigma-{J_0}\tau-S\eta$.
For simplicity one writes $\Hdual{D}=-\p_s-J_0\p_t-S$ with
the understanding that these symbols indicate weak derivatives.
(Applied to maps of class $C^1$ they coincide with the usual derivatives.)

For $\eta\in W^{1,q}$ one immediately sees that
\[
     \Norm{\Hdual{D}\eta}_{L^q}
     =\Norm{D\eta}_{L^q}
     \le\left(1+\Norm{S}_{L^\infty}\right)\Norm{\eta}_{W^{1,q}}
\]
where the identity follows by the variable transformation
$s\mapsto -s$.
\end{proof}

By density $C_0^\infty\subset W^{1,p}$ the characterizing identity continues
to hold for all elements $\xi\in W^{1,p}$ and then, together with
non-degeneracy of the pairing, shows that the duality isomorphism
$L^q\simeq\Hdual{(L^p)}$ restricts to an isomorphism
\begin{equation}\label{eq:iso-ker-coker}
     \ker\Hdual{D}\simeq\pann{(\im D)}.
\end{equation}

\cleardoublepage
\phantomsection

%%%%%%%%%%%%%%%%%%%%%%%%%%%%%%%%%%%
%%%%%%%%%% BACKMATTER %%%%%%%%%%%%%%%
%%%%%%%%%%%%%%%%%%%%%%%%%%%%%%%%%%%
\backmatter %commands for bibliography, index

%%%%%%%%%%%%%%%%%%%%%%%%%
%%%%%%%%% REFERENCES %%%%%%
%%%%%%%%%%%%%%%%%%%%%%%%
%\renewcommand{\bibname}{References}
%\bibliographysty le{plain}
         %   erzeugt:     [1] Joa Weber
%\bibliographystyle{abbrv}
         %  erzeugt:      [1] J. Weber and 
\bibliographystyle{alpha}
         %  article:    [Web05]  J. Weber
         %  book:      [Web05]  Joa Weber
         % more authors: [HZ87]
%%%%%%%%%%%%%%%%%%%%%%%%%
%% include Bibliography in TOC %%
% en.wikibooks.org/wiki/LaTeX/Bibliography_Management#Using_tocbibind
%%%%%%%%%%%%%%%%%%%%%%%%%
% Using hyperref, one should say:
\cleardoublepage
\phantomsection
\addcontentsline{toc}{chapter}{Textbook references}
\bibliography{$HOME/Dropbox/0-Libraries+app-data/Bibdesk-BibFiles/library_math}{}

\begin{thebibliography}{{Lan}01}

\bibitem[Abb14]{Abbas:2014a}
Casim Abbas.
\newblock {\em An introduction to compactness results in symplectic field
  theory}.
\newblock Springer, Heidelberg, 2014.

\bibitem[AF03]{Adams:2003a}
Robert~A. Adams and John J.~F. Fournier.
\newblock {\em Sobolev spaces}, volume 140 of {\em Pure and Applied Mathematics
  (Amsterdam)}.
\newblock Elsevier/Academic Press, Amsterdam, second edition, 2003.

\bibitem[AM78]{abraham:1978a}
Ralph Abraham and Jerrold~E. Marsden.
\newblock {\em Foundations of mechanics}.
\newblock Benjamin/Cummings Publishing Co., Inc., Advanced Book Program,
  Reading, Mass., 1978.
\newblock Second edition, revised and enlarged, With the assistance of Tudor
  Ra{\c{t}}iu and Richard Cushman.

\bibitem[AP93]{ambrosetti:1993a}
Antonio Ambrosetti and Giovanni Prodi.
\newblock {\em A primer of nonlinear analysis}, volume~34 of {\em Cambridge
  Studies in Advanced Mathematics}.
\newblock Cambridge University Press, Cambridge, 1993.

\bibitem[AR67]{Abraham:1967a}
Ralph Abraham and Joel Robbin.
\newblock {\em Transversal mappings and flows}.
\newblock An appendix by Al Kelley. W. A. Benjamin, Inc., New York-Amsterdam,
  1967.

\bibitem[Arn78]{Arnold:1978a}
V.~I. Arnold.
\newblock {\em Mathematical methods of classical mechanics}.
\newblock Springer-Verlag, New York-Heidelberg, 1978.
\newblock Translated from the Russian by K. Vogtmann and A. Weinstein, Graduate
  Texts in Mathematics, 60.

\bibitem[Bre83]{Brezis:1983a}
Ha{\"{\i}}m Brezis.
\newblock {\em Analyse fonctionnelle}.
\newblock Collection Math\'ematiques Appliqu\'ees pour la Ma\^\i trise.
  [Collection of Applied Mathematics for the Master's Degree]. Masson, Paris,
  1983.
\newblock Th{\'e}orie et applications. [Theory and applications].

\bibitem[Bre11]{brezis:2011a}
Ha{\"{\i}}m Brezis.
\newblock {\em Functional analysis, {S}obolev spaces and partial differential
  equations}.
\newblock Universitext. Springer, New York, 2011.

\bibitem[BT82]{bott:1982a}
Raoul Bott and Loring~W. Tu.
\newblock {\em Differential forms in algebraic topology}, volume~82 of {\em
  Graduate Texts in Mathematics}.
\newblock Springer-Verlag, New York, 1982.

\bibitem[CE12]{Cieliebak:2012a}
Kai Cieliebak and Yakov Eliashberg.
\newblock {\em From {S}tein to {W}einstein and back}, volume~59 of {\em
  American Mathematical Society Colloquium Publications}.
\newblock American Mathematical Society, Providence, RI, 2012.
\newblock Symplectic geometry of affine complex manifolds.

\bibitem[Cha05]{Chang:2005a}
Kung-Ching Chang.
\newblock {\em Methods in nonlinear analysis}.
\newblock Springer Monographs in Mathematics. Springer-Verlag, Berlin, 2005.

\bibitem[Con85]{Conway:1985a}
John~B. Conway.
\newblock {\em A course in functional analysis}, volume~96 of {\em Graduate
  Texts in Mathematics}.
\newblock Springer-Verlag, New York, 1985.

\bibitem[Gei08]{geiges:2008a}
Hansj{{\"o}}rg Geiges.
\newblock {\em An introduction to contact topology}, volume 109 of {\em
  Cambridge Studies in Advanced Mathematics}.
\newblock Cambridge University Press, Cambridge, 2008.

\bibitem[GH78]{Griffiths:1978a}
Phillip Griffiths and Joseph Harris.
\newblock {\em Principles of algebraic geometry}.
\newblock Wiley-Interscience [John Wiley \& Sons], New York, 1978.
\newblock Pure and Applied Mathematics.

\bibitem[GP74]{guillemin:1974a}
Victor Guillemin and Alan Pollack.
\newblock {\em Differential topology}.
\newblock Prentice-Hall, Inc., Englewood Cliffs, N.J., 1974.

\bibitem[Hir76]{hirsch:1976a}
Morris~W. Hirsch.
\newblock {\em Differential topology}.
\newblock Springer-Verlag, New York-Heidelberg, 1976.
\newblock Graduate Texts in Mathematics, No. 33.

\bibitem[HZ11]{hofer:2011a}
Helmut Hofer and Eduard Zehnder.
\newblock {\em Symplectic invariants and {H}amiltonian dynamics}.
\newblock Modern Birkh{\"a}user Classics. Birkh{\"a}user Verlag, Basel, 2011.
\newblock Reprint of the 1994 edition.

\bibitem[Inf16]{Infusino:2016a}
Maria Infusino.
\newblock {Topological Vector Spaces, last accessed 13/10/2017 on}
  \href{http://www.math.uni-konstanz.de/~infusino/Note.pdf}{Webpage}, {Lecture
  Notes}, 2016.

\bibitem[Kel55]{Kelley:1955a}
John~L. Kelley.
\newblock {\em General topology}.
\newblock D. Van Nostrand Company, Inc., Toronto-New York-London, 1955.

\bibitem[KN76]{Kelley:1976a}
John~L. Kelley and Isaac Namioka.
\newblock {\em Linear topological spaces}.
\newblock Springer-Verlag, New York-Heidelberg, 1976.
\newblock With the collaboration of W. F. Donoghue, Jr., Kenneth R. Lucas, B.
  J. Pettis, Ebbe Thue Poulsen, G. Baley Price, Wendy Robertson, W. R. Scott,
  and Kennan T. Smith, Second corrected printing, Graduate Texts in
  Mathematics, No. 36.

\bibitem[{Lan}01]{lang:2001a}
Serge {Lang}.
\newblock {\em {Fundamentals of differential geometry}}.
\newblock Springer-Verlag, New York, corr. printing 2nd edition, 2001.

\bibitem[Lax02]{lax:2002a}
Peter~D. Lax.
\newblock {\em Functional analysis}.
\newblock Pure and Applied Mathematics (New York). Wiley-Interscience [John
  Wiley \& Sons], New York, 2002.

\bibitem[LM89]{Lawson:1989a}
H.~Blaine Lawson, Jr. and Marie-Louise Michelsohn.
\newblock {\em Spin geometry}, volume~38 of {\em Princeton Mathematical
  Series}.
\newblock Princeton University Press, Princeton, NJ, 1989.

\bibitem[LV03]{Lebedev:2003a}
L.~P. Lebedev and I.~I. Vorovich.
\newblock {\em Functional analysis in mechanics}.
\newblock Springer Monographs in Mathematics. Springer-Verlag, New York, 2003.
\newblock Revised and extended translation of the Russian original.

\bibitem[Mil63]{milnor:1963a}
John Milnor.
\newblock {\em Morse theory}.
\newblock Based on lecture notes by M. Spivak and R. Wells. Annals of
  Mathematics Studies, No. 51. Princeton University Press, Princeton, N.J.,
  1963.

\bibitem[Mil65]{milnor:1965a}
John Milnor.
\newblock {\em Lectures on the {$h$}-cobordism theorem}.
\newblock Notes by L. Siebenmann and J. Sondow. Princeton University Press,
  Princeton, N.J., 1965.

\bibitem[MS74]{milnor:1974a}
John~W. Milnor and James~D. Stasheff.
\newblock {\em Characteristic classes}.
\newblock Princeton University Press, Princeton, N. J., 1974.
\newblock Annals of Mathematics Studies, No. 76.

\bibitem[MS98]{mcduff:1998a}
Dusa McDuff and Dietmar Salamon.
\newblock {\em Introduction to symplectic topology}.
\newblock Oxford Mathematical Monographs. The Clarendon Press, Oxford
  University Press, New York, second edition, 1998.

\bibitem[MS04]{mcduff:2004a}
Dusa McDuff and Dietmar Salamon.
\newblock {\em {$J$}-holomorphic curves and symplectic topology}, volume~52 of
  {\em American Mathematical Society Colloquium Publications}.
\newblock American Mathematical Society, Providence, RI, 2004.

\bibitem[Oxt80]{Oxtoby:1980a}
John~C. Oxtoby.
\newblock {\em Measure and category}, volume~2 of {\em Graduate Texts in
  Mathematics}.
\newblock Springer-Verlag, New York-Berlin, second edition, 1980.
\newblock A survey of the analogies between topological and measure spaces.

\bibitem[PP09]{Palais:2009a}
Richard~S. Palais and Robert~A. Palais.
\newblock {\em Differential equations, mechanics, and computation}, volume~51
  of {\em Student Mathematical Library}.
\newblock American Mathematical Society, Providence, RI; Institute for Advanced
  Study (IAS), Princeton, NJ, 2009.
\newblock IAS/Park City Mathematical Subseries.

\bibitem[Pug02]{pugh:2002a}
Charles~Chapman Pugh.
\newblock {\em Real mathematical analysis.}
\newblock Undergraduate Texts in Mathematics. {Springer, xii+437 p.}, New York,
  2002.

\bibitem[RS80]{reed:1980a}
Michael Reed and Barry Simon.
\newblock {\em Methods of modern mathematical physics. {I}}.
\newblock Academic Press, Inc. [Harcourt Brace Jovanovich, Publishers], New
  York, second edition, 1980.
\newblock Functional analysis.

\bibitem[Rud91]{Rudin:1991b}
Walter Rudin.
\newblock {\em Functional analysis}.
\newblock International Series in Pure and Applied Mathematics. McGraw-Hill,
  Inc., New York, second edition, 1991.

\bibitem[Sal16]{Salamon:2016a}
Dietmar~A. Salamon.
\newblock {\em Measure and integration}.
\newblock EMS Textbooks in Mathematics. European Mathematical Society (EMS),
  Z\"urich, 2016.

\bibitem[SS05]{Stein:2005a}
Elias~M. Stein and Rami Shakarchi.
\newblock {\em Real analysis}, volume~3 of {\em Princeton Lectures in
  Analysis}.
\newblock Princeton University Press, Princeton, NJ, 2005.
\newblock Measure theory, integration, and Hilbert spaces.

\bibitem[Ste83]{Sternberg:1983a}
Shlomo Sternberg.
\newblock {\em Lectures on differential geometry}.
\newblock Chelsea Publishing Co., New York, second edition, 1983.
\newblock With an appendix by Sternberg and Victor W. Guillemin.

\bibitem[Tes16]{Teschl:2016a}
Gerald Teschl.
\newblock Topics in real and functional analysis.\newline
\newblock
  \href{http://www.mat.univie.ac.at/~gerald/ftp/book-fa/}{www.mat.univie.ac.at/$\sim$gerald/ftp/book-fa/},
  2016.

\bibitem[War83]{warner:1983a}
Frank~W. Warner.
\newblock {\em Foundations of differentiable manifolds and {L}ie groups},
  volume~94 of {\em Graduate Texts in Mathematics}.
\newblock Springer-Verlag, New York-Berlin, 1983.
\newblock Corrected reprint of the 1971 edition.

\bibitem[Web]{weber:2015-MORSELEC-In_Preparation}
Joa Weber.
\newblock {Lecture Notes on Morse and Conley Theory}.
\newblock \href{http://www.math.sunysb.edu/~joa/PUBLICATIONS/MORSELEC.pdf}{In
  progress}.%.

\bibitem[Weh04]{Wehrheim:2004a}
Katrin Wehrheim.
\newblock {\em Uhlenbeck compactness}.
\newblock EMS Series of Lectures in Mathematics. European Mathematical Society
  (EMS), Z\"urich, 2004.

\bibitem[Zim90]{Zimmer:1990a}
Robert~J. Zimmer.
\newblock {\em Essential results of functional analysis}.
\newblock Chicago Lectures in Mathematics. University of Chicago Press,
  Chicago, IL, 1990.

\end{thebibliography}


\begin{thebibliography}{EGH00}

\bibitem[Arn63]{Arnold:1963a}
V.~I. Arnol{$'$}d.
\newblock Proof of a theorem of {A}. {N}. {K}olmogorov on the preservation of
  conditionally periodic motions under a small perturbation of the
  {H}amiltonian.
\newblock {\em Uspehi Mat. Nauk}, 18(5 (113)):13--40, 1963.

\bibitem[Arn76]{Arnold:1976a}
V.~I. Arnol{$'$}d.
\newblock In Felix~E. {Browder}, editor, {\em {Mathematical developments
  arising from Hilbert problems. Proceedings of the symposium in pure
  mathematics of the American Mathematical Society, held at Northern Illinois
  University, DeKalb, Illinois, May 1974.}}, volume XXVIII, page~66, 1976.

\bibitem[Ban93]{Bangert:1993a}
Victor Bangert.
\newblock On the existence of closed geodesics on two-spheres.
\newblock {\em Internat. J. Math.}, 4(1):1--10, 1993.

\bibitem[Bir13]{Birkhoff:1913a}
George~D. Birkhoff.
\newblock Proof of {P}oincar\'e's geometric theorem.
\newblock {\em Trans. Amer. Math. Soc.}, 14(1):14--22, 1913.

\bibitem[CF09]{Cieliebak:2009a}
Kai Cieliebak and Urs~Adrian Frauenfelder.
\newblock A {F}loer homology for exact contact embeddings.
\newblock {\em Pacific J. Math.}, 239(2):251--316, 2009.

\bibitem[CZ83]{Conley:1983a}
C.~C. Conley and E.~Zehnder.
\newblock The {B}irkhoff-{L}ewis fixed point theorem and a conjecture of {V}.
  {I}. {A}rnol$'$d.
\newblock {\em Invent. Math.}, 73(1):33--49, 1983.

\bibitem[EGH00]{Eliashberg:2000a}
Y.~Eliashberg, A.~Givental, and H.~Hofer.
\newblock Introduction to symplectic field theory.
\newblock {\em Geom. Funct. Anal.}, Special Volume, Part II:560--673, 2000.
\newblock GAFA 2000 (Tel Aviv, 1999).

\bibitem[Flo86]{Floer:1986a}
Andreas Floer.
\newblock Proof of the {A}rnol$'$d conjecture for surfaces and generalizations
  to certain {K}\"ahler manifolds.
\newblock {\em Duke Math. J.}, 53(1):1--32, 1986.

\bibitem[Flo88]{floer:1988a}
Andreas Floer.
\newblock Morse theory for {L}agrangian intersections.
\newblock {\em J. Differential Geom.}, 28(3):513--547, 1988.

\bibitem[Flo89]{floer:1989a}
Andreas Floer.
\newblock Symplectic fixed points and holomorphic spheres.
\newblock {\em Comm. Math. Phys.}, 120(4):575--611, 1989.

\bibitem[Fra92]{Franks:1992a}
John Franks.
\newblock Geodesics on {$S^2$} and periodic points of annulus homeomorphisms.
\newblock {\em Invent. Math.}, 108(2):403--418, 1992.

\bibitem[Gro85]{gromov:1985a}
M.~Gromov.
\newblock Pseudo holomorphic curves in symplectic manifolds.
\newblock {\em Invent. Math.}, 82:307--347, 1985.

\bibitem[Ham35]{Hamilton:1835a}
William~Rowan Hamilton.
\newblock Second essay on a general method in dynamics.
\newblock {\em Philosophical Transactions of the Royal Society of London},
  125:95--144, 1835.

\bibitem[Jac09]{Jacobi:2009a}
C.~G.~J. Jacobi.
\newblock {\em Jacobi's lectures on dynamics}, volume~51 of {\em Texts and
  Readings in Mathematics}.
\newblock Hindustan Book Agency, New Delhi, revised edition, 2009.
\newblock Delivered at the University of K\"onigsberg in the winter semester
  1842--1843 and according to the notes prepared by C. W. Brockardt, Edited by
  A. Clebsch, Translated from the original German by K. Balagangadharan,
  Translation edited by Biswarup Banerjee.

\bibitem[Kol54]{Kolmogorov:1954a}
A.~N. Kolmogorov.
\newblock On conservation of conditionally periodic motions for a small change
  in {H}amilton's function.
\newblock {\em Dokl. Akad. Nauk SSSR (N.S.)}, 98:527--530, (Russian) [English
  translation in: Lectures Notes in Physics 93, Springer, 1979.], 1954.

\bibitem[Lef26]{Lefschetz:1926a}
Solomon Lefschetz.
\newblock Intersections and transformations of complexes and manifolds.
\newblock {\em Trans. Amer. Math. Soc.}, 28(1):1--49, 1926.

\bibitem[LS30]{lusternik:1930a}
L.~{Lusternik} and L.~{Schnirelmann}.
\newblock {M\'ethodes topologiques dans les probl\`emes variationnels.}
\newblock {Moskau: Issledowatelskij Institut Mathematiki i Mechaniki pri J. M.
  G. U}, 1930.

\bibitem[Mos62]{Moser:1962a}
J.~Moser.
\newblock On invariant curves of area-preserving mappings of an annulus.
\newblock {\em Nachr. Akad. Wiss. G\"ottingen Math.-Phys. Kl. II}, 1962:1--20,
  1962.

\bibitem[{Nel}16]{2016arXiv161102676N}
J.~{Nelson}.
\newblock {From Dynamics to Contact and Symplectic Topology and Back}.
\newblock {\em \href{https://arxiv.org/abs/1611.02676}{ArXiv e-prints}},
  November 2016.

\bibitem[{Nik}74]{Nikishin:1974a}
N.A. {Nikishin}.
\newblock {Fixed points of diffeomorphisms of two-dimensional spheres
  preserving an oriented plane.}
\newblock {\em {Funct. Anal. Appl.}}, 8:77--79, {(Translation from Funkts.
  Anal. Prilozh. 8, No.1, 84--85 (1974))}, 1974.

\bibitem[{Poi}95]{Poincare:1895a}
H.~{Poincar\'e}.
\newblock {Analysis situs.}
\newblock {J. de l'\'Ec. Pol. (2) I. 1-123}, 1895.

\bibitem[{Poi}12]{Poincare:1912a}
H.~{Poincar\'e}.
\newblock {Sur un th\'{e}or\`{e}me de G\'{e}om\'{e}trie}.
\newblock {\em Rend. Circolo Math. Palermo}, 33:375--407, 1912.

\bibitem[Rab79]{Rabinowitz:1979a}
Paul~H. Rabinowitz.
\newblock Periodic solutions of a {H}amiltonian system on a prescribed energy
  surface.
\newblock {\em J. Differential Equations}, 33(3):336--352, 1979.

\bibitem[Rab86]{rabinowitz:1986a}
Paul~H. Rabinowitz.
\newblock {\em Minimax methods in critical point theory with applications to
  differential equations}, volume~65 of {\em CBMS Regional Conference Series in
  Mathematics}.
\newblock Published for the Conference Board of the Mathematical Sciences,
  Washington, DC; by the American Mathematical Society, Providence, RI, 1986.

\bibitem[Sim74]{Simon:1974a}
Carl~P. Simon.
\newblock A bound for the fixed-point index of an area-preserving map with
  applications to mechanics.
\newblock {\em Invent. Math.}, 26:187--200, 1974.

\bibitem[Wei79]{Weinstein:1979a}
Alan Weinstein.
\newblock On the hypotheses of {R}abinowitz' periodic orbit theorems.
\newblock {\em J. Differential Equations}, 33(3):353--358, 1979.

\end{thebibliography}


\begin{thebibliography}{{Wen}15}

\bibitem[EG91]{Eliashberg:1991a}
Yakov Eliashberg and Mikhael Gromov.
\newblock Convex symplectic manifolds.
\newblock In {\em Several complex variables and complex geometry, {P}art 2
  ({S}anta {C}ruz, {CA}, 1989)}, volume~52 of {\em Proc. Sympos. Pure Math.},
  pages 135--162. Amer. Math. Soc., Providence, RI, 1991.

\bibitem[Eli90]{Eliashberg:1990a}
Yakov Eliashberg.
\newblock Topological characterization of {S}tein manifolds of dimension
  {$>2$}.
\newblock {\em Internat. J. Math.}, 1(1):29--46, 1990.

\bibitem[Etn16]{Etnyre:2006a}
John~B. Etnyre.
\newblock {Contact Manifolds}.
\newblock In Jean-Pierre Fran{\c{c}}oise, Gregory~L. Naber, and Tsou~Sheung
  Tsun, editors, {\em Encyclopedia of mathematical physics. {V}ol. 1, 2, 3, 4,
  5}, volume~1, pages 631--636. Academic Press/Elsevier Science, Oxford, 2006.
  {\href{http://people.math.gatech.edu/~etnyre/preprints/papers/phys.pdf}{Preprint
  (last accessed on 22/10/2016)}}.

\bibitem[Gei01]{Geiges:2001a}
Hansj{\"o}rg Geiges.
\newblock A brief history of contact geometry and topology.
\newblock {\em Expositiones Mathematicae}, 19(1):25 -- 53, 2001.

\bibitem[GG03]{Ginzburg:2003a}
Viktor~L. Ginzburg and Ba{\c{s}}ak~Z. G{\"u}rel.
\newblock A {$C^2$}-smooth counterexample to the {H}amiltonian {S}eifert
  conjecture in {$\mathbb R^4$}.
\newblock {\em Ann. of Math. (2)}, 158(3):953--976, 2003.

\bibitem[Gin01]{Ginzburg:2001a}
Viktor~L. Ginzburg.
\newblock The {H}amiltonian {S}eifert conjecture: examples and open problems.
\newblock In {\em European {C}ongress of {M}athematics, {V}ol. {II}
  ({B}arcelona, 2000)}, volume 202 of {\em Progr. Math.}, pages 547--555.
  Birkh\"auser, Basel, 2001.

\bibitem[Hof93]{Hofer:1993a}
H.~Hofer.
\newblock Pseudoholomorphic curves in symplectizations with applications to the
  {W}einstein conjecture in dimension three.
\newblock {\em Invent. Math.}, 114(3):515--563, 1993.

\bibitem[Hut10]{Hutchings:2010a}
Michael Hutchings.
\newblock Taubes's proof of the {W}einstein conjecture in dimension three.
\newblock {\em Bull. Amer. Math. Soc. (N.S.)}, 47(1):73--125, 2010.

\bibitem[Lim88]{Lima:1988a}
Elon~L. Lima.
\newblock The {J}ordan-{B}rouwer separation theorem for smooth hypersurfaces.
\newblock {\em Amer. Math. Monthly}, 95(1):39--42, 1988.

\bibitem[PR83]{Pugh:1983a}
Charles~C. Pugh and Clark Robinson.
\newblock The {$C^{1}$} closing lemma, including {H}amiltonians.
\newblock {\em Ergodic Theory Dynam. Systems}, 3(2):261--313, 1983.

\bibitem[Rab78]{Rabinowitz:1978a}
Paul~H. Rabinowitz.
\newblock Periodic solutions of {H}amiltonian systems.
\newblock {\em Comm. Pure Appl. Math.}, 31(2):157--184, 1978.

\bibitem[Tau07]{Taubes:2007a}
Clifford~Henry Taubes.
\newblock The {S}eiberg-{W}itten equations and the {W}einstein conjecture.
\newblock {\em Geom. Topol.}, 11:2117--2202, 2007.

\bibitem[Vit87]{Viterbo:1987a}
Claude Viterbo.
\newblock A proof of {W}einstein's conjecture in {${\bf R}^{2n}$}.
\newblock {\em Ann. Inst. H. Poincar\'e Anal. Non Lin\'eaire}, 4(4):337--356,
  1987.

\bibitem[Wei78]{Weinstein:1978a}
Alan Weinstein.
\newblock Periodic orbits for convex {H}amiltonian systems.
\newblock {\em Ann. of Math. (2)}, 108(3):507--518, 1978.

\bibitem[{Wen}15]{Wendl:2015a}
C.~{Wendl}.
\newblock {Lectures on Holomorphic Curves in Symplectic and Contact Geometry}.
\newblock {Manuscript on}
  \href{http://www.homepages.ucl.ac.uk/~ucahcwe/pub/jhol_bookv33.pdf}{Webpage},
  last accessed on 15/09/2016, v3.3, May 2015.

\bibitem[Zeh87]{Zehnder:1987b}
E.~Zehnder.
\newblock Remarks on periodic solutions on hypersurfaces.
\newblock In {\em Periodic solutions of {H}amiltonian systems and related
  topics ({I}l {C}iocco, 1986)}, volume 209 of {\em NATO Adv. Sci. Inst. Ser. C
  Math. Phys. Sci.}, pages 267--279. Reidel, Dordrecht, 1987.

\end{thebibliography}


\begin{thebibliography}{{Sma}65}

\bibitem[Abo11]{Abouzaid:2011a}
Mohammed Abouzaid.
\newblock A cotangent fibre generates the {F}ukaya category.
\newblock {\em Adv. Math.}, 228(2):894--939, 2011.

\bibitem[Abo15]{Abouzaid:2015a}
Mohammed Abouzaid.
\newblock Symplectic cohomology and {V}iterbo's theorem.
\newblock In {\em Free loop spaces in geometry and topology}, volume~24 of {\em
  IRMA Lect. Math. Theor. Phys.}, pages 271--485. Eur. Math. Soc., Z\"urich,
  2015.

\bibitem[AM06]{abbondandolo:2006a}
Alberto Abbondandolo and Pietro Majer.
\newblock Lectures on the {M}orse complex for infinite-dimensional manifolds.
\newblock In {\em Morse theoretic methods in nonlinear analysis and in
  symplectic topology}, volume 217 of {\em NATO Sci. Ser. II Math. Phys.
  Chem.}, pages 1--74. Springer, Dordrecht, 2006.

\bibitem[AS06]{abbondandolo:2006b}
Alberto Abbondandolo and Matthias Schwarz.
\newblock On the {F}loer homology of cotangent bundles.
\newblock {\em Comm. Pure Appl. Math.}, 59(2):254--316, 2006.

\bibitem[AS14]{abbondandolo:2014b}
Alberto Abbondandolo and Matthias Schwarz.
\newblock Corrigendum: {O}n the {F}loer homology of cotangent bundles
  %[\refcno2190223].
\newblock {\em Comm. Pure Appl. Math.}, 67(4):670--691, 2014.

\bibitem[AS15]{Abbondandolo:2015c}
Alberto Abbondandolo and Matthias Schwarz.
\newblock The role of the {L}egendre transform in the study of the {F}loer
  complex of cotangent bundles.
\newblock {\em Comm. Pure Appl. Math.}, 68(11):1885--1945, 2015.

\bibitem[Bot56]{Bott:1956a}
Raoul Bott.
\newblock On the iteration of closed geodesics and the {S}turm intersection
  theory.
\newblock {\em Comm. Pure Appl. Math.}, 9:171--206, 1956.

\bibitem[El{\u\i}67]{Eluiasson:1967a}
Halld{\'o}r~I. El{\u\i}asson.
\newblock Geometry of manifolds of maps.
\newblock {\em J. Differential Geometry}, 1:169--194, 1967.

\bibitem[FHS95]{Floer:1995a}
Andreas Floer, Helmut Hofer, and Dietmar Salamon.
\newblock Transversality in elliptic {M}orse theory for the symplectic action.
\newblock {\em Duke Math. J.}, 80(1):251--292, 1995.

\bibitem[Flo89]{floer:1989c}
Andreas Floer.
\newblock Witten's complex and infinite-dimensional {M}orse theory.
\newblock {\em J. Differential Geom.}, 30(1):207--221, 1989.

\bibitem[FO99]{Fukaya:1999a}
Kenji Fukaya and Kaoru Ono.
\newblock Arnold conjecture and {G}romov-{W}itten invariant.
\newblock {\em Topology}, 38(5):933--1048, 1999.

\bibitem[GG15]{Ginzburg:2015b}
Viktor~L. Ginzburg and Ba\c sak~Z. G{\"u}rel.
\newblock The {C}onley conjecture and beyond.
\newblock {\em Arnold Math. J.}, 1(3):299--337, 2015.

\bibitem[Gin10]{Ginzburg:2010b}
Viktor~L. Ginzburg.
\newblock The {C}onley conjecture.
\newblock {\em Ann. of Math. (2)}, 172(2):1127--1180, 2010.

\bibitem[Gom98]{Gompf:1998a}
Robert~E. Gompf.
\newblock Symplectically aspherical manifolds with nontrivial {$\pi_2$}.
\newblock {\em Math. Res. Lett.}, 5(5):599--603, 1998.

\bibitem[Hof85]{Hofer:1985a}
Helmut Hofer.
\newblock Lagrangian embeddings and critical point theory.
\newblock {\em Ann. Inst. H. Poincar\'e Anal. Non Lin\'eaire}, 2(6):407--462,
  1985.

\bibitem[HS95]{Hofer:1995a}
H.~Hofer and D.~A. Salamon.
\newblock Floer homology and {N}ovikov rings.
\newblock In {\em The {F}loer memorial volume}, volume 133 of {\em Progr.
  Math.}, pages 483--524. Birkh\"auser, Basel, 1995.

\bibitem[Kak43]{Kakutani:1943a}
Shizuo Kakutani.
\newblock Topological properties of the unit sphere of a {H}ilbert space.
\newblock {\em Proc. Imp. Acad. Tokyo}, 19:269--271, 1943.

\bibitem[{Kra}07]{2007arXiv0712.2533K}
T.~{Kragh}.
\newblock {The Viterbo Transfer as a Map of Spectra}.
\newblock {\em ArXiv e-prints}, December 2007.

\bibitem[Kra13]{Kragh:2013a}
Thomas Kragh.
\newblock Parametrized ring-spectra and the nearby {L}agrangian conjecture.
\newblock {\em Geom. Topol.}, 17(2):639--731, 2013.
\newblock With an appendix by Mohammed Abouzaid.

\bibitem[KRT08]{Kedra:2008a}
Jarek K\c{e}dra, Yuli Rudyak, and Aleksy Tralle.
\newblock Symplectically aspherical manifolds.
\newblock {\em J. Fixed Point Theory Appl.}, 3(1):1--21, 2008.

\bibitem[Kui65]{Kuiper:1965a}
Nicolaas~H. Kuiper.
\newblock The homotopy type of the unitary group of {H}ilbert space.
\newblock {\em Topology}, 3:19--30, 1965.

\bibitem[Lon02]{Long:2002a}
Yiming Long.
\newblock {\em Index theory for symplectic paths with applications}, volume 207
  of {\em Progress in Mathematics}.
\newblock Birkh\"auser Verlag, Basel, 2002.

\bibitem[LT98]{Liu:1998a}
Gang Liu and Gang Tian.
\newblock Floer homology and {A}rnold conjecture.
\newblock {\em J. Differential Geom.}, 49(1):1--74, 1998.

\bibitem[Mos76]{Moser:1976a}
J.~Moser.
\newblock Periodic orbits near an equilibrium and a theorem by {A}lan
  {W}einstein.
\newblock {\em Comm. Pure Appl. Math.}, 29(6):724--747, 1976.

\bibitem[MW10]{Mawhin:2010a}
Jean Mawhin and Michel Willem.
\newblock {Origin and evolution of the Palais--Smale condition in critical
  point theory}.
\newblock {\em Journal of Fixed Point Theory and Applications}, 7(2):265--290,
  2010.

\bibitem[Pal66]{Palais:1966b}
Richard~S. Palais.
\newblock Lusternik-{S}chnirelman theory on {B}anach manifolds.
\newblock {\em Topology}, 5:115--132, 1966.

\bibitem[PSS96]{Piunikhin:1996a}
S.~Piunikhin, D.~Salamon, and M.~Schwarz.
\newblock Symplectic {F}loer-{D}onaldson theory and quantum cohomology.
\newblock In {\em Contact and symplectic geometry ({C}ambridge, 1994)},
  volume~8 of {\em Publ. Newton Inst.}, pages 171--200. Cambridge Univ. Press,
  Cambridge, 1996.

\bibitem[RS95]{robbin:1995a}
Joel {Robbin} and Dietmar {Salamon}.
\newblock {The spectral flow and the Maslov index.}
\newblock {\em {Bull. Lond. Math. Soc.}}, 27(1):1--33, 1995.

\bibitem[Sal90]{salamon:1990a}
Dietmar Salamon.
\newblock Morse theory, the {C}onley index and {F}loer homology.
\newblock {\em Bull. London Math. Soc.}, 22(2):113--140, 1990.

\bibitem[{Sal}99a]{salamon:1999a}
Dietmar {Salamon}.
\newblock {Lectures on Floer homology}.
\newblock In {\em {Symplectic geometry and topology. Lecture notes from the
  graduate summer school program, Park City, UT, USA, June 29--July 19, 1997}},
  pages 145--229. Providence, RI: American Mathematical Society, 1999.

\bibitem[Sal99b]{salamon:1999b}
Dietmar Salamon.
\newblock {\em {Spin Geometry and Seiberg-Witten Invariants}}.
\newblock
  \href{https://people.math.ethz.ch/~salamon/PREPRINTS/witsei.pdf}{Unpublished
  manuscript}, 1999.

\bibitem[Sar42]{Sard:1942a}
Arthur Sard.
\newblock The measure of the critical values of differentiable maps.
\newblock {\em Bull. Amer. Math. Soc.}, 48:883--890, 1942.

\bibitem[Sch93]{schwarz:1993a}
Matthias Schwarz.
\newblock {\em Morse homology}, volume 111 of {\em Progress in Mathematics}.
\newblock Birkh\"auser Verlag, Basel, 1993.

\bibitem[Sei10]{Seidel:2010a}
Paul Seidel.
\newblock {A remark on the symplectic cohomology of cotangent bundles, after
  Kragh}.
\newblock Unpublished notes, 2010.

\bibitem[{Sma}65]{smale:1965a}
Stephen {Smale}.
\newblock {An infinite dimensional version of Sard's theorem.}
\newblock {\em {Am. J. Math.}}, 87:861--866, 1965.

\bibitem[Ste70]{Stein:1970a}
Elias~M. Stein.
\newblock {\em Singular integrals and differentiability properties of
  functions}.
\newblock Princeton Mathematical Series, No. 30. Princeton University Press,
  Princeton, N.J., 1970.

\bibitem[SW06]{salamon:2006a}
Dietmar Salamon and Joa Weber.
\newblock Floer homology and the heat flow.
\newblock {\em Geom. Funct. Anal.}, 16(5):1050--1138, 2006.

\bibitem[SZ92]{Salamon:1992a}
Dietmar Salamon and Eduard Zehnder.
\newblock Morse theory for periodic solutions of {H}amiltonian systems and the
  {M}aslov index.
\newblock {\em Comm. Pure Appl. Math.}, 45(10):1303--1360, 1992.

\bibitem[Vit98]{Viterbo:1998a}
Claude Viterbo.
\newblock {Functors and computations in Floer homology with applications, II.}
\newblock {Preprint Universit{\'e} Paris-Sud no. 98-15}, 1998.

\bibitem[Was69]{Wasserman:1969a}
Arthur~G. Wasserman.
\newblock Equivariant differential topology.
\newblock {\em Topology}, 8:127--150, 1969.

\bibitem[Web93]{weber:1993a}
Joa Weber.
\newblock Der {M}orse-{W}itten {K}omplex
  \href{http://www.math.sunysb.edu/~joa/PUBLICATIONS/1993-madip.pdf}{\rm
  (access pdf)}.
\newblock Master's thesis, {Department of Mathematics, TU~Berlin}, February
  1993.

\bibitem[Web96]{Weber:1996a}
Joa Weber.
\newblock {Morse theory on the loop space of flat tori and symplectic Floer
  theory}.
\newblock In {\em eprint \href{https://arxiv.org/abs/dg-ga/9612012}{{\rm
  arXiv:dg-ga/9612012}}}, December 1996.

\bibitem[Web02]{weber:2002a}
Joa Weber.
\newblock {Perturbed closed geodesics are periodic orbits: Index and
  transversality}.
\newblock {\em Math. Z.}, 241(1):45--82, 2002.

\bibitem[Web05]{weber:2005a}
Joa Weber.
\newblock {T}hree approaches towards {F}loer homology of cotangent bundles.
  {Conference on Symplectic Topology}.
\newblock {\em J. Symplectic Geom.}, 3(4):671--701, 2005.
\newblock % Conference on Symplectic Topology.

\bibitem[Web06a]{weber:2006b}
Joa Weber.
\newblock The {M}orse-{W}itten complex via dynamical systems.
\newblock {\em Expo. Math.}, 24(2):127--159, 2006.

\bibitem[Web06b]{weber:2006a}
Joa Weber.
\newblock {Noncontractible periodic orbits in cotangent bundles and Floer
  homology.}
\newblock {\em Duke Math. J.}, 133(3):527--568, 2006.

\bibitem[Web13a]{weber:2013b}
Joa Weber.
\newblock {M}orse homology for the heat flow.
\newblock {\em Math. Z.}, 275(1-2):1--54, 2013.

\bibitem[Web13b]{weber:2013a}
Joa Weber.
\newblock {M}orse homology for the heat flow -- {L}inear theory.
\newblock {\em Math. Nachr.}, 286(1):88--104, 2013.

\bibitem[Web15]{Weber:2015c}
Joa Weber.
\newblock Contraction method and {L}ambda-{L}emma.
\newblock {\em S{\~a}o Paulo Journal of Mathematical Sciences}, 9(2):263--298,
  2015.

\bibitem[Web17]{weber:2014c}
Joa Weber.
\newblock {S}table foliations and semi-flow {M}orse homology.
\newblock {\em Ann. Sc. Norm. Super. Pisa Cl. Sci. (5)}, Vol. XVII(3):853--909,
  2017.

\end{thebibliography}


\newcommand{\etalchar}[1]{$^{#1}$}
\begin{thebibliography}{BEH{\etalchar{+}}03}

\bibitem[Abb13]{Abbondandolo:2013c}
Alberto Abbondandolo.
\newblock Lectures on the free period {L}agrangian action functional.
\newblock {\em J. Fixed Point Theory Appl.}, 13(2):397--430, 2013.

\bibitem[AF10a]{Albers:2010b}
Peter Albers and Urs Frauenfelder.
\newblock Leaf-wise intersections and {R}abinowitz {F}loer homology.
\newblock {\em J. Topol. Anal.}, 2(1):77--98, 2010.

\bibitem[AF10b]{Albers:2010c}
Peter Albers and Urs Frauenfelder.
\newblock Spectral invariants in {R}abino-witz-{F}loer homology and global
  {H}amiltonian perturbations.
\newblock {\em J. Mod. Dyn.}, 4(2):329--357, 2010.

\bibitem[AF12a]{Albers:2012b}
Peter Albers and Urs Frauenfelder.
\newblock Infinitely many leaf-wise intersections on cotangent bundles.
\newblock {\em Expo. Math.}, 30(2):168--181, 2012.

\bibitem[AF12b]{Albers:2012a}
Peter Albers and Urs Frauenfelder.
\newblock Rabinowitz {F}loer homology: a survey.
\newblock In {\em Global differential geometry}, volume~17 of {\em Springer
  Proc. Math.}, pages 437--461. Springer, Heidelberg, 2012.

\bibitem[AS09]{Abbondandolo:2009a}
Alberto Abbondandolo and Matthias Schwarz.
\newblock Estimates and computations in {R}abinowitz-{F}loer homology.
\newblock {\em J. Topol. Anal.}, 1(4):307--405, 2009.

\bibitem[Ban80]{Banyaga:1980a}
Augustin Banyaga.
\newblock On fixed points of symplectic maps.
\newblock {\em Invent. Math.}, 56(3):215--229, 1980.

\bibitem[BEH{\etalchar{+}}03]{Bourgeois:2003a}
F.~Bourgeois, Y.~Eliashberg, H.~Hofer, K.~Wysocki, and E.~Zehnder.
\newblock Compactness results in symplectic field theory.
\newblock {\em Geom. Topol.}, 7:799--888, 2003.

\bibitem[BF11]{Bae:2011a}
Youngjin Bae and Urs Frauenfelder.
\newblock Continuation homomorphism in {R}abinowitz {F}loer homology for
  symplectic deformations.
\newblock {\em Math. Proc. Cambridge Philos. Soc.}, 151(3):471--502, 2011.

\bibitem[CFO10]{Cieliebak:2010b}
Kai Cieliebak, Urs Frauenfelder, and Alexandru Oancea.
\newblock Rabinowitz {F}loer homology and symplectic homology.
\newblock {\em Ann. Sci. \'Ec. Norm. Sup\'er. (4)}, 43(6):957--1015, 2010.

\bibitem[Fra04]{Frauenfelder:2004a}
Urs Frauenfelder.
\newblock The {A}rnold-{G}ivental conjecture and moment {F}loer homology.
\newblock {\em Int. Math. Res. Not.}, 42:2179--2269, 2004.

\bibitem[FS16]{Frauenfelder:2016a}
Urs Frauenfelder and Felix Schlenk.
\newblock {$S^1$}-equivariant {R}abinowitz-{F}loer homology.
\newblock {\em Hokkaido Math. J.}, 45(3):293--323, 2016.

\bibitem[FZ17]{Frauenfelder:2017a}
U.~{Frauenfelder} and L.~{Zhao}.
\newblock {Existence of infinitely many consecutive collision orbits in the
  planar circular restricted three-body problem}.
\newblock {\em ArXiv e-prints}, May 2017.

\bibitem[Mer11]{Merry:2011a}
Will~J. Merry.
\newblock On the {R}abinowitz {F}loer homology of twisted cotangent bundles.
\newblock {\em Calc. Var. Partial Differential Equations}, 42(3-4):355--404,
  2011.

\bibitem[Mos78]{Moser:1978a}
J.~Moser.
\newblock A fixed point theorem in symplectic geometry.
\newblock {\em Acta Math.}, 141(1--2):17--34, 1978.

\bibitem[MP11]{Merry:2011b}
Will~J. Merry and Gabriel~P. Paternain.
\newblock Index computations in {R}abinowitz {F}loer homology.
\newblock {\em J. Fixed Point Theory Appl.}, 10(1):87--111, 2011.

\bibitem[Sch06]{Schlenk:2006a}
Felix Schlenk.
\newblock Applications of {H}ofer's geometry to {H}amiltonian dynamics.
\newblock {\em Comment. Math. Helv.}, 81(1):105--121, 2006.

\end{thebibliography}


\begin{thebibliography}{BRCF05}

\bibitem[AG01]{Arnold:2001b}
V.~I. Arnol{$'$}d and A.~B. Givental{$'$}.
\newblock Symplectic geometry.
\newblock In {\em Dynamical systems, {IV}}, volume~4 of {\em Encyclopaedia
  Math. Sci.}, pages 1--138. Springer, Berlin, 2001.

\bibitem[AKN06]{arnold:2006a}
Vladimir~I. Arnol$'$d, Valery~V. Kozlov, and Anatoly~I. Neishtadt.
\newblock {\em Mathematical aspects of classical and celestial mechanics},
  volume~3 of {\em Encyclopaedia of Mathematical Sciences}.
\newblock Springer-Verlag, Berlin, third edition, 2006.
\newblock [Dynamical systems. III], Translated from the Russian original by E.
  Khukhro.

\bibitem[Arn67]{Arnold:1967a}
V.~I. Arnol{$'$}d.
\newblock On a characteristic class entering into conditions of quantization.
\newblock {\em Funkcional. Anal. i Prilo\v zen.}, 1:1--14, 1967.

\bibitem[Bot85]{Bott:1985a}
Raoul Bott.
\newblock On some recent interactions between mathematics and physics.
\newblock {\em Canad. Math. Bull.}, 28(2):129--164, 1985.

\bibitem[BRCF05]{Barros:2005a}
Manuel Barros, Alfonso Romero, Jos\'e~L. Cabrerizo, and Manuel Fern\'andez.
\newblock The {G}auss-{L}andau-{H}all problem on {R}iemannian surfaces.
\newblock {\em J. Math. Phys.}, 46(11):112905, 15, 2005.

\bibitem[CMP04]{Contreras:2004a}
Gonzalo Contreras, Leonardo Macarini, and Gabriel~P. Paternain.
\newblock Periodic orbits for exact magnetic flows on surfaces.
\newblock {\em Int. Math. Res. Not.}, (8):361--387, 2004.

\bibitem[CZ84]{conley:1984a}
Charles~C. Conley and Eduard Zehnder.
\newblock Morse-type index theory for flows and periodic solutions for
  {H}amiltonian equations.
\newblock {\em Comm. Pure Appl. Math.}, 37(2):207--253, 1984.

\bibitem[Des81]{Deschamps:1981a}
G.~A. Deschamps.
\newblock Electromagnetics and differential forms.
\newblock {\em Proceedings of the IEEE}, 69(6):676--696, June 1981.

\bibitem[FLS64]{Feynman:1964a}
Richard~P. Feynman, Robert~B. Leighton, and Matthew Sands.
\newblock {\em The {F}eynman lectures on physics. {V}ol. 2: {M}ainly
  electromagnetism and matter}.
\newblock Addison-Wesley Publishing Co., Inc., Reading, Mass.-London, 1964.

\bibitem[Gin96]{Ginzburg:1996a}
Viktor~L. Ginzburg.
\newblock On closed trajectories of a charge in a magnetic field. {A}n
  application of symplectic geometry.
\newblock In {\em Contact and symplectic geometry ({C}ambridge, 1994)},
  volume~8 of {\em Publ. Newton Inst.}, pages 131--148. Cambridge Univ. Press,
  Cambridge, 1996.

\bibitem[Gin01]{Ginzburg:2001b}
Viktor~L. Ginzburg.
\newblock {A charge in a magnetic field: Arnol$'$d's problems 1981-9, 1982-24,
  1984-4, 1994-14, 1994-35, 1996-17, and 1996-18, last accessed 15/10/2016 on}
  \href{http://ginzburg.math.ucsc.edu/ARNOLD/mag-post.pdf}{Webpage}, preprint,
  December 2001.

\bibitem[GL58]{Gelfand:1958a}
I.~M. Gel{$'$}fand and V.~B. Lidski{\u\i}.
\newblock On the structure of the regions of stability of linear canonical
  systems of differential equations with periodic coefficients.
\newblock {\em Amer. Math. Soc. Transl. (2)}, 8:143--181, 1958.

\bibitem[Gom01]{Gompf:2001a}
Robert~E. Gompf.
\newblock The topology of symplectic manifolds.
\newblock {\em Turkish J. Math.}, 25(1):43--59, 2001.

\bibitem[Gut14]{Gutt:2014a-arXiv-link}
Jean Gutt.
\newblock Generalized {C}onley-{Z}ehnder index.
\newblock {\em Ann. Fac. Sci. Toulouse Math. (6)}, 23(4):907--932, (cf.
  \href{http://www.math.sunysb.edu/~joa/PUBLICATIONS/1999--prom.pdf}{arXiv:1201.3728}),
  2014.

\bibitem[HK99]{Hofer:1999a}
Helmut Hofer and Markus Kriener.
\newblock Holomorphic curves in contact dynamics.
\newblock In {\em Differential equations: {L}a {P}ietra 1996 ({F}lorence)},
  volume~65 of {\em Proc. Sympos. Pure Math.}, pages 77--131. Amer. Math. Soc.,
  Providence, RI, 1999.

\bibitem[HMSa15]{Hryniewicz:2015a}
Umberto Hryniewicz, Al~Momin, and Pedro A.~S. Salom\~ao.
\newblock A {P}oincar\'e-{B}irkhoff theorem for tight {R}eeb flows on {$S^3$}.
\newblock {\em Invent. Math.}, 199(2):333--422, 2015.

\bibitem[HWZ95]{Hofer:1995b}
H.~Hofer, K.~Wysocki, and E.~Zehnder.
\newblock Properties of pseudo-holomorphic curves in symplectisations. {II}.
  {E}mbedding controls and algebraic invariants.
\newblock {\em Geom. Funct. Anal.}, 5(2):270--328, 1995.

\bibitem[HWZ03]{Hofer:2003a}
H.~Hofer, K.~Wysocki, and E.~Zehnder.
\newblock Finite energy foliations of tight three-spheres and {H}amiltonian
  dynamics.
\newblock {\em Ann. of Math. (2)}, 157(1):125--255, 2003.

\bibitem[Maz12]{mazzucchelli:2012a}
Marco Mazzucchelli.
\newblock {\em Critical point theory for {L}agrangian systems}, volume 293 of
  {\em Progress in Mathematics}.
\newblock Birkh{\"a}user/Springer Basel AG, Basel, 2012.

\bibitem[RS93]{Robbin:1993a}
Joel Robbin and Dietmar Salamon.
\newblock The {M}aslov index for paths.
\newblock {\em Topology}, 32(4):827--844, 1993.

\bibitem[Sal13]{Salamon:2013a}
Dietmar Salamon.
\newblock Uniqueness of symplectic structures.
\newblock {\em Acta Math. Vietnam.}, 38(1):123--144, 2013.

\bibitem[Web99]{weber:1999a}
Joa Weber.
\newblock {\em J-holomorphic curves in cotangent bundles and the heat flow
  \href{http://www.math.sunysb.edu/~joa/PUBLICATIONS/1999-prom.pdf}{\rm (access
  pdf)}}.
\newblock PhD thesis, TU Berlin, June 1999.

\bibitem[WR14]{Warnick:2014a}
K.F. Warnick and P.H. Russer.
\newblock Differential forms and electromagnetic field theory (invited paper).
\newblock {\em Progress In Electromagnetics Research}, 148:83--112, 2014.

\end{thebibliography}
%$

%%%%%%%%%%%%%%%%%%%%%%%%%
%%%%%%%%% INDEX %%%%%%%%%%
%%%%%%%%%%%%%%%%%%%%%%%%%
% Using hyperref, one should say:
\cleardoublepage
\phantomsection
\addcontentsline{toc}{chapter}{Index}
\printindex

\end{document}